\newcommand{\E}{\mathbb{E}}
    \newcommand{\Prb}{\mathbb{P}}
	\newcommand{\cP}{\mathcal{P}}
	\newcommand{\cA}{\mathcal{A}}
	\newcommand{\an}{\overrightarrow{n}}
				\newcommand{\cS}{\mathcal{S}}
									\newcommand{\ogam}{\overrightarrow{\gamma}}
									\newcommand{\oGam}{\overrightarrow{\Gamma}}
		\newcommand{\cE}{\mathcal{E}}
		\newcommand{\amu}{\overrightarrow{\mu}}
		\newcommand{\cF}{\mathcal{F}}
				\newcommand{\fT}{\mathfrak{T}}
				\newcommand{\fM}{\mathfrak{M}}
				\newcommand{\fB}{\mathfrak{B}}
		\newcommand{\cM}{\mathcal{M}}
		\newcommand{\cG}{\mathcal{G}}
		\newcommand{\cL}{\mathcal{L}}
		\newcommand{\cU}{\mathcal{U}}
	\newcommand{\cB}{\mathcal{B}}
		\newcommand{\B}{B(n)}
		\newcommand{\cV}{\mathcal{V}}
							\newcommand{\cN}{\mathcal{N}}
		\newcommand{\cH}{\mathcal{H}}
	\newcommand{\sR}{\mathbb{R}}
	\newcommand{\sN}{\mathbb{N}}
	\newcommand{\vv}{\overrightarrow{v}}
	\newcommand{\nn}{\overrightarrow{n}}
	\newcommand{\ssigma}{\overrightarrow{\sigma}}
		\newcommand{\ff}{\overrightarrow{f}}
		\newcommand{\sS}{\mathbb{S}}
	\DeclareMathOperator{\diam}{diam}
				\DeclareMathOperator{\card}{card}
				\DeclareMathOperator{\cyl}{cyl}
				\DeclareMathOperator{\proj}{proj}
				\DeclareMathOperator{\flow}{flow}
				\DeclareMathOperator{\sign}{sign}
								\DeclareMathOperator{\diver}{div}
    \newcommand{\sZ}{\mathbb{Z}}
    \newcommand{\sC}{\mathcal{C}}
    \newcommand{\ep}{\varepsilon}
    \newcommand{\dis}{\mathfrak{d}}
        \newcommand{\Cor}{\mathfrak{Cor}}
    \newcommand{\fC}{\mathfrak{C}}
    \newcommand{\di}{\mathrm{d}}
    \newcommand{\ind}{\mathds{1}}
 \theoremstyle{plain}   
 \newtheorem{thm}{Theorem}[section]
\newtheorem{lem}[thm]{Lemma}
\newtheorem{defn}[thm]{Definition}
\newtheorem{prop}[thm]{Proposition}
\newtheorem{rk}[thm]{Remark}
\newtheorem{hypo}{Hypothesis}
\numberwithin{equation}{section}
\title{Large deviation principle for the streams and the maximal flow in first passage percolation\thanks{Research was partially supported by the ANR project PPPP (ANR-16-CE40-0016) and the Labex MME-DII (ANR 11-LBX-0023-01).}}
\date{}
\author{Barbara Dembin\thanks{LPSM UMR 8001, Université Paris Diderot, Sorbonne Paris Cité, CNRS, F-75013 Paris, France. bdembin@lpsm.paris} , Marie Théret\thanks{Modal'X, UPL, Univ Paris Nanterre, F92000 Nanterre France and FP2M, CNRS FR 2036.
marie.theret@parisnanterre.fr} }
\begin{document}

 \selectlanguage{english}

\maketitle
{\bf Abstract:} We consider the standard first passage percolation model in the rescaled lattice $\sZ^d/n$ for $d\geq 2$ and a bounded domain $\Omega$ in $\sR^d$. We denote by $\Gamma^1$ and $\Gamma^2$ two disjoint subsets of $\partial \Omega$ representing respectively the source and the sink, \textit{i.e.}, where the water can enter in $\Omega$ and escape from $\Omega$. A maximal stream is a vector measure $\amu_n^{max}$ that describes how the maximal amount of fluid can enter through $\Gamma ^1$ and spreads in $\Omega$. Under some assumptions on $\Omega$ and $G$, we already know a law of large number for $\amu_n^{max}$. The sequence $(\amu_n^{max})_{n\geq 1}$ converges almost surely to the set of solutions of a continuous deterministic problem of maximal stream in an anisotropic network. We aim here to derive a large deviation principle for streams and deduce by contraction principle the existence of a rate function for the upper large deviations of the maximal flow in $\Omega$.

\section{Introduction}
\subsection{First definitions and main results}
\subsubsection{The environment, discrete admissible maximal streams}
We use here the same notations as in \cite{CT1}.
 Let $n\geq 1$ be an integer. We consider the graph $(\sZ^d_n,\E^d_n)$ having for vertices $\sZ^d _n=\sZ^d/n$ and for edges $\E_n^d$, the set of pairs of points of $\sZ^d_n$ at Euclidean distance $1/n$ from each other.  With each edge $e\in\E_n^d $ we associate a capacity $t(e)$, which is a random variable with value in $\sR^+$. The family $(t(e))_{e\in\E_n^d}$ is independent and identically distributed with a common law $G$. We interpret this capacity as a rate of flow, \textit{i.e.}, it corresponds to the maximal amount of water that can cross the edge per second. Throughout the paper, we work with a distribution $G$ on $\sR^+$ satisfying the following hypothesis.
\nomenclature{$(\sZ_n^d,\E_n^d)$}{The rescaled lattice}
\begin{hypo} \label{hypo:G} There exists $M>0$ such that $G([M,+\infty[)=0$.
\end{hypo}

Let $(\Omega,\Gamma^1,\Gamma^2)$ that satisfies the following hypothesis.
\begin{hypo}\label{hypo:omega}
The set $\Omega$ is an open bounded connected subset of $\sR^d$, that it is a Lipschitz domain. There exist $\cS_1,\dots,\cS_l$ oriented manifolds of class $\sC^1$ that intersect each other transversally such that the boundary $\Gamma$ of $\Omega$ is included in $ \cup_{i=1,\dots,l}\cS_i$.
 The sets $\Gamma^1$ and $\Gamma^2$ are two disjoint subsets of $\Gamma$ that are open in $\Gamma$ such that $\inf\{\|x-y\|,\, x\in\Gamma^1,\,y\in\Gamma^2\}>0$, and that their relative boundaries $\partial_\Gamma \Gamma^1$ and $\partial_\Gamma \Gamma ^2$ have null $\cH^{d-1}$ measure. 
\end{hypo}
The sets $\Gamma^1$ and $\Gamma ^2$ represent respectively the sources and the sinks. We aim to study the maximal streams from $\Gamma^1$ to $\Gamma^2$ through $\Omega$ for the capacities $(t(e))_{e\in\E_n^d}$. We shall define discretized versions for those sets. For $x=(x_1,\dots,x_d)\in\sR^d$, we define $$\|x\|_2=\sqrt{\sum_{i=1}^dx_i^2}\qquad\text{and}\qquad \|x\|_\infty =\max\big\{\,|x_i|,\,i=1,\dots,d\,\big\}.$$
We use the subscript $n$ to emphasize the dependence on the lattice $(\sZ^d_n,\E^d _n)$. Let $\Omega_n$, $\Gamma_n$, $\Gamma_n^1$ and $\Gamma_n^2$ be the respective discretized version of $\Omega$, $\Gamma$, $\Gamma^1$ and $\Gamma^2$:
\begin{align*}
&\Omega_n=\left\{\,x\in \sZ^d_n:\, d_\infty(x,\Omega)<\frac{1}{n}\right\},\\
&\Gamma_n=\big\{\,x\in \Omega_n:\, \exists y\notin \Omega_n,\,\langle x,y\rangle\in\E_n^d\,\big\}\,,\\
&\Gamma_n ^i=\left\{\,x\in \Gamma_n:\, d_\infty(x,\Gamma^i)<\frac{1}{n}, \,  d_\infty(x,\Gamma^{3-i})\geq\frac{1}{n}\right\},\qquad \text{for $i=1,2$},
\end{align*}
\nomenclature{$(\Omega,\Gamma^1,\Gamma^2)$}{The domain, the source and the sink}
\nomenclature{ $\Omega_n$, $\Gamma_n$, $\Gamma_n^1$ and $\Gamma_n^2$}{The discretized version of $\Omega$, $\Gamma$, $\Gamma^1$,$\Gamma^2$}
where $d_\infty$ is the $L^\infty$ distance and $\langle x,y\rangle$ represents the edge whose endpoints are $x$ and $y$. We denote by $\overrightarrow{\E}_n^d$ the set of oriented edges. We will denote by $\langle\langle x,y\rangle\rangle$ the oriented edge in $\overrightarrow{\E}_n^d$ whose first endpoint is $x$ and last endpoint is $y$. To each $\langle\langle x,y\rangle\rangle$ in $\overrightarrow{\E}_n^d$ we can associate the vector $\overrightarrow{xy}$ in $\sR^d$. Notice that $\|\overrightarrow{xy}\|_2=1/n$. Let $(\overrightarrow{e_1},\dots,\overrightarrow{e_d})$ be the canonical basis of $\sR^d$.
  We denote by $\cdot$ the standard scalar product in $\sR^d$. 

 \noindent 
{\bf Stream function.} A stream $f_n$ is a function $f_n : \E_n^d\rightarrow \sR^d$  such that the vector $f_n(e)$ is collinear with the geometric segment associated with $e$. For $e\in \E_n^d$, $\|f_n(e)\|_2$ represents the amount of water that flows through $e$ per second and $f_n(e)/\|f_n(e)\|_2$ represents the direction in which the water flows through $e$. 

\noindent 
{\bf Admissible streams through $\Omega$.} 
A stream $f_n:\E^d_n\rightarrow \sR^d$ from $\Gamma^1$ to $\Gamma^2$ through $\Omega$ is admissible if and only if
\begin{itemize}
\item[$\cdot$] \textit{The stream is inside $\Omega$} : for each edge $e=\langle x,y\rangle$ such that $(x,y)\notin \Omega_n^2\setminus (\Gamma_n^1\cup\Gamma_n^2) ^2$ we have $f_n(e)=0$
\item[$\cdot$] \textit{The stream respects the capacity constraints}: for each $e\in\E_n^d$ we have $\|f_n(e)\|_2\leq t(e)$
\item [$\cdot$] \textit{The stream satisfies the node law}: for each vertex $x\in \sZ_n^d\setminus(\Gamma_n^1\cup \Gamma_n^2)$ we have
$$\sum_{y\in\sZ_n^d:\,e =\langle x,y \rangle\in \E_n^d } f_n(e)\cdot \overrightarrow{xy} =0\,.$$
\end{itemize} 
The node law expresses that there is no loss or creation of fluid outside $\Gamma_1$ and $\Gamma_2$. The capacity constraint imposes that the amount of water that flows through an edge $e$ per second is limited by its capacity $t(e)$.
We denote by $\cS_n(\Gamma^1,\Gamma^2,\Omega)$ the set of admissible streams from $\Gamma^1$ to $\Gamma^2$ through $\Omega$.
\nomenclature{$\cS_n(\Gamma^1,\Gamma^2,\Omega)$}{The random set of admissible discrete stream }
As the capacities are random, the set of admissible streams $\cS_n(\Gamma^1,\Gamma^2,\Omega)$ is also random.  
 We denote by $\cS_n^M(\Gamma^1,\Gamma^2,\Omega)$ the set of streams $f_n:\E^d_n\rightarrow \sR^d$ such that
\begin{itemize}
\item[$\cdot$] for each edge $e=\langle x,y\rangle$ such that $(x,y)\notin \Omega_n^2\setminus (\Gamma_n^1\cup\Gamma_n^2) ^2$ we have $f_n(e)=0$
\item[$\cdot$] for each $e\in\E_n^d$ we have $\|f_n(e)\|_2\leq M$
\item [$\cdot$]the stream satisfies the node law for any vertex $x\in \sZ_n^d\setminus(\Gamma_n^1\cup \Gamma_n^2)$.
\end{itemize} 
Note that the set $\cS_n^M(\Gamma^1,\Gamma^2,\Omega)$ is a deterministic set.
\nomenclature{$\cS_n^M(\Gamma^1,\Gamma^2,\Omega)$}{The deterministic set of admissible discrete stream }
To each $f_n$, we can define its associated vector measure $\amu_n(f_n)$ by
$$\amu_n(f_n)=\frac{1}{n^d}\sum_{e\in\E_n^d}f_n(e)\delta_{c(e)}\,,$$
where $c(e)$ denotes the center of the edge $e$.

\noindent{\bf Maximal flow through $\Omega$.}
For each admissible stream $f_n$ in $\cS_n(\Gamma^1,\Gamma^2,\Omega)$, we define its flow by 
$$\flow_n(f_n)=\sum_{x\in \Gamma_n^1}\,\sum_{\substack{y\in\Omega_n:\\\,e =\langle x,y \rangle\in \E_n^d} }n\,f_n(e)\cdot \overrightarrow{xy}\,$$
where we recall that $\|\overrightarrow{xy}\|_2=1/n$.
This corresponds to the amount of water that enters in $\Omega_n$ through $\Gamma^1_n$ per second for the stream $f_n$. 
The maximal flow  between $\Gamma^1$ and $\Gamma^2$ through $\Omega$ for the capacities $(t(e))_{e\in\E_n^d}$, denoted by $\phi_n(\Gamma^1,\Gamma^2, \Omega)$, is the supremum of the flows of all admissible streams  from $\Gamma^1$ to $\Gamma^2$ through $\Omega$:
\begin{align}\label{eq:defphin}
\phi_n(\Gamma^1,\Gamma^2, \Omega)=\sup\left\{\flow_n(f_n)\,:\,f_n\in \cS_n(\Gamma^1,\Gamma^2,\Omega)\right\}\,.
\end{align}
We define $\amu_n^{max}$, the measure corresponding to a given stream $f_n ^{max}\in \cS_n(\Gamma^1,\Gamma^2,\Omega)$ that achieves the maximal flow $\phi_n(\Gamma^1,\Gamma^2, \Omega)$.

\noindent{\bf Admissible streams through a connected set $C$ without prescribed sinks and sources.} Let $C\subset \sR^d$.
We denote by $\cS_n(C)$ the set of admissible streams through $C$, that is streams $f_n:\E^d_n\rightarrow \sR^d$ such that:
\begin{itemize}
\item[$\cdot$] \textit{The stream respects the capacity constraint}: for each edge $\langle x,y\rangle\in\E_n^d$ such that $x\in C$ and there exists $i\in\{1,\dots,d\}$ such that $\overrightarrow {xy}\cdot \overrightarrow{e_i}>0$, we have $\|f_n(e)\|_2\leq t(e)$.
\item [$\cdot$] \textit{The stream respects the node law}: for each vertex $x\in \sZ_n^d\cap C$ such that for any $i\in\{1,\dots,d\}$, $(x-\overrightarrow{e}_i/n)\in C$, we have
$$\sum_{y\in\sZ_n^d:\,e =\langle x,y \rangle\in \E_n^d } f_n(e)\cdot \overrightarrow{xy} =0\,.$$
\end{itemize} 
In what follows, we will say that $x$ is the left endpoint of the edge $e=\langle x,y\rangle \in\E_n^d$, if there exists $i\in\{1,\dots,d\}$ such that $\overrightarrow{xy}=\overrightarrow{e_i}/n$. Moreover, we say that $e$ belongs to $C$ if its left endpoint $x$ belongs to $C$.
Note that the event $\{f_n\in \cS_n(C)\}$ only depends on edges such that their left endpoint is in $C$.
%Moreover,  we denote by $\partial_n ^{int} C$ the vertices in $\sZ_n ^d$ in the boundary of $C$, \textit{i.e.},
%$$\partial_n^{int} C=\left\{\,x\in \sZ_n^d\cap C: \exists y \in\sZ_n^d\setminus C \text{ such that }\langle x, y\rangle\in\E_n^d\right\}\,.$$

\subsubsection{Presentation of the limiting objects and main results}
We want to define the possible limiting objects for $\amu_n$. To lighten the presentation of the object, we will do as if our limiting object $\ssigma:\sR^d\rightarrow \sR^d$ were a nice $\sC^1$ vector field. Actually, the convergence holds in a distributional sense and $\ssigma$ is a distribution. More rigorous definitions will be given in section \ref{sect:defmaxflow}. Let us denote by $\sS^{d-1}$ the unit sphere in $\sR^d$. For $x\in \Omega$ and $\vv\in\sS^{d-1}$, the quantity $\ssigma(x)\cdot \vv$ corresponds to the quantity of flow sent by $\ssigma$ at the position $x$ in the direction $\vv$. 
 The limiting object $\ssigma$ inherits the properties of $\amu_n$:
 \begin{enumerate}[label=(\roman*)]
 \item \textit{the stream is inside} $\Omega$ : $\ssigma=0$ on $\sR^d\setminus \overline{\Omega}$;
 \item\label{condstream:3} \textit{conservation law}: $\diver \ssigma=0$ on $\Omega$ and $\ssigma\cdot \overrightarrow{n}_\Omega=0 $ on $\Gamma \setminus (\Gamma^1\cup\Gamma ^2)$ .
  \end{enumerate}
  Here $\overrightarrow{n}_\Omega(x)$ denotes the exterior unit vector normal to $\Omega$ at $x$.
 The condition \ref{condstream:3} is a consequence of the fact that $f_n$ satisfies the node law on $\Omega$ and that no water escapes from $\Omega$ except at the sinks and the sources. We will say that $\ssigma$ is an admissible stream if it satisfies these two conditions and we will denote by $\Sigma(\Gamma ^1,\Gamma^2,\Omega)$ the set of admissible continuous streams (a more rigorous definition will be given later). Just as in the discrete setting, we are interested in the maximal amount of flow that can enter through $\Gamma^1$. We need to give a definition of the flow for continuous streams. The flow is the amount of water that enters the source per second, it translates here as follows
\[\flow ^{cont}(\ssigma)=-\int_{\Gamma^1}\ssigma\cdot \overrightarrow{n}_\Omega d\cH^{d-1}\,.\]
Here, the vector $\overrightarrow{n}_\Omega$ is exterior, that is, exiting from $\Omega$, whereas the vector $-\overrightarrow{n}_\Omega$ is entering $\Omega$, this accounts for the minus sign. $\cH^{d-1}$ denotes the Hausdorff measure in dimension $d-1$.
The goal of this paper is roughly speaking to find a proper rate function $\widehat{I}$ on $\Sigma$ such that 
\[\forall \ssigma\in \Sigma(\Gamma ^1,\Gamma^2,\Omega) \qquad\Prb\big(\exists f_n\in\cS_n(\Gamma ^1,\Gamma ^2,\Omega): \amu_n(f_n)\approx \ssigma\cL^d\big)\approx \exp\big(-\widehat{I}(\ssigma)n^d\big)\,\]
where $\cL^d$ denotes the Lebesgue measure in dimension $d$.
To be able to compare $f_n$ and $\ssigma$, we introduce a distance $\dis$ between vector measures that we will explicit in section \ref{sect:tools}. The convergence for this distance implies the weak convergence.
Roughly speaking, the larger $\widehat{I}(\ssigma)$ is the more atypical the continuous stream is. We here express the rate function $\widehat{I}$ as the integral on $\Omega$ of an elementary rate function $I$:
\begin{align}\label{eq:defhatI}
\widehat{I}(\ssigma)=\int_{\Omega}I(\ssigma(x))d\cL^d(x)\,.
\end{align}
 This elementary rate function $I$ characterizes locally how atypical the stream is. When we consider a small cube $C$ in $\Omega$, we have if the cube is small enough that $\ssigma$ is almost constant, there exists $s>0$ and $\vv\in\sS^{d-1}$ such that $\ssigma\approx s\vv$ in $C$. The function $I$ characterizes how likely it is possible that there exists an admissible stream in $C$ close to $s\vv\cL^d$.  Let us now give a more rigorous definition.
Let us denote by $\fC$ the unit cube centered at $0$, that is
\[\fC=\left[-\frac{1}{2},\frac{1}{2}\right[^d\,.\]
We first study the probability of having a stream in $\cS_n(\fC)$ that looks like some constant continuous stream $s\vv\in\sR^d$.
\begin{thm}\label{thmbrique}Let $G$ that satisfies hypothesis \ref{hypo:G}. Let $\vv\in\sS^{d-1}$ and $s>0$. 
We have
\begin{align*}
-\lim_{\ep\rightarrow 0} &\limsup_{n\rightarrow \infty}\frac{1}{n^d}\log \Prb\left( \exists f_n\in\cS_n(\fC) : \,\dis\big(\amu_n(f_n)\ind_{\fC},s\vv\ind_{\fC}\cL^d\big)\leq \ep\right)\\&\hspace{2cm}=-\lim_{\ep\rightarrow 0} \liminf_{n\rightarrow \infty}\frac{1}{n^d}\log \Prb\left(\exists f_n\in\cS_n(\fC) : \,\dis\big(\amu_n(f_n)\ind_{\fC},s\vv\ind_{\fC}\cL^d\big)\leq \ep\right)\,.
\end{align*}
We will denote this limit by $I(s\vv)$.
\end{thm}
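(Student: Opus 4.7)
The plan is to establish a super-multiplicative inequality
$$p_{KN}(\ep) \geq p_N(\ep')^{K^d}\,e^{-C\alpha (KN)^d}, \qquad K\geq 2,\ N\geq 1,\ \alpha>0,$$
where $p_n(\ep):=\Prb(\exists f_n\in\cS_n(\fC):\dis(\amu_n(f_n)\ind_{\fC},s\vv\ind_{\fC}\cL^d)\leq\ep)$ and $\ep'$ is small relative to $\ep,K,\alpha$, and then to pass to the limit. Introduce $q^+(\ep):=\limsup_n n^{-d}\log p_n(\ep)$ and $q^-(\ep):=\liminf_n n^{-d}\log p_n(\ep)$; both are non-decreasing in $\ep$, so $\lim_{\ep\to 0^+}q^\pm(\ep)$ exist in $[-\infty,0]$, and $q^-\leq q^+$ is immediate. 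Only the reverse inequality in the limit requires work.

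\textbf{Tiling, independence, and gluing.} I would fix $N\geq 1$, take $n=KN$, and partition $\fC$ into $K^d$ disjoint sub-cubes $(\fC_j)$ of side $1/K$, aligned with the scale-$n$ lattice by translating $[-1/(2K),1/(2K))^d$. The left-endpoint convention built into the definition of $\cS_n(\cdot)$ makes $\{\exists f\in\cS_n(\fC_j)\}$ depend only on capacities of edges whose left endpoint lies in $\fC_j$; those edge sets being pairwise disjoint, the corresponding events are independent, and by translation invariance of $G$ together with the lattice isomorphism between $\fC_j$ at scale $n=KN$ and $\fC$ at scale $N$, each is a scaled copy of the event defining $p_N$ at a tolerance $\ep''=h(K)\ep'$ dictated by the scaling of $\dis$. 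Concatenating local streams into $f:=\bigoplus_j f_j$ is well-defined (each edge lies in a unique sub-cube), but the node law of $\cS_n(\fC)$ may fail at interface vertices, which are interior to $\fC$ yet sit on the lower face of some $\fC_j$ where $f_j$'s node law is not required. The fix, which is the main obstacle, is to strengthen the local event by requiring $f_j$ to coincide, on every edge with left endpoint in a boundary shell $B_j\subset\fC_j$ of width $\alpha/K$, with the deterministic stream $\tau(e):=s\,v_{i(e)}\overrightarrow{e_{i(e)}}$ (where $i(e)\in\{1,\ldots,d\}$ is the axis direction of $e$ and $\vv=(v_1,\ldots,v_d)$). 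A direct check shows $\tau$ satisfies the node law at every lattice vertex, so $f$ satisfies it at every interface vertex and $f\in\cS_n(\fC)$; moreover $\dis(\amu_n(f)\ind_\fC,s\vv\ind_\fC\cL^d)\leq C_1 K^d\ep''+C_2\alpha$ by additivity of $\dis$ over the partition plus an $O(\alpha)$ buffer contribution, so it is enough to choose $\ep'$ satisfying $C_1 K^d h(K)\ep'+C_2\alpha\leq\ep$.

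\textbf{Cost of the buffer and conclusion.} The cost of imposing $f_j=\tau$ on $B_j$ is the capacity event $\{t(e)\geq s|v_{i(e)}|:e\in B_j\}$; by independence of $(t(e))$ its probability is at least $G([s,\infty))^{|B_j|}$, and under Hypothesis \ref{hypo:G} with $s<M$ (the degenerate regime $s\geq M$ forces $p_n(\ep)=0$ for small $\ep$, hence $I(s\vv):=+\infty$ trivially), $G([s,\infty))=:c>0$. An edge count gives $\sum_j|B_j|\leq C_3\alpha n^d$, completing the super-multiplicative bound. Taking $n^{-d}\log$, letting $K\to\infty$ at fixed $N,\alpha$, and then $\limsup_N$ on the right produces $q^-(\ep)\geq q^+(\ep')-C\alpha$. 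Letting $\ep\to 0$ (so that $\ep'\to 0$ at fixed $K,\alpha$), then $\alpha\to 0$, and invoking monotonicity of $q^+$, one obtains $\lim_{\ep\to 0}q^-(\ep)\geq\lim_{\ep'\to 0}q^+(\ep')$; together with the trivial reverse inequality this proves the theorem and defines $I(s\vv)$. A routine approximation argument, applying the same tiling with $n\in\{KN,\ldots,(K+1)N-1\}$ and padding, handles the $n$ that are not multiples of $N$.
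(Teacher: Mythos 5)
Your general plan — tile $\fC$ into $K^d$ sub-cubes, exploit independence of the sub-cubes via the left-endpoint convention, and glue local streams with a high-capacity buffer, then pass to the limit — is the same skeleton as the paper's proof. But there is a genuine gap at the step you yourself call "the main obstacle," and it is not a detail that can be waved through.

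You claim that the cost of strengthening the local event from "$\exists f_j\in\cS_n(\fC_j)$ with $\dis\leq\ep''$" to "$\exists f_j\in\cS_n(\fC_j)$ with $f_j=\tau$ on $B_j$ and $\dis\leq\ep''$" is exactly the capacity event $\{t(e)\geq s|v_{i(e)}|:e\in B_j\}$, and hence that the strengthened event has probability $\geq p_N(\ep')\,G([s,\infty))^{|B_j|}$. This does not follow. On the event that some stream $g$ is $\ep''$-close to $s\vv$ and the shell capacities are high, you have no way to produce a stream that agrees with $\tau$ throughout $B_j$ while still satisfying the node law inside $\fC_j$: $g$ will generically not match $\tau$ at the inner boundary of the shell, and you cannot set $f_j=g$ inside and $f_j=\tau$ in the shell without creating sources and sinks at that inner interface. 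Interpolating inside the shell requires a mixing construction that converts $g$'s interface values to $\tau$'s; by the quantitative version of this fact (the paper's Lemma \ref{lem:mixing}), matching $n/K$ inputs to $n/K$ outputs requires a corridor of lattice length at least $2(d-1)n/K$, i.e. physical length $2(d-1)/K$, which for $d\geq 2$ already equals or exceeds the side $1/K$ of $\fC_j$. So either your shell is thin (width $\alpha/K$ with small $\alpha$), in which case the interpolation physically cannot be performed, or the shell occupies all of $\fC_j$ and there is no interior left. Either way the factorization fails, and the claimed bound is not proved. Neither FKG nor straightforward conditioning repairs this, because the constrained event is genuinely smaller than the product event.

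This is precisely the difficulty that drives the structure of the paper's proof, and it is why the paper introduces a second, mesoscopic scale. Rather than prescribing the stream exactly on a shell, the paper prescribes only the $m^{d-1}$ mesoscopic face fluxes $\psi_i^\diamond(f_n,A)$ for $A\in\cP_i^\diamond(m)$ (Lemmas \ref{lem:lemexistencefamille} and \ref{lem:toutefamille} show, via a pigeonhole plus a careful local modification argument, that this can be imposed at essentially no exponential cost), and then performs the gluing by mixing at the mesoscopic scale: matching $\sim n/(mK)$ inputs requires corridor length $\sim 2(d-1)/(mK)\ll 1/K$, which fits inside a thin corridor whose volume fraction vanishes as $m\to\infty$ (equivalently $\ep\to 0$). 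Without this two-scale decoupling, your buffer argument cannot close. (A secondary but related issue: your "scaled copy" identification uses the sub-cube event without the shell constraint, so the claimed appearance of $p_N(\ep')$ is also not justified; and the limits in $K$, $N$, $\alpha$, $\ep'$ as written are tangled, since $\ep'$ depends on $K$, so one cannot both let $K\to\infty$ and keep $\ep'$ fixed when taking $\limsup_N$.)
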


Let $\cM(\overline{\cV_\infty(\Omega,1)})$ be the set of measures on $\sR^d$ with support included in  $\overline{\cV_\infty(\Omega,1)}$ where $\cV_\infty(\Omega,1)=\{x\in\sR^d:\, d_\infty(\Omega,x)\leq 1\}$.
We endow $\cM(\overline{\cV_\infty(\Omega,1)})^d$ with the topology $\mathcal{O}$ associated with the distance $\dis$ and the Borelian $\sigma$-field $\cB$. Write $\Prb_n$ the following probability:
$$\forall A\in\cB\qquad \Prb_n(A)=\Prb(\exists f_n\in\cS_n(\Gamma^1,\Gamma^2,\Omega):\amu_n(f_n)\in A)\,.$$
We define the following rate function $\widetilde{I}$ on $\cM(\overline{\cV_\infty(\Omega,1)})^d$ as follows:
\[\forall \nu \in \cM(\overline{\cV_\infty(\Omega,1)})^d\qquad  \widetilde{I}(\nu) = \left\{
    \begin{array}{ll}
        +\infty  & \mbox{if } \nu\notin\{\ssigma\cL^d:\,\ssigma\in\Sigma(\Gamma^1,\Gamma^2,\Omega)\cap \Sigma^M(\Gamma^1,\Gamma^2,\Omega)\} \\
        \widehat{I}(\ssigma) & \mbox{if } \nu=\ssigma\cL^d,\,\ssigma\in\Sigma(\Gamma^1,\Gamma^2,\Omega)\,
    \end{array}
\right.
\]
where $\Sigma^M(\Gamma^1,\Gamma^2,\Omega)$ will be defined more rigorously later, it represents the continuous streams that corresponds to weak limit of a sequence of discrete streams in $\cS_n^M(\Gamma^1,\Gamma^2,\Omega)$.
We have the following large deviation principle for the stream :
\begin{thm}[Large deviation principle for admissible streams] \label{thm:pgd}Under some regularity hypothesis on $\Omega$, $\Gamma^1$ and $\Gamma^2$, for distributions $G$ compactly supported, the sequence $(\Prb_n)_{n\geq 1}$ satisfies a large deviation principle with speed $n^d$ governed by the good rate function $\widetilde{I}$ and with respect to the topology $\mathcal{O}$, \textit{i.e.}, for all $A\in\cB$
$$ -\inf\left\{\,\widetilde{I}(\nu ):\,\nu\in \mathring{A}\,\right\}\leq\liminf_{n\rightarrow\infty}\frac{1}{n^d}\log\Prb_n(A)\leq \limsup_{n\rightarrow\infty}\frac{1}{n^d}\log\Prb_n(A)\leq -\inf\left\{\,\widetilde{I}(\nu ):\,\nu\in \overline{A}\,\right\}\,.$$
\end{thm}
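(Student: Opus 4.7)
The plan is to deduce Theorem~\ref{thm:pgd} from the elementary LDP of Theorem~\ref{thmbrique} by a tiling-and-gluing argument, establishing three ingredients: exponential tightness together with goodness of $\widetilde I$, a local lower bound, and a local upper bound. The bounded-capacity hypothesis~\ref{hypo:G} makes the first ingredient essentially automatic: any $f_n \in \cS_n(\Gamma^1,\Gamma^2,\Omega)$ satisfies $\|f_n(e)\|_2 \leq M$, so $\amu_n(f_n)$ lies almost surely in a fixed, $\dis$-precompact subset of $\cM(\overline{\cV_\infty(\Omega,1)})^d$. Lower semi-continuity of $\widetilde I$ and compactness of its level sets then follow from the convexity of $I$ (obtained by a subadditivity argument built from Theorem~\ref{thmbrique}) and from the closedness of $\Sigma^M(\Gamma^1,\Gamma^2,\Omega)$ under weak convergence.

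For the local lower bound, fix $\ssigma \in \Sigma(\Gamma^1,\Gamma^2,\Omega)\cap \Sigma^M(\Gamma^1,\Gamma^2,\Omega)$. I would first regularize $\ssigma$ and approximate it in $\dis$ by a stream $\ssigma_\delta$ that is constant equal to $s_i \vv_i$ on each cube $\fC_i$ of a grid of mesh $\delta$ partitioning $\Omega$. On each $\fC_i$ I apply Theorem~\ref{thmbrique} to obtain, independently across cubes since they involve disjoint edge families, local streams $f_n^{(i)}\in \cS_n(\fC_i)$ with $\amu_n(f_n^{(i)})\ind_{\fC_i}\approx s_i \vv_i \ind_{\fC_i}\cL^d$ at probabilistic cost $\exp(-n^d\delta^d(I(s_i\vv_i)+o_\delta(1)))$; the product over $i$ and the limit $\delta\to 0$ produce the exponent $\widehat I(\ssigma)=\int_\Omega I(\ssigma)\, d\cL^d$. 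The decisive step is \emph{gluing} the $f_n^{(i)}$ into a single element of $\cS_n(\Gamma^1,\Gamma^2,\Omega)$ respecting the node law on the whole lattice and the boundary conditions on $\Gamma^1$, $\Gamma^2$. Since $\diver \ssigma_\delta=0$ in the distributional sense, the flux mismatches across neighbouring cube interfaces are $O(\ep)$ per face and can be absorbed in thin boundary slabs using the capacity slack $M-\|s_i\vv_i\|_2>0$ guaranteed by $\ssigma\in\Sigma^M$; the probabilistic cost of these corrections is $e^{o(n^d)}$ and does not pollute the main exponent.

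For the local upper bound, I would argue symmetrically. If $\nu=\ssigma\cL^d$ with $\ssigma\in\Sigma\cap\Sigma^M$, any $f_n$ such that $\amu_n(f_n)$ is $\dis$-close to $\nu$ forces the restriction to each cube $\fC_i$ to be close to $s_i\vv_i\ind_{\fC_i}\cL^d$; this event is contained in an intersection of independent local events, each bounded above by Theorem~\ref{thmbrique} after covering a small $\dis$-ball by finitely many $\ep$-neighbourhoods of constant targets. Multiplying across $i$ yields $\exp(-n^d(\widehat I(\ssigma)+o_\delta(1)))$. When $\nu$ is not of that form, a compactness argument using the uniform bound $M$ shows that a neighbourhood of $\nu$ is not reachable: any subsequential weak limit of $\amu_n(f_n)$ must be absolutely continuous with density in $\Sigma\cap\Sigma^M$, so the rate is $+\infty$ as prescribed. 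Together with exponential tightness, these local bounds yield the full LDP by the standard patching from open neighbourhoods to arbitrary Borel sets.

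The principal obstacle is the gluing step in the lower bound: assembling the independently constructed near-optimal local streams into a globally admissible element of $\cS_n(\Gamma^1,\Gamma^2,\Omega)$ while preserving the node law and the boundary conditions on $\Gamma^1, \Gamma^2$, without deteriorating the exponent. This is where the regularity hypotheses on $\Omega$, $\Gamma^1$ and $\Gamma^2$ are genuinely used, through a tailored boundary-layer construction near cube interfaces and near $\Gamma$. A secondary delicate point is the continuity of $\widehat I$ along piecewise-constant approximations of $\ssigma$, which requires a uniform control of $I(s\vv)$ on bounded ranges of $(s,\vv)$ beyond what Theorem~\ref{thmbrique} provides pointwise.
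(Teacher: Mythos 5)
Your proposal takes the same overall route as the paper: reduce the full LDP to (i) goodness/tightness of $\widetilde I$, (ii) a local lower bound, and (iii) a local upper bound, and obtain the local bounds by tiling $\Omega$ into small cubes, invoking the elementary rate function from Theorem~\ref{thmbrique} on each, and gluing. That is precisely the architecture of the paper: Theorem~\ref{thm:ULDpatate} (proved via Propositions~\ref{prop:step1uld} and~\ref{prop:step2uld}) is the combined local estimate, Proposition~\ref{propadmissiblestream} handles the ``unreachable'' $\nu$, Lemma~\ref{lem:Sigmacompact} gives compactness of the admissible set, and Proposition~\ref{prop:localestimates} assembles these into the three conditions for the LDP before patching via the framework of \cite{Cerf:StFlour}. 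You also correctly identify gluing as the decisive obstacle and note the need for uniform control of $I$ along piecewise-constant approximants (which the paper obtains from convexity via Propositions~\ref{prop:continuityI} and~\ref{prop:continuityIhat}).

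Two places where the gluing is harder than you describe, and where the paper does something substantively different. First, flux matching between cubes is not handled by absorbing $O(\ep)$ interface mismatches into a boundary slab via ``capacity slack''; instead, the paper modifies each local stream to be \emph{well-behaved} (Lemmas~\ref{lem:lemexistencefamille} and~\ref{lem:toutefamille}), i.e.\ to have exactly uniform, prescribed flux through each mesoscopic sub-face, so that the interface fluxes match \emph{exactly}, and the reconnection is then done with the mixing Lemma~\ref{lem:mixing} in a corridor whose edges are conditioned to have high capacity; the probabilistic cost of this conditioning is controlled by shrinking the corridor volume. Your ``slack $M-\|s_i\vv_i\|_2>0$'' is not what $\Sigma^M$ guarantees (it bounds each component $|\ssigma\cdot\overrightarrow{e_i}|\leq M$), and without exact flux matching one cannot build an element of $\cS_n$. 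Second, and more seriously, near $\Gamma\setminus(\Gamma^1\cup\Gamma^2)$ the mollified stream $\ssigma * K_p$ has nonzero normal trace, so it cannot be turned into an admissible discrete stream by a local boundary-layer correction alone. The paper's resolution (Proposition~\ref{prop:prolsigma}) is to first \emph{prolongate} $\ssigma$ to an enlarged domain $\widetilde\Omega$ in which the sources and sinks are pushed a fixed distance $\rho$ away from $\Gamma$, and only then mollify at scale $p^{-1}\ll\rho$; the mollified extension then has null divergence on a genuine neighbourhood of $\overline\Omega$ and zero normal trace on $\Gamma\setminus(\Gamma^1\cup\Gamma^2)$, which is what allows the discretized version $\ssigma_p^{disc}$ to satisfy the node law at every vertex of $\Omega_n\setminus(\Gamma_n^1\cup\Gamma_n^2)$. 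Without this step your lower-bound construction does not produce an element of $\cS_n(\Gamma^1,\Gamma^2,\Omega)$. Finally, a minor misattribution: lower semi-continuity of $\widetilde I$ is not a consequence of convexity of $I$; the paper proves it directly (Step~1 of Proposition~\ref{prop:localestimates}) from Theorem~\ref{thm:ULDpatate} and Proposition~\ref{propadmissiblestream}, because the domain constraint $\nu\in\{\ssigma\cL^d:\ssigma\in\Sigma\cap\Sigma^M\}$ is not captured by convexity of the integrand.
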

We can deduce from theorem \ref{thm:pgd}, by a contraction principle, a large deviation principle for the maximal flows.
Let $J$ be the following function on $\sR^+$:
$$\forall \lambda\geq 0\qquad J(\lambda)=\inf\left\{\widehat{I}(\ssigma) : \ssigma\in\Sigma(\Gamma^1,\Gamma^2, \Omega)\cap\Sigma^M(\Gamma^1,\Gamma^2,\Omega),\,\flow^{cont}(\ssigma)=\lambda\right\}\,$$
and $$\lambda_{max}=\sup\left\{ \flow^{cont}(\ssigma):\ssigma\in \Sigma(\Gamma^1,\Gamma^2,\Omega)\right\}\,.$$
To prove an upper large deviation principle for maximal flows, we will need the following lower large deviation principle for maximal flows that was proven in \cite{dembin2021large}.
\begin{thm}[Lower large deviation principle for maximal flows]\label{thm:lldmf} 
Let $G$ that satisfies hypothesis \ref{hypo:G}. Let $(\Omega,\Gamma^1,\Gamma^2)$ that satisfies hypothesis \ref{hypo:omega}. There exist $\phi_{\Omega}\geq 0$ and $\lambda_{min}\geq 0$ depending on $\Omega$, $\Gamma^1$, $\Gamma^2$ and $G$ such that the sequence $(\phi_n(\Gamma^1,\Gamma^ 2,\Omega)/n^{d-1},n\in\sN)$ satisfies a large deviation principle of speed $n^{d-1}$ with the good rate function $\widetilde J_l$. ~Moreover, the map $\widetilde J_l$ is infinite on $[0,\lambda_{min}[\cup]\phi_\Omega,+\infty[$, decreasing on $]\lambda_{min},\phi_{\Omega}[$, positive on $]\lambda_{min},\phi_{\Omega}[$. Besides, for every $\lambda< \lambda_{min}$, there exists $n_0\geq 1$ such that
$$\forall n\geq n_0 \qquad\Prb\left(\frac{\phi_n(\Gamma^1,\Gamma^ 2,\Omega)}{n^{d-1}}\leq \lambda\right)=0\,.$$
\end{thm}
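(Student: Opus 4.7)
The plan is to exploit the max-flow min-cut duality to reduce the lower large deviations of $\phi_n$ to the existence of unusually cheap cuts separating $\Gamma^1_n$ from $\Gamma^2_n$. Because a cut is a $(d-1)$-dimensional object whose number of edges is of order $n^{d-1}$, cuts are the natural carriers of the speed $n^{d-1}$. I would first establish a flat cylindrical LDP and then glue local cylindrical estimates to treat the general domain $\Omega$.

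For each direction $\vv\in\sS^{d-1}$, consider the straight cylinder $\cyl(A,h)$ whose top and bottom faces are the translates of a hyperrectangle $A\subset\vv^\perp$ by $\pm h\vv/2$, and let $\tau_n(A,h,\vv)$ denote the maximal flow between these two faces. By max-flow min-cut, $\{\tau_n\leq \lambda n^{d-1}\}$ is the event that a cut of small total capacity exists. Stacking two cylinders $\cyl(A,h_1)$ and $\cyl(A,h_2)$ along $\vv$, the concatenation of two cheap cuts is a cheap cut of the combined cylinder; combined with FKG and the independence of capacities on disjoint edge sets, this yields a subadditive-type inequality for $-\log\Prb(\tau_n\leq\lambda n^{d-1})$ and hence the existence of
\[
\tau(\vv,\lambda)\;=\;-\lim_{n\to\infty}\frac{1}{n^{d-1}}\log\Prb\!\left(\tau_n([0,1]^{d-1},1,\vv)\leq \lambda n^{d-1}\right),
\]
which is convex and non-increasing in $\lambda$, vanishes above a typical flat-cylinder value $\nu(\vv)$, and is $+\infty$ below a geometric threshold $\lambda_{\min}(\vv)\geq 0$ determined by the support of $G$.

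To globalize, cover $\Gamma^1\cup\Gamma^2$ by finitely many small almost-flat patches with normals close to $\nn_\Omega$. For an admissible continuous stream $\ssigma\in\Sigma(\Gamma^1,\Gamma^2,\Omega)$, the natural local cost at a surface point $x$ is $\tau(\nn_\Omega(x),|\ssigma(x)\cdot \nn_\Omega(x)|)$. The upper bound is obtained by covering a candidate cut by cylinders in directions close to the boundary normals and applying the cylindrical LDP on each piece via a BK-type decoupling; the matching lower bound is obtained by realizing the patchwise cheap-cut events independently (using spatial independence) and gluing them through the interior of $\Omega$ at a cost that is $o(n^{d-1})$. This yields the variational representation
\[
\widetilde J_l(\lambda)\;=\;\inf\!\left\{\int_{\Gamma^1}\tau\bigl(\nn_\Omega(x),\,\ssigma(x)\cdot\nn_\Omega(x)\bigr)\,d\cH^{d-1}(x)\,:\,\ssigma\in\Sigma(\Gamma^1,\Gamma^2,\Omega),\;\flow^{cont}(\ssigma)\leq\lambda\right\},
\]
which is a good rate function by convexity and continuity of $\tau$ in $\vv$ combined with the compactness of admissible streams already used in the law of large numbers for $\amu_n^{max}$. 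Monotonicity and strict positivity of $\widetilde J_l$ on $(\lambda_{\min},\phi_\Omega)$ follow from the monotone decreasing structure of the event $\{\phi_n\leq\lambda n^{d-1}\}$ and the LLN $\phi_n/n^{d-1}\to\phi_\Omega$; with $\lambda_{\min}=\inf\{\lambda:\widetilde J_l(\lambda)<\infty\}$, the deterministic statement $\Prb(\phi_n/n^{d-1}\leq\lambda)=0$ for $\lambda<\lambda_{\min}$ is a geometric consequence of summing local geometric minima of cut values over a covering of $\Gamma^1$.

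The most delicate part is the gluing in the globalization step. Away from $\partial_\Gamma\Gamma^1\cup\partial_\Gamma\Gamma^2$, corners of $\Omega$, and singularities of an optimal $\ssigma$, the patchwise argument is clean; but near the relative boundaries and non-smooth points of $\Omega$ one must absorb mismatches in the node law via bulk detours inside $\Omega$. Showing that these detours contribute only at the surface scale $n^{d-1}$ — strictly smaller than the bulk scale $n^d$ on which Theorem~\ref{thm:pgd} operates — is the key estimate distinguishing the lower LDP from the upper LDP and explaining why the two regimes have different speeds.
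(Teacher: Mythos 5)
This statement is not proven in the paper you were given: it is Theorem~\ref{thm:lldmf}, introduced with the sentence ``... the following lower large deviation principle for maximal flows that was proven in \cite{dembin2021large}.'' The present paper only imports the result and later combines it with the upper LDP (Theorem~\ref{thm:uldmf}) to obtain the full picture. There is therefore no in-paper proof to compare your attempt against.

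That said, your outline reflects the right general philosophy for a result of this type, and the one the authors signal in Remark~1.7 (cuts, not streams, carry the speed $n^{d-1}$; cylinder estimates via subadditivity; globalize by covering and gluing). The substantive problem is the variational formula you propose for $\widetilde J_l$. A cheap-cut event is geometrically realized by a hypersurface $S$ that separates $\Gamma^1$ from $\Gamma^2$ \emph{inside} $\Omega$, and generically $S$ does not sit on $\Gamma^1$. The elementary cylindrical cost should therefore be integrated over $S$ against its own normal $\nn_S(x)$, not over $\Gamma^1$ against $\nn_\Omega$. In a cut-based formulation the stream $\ssigma$ plays no role as the unknown: the natural data are the separating surface $S$ and a capacity density $h:S\to\sR_+$ with $\int_S h\,d\cH^{d-1}\leq\lambda$, and the rate is the infimum over such $(S,h)$ of the integral over $S$ of the cylindrical elementary rate function evaluated at $(\nn_S(x),h(x))$. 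Your expression, which constrains $\flow^{cont}(\ssigma)\leq\lambda$ and integrates a local cost over $\Gamma^1$, evaluates the cost at the wrong hypersurface and omits the optimization over where the cut sits and how its capacity is distributed along it. The remark about absorbing node-law mismatches by bulk detours at cost $o(n^{d-1})$ is pointed in the right direction, but the entire bookkeeping must track cutsets rather than boundary traces of streams; otherwise the constraint set $\{\flow^{cont}(\ssigma)\leq\lambda\}$ does not see the geometry that actually produces the lower-tail event.
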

Let $\lambda_{min}\geq 0$ depending on $G$ and $\Omega$  given by theorem \ref{thm:lldmf}.
We define the following rate function:
\begin{align}\label{eq:defJu}
\widetilde{J}_u(\lambda)=\left\{\begin{array}{ll}J(\lambda) &\mbox{if } \lambda\in[\lambda_{min},\lambda_{max}[\\
\lim_{\substack{\lambda\rightarrow\lambda_{max}\\\lambda<\lambda_{max}}}J(\lambda)&\mbox{if }\lambda=\lambda_{max}\\
+\infty &\mbox{if }\lambda\in[0,\lambda_{min}[\cup]\lambda_{max},+\infty[
\end{array}\right.\,.
\end{align}
\begin{thm}[Upper large deviation principle for maximal flows]\label{thm:uldmf} 
Let $G$ that satisfies hypothesis \ref{hypo:G}. Let $(\Omega,\Gamma^1,\Gamma^2)$ that satisfies hypothesis \ref{hypo:omega}. Let $\phi_\Omega$, $\lambda_{min}$ given by theorem \ref{thm:lldmf}. The sequence $(\phi_n(\Gamma^1,\Gamma^ 2,\Omega)/n^{d-1},n\in\sN)$ satisfies a large deviation principle of speed $n^d$ with the good rate function $\widetilde J_u.$ Moreover, the map $\widetilde J_u$ is convex on $\sR_+$, infinite on $[0,\lambda_{min}[\cup]\lambda_{max},+\infty[$, $\widetilde J_u $ is null on $[\lambda_{min},\phi_\Omega]$ and strictly positive on $]\phi_\Omega,+\infty[$. 
\end{thm}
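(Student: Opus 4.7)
The strategy is to derive the theorem from the large deviation principle for streams (Theorem \ref{thm:pgd}) by a contraction-principle argument. The key observation is that the continuous flow can be rewritten as a continuous linear functional of the stream measure. Fix $\chi \in \mathcal{C}_c^\infty(\sR^d)$ equal to $1$ on an open neighbourhood of $\overline{\Gamma^2}$ and to $0$ on an open neighbourhood of $\overline{\Gamma^1}$; this is possible since $\Gamma^1$ and $\Gamma^2$ are at positive distance by Hypothesis \ref{hypo:omega}. For every $\ssigma \in \Sigma(\Gamma^1,\Gamma^2,\Omega)$, the divergence theorem combined with $\diver\ssigma=0$ inside $\Omega$ and $\ssigma\cdot\overrightarrow{n}_\Omega=0$ on $\Gamma \setminus (\Gamma^1 \cup \Gamma^2)$ yields
\[\flow^{cont}(\ssigma) \;=\; \int_{\Gamma^2} \ssigma\cdot\overrightarrow{n}_\Omega\,d\cH^{d-1} \;=\; \int_\Gamma \chi\,\ssigma\cdot\overrightarrow{n}_\Omega\,d\cH^{d-1} \;=\; \int_\Omega \nabla\chi\cdot\ssigma\,d\cL^d \;=:\; \Phi(\ssigma\cL^d),\]
and $\Phi(\nu):=\int\nabla\chi\cdot d\nu$ extends as a continuous linear functional on $(\cM(\overline{\cV_\infty(\Omega,1)})^d,\mathcal{O})$. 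A discrete summation by parts using the node law for $f_n$ produces the analogous identity $\flow_n(f_n)/n^{d-1}=\Phi(\amu_n(f_n))+r_n(f_n)$ with $|r_n(f_n)|\leq C_\chi M/n$ uniformly over $f_n \in \cS_n^M(\Gamma^1,\Gamma^2,\Omega)$.

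\textbf{Transferring the LDP.} For the upper bound on a closed $F\subset[0,+\infty)$, the event $\{\phi_n/n^{d-1}\in F\}$ forces the existence of some $f_n\in\cS_n(\Gamma^1,\Gamma^2,\Omega)$ (which a.s.\ lies in $\cS_n^M(\Gamma^1,\Gamma^2,\Omega)$ by Hypothesis \ref{hypo:G}) with $\amu_n(f_n)\in\Phi^{-1}(F^\ep)$, where $F^\ep$ is the $\ep$-enlargement of $F$. The upper bound of Theorem \ref{thm:pgd} applied to this closed set, followed by $\ep\downarrow 0$ via goodness of $\widetilde I$, yields $\limsup_n n^{-d}\log\Prb(\phi_n/n^{d-1}\in F) \leq -\inf_{\lambda\in F}\widetilde J_u(\lambda)$. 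For an open $U$, two regimes arise: if $U\cap[\lambda_{min},\phi_\Omega]\neq\emptyset$ then $\inf_U\widetilde J_u=0$ and the law of large numbers $\phi_n/n^{d-1}\to\phi_\Omega$ suffices. Otherwise $U\subset(\phi_\Omega,\lambda_{max}]$; fix $\lambda_0\in U$ nearly minimizing $\widetilde J_u$, pick $\ssigma_0\in\Sigma\cap\Sigma^M$ with $\flow^{cont}(\ssigma_0)=\lambda_0$ and $\widehat I(\ssigma_0)<J(\lambda_0)+\ep$, and write
\[\Prb(\phi_n/n^{d-1}\in U) \;\geq\; \Prb(\phi_n/n^{d-1}>\lambda_0-\delta) - \Prb(\phi_n/n^{d-1}\geq\lambda_0+\delta).\]
The first term is bounded below by $\Prb(\exists f_n: \amu_n(f_n)\in\cU)$, for $\cU$ a small open neighbourhood of $\ssigma_0\cL^d$ with $\Phi(\cU)\subset(\lambda_0-\delta,\lambda_0+\delta)$, and the lower bound of Theorem \ref{thm:pgd} gives $\geq\exp(-n^d(\widehat I(\ssigma_0)+\ep))$. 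The second term is exponentially smaller by the already-established upper bound together with strict monotonicity of the convex rate function beyond $\phi_\Omega$, yielding the desired bound after subtraction.

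\textbf{Properties and main obstacle.} Convexity of $\widetilde J_u$ reduces to that of $J$, itself a consequence of convexity of the local rate function $I$ (obtained by concatenation of near-optimal cube streams) and linearity of $\Phi$. Theorem \ref{thm:lldmf} provides $\phi_n/n^{d-1}\geq\lambda_{min}$ almost surely for $n$ large, forcing $\widetilde J_u=+\infty$ on $[0,\lambda_{min})$; symmetrically, the capacity bound $M$ together with a max-flow/min-cut argument gives $\phi_n/n^{d-1}\leq\lambda_{max}$, so $\widetilde J_u=+\infty$ on $(\lambda_{max},+\infty)$, and the left-limit definition at $\lambda_{max}$ secures lower semicontinuity. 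The convergence $\phi_n/n^{d-1}\to\phi_\Omega$ yields $\widetilde J_u=0$ on $[\lambda_{min},\phi_\Omega]$, and strict positivity on $(\phi_\Omega,\lambda_{max}]$ follows from convexity. The main obstacle will be the rigorous validity of the integration by parts for the distributional streams $\ssigma\in\Sigma^M$: the conservation law and the normal-trace condition hold only weakly, and the discrete error $r_n$ must be controlled uniformly by exploiting the node law at every vertex in the support of $\nabla\chi$, which requires choosing $\chi$ with support at positive distance from $\Gamma\setminus(\Gamma^1\cup\Gamma^2)$.
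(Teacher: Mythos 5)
Your approach is essentially parallel to the paper's but with one genuinely different technical choice and two concrete gaps.

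\textbf{Different route that works.} Your representation $\flow^{cont}(\ssigma) = \int_\Omega \overrightarrow{\nabla}\chi\cdot\ssigma\,d\cL^d$ via a cutoff function $\chi$ is a clean alternative to the paper's Proposition~\ref{prop:flowcont}, which instead approximates $\flow^{cont}$ by flows through a finite family of cylinders $\cyl(\cA_i,h)$ (referring to inequality (4.28) of \cite{CT1}). Both yield continuity of the flow functional on $\{\ssigma\cL^d:\ssigma\in\Sigma\}$ for the topology $\mathcal{O}$; yours is arguably more transparent, and the paper could have used it. The discrete analogue $\flow_n(f_n)/n^{d-1}=\Phi(\amu_n(f_n))+r_n(f_n)$ with uniform control of $r_n$ is exactly what Proposition~4.5/4.7 of \cite{CT1} supply (the quantity $\alpha_n(h,f_n,\Omega)/n^d$ appearing in the paper's Step~3 of Lemma~\ref{lem:convdiscstream}), so your claim about $r_n$ is correct but you should note it is not free. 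Note also that a naive contraction principle does not apply verbatim: $\Phi$ is only $\dis$-continuous on the uniformly-bounded measures, and $\phi_n/n^{d-1}$ is not $\Phi$ of a single random measure but a supremum over streams. The paper circumvents both issues by a finite covering of the compact set $\{\ssigma\cL^d:\ssigma\in\Sigma\cap\Sigma^M\}$ in Step~3 of Proposition~\ref{prop:prelthmuldmf}; you would have to do something equivalent to make the contraction rigorous, using the goodness of $\widetilde{I}$ and the fact that $\{\phi_n\geq\lambda n^{d-1}\}$ implies the existence of a stream with large flow.

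\textbf{Gap 1: strict positivity on $]\phi_\Omega,+\infty[$.} You claim this ``follows from convexity.'' It does not: a convex nonnegative function vanishing at $\phi_\Omega$ can vanish on a larger interval $[\phi_\Omega,\phi_\Omega+\delta]$ before becoming positive. The paper establishes strict positivity by invoking Theorem~\ref{thm:CT2} (Cerf--Th\'eret), which gives $\limsup_n n^{-d}\log\Prb(\phi_n\geq\lambda n^{d-1})<0$ for every $\lambda>\phi_\Omega$, hence $J(\lambda)>0$ for $\lambda>\phi_\Omega$; combined with $J(\phi_\Omega)=0$ and convexity this yields strict monotonicity beyond $\phi_\Omega$. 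You need this external input.

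\textbf{Gap 2: lower bound for open $U$ with $U\cap[\lambda_{min},\phi_\Omega]\neq\emptyset$.} You say ``the law of large numbers suffices.'' That is only true if $\phi_\Omega\in U$. If $U$ is, say, a small interval around some $\lambda\in]\lambda_{min},\phi_\Omega[$ not containing $\phi_\Omega$, the LLN gives $\Prb(\phi_n/n^{d-1}\in U)\to 0$ and says nothing about the speed. What you actually need is the lower large deviation principle Theorem~\ref{thm:lldmf} at speed $n^{d-1}$: since $\widetilde J_l(\lambda)<\infty$ on $]\lambda_{min},\phi_\Omega[$, one has $\Prb(\phi_n/n^{d-1}\in U)\geq\exp(-c n^{d-1})$, and dividing by $n^d$ gives the required $\liminf\geq 0$. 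This is precisely what the paper does in the first case of its lower-bound step.
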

\noindent Theorems \ref{thm:pgd} and \ref{thm:uldmf} are the main results of this article. To prove these theorems, we will need tools from the realm of large deviations and the following key theorem:
\begin{thm}\label{thm:ULDpatate} Let $G$ that satisfies hypothesis \ref{hypo:G}. Let $(\Omega,\Gamma^1,\Gamma^2)$ that satisfies hypothesis \ref{hypo:omega}. For any $\ssigma \in \Sigma(\Gamma^1,\Gamma^2,\Omega)$, we have 
\begin{align*}
-&\lim_{\ep\rightarrow 0} \limsup_{n\rightarrow \infty}\frac{1}{n^d}\log \Prb\left(\exists f_n\in\cS_n(\Gamma^1,\Gamma^2,\Omega) : \,\dis\big(\amu_n(f_n),\ssigma\cL^d\big)\leq \ep\right)\\&=-\lim_{\ep\rightarrow 0} \liminf_{n\rightarrow \infty}\frac{1}{n^d}\log \Prb\left(\exists f_n\in\cS_n(\Gamma^1,\Gamma^2,\Omega) : \,\dis\big(\amu_n(f_n),\ssigma\cL^d\big)\leq \ep\right)=\int_{\Omega}I(\ssigma(x))d\cL ^d(x)=\widehat{I}(\ssigma)\,.
\end{align*}
\end{thm}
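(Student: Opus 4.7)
The plan is to deduce the theorem from the single-cube estimate of Theorem \ref{thmbrique} by a tiling-plus-gluing argument, combined with a piecewise-constant approximation of $\ssigma$. As a first step, given $\ssigma\in\Sigma(\Gamma^1,\Gamma^2,\Omega)$ I would first regularize it and then discretize it on a grid of mesh $\delta$, obtaining a piecewise constant field $\ssigma_\delta=\sum_i s_i\vv_i\ind_{C_i}$ on small cubes $C_i$ of side $\delta$, such that $\dis(\ssigma_\delta\cL^d,\ssigma\cL^d)\to 0$ and $\sum_i\delta^d I(s_i\vv_i)\to\widehat{I}(\ssigma)$ as $\delta\to 0$. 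This last convergence relies on convexity and continuity properties of the elementary rate function $I$, which must be established beforehand from Theorem \ref{thmbrique} (using concatenation of local configurations to obtain subadditivity in $s$ and $\vv$). The cubes that meet $\partial\Omega$ have total volume $O(\delta)$ by the Lipschitz regularity of $\Omega$ (Hypothesis \ref{hypo:omega}), and since $I$ is bounded on the ball of radius $M$ fixed by Hypothesis \ref{hypo:G}, their contribution is negligible as $\delta\to 0$.

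For the upper bound, fix $\delta$ and keep only the cubes $C_i\subset\Omega$. If $f_n\in\cS_n(\Gamma^1,\Gamma^2,\Omega)$ satisfies $\dis(\amu_n(f_n),\ssigma\cL^d)\leq\ep$, then on most cubes the restriction of $\amu_n(f_n)$ is close to $s_i\vv_i\ind_{C_i}\cL^d$ in a localized version of $\dis$. After rescaling $C_i$ by $\delta n$, each such local event is exactly the event addressed by Theorem \ref{thmbrique}. The events $\{\exists f_n^{(i)}\in\cS_n(C_i):\ldots\}$ depend on disjoint families of capacities, by the remark following the definition of $\cS_n(C)$, hence are independent; the probability factorizes, and passing to the limits $n\to\infty$, then $\ep\to 0$, then $\delta\to 0$, yields $\widehat{I}(\ssigma)$.

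For the lower bound, I would build a near-optimal global stream by pasting together local ones. By Theorem \ref{thmbrique} and independence, with probability at least $\prod_i\exp(-(\delta n)^d(I(s_i\vv_i)+\eta))$ every cube $C_i$ simultaneously carries a local $f_n^{(i)}\in\cS_n(C_i)$ with $\amu_n(f_n^{(i)})$ close to $s_i\vv_i\ind_{C_i}\cL^d$. The delicate point is to glue these into a single element of $\cS_n(\Gamma^1,\Gamma^2,\Omega)$: on shared faces the traces of two neighbouring local streams may differ, breaking the node law at boundary vertices, and the assembled stream must also match the admissibility condition on $\Gamma$. I would reserve a thin buffer layer of width $\ll\delta$ around each face and solve a local max-flow problem there to absorb the divergence mismatches. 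Since after regularization the mismatch $|s_i\vv_i-s_j\vv_j|$ on adjacent cubes is small, and since with high probability the buffer contains enough capacity (this is where lower tail estimates on flows through thin slabs, akin to Theorem \ref{thm:lldmf}, enter), the rate cost of the correction is $o(1)$ per unit volume as $\delta\to 0$ and can be absorbed into $\eta$.

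The main obstacle is precisely this gluing step: one must either refine Theorem \ref{thmbrique} so as to produce local streams with prescribed (or nearly prescribed) boundary flux on each face of the cube, or couple the local near-optimal streams with an independent correction stream in a buffer so that the resulting global object is admissible. A related difficulty is the boundary condition on $\Gamma$: since $\ssigma$ already satisfies $\ssigma\cdot\overrightarrow{n}_\Omega=0$ on $\Gamma\setminus(\Gamma^1\cup\Gamma^2)$ and the relative boundaries of $\Gamma^1,\Gamma^2$ are $\cH^{d-1}$-negligible by Hypothesis \ref{hypo:omega}, the gluing in a thin shell along $\Gamma$ is structurally analogous to the interior gluing and, with care, does not contribute to the rate in the limit.
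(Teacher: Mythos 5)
Your upper-bound plan is close to the paper's: the paper tiles $\Omega$ with dyadic cubes, localizes the distance via Lemma~\ref{lem:propdis3}, rescales to the unit cube via Lemma~\ref{lem:scaling2}, uses independence plus Fatou, and invokes the Lebesgue differentiation theorem to pass from the local estimate to the pointwise rate $I(\ssigma(x))$ at $\cL^d$-a.e.\ $x$. You collapse the rescaling and differentiation steps into ``after rescaling \ldots exactly the event addressed by Theorem~\ref{thmbrique},'' which hides the work but is essentially the right route.

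For the lower bound, however, there is a genuine gap that your sketch underplays. You note the gluing problem and correctly identify the remedy of refining Theorem~\ref{thmbrique} to produce ``well-behaved'' local streams with prescribed mesoscopic boundary flux (this is exactly Lemma~\ref{lem:lemexistencefamille}--\ref{lem:toutefamille} plus corridors, and the corridor cost is absorbed because one can demand capacities close to $M$ on a shrinking corridor volume using FKG, not lower-tail estimates \`a la Theorem~\ref{thm:lldmf} as you suggest --- those estimates play no role here). But your claim that the boundary layer along $\Gamma$ is ``structurally analogous to the interior gluing'' and ``does not contribute to the rate'' is where the argument would actually fail. If you mollify $\ssigma$ directly, $\ssigma * K_p$ has \emph{nonzero} divergence in a neighborhood of $\Gamma^1\cup\Gamma^2$, because the boundary trace $\ssigma\cdot\overrightarrow{n}_\Omega$ is nonzero there. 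Then the discretized version cannot satisfy the node law at interior vertices near the source and the sink, and the assembled discrete object is simply not in $\cS_n(\Gamma^1,\Gamma^2,\Omega)$. This is the central obstruction and it forces the paper's Proposition~\ref{prop:prolsigma}: one must first extend $\ssigma$ to a stream $\ssigma'$ in an enlarged domain $\widetilde\Omega$ whose sources and sinks have been pushed a positive distance $\rho$ away from $\Gamma$, by decomposing approximating discrete streams into oriented paths (Lemma~\ref{lem:res}), discarding paths that hit a bad boundary set $\cN_p$, and prolongating the remaining paths along lattice directions transverse to $\Gamma$. Only then does $\ssigma'*K_p$ have null divergence on a neighborhood of $\Omega$ for $p$ large, and only then can one build a discretized version $\ssigma_p^{disc}$ that is genuinely admissible. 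This construction relies delicately on the hypothesis that $\Gamma$ is covered by finitely many $\sC^1$ manifolds intersecting transversally, and is not a perturbation of the interior argument; treating it as negligible is the step that would break.
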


\begin{rk}
Theorems \ref{thm:lldmf} and \ref{thm:uldmf} give the full picture of large deviations of $\phi_n(\Gamma^1,\Gamma^ 2,\Omega)$. The lower large deviations are of surface order since it is enough to decrease the capacities of the edges along a surface to obtain a lower large deviations event. The lower large deviations have been studied in the companion paper \cite{dembin2021large}. The upper large deviations are of volume order, to create an upper large deviations event, we need to increase the capacities of constant fraction of the edges. This is the reason why to study lower large deviations, it is natural to study cutsets that are $(d-1)$-dimensional objects, whereas to study the upper large deviations, we study streams that are $d$-dimensional objects. 
\end{rk}

\subsection{Background} We now present the mathematical background needed in what follows. We present two different flows in cylinders and give a rigorous definition of the limiting objects.

\subsubsection{Flows in cylinders and minimal cutsets}
Dealing with admissible streams is not so easy, but hopefully we can use an alternative definition of maximal flow which is more convenient. Here $n=1$, \textit{i.e.}, we consider the lattice $(\sZ^d,\E^d)$. Let $E\subset\E^d$ be a set of edges. We say that $E$ cuts $\Gamma^1$ from $\Gamma^2$ in $\Omega$ (or is a cutset, for short) if there is no path from $\Gamma^1_1$ to $\Gamma^2_1$ in $(\Omega_1,\E^d \setminus E)$. More precisely, let $\gamma$ be a path from  $\Gamma^1_1$ to $\Gamma^2_1$ in $\Omega_1$, we can write $\gamma$ as a finite sequence $(v_0,e_1,v_1,\dots,e_n,v_n)$ of vertices $(v_i)_{i=0,\dots,n}\in \Omega_1^{n+1}$ and edges $(e_i)_{i=1,\dots,n}\in (\E^d) ^n$ where $v_0\in  \Gamma^1_1$, $v_n\in\Gamma^2_1$ and for any $1\leq i \leq n$, $e_i=\langle v_{i-1}, v_i \rangle \in \E^d$. Then, $E$ cuts $\Gamma^1$ from $\Gamma^2$ in $\Omega$ if for any path $\gamma$ from  $\Gamma^1_1$ to $\Gamma^2_1$ in $\Omega_1$, we have $\gamma\cap E\neq \emptyset$. Note that $\gamma$ can be seen as a set of edges or a set of vertices and we define $|\gamma|=n$.
We associate with any set of edges $E$ its capacity $T(E)$ defined by $$T(E)=\sum _{e\in E} t(e)\,.$$ The max-flow min-cut theorem, see \cite{Bollobas}, a result of graph theory, states that 
$$\phi_1(\Gamma^1,\Gamma^2, \Omega)=\min\big\{\,T(E)\,:\, E \text{ cuts $\Gamma^1$ from $\Gamma^2$ in $\Omega$}\,\big\}\,.$$
We recall that $\phi_1$ was defined in \eqref{eq:defphin}.
The idea behind this theorem is quite intuitive. By the node law, the flow is always smaller than the capacity of any cutset. Conversely, consider a maximal flow through $\Omega$, some of the edges are jammed. We say that $e$ is jammed if the amount of water that flows through $e$ is equal to the capacity $t(e)$. These jammed edges form a cutset, otherwise we would be able to find a path $\gamma$ from $\mathfrak{G}_1$ to $\mathfrak{G}_2$ of non-jammed edges, and we could increase the amount of water that flows through $\gamma$ which contradicts the fact that the flow is maximal. Thus, the maximal flow is limited by the capacity of these jammed edges: the maximal flow is given by one of the  $T(E)$ where $E$ cuts $\Gamma^1$ from $\Gamma^2$ in $\Omega$. It follows that the maximal flow is equal to the minimal capacity of a cutset. 

We are interested in the maximal flow $\Phi$ that can cross a cylinder oriented according to $\vv\in\sS^{d-1}$ from its top to its bottom per second for admissible streams. A first issue is to understand if the maximal flow in the box properly renormalized converges when the size of the box grows to infinity. This boils down to understand the maximal amount of water that can flow in the direction $\vv$.
Let us first define rigorously the maximal flow from the top to the bottom of a cylinder. Let $A$ be a non-degenerate hyperrectangle, \textit{i.e.}, a rectangle of dimension $d-1$ in $\sR^d$. Let $\vv\in\sS^{d-1}$ such that $\vv$ is not contained in an hyperplane parallel to $A$. 
We denote by $\cyl(A,h,\vv)$ the cylinder of basis $A$ and of height $h>0$ in the direction $\vv$ defined by
$$\cyl(A,h,\vv)=\big\{\,x+t\vv:\,x\in A,\,t\in[0,h]\,\big\}\,.$$
If $\vv$ is one of the two unit vectors normal to $A$, we denote by $\cyl(A,h)$ 
$$\cyl(A,h)=\big\{\,x+t\vv:\,x\in A,\,t\in[-h,h]\,\big\}\,.$$
We have to define discretized versions of the bottom $B(A,h)$ and the top $T(A,h)$ of the cylinder $\cyl(A,h)$. We define them by 
$$B(A,h):= \left\{x\in\sZ^d\cap\cyl(A,h)\,:\,\begin{array}{c}
\exists y \notin \cyl(A,h),\, \langle x,y \rangle\in\E^d \\\text{ and $\langle x,y \rangle$ intersects } A-h\vv
\end{array} \right\}$$
and
$$T(A,h):= \left\{x\in\sZ^d\cap\cyl(A,h)\,:\,\begin{array}{c}
\exists y \notin \cyl(A,h),\, \langle x,y \rangle\in\E^d \\\text{ and $\langle x,y \rangle$ intersects } A+h\vv
\end{array} \right\}\,.$$

We denote by $\Phi(A,h)$ the maximal flow from the top to the bottom of the cylinder $\cyl(A,h)$ in the direction $\vv$, defined by 
$$\Phi(A,h)=\phi_1(T(A,h),B(A,h) ,\cyl(A,h,\vv))\,.$$
The maximal flow $\Phi(A,h)$ is not well suited to use ergodic subadditive theorems, because we cannot glue two cutsets from the top to the bottom of two adjacent cylinders together to build a cutset from the top to the bottom of the union of these two cylinders. Indeed, the intersection of these two cutsets with the adjacent face will very likely not coincide. 

To fix this issue, we need to introduce another maximal flow through the cylinder for which the subadditivity would be recover. Let $T'(A,h)$ (respectively $B'(A,h)$) be the a discretized version of the upper half part (resp. lower half part) of the boundary of $ \cyl(A,h)$; that is if we denote by $z$ the center of $A$:
\begin{align}\label{eq:defT(A,h)}
T'(A,h)=\big\{\,x\in\sZ^d\cap \cyl(A,h)\,:\,\overrightarrow{zx}\cdot\vv> 0 \text{ and }\exists y\notin \cyl(A,h),\,\langle x,y\rangle\in\E^d\,\big\}\,,
\end{align}
\begin{align}\label{eq:defB(A,h)}
B'(A,h)=\big\{\,x\in\sZ^d\cap \cyl(A,h)\,:\,\overrightarrow{zx}\cdot\vv<0 \text{ and }\exists y\notin \cyl(A,h),\,\langle x,y\rangle\in\E^d\,\big\}\,.
\end{align}
We denote by $\tau(A,h)$ the maximal flow from the upper half part to the lower half part of the boundary of the cylinder, \textit{i.e.}, 
$$\tau(A,h)=\phi_1(T'(A,h),B'(A,h),\cyl(A,h))\,.$$
By the max-flow min-cut theorem, the flow $\tau(A,h)$ is equal to the minimal capacity of a set of edges $E$ that cuts $T'(A,h)$ from $B'(A,h)$ inside the cylinder $\cyl(A,h)$. The intersection of $E$ with the boundary of the cylinder has to be close to the relative boundary $\partial A$ of the hyperrectangle $A$.

\subsubsection{Some mathematical tools and definitions}\label{sect:tools}
Let us first recall some mathematical definitions.
For a subset $X$ of $\sR^d$, we denote by $\overline{X}$ the closure of $X$, by $\mathring{X}$ the interior of $X$. Let $a\in\sR^d$, the set $a+X$ corresponds to the following subset of $\sR^d$
$$a+X=\{a+x:\,x\in X\}\,.$$
For $r>0$, the $r$-neighborhood $\cV_i(X,r)$ of $X$ for the distance $d_i$, that can be Euclidean distance if $i=2$ or the $L^\infty$-distance if $i=\infty$, is defined by
$$\cV_i(X,r)=\left\{\,y\in\sR^d:\,d_i(y,X)<r\,\right\}\,.$$
We denote by $B(x,r)$ the closed ball centered at $x\in\sR^d$ of radius $r>0$.
Let $\sC_b(\sR^d,\sR)$ be the set of continuous bounded functions from $\sR^d$ to $\sR$.
We denote by $\sC^k_c(A,B)$ for $A\subset \sR^p$ and $B\subset\sR^q$, the set of functions of class $\sC^k$ defined on $\sR^p$, that takes values in $B$ and whose domain is included in a compact subset of $A$. The set of functions of bounded variations in $\Omega$, denoted by $BV(\Omega)$, is the set of all functions $u\in L^1(\Omega\rightarrow\sR,\cL^d)$ such that
\[\sup\left\{\int_\Omega\diver \overrightarrow{h}d\cL^d:\, \overrightarrow{h}\in\sC_c^\infty(\Omega,\sR^d),\,\forall x\in\Omega \quad\overrightarrow{h}(x)\in B(0,1)\right\}<\infty\,.\]
Let $\nu$ be a signed-measure on $\sR^d$, we write $\nu=\nu^+-\nu^-$ for the Hahn-Jordan decomposition of the signed measures $\nu$. Then $\nu^+$ and $\nu^-$ are positive measures, respectively, the positive and negative part of $\nu$. We define the total variation $|\nu|$ of $\nu$ as $|\nu|=\nu^++\nu ^-$.

Let $x\in\sR^d$ and $\alpha>0$, we define the homothety $\pi_{x,\alpha}:\sR^d\rightarrow\sR^d$ as follows
\begin{align}\label{eq:defpixalpha}
\forall y \in\sR^d\qquad \pi_{x,\alpha}(y)=\alpha y+x\,.
\end{align}
We will need the following proposition that enables to relate the Lebesgue measure of a neighborhood of the boundary of a set $E$ with the $\cH^{d-1}$-measure of its boundary $\partial E$. 
\begin{prop}\label{prop:minkowski}
Let $E$ be a subset of $\sR^d$ such that $\partial E$ is piecewise of class $\sC^1$ and $\cH^{d-1}(\partial E)<\infty$. Then, we have
$$\lim_{r\rightarrow 0 }\frac{ \cL^d(\cV_2(\partial E,r))}{2 r}= \cH^{d-1}(\partial E)\,.$$
\end{prop}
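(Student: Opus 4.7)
The plan is to decompose $\partial E$ into a finite union of $\sC^1$ pieces plus a lower-dimensional singular set, and to treat each smooth piece via a tubular neighborhood argument; the key geometric observation is that, infinitesimally, the $r$-neighborhood of a $\sC^1$ hypersurface is a slab of thickness $2r$ whose volume is $2r$ times the surface area, up to curvature corrections that vanish with $r$.

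\textbf{Step 1: Decomposition.} Since $\partial E$ is piecewise of class $\sC^1$ and $\cH^{d-1}(\partial E)<\infty$, I write $\partial E = S \cup \bigcup_{i=1}^N M_i$, where each $M_i$ is the closure of a relatively open subset of an embedded $\sC^1$ hypersurface, and the singular set $S$ (where the pieces meet) is contained in a finite union of $\sC^1$ submanifolds of dimension at most $d-2$. In particular $\cH^{d-1}(S)=0$, so $\cH^{d-1}(\partial E)=\sum_i \cH^{d-1}(M_i)$, and there exists $C_S>0$ such that $\cL^d(\cV_2(S,r))\leq C_S r^2$ for all $r\in (0,1]$.

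\textbf{Step 2: Tubular neighborhood of a smooth piece.} Fix $i$ and a continuous choice of unit normal $\overrightarrow{n}_i$ on $M_i$. By the tubular neighborhood theorem, there exists $r_i>0$ such that the map $\Psi_i:\mathring{M}_i\times(-r_i,r_i)\rightarrow \sR^d$ defined by $\Psi_i(x,t)=x+t\overrightarrow{n}_i(x)$ is a $\sC^1$ diffeomorphism onto its image, with Jacobian $\prod_{j=1}^{d-1}(1-t\kappa_j(x))$, where $\kappa_j(x)$ are the principal curvatures; this Jacobian tends uniformly to $1$ as $t\rightarrow 0$. By the area formula, for any $r\leq r_i$,
\begin{equation*}
\cL^d\bigl(\Psi_i(\mathring{M}_i\times(-r,r))\bigr)=\int_{\mathring{M}_i}\int_{-r}^r\prod_{j=1}^{d-1}(1-t\kappa_j(x))\,dt\,d\cH^{d-1}(x)=2r\,\cH^{d-1}(M_i)+o(r).
\end{equation*}
Moreover, any point of $\cV_2(M_i,r)$ not in $\Psi_i(\mathring{M}_i\times(-r,r))$ lies in a $C r$-neighborhood of $\partial M_i\subset S$, hence contributes $O(r^2)=o(r)$ by Step 1.

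\textbf{Step 3: Conclusion.} Since $\cV_2(\partial E,r)=\cV_2(S,r)\cup\bigcup_{i=1}^N \cV_2(M_i,r)$, and the pairwise overlaps $\cV_2(M_i,r)\cap\cV_2(M_j,r)$ for $i\neq j$ are all contained in a fixed enlargement $\cV_2(S,C'r)$, combining Step 2 over $i$ and using $\cL^d(\cV_2(S,C'r))=o(r)$ yields
\begin{equation*}
\cL^d(\cV_2(\partial E,r))=2r\sum_{i=1}^N \cH^{d-1}(M_i)+o(r)=2r\,\cH^{d-1}(\partial E)+o(r).
\end{equation*}
Dividing by $2r$ and letting $r\rightarrow 0$ gives the claim.

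\textbf{Main obstacle.} The only delicate point is the bookkeeping near the singular set: one has to ensure that the overlaps between the tubular neighborhoods of adjacent smooth pieces are absorbed into $\cV_2(S,O(r))$, and that the tubular neighborhood theorem can be applied with a uniform reach $r_i$ on each (compact) piece $M_i$. Both rely only on the compactness and the $\sC^1$-regularity in the piecewise decomposition. The result is the classical case of the Steiner/Weyl tube formula for smoothly bounded sets, extended to the piecewise $\sC^1$ setting by the negligibility argument of Step 1.
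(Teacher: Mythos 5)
The paper does not actually supply a proof: it immediately cites Federer's theorem on the $(d-1)$-dimensional Minkowski content (Definition~3.2.37 and Theorem~3.2.39 in \cite{FED}), which states that the Minkowski content of a closed rectifiable set equals its Hausdorff measure. Your proposal instead attempts a self-contained argument via a tubular-neighborhood/tube-formula computation, which is a legitimate and more elementary strategy in spirit, but as written it has a genuine gap.

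The problem is that Step~2 uses two facts that require more than $\sC^1$ regularity. First, the tubular neighborhood theorem with a uniform reach $r_i>0$ on each piece $M_i$ requires at least $\sC^{1,1}$ (or positive reach): for a merely $\sC^1$ hypersurface the reach can be zero, meaning the map $\Psi_i(x,t)=x+t\overrightarrow{n}_i(x)$ fails to be injective on $\mathring{M}_i\times(-r,r)$ for every $r>0$. Second, the Jacobian formula $\prod_{j}(1-t\kappa_j(x))$ involving principal curvatures presupposes a well-defined second fundamental form, hence $\sC^2$ regularity; for $\sC^1$ the unit normal field $\overrightarrow{n}_i$ is only continuous, not differentiable, so $\Psi_i$ is not $\sC^1$, the area formula does not apply in the form you invoke, and there is no curvature correction to speak of. Your closing remark that "both rely only on the compactness and the $\sC^1$-regularity" is precisely where the argument breaks.

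The conclusion is nonetheless correct, and can be reached in the piecewise $\sC^1$ setting either by citing Federer's rectifiability-based Minkowski-content theorem (as the paper does) or by a repair of your approach that avoids tubes altogether: cover each $M_i$ by small patches on which, after rotation, $M_i$ is a graph $\{(x',f(x'))\}$ with $\nabla f$ uniformly small; compare $\cV_2$-neighborhoods with vertical slabs of half-width $r\sqrt{1+|\nabla f|^2}$; use uniform continuity of $\nabla f$ (which is all $\sC^1$ gives you) to control the error as the patch size shrinks, and then absorb the overlaps and the singular set exactly as in your Steps~1 and~3. If you want to keep the tube-formula picture literally, you would need to assume $\sC^{1,1}$ or $\sC^2$ boundary, or first approximate by smooth hypersurfaces and pass to the limit, neither of which your write-up addresses.
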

\noindent  This proposition is a consequence of the existence of the $(d-1)$-dimensional Minkowski content. We refer to Definition 3.2.37 and Theorem 3.2.39 in \cite{FED}.

Let us now define the distance $\dis$.
Let $k\in\sN$. Let $\lambda\in[1,2]$. We denote by $\Delta^k_\lambda$ the set of dyadic cubes at scale $k$ with scaling parameter $\lambda$, that is,
$$\Delta^k_\lambda =\left\{\,2^{-k}\lambda\left(\left[-\frac{1}{2},\frac{1}{2}\right[^d+x\right):\,x\in\sZ^d\right\}\,.$$
Let $\Delta^k_\lambda (\Omega)$ denote the dyadic cubes at scale $k$ that intersect $\overline{\cV_\infty(\Omega,2)}$, that is
$$\Delta^k _\lambda(\Omega)=\left\{\, Q\in\Delta^k_\lambda : \,Q\cap \overline{\cV_\infty(\Omega,2)}\neq\emptyset\,\right\}\,.$$
Let $\nu,\mu\in \cM(\overline{\cV_\infty(\Omega,1)})^d$ be vectorial measures, we set
\begin{align}\label{eq:def:dis}
\dis(\nu,\mu)=\sup_{x\in[-1,1[^d}\sup_{\lambda\in[1,2]}\sum_{k=0}^\infty\frac{1}{2^{k}}\sum_{Q\in\Delta^k_\lambda }\left\|\mu(Q+x)-\nu(Q+x)\right\|_2.\end{align}
\begin{rk}Although working with topological neighborhood is the most general setting, we chose here to work with a distance to reduce the amount of technical details. The choice of a distance is arbitrary. However, this distance satisfies some properties that are not satisfied by other more standard distances. This distance was inspired by the distance that appears in \cite{Gold}. The key property that this distance satisfies is that if for $\nu,\mu\in \cM(\overline{\cV_\infty(\Omega,1)})^d$ the distance $\dis(\nu,\mu)$ is small, then the distance restricted to some $Q\subset \Omega$ is also small. This property will be proven later. 
\end{rk}

\subsubsection{Continuous streams}\label{sect:defmaxflow}
We give here the mathematical definitions to properly define the max-flow min-cut theorem as in the paper of Nozawa \cite{Nozawa}. 
A stream in $\Omega$ is a vector field $\ssigma\in L^\infty(\Omega\rightarrow \sR^d,\cL^d)$ that satisfies 
\[\diver\ssigma=0\qquad\text{on   $\Omega$,}\]
in the distributional sense, that is, $\diver\ssigma$ is a distribution defined on $\Omega$ by
\[\forall h \in\sC^\infty _c(\Omega,\sR)\qquad \int_{\sR^d}h\diver\ssigma d\cL^d=-\int_{\sR^d}\ssigma\cdot \overrightarrow{\nabla}hd\cL^d\,.\]
Thus, a stream $\ssigma$ satisfies
\[\forall h \in\sC^\infty _c(\Omega,\sR)\qquad \int_{\sR^d}\ssigma\cdot \overrightarrow{\nabla}hd\cL^d=0\,.\]
This condition is the continuous analogue of the node law, it expresses the fact that there is no loss or gain of fluid for the stream $\ssigma$ inside $\Omega$. 

For a stream $\ssigma$ from $\Gamma^1$ to $\Gamma^2$ in $\Omega$, the fluid can enter or exit only through the source $\Gamma ^1$ and the sink $\Gamma^2$, we have to mathematically express the fact that no water escapes through $\Gamma\setminus (\Gamma ^1\cup \Gamma ^2)$. Since $\ssigma$ is defined in the distributional sense, we need to give a sense to the value of $\ssigma$ on $\Gamma$ that is a set of null $\cL^d$-measure. To do so we need to define the trace on $\Gamma$ for any $u \in BV(\Omega)$. According to Nozawa in \cite{Nozawa}, there exists a linear mapping $\gamma$ from $BV(\Omega)$ to $L^1(\Gamma\rightarrow \sR,\cH^{d-1})$, such that, for any $u\in BV(\Omega)$,
\[\lim_{r\rightarrow 0, r>0}\frac{1}{\cL^d(\Omega\cap B(x,r))}\int_{\Omega\cap B(x,r)}|u(y)-\gamma(u)(x)|d\cL^d(y)=0 \quad\text{for $\cH^{d-1}$-a.e. $x\in\Gamma$.} \]
According to Nozawa in \cite{Nozawa}, Theorem 2.3, for every $\overrightarrow{\rho}=(\rho_1,\dots,\rho_d):\Omega\rightarrow\sR^d$ such that $\rho_i\in L^\infty(\Omega\rightarrow\sR^d,\cL^d)$ for all $i=1,\dots,d$ and $\diver\overrightarrow{\rho}\in L^d(\Omega\rightarrow \sR, \cL^d)$, there exists $g\in L^\infty(\Gamma\rightarrow\sR^d,\cH^ {d-1})$ defined by
$$\forall u\in W^{1,1}(\Omega)\qquad \int_{\Gamma}g\gamma(u)\,d\cH^{d-1}=\int_{\Omega}\overrightarrow{\rho}\cdot \overrightarrow{\nabla}ud\cL^d+\int_{\Omega}u\diver\overrightarrow{\rho}d\cL^d\,.$$
The function $g$ is denoted by $\overrightarrow{\rho}\cdot\overrightarrow{n}_\Omega$.
For any stream $\ssigma$, since $\diver\ssigma=0$ $\cL^d$-a.e. on $\Omega$, we have
\[\forall u \in W^{1,1}(\Omega)\qquad\int_{\Gamma}(\ssigma\cdot \overrightarrow{n}_{\Omega})\gamma(u)\,d\cH^{d-1}=\int_{\Omega}\ssigma\cdot \overrightarrow{\nabla}ud\cL^d\,.\]
We need to impose some boundary conditions for any stream $\ssigma$ from $\Gamma^1$ to $\Gamma^2$ in $\Omega$: the water can only enters through $\Gamma^1$ , \textit{i.e.},
\[\ssigma\cdot \overrightarrow{n}_{\Omega}\leq 0\qquad\text{$\cH^{d-1}$-a.e. on $\Gamma ^1$}\]
and no water can enter or exit through $\Gamma\setminus (\Gamma^1\cup\Gamma^2)$, \textit{i.e.}
\[\ssigma\cdot \overrightarrow{n}_{\Omega}= 0\qquad\text{$\cH^{d-1}$-a.e. on $\Gamma\setminus (\Gamma^1\cup\Gamma^2)$}\,.\]
Of course, we also need to add a constraint on the local capacity, otherwise the continuous maximal flow is infinite. This local constraint is here anisotropic which means that the maximal amount of water that can spreads in a direction depends on the direction but not on the location. This local constraint is given by a function $\nu:\sR^d\rightarrow \sR_+$, that is a continuous convex function that satisfies $\nu(\vv)=\nu(-\vv)$. In the setting of \cite{CT1}, the function $\nu$ corresponds the flow constant that will be properly defined in section \ref{sect:flowconstant}. The local capacity constraint is expressed by
\[\cL^d\text{-a.e. on $\Omega$, }\quad\qquad\forall \vv\in\sS^{d-1}\qquad \ssigma\cdot\vv\leq \nu(\vv)\,.\]
To each admissible stream $\ssigma$, we associate its flow 
\[\flow ^{cont}(\ssigma)=-\int_{\Gamma^1}\ssigma\cdot \overrightarrow{n}_\Omega d\cH^{d-1}\]
which corresponds to the amount of water that enters in $\Omega$ through $\Gamma^1$ for the stream $\ssigma$ per second.
Nozawa considered the following variational problem
\begin{align}\label{eq:def:phiomega}
\phi_\Omega=\sup\left\{\flow ^{cont}(\ssigma): \begin{array}{c}\ssigma\in L^\infty(\Omega\rightarrow \sR^d,\cL^d), \,\diver\ssigma =0 \,\cL^d\text{-a.e. on $\Omega$},\\  \ssigma\cdot\vv\leq \nu(\vv)\text { for all }\vv\in\sS^{d-1}\,\cL^d\text{-a.e. on $\Omega$, }\\\ssigma\cdot \overrightarrow{n}_{\Omega}\leq 0\quad\text{$\cH^{d-1}$-a.e. on $\Gamma ^1$}
\\
\ssigma\cdot \overrightarrow{n}_{\Omega}= 0\qquad\text{$\cH^{d-1}$-a.e. on $\Gamma\setminus (\Gamma^1\cup\Gamma^2)$}
\end{array} \right\}\,.
\end{align}
Note that we can extend $\ssigma$ to $\sR^d$ by defining $\ssigma=0$ $\cL^d$-a.e. on $\Omega^c$.
We denote by $\Sigma_\nu$ the set of admissible streams solution of the variational problem, \textit{i.e.},
\begin{align}\label{def:sigmaomega}
\Sigma_\nu=\left\{\ssigma\in L^\infty(\sR^d\rightarrow \sR^d,\cL^d): \begin{array}{c}\ssigma=0\text{ $\cL^d$-a.e. on $\Omega^c$}, \,\diver\ssigma =0 \,\cL^d\text{-a.e. on $\Omega$},\\  \ssigma\cdot\vv\leq \nu(\vv)\text { for all }\vv\in\sS^{d-1}\,\cL^d\text{-a.e. on $\Omega$, }\\\ssigma\cdot \overrightarrow{n}_{\Omega}\leq 0\quad\text{$\cH^{d-1}$-a.e. on $\Gamma ^1$}
\\
\ssigma\cdot \overrightarrow{n}_{\Omega}= 0\qquad\text{$\cH^{d-1}$-a.e. on $\Gamma\setminus (\Gamma^1\cup\Gamma^2)$}\\\flow ^{cont}(\ssigma)=\phi_\Omega
\end{array} \right\}
\,.
\end{align}
Depending on the domain, the source and the sink, there might be several solutions to the continuous max-flow problem.
There is also a formulation of this continuous problem in terms of minimal cutset, but we won't present it here as we are only interested in streams. We refer to \cite{Nozawa} for more details on this formulation.
When we study law of large numbers for maximal streams, the capacity constraint comes naturally from the law of large numbers for maximal flow. Namely, a discrete stream cannot send more water that $\nu(\vv)$ in the direction $\vv$ almost surely where $\nu(\vv)$ is the flow constant defined in section \ref{sect:flowconstant}. Otherwise there exists a cylinder in the direction $\vv$ where the maximal flow properly renormalized exceeds $\nu(\vv)$, this event is very unlikely. However, when we study large deviations, we are specifically interested in these unlikely events and so the capacity constraint given by $\nu$ is not relevant anymore. Of course, if $G$ is compactly supported on $[0,M]$, the limiting streams have a capacity constraint depending on $M$, $d$ and $\vv$.
We define the set of admissible continuous streams $\Sigma(\Gamma^1,\Gamma ^2,\Omega)$ without capacity constraint as
\begin{align}\label{eq:defadmsigma}
\Sigma(\Gamma^1,\Gamma ^2,\Omega)=\left\{\ssigma\in L^\infty(\sR^d\rightarrow \sR^d,\cL^d): \begin{array}{c}\ssigma=0\text{ $\cL^d$-a.e. on $\Omega^c$}, \,\diver\ssigma =0 \,\cL^d\text{-a.e. on $\Omega$},\\  
\ssigma\cdot \overrightarrow{n}_{\Omega}= 0\qquad\text{$\cH^{d-1}$-a.e. on $\Gamma\setminus (\Gamma^1\cup\Gamma^2)$}\\\forall i\in\{1,\dots,d\}\quad |\ssigma\cdot \overrightarrow{e_i}|\leq M\quad\text{$\cL^d$-a.e. on $\Omega$}
\end{array} \right\}
\,.
\end{align}

\begin{rk}Unlike the definition of $\Sigma_\nu$, in the definition of admissible streams $\Sigma(\Gamma^1,\Gamma ^2,\Omega)$ we do not constrain the water to enter through $\Gamma^1$. Indeed, we are interested in admissible streams that are not necessarily maximal.
\end{rk}

\subsection{State of the art}

\subsubsection{Flow constant}\label{sect:flowconstant}
In 1984, Grimmett and Kesten initiated the study of maximal flows in dimension $2$ in \cite{GrimmettKesten84}.
In 1987, Kesten studied maximal flows in dimension $3$ in \cite{Kesten:flows} for straight boxes, \textit{i.e.}, in the direction $\vv=\overrightarrow{v_0}:=(0,0,1)$ and basis $A=[0,k]\times[0,l]\times \{0\}$ with $k\geq l\geq 0$. He proved the following theorem.
\begin{thm}[Kesten \cite{Kesten:flows}]Let $d=3$. Let $G$ be a distribution that admits an exponential moment and such that $G(\{0\})$ is small enough. Let $k\geq l$ and $m=m(k,l)\geq 1$.
If $m(k,l)$ goes to infinity when $k$ and $l$ go to infinity in such a way there exists $\delta\in]0,1[$, such that
\[\lim_{k,l\rightarrow \infty}\frac{1}{k^\delta}\log m(k,l)= 0\,,\]
then
\[\lim_{k,l\rightarrow \infty}\frac{\Phi\big([0,k]\times[0,l]\times\{0\},m(k,l)\big)}{k\,l}=\nu\quad\text{a.s. and in $L^1$}\,\]
where $\nu$ is a constant depending only on $d$ and $G$.
\end{thm}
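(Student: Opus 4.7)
The plan is to establish the limit first for cube-shaped cylinders by a subadditive ergodic argument, and then to extend it both to arbitrary aspect ratios and to slowly growing heights $m(k,l)$ by controlling the geometry of minimal cutsets.

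First, I would work with the variant $\tau(A,h)$ rather than $\Phi(A,h)$, since $\tau$ enjoys a clean subadditivity: a cutset achieving $\tau$ is pinned near $\partial A\times[-h,h]$, so cutsets for two adjacent cylinders may be glued along their common face. Applying a multiparameter subadditive ergodic theorem (Akcoglu--Krengel) to $\tau([0,n]^2\times\{0\},n)$ produces an almost sure limit $\nu$. Replacing $\tau$ by $\Phi$ is a boundary-layer estimate: the discrepancy is dominated by the capacity of edges in a thin neighbourhood of $\partial A\times[-h,h]$, which is $o(n^2)$ in expectation thanks to the exponential moment. Passing from squares to rectangles $[0,k]\times[0,l]$ is then standard: one sandwiches $\Phi$ between sub- and super-additive approximations obtained by tiling with nearly equal squares, both of which converge to the same $\nu$.

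The real difficulty is to accommodate heights $m(k,l)$ that may be much smaller than $k$ or $l$. For this I would show that the minimal cutset realising $\Phi$ in a tall cylinder is, with overwhelming probability, confined to a vertical slab of bounded height. The hypothesis that $G(\{0\})$ is small enters here through a Peierls-type estimate: any cutset with large vertical extent must contain many edges with positive capacity, giving it a total capacity well above $kl\,\nu$. Combined with exponential concentration of $\Phi$ around its mean --- obtained from a bounded-difference/Efron--Stein inequality justified by the exponential moment of $G$ --- this yields a deviation probability of order $\exp(-c k^\delta)$ that the cutset escapes a fixed slab. A union bound over the $m(k,l)$ possible vertical positions of such a slab remains summable exactly under the hypothesis $\log m(k,l)=o(k^\delta)$, which is why this precise growth condition appears. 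The almost sure limit for $\Phi([0,k]\times[0,l]\times\{0\},m(k,l))/(kl)$ then agrees with the tall-cube limit $\nu$, and the $L^1$ convergence follows from the same concentration estimates via uniform integrability.

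The main obstacle is this last step: quantitatively confining minimal cutsets to a slab of bounded height, and matching the resulting deviation probability with the admissible growth of $m(k,l)$. The subtle interplay between the two hypotheses --- smallness of $G(\{0\})$ to control cutset geometry and an exponential moment to get sharp concentration --- is essential and constitutes the main technical contribution of \cite{Kesten:flows}.
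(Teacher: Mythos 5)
The paper does not reproduce Kesten's proof of this statement; it cites it and explicitly characterizes the original argument as ``very technical'', based on ``giving a rigorous meaning to the notion of surface'', and as relying crucially on ``the symmetry of the straight boxes preserv[ing] the lattice''. That is a direct combinatorial surface-counting treatment of minimal cutsets in $\sZ^3$, and it is \emph{not} an application of a subadditive ergodic theorem. Your plan --- subadditivity of the anchored flow $\tau$ via a multi-parameter ergodic theorem, transfer to $\Phi$ by a boundary-layer estimate, tiling to pass from squares to rectangles, then slab-confinement plus a union bound over heights --- is essentially the later and more robust route developed by Rossignol--Th\'eret and by Zhang (and reviewed in this paper's Proposition on $\tau(nA,h(n))$, where the subadditive theorems of Krengel--Pyke and Smythe are invoked). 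So you are reproving the same statement by a different decomposition: you trade Kesten's lattice-symmetry and surface-counting for subadditive ergodicity. What your route buys is generality (all $d\ge 3$, and eventually all directions), which Kesten's symmetry argument cannot give; what Kesten's route buys is that it treats tall thin boxes directly without first building the flow constant through cube-shaped cylinders.

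Within your plan there is one genuine gap, and it sits at the step you yourself identify as the crux. To accommodate heights $m(k,l)=\exp(o(k^\delta))$ you need the minimal cutset for $\Phi$ to be confined, with failure probability at most $\exp(-ck^\delta)$, to a slab of bounded vertical extent. Your sketch proposes to get this from ``a Peierls-type estimate'' plus ``bounded-difference/Efron--Stein concentration'', but these two ingredients do not, as stated, combine to give a tail bound of the required \emph{form}. Bounded-difference/Efron--Stein gives Gaussian-scale concentration of $\Phi$ around its mean, at scale $\sqrt{kl}$, which controls deviations of order $\ep k l$ but does not by itself deliver an $\exp(-ck^\delta)$ bound for the event that the cutset wanders far vertically, nor does it explain where $G(\{0\})$ small enters quantitatively. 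What is actually needed is (i) a bound showing that a cutset of abnormally large cardinality has total capacity well above $\nu\,kl$ with probability $\exp(-c\,\text{card})$ (this is where the exponential moment plus a counting argument of Peierls type for connected surfaces enters, and where $G(\{0\})<1-p_c$ supplies the geometric rigidity of cutsets), and (ii) a union bound over the $\le m(k,l)$ admissible slab positions, which is summable precisely under $\log m(k,l)=o(k^\delta)$. You should make (i) explicit rather than appealing to off-the-shelf concentration, since (i) is exactly the technical heart of Kesten's paper that your outline currently elides.
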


\noindent The proof is very technical and tries to give a rigorous meaning to the notion of surface. Moreover, it strongly relies on the  fact that the symmetry of the straight boxes preserves the lattice, there is no hope to extend this technique to tilted cylinders. In \cite{Zhang2017}, Zhang generalized the result of Kesten for $d\geq 3$ and $G(\{0\})<1-p_c(d)$.

To be able to define the flow constant in any direction, we would like to use a subadditive ergodic theorem. Since we cannot recover a subadditive property from the maximal flow $\Phi$, we consider the flow $\tau$ instead. The simplest case to study maximal flows is still for a straight cylinder, \textit{i.e.}, when $\vv=\overrightarrow{v_0}:=(0,0,\dots,1)$ and $A=A(\overrightarrow{k},\overrightarrow{l})=\prod_{i=1}^{d-1}[k_i,l_i]\times \{0\}$ with $k_i\leq 0<l_i\in\sZ$. In this case, the family of variables $(\tau(A(\overrightarrow{k},\overrightarrow{l}),h))_{\overrightarrow{k},\overrightarrow{l}}$ is subadditive since minimal cutsets in adjacent cylinders can be glued together  along the common side of these cylinders. By applying ergodic subadditive theorems in the multi-parameter case (see Krengel and Pyke \cite{KrengelPyke} and Smythe \cite{Smythe}), we obtain the following result.

\begin{prop}\label{prop1} Let $G$ be an integrable probability measure on $[0,+\infty[$, \textit{i.e.}, $\int_0^{+\infty}xdG(x)<\infty$. Let $A=\prod_{i=1}^{d-1}[k_i,l_i]\times \{0\}$ with $k_i\leq 0<l_i\in\sZ$. Let $h:\,\sN\rightarrow \sR^+$ such that $\lim_{n\rightarrow \infty}h(n)=+\infty$. Then there exists a constant $\nu(\overrightarrow{v_0})$, that does not depend on $A$ and $h$ but depends on $G$ and $d$, such that 
$$\lim_{n\rightarrow \infty}\frac{\tau(nA,h(n))}{\cH^{d-1}(nA)}=\nu(\overrightarrow{v_0})\text{  a.s. and in $L^1$}.$$
\end{prop}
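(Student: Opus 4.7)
The plan is to apply a multi-parameter subadditive ergodic theorem (Krengel--Pyke \cite{KrengelPyke} or Smythe \cite{Smythe}) to the family $\bigl(\tau(A(\overrightarrow k,\overrightarrow l),h)\bigr)$, then argue the limit does not depend on the auxiliary height $h$ nor on the particular base $A$.

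First I would check the three inputs needed for the multi-parameter theorem in the variables $(\overrightarrow k,\overrightarrow l)$, with $h$ held fixed (say $h=h_0$). Stationarity is immediate since the capacities $(t(e))_{e\in \E^d}$ are i.i.d.\ and the law of $\tau(A,h_0)$ depends only on the dimensions of $A$. Integrability follows from hypothesis $G$ integrable and from the crude bound $\tau(A,h_0)\leq \sum_{e}t(e)$ summed over the (finite) set of edges crossing any fixed hyperplane inside $\cyl(A,h_0)$, using the max-flow min-cut theorem to dominate $\tau$ by the capacity of an arbitrary cutset (e.g.\ the hyperplane $A\times\{0\}$), which has expectation $\cH^{d-1}(A)\int x\,dG(x)$. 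The heart of the argument is subadditivity: if $A_1,A_2$ are two adjacent hyperrectangles in the hyperplane $\{x_d=0\}$ sharing a $(d-2)$-dimensional face, and $A=A_1\cup A_2$, then
\[
\tau(A,h_0)\leq \tau(A_1,h_0)+\tau(A_2,h_0).
\]
To see this, pick minimal cutsets $E_i$ separating $T'(A_i,h_0)$ from $B'(A_i,h_0)$ inside $\cyl(A_i,h_0)$; by the choice of the sets $T'$ and $B'$ in \eqref{eq:defT(A,h)}--\eqref{eq:defB(A,h)}, $E_i$ must exit through the lateral boundary of $\cyl(A_i,h_0)$ near $\partial A_i\times\{0\}$. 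The union $E_1\cup E_2$ therefore separates $T'(A,h_0)$ from $B'(A,h_0)$ inside $\cyl(A,h_0)$, and its capacity is $\tau(A_1,h_0)+\tau(A_2,h_0)$. Applying Krengel--Pyke/Smythe yields a deterministic limit $\nu(\overrightarrow{v_0},h_0)\geq 0$, a.s.\ and in $L^1$, of $\tau(nA,h_0)/\cH^{d-1}(nA)$ along integer $n$, and a standard monotone sandwich between rectangles with integer coefficients extends this to arbitrary $A=\prod_{i=1}^{d-1}[k_i,l_i]\times\{0\}$.

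The real obstacle is showing that the limit does not depend on $h$ nor on the rate at which $h(n)\to\infty$. For this I would first prove that $h\mapsto \nu(\overrightarrow{v_0},h)$ is non-increasing: any cutset realizing $\tau(nA,h)$ for height $h>h_0$ can be intersected with $\cyl(nA,h_0)$ and completed by the portion of $\partial \cyl(nA,h_0)$ lying between the two halves $T'$ and $B'$, which asymptotically has negligible area $O(n^{d-2})$ compared to $\cH^{d-1}(nA)=O(n^{d-1})$, so that $\nu(\overrightarrow{v_0},h)\leq \nu(\overrightarrow{v_0},h_0)$. In the reverse direction, enlarging $h$ only relaxes the cutset constraint, so $\tau(nA,h)\leq \tau(nA,h_0)$ trivially. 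Hence $h\mapsto \nu(\overrightarrow{v_0},h)$ is non-increasing and bounded below by $0$, so admits a limit $\nu(\overrightarrow{v_0})$ as $h\to\infty$.

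Finally, to handle a \emph{sequence} $h(n)\to\infty$, I would combine two ingredients. For the upper bound, fix $h_0$; for $n$ large enough $h(n)\geq h_0$, so $\tau(nA,h(n))\leq \tau(nA,h_0)$ and thus $\limsup_n \tau(nA,h(n))/\cH^{d-1}(nA)\leq \nu(\overrightarrow{v_0},h_0)$; letting $h_0\to\infty$ gives the limsup bound by $\nu(\overrightarrow{v_0})$. For the matching liminf, I would adapt the gluing argument: decompose the cylinder $\cyl(nA,h(n))$ into horizontal slabs of fixed height $h_0$ and use the concentration of $\tau(nA,h_0)$ around $\nu(\overrightarrow{v_0},h_0)\cH^{d-1}(nA)$ (from the $L^1$ convergence at fixed height) to obtain a lower bound for $\tau(nA,h(n))$ of the same order. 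This last step is the delicate point: one must control boundary effects between consecutive slabs, which is where the assumption $h(n)\to\infty$ (as opposed to $h$ fixed) is actually needed to absorb $o(n^{d-1})$ error terms. Once both bounds match, the a.s.\ and $L^1$ convergence to a common constant $\nu(\overrightarrow{v_0})$ independent of $A$ and $h$ follows.
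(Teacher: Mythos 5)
Your overall strategy — apply the multi-parameter subadditive ergodic theorem of Krengel--Pyke/Smythe for fixed height $h_0$, then argue independence in $h$ — is precisely the route the paper takes (the paper cites these theorems and merely asserts the $h$-independence can be proved ``quite easily'' without giving an argument). The subadditivity, stationarity and integrability checks are all fine in spirit. However, the part that the paper acknowledges is \emph{not} a consequence of the ergodic theorem, namely the $h$-independence, is exactly where your argument goes wrong.

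Your ``intersect the cutset with $\cyl(nA,h_0)$ and complete by the equatorial $O(n^{d-2})$ lateral edges'' step does not produce a cutset for $\tau(nA,h_0)$. A set $E$ realizing $\tau(nA,h)$ need not lie in $\cyl(nA,h_0)$, and $E\cap\cyl(nA,h_0)$ together with edges near $\partial(nA)\times\{0\}$ will in general miss paths in $\cyl(nA,h_0)$ that run from the top face $T(nA,h_0)$ straight down to the bottom face $B(nA,h_0)$; those paths were cut by the portion of $E$ \emph{outside} $\cyl(nA,h_0)$. To close the intersected cutset you would have to add all edges across $T(nA,h_0)$ and $B(nA,h_0)$, which costs $O(n^{d-1})$, not $O(n^{d-2})$. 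Moreover, even if that construction worked it would show $\tau(nA,h_0)\le\tau(nA,h)+o(n^{d-1})$, i.e.\ $\nu(\cdot,h)$ \emph{non-decreasing} in $h$, the opposite of what you wrote. The correct (but not trivial) monotonicity is the one you call ``trivial'': a cutset for $\tau(nA,h_0)$ remains a cutset for $\tau(nA,h)$ when $h>h_0$ (this needs the topological observation that any path from $T'(nA,h)$ to $B'(nA,h)$ must contain a sub-path in $\cyl(nA,h_0)$ running from $T'(nA,h_0)$ to $B'(nA,h_0)$, since the complement $\cyl(nA,h)\setminus\cyl(nA,h_0)$ is disconnected into an upper and a lower piece). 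That gives $\tau$ non-increasing in $h$, hence the $\limsup$ bound. But it does not give the matching $\liminf$ bound, and your slab-decomposition sketch for the $\liminf$ is not convincing: stacking slabs of height $h_0$ lower-bounds nothing, because a cutset for $\tau(nA,h(n))$ need not cut any individual slab, and there is no reason the union of slab cutsets is dominated by the global one. The standard route (used in \cite{Rossignol2010}) is instead to show that minimal cutsets for $\tau$ in the straight case are confined to a bounded neighbourhood of the equatorial hyperplane, so that $\tau(nA,h)$ stabilises once $h$ exceeds a fixed constant; that localisation result is what makes the $h$-independence work, and it is the genuine missing idea in your proposal.
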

\noindent The constant $\nu(\overrightarrow{v_0})$ is called the flow constant. In fact, the property that $\nu(\vv_0)$ does not depend on $h$ is not a consequence of ergodic subadditive theorems, but the property can be proved quite easily. Next, a natural question to ask is whether we can define a flow constant in any direction. When we consider tilted cylinders, we cannot recover perfect subadditivity because of the discretization of the boundary. Moreover, the use of ergodic subadditive theorems is not possible when the direction $\vv$ we consider is not rational.  These issues were overcome by Rossignol and Théret in \cite{Rossignol2010} where they proved the following law of large numbers.

\begin{thm}[Rossignol-Théret \cite{Rossignol2010}]
 Let $G$ be an integrable probability measure on $[0,+\infty[$ , \textit{i.e.}, $\int_0^{+\infty}xdG(x)<\infty$. For any $\vv\in\sS^{d-1}$, there exists a constant $\nu(\vv)\in[0,+\infty[$ such that for any non-degenerate hyperrectangle $A$ normal to $\vv$, for any function $h:\,\sN\rightarrow \sR^+$ such that $\lim_{n\rightarrow \infty}h(n)=+\infty$, we have
 $$\lim_{n\rightarrow \infty}\frac{\tau(nA,h(n))}{\cH^{d-1}(nA)}=\nu(\vv)\text{ in $L^1$}.$$
 If moreover the origin of the graph belongs to $A$, or if  $\int_0^{+\infty}x^{1+1/(d-1)}dG(x)<\infty$, then
  $$\lim_{n\rightarrow \infty}\frac{\tau(nA,h(n))}{\cH^{d-1}(nA)}=\nu(\vv)\text{  a.s.}.$$
  If the cylinder is flat, \textit{i.e.}, if $\lim_{n\rightarrow\infty} h(n)/n=0$, then the same convergence also holds for $\Phi(nA,h(n))$.
  Moreover, either $\nu(\vv)$ is null for all $\vv \in\sS^{d-1}$ or $\nu(\vv)>0$ for all $\vv\in\sS^{d-1}$.
\end{thm}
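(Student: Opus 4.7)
The plan is to extend Proposition \ref{prop1} from the canonical axis direction $\overrightarrow{v_0}$ to arbitrary $\vv \in \sS^{d-1}$, and along the way control the dependence on the basis $A$ and the height function $h$. I would split the argument into three pieces: concentration of $\tau$ around its mean, convergence of the mean in rational directions via a gluing-with-correction-layer argument, and passage to irrational directions via a Cauchy sequence argument.

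First, I would establish a concentration inequality: since modifying the capacity of a single edge changes $\tau(nA,h(n))$ by at most $M$ (by Hypothesis \ref{hypo:G}) and the number of edges in $\cyl(nA,h(n))$ is polynomial in $n$ and $h(n)$, a bounded-differences / Efron–Stein inequality yields $\Var(\tau(nA,h(n))) = o(\cH^{d-1}(nA)^2)$ and in fact concentration strong enough to reduce the almost-sure statement to convergence of $\E[\tau(nA,h(n))]/\cH^{d-1}(nA)$. (Under only an integrability assumption as stated in the theorem, one instead truncates the capacities and controls the truncation error; this is the technical source of the extra moment assumption $\int x^{1+1/(d-1)}dG(x)<\infty$ needed for the almost-sure convergence.) The fact that $\tau(nA,h(n)) \leq M \cdot (\text{number of edges crossing a horizontal slab}) = O(\cH^{d-1}(nA))$ ensures the right scaling.

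Second, for directions $\vv$ with rational coordinates, I would prove approximate subadditivity for $\E[\tau]$ by tiling a large hyperrectangle normal to $\vv$ by smaller translates and gluing minimal cutsets. The obstruction is that two minimal cutsets in adjacent tilted cylinders do not match along the shared face because the discretized boundaries are not compatible with the tilt. I would fix this by surrounding the shared face with a thin slab of width $O(1)$ in lattice units and saturating every edge with left endpoint in that slab: this adds a cost $O(M \cdot n^{d-2} \cdot (h(n) + \text{slab thickness}))$, which is $o(n^{d-1})$ provided $h(n)/n \to 0$ or that one glues along the flat directions only. A standard Fekete-type argument then produces a limit $\nu(\vv)$ depending only on $\vv$ and $G$. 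Independence of $\nu(\vv)$ from $h$ and from $A$ follows: any cutset in $\cyl(nA,h(n))$ truncates to a cutset in $\cyl(nA,h'(n))$ for $h'\leq h$ up to controlled boundary corrections, and rectangles of different dimensions can be compared by tiling.

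Third, for irrational $\vv$, I would pick a sequence $\vv_k \to \vv$ of rational directions and show that $(\nu(\vv_k))_k$ is Cauchy. The idea is to embed a hyperrectangle $A_k$ normal to $\vv_k$ inside a slightly larger one $A$ normal to $\vv$, add a correction layer of saturated edges around $A_k$ inside $\cyl(nA,h(n))$, and deduce that $|\E[\tau(nA,h(n))] - \E[\tau(nA_k,h(n))]| = o(\cH^{d-1}(nA))$ as soon as $\|\vv-\vv_k\|$ is small. Define $\nu(\vv) := \lim_k \nu(\vv_k)$ and check this does not depend on the approximating sequence. The equivalence of convergence of $\tau(nA,h(n))$ and $\Phi(nA,h(n))$ in the flat regime ($h(n)/n \to 0$) comes from the fact that for flat cylinders, the minimal cutset in the $\tau$-problem must stay close to $A$, hence almost coincides with a $\Phi$-cutset, and the discrepancy is carried by edges in a neighborhood of $\partial A$ of negligible $\cH^{d-1}$-measure. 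The final dichotomy $\nu \equiv 0$ vs $\nu > 0$ on $\sS^{d-1}$ follows from a comparison argument: $\nu(\vv)$ is bounded above and below by $\nu(\vv')$ up to geometric factors $\vv\cdot\vv'$, so vanishing in one direction forces vanishing in all.

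The main obstacle is the irrational-direction step: the loss of lattice symmetry means that ergodic subadditive theorems cannot be applied directly in the direction $\vv$, and one must reconstruct the convergence from scratch through the Cauchy argument, carefully quantifying how the correction-layer cost and the concentration bounds interact as $\vv_k \to \vv$.
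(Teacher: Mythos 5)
This theorem is cited from Rossignol--Th\'eret \cite{Rossignol2010} as background; the paper gives no proof of it, so there is nothing internal to compare against. Evaluating your proposal on its own merits, there are two genuine gaps.

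The first is the concentration step. You invoke bounded-differences/Efron--Stein with the bound $M$ from Hypothesis \ref{hypo:G}, but the theorem is stated under only $\int_0^\infty x\,dG(x)<\infty$, with bounded support nowhere in sight. Truncation alone does not rescue this: controlling the truncation error in the cutset capacity requires moment assumptions at least as strong as (and in fact this is precisely where) the extra hypothesis $\int x^{1+1/(d-1)}dG(x)<\infty$ enters, so the a.s.\ statement cannot be a corollary of a generic concentration inequality. Moreover even under bounded capacities, a single-edge change alters $\tau$ by up to $M$, the cylinder contains $\asymp n^{d-1}h(n)$ edges, and the resulting fluctuation scale $\sqrt{n^{d-1}h(n)}\cdot M$ is not $o(n^{d-1})$ when $h(n)$ grows at a rate $\gtrsim n^{d-1}$, which the theorem permits. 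One must use the anchoring of $\tau$-cutsets near $\partial(nA)$ to localize the dependence to $O(n^d)$ edges, and in $d=2$ the scaling is then critical rather than sub-leading. The Rossignol--Th\'eret argument does not reduce to a concentration-plus-mean step; it applies multi-parameter subadditive ergodic theorems directly (giving a.s.\ and $L^1$ convergence simultaneously in the straight/rational cases) and then compares tilted to straight cylinders quantitatively.

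The second gap is in the gluing/correction-layer argument. You saturate a slab of edges to merge cutsets of adjacent tilted cylinders and declare the cost $O(M\cdot n^{d-2}\cdot(h(n)+\text{thickness}))$ negligible, adding the caveat ``provided $h(n)/n\to 0$ or one glues along the flat directions only.'' But the theorem imposes no restriction on the growth of $h$, and without bounded support you cannot ``saturate'' at cost $O(M)$ per edge: the relevant cost is $\sum_{e\in\text{slab}}t(e)$, which by the law of large numbers is of order $\E[t(e)]\cdot|\text{slab}|$ --- the same order $n^{d-1}$ as the quantity you are estimating, not smaller. The correct mechanism is different: for straight directions, minimal $\tau$-cutsets in adjacent boxes glue exactly along the shared face with no correction, which is why the multi-parameter subadditive ergodic theorem applies there without a slab; the correction layers appear only when passing between nearby tilted directions, and there one controls the cost via the anchoring of the cutsets and a careful choice of scales, not by brute saturation. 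Your Cauchy argument for irrational directions has the right shape but inherits the same unquantified correction-cost problem. The flat-cylinder $\tau\leftrightarrow\Phi$ comparison and the dichotomy $\nu\equiv 0$ vs.\ $\nu>0$ are essentially fine at the level of ideas.
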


%Note that $\nu$ cannot be defined using $\tau$ for distributions $G$ without good integrability properties. To overcome this issue, Rossignol and Théret introduced in \cite{flowconstant} another alternative flow with subadditive property. They are able to define the flow constant $\nu$ for any direction and dimension under the hypothesis $G(\{+\infty\})<p_c(d)$. They proved the following law of large numbers for the maximal flow from the top to the bottom of flat cylinders.
%
%\begin{thm}\label{RosTer}
%For any probability measure $G$ on $[0,+\infty]$ such that $G(\{+\infty\})<p_c(d)$, for any $\vv\in\sS^{d-1}$, there exists a constant $\nu(\vv)\in [0,+\infty[$ such that for any non-degenerate hyperrectangle $A$ normal to $\vv$, for any function $h$ such that $h(n)/ \log n \rightarrow \infty$ and $h(n)/ n \rightarrow 0$ when $n$ goes to infinity, we have
%$$\lim_{n\rightarrow \infty}\frac{\Phi(nA,h(n))}{\cH^{d-1}(nA)}=\nu(\vv)\text{  a.s.}.$$
%Moreover, for every $\vv\in\sS^{d-1}$,
%$$\nu(\vv)>0 \iff G(\{0\})<1-p_c(d) \,.$$
%\end{thm}

\subsubsection{Upper large deviations for maximal flows in cylinders}
We present here some result on upper large deviations for the maximal flows $\Phi(nA,h(n))$ in cylinders and $\tau(nA,h(n))$. The theorem 4 in \cite{Theret:uppertau} states upper large deviations results for the variable $\Phi(nA,h(n))$ above the value $\nu(\vv)$. 
\begin{thm}[Théret \cite{Theret:uppertau}]\label{thm:theretuppertau} Let us consider a distribution $G$ on $\sR_+$ that admits an exponential moment. Let $\vv$ be a unit vector and $A$ be an hyperrectangle orthogonal to $\vv$, let $h:\sN\rightarrow \sR_+$ be a height function such that $\lim_{n\rightarrow \infty}h(n)=+\infty$. We have for every $\lambda>\nu(\vv)$
\[\liminf_{n\rightarrow\infty} -\frac{1}{\cH^{d-1}(nA)h(n)}\log\Prb\left(\frac{\Phi(nA,h(n))}{\cH^{d-1}(nA)}\geq \lambda\right) >0\,.\]
\end{thm}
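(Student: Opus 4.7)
The strategy is a slab decomposition combined with a surface-order concentration estimate for the variable $\tau$. Fix a parameter $t>0$ (to be chosen large later) and divide the cylinder $\cyl(nA,h(n))$ along the direction $\vv$ into $K_n = \lfloor h(n)/(2t) \rfloor$ edge-disjoint parallel sub-cylinders, each isometric to $\cyl(nA,t)$. Denote by $\tau_j$ the maximal flow from the upper half to the lower half of the boundary of the $j$-th sub-cylinder. By max-flow min-cut, a minimal cut in the $j$-th sub-cylinder is also a cutset in $\cyl(nA,h(n))$ separating its top from its bottom, so $\Phi(nA,h(n)) \leq \tau_j$ for every $j$. Since the sub-cylinders are edge-disjoint, $\tau_1,\ldots,\tau_{K_n}$ are i.i.d.\ with the law of $\tau(nA,t)$, which yields
\[\Prb\!\big(\Phi(nA,h(n)) \geq \lambda \cH^{d-1}(nA)\big) \;\leq\; \Prb\!\big(\tau(nA,t) \geq \lambda \cH^{d-1}(nA)\big)^{K_n}.\]

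The proof then reduces to the following surface-order bound on $\tau$: for every $\lambda > \nu(\vv)$ there exist $t_0$ and $c>0$ such that, for all $t \geq t_0$ and $n$ large enough,
\[\Prb\!\big(\tau(nA,t) \geq \lambda \cH^{d-1}(nA)\big) \leq \exp\!\big(-c\,\cH^{d-1}(nA)\big).\]
I would obtain this in two sub-steps. First, the law of large numbers of Rossignol--Th\'eret gives $L^1$-convergence of $\tau(nA,t)/\cH^{d-1}(nA)$ to $\nu(\vv)$, so for $t$ and $n$ large enough, $\E[\tau(nA,t)] \leq \tfrac12(\lambda + \nu(\vv))\,\cH^{d-1}(nA)$. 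Second, one concentrates $\tau$ around its mean: since $\tau$ is the minimum over cutsets $E$ of $\sum_{e \in E} t(e)$, changing a single edge capacity changes $\tau$ by at most $|t(e)-t'(e)|$, and the minimum is attained on a cut of cardinality $O(\cH^{d-1}(nA))$. Under the exponential-moment assumption on $G$, this structure lets us invoke Talagrand's convex-distance inequality (or a bounded-differences martingale argument after truncating capacities at an exponential-moment-calibrated level) to obtain, for every $\delta>0$,
\[\Prb\!\big(\tau(nA,t) - \E[\tau(nA,t)] \geq \delta\,\cH^{d-1}(nA)\big) \leq \exp\!\big(-c'_\delta\,\cH^{d-1}(nA)\big).\]

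Combining the two sub-steps and plugging the resulting bound into the slab decomposition gives
\[\Prb\!\big(\Phi(nA,h(n)) \geq \lambda \cH^{d-1}(nA)\big) \leq \exp\!\Big(-\tfrac{c}{2t}\,\cH^{d-1}(nA)\, h(n)\Big)\]
for all $n$ large, from which the strict positivity of the $\liminf$ follows at once. The main obstacle is the concentration sub-step: a naive bounded-differences bound based on the total number of edges in $\cyl(nA,t)$ (which is of volume order $\cH^{d-1}(nA)\,t\,n$) produces only a deviation rate of volume scale in the exponent, yielding a suboptimal speed. The essential point to overcome this is that only $O(\cH^{d-1}(nA))$ coordinates can enter the optimal cut, so that the effective ``variance'' of $\tau$ is of surface and not of volume order; once this sharper concentration is secured, the slab argument converts it cleanly into the desired volume-order bound for $\Phi$, in agreement with the intuition that forcing $\Phi$ to be anomalously large requires forcing each of the $\sim h(n)$ parallel cross-sections to have anomalously large min-cut capacity at a cost proportional to its area.
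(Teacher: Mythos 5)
The paper does not prove this theorem; it is a black-box citation from Th\'eret \cite{Theret:uppertau}, and the paragraph that follows it only gives a heuristic (phrased in terms of streams, whereas you argue with cutsets --- both are valid angles on the same result). Your slab decomposition is the right high-level strategy and matches Th\'eret's original argument in outline: the restriction of an admissible stream for $\Phi$ to a translated copy of $\cyl(nA,t)$ is admissible for $\tau$ in that slab (no water exits through the lateral boundary), so $\Phi(nA,h(n))\le\min_j\tau_j$, and by independence of the $K_n\approx h(n)/(2t)$ disjoint slabs, $\Prb(\Phi(nA,h(n))\ge\lambda\cH^{d-1}(nA))\le\Prb(\tau(nA,t)\ge\lambda\cH^{d-1}(nA))^{K_n}$.

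The reduction to the surface-order upper tail of $\tau(nA,t)$ for a \emph{fixed} $t$ is where the real work lives, and your sketch has two genuine gaps there. (a) The Rossignol--Th\'eret LLN you invoke is stated for heights $h(n)\to\infty$; it does not deliver $\E[\tau(nA,t)]/\cH^{d-1}(nA)\to\nu(\vv)$ for a fixed $t$, and the escape of letting $t=t_n\to\infty$ is not available, because your slab exponent is then $c/(2t_n)\to 0$. (Under only an exponential moment the deviation speed of $\tau$ is surface order uniformly in the slab height --- this is precisely what Th\'eret's companion result for $\tau$ says --- so you cannot compensate for the growing $t_n$; for bounded $G$ one could, since the speed then improves with the height, but the theorem is stated under the weaker moment assumption.) You therefore need a separate approximation/subadditivity argument showing $\limsup_n\E[\tau(nA,t)]/\cH^{d-1}(nA)\le\nu(\vv)+\delta$ for a fixed $t=t(\delta)$, which the LLN as stated does not supply and which is delicate for irrational $\vv$. (b) The concentration step requires more than a Talagrand invocation: the true minimising cut may have unboundedly many edges when $G$ charges a neighbourhood of $0$ (the theorem assumes nothing about $G(\{0\})$), so one must argue with a \emph{near}-minimising cut of controlled cardinality; and since only an exponential moment is assumed, one must truncate the capacities and control the truncation's effect on both $\E[\tau]$ and the discarded tail. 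You flag the concentration issue as the main obstacle; the fixed-height LLN gap is at least as serious and is left unaddressed.
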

Let us give an intuition of the speed of deviation. If $\Phi(nA,h(n))$ is abnormally large, there are two possible scenarios. Either there are an order $n^{d-1}$ of paths from the top to the bottom of the cylinder that use edges of slightly abnormally large capacity, or there are a fewer number of paths from the top to the bottom of the cylinder with edges whose capacities are extremely big (with a capacity that goes to infinity with $n$). Both scenarios enable to transmit more water from the top to the bottom than the expected value. Actually, when $G$ has an exponential moment, the first scenario is the most likely one. Since the paths from the top to the bottom have a cardinality of order at least $h(n)$, this implies that a positive fraction of edges inside the cylinder have a slightly abnormally large capacity. This accounts for the speed of deviation of volume order.

\begin{rk}
We insist on the fact that $\nu(\vv)$ is not in general the limit of $\Phi(nA,h(n))/\cH^{d-1}(nA)$ when $n$ goes to infinity. We can prove that the limit is equal to $\nu(\vv)$ only for straight cylinders or flat cylinders. The existence of the limit of $\Phi(nA,h(n))/\cH^{d-1}(nA)$ when $n$ goes to infinity is known when $h(n)= Cn$. The limit may be expressed as the solution of a deterministic variational problem of the same kind than $\phi_\Omega$ defined in \eqref{eq:def:phiomega}. Proving that the limit is smaller or equal than $\nu(\vv)$ is trivial. Proving the strict inequality has been done only in dimension $2$ by Rossignol and Théret in \cite{lldRT}. They proved that the limit of $\Phi(nA,h(n))/\cH^{d-1}(nA)$ is strictly smaller for tilted cylinder. We expect that this result also holds for higher dimensions but the question is still open.
\end{rk}
The corresponding large deviation principle have been obtained in the case of straight cylinders by Théret in \cite{TheretUpper}.
\begin{thm}[Théret \cite{TheretUpper}] Let $h:\sN\rightarrow \sR_+$ be a height function such that \[\lim_{n\rightarrow \infty}\frac{h(n)}{\log n}=+\infty\,.\] Set $A=[0,1]^{d-1}\times\{0\}$.
Then for every $\lambda\geq 0$, the limit
\[\psi(\lambda)=\lim_{n\rightarrow\infty}-\frac{1}{n^{d-1}h(n)}\log\Prb\left(\Phi(nA,h(n))\geq \lambda n^{d-1}\right) \]
exists and is independent of $h$. Moreover, the function $\psi$ is convex on $\sR_+$, finite and continuous on the set $\{\lambda:\,G([\lambda,+\infty[)>0\}$. If $G$ has a first moment then $\psi$ vanishes on $[0,\nu((0,\dots,0,1))]$. If $G$ has an exponential moment then $\psi$ is strictly positive on $]\nu((0,\dots,0,1)),+\infty[$, and the sequence 
$$\left(\frac{\Phi(nA,h(n))}{n^{d-1}}\right)_{n\geq 1}$$ satisfies a large deviation principle with speed $n^{d-1}h(n)$ and governed by the good rate function $\psi$.
\end{thm}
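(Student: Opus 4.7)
The plan is to exploit the very symmetric geometry of straight cylinders (with base $A=[0,1]^{d-1}\times\{0\}$) to derive a supermultiplicative inequality for the probabilities, which yields existence and convexity of $\psi$ simultaneously; then read off $h$-independence from a vertical stacking comparison, deduce the zero set from the law of large numbers (Proposition \ref{prop1}) and the positivity from Théret's estimate (Theorem \ref{thm:theretuppertau}); finally, upgrade the pointwise statement to a full large deviation principle using convexity.

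\textbf{Existence, convexity and $h$-independence.} For $k\mid n$, partition the base $nA$ into $k^{d-1}$ disjoint translates $A_1,\dots,A_{k^{d-1}}$ of $(n/k)A$. The corresponding cylinders $\cyl(A_i,h(n))$ are edge-disjoint, so the variables $\Phi_i:=\Phi(A_i,h(n))$ are independent, and since any horizontal concatenation of admissible streams in the sub-cylinders remains admissible in $\cyl(nA,h(n))$ (a property specific to the straight geometry), we get $\Phi(nA,h(n))\geq \sum_i \Phi_i$. Therefore
\[
\Prb\bigl(\Phi(nA,h(n))\geq \lambda n^{d-1}\bigr)\geq \Prb\bigl(\Phi((n/k)A,h(n))\geq \lambda (n/k)^{d-1}\bigr)^{k^{d-1}},
\]
so $a_n(\lambda):=-\log\Prb(\Phi(nA,h(n))\geq \lambda n^{d-1})$ satisfies $a_n/(n^{d-1}h(n))\leq a_{n/k}/((n/k)^{d-1}h(n))$. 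A multi-parameter Fekete lemma gives the existence of $\psi(\lambda)$ as an infimum, and a standard sandwich across non-divisors closes the argument. The same disjoint-cell decomposition, but now asking a fraction $\alpha$ of the sub-cylinders to carry flow $\geq \lambda_1 (n/k)^{d-1}$ and the remaining $(1-\alpha)$ to carry flow $\geq \lambda_2 (n/k)^{d-1}$, produces the convexity inequality $\psi(\alpha\lambda_1+(1-\alpha)\lambda_2)\leq \alpha\psi(\lambda_1)+(1-\alpha)\psi(\lambda_2)$. For $h$-independence: stacking along $\overrightarrow{v_0}$ many disjoint copies of $\cyl(nA,h_1(n))$ inside $\cyl(nA,h_2(n))$ gives, via max-flow min-cut, $\Phi(nA,h_2(n))\leq \min_j \Phi_j(nA,h_1(n))$ with independent $\Phi_j$; matching the logs in both directions (using $h(n)/\log n\to\infty$ to absorb boundary effects) shows the limit does not depend on $h$.

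\textbf{Zero set, finiteness and positivity.} The vanishing of $\psi$ on $[0,\nu(\overrightarrow{v_0})]$ is immediate from Proposition \ref{prop1}: under a first moment, $\Phi(nA,h(n))/n^{d-1}\to \nu(\overrightarrow{v_0})$, hence $\Prb(\Phi\geq \lambda n^{d-1})\to 1$ for $\lambda<\nu(\overrightarrow{v_0})$ and $-\log\Prb=o(n^{d-1}h(n))$. Finiteness on $\{\lambda:G([\lambda,\infty[)>0\}$ follows from a lower bound obtained by forcing every edge of $\cyl(nA,h(n))$ to have capacity at least $\lambda$, which has probability $G([\lambda,\infty[)^{O(n^{d-1}h(n))}$ and guarantees a flow of the right order; continuity on the interior is then automatic from convexity and finiteness on an open set. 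The strict positivity of $\psi$ above $\nu(\overrightarrow{v_0})$ under an exponential moment is the main obstacle: Theorem \ref{thm:theretuppertau} only asserts a positive $\liminf$, and one must check that the subadditive construction above in fact produces a strictly positive limit, which follows because the decomposition shows $\psi(\lambda)$ dominates $a_n(\lambda)/(n^{d-1}h(n))$ for any single $n$, and for well-chosen $n$ the latter is bounded below by the positive constant from Theorem \ref{thm:theretuppertau}.

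\textbf{From the pointwise statement to the LDP.} The upper bound on closed sets $F\subset\sR_+$ is direct: $\Prb(\Phi/n^{d-1}\in F)\leq \sum_{\lambda\in F\cap [0,M]}\Prb(\Phi\geq \lambda n^{d-1})$ on a fine finite grid together with the $G$-essential boundedness $\Phi\leq M$ (the bounded support restriction in the final LDP is handled by the hypothesis that an exponential moment exists; one uses a truncation plus Cramér-type tail control of the excess mass above any threshold). The lower bound on open sets uses that $\psi$ is convex and lower semicontinuous, so for any open $U$ and $\lambda_0\in U$ with $\psi(\lambda_0)<\infty$, a small interval around $\lambda_0$ carries enough probability by the pointwise limit, giving $\liminf -\frac{1}{n^{d-1}h(n)}\log\Prb(\Phi/n^{d-1}\in U)\leq \psi(\lambda_0)$, and taking the infimum over $\lambda_0\in U$ concludes. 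Goodness of the rate function follows from convexity, lower semicontinuity, and the fact that $\psi(\lambda)\to\infty$ as $\lambda\to\infty$ whenever $G$ has an exponential moment (again by the tail estimate of Theorem \ref{thm:theretuppertau}).
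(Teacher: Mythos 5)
This statement is not proved in the Dembin--Théret paper; it is quoted as background (Théorème de Théret, \cite{TheretUpper}), so there is no ``paper's own proof'' to compare against. What the paper does say, immediately after the statement, is that ``this result crucially depends on the symmetry of the lattice with regards to reflexion along the vertical faces of the cylinders,'' and that the proof strategy does not extend to tilted cylinders. Your proposal never invokes that reflection symmetry, and this omission matters: your partition of $nA$ into $k^{d-1}$ translates of $(n/k)A$ produces cylinders at the original lattice scale all sharing the \emph{same} height $h(n)$, so after taking logs you are comparing $a_n/(n^{d-1}h(n))$ with $-\log\Prb\bigl(\Phi((n/k)A,h(n))\geq\lambda(n/k)^{d-1}\bigr)/((n/k)^{d-1}h(n))$, not with $a_{n/k}/((n/k)^{d-1}h(n/k))$. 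This is exactly the height-mismatch that your subsequent $h$-independence argument is supposed to resolve, but you only establish it in one direction: vertical stacking of $\lfloor h_2/h_1\rfloor$ independent copies gives $\Phi(nA,h_2)\leq\min_j\Phi_j(nA,h_1)$ and hence, in the limit, $\psi_{h_2}\geq\psi_{h_1}$ when $h_2\gg h_1$. The converse inequality does not follow from either this or the trivial monotonicity $\Phi(nA,h_2)\leq\Phi(nA,h_1)$, which after dividing by $n^{d-1}h_2$ only yields the weaker $\psi_{h_2}\geq (h_1/h_2)\psi_{h_1}$. So the circularity between ``existence of the limit'' and ``$h$-independence'' is a genuine gap, not just a sandwich lemma to be filled in. In Théret's proof, the role of reflecting the configuration across a vertical face is precisely to tile the larger box exactly by law-preserving copies of the smaller box, sidestepping the remainder strips and making the subadditivity tight enough to close this loop.

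The remaining pieces of your proposal are sound in spirit: the vanishing of $\psi$ on $[0,\nu(\overrightarrow{v_0})]$ via Proposition \ref{prop1}, the finiteness bound $\psi(\lambda)\leq -d\log G([\lambda,\infty[)$ by forcing all edges to have large capacity, the strict positivity above $\nu(\overrightarrow{v_0})$ via Theorem \ref{thm:theretuppertau}, and the standard upgrade from a pointwise limit to an LDP using convexity all go through. One smaller slip: convexity alone guarantees continuity only on the \emph{interior} of the effective domain, not at its right endpoint, so the claim ``continuity is automatic from convexity'' needs a separate one-sided argument when $G$ has bounded support.
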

This result crucially depends on the symmetry of the lattice with regards to reflexion along the vertical faces of the cylinders. The proof strategy may not be extended to tilted cylinders. The upper large deviations results for $\tau$ are a bit different because the speed of deviation depends on the tail of the distribution $G$. Indeed if the edges around $\partial A$ have very large capacities it will increase the flow $\tau$ in a non negligible way. Since the minimal cutsets corresponding to $\tau(A,h)$ are anchored around $\partial A$, their capacity depends a lot on these edges. Théret proved in Theorem 3 in \cite{Theret:uppertau} upper large deviations of the variable $\tau$.

\begin{thm}[Théret \cite{Theret:uppertau}]  Let $\vv$ be a unit vector and $A$ be an hyperrectangle orthogonal to $\vv$, let $h:\sN\rightarrow \sR_+$ be a height function such that $\lim_{n\rightarrow \infty}h(n)=+\infty$. The upper large deviations of $\tau(nA,h(n))/\cH^{d-1}(nA)$ depend on the tail of the distribution of the capacities. We have
\begin{enumerate}[$(\roman*)$]
\item If the law $G$ has bounded support, then for every $\lambda>\nu(\vv)$ we have
\[\liminf_{n\rightarrow\infty} -\frac{1}{\cH^{d-1}(nA)\min(h(n),n)}\log\Prb\left(\frac{\tau(nA,h(n))}{\cH^{d-1}(nA)}\geq \lambda\right) >0\,.\]
\item If the law $G$ is exponential of parameter $1$, then there exists $n_0$ such that for every $\lambda>\nu(\vv)$ there exists a positive constant $D$ depending on $d$ and $\lambda$ such that 
\[\forall n\geq n_0\qquad \Prb\left(\frac{\tau(nA,h(n))}{\cH^{d-1}(nA)}\geq \lambda\right)\geq\exp(-D\cH^{d-1}(nA))\,.\]
\item If the law $G$ satisfies
\[\forall  \theta>0\qquad \int_{\sR_+}e^{\theta x}dG(x)<\infty\,,\]
then for every $\lambda>\nu(\vv)$ we have
\[\lim_{n\rightarrow\infty} \frac{1}{\cH^{d-1}(nA)}\log\Prb\left(\frac{\tau(nA,h(n))}{\cH^{d-1}(nA)}\geq \lambda\right)=-\infty\,.\]
\end{enumerate}
\end{thm}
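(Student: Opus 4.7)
Plan: The three cases (bounded, exponential, super-exponential) require different arguments, reflecting the sensitivity of $\tau$ to the tail of $G$.

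For part (i), the bounded-support case, I would implement a slab decomposition. Partition $\cyl(nA,h(n))$ into $N \asymp \min(h(n),n)$ disjoint horizontal slabs $C_1,\dots,C_N$ of comparable thickness, and in each slab let $\Phi_i$ denote the max-flow from its top face to its bottom face inside $C_i$, which by max-flow min-cut equals the minimum capacity of a cutset lying inside $C_i$. Any such slab cutset extends, by appending a boundary correction of $O(\cH^{d-2}(n\partial A)) = o(\cH^{d-1}(nA))$ lateral edges on $\partial(nA)$, into a cutset between $T'(nA,h(n))$ and $B'(nA,h(n))$ in the full cylinder. Hence $\tau(nA,h(n)) \le \Phi_i + o(\cH^{d-1}(nA))$, and so $\{\tau \ge \lambda \cH^{d-1}(nA)\}$ implies $\{\Phi_i \ge (\lambda - \delta)\cH^{d-1}(nA)\}$ for every slab once $n$ is large. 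Because distinct slabs have disjoint edge sets, these $N$ events are independent, and Theorem \ref{thm:theretuppertau} applied to each flat slab contributes a factor $\exp(-c \cH^{d-1}(nA))$, giving the total speed $\cH^{d-1}(nA)\min(h(n),n)$. The saturation at $n$ when $h(n)\ge n$ is forced by the geometry of $\tau$-cutsets anchored at $\partial(nA)$: the number of independent effective slabs that can be fitted is capped at order $n$.

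For part (ii), the exponential case, one needs to construct a matching lower bound. Condition a fixed-thickness tubular neighborhood of the equator $\partial(nA)$, containing $O(\cH^{d-1}(nA))$ edges, so that every edge in it has capacity at least $\lambda$. Since every cutset separating $T'$ from $B'$ in $\cyl(nA,h(n))$ must cross this neighborhood in at least $c\cH^{d-1}(nA)$ edges, the minimum-capacity cutset --- hence $\tau$ --- is at least $\lambda c\cH^{d-1}(nA)$. The conditioning has cost $\Prb(\Exp(1)\ge \lambda)^{O(\cH^{d-1}(nA))} = \exp(-D\cH^{d-1}(nA))$, which after relabelling $\lambda$ matches the stated lower bound.

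For part (iii), super-exponential integrability, the plan is to reduce to part (i) by truncation and absorb the excess. Set $\tilde t(e) = \min(t(e),K)$ and write $\tilde\tau$ for the corresponding max-flow. Part (i) applied with bound $M = K$ gives $\Prb(\tilde\tau \ge (\lambda - \delta)\cH^{d-1}(nA)) \le \exp(-c_K \cH^{d-1}(nA))$ with $c_K$ stable in $K$ for $K$ large since $\nu(\vv) < \lambda$. The defect is bounded by $\tau - \tilde\tau \le \sum_{e \in \cyl(nA,h(n))} (t(e)-K)_+$; by the hypothesis that $\int e^{\theta x}\,dG(x) < \infty$ for every $\theta > 0$, the probability that this sum exceeds $\delta \cH^{d-1}(nA)$ is smaller than $\exp(-\theta K \cH^{d-1}(nA))$ for every $\theta$. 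Letting first $\theta \to \infty$ and then $K \to \infty$ drives the rate to $-\infty$.

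The main obstacle will be part (i): making the slab reduction truly rigorous requires justifying disjointness of the edge sets carrying each $\Phi_i$ after the boundary extensions, and ensuring the per-slab correction of order $n^{d-2}$ does not accumulate across the $\min(h(n),n)$ slabs to overwhelm the leading exponent. A standard remedy is to use a slightly shrunken basis $(1-\eta)nA$ for each slab's flow problem and to accept a controlled loss $\delta(\eta)$ in the threshold $\lambda$; independence of the slab-level events then follows by construction, and the loss $\delta(\eta)$ can be made smaller than $\lambda-\nu(\vv)$ by choosing $\eta$ small.
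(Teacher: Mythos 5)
Note first that this theorem is quoted from \cite{Theret:uppertau}; the present paper states it without proof, so there is no in-paper argument to compare against, only the correctness of your sketch.

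Your sketch for part $(ii)$ has a genuine gap. A fixed-thickness tube around the $(d-2)$-dimensional set $n\partial A$ contains $O(n^{d-2})$ edges, not $O(\cH^{d-1}(nA))\asymp n^{d-1}$ as you write, and even after inflating the tube to contain $n^{d-1}$ edges, a $\tau$-cutset need not pick up $\gtrsim n^{d-1}$ of them: its trace near $n\partial A$ has only $\asymp n^{d-2}$ edges, and away from $n\partial A$ the cutset can leave the tube. A decisive sanity check: your construction uses nothing beyond $\Prb(t(e)\geq\lambda)>0$, so it would apply verbatim to a bounded law whose support reaches past $\lambda$, contradicting part $(i)$, which forces a speed $\cH^{d-1}(nA)\min(h(n),n)\gg\cH^{d-1}(nA)$ once $h(n)\to\infty$. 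What actually singles out the exponential law is that a \emph{small} number of edges can be made \emph{very large} at surface cost: force the $\asymp n^{d-2}$ edges near $n\partial A$ that carry the edge-disjoint short crossings from $T'(nA,h(n))$ to $B'(nA,h(n))$ to each have capacity at least $cn$. For $\Exp(1)$ capacities this costs $\exp(-cn\cdot n^{d-2})=\exp(-c\,\cH^{d-1}(nA))$, and since every admissible cutset must sever those $\asymp n^{d-2}$ disjoint crossings, $\tau\geq cn\cdot n^{d-2}\asymp c\,\cH^{d-1}(nA)$. For bounded laws this event is impossible and for super-exponential laws it is cheaper than any exponential rate, which is exactly what produces the trichotomy $(i)$--$(iii)$.

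There are lesser but real issues elsewhere. In part $(i)$, the lateral correction required to upgrade a cutset of the slab at vertical distance $k$ from $n\partial A$ into a $T'$--$B'$ cutset grows like $k\,n^{d-2}$, which is \emph{not} $o(\cH^{d-1}(nA))$ when $k\asymp n$; shrinking the basis as you propose does not fix this. The correct remedy is to keep only slabs at distance $O(\epsilon n)$ from the equator, absorbing the $O(\epsilon M n^{d-1})$ correction into $\lambda-\delta$, which still yields $\asymp\min(h(n),\epsilon n)$ independent slabs. In part $(iii)$ the defect inequality $\tau-\tilde\tau\leq\sum_e(t(e)-K)_+$ is fine, but the Chernoff step does not close with $\theta\to\infty$ at fixed $K$ and $n$: the log-mgf contribution is of order $(\#\text{edges})\cdot e^{-c\theta K}$, which dominates $\theta\delta\,\cH^{d-1}(nA)$ unless $K$ is allowed to grow with $n$, so the order of limits must be arranged more carefully.
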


Let us give an intuition of why the speed is $n^{d}$ when $h(n)\geq n$. Since the cutsets are anchored in $\partial (nA)$, they cannot deviate too far away from $nA$: as a result, most of the edges outside $\cyl(nA,n)$ do not have an influence on $\tau(nA,h(n))$. There is no large deviation principle for maximal flows $\tau$ or $\Phi$ in tilted cylinder.
% If we increase all the capacities of all edges in the cylinder by $\ep$, we obtain the event $\{\Phi(nA,h(n))\geq (\nu((0,\dots,0,1))+\ep)n^{d-1}$. Since the number of edges is of order $n^{d-1}h(n)$, we obtain that 
%\[\Prb \left(\Phi(nA,h(n))\geq (\nu((0,\dots,0,1))+\ep)n^{d-1}\right)\geq C^{n^{d-1}h(n)}\]
%for some constant $C\in]0,1[$.
%Conversely, to increase the flow by $\ep n ^{d-1}$ we have two possibles scenarios. We increase a little the capacities of the edges along $C'n^{d-1}$ paths from the top to the bottom of the cylinder. Or we need increase a lot the capacities (something that goes to infinity with $n$) for a number of paths Since the path are of length at most $h(n)$ this will cost 
\subsubsection{Law of large numbers for the maximal stream in a domain}
We work here with the lattice $(\sZ_n^d,\E_n^d)$.
In \cite{CT1}, Cerf and Théret proved a law of large number for the maximal flow and the maximal stream. The maximal stream converges in some sense towards the solution of the continuous max-flow problem $\phi_\Omega$. We recall that the continuous max-flow problem was presented in section \ref{sect:defmaxflow} and here the local capacity constraint corresponds to the flow constant $\nu$.
\begin{thm}\label{thm:CerfTheret}[Cerf-Théret \cite{CT1}] Let $G$ that satisfies hypothesis \ref{hypo:G} and such that $G(\{0\})<1-p_c(d)$ (to ensure that $\nu$ is a norm). Let $(\Omega,\Gamma^1,\Gamma^2)$ that satisfies hypothesis \ref{hypo:omega}. We have that the sequence $(\amu_n^{max})_{n\geq 1}$ converges weakly a.s. towards the set $\Sigma_{\nu}$ (defined in \eqref{def:sigmaomega}), that is,
\[a.s.,\,\forall f\in\sC_b(\sR^d,\sR)\qquad \lim_{n\rightarrow\infty}\inf_{\ssigma\in\Sigma_{\nu}}\left\|\int_{\sR^d}fd\amu_n^{max}-\int_{\sR^d}f\ssigma d\cL^d\right\|_2=0\,.\]
Moreover, we have
\[\lim_{n\rightarrow\infty}\frac{\phi_n(\Gamma^1,\Gamma^2, \Omega)}{n^{d-1}}=\phi_\Omega\,.\]
\end{thm}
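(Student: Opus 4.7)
The plan is to establish the two claims in tandem by combining a tightness argument with matching upper and lower bounds on $\phi_n(\Gamma^1,\Gamma^2,\Omega)/n^{d-1}$. First I would observe that hypothesis \ref{hypo:G} forces $\|f_n^{max}(e)\|_2\le M$ for every edge, so $|\amu_n^{max}|(\sR^d)$ is uniformly bounded; by Banach--Alaoglu any subsequence admits a further subsequence converging in the weak-$*$ sense to a vector measure of the form $\ssigma\cL^d$ with $\ssigma\in L^\infty(\sR^d\to\sR^d,\cL^d)$ supported in $\overline{\Omega}$. I would then pass the discrete node law and the boundary constraints to the limit against test functions in $\sC_c^\infty(\Omega,\sR)$ and $\sC_c^\infty(\sR^d,\sR)$ to obtain $\diver\ssigma=0$ on $\Omega$ and $\ssigma\cdot\overrightarrow{n}_\Omega=0$ on $\Gamma\setminus(\Gamma^1\cup\Gamma^2)$ in the trace sense introduced by Nozawa. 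The anisotropic capacity constraint $\ssigma\cdot\vv\le \nu(\vv)$ at $\cL^d$-a.e. point would come from applying the Rossignol--Théret law of large numbers for $\tau$ to a small cylinder centered at a Lebesgue point of $\ssigma$: the integral of $\ssigma\cdot\vv$ over a thin slab normal to $\vv$ is dominated by the $\tau$-flow through that slab, which after renormalization converges to $\nu(\vv)$.

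For the upper bound $\limsup \phi_n/n^{d-1}\le \phi_\Omega$, I would invoke the max-flow min-cut duality on the discrete lattice and compare with the continuous min-cut problem dual to $\phi_\Omega$ treated by Nozawa. Given a continuous cutting surface $S$ realizing $\phi_\Omega$, I would tile a thin neighborhood of $S$ by small hyperrectangles $A_i$ normal to the local unit vector $\vv_i$ and, inside each associated cylinder, take a discrete minimal cutset realizing $\tau(nA_i,h_n)$; concatenating these cutsets produces a discrete cutset of $\Gamma^1_n$ from $\Gamma^2_n$ in $\Omega_n$ whose capacity is, by Rossignol--Théret, close to $n^{d-1}\sum_i \nu(\vv_i)\cH^{d-1}(A_i)$, itself a Riemann-type approximation of $\int_S \nu(\overrightarrow{n}_S)\,d\cH^{d-1}$, the continuous min-cut value.

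For the lower bound $\liminf \phi_n/n^{d-1}\ge \phi_\Omega$, I would approximate an optimal continuous stream $\ssigma\in\Sigma_\nu$ by a smooth one via mollification and then discretize it: cover $\Omega$ by a grid of cubes of side $\delta$ in which $\ssigma$ is almost constant equal to $s\vv$ with $s<\nu(\vv)$, and inside each such cube build a discrete stream of magnitude close to $s$ in direction $\vv$ using the $\tau$-flow construction. The discrete pieces would then be glued across common faces by inserting a thin correcting layer that absorbs the mismatch in direction and magnitude between adjacent cubes. The resulting admissible discrete stream realizes flow at least $(\phi_\Omega-o(1))n^{d-1}$. The main technical obstacle is precisely this gluing step: maintaining both the capacity constraint and the node law along the internal faces while matching prescribed rates on either side forces a surface-order cost, and showing that this cost is negligible compared to $n^{d-1}$ requires delicate local flow-balancing as well as a careful analysis of the boundary behavior of $\ssigma$ near $\partial\Omega$, using hypothesis \ref{hypo:omega} to ensure that the regions near $\partial_\Gamma\Gamma^i$ have negligible $\cH^{d-1}$-measure. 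Once the two bounds match in expectation, the almost-sure convergence of $\phi_n/n^{d-1}$ follows from a bounded-difference concentration estimate (Efron--Stein or McDiarmid, legitimate since capacities are bounded by $M$) combined with Borel--Cantelli, and any weak limit of $\amu_n^{max}$ then lies in the set of admissible streams realizing flow $\phi_\Omega$, which is exactly $\Sigma_\nu$.
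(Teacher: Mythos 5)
This is a result that the present paper cites directly from Cerf--Théret \cite{CT1} and does not reprove; it only outlines the strategy after the statement. So the relevant comparison is against that outlined strategy (and the companion papers \cite{CT4,CT3,CT2} it relies on). Your first two steps --- uniform total-variation bound and weak-$*$ subsequential limits, then passing the node law, boundary condition, and the anisotropic capacity constraint $\ssigma\cdot\vv\le\nu(\vv)$ to the limit via the Rossignol--Théret law of large numbers in thin cylinders --- coincide with what \cite{CT1} does, and are essentially the content of Lemma \ref{lem:convdiscstream} together with the argument in \cite{CT1} Proposition 4.2. Beyond that, however, your plan diverges in two respects worth flagging.

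First, for the identity $\flow^{cont}(\ssigma)=\phi_\Omega$ you propose matching upper and lower bounds on $\phi_n/n^{d-1}$ directly, with a cutset tiling for the upper bound and a discretize-and-glue stream construction for the lower bound. \cite{CT1} does not proceed this way: it shows that the discrete \emph{maximal} flow equals $\flow^{cont}(\ssigma)$ in the limit by studying the discrete minimal cutsets associated with $f_n^{max}$ and relating them to a continuous min-cut, while the convergence $\phi_n/n^{d-1}\to\phi_\Omega$ itself is imported from \cite{CT4,CT3,CT2} (with $\phi_\Omega$ expressed there as a continuous min-cut value rather than a max-stream value). The gluing construction you sketch for the lower bound --- covering $\Omega$ by small cubes, building nearly constant streams inside, and inserting correcting layers to match fluxes across interfaces while respecting both the node law and the capacity constraint --- is precisely the new machinery that this paper has to develop (Lemmas \ref{lem:mixing}--\ref{lem:mixprecis}, \ref{lem:preconv}, and Proposition \ref{prop:step2uld}). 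It is not an off-the-shelf tool; \cite{CT1} avoids it. So while your route is not \emph{wrong}, presenting it as ``the'' proof of the cited theorem conceals that you would be replicating the heaviest technical work of the present paper rather than invoking known facts.

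Second, and more seriously, the concentration step is a gap. McDiarmid with bounded differences $M$ over $\asymp n^d$ edges gives $\Prb\bigl(|\phi_n-\E\phi_n|>\ep n^{d-1}\bigr)\le 2\exp\bigl(-c\,\ep^2 n^{d-2}\bigr)$, which is not summable when $d=2$; Efron--Stein improves the variance to $O(n^{d-1})$ but only yields Chebyshev rates, and $\phi_n$ has no obvious monotonicity in $n$ to interpolate between a sparse subsequence and the full sequence. The theorem is asserted for all $d\ge 2$, so you would either need a genuinely sharper concentration input for domain maximal flows, or (as \cite{CT1} actually does) derive a.s.~convergence from the a.s.~cylinder results plus a deterministic covering/diagonalization argument rather than from a probabilistic concentration-plus-Borel--Cantelli scheme.
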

The strategy of their proof is to first prove that the measure $\amu_n^{max}$ converges towards $\ssigma\cL^d$ where $\ssigma$ is an admissible continuous stream and that
\[\lim_{n\rightarrow\infty}\frac{\phi_n(\Gamma^1,\Gamma^2, \Omega)}{n^{d-1}}=\flow^{cont}(\ssigma)\,.\]
 These properties come from the fact that the continuous stream $\ssigma$ inherits the properties of the discrete stream $f_n ^{max}$. In particular, the local capacity constraint comes from the fact that the maximal flow in a cylinder in a direction $\vv$ properly renormalized converges towards $\nu(\vv)$ when the dimension of the cylinders goes to infinity. This implies that almost surely the stream $f_n ^{max}$ cannot send more water than $\nu(\vv)$ in the direction $\vv$. The remaining part is to prove that $\ssigma\in\Sigma_{\nu}$, \textit{i.e.}, that $\flow^{cont}(\ssigma)=\phi_\Omega$. To prove it, they need to study discrete minimal cutsets associated with $f_n^{max}$ and their continuous counterpart.
The originality of this paper is the use of new techniques by working with maximal streams instead of minimal cutsets. This object is more natural than cutsets to study upper large deviations since the upper large deviations are of volume order, whereas cutsets are $(d-1)$-dimensional objects.

Actually, the convergence of $\phi_n(\Gamma^1,\Gamma^2, \Omega)/n^{d-1}$ towards $\phi_\Omega$ when $n$ goes to infnity was already known as a consequence of the companion papers of Cerf and Théret \cite{CT4}, \cite{CT3} and \cite{CT2} with an alternative definition for $\phi_\Omega$. Instead, of expressing $\phi_\Omega$ as the solution of a variational problem for maximal stream, they expressed it as the solution $\widetilde{\phi}_\Omega$ of a variational problem for minimal continuous cutsets.
In \cite{CT2}, Cerf and Théret proved using upper large deviations result in cylinders (theorem \ref{thm:theretuppertau}) that the large deviations of $\phi_n$ is of volume order.
\begin{thm}[Cerf-Théret \cite{CT2}]\label{thm:CT2}If $d(\Gamma^1,\Gamma^2)>0$ and if the law $G$ admits an exponential moment, then there exists a constant $\widetilde{\phi}_\Omega$ such that for all $\lambda>\widetilde{\phi}_\Omega$,
$$\limsup_{n\rightarrow\infty}\frac{1}{n^d}\log\Prb\left(\phi_n(\Gamma^1,\Gamma^2, \Omega)\geq \lambda n^{d-1}\right)<0\,.$$
\end{thm}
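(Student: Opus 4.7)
The plan is to upper bound $\phi_n(\Gamma^1,\Gamma^2,\Omega)$ by exhibiting an explicit discrete cutset separating $\Gamma_n^1$ from $\Gamma_n^2$, and then to control its total capacity using the volume-order upper large deviations of $\Phi$ in tilted cylinders given by Theorem~\ref{thm:theretuppertau}. First I would introduce the constant
\[
\widetilde\phi_\Omega=\inf\Bigl\{\int_F \nu(\overrightarrow{n}_F)\,d\cH^{d-1}\Bigr\},
\]
the infimum running over rectifiable hypersurfaces $F\subset\overline\Omega$ separating $\Gamma^1$ from $\Gamma^2$; the continuous max-flow/min-cut duality of \cite{Nozawa} identifies this quantity with $\phi_\Omega$. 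Fix $\lambda>\widetilde\phi_\Omega$, choose $\eta>0$ with $\widetilde\phi_\Omega+3\eta<\lambda$, and approximate a near-optimal continuous cutset by a finite polyhedral surface $S\subset\Omega$ with $\int_S\nu(\overrightarrow{n}_S)\,d\cH^{d-1}<\widetilde\phi_\Omega+\eta$, chosen at positive distance from $\Gamma^1\cup\Gamma^2$ thanks to the hypothesis $d(\Gamma^1,\Gamma^2)>0$ and the regularity of $\partial\Omega$.

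Next I would cover $S$ by finitely many pairwise disjoint hyperrectangles $A_1,\dots,A_N$ of small diameter $r>0$ with unit normals $\vv_1,\dots,\vv_N$ such that $\sum_i \nu(\vv_i)\,\cH^{d-1}(A_i)<\widetilde\phi_\Omega+2\eta$. Around each $A_i$ I erect a short tilted cylinder $\cyl(A_i,h,\vv_i)$ of macroscopic thickness $h>0$ with $h\ll r$, so that the cylinders are pairwise disjoint and disjoint from $\Gamma^1\cup\Gamma^2$. Inside each discrete cylinder $\cyl(nA_i,nh,\vv_i)$, the max-flow/min-cut theorem furnishes a cutset $E_i$ from top to bottom of capacity $\Phi(nA_i,nh)$. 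Concatenating the $E_i$ with a thin deterministic \emph{skirt} of edges, laid along the seams $\bigcup_i\partial A_i$ and through a short bridging corridor joining $S$ to $\partial\Omega\setminus(\Gamma^1\cup\Gamma^2)$, yields a genuine cutset separating $\Gamma_n^1$ from $\Gamma_n^2$ in $\Omega_n$. By Proposition~\ref{prop:minkowski} and by taking $h/r$ small, the skirt carries at most $\epsilon n^{d-1}$ edges with $\epsilon$ as small as wished, so under Hypothesis~\ref{hypo:G} its total capacity is deterministically at most $M\epsilon n^{d-1}<\eta n^{d-1}$.

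A union bound then gives, for a suitable $\delta>0$,
\[
\Prb\bigl(\phi_n(\Gamma^1,\Gamma^2,\Omega)\ge\lambda n^{d-1}\bigr)\le\sum_{i=1}^N\Prb\bigl(\Phi(nA_i,nh)\ge(\nu(\vv_i)+\delta)\,\cH^{d-1}(nA_i)\bigr),
\]
and Theorem~\ref{thm:theretuppertau} applied with height function $h(n)=nh\to\infty$ bounds each summand by $\exp(-c_i\, h\, n^{d})$, so that $\limsup_n n^{-d}\log\Prb(\phi_n\ge\lambda n^{d-1})\le-h\min_i c_i<0$, as required.

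The \textbf{main technical obstacle} is the geometric gluing step: one has to design the covering, the cylinders, the skirt and the bridging corridor so that (i) the local cutsets $E_i$ together with the skirt actually assemble into a single surface disconnecting $\Gamma_n^1$ from $\Gamma_n^2$ in $\Omega_n$, (ii) the total weighted area $\sum_i\nu(\vv_i)\cH^{d-1}(A_i)$ stays below $\widetilde\phi_\Omega+2\eta$ and the associated $\delta$ can be chosen uniformly, and (iii) the skirt really has only $\epsilon n^{d-1}$ edges with $\epsilon$ small. This is where the hypothesis $d(\Gamma^1,\Gamma^2)>0$ and the regularity of $\partial\Omega$ (yielding enough room to push $S$ away from $\Gamma^1\cup\Gamma^2$ and to fit the cylinders) become essential; should one wish to relax Hypothesis~\ref{hypo:G} to a mere exponential moment on $G$, the deterministic control on the skirt would have to be replaced by a Cramér bound, which in turn would force a more delicate choice of the geometric parameters $h$, $r$ and $\epsilon$.
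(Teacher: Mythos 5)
This theorem is cited from \cite{CT2} and the paper under review does not reprove it; the only information it gives about the proof is that it ``uses upper large deviations result in cylinders (theorem \ref{thm:theretuppertau})'' and that $\widetilde\phi_\Omega$ is the value of a variational problem for minimal continuous cutsets. Your proposal is faithful to exactly that strategy: define $\widetilde\phi_\Omega$ via the anisotropic min-cut functional $\int_F\nu(\nn_F)\,d\cH^{d-1}$, approximate a near-optimal cutset by a polyhedral surface, tile it by small hyperrectangles supporting thin tilted cylinders, invoke Theorem~\ref{thm:theretuppertau} for the $\Phi$-capacities in those cylinders at speed $n^d$, and absorb the boundary effects into a skirt of $\epsilon n^{d-1}$ edges. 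The surface-area bookkeeping you do for the skirt is correct (seams have $(d-2)$-measure $\sim 1/r$, extruded to height $h$ the skirt has $\sim(h/r)n^{d-1}$ edges). The one caveat worth emphasizing, and which you yourself flag, is that as written the skirt bound uses boundedness of $G$ rather than the exponential-moment hypothesis under which the cited theorem is stated; under only an exponential moment one must replace the deterministic bound by a Cram\'er-type estimate on the skirt capacity (still at speed $n^d$ after choosing $h/r$ small), and this together with the gluing of the local cutsets $E_i$ across cylinder faces is precisely where the technical work in \cite{CT2} lies. Since the paper does not reproduce that argument, I cannot compare your gluing step line-by-line, but the overall structure you propose is the one the paper attributes to \cite{CT2}.
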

\noindent Their strategy does not able to prove the existence of the limit of $\log\Prb\left(\phi_n(\Gamma^1,\Gamma^2, \Omega)\geq \lambda n^{d-1}\right)/n^d$ when $n$ goes to infinity.
%To obtain upper large deviation, that is a maximal flow that is larger than the expected values a constant proportion of the edges are concerned (volume order). To be able transmit more water, we need to increase the capacities of edges along paths from the sources to the sinks. Whereas, the lower large deviations, that is the probability that the maximal flow is lower than its expected value is of surface order. To decrease the maximal flow, it is enough to decrease the capacities along a cutset. For these reasons, to study upper large deviations, it is more natural to work with streams whereas to study lower large deviations it is more natural to work with minimal surfaces.

\subsubsection{Upper large deviation principle for the first passage percolation random pseudo-metric}
We consider here the lattice $\sZ^d$. There exists another interpretation of the model of first passage percolation which has been much more studied. In this interpretation we say that the random variable $t(e)$ represents a passage time, \textit{i.e.}, the time needed to cross the edge $e$. We can define a random pseudo-metric $T$ on the graph: for any pair of vertices $x$, $y\in\sZ^d$, the random variable $T(x,y)$ is the shortest time to go from $x$ to $y$, that is,
\[T(x,y)=\inf\left\{\sum_{e\in\gamma} t(e) :\,\gamma \text{ path from $x$ to $y$}\right\}\,.\]

A natural question is to understand how this random pseudo-metric behaves. In particular, what is the asymptotic behavior of the quantity $T(0,nx)$ when $n$ goes to infinity ?  Under some assumptions on the distribution $G$, one can prove that asymptotically when $n$ is large, the random variable $T(0,nx)$ behaves like $n\, \mu(x)$ where $\mu(x)$ is a deterministic constant depending only on the distribution $G$ and the point $x$, \textit{i.e.},
\[\lim_{n\rightarrow\infty}\frac{T(0,nx)}{n}=\mu(x)\qquad\text{almost surely and in $L^1$}\] when this limit exists. This constant $\mu$ is the so-called time constant. This implies the existence of a limiting metric $D$ such that 
\[\forall x,y\in\sR^d\qquad D(x,y)=\mu(y-x)\,.\] This metric approximates well $T(x,y)$ when $\|x-y\|_2$ is large. We refer to \cite{Kesten:StFlour} and \cite{MR3729447} for reviews on the subject.

For $d=2$, let $\overrightarrow{e_1}=(1,0)$. In \cite{basu2017upper}, Basu, Ganguly and Sly study the decay of the probability of the upper large deviations event $\{T(0,n\overrightarrow{e_1})\geq (\mu(\overrightarrow{e_1})+\ep)n\}$. They prove the following result:
\begin{thm}Let $d=2$. Let $b>0$, let $G$ be a probability distribution with support $[0,b]$ and a continuous density. Then for $\ep\in]0,b-\mu(\overrightarrow{e_1})[$ there exists $r\in]0,+\infty[$ depending on $\ep$ and $G$ such that
\[\lim_{n\rightarrow \infty} - \frac{\log \Prb( T(0,n\overrightarrow{e_1})\geq (\mu(\overrightarrow{e_1})+\ep)n)}{n^2}=r\,.\]
\end{thm}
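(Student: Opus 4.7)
The plan is to write $p_n = \Prb(T(0, n\overrightarrow{e_1}) \geq (\mu(\overrightarrow{e_1})+\ep) n)$ and $\alpha_n = -n^{-2}\log p_n$, and establish separately the three statements (A) $\limsup_n \alpha_n < \infty$, (B) $\liminf_n \alpha_n > 0$, and (C) existence of the common limit. Note that the hypotheses force $G(\{0\}) = 0 < p_c(2) = 1/2$ and hence $\mu(\overrightarrow{e_1}) > 0$, so the classical shape theorem and its concentration counterparts are available as black boxes throughout.

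For (A), I would force an explicit atypical environment on a wide tube. Continuity of the density of $G$ together with $\mu(\overrightarrow{e_1})+\ep < b$ yields an interval $[t_1, t_2]$ with $\mu(\overrightarrow{e_1})+\ep < t_1 < t_2 < b$ and $q := G([t_1,t_2]) > 0$. Fix a large constant $K$, set $R_n = [-n, 2n] \times [-Kn, Kn]$, and let $A_n$ be the event that every edge of $\E^d$ lying in $R_n$ has passage time in $[t_1, t_2]$; then $\Prb(A_n) \geq q^{|R_n \cap \E^d|} \geq \exp(-C K n^2)$. On $A_n$, any path from $0$ to $n\overrightarrow{e_1}$ staying inside $R_n$ has graph-length at least $n$ and hence passage time at least $t_1 n \geq (\mu(\overrightarrow{e_1})+\ep) n$. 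For paths escaping $R_n$, the graph-length exceeds $(2K+1)n$, and an auxiliary shape-theorem large-deviation estimate outside $R_n$ (where the environment keeps its original law and is independent of $A_n$) ensures, for $K$ large enough, that with probability $\geq 1/2$ every such escaping path also has passage time exceeding $(\mu(\overrightarrow{e_1})+\ep) n$. Combining gives $p_n \geq \frac{1}{2}\exp(-C K n^2)$, so $\limsup_n \alpha_n \leq CK < \infty$.

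For (B), the idea is that the upper large deviation event forces many simultaneous atypical passage times along parallel segments. Set $L = \ep/(4b)$ and consider the shifted passage times $T_i := T(i\overrightarrow{e_2},\, n\overrightarrow{e_1} + i\overrightarrow{e_2})$ for $|i| \leq Ln$. If any such $T_i$ were strictly below $(\mu(\overrightarrow{e_1})+\ep/2) n$, the realising geodesic, connected to the endpoints $0$ and $n\overrightarrow{e_1}$ by two vertical segments of total graph-length $2|i| \leq 2Ln$ and hence total passage time at most $2Lnb = \ep n / 2$, would produce a path from $0$ to $n\overrightarrow{e_1}$ of passage time strictly below $(\mu(\overrightarrow{e_1})+\ep)n$, contradicting the event. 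Hence on the event of interest all $\Theta(n)$ variables $T_i$ are simultaneously slow. The $T_i$ share edges so FKG or BK do not apply directly; the key planar lemma (this is the heart of the argument of Basu--Ganguly--Sly) decomposes the slow strip into $\Theta(n^2)$ disjoint meso-scale blocks and shows, via a coupling that uses the continuity of the density in an essential way, that a positive density of these blocks must be individually atypical. A Chernoff estimate on the resulting Bernoulli sum then gives $p_n \leq \exp(-c n^2)$ and hence $\liminf_n \alpha_n > 0$.

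For (C) I would use an approximate superadditivity of $-\log p_n$ across scales: splitting $T(0,(m+n)\overrightarrow{e_1})$ at the midpoint $m\overrightarrow{e_1}$, using independence of the two resulting passage times, and summing over their deviation profile, one arrives at an inequality of the shape $-\log p_{m+n} \geq -\log p_m - \log p_n - E(m,n)$ with $E(m,n) = o((m+n)^2)$; Fekete's lemma at scale $n^2$, combined with (A) and (B), then delivers convergence of $\alpha_n$ to a constant $r \in (0, \infty)$. The main obstacle is step (B), specifically the passage from ``$\Theta(n)$ correlated shifts are slow'' to ``$\Theta(n^2)$ nearly independent edge events are atypical'': the shared-edge structure precludes direct product-type bounds, and this is the point at which planarity of the lattice and continuity of the density are exploited in a genuinely subtle, multiscale way. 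A secondary difficulty is controlling $E(m,n)$ in (C) sharply enough that Fekete yields a single limit rather than a gap between $\limsup$ and $\liminf$.
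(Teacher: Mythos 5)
Your steps (A) and (B) target facts that predate Basu--Ganguly--Sly: the order $e^{-\Theta(n^2)}$ of the upper tail for bounded passage times in $d=2$ goes back to Kesten (as the paper notes, ``the correct order of large deviations was already known''), and your (A) is essentially fine. The actual content of the theorem is your step (C), the existence of the limit, and there your argument has a fatal gap. Splitting $T(0,(m+n)\overrightarrow{e_1})$ at the midpoint cannot yield any useful multiplicativity for the \emph{upper} tail: the triangle inequality reads $T(0,(m+n)\overrightarrow{e_1})\leq T(0,m\overrightarrow{e_1})+T(m\overrightarrow{e_1},(m+n)\overrightarrow{e_1})$, so it neither lets you build the large-scale event from the two small ones (the optimal path need not pass near $m\overrightarrow{e_1}$, and to make $T$ large one must slow down \emph{every} path simultaneously), nor deconstruct it into both small events (it only forces \emph{one} of the two summands to exceed its proportional share); moreover the two summands are not independent, since the paths achieving them explore overlapping regions of size $\Theta(m+n)$. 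Even granting all of that, the scales do not match: a two-piece concatenation produces a relation between $-\log p_{m+n}$ and $(-\log p_m)+(-\log p_n)$, i.e.\ super/sub-additivity at \emph{linear} speed, whereas $-\log p_n\asymp n^2$; Fekete at linear speed returns $+\infty$ and says nothing about $\alpha_n=-n^{-2}\log p_n$. Any relation capable of forcing $\alpha_N\to r$ must compare $-\log p_N$ with $(N/n)^2(-\log p_n)$, i.e.\ it must involve quadratically many copies of the small-scale event.

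That quadratic gluing is precisely the argument sketched in the paper and is the heart of Basu--Ganguly--Sly (not the planar block decomposition you place at the centre of step (B)). One takes $(N/n)^2$ independent realizations of $\{T(0,n\overrightarrow{e_1})\geq(\mu(\overrightarrow{e_1})+\ep)n\}$ in boxes of side $\asymp n$, conditions them --- at cost $e^{-o(N^2)}$, which is where the continuity of the density and the ``limiting metric'' structure are used --- to share a common macroscopic metric profile, and tiles $B_{cN}$ with them so that every path $\gamma$ from $0$ to $N\overrightarrow{e_1}$ in the glued configuration has a contracted companion $\widetilde{\gamma}$ in a single small box with $T(\gamma)\approx(N/n)\,T(\widetilde{\gamma})\geq(\mu(\overrightarrow{e_1})+\ep)N$. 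This yields $p_N\geq p_n^{(N/n)^2}e^{-o(N^2)}$, hence $\limsup_N\alpha_N\leq\liminf_n\alpha_n$ and the convergence. Without an argument of this type your step (C) does not close, and the proposal as written establishes only $0<\liminf_n\alpha_n\leq\limsup_n\alpha_n<\infty$.
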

\begin{rk}Their proof strategy also holds for $d\geq 2$ and for tilted directions.

\end{rk}

The result in \cite{basu2017upper} answers an old open question that was first formulated by Kesten in \cite{Kesten:StFlour}. The correct order of large deviations was already known (see \cite{Kesten:StFlour}). A large deviation principle was proved by Chow and Zhang in \cite{Chow-Zhang} for the time between two opposite faces of a box. However, their strategy cannot be generalized for proving the existence of a rate function for the time between two points.

We here briefly present the sketch of their proof. Let $N\geq n\geq 1$. The aim is to build the upper large deviations event at the higher scale $N$ using upper large deviations events at the smaller scale $n$.
 Let us define $B_N=[-N,N]^d$. Since the passage times are bounded, there exists a positive constant $c$ depending on $b$, such that geodesics between $0$ and $N\overrightarrow{e_1}$ remain almost surely in the box $B_{cN}$. The strategy of the proof is to create a configuration $\omega$ of the edges in the box of size $B_{cN}$ such that $\omega\in\{T(0,N\overrightarrow{e_1})\geq (\mu(\overrightarrow{e_1})+\ep)N\}$ using configurations of upper large deviations events at the smaller scale $n$. Namely, we consider $\omega_1,\dots,\omega_{(N/n)^2}$ $(N/n)^2$ independent realizations of the edges in the box $B_{cn}$ for the event $\{T(0,n\overrightarrow{e_1})\geq (\mu(\overrightarrow{e_1})+\ep)n\}$. 
The key idea is that, even on the upper large deviations event, there exists a limiting metric structure in the configurations $\omega_i$. Roughly speaking, at large scales the distance $T(x,x+n\vv)$ in a given direction $\vv$ from a given point $x$ grows linearly with speed $\nabla_x(\vv)$, \textit{i.e.}, we have $T(x,x+n\vv)\approx n \nabla_x(\vv)$.  
Up to paying a negligible price, they can pick configurations $(\omega_i, i=1,\dots,(N/n)^2)$ with the same limiting metric. 
Each configuration $\omega_i$ is cut into different regions such that for any $x,y$ in a given region $\nabla_x\approx \nabla_y$.  They reassemble all the configurations $\omega_i$ by gluing together the corresponding regions, in order to create a configuration $\omega$ in the box $B_{cN}$ that also has the same limiting metric. It follows that for any path $\gamma$ in the configuration $\omega$ we can build a path $\widetilde{\gamma}$ in the configuration $\omega_1$ such that
$$T(\gamma)\approx\frac{N}{n}T(\widetilde{\gamma})\,.$$
The path $\gamma$ is the dilated version of $\widetilde{\gamma}$. It follows that
$$T(\gamma)\geq\frac{N}{n}T(0,n\overrightarrow{e_1})\geq (\mu(\overrightarrow{e_1})+\ep)N\,.$$
The remaining of the proof uses techniques from large deviations theory to deduce the existence of a rate function.

Our paper finds its inspiration in the philosophy of \cite{basu2017upper}: we use large deviations events at a small scale to build large deviations events at a higher scale. We here manage to formalize the idea of a limiting environment. We obtain something stronger than upper large deviations for maximal flow: we manage to relate an abnormally large flow with local abnormalities on the domain $\Omega$. To obtain this stronger result, we need to deal with complex technical issues: in particular, we need to reconnect streams in adjacent cubes.
\subsection{Sketch of the proof }
Most of the proofs in this paper are about reconnecting streams defined in cubes. Let $A$ be an hyperrectangle of dimension $d-1$ of side length $\kappa>0$ normal to $\overrightarrow{e_1}=(1,0,\dots,0)$. We consider two streams $f_n\in\cS_n(\cyl(A,\kappa,\overrightarrow{e_1}))$ and $g_n\in\cS_n(\cyl(A+(\kappa+\delta)\overrightarrow{e_1},\kappa,\overrightarrow{e_1}))$ for some $\delta>0$. We would like to exploit the region $\cyl(A+\kappa \overrightarrow{e_1},\delta,\overrightarrow{e_1})$ between these two cubes -that we call the corridor- to connect the streams $f_n$ and $g_n$ (see figure \ref{figintro}). Namely, we would like to prove the existence of a stream $h_n$ such that $h_n\in\cS_n(\cyl(A,2\kappa+\delta,\overrightarrow{e_1}))$, $h_n=f_n$ in $\cyl(A,\kappa,\overrightarrow{e_1})$ and $h_n=g_n$ in $\cyl(A+(\kappa+\delta)\overrightarrow{e_1},\kappa,\overrightarrow{e_1})$. Moreover, we want that no water exists or enters from the lateral sides of the corridor for $h_n$. In particular the stream $h_n$ satisfies the node law in the corridor. 
Note that a necessary condition for the existence of $h_n$ is that the flow for $f_n$ through the face $A+\kappa \overrightarrow{e_1}$ is equal to the flow for $g_n$ through the face $A+(\kappa+\delta)\overrightarrow{e_1}$ (we say that their flow match). Indeed, if such $h_n$ exists, since it satisfies the node law inside the corridor and that no flow escapes from its lateral sides, the flows of $f_n$ and $g_n$ must match.
\begin{figure}[H]
\begin{center}
\def\svgwidth{0.6\textwidth}
   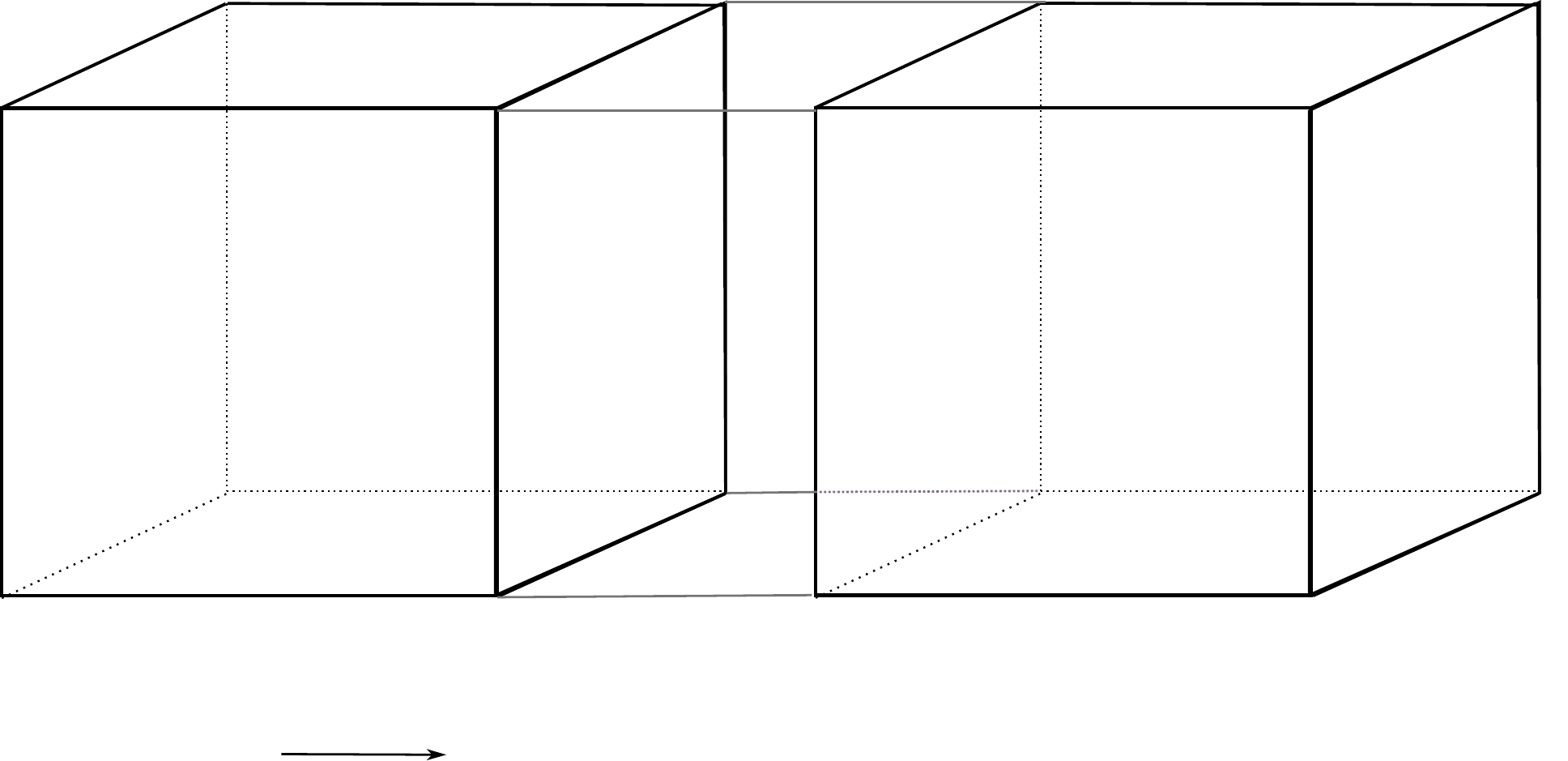
   \caption{\label{figintro}Connecting two streams in a cylinder}
   \end{center}
\end{figure}
The ideal situation to connect the streams is to take $\delta=0$ but this is too restrictive because it requires that the outputs of $f_n$ perfectly match the inputs of $g_n$. The outputs (respectively inputs) correspond to the values of $f_n$ (respectively $g_n$) for edges exiting (respectively entering)  the cylinder by the corridor. However, it seems reasonable that if the capacities of the edges in the corridor are large enough and $\delta$ is large enough, then as long as their flow match we can reconnect $f_n$ and $g_n$. This is the key property: \newline
\noindent {\bf Key property.} There exists a constant $c_d$ depending only on $d$ such that for $f_n$ and $g_n$ with matching flows, if their inputs and outputs are all smaller in absolute value than some constant $b>0$, then we can always connect the two streams as long as $\delta>\kappa c_d$ and all the edges inside the corridor have capacity larger than or equal to $b$.

%This idea alone is of course not enough, if the size of the corridor we need to connect the streams is not negligible compared to the size of the cube, then the stream we build depends a lot on what happens in the corridor. Moreover, requiring capacities to be larger than some $M$ for a non negligible fraction of the volume has a non-negligible cost. This leads us to the second idea.
%
%
%\noindent {\bf Second idea.} If a stream $f_n\in\cS_n(\fC)$ is close to a constant continuous stream $s\vv$ then at a mesoscopic level it spreads uniformly through the face $\fC$. By mesoscopic, we mean smaller than $1$ but larger than $1/n$. Let $\kappa$ be a small positive constant, let $C$ be an hypercube of side-length $\kappa$ contained in $\{1/2\}\times[-1/2,1/2[^{d-1}$. Let $g_n\in\cS_n(\fC+\delta e_1)$ close to $s\vv$ and such that the flows of $f_n$ and $g_n$ match at the macroscopic level. We would like that the flows also match at the mesoscopic level. We can slightly modify $f_n$ and $g_n$ so that the flow through $C$ for $f_n$ and the flow trough $C+\delta e_1$ for $g_n$ is equal. To connect these two streams  through $C$, we only need a corridor such that $\delta=c_d\kappa$. Doing so, we can connect streams $f_n$ and $g_n$ using a corridor of negligible size by having a perfect match at the mesoscopic level.  
%

The first step before proving theorem \ref{thm:ULDpatate} is to prove the existence of an elementary rate function: theorem \ref{thmbrique}.

\subsubsection{Sketch of the proof of theorem \ref{thmbrique}: existence of an elementary rate function}
We recall that 
$$\fC=\left[-\frac{1}{2},\frac{1}{2}\right[^d\,.$$
Consider a stream $f_n\in\cS_n(\fC)$ that is close to a continuous stream $s\vv$ (in the sense that $\amu_n(f_n)$ is close to $s\vv\ind_\fC\cL^d$ for the distance $\dis$). \newline
\noindent {\bf Step 1.} We first prove that at a mesoscopic level the flow of $f_n$ through each face of $\fC$ is almost uniform and close to the flow for the continuous stream $s\vv$. Namely, for each face $F$ of $\fC$, we can split $F$ into a collection $\cP(F)$ of small isometric $(d-1)$-dimensional hypercubes of side-length $\kappa$. For each $C\in\cP(F)$ the quantity $\psi(f_n,C)$ of water that flow through $C$ for $f_n$ is close to the flow for the continuous stream $s\vv$,\textit{i.e.},
 $$\psi(f_n,C)\approx n^{d-1}s\vv\cdot \overrightarrow{e_i}\cH^{d-1}(C)$$ where $\overrightarrow{e_i}$ is the vector of the canonical basis that is normal to $C$.\newline
\noindent {\bf Step 2.} We prove that up to paying a negligible price, we can increase the capacities of a negligible number of edges in $\fC$ in such a way we guarantee the existence of a stream $\widetilde{f}_n$  such that for each face $F$ of $\fC$, for each $C\in\cP(F)$,
\[\psi(\widetilde{f}_n,C)=n^{d-1}s\vv\cdot \overrightarrow{e_i}\cH^{d-1}(C)\,.\]
We call such a stream a well-behaved stream. To build such a stream we do small modifications to $f_n$ to ensure that the water spreads uniformly at the mesoscopic level. To do so we increase the capacities for a negligible portion of the edges in order to add a small amount of water that will correct the differences of flow with the continuous stream. Since these corrections are small, the modified stream $\widetilde{f}_n$ is still close to the continuous stream $s\vv$. The price we have to pay to modify the original configuration is negligible since only a negligible portion of the edges have been modified, \textit{i.e.}, these modifications won't appear in the limit.\newline
\noindent {\bf Step 3.} We prove theorem \ref{thmbrique}. We fix $N\geq n$. We consider $(N/n)^d$ different configurations of the event $\{\exists f_n\in\cS_n(\fC):f_n\approx s\vv \text {  and $f_n$ is well-behaved}\}$. Using these configurations, we connect the streams in order to create a stream in $\cS_N(\fC)$ that is close to $s\vv$. Since the streams we consider at the small scale are well-behaved, the water flow uniformly at a mesoscopic level and we are able to connect at the macroscopic level two adjacent streams by using the key property for connection at the mesoscopic level for each $C\in\cP(F)$ (see figure \ref{fig6}). The length of the corridor we need to connect two adjacent streams is $c_d\kappa$ that is negligible for small $\kappa$. The remaining of the proof uses standard techniques from the realm of large deviations.

\begin{figure}[H]
\begin{center}
\def\svgwidth{0.6\textwidth}
   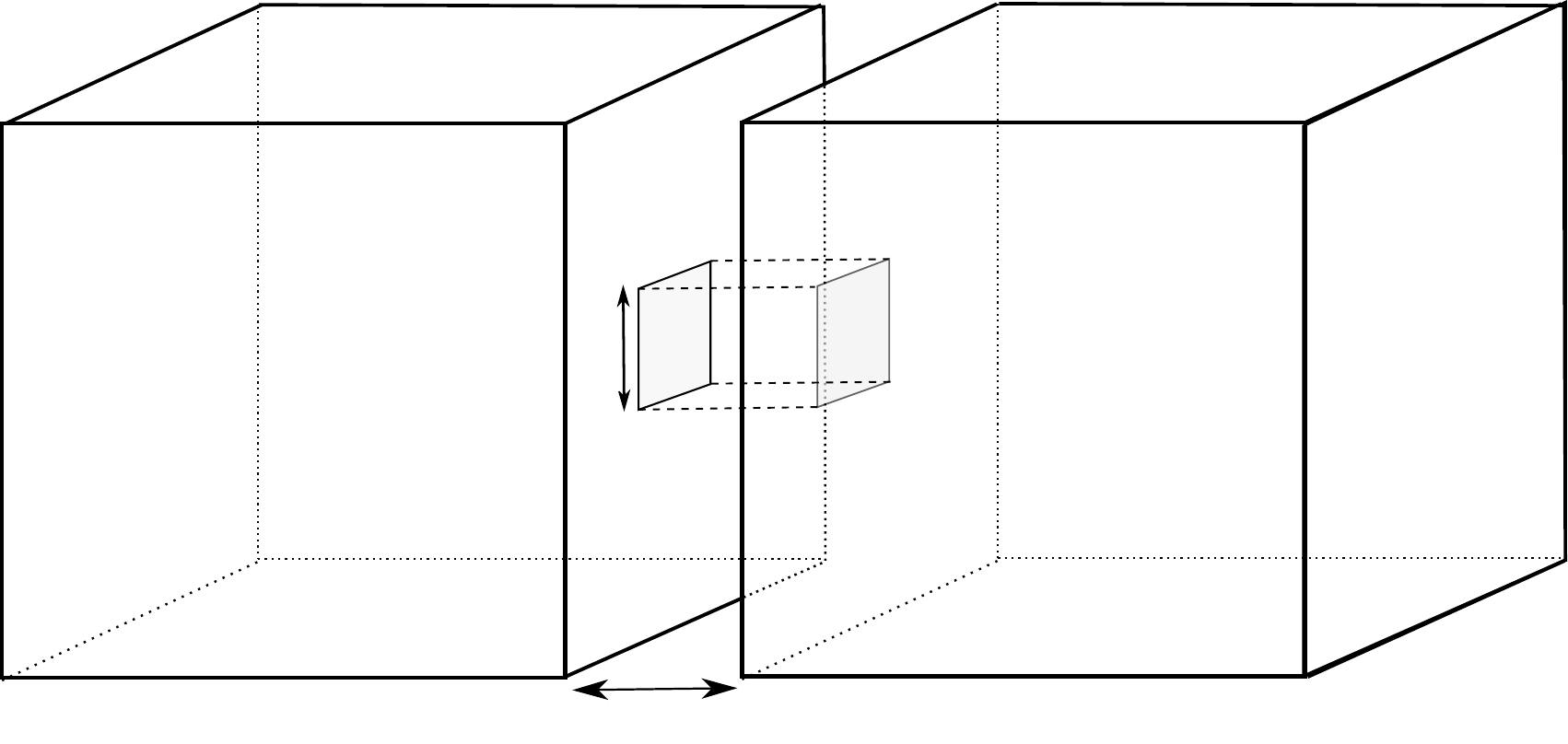
   \caption{\label{fig6}Connecting streams in cubes at mesoscopic level}
   \end{center}
\end{figure}
\subsubsection{Sketch of the proof of theorem \ref{thm:ULDpatate}} We aim to prove that for $\ssigma\in\Sigma(\Gamma^1,\Gamma ^2,\Omega)$, we have \[\Prb\big(\exists f_n\in\cS_n(\Gamma ^1,\Gamma ^2,\Omega): f_n\approx \ssigma\big)\approx \exp\big(-\widehat{I}(\ssigma)n^d\big)\]
where $\widehat{I}(\ssigma)=\int_{\Omega}I(\ssigma(x))d\cL^d(x)$. To prove this result we prove separately an upper and a lower bound on the probability we try to estimate. We can till $\Omega$ into a family of small cubes $\cE$ with disjoint interiors such that $\ssigma$ is almost constant in each cube $C\in\cE$. Using the independence of the capacities, we have
\[\Prb\big(\exists f_n\in\cS_n(\Gamma ^1,\Gamma ^2,\Omega): f_n\approx \ssigma\big)\leq \prod_{C\in\cE}\Prb(\exists f_n\in\cS_n(C):f_n\approx \ssigma\ind_C)\,.\]
Since $\ssigma$ is almost constant in $C\in\cE$, we can use theorem \ref{thmbrique} and prove that 
$$\prod_{C\in\cE}\Prb(\exists f_n\in\cS_n(C):f_n\approx \ssigma\ind_C)\approx \exp\left(-n^d\int_{\Omega}I(\ssigma(x))d\cL^d(x)\right)\,.$$
To prove the upper bound, we deconstruct a stream in $\Omega$, to prove the lower bound we do the reverse: we construct a stream in $\Omega$ close to $\ssigma$ from a collection of streams inside small cubes. For each cube $C$, we consider a discrete stream $g_n^C$ in $\cS_n(C)$ that is close to the constant approximation of $\ssigma$ in $C$. We use the ideas of corridors and well-behaved streams to reconnect these streams $(g_n^C,C\in\cE)$ altogether in order to create a stream $f_n\in\cS_n(\Omega)$ that is close to $\ssigma$. The main difficulty in the proof of the lower bound is to create from $f_n$ a stream $\overline{f}_n\in\cS_n(\Gamma^1,\Gamma^2,\Omega)$, \textit{i.e.}, to remove all the water that is entering or exiting through $\Gamma \setminus (\Gamma ^1\cup\Gamma^2)$ for $f_n$ and make sure that $\overline{f}_n$ is still close to $\ssigma$. This is the most technical part of the proof.
  \newline
\subsubsection{Organization of the paper.}

In section \ref{sect:properties}, we give some properties of the distance $\dis$, we also give necessary conditions on the stream $\ssigma$ in order to have $\widehat{I}(\ssigma)<\infty$. In section \ref{sect: technicallem}, we gather all the technical non probabilistic lemmas. In section \ref{sect:basicbrick}, we prove the existence of the elementary rate function $I$ by proving theorem \ref{thmbrique} and we also prove the convexity of $I$. In section \ref{sect:ULD}, we prove theorem \ref{thm:ULDpatate}, that is the key result to prove theorem \ref{thm:pgd}. Finally, in section \ref{sect:goodtaux}, we prove the theorem \ref{thm:pgd} and we deduce an upper large deviation principle for the maximal flow theorem \ref{thm:uldmf}.

\section{Properties of the distance and of the admissible continuous streams}\label{sect:properties}
In this section, we introduce the metric we use and derive some properties for the limiting continuous streams. 
\subsection[Properties of the metric]{Properties of the metric $\dis$}
We recall that $\dis$ was defined in \eqref{eq:def:dis}. We state here some key properties that this distance satisfies that will be useful in what follows. The proofs of the following lemmas will be given after their statements.
The convergence for the distance $\dis$ of a sequence of measures that are uniformly bounded in the total variation norm implies the weak convergence. We recall that for a signed-measure $\nu$ on $\sR^d$, we write $\nu=\nu^+-\nu^-$ for the Hahn-Jordan decomposition of $\nu$ and we write $|\nu|$ the total variation of $\nu$ defined as $|\nu|=\nu^++\nu ^-$.
\begin{lem}\label{lem:weakconv}Let $\nu=(\nu^1,\dots,\nu ^d)\in \cM(\overline{\cV_\infty(\Omega,1)})^d$. Let $(\nu_n=(\nu_n^1,\dots,\nu_n ^d))_{n\geq 1}$ be a sequence of measures in $\cM(\overline{\cV_\infty(\Omega,1)})^d$ such that $$\lim_{n\rightarrow\infty}\dis(\nu_n,\nu)=0\,.$$ 
Moreover, suppose that the measures $\nu$ and $(\nu_n)_{n\geq 1}$ are uniformly bounded in the total variation norm, that is there exists a positive constant $C_1$ such that
$$\forall n\geq 1\qquad \sum_{i=1}^d|\nu_n^i|(\overline{\cV_\infty(\Omega,1)})\leq C_1 \cL^d(\overline{\cV_\infty(\Omega,1)})\,,$$
and 
$$\sum_{i=1}^d|\nu^i|(\overline{\cV_\infty(\Omega,1)})\leq C_1 \cL^d(\overline{\cV_\infty(\Omega,1)})\,.$$
Then, the sequence of measure $(\nu_n)_{n\geq 1}$ weakly converges towards $\nu$, that is 
$$\forall f\in\sC_b(\sR^d,\sR)\qquad \lim_{n\rightarrow \infty }\int_{\sR^d}fd\nu_n=\int_{\sR^d}fd\nu\,.$$
\end{lem}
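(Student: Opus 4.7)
The plan is to approximate any continuous bounded function $f$ uniformly on the compact support $K = \overline{\cV_\infty(\Omega,1)}$ by dyadic step functions, use the uniform total variation bound to control the approximation error uniformly in $n$, and then dispatch the resulting finite linear combination of indicators of dyadic cubes using the convergence $\dis(\nu_n,\nu) \to 0$.

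First I would reduce to working componentwise: since $\int f\,d\nu_n$ is the vector of $\sR^d$ with entries $\int f\,d\nu_n^i$, it suffices to prove $\int f\,d\nu_n^i \to \int f\,d\nu^i$ for each $i \in \{1,\dots,d\}$. All measures involved are supported in the compact set $K$, so only the restriction $f|_K$ matters, and $f$ is uniformly continuous on $K$.

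Fix $\varepsilon>0$. I would choose an integer $k_0$ so large that the oscillation of $f$ on every cube $Q \in \Delta^{k_0}_1$ meeting $K$ is at most $\varepsilon/(1+2C_1\cL^d(K))$. Setting $\tilde f = \sum_{Q \in \Delta^{k_0}_1,\,Q\cap K\neq\emptyset} f(x_Q)\ind_Q$ with $x_Q \in Q\cap K$ arbitrary, the oscillation bound combined with the uniform total variation estimate yields, for each $n$ and each component $i$,
\[
\left|\int f\,d\nu_n^i - \int \tilde f\,d\nu_n^i\right| \leq \frac{\varepsilon}{1+2C_1\cL^d(K)}\cdot |\nu_n^i|(K) \leq \frac{\varepsilon}{2},
\]
and the same bound holds with $\nu$ in place of $\nu_n$.

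It remains to show $\int \tilde f\,d\nu_n^i \to \int \tilde f\,d\nu^i$ as $n \to \infty$. Writing the difference as $\sum_Q f(x_Q)(\nu_n^i(Q) - \nu^i(Q))$ and using $|\nu_n^i(Q) - \nu^i(Q)| \leq \|\nu_n(Q) - \nu(Q)\|_2$, I bound it by
\[
\|f\|_\infty \sum_{Q\in\Delta^{k_0}_1} \|\nu_n(Q) - \nu(Q)\|_2 \leq \|f\|_\infty\, 2^{k_0}\, \dis(\nu_n,\nu),
\]
the second inequality coming from isolating the $k=k_0$ term in the series defining $\dis$ (taking $x=0$ and $\lambda=1$). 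Since $k_0$ is now fixed while $\dis(\nu_n,\nu) \to 0$, this quantity vanishes, giving $\limsup_n |\int f\,d\nu_n^i - \int f\,d\nu^i| \leq \varepsilon$; letting $\varepsilon \to 0$ concludes.

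I do not expect any real obstacle here: this is a clean density argument. The only point requiring care is the order of quantifiers, namely that the scale $k_0$ is selected \emph{first}, as a function of $\varepsilon$, of the modulus of continuity of $f$, and of the constant $C_1$, and only \emph{then} $n$ is sent to infinity, so that the factor $2^{k_0}$ appearing when one isolates a single term of the series defining $\dis$ is harmless.
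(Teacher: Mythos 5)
Your proof is correct and takes essentially the same route as the paper's: approximate $f$ by a dyadic step function at a fixed scale $k_0$ chosen via uniform continuity, control the two approximation-error terms with the uniform total variation bound, and dispatch the remaining finite sum over cubes by isolating the $k=k_0$ term (with $x=0$, $\lambda=1$) in the series defining $\dis$, which introduces the harmless factor $2^{k_0}$. The only cosmetic differences are that you argue componentwise while the paper works directly with the $\sR^d$-valued integral, and you sample $f$ at an arbitrary point of $Q\cap K$ rather than at the center of $Q$.
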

The converse result holds for a sequence of measures absolutely continuous with respect to Lebesgue measure.
\begin{lem}\label{lem:weakconvautresens}Let $M>0$. Let $\nu=h\cL^d$ with $h\in L ^\infty(\sR^d\rightarrow\sR^d,\cL^d)$ such that
$$\|h\|_2\leq M\quad\text{a.e. on $\sR^d$},\quad h=0\quad\text{a.e. on $\cV_2(\Omega,1)^c$}.$$
$\bullet$  Let $(h_n)_{n\geq 1}$ be a sequence of functions in  $L ^\infty(\sR^d\rightarrow\sR^d,\cL^d)$ such that
$$\forall n\geq 1\qquad \|h_n\|_2\leq M\quad\text{a.e. on $\sR^d$}\quad h_n=0\quad\text{a.e. on $\cV_2(\Omega,1)^c$}$$
and the sequence of measure $(h_n\cL^d)_{n\geq 1}$ weakly converges towards $h\cL^d$, that is 
$$\forall g\in\sC_b(\sR^d,\sR)\qquad \lim_{n\rightarrow \infty }\int_{\sR^d}gh_nd\cL ^ d=\int_{\sR^d}ghd\cL^d\,$$
then, 
$$\lim_{n\rightarrow\infty}\dis(h_n\cL^d,h\cL^d)=0\,.$$

\noindent $\bullet$ Let $(f_n)_{n\geq 1}$ be a sequence of streams inside $\Omega$ such that 
$$\forall e\in\E_n^d\qquad\|f_n(e)\|_2\leq M$$ and
the sequence of measures $(\amu_n(f_n))_{n\geq 1}$ weakly converges towards $h\cL^d$.
Then,
$$\lim_{n\rightarrow\infty}\dis(\amu_n(f_n),h\cL^d)=0\,.$$

\end{lem}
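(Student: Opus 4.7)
The plan is to prove both bullets with the same architecture: split the series defining $\dis$ at some scale $K$, bound the tail $\sum_{k\geq K}$ uniformly in all parameters using only the $L^\infty$ bound $M$, and handle the finite head $\sum_{k<K}$ by combining weak convergence on each cube with an equicontinuity argument in $(x,\lambda)$.

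\textbf{Tail estimate.} Because $\|h_n\|_2,\|h\|_2\leq M$ a.e., we have $\|h_n\cL^d(A)\|_2\leq M\cL^d(A)$ for every Borel set $A$, and similarly for $h\cL^d$. For the discrete measures, $\|\amu_n(f_n)(A)\|_2\leq \frac{M}{n^d}\#\{e:c(e)\in A\}$, so summing over the cubes $Q\in\Delta^k_\lambda$ meeting the support region $\overline{\cV_\infty(\Omega,1)}$ yields a bound $C=C(M,d,\Omega)$ uniform in $k,\lambda,x,n$:
\[\sum_{Q\in\Delta^k_\lambda}\|\nu_n(Q+x)-\nu(Q+x)\|_2\leq C.\]
Therefore $\sum_{k\geq K}2^{-k}\sum_Q\|\cdots\|_2\leq 2C\cdot 2^{-K}$, which is $<\varepsilon/2$ once $K$ is large, uniformly in $(x,\lambda,n)$.

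\textbf{Head, pointwise in $(x,\lambda)$.} For each fixed $(x,\lambda)$, the head $\sum_{k<K}2^{-k}\sum_{Q\in\Delta^k_\lambda}\|\nu_n(Q+x)-\nu(Q+x)\|_2$ is a finite sum over half-open cubes $B=Q+x$. Each such $B$ has $\cL^d(\partial B)=0$, so $B$ is a continuity set of the absolutely continuous limit $\nu=h\cL^d$; the Portmanteau theorem applied to each coordinate measure gives $\nu_n(B)\to\nu(B)$. Hence the head converges to $0$ at each $(x,\lambda)$.

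\textbf{Head, uniformity in $(x,\lambda)$.} The parameter set $[-1,1[^d\times[1,2]$ is relatively compact, so it suffices to establish equicontinuity of the head jointly in $n$. A small perturbation $(x,\lambda)\mapsto(x',\lambda')$ with $\|x-x'\|_\infty+|\lambda-\lambda'|\leq\eta$ replaces each cube $Q+x$ of side $\ell=\lambda 2^{-k}$ by a cube whose symmetric difference with $Q+x$ has $\cL^d$-measure $\leq C_d\,\eta\,\ell^{d-1}$ (for $\eta\leq\ell$). For $h_n\cL^d$ this gives $\|\nu_n(Q+x)-\nu_n(Q+x')\|_2\leq C_dM\eta\ell^{d-1}$; for $\amu_n(f_n)$ the same holds up to an additive $O(1/n)$ per cube coming from edge centers whose inclusion status changes. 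Summing over the $O(\ell^{-d})$ cubes at scale $k$ intersecting the support and multiplying by $2^{-k}$ yields oscillation $\leq C'\eta+O(1/n)$ at each scale $k<K$. A finite $\eta$-net in $[-1,1[^d\times[1,2]$ combined with the previous step gives uniform convergence of the head, completing the proof.

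\textbf{Main obstacle.} The delicate point is the second bullet, where $\amu_n(f_n)$ is atomic and $\amu_n(f_n)(Q+x)$ is only piecewise constant in $x$. Continuity-set arguments apply at fixed $(x,\lambda)$ because atoms of $\amu_n$ lie on a set of Lebesgue measure zero, but equicontinuity in $(x,\lambda)$ requires the $O(1/n)$ boundary-strip correction to be absorbed by choosing $\eta$ first and then letting $n\to\infty$ with $1/n\ll\eta$. This ordering of quantifiers is the main bookkeeping to keep straight; once it is in place, everything reduces to the same structure as the absolutely continuous case in the first bullet.
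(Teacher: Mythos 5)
Your argument is correct, but it takes a route different from the paper's. Both proofs share the tail estimate: the uniform $L^\infty$ bound $M$ on the densities (and the matching bound on the discrete measures) controls $\sum_{k\geq K}$ uniformly in $(x,\lambda,n)$. Where you diverge is in making the head $\sum_{k<K}$ converge \emph{uniformly} in $(x,\lambda)$. The paper fixes an auxiliary grid of $\delta$-cubes $B=\delta(y+\fC)$, approximates each $Q+x$ at scales $k\leq k_0$ by the union of $\delta$-cubes it contains, controls the boundary contribution through Lemma~\ref{lem:interbord}, and then applies Portmanteau on the \emph{fixed, finite} family of $\delta$-cubes; the limits are taken in the order $n\to\infty$, then $\delta\to 0$, then $\ep\to 0$. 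You instead apply Portmanteau directly on each $Q+x$ at fixed $(x,\lambda)$ (each is a continuity set since $\cL^d(\partial(Q+x))=0$ and $|\nu|\leq dM\cL^d$) to get pointwise convergence of the head, and then upgrade to uniformity via an asymptotic equicontinuity estimate combined with a finite $\eta$-net of the compact parameter set $[-1,1[^d\times[1,2]$. Both work. The paper's route never has to estimate how dyadic cubes deform under perturbations of $(x,\lambda)$ (which is what makes your equicontinuity step somewhat delicate, especially for the atomic $\amu_n$, where the quantifier order $K$, then $\eta\lesssim 2^{-K}$, then $n\gg 1/\eta$ must be respected, as you note); your route is the more classical pointwise-plus-equicontinuity scheme and does not need the paving lemma. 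In a polished write-up, the Portmanteau step should be spelled out for the signed vector measures at hand: one sandwiches $\ind_{Q+x}$ between continuous functions and uses $|\nu^i_n|\leq M\cL^d$ (respectively the $O(n^{-d}\#\{e:c(e)\in\cdot\})$ bound) to absorb the boundary strip — the theorem as usually stated is for probability measures. The same caveat applies to the paper's use on the $\delta$-cubes.
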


We can control the distance between two measures that are absolutely continuous with respect to the Lebesgue measure by the $L^1$-distance.
\begin{lem}\label{lem:propdis2}Let $f,g\in L^1(\sR^d\rightarrow \sR^d,\cL^d)$. We have
\[\dis(f\cL^d,g\cL^d)\leq 2 \int_{\sR^d}\|f(x)-g(x)\|_2d\cL^d(x)= 2\|f-g\|_{L^1}\,.\]
\end{lem}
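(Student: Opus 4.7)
The plan is to unwind the definition of $\dis$ and estimate each piece by the triangle inequality, exploiting that dyadic cubes at a fixed scale form a partition of $\sR^d$.

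First I would fix $x\in[-1,1[^d$ and $\lambda\in[1,2]$ and look at a single summand. Since the measures are absolutely continuous with densities $f$ and $g$,
\[
\mu(Q+x)-\nu(Q+x)=\int_{Q+x}(g-y\mapsto f)\,d\cL^d,
\]
so by the standard triangle inequality for vector-valued integrals (Minkowski),
\[
\|\mu(Q+x)-\nu(Q+x)\|_2\leq \int_{Q+x}\|f(y)-g(y)\|_2\,d\cL^d(y).
\]

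Next I would sum over $Q\in\Delta^k_\lambda$ at a fixed scale $k$. By construction the family $\{Q+x : Q\in\Delta^k_\lambda\}$ is a partition of $\sR^d$ into disjoint cubes of side $2^{-k}\lambda$, so the integrals telescope into one integral over $\sR^d$:
\[
\sum_{Q\in\Delta^k_\lambda}\int_{Q+x}\|f-g\|_2\,d\cL^d=\int_{\sR^d}\|f(y)-g(y)\|_2\,d\cL^d(y)=\|f-g\|_{L^1}.
\]
This bound is uniform in $k$, $x$ and $\lambda$.

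Finally I would sum the geometric series in $k$ and take the supremum. For every admissible pair $(x,\lambda)$,
\[
\sum_{k=0}^{\infty}\frac{1}{2^k}\sum_{Q\in\Delta^k_\lambda}\|\mu(Q+x)-\nu(Q+x)\|_2\leq \|f-g\|_{L^1}\sum_{k=0}^{\infty}\frac{1}{2^k}=2\|f-g\|_{L^1},
\]
and taking the supremum over $x\in[-1,1[^d$ and $\lambda\in[1,2]$ yields exactly $\dis(f\cL^d,g\cL^d)\leq 2\|f-g\|_{L^1}$. There is no real obstacle here: the argument is a combination of Minkowski's inequality for integrals, the partition property of the dyadic cubes at each fixed scale, and the convergence of $\sum_k 2^{-k}$.
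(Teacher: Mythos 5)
Your proof is correct and follows exactly the same route as the paper: bound $\|\mu(Q+x)-\nu(Q+x)\|_2$ by $\int_{Q+x}\|f-g\|_2\,d\cL^d$ via the triangle inequality, use that the shifted dyadic cubes at scale $k$ partition $\sR^d$ to collapse the inner sum to $\|f-g\|_{L^1}$, and then sum the geometric series in $k$. The only blemish is the typo ``$g-y\mapsto f$'' in the displayed integrand, which should simply read $f-g$.
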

We say that $(E_i)_{i\geq 1}$ is a paving of $\sR^d$ if the sets $E_i$ are of pairwise disjoint interior, for $i\geq 1$, the set $E_i$ is a translate of $E_1$ and $\sR^d= \cup_{i\geq 1}E_i$.
For a subset $E$ of $\sR^d$, we denote by $\diam E$ its diameter, \textit{i.e.},
$$\diam E = \sup\left\{\|x-y\|_2:\,x,y\in E\right\}\,.$$
The following lemma will be very useful in what follows, it enables to control the number of elements of a paving that intersect the boundary of a given cube.
\begin{lem}\label{lem:interbord}There exists a positive constant $\ep_\fC$ depending only on the dimension such that for any $\delta \in ]0,1[$ and $z\in\sR^d$, for any paving $(E_i)_{i\geq 1}$ of $\sR^d$ such  that $\diam E_1\leq \ep_{\fC}\delta$, we have
\[\left|\left\{i \geq 1:E_i\cap (\partial(\delta\fC+z))\right\}\right|\leq 2\frac{\cH^{d-1}(\partial(\delta\fC+z))}{\cL^d(E_1)}\diam E_1\,.\]

\end{lem}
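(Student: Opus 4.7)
\textbf{Proof plan for Lemma \ref{lem:interbord}.}
The strategy is a simple volume-counting argument based on Proposition~\ref{prop:minkowski}. First I would observe that if $E_i$ intersects $\partial(\delta\fC+z)$, then every point of $E_i$ lies within Euclidean distance $\diam E_1$ of $\partial(\delta\fC+z)$, so
\[
E_i \;\subset\; \overline{\cV_2(\partial(\delta\fC+z),\diam E_1)}.
\]
Since the $E_i$'s are translates of $E_1$ with pairwise disjoint interiors, $\cL^d(E_i)=\cL^d(E_1)$ and summing gives
\[
\#\{\,i\geq 1:E_i\cap\partial(\delta\fC+z)\neq\emptyset\,\}\cdot\cL^d(E_1)\;\leq\;\cL^d\big(\cV_2(\partial(\delta\fC+z),\diam E_1)\big).
\]
It will then suffice to show that the right-hand side is bounded by $2\diam E_1\cdot \cH^{d-1}(\partial(\delta\fC+z))$ whenever $\diam E_1\leq \ep_\fC\delta$ for a suitable $\ep_\fC$.

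To reduce to a single reference set independent of $\delta$ and $z$, I would use the homothety $y\mapsto (y-z)/\delta$, which sends $\delta\fC+z$ to $\fC$ and the $\diam E_1$-neighborhood of $\partial(\delta\fC+z)$ to the $(\diam E_1/\delta)$-neighborhood of $\partial\fC$. The change of variables gives
\[
\cL^d\big(\cV_2(\partial(\delta\fC+z),\diam E_1)\big)=\delta^d\cL^d\big(\cV_2(\partial\fC,\diam E_1/\delta)\big),
\qquad
\cH^{d-1}(\partial(\delta\fC+z))=\delta^{d-1}\cH^{d-1}(\partial\fC).
\]
So the desired inequality is equivalent to $\cL^d(\cV_2(\partial\fC,s))\leq 2s\,\cH^{d-1}(\partial\fC)$ for $s=\diam E_1/\delta$ small enough.

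Now I would apply Proposition~\ref{prop:minkowski} to the unit cube $\fC$: the ratio $\cL^d(\cV_2(\partial\fC,s))/(2s)$ tends to $\cH^{d-1}(\partial\fC)=2d$ as $s\to 0$. The key point is that for the cube this limit is approached from below for small $s$, which can be checked explicitly from the Steiner-type expansion: writing $\cV_2(\partial\fC,s)$ as the union of its parts inside and outside $\fC$, one finds
\[
\cL^d(\cV_2(\partial\fC,s))=4ds+d(d-1)\Bigl(\tfrac{\pi}{2}-2\Bigr)s^2+O(s^3),
\]
where the second-order coefficient is negative because $\pi/2<2$; hence there exists $\ep_\fC>0$ (depending only on $d$) such that
\[
\forall s\in (0,\ep_\fC]\qquad \cL^d(\cV_2(\partial\fC,s))\;\leq\;2s\,\cH^{d-1}(\partial\fC).
\]
Combining these steps under the hypothesis $\diam E_1\leq \ep_\fC\delta$ (which ensures $s\leq \ep_\fC$) yields the claim.

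The only mildly delicate point is checking that the Minkowski content for the cube is approached from below so as to keep the exact constant $2$ in the statement; once that (dimension-only) slack is secured, the rest of the argument is a clean volume comparison.
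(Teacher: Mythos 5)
Your proof is correct, and the volume-counting step (the $E_i$'s intersecting the boundary are contained in $\overline{\cV_2(\partial(\delta\fC+z),\diam E_1)}$, have disjoint interiors and equal Lebesgue measure, then rescale to $\fC$) is the same as the paper's. The divergence is in how the neighborhood volume is related to $\cH^{d-1}(\partial\fC)$. The paper simply invokes Proposition~\ref{prop:minkowski}: since $\cL^d(\cV_2(\partial\fC,\ep_0))/(2\ep_0)\to\cH^{d-1}(\partial\fC)$, for $\ep_0$ small enough the ratio is at most $2\cH^{d-1}(\partial\fC)$, which plugged into the counting step gives the bound with constant $4$, not $2$. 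You go further and argue, via the explicit Steiner-type expansion for the cube, that the Minkowski limit is approached from below, so the ratio is actually at most $\cH^{d-1}(\partial\fC)$ itself for small $\ep_0$; this is what delivers the constant $2$ claimed in the statement. Your computation of the second-order coefficient $d(d-1)(\pi/2-2)<0$ is correct (inside contribution $1-(1-2s)^d$, outside from the Steiner formula for the convex body $\fC$, both exact polynomials in $s$), so the refinement is sound. In short: the paper's displayed proof only establishes the lemma with constant $4$, which is indeed the constant it uses in its later applications (so the ``$2$'' in the statement looks like a typo); your extra Steiner step is precisely what is needed to prove the lemma as literally stated, at the cost of appealing to cube-specific structure rather than only to the existence of the Minkowski content.
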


The result of the following lemma is a key property of the distance $\dis$ that does not necessarily hold for standard distances: if the distance $\dis(\nu,\mu)$ is small then for a cube $Q\subset \Omega$, the distance $\dis(\nu\ind_Q,\mu\ind_Q)$ is also small.

\begin{lem}\label{lem:propdis3}Let $M>0$. Let $G$ be a distribution such that $G([M,+\infty[)=0$. Let $\nu\in \cM(\overline{\cV_\infty(\Omega,1)})^d$ such that 
\[\forall x\in[-1,1[^d\quad\forall \lambda\in[1,2]\quad\forall k\geq 0\quad  \forall Q\in(\Delta_\lambda^k+x)\qquad \|\nu(Q)\|_2\leq M\cL^d(Q)\,.\]
There exist positive constants $\beta_1$, $\beta_2$ depending only on $M$, $\Omega$ and $d$, and $\ep_\fC$ depending on $d$ such that for any $\delta\in[0,1]$ and $z\in\sR^d$, we have  for any $\rho\leq \delta\ep_\fC$, for $n$ large enough depending on $\rho$, for any $f_n\in\cS_n(\Omega)$
$$ \dis(\amu_n(f_n)\ind_{\delta\fC+z},\nu\ind_{\delta\fC+z})\leq \beta_1 \frac{\dis(\amu_n(f_n),\nu)}{\rho }+ \beta_2\rho\delta ^{d-1}\,.$$
\end{lem}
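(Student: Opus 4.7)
\medskip

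\noindent\textbf{Proof plan for Lemma \ref{lem:propdis3}.}
The plan is to compare $\dis(\amu_n(f_n)\ind_{\delta\fC+z},\nu\ind_{\delta\fC+z})$ to $\dis(\amu_n(f_n),\nu)$ by analysing each dyadic cube $Q+x'$ that appears in the supremum defining $\dis$ according to its position with respect to $\delta\fC+z$. Writing $\tilde\mu=\amu_n(f_n)\ind_{\delta\fC+z}$ and $\tilde\nu=\nu\ind_{\delta\fC+z}$, three cases arise for a cube $Q+x'$: it is entirely contained in $\delta\fC+z$ (in which case the contribution equals $\|(\amu_n(f_n)-\nu)(Q+x')\|_2$ and, after summing over all such cubes, is bounded by $\dis(\amu_n(f_n),\nu)$ by definition of the supremum); it is disjoint from $\delta\fC+z$ (contribution zero); or it straddles $\partial(\delta\fC+z)$. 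Only the straddling cubes require work, and they will be the source of both the $\beta_1\dis/\rho$ and the $\beta_2\rho\delta^{d-1}$ terms.

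Two preliminary pointwise bounds are used throughout. Since the hypothesis on $\nu$ holds at all dyadic scales and positions, an easy dyadic martingale / Lebesgue differentiation argument shows that $\nu$ is absolutely continuous with density bounded in Euclidean norm by $\sqrt{d}\,M$, giving $\|\nu(A)\|_2\leq \sqrt{d}\,M\,\cL^d(A)$ for every Borel set $A$. For $n$ large enough in terms of $\rho$, the number of edges of $\E_n^d$ centered in any dyadic cube of diameter at least $\rho$ is at most $2d\,n^d$ times its Lebesgue measure, hence $\|\amu_n(f_n)(A)\|_2\leq 2dM\,\cL^d(Q)$ for every Borel $A\subset Q$ with $Q$ such a dyadic cube. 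These bounds use the cap $\|f_n(e)\|_2\leq M$ coming from Hypothesis~\ref{hypo:G}.

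Let $k_0$ be the smallest integer with $2^{-k_0}\lambda'\sqrt d\leq \rho$; in particular $2^{k_0}\leq 4\sqrt d/\rho$ since $\lambda'\leq 2$. For straddling cubes at small scales $k\geq k_0$ (diameter $\leq \rho\leq \ep_\fC \delta$), Lemma~\ref{lem:interbord} bounds their number by $c_d\,\delta^{d-1}2^{k(d-1)}\lambda'^{-(d-1)}$; combining with the two pointwise bounds, the scale-$k$ contribution is at most $C_dM\delta^{d-1}\lambda'\,2^{-2k}$, and summing over $k\geq k_0$ gives a geometric series bounded by $O(M\delta^{d-1}\rho^2)\leq O(M\delta^{d-1}\rho)$ using $\rho\leq 1$. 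For straddling cubes at large scales $k<k_0$, the plan is to refine each into the children $Q'+x'$ at scale $k_0$ (which forms a dyadic refinement of $\Delta^k_{\lambda'}$). Split these children into those contained in $\delta\fC+z$ and those which themselves straddle. For the straddling children at scale $k_0$, Lemma~\ref{lem:interbord} applied at that scale bounds their total Lebesgue volume by $O(\delta^{d-1}\rho)$, so by the pointwise bounds their combined contribution at each scale is $O(M\delta^{d-1}\rho)$, still $O(M\delta^{d-1}\rho)$ after the $1/2^k$ summation. For the contained children, each $Q'\in\Delta^{k_0}_{\lambda'}$ has a unique ancestor at every scale $k<k_0$, so swapping the order of summation gives
\[
\sum_{k<k_0}\frac{1}{2^k}\sum_{\substack{Q'\subset \delta\fC+z\\ \text{parent at scale }k\text{ straddles}}}\!\!\|(\amu_n(f_n)-\nu)(Q'+x')\|_2
\;\leq\; 2\!\!\sum_{Q'\in\Delta^{k_0}_{\lambda'}}\!\!\|(\amu_n(f_n)-\nu)(Q'+x')\|_2
\;\leq\; 4\cdot 2^{k_0}\,\dis(\amu_n(f_n),\nu)
\;\leq\; \frac{C'_d}{\rho}\,\dis(\amu_n(f_n),\nu),
\]
where the factor $2$ comes from the fact that the weights $2^{-k}$ corresponding to ancestors that straddle form a subseries of $\sum_{k\geq 0}2^{-k}=2$, and the last bound in the display uses the estimate on $2^{k_0}$ together with the definition of $\dis$ at scale $k_0$.

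Assembling the four contributions (contained cubes, small-scale straddling, large-scale straddling with straddling children, large-scale straddling with contained children) yields the announced bound with $\beta_1$ depending only on $d$ and $\beta_2$ depending on $M$ and $d$. The main technical subtlety is the refinement step at large scales: naively each contained child $Q'$ could contribute at many scales $k<k_0$, but the geometric weight $1/2^k$ together with the fact that each $Q'$ has a single ancestor per scale collapses the double sum into a single sum at scale $k_0$, which is exactly what the definition of $\dis$ is able to control up to the factor $1/\rho$.
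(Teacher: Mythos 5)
Your overall structure (push the large-scale cubes to a threshold scale of size $\rho$, handle the boundary-crossing cubes via Lemma~\ref{lem:interbord}, and control the small scales by a crude bound) is the same strategy as the paper, organized slightly differently: you additionally single out the ``contained'' cubes at the original scale and bound them directly by $\dis$, whereas the paper simply pushes every scale $k\le j$ down to scale $j$. That reorganization is fine.

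However, there is a genuine gap in your treatment of the small scales. You state the pointwise bound on $\amu_n(f_n)$ — namely $\|\amu_n(f_n)(A)\|_2\le 2dM\,\cL^d(Q)$ for Borel $A\subset Q$ — \emph{only for dyadic cubes $Q$ of diameter at least $\rho$}, and this restriction is necessary: for a fixed $n$ and a cube $Q$ of diameter much smaller than $1/n$, a single edge centred in $Q$ gives $\|\amu_n(f_n)(Q)\|_2\approx M/n^d$ while $\cL^d(Q)$ can be arbitrarily smaller, so no bound of the form $C\cL^d(Q)$ can hold uniformly. But in the next paragraph you apply precisely this per-cube bound to ``straddling cubes at small scales $k\ge k_0$ (diameter $\le\rho$)'' in order to claim that the scale-$k$ contribution is $O(M\delta^{d-1}\lambda' 2^{-2k})$: the bound you invoke there is outside its stated (and attainable) range of validity for every $k>k_0$. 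The claimed $2^{-2k}$ decay of the per-scale contribution is therefore unjustified; in fact, for a fixed $n$ the $\amu_n$-contribution at scale $k$ does not decay as $k\to\infty$ but plateaus at the mass of $\amu_n$ near $\partial(\delta\fC+z)$.

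The paper avoids this by never applying a per-cube estimate below the threshold scale: for $k>j$ it bounds the whole $\amu_n$-contribution by a single uniform mass bound $\frac{1}{n^d}\sum_{e:c(e)\in\cV_2(B,\rho)}\|f_n(e)\|_2$, valid for all scales $k>j$ at once since every cube intersecting $B$ at such a scale lies in $\cV_2(B,\rho)$; the factor $\rho$ then comes from $\sum_{k>j}2^{-k}\approx\rho$, not from a geometric decay of the per-scale term. Your bound at small scales must be replaced by this uniform mass argument (the $\nu$-part of your small-scale estimate is unaffected, since the density bound $\|\nu(A)\|_2\le\sqrt{d}M\cL^d(A)$ holds at every scale). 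Once this substitution is made, the final estimate $O(M\delta^{d-1}\rho^2)$ survives, so the conclusion you reach is correct, but as written the argument for it is broken.

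A minor additional remark: the $\beta_1$ factor in your contained-children term is asserted as $4\cdot 2^{k_0}$; a careful bookkeeping (each $Q'$ at scale $k_0$ has one ancestor per scale $k<k_0$, weights $\sum_{k<k_0}2^{-k}\le 2$) gives $2\cdot 2^{k_0}$, but this does not affect correctness.
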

The following lemma implies that to upper-bound the distance between two measures $\mu$, $\nu$, given a partition of $\Omega$, it is sufficient to upper-bound separately the distance $\dis(\mu\ind_ A,\nu\ind_A)$ on each set $A$ of the partition.
\begin{lem}\label{lem:propdis4} Let $\mu,\nu\in \cM(\overline{\cV_\infty(\Omega,1)})^d$. Let $(A_i,1\leq i\leq r)$ be a family of pairwise disjoint subsets of $\sR^d$ such that $$\overline{\cV_\infty(\Omega,1)}\subset\bigcup_{i=1}^rA_i\,.$$
Then, we have
$$\dis(\mu,\nu)\leq \sum_{i=1}^r\dis(\mu\ind_{A_i},\nu\ind_{A_i})\,.$$
\end{lem}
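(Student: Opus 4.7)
The plan is to exploit the pointwise triangle inequality within the definition of $\dis$, combined with the additivity of measures over the disjoint decomposition $(A_i)_{1\leq i\leq r}$. Since every cube $Q+x$ intersects $\overline{\cV_\infty(\Omega,1)}$ inside $\bigcup_i A_i$, and since $\mu,\nu$ are supported on $\overline{\cV_\infty(\Omega,1)}$, we get
\[
\mu(Q+x)=\sum_{i=1}^{r}(\mu\ind_{A_i})(Q+x),\qquad \nu(Q+x)=\sum_{i=1}^{r}(\nu\ind_{A_i})(Q+x),
\]
where we use that the $A_i$ are pairwise disjoint, so no set is counted twice.

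Applying the triangle inequality for $\|\cdot\|_2$ in $\sR^d$ then yields, for every fixed $x\in[-1,1[^d$, $\lambda\in[1,2]$, $k\geq 0$, and $Q\in\Delta^k_\lambda$,
\[
\|\mu(Q+x)-\nu(Q+x)\|_2 \;\leq\; \sum_{i=1}^{r}\bigl\|(\mu\ind_{A_i})(Q+x)-(\nu\ind_{A_i})(Q+x)\bigr\|_2.
\]
Multiplying by $1/2^k$, summing over $Q\in\Delta^k_\lambda$ and $k\geq 0$ (all terms are nonnegative, so Fubini/Tonelli allows the interchange of the $\sum_{i=1}^r$ with the outer sums), I obtain
\[
\sum_{k=0}^\infty \frac{1}{2^k}\sum_{Q\in\Delta^k_\lambda}\|\mu(Q+x)-\nu(Q+x)\|_2 \;\leq\; \sum_{i=1}^r \sum_{k=0}^\infty \frac{1}{2^k}\sum_{Q\in\Delta^k_\lambda}\bigl\|(\mu\ind_{A_i})(Q+x)-(\nu\ind_{A_i})(Q+x)\bigr\|_2.
\]

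Finally, taking the supremum over $x\in[-1,1[^d$ and $\lambda\in[1,2]$ on both sides, and using the elementary fact that $\sup$ of a finite sum of nonnegative quantities is bounded above by the sum of the individual suprema, gives
\[
\dis(\mu,\nu) \;\leq\; \sum_{i=1}^{r}\dis(\mu\ind_{A_i},\nu\ind_{A_i}),
\]
which is the desired inequality. There is no real obstacle here: the whole argument is a bookkeeping exercise built on the triangle inequality and the disjointness of the $A_i$; the only mild point requiring care is that the covering hypothesis $\overline{\cV_\infty(\Omega,1)}\subset\bigcup_i A_i$ (rather than an equality) is enough because both $\mu$ and $\nu$ charge nothing outside $\overline{\cV_\infty(\Omega,1)}$.
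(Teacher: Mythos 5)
Your proof is correct and takes essentially the same route as the paper's: decompose $\mu(Q+x)$ and $\nu(Q+x)$ over the $A_i$ using disjointness and the support condition, apply the triangle inequality, swap the sums, and use that a supremum of a finite sum of nonnegative quantities is bounded by the sum of the suprema. The paper writes $\mu(Q\cap A_i)$ where you write $(\mu\ind_{A_i})(Q+x)$, but these are the same thing, and the remark you add about the covering hypothesis being only an inclusion is a valid (and useful) observation also implicit in the paper's argument.
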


We now prove the lemmas above.
\begin{proof}[Proof of lemma \ref{lem:weakconv}]
Let $f\in\sC_b(\sR^d,\sR)$. Let $\ep>0$.  For $k\geq 1$, we set $$f_k=\sum_{Q\in\Delta^k_1 (\Omega)}f(c(Q))\ind_Q\,,$$
where $c(Q)$ denotes the center of $Q$.
Since the function $f$ is uniformly continuous on the compact set $\overline{\cV_\infty(\Omega,2)}$, we fix $k$ large enough (depending on $f$ and $\ep$) such that
$$\forall Q\in\Delta^k_1(\Omega)\quad \forall x\in Q\qquad |f_k(x)-f(x)|\leq \ep\,.$$
Besides, we have
\begin{align}
&\left\|\int_{\sR^d}fd\nu_n-\int_{\sR^d}fd\nu\right\|_2\nonumber\\
&\hspace{2cm}\leq \left\|\int_{\sR^d}fd\nu_n-\int_{\sR^d}f_kd\nu_n\right\|_2+\left\|\int_{\sR^d}f_kd\nu_n-\int_{\sR^d}f_kd\nu\right\|_2+\left\|\int_{\sR^d}fd\nu-\int_{\sR^d}f_kd\nu\right\|_2\nonumber \\
&\hspace{2cm}\leq \ep\sum_{i=1}^d |\nu_n^i|(\overline{\cV_\infty(\Omega,1)})+\|f\|_\infty\sum_{Q\in\Delta^k_1(\Omega)}\,\|\nu_n(Q)-\nu(Q)\|_2+\ep \sum_{i=1}^d |\nu_n^i|(\overline{\cV_\infty(\Omega,1)})\nonumber\\
&\hspace{2cm}\leq 2\ep C_1\cL^d(\overline{\cV_\infty(\Omega,1)})+2^{k} \|f\|_\infty\,\dis(\nu_n,\nu)\,.
\end{align}
Hence for $n$ large enough,
$$\left\|\int_{\sR^d}fd\nu_n-\int_{\sR^d}fd\nu\right\|_2\leq 3\ep C_1\cL^d(\overline{\cV_\infty(\Omega,1)})\,.$$
This yields the result.
\end{proof}
\begin{proof}[Proof of lemma \ref{lem:weakconvautresens}]  Let $h$ in  $L ^\infty(\sR^d\rightarrow\sR^d,\cL^d)$ and $(h_n)_{n\geq 1}$ be a sequence of functions in  $L ^\infty(\sR^d\rightarrow\sR^d,\cL^d)$ as in the statement of lemma \ref{lem:weakconvautresens}.
Let $\ep>0$. Let $z\in[-1,1[^d$, $\lambda\in[1,2]$.
For $k_0\geq 1$ large enough depending on $\ep$ and $\Omega$, we have
\begin{align*}
\sum_{k=k_0}^\infty\frac{1}{2^{k}}\sum_{Q\in(z+\Delta^k_\lambda) }\left\|(h_n\cL^d)(Q+x)-(h\cL^d)(Q+x)\right\|_2&\leq \sum_{k=k_0}^\infty\frac{1}{2^{k}}\sum_{Q\in(z+\Delta^k_\lambda) }2M\cL^d(Q)\ind_{Q\cap \cV_2(\Omega,1)\neq\emptyset}\\
&\leq \sum_{k=k_0}^\infty\frac{1}{2^{k}}2M \cL^d(\cV_2(\Omega,3))\\
&\leq 4M\cL^d(\cV_2(\Omega,3)) 2^{-k_0}\leq \ep\,.
\end{align*}
We aim at obtaining a uniform control in $z$ and $\lambda$ of
$$\sum_{k=0}^{k_0}\frac{1}{2^{k}}\sum_{Q\in(z+\Delta^k_\lambda) }\left\|(h_n\cL^d)(Q+x)-(h\cL^d)(Q+x)\right\|_2\,.$$
Let $\delta >0$ such that $d \delta\leq 2^{-{k_0}}$. Let $B=\delta(y+\fC)$ with $y\in\sZ^d$. 
Since $(h\cL^d)(\partial B)=0$, we have by Portmanteau theorem 
\begin{align}\label{eq:weakconvcyl}
\lim_{n\rightarrow \infty }\|(h_n\cL^d)(B)-(h\cL^ d)(B)\|_2=0\,.
\end{align}
Besides, using lemma \ref{lem:interbord}, we have
\begin{align*}
\sum_{k=0}^{k_0}&\frac{1}{2^{k}}\sum_{Q\in(z+\Delta^k_\lambda) }\left\|(h_n\cL^d)(Q+x)-(h\cL^d)(Q+x)\right\|_2\\&\leq \sum_{k=0}^{k_0}\frac{1}{2^{k}}\sum_{\substack{Q\in(z+\Delta^k_\lambda):\\Q\cap\cV_2(\Omega,1) }}\sum_{\substack{y\in\sZ^d:\\B= \delta(y+\fC)\subset Q}}\|(h_n\cL^d)(B)-(h\cL^d)(B)\|_2+\sum_{\substack{y\in\sZ^d:\\B= \delta(y+\fC)\text{ s.t. }B\cap\partial Q\neq \emptyset}}2M\delta ^d \\
&\leq 2\sum_{\substack{y\in\sZ^d:\\B= \delta(y+\fC)\cap \cV_2(\Omega,1)\neq\emptyset}}\|(h_n\cL^d)(B)-(h\cL^d)(B)\|_2+ \sum_{k=0}^{k_0}\frac{1}{2^{k}}\sum_{\substack{Q\in(z+\Delta^k_\lambda):\\Q\cap\cV_2(\Omega,1) \neq\emptyset}} 8dM\cH^{d-1}(\partial Q)\delta\\
&\leq 2\sum_{\substack{y\in\sZ^d:\\B= \delta(y+\fC)\cap \cV_2(\Omega,1)\neq\emptyset}}\|(h_n\cL^d)(B)-(h\cL^d)(B)\|_2 +\sum_{k=0}^{k_0}\frac{1}{2^{k}}\frac{\cL^d(\cV_2(\Omega,3))}{(\lambda 2^{-k}) ^d}2d(\lambda 2^{-k}) ^{d-1}8dM\delta\\
&\leq 2\sum_{\substack{y\in\sZ^d:\\B= \delta(y+\fC)\cap \cV_2(\Omega,1)\neq\emptyset}}\|(h_n\cL^d)(B)-(h\cL^d)(B)\|_2+(k_0+1)\cL^d(\cV_2(\Omega,3))16d^2M\delta\,.
\end{align*}
It follows that
$$\dis(h_n\cL^d,h\cL^d)\leq \ep+2\sum_{\substack{y\in\sZ^d:\\B= \delta(y+\fC)\cap \cV_2(\Omega,1)\neq\emptyset}}\|(h_n\cL^d)(B)-(h\cL^d)(B)\|_2+(k_0+1)\cL^d(\cV_2(\Omega,3))16d^2M\delta\,.$$
By taking the limsup in $n$, we obtain 
$$\limsup_{n\rightarrow \infty}\dis(h_n\cL^d,h\cL^d)\leq \ep+k_0\cL^d(\cV_2(\Omega,3))16d^2M\delta\,.$$
By first letting $\delta$ goes to $0$ and then 
by letting $\ep$ goes to $0$, we obtain
$$\lim_{n\rightarrow \infty}\dis(h_n\cL^d,h\cL^d)=0\,.$$
This yields the result.
The same arguments may be adapted in the case of a sequence $(\amu_n(f_n))_{n\geq1}$ using the fact that for any $B\in(x+\Delta_\lambda ^k)$ for $n$ large enough
$$\left\|\amu_n(f_n)(B)\right\|_2\leq 3d\cL^d(B)M\,.$$
\end{proof}
\begin{proof}[Proof of lemma \ref{lem:propdis2}]
Write $\mu=f\cL^d$ and $\nu=g\cL^d$.
Let $x\in[-1,1[^d$, $\lambda\in[1,2]$.
We have

\begin{align*}
\sum_{k=0}^\infty\frac{1}{2^{k}}\sum_{Q\in\Delta^k_\lambda }\left\|\mu(Q+x)-\nu(Q+x)\right\|_2&=\sum_{k=0}^\infty\frac{1}{2^{k}}\sum_{Q\in\Delta^k_\lambda }\left\|\int_{Q+x}(f(y)-g(y))d\cL^d(y)\right\|_2\\
&\leq \sum_{k=0}^\infty\frac{1}{2^{k}}\sum_{Q\in\Delta^k_\lambda }\int_{Q+x} \left\|f(y)-g(y)\right\|_2d\cL^d(y)\\
&=\sum_{k=0}^\infty\frac{1}{2^{k}}\int_{\sR^d} \left\|f(y)-g(y)\right\|_2d\cL^d(y)\\
&\leq 2\|f-g\|_{L^1}\,.
\end{align*}
It follows that 
$$\dis(f\cL^d,g\cL^d)\leq 2\|f-g\|_{L^1}\,.$$
This yields the result.
\end{proof}
\begin{proof}[Proof of lemma \ref{lem:interbord}]
 Write $B=\delta\fC+z$. By proposition \ref{prop:minkowski}, there exists a positive constant $\ep_{\fC}$ depending on $\fC$ such that
$$\forall \ep_0\in[0, \ep_\fC]\qquad\frac{\cL^d(\cV_2(\partial \fC,\ep_0))}{2\ep_0}\leq 2\cH^{d-1}(\partial \fC)\,.$$
It follows that 
$$\forall \ep_0\in[0, \ep_\fC\delta ]\qquad\frac{\cL^d(\cV_2(\partial B,\ep_0))}{2\ep_0}\leq 2\cH^{d-1}(\partial B)\,.$$
Let  $(E_i)_{i\geq 1}$ be a paving of $\sR^d$ such that  $\diam E_1\leq \ep_{\fC}\delta$, we have
\[\left|\left\{i \geq 1:E_i\cap (\partial(\delta\fC+z))\right\}\right|\leq \frac{\cL^d(\cV_2(\partial B,\diam E_1))}{\cL^d(E_1)}\leq 4\frac{\cH^{d-1}(\partial(\delta\fC+z))}{\cL^d(E_1)}\diam E_1\,.\]
This yields the result.
\end{proof}
\begin{proof} [Proof of lemma \ref{lem:propdis3}]
Let $\nu$ that satisfies the conditions in the statement of the lemma \ref{lem:propdis3}. Let $\delta\in[0,1]$ and $z\in\sR^d$. Write $B=\delta\fC+z$.

Let $w\in[-1,1[^d$ and $\lambda\in[1,2]$. Let $\ep_\fC$ be given by lemma \ref{lem:propdis3}.  Let $\rho\leq \ep_\fC \delta$.  Let $f_n\in\cS_n(\Omega)$. Write $\amu_n=\amu_n(f_n)$.
Let $j$ be the smallest integer such that $d\lambda2^{-j}\leq\rho$. Hence, $w+\Delta_j^\lambda$ is a paving of $\sR^d$ such that for any $Q\in \Delta_j^\lambda$, we have $\diam Q\leq d\lambda 2^{-j}\leq \ep_\fC \delta$.
Using lemma \ref{lem:interbord}, we have
\begin{align*}
\sum_{k=0}^\infty&\frac{1}{2^{k}}\sum_{Q\in(\Delta_k^\lambda+w)}\|\amu_n(B\cap Q)-\nu(B\cap Q)\|_2\\
&\leq \sum_{k=0}^j\frac{1}{2^{k}}\sum_{Q\in(\Delta_k^\lambda+w)}\|\amu_n(B\cap Q)-\nu(B\cap Q)\|_2+\sum_{k=j+1}^\infty\frac{1}{2^{k}}\sum_{\substack{Q\in(\Delta_k^\lambda+w):\\ Q\cap B\neq \emptyset}}
\|\amu_n(B\cap Q)-\nu(B\cap Q)\|_2\\
&\leq \sum_{k=0}^j\frac{1}{2^{k}}\sum_{Q\in(\Delta_j^\lambda+w)}\|\amu_n(B\cap Q)-\nu(B\cap Q)\|_2+\sum_{k=j+1}^\infty\frac{1}{2^{k}}\sum_{\substack{Q\in(\Delta_k^\lambda+w):\\ Q\cap B\neq \emptyset}}
\left(\|\amu_n(B\cap Q)\|_2 +\|\nu(B\cap Q)\|_2\right)\\
&\leq \sum_{k=0}^j\frac{1}{2^{k}}\left(\sum_{Q\in(\Delta_j^\lambda+w): Q\subset B}\|\amu_n( Q)-\nu( Q)\|_2+\sum_{Q\in(\Delta_j^\lambda+w): Q\cap \partial B\neq\emptyset}\left(\|\amu_n(B\cap Q)\|_2 +\|\nu(B\cap Q)\|_2\right)\right)\\
&\qquad+\sum_{k=j+1}^\infty\frac{1}{2^{k}}\left(\frac{1}{n^d}\sum_{e\in\E_n^d: c(e)\in\cV_2(B,\rho)}\|f_n(e)\|_2+ M\cL^d( 2\delta \fC+z)\right)\\
&\leq 2\sum_{Q\in(\Delta_j^\lambda+w): Q\subset B}\|\amu_n( Q)-\nu( Q)\|_2+\frac{2}{n^d}\sum_{\substack{e\in\E_n^d:\\ c(e)\in\cV_2(\partial B, d\lambda 2^{-j})}}\|f_n(e)\|_2\\
&\qquad + 2M\cL^d (\lambda 2^{-j}\fC)|\{Q\in(\Delta_j^\lambda+w): Q\cap \partial B\neq\emptyset\}|+\frac{M}{2^{j}}(2d +1)2^d\delta^d\\
&\leq 2^{j+1}\dis(\amu_n,\nu)+8Md\cH^{d-1}(\partial B)\rho+8M \cH^{d-1}(\partial B)\rho+\frac{M}{2^{j}}(2d +1)2^d\delta^d\\
&\leq \frac{4d\lambda}{\rho}\dis(\amu_n,\nu)+M\left( 16d^2+16d +(2d+1)2^d\right)\rho\delta^{d-1}
\end{align*}
where we use in the last inequality that by definition of $j$, we have $d\lambda2^{-j+1}>\rho$. It follows that 
$$\dis(\amu_n(f_n)\ind_B,\nu\ind_B)\leq \beta_1 \frac{\dis(\amu_n(f_n),\nu)}{\rho }+ \beta_2\rho\delta ^{d-1}\,.$$
where $\beta_1$ and $\beta_2$ are positive constant depending only on $M$ and $d$.
\end{proof}
\begin{proof} [Proof of lemma \ref{lem:propdis4}]
Let $\mu,\nu\in\cM(\overline{\cV_\infty(\Omega,1)})^d$ and $(A_i,1\leq i\leq r)$ be a family of pairwise disjoint subsets of $\sR^d$ such that
 $$\overline{\cV_\infty(\Omega,1)}\subset\bigcup_{i=1}^rA_i\,.$$
We have for $w\in[-1,1[^d$ and $\lambda\in[1,2]$
\begin{align*}
\sum_{k=0}^\infty\frac{1}{2^{k}}\sum_{Q\in(\Delta_k^\lambda+w)}\|\mu( Q)-\nu( Q)\|_2
&\leq \sum_{k=0}^\infty\frac{1}{2^{k}}\sum_{Q\in(\Delta_k^\lambda+w)}\sum_{i=1}^r\|\mu( Q\cap A_i)-\nu( Q\cap A_i)\|_2\\
&=\sum_{i=1}^r \sum_{k=0}^\infty\frac{1}{2^{k}}\sum_{Q\in(\Delta_k^\lambda+w)}\|\mu( Q\cap A_i)-\nu( Q\cap A_i)\|_2 \\
&\leq\sum_{i=1}^r\dis(\mu\ind_{A_i},\nu\ind_{A_i})\,.
\end{align*}
It follows that 
$$\dis(\mu,\nu)\leq \sum_{i=1}^r\dis(\mu\ind_{A_i},\nu\ind_{A_i})\,.$$
This yields the result.
\end{proof}

\subsection{Properties of the admissible streams}
The aim of this section is to prove properties that the continuous streams $\ssigma$ must be in $\Sigma(\Gamma^1,\Gamma ^2,\Omega)$ to get $\widehat{I}(\ssigma)<\infty$.
Denote by $\Sigma^M(\Gamma^1,\Gamma ^2,\Omega)$ the following set 
\begin{align}\label{eq:defSigmaM}
\Sigma^M(\Gamma^1,\Gamma ^2,\Omega)=\left\{\ssigma\in L^\infty(\sR^d\rightarrow\sR^d,\cL^d):\begin{array}{c}\exists \psi:\sN\rightarrow\sN \quad\forall n\geq 1 \quad\exists f_{\psi(n)}\in \cS_{\psi(n)}^M(\Gamma^1,\Gamma^2,\Omega)\\\lim_{n\rightarrow\infty}\dis(\amu_{\psi(n)}(f_{\psi(n)}),\ssigma\cL^d)=0\end{array} \right\}
\end{align}
 with $f_{\psi(n)}\in\cS_{\psi(n)}^M (\Gamma^1,\Gamma^2,\Omega)$.
 
\begin{prop}\label{propadmissiblestream}Let $G$ that satisfies hypothesis \ref{hypo:G}. Let $(\Omega,\Gamma^1,\Gamma^2)$ that satisfies hypothesis \ref{hypo:omega}. Let $\nu\in\cM(\sR^d)^d\setminus (\{\ssigma\cL ^d : \,\ssigma\in\Sigma(\Gamma^1,\Gamma ^2,\Omega)\cap \Sigma^M(\Gamma^1,\Gamma ^2,\Omega)\})$ (we recall that $\Sigma(\Gamma^1,\Gamma ^2,\Omega)$ was defined in \eqref{eq:defadmsigma}), we have
$$\lim_{\ep\rightarrow 0}\liminf_{n\rightarrow \infty}\frac{1}{n^d} \log\Prb(\exists f_n\in\cS_n(\Gamma^1,\Gamma^2,\Omega) : \,\dis\big(\amu_n(f_n),\nu\big)\leq \ep)=-\infty\,.$$
\end{prop}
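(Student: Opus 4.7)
The plan is to argue by contraposition: assume that the quantity
$$\lim_{\ep\to 0}\liminf_{n\to\infty}\frac{1}{n^d}\log\Prb\bigl(\exists f_n\in\cS_n(\Gamma^1,\Gamma^2,\Omega):\dis(\amu_n(f_n),\nu)\leq\ep\bigr)$$
is strictly greater than $-\infty$, and deduce that $\nu=\ssigma\cL^d$ for some $\ssigma\in\Sigma(\Gamma^1,\Gamma^2,\Omega)\cap\Sigma^M(\Gamma^1,\Gamma^2,\Omega)$, contradicting the hypothesis. Under this assumption, for every $\ep>0$ there exists an infinite set of integers $n$ for which the probability inside the log is strictly positive, and in particular at least one realisation of the environment supports a stream $f_n\in\cS_n(\Gamma^1,\Gamma^2,\Omega)$ with $\dis(\amu_n(f_n),\nu)\leq\ep$. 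A diagonal extraction produces $\ep_k\to 0$, $n_k\to\infty$, and deterministic streams $f_{n_k}$. Since Hypothesis \ref{hypo:G} forces $t(e)\leq M$ almost surely, one has automatically $f_{n_k}\in\cS_{n_k}^M(\Gamma^1,\Gamma^2,\Omega)$.

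Next, I would show that, up to a further subsequence, $\amu_{n_k}(f_{n_k})$ converges weakly to a vector measure of the form $\ssigma\cL^d$. The key input is the uniform bound $\|f_{n_k}(e)\|_2\leq M$: it gives a uniform bound on the total variation of $\amu_{n_k}(f_{n_k})$ on every fixed compact set and, more importantly, a uniform $O(\cL^d)$ bound on the mass on arbitrary cubes (because the number of edges with centre in a cube of volume $V$ is at most of order $d\,n^d V$). Banach--Alaoglu yields a weak-$\ast$ limit $\mu$, and the density bound implies $\mu\ll\cL^d$ with an $L^\infty$ density $\ssigma$ satisfying $|\ssigma\cdot\overrightarrow{e_i}|\leq M$ in each canonical direction (each $f_n(e)$ being collinear to the corresponding edge). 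Lemma \ref{lem:weakconvautresens} then gives $\dis(\amu_{n_k}(f_{n_k}),\ssigma\cL^d)\to 0$; combining with $\dis(\amu_{n_k}(f_{n_k}),\nu)\leq\ep_k\to 0$ and the triangle inequality, one obtains $\nu=\ssigma\cL^d$. By construction, $\ssigma\in\Sigma^M(\Gamma^1,\Gamma^2,\Omega)$ via the definition \eqref{eq:defSigmaM}.

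It remains to verify that $\ssigma\in\Sigma(\Gamma^1,\Gamma^2,\Omega)$, i.e.\ that the constraints defining $\cS_n(\Gamma^1,\Gamma^2,\Omega)$ pass to the limit. The vanishing of $\ssigma$ outside $\overline{\Omega}$ is immediate from the support condition on $f_{n_k}$ and the fact that $\Omega_{n_k}$ shrinks onto $\overline{\Omega}$. For the distributional divergence-free condition, I would test against $h\in\sC^\infty_c(\Omega,\sR)$: a discrete summation by parts, exploiting the node law satisfied by $f_{n_k}$ at every vertex of $\sZ^d_{n_k}\setminus(\Gamma_{n_k}^1\cup\Gamma_{n_k}^2)$, expresses $\int\nabla h\,d\amu_{n_k}(f_{n_k})$ as a sum of boundary contributions supported near $\Gamma$; for $n_k$ large enough these contributions vanish because $\mathrm{supp}(h)$ is at positive distance from $\Gamma$. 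Passing to the limit yields $\int_\Omega\ssigma\cdot\overrightarrow\nabla h\,d\cL^d=0$. The boundary condition $\ssigma\cdot\overrightarrow{n}_\Omega=0$ on $\Gamma\setminus(\Gamma^1\cup\Gamma^2)$ is obtained by testing against $u\in W^{1,1}(\Omega)$ whose trace $\gamma(u)$ is supported in a relatively open subset of $\Gamma\setminus(\Gamma^1\cup\Gamma^2)$ at positive distance from $\Gamma^1\cup\Gamma^2$; approximating such $u$ by smooth functions vanishing in a neighbourhood of $\Gamma^1\cup\Gamma^2$, the discrete node law again gives $\int_\Omega\ssigma\cdot\overrightarrow\nabla u\,d\cL^d=0$, which by the Nozawa integration-by-parts formula recalled in Section \ref{sect:defmaxflow} equals $\int_\Gamma(\ssigma\cdot\overrightarrow{n}_\Omega)\gamma(u)\,d\cH^{d-1}$. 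Varying $u$ pins down the required trace condition, and we conclude $\ssigma\in\Sigma(\Gamma^1,\Gamma^2,\Omega)\cap\Sigma^M(\Gamma^1,\Gamma^2,\Omega)$, contradicting the hypothesis on $\nu$.

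The hardest piece of this scheme will be the passage to the trace-sense boundary condition: the discrete node law gives only information at interior vertices, and one must justify the limit of boundary contributions using Nozawa's construction of a well-defined normal trace, with careful localisation of the test functions away from $\Gamma^1\cup\Gamma^2$. The remaining steps---the extraction of the weak limit, the absolute continuity, the identification $\nu=\ssigma\cL^d$, and the divergence-free condition in the distributional sense on $\Omega$---are essentially standard compactness and consistency arguments once the estimates of Section \ref{sect:properties} are available.
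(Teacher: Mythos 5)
Your proposal is correct and follows essentially the same line of argument as the paper: argue by contraposition, extract a sequence of realizations and streams $f_{n_k}$ with $\dis(\amu_{n_k}(f_{n_k}),\nu)\to 0$, use the uniform $L^\infty$ bound $\|f_{n_k}(e)\|_2\leq M$ (automatic since $t(e)\leq M$ a.s.) to obtain a weak limit $\ssigma\cL^d$ with $|\ssigma\cdot\overrightarrow{e_i}|\leq M$, and then verify the defining properties of $\Sigma(\Gamma^1,\Gamma^2,\Omega)$ by passing the node law and boundary conditions to the limit via discrete summation by parts and Nozawa's trace formula. The only meaningful difference in packaging is this: the paper obtains weak convergence of $\amu_{a_n}(f_{a_n})$ directly to $\nu$ by a single application of Lemma \ref{lem:weakconv} (which turns $\dis$-convergence into weak convergence under the uniform total-variation bound), and then delegates everything you sketch in your third paragraph — absolute continuity, vanishing outside $\overline\Omega$, null distributional divergence, and null normal trace on $\Gamma\setminus(\Gamma^1\cup\Gamma^2)$ — to a separate deterministic Lemma \ref{lem:convdiscstream}, which is exactly the result you are re-deriving inline. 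You instead extract a weak-$\ast$ limit $\mu$ by compactness, identify $\mu=\ssigma\cL^d$, use Lemma \ref{lem:weakconvautresens} to recover $\dis$-convergence to $\ssigma\cL^d$, and conclude $\nu=\ssigma\cL^d$ by the triangle inequality; this is logically fine (since $\dis$ is a genuine metric), but slightly roundabout — a single use of Lemma \ref{lem:weakconv} already gives you $\amu_{n_k}(f_{n_k})\rightharpoonup\nu$, after which the whole point is to show that any such weak limit of a sequence in $\cS_{n}^M(\Gamma^1,\Gamma^2,\Omega)$ lies in $\{\ssigma\cL^d:\ssigma\in\Sigma\cap\Sigma^M\}$, i.e.\ Lemma \ref{lem:convdiscstream}. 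Invoking that lemma would let you skip your third paragraph entirely, which is precisely why the paper factors it out.
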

To prove this proposition we need the following deterministic lemma. It states that the limit of a sequence of discrete stream inherits the properties of the discrete streams. The main ingredient of the proof of this lemma were already present in \cite{CT1}. We postpone its proof after the proof of proposition \ref{propadmissiblestream}.
\begin{lem} \label{lem:convdiscstream} Let $(\Omega,\Gamma^1,\Gamma^2)$ that satisfies hypothesis \ref{hypo:omega}. Let $M>0$. Let $\psi:\sN\rightarrow\sN$ be an increasing function. Let $ f_{\psi(n)}\in \cS_{\psi(n)}^M(\Gamma^1,\Gamma^2,\Omega)$, for $n \geq 1$ such that $\mu_{\psi(n)}(f_{\psi(n)})$ weakly converges towards a measure $\nu \in \cM(\sR^d)^d$. Then, we have
$$\nu\in\left\{\ssigma\cL^d:\ssigma\in\Sigma^M(\Gamma^1,\Gamma^2,\Omega)\cap \Sigma(\Gamma^1,\Gamma^2,\Omega)\right\}\,.$$
\end{lem}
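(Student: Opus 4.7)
The plan is to verify, in turn, each defining property of $\Sigma(\Gamma^1,\Gamma^2,\Omega)\cap\Sigma^M(\Gamma^1,\Gamma^2,\Omega)$ for the limit $\nu$, and finally to upgrade weak convergence to $\dis$-convergence. Write $\amu_n:=\amu_{\psi(n)}(f_{\psi(n)})$ for short. Since each $f_{\psi(n)}(e)$ is collinear with $e$, the $i$-th component of $\amu_n$ only receives contributions from edges parallel to $\overrightarrow{e_i}$; since there are at most $\psi(n)^d\cL^d(Q)+O(\psi(n)^{d-1})$ such edges with centre in a cube $Q$ with $\cL^d(\partial Q)=0$, and each contributes at most $M/\psi(n)^d$, one gets $|\amu_n\cdot\overrightarrow{e_i}|(Q)\leq M\cL^d(Q)+o(1)$. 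By Portmanteau, $|\nu\cdot\overrightarrow{e_i}|(Q)\leq M\cL^d(Q)$ for all such $Q$, whence $\nu=\ssigma\cL^d$ with $\ssigma\in L^\infty(\sR^d\to\sR^d,\cL^d)$ and $|\ssigma\cdot\overrightarrow{e_i}|\leq M$ $\cL^d$-a.e. Moreover $f_{\psi(n)}(e)=0$ whenever an endpoint of $e$ lies outside $\Omega_{\psi(n)}$, so $\amu_n$ is supported in a $2/\psi(n)$-neighbourhood of $\overline{\Omega}$, forcing $\ssigma=0$ $\cL^d$-a.e.\ on $\Omega^c$.

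To obtain $\diver\ssigma=0$ in the distributional sense on $\Omega$, I fix $h\in\sC^\infty_c(\Omega,\sR)$. For $\psi(n)$ large, $h$ vanishes at every vertex outside $\Omega_{\psi(n)}\setminus(\Gamma_{\psi(n)}^1\cup\Gamma_{\psi(n)}^2)$. For an edge $e=\langle x,y\rangle$ with $\overrightarrow{xy}=\overrightarrow{e_i}/\psi(n)$ I write $f_{\psi(n)}(e)=\phi_e\overrightarrow{e_i}$ and $\partial_i h(c(e))=\psi(n)(h(y)-h(x))+O(\|\nabla^2 h\|_\infty/\psi(n))$, giving
\begin{align*}
\int_{\sR^d}\ssigma\cdot\overrightarrow{\nabla}h\, d\cL^d
&=\lim_{n\to\infty}\frac{1}{\psi(n)^d}\sum_{e}f_{\psi(n)}(e)\cdot\overrightarrow{\nabla}h(c(e))\\
&=\lim_{n\to\infty}\frac{1}{\psi(n)^{d-1}}\sum_{e=\langle x,y\rangle}\phi_e\bigl(h(y)-h(x)\bigr)+O\bigl(\psi(n)^{-1}\bigr).
\end{align*}
Reorganising this sum by vertex $x$, each coefficient $h(x)$ is multiplied by the net outflow $\sum_{y\sim x}f_{\psi(n)}(\langle x,y\rangle)\cdot\overrightarrow{xy}\,\psi(n)$ at $x$, which vanishes by the node law at every vertex where $h$ is nonzero; hence the limit is $0$.

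The hardest step is the boundary condition $\ssigma\cdot\overrightarrow{n}_\Omega=0$ $\cH^{d-1}$-a.e.\ on $\Gamma\setminus(\Gamma^1\cup\Gamma^2)$. The strategy is to extend the previous computation to functions $h\in W^{1,1}(\Omega)$ whose trace $\gamma(h)$ vanishes on a neighbourhood in $\Gamma$ of $\overline{\Gamma^1}\cup\overline{\Gamma^2}$, which is possible thanks to the separation assumption in Hypothesis~\ref{hypo:omega}; such $h$ are approximated by smooth functions $h_k$ of the same type. For each $h_k$ the rearrangement of the discrete sum above still yields a vanishing contribution because the node law holds at every vertex in $\Omega_{\psi(n)}\setminus(\Gamma_{\psi(n)}^1\cup\Gamma_{\psi(n)}^2)$, including those lying in $\Gamma_{\psi(n)}$. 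Passing to the limit gives $\int_\Omega\ssigma\cdot\overrightarrow{\nabla}h\,d\cL^d=0$; combining with $\diver\ssigma=0$, the Nozawa trace formula recalled in Section~\ref{sect:defmaxflow} collapses this to $\int_\Gamma(\ssigma\cdot\overrightarrow{n}_\Omega)\gamma(h)\,d\cH^{d-1}=0$, and varying $h$ over a family of traces dense in $L^1(\Gamma\setminus(\Gamma^1\cup\Gamma^2),\cH^{d-1})$ yields the announced identity. Finally, Lemma~\ref{lem:weakconvautresens} applied with the uniform bound $\|f_{\psi(n)}(e)\|_2\leq M$ upgrades weak convergence to $\lim_{n\to\infty}\dis(\amu_n,\ssigma\cL^d)=0$, so $(\psi(n),f_{\psi(n)})$ witnesses $\ssigma\in\Sigma^M(\Gamma^1,\Gamma^2,\Omega)$ by \eqref{eq:defSigmaM}. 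The principal obstacle is therefore the trace identity: compactly-supported test functions (sufficient for divergence-freeness) do not see the boundary, and constructing trace-compatible $W^{1,1}$ test functions while keeping the discrete node law applicable at every relevant vertex is where the geometric hypotheses on $\partial\Omega$ and on $\partial_\Gamma\Gamma^i$ are essential.
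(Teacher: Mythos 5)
Your proof follows the same four-step structure as the paper's (absolute continuity and $L^\infty$ bound, support in $\overline{\Omega}$, vanishing distributional divergence via discrete integration by parts and the node law, boundary trace via test functions vanishing near $\Gamma^1\cup\Gamma^2$ and the Nozawa trace identity), together with the same use of Lemma~\ref{lem:weakconvautresens} to conclude $\ssigma\in\Sigma^M(\Gamma^1,\Gamma^2,\Omega)$. The only cosmetic deviations are that you deduce absolute continuity from lower semicontinuity of the total variation on open cubes rather than the paper's Prohorov--plus--Vitali argument, and you approximate $W^{1,1}(\Omega)$ test functions by smooth ones where the paper works directly with $u\in\sC_c^\infty((\Gamma^1\cup\Gamma^2)^c,\sR)$; neither changes the substance of the argument.
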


\begin{proof}[Proof of Proposition \ref{propadmissiblestream}]
Let $\nu\in\cM(\sR^d)^d$. We start by extracting a deterministic sequence of good realizations of $\amu_n(f_n)$ that converges weakly towards $\nu$.
Let us assume there exists $\kappa>0$ such that 
$$\forall \ep>0\qquad\liminf_{n\rightarrow\infty}\frac{1}{n^d}\log \Prb(\exists f_n\in\cS_n(\Gamma^1,\Gamma^2,\Omega) : \,\dis\big(\amu_n(f_n),\nu\big)\leq \ep)\geq -\kappa\,.$$
Hence, we can build iteratively an increasing sequence $(a_n)_{n\geq 1}$ of integers such that
$$\forall n\geq 1\qquad \frac{1}{a_n^d}\log \Prb\left(\exists f_{a_n}\in\cS_{a_n}(\Gamma^1,\Gamma^2,\Omega) : \,\dis\big(\amu_{a_n}(f_{a_n}),\nu\big)\leq \frac{1}{n}\right)\geq -2\kappa\,.$$
It follows that the event\[\left\{\exists f_{a_n}\in\cS_{a_n}(\Gamma^1,\Gamma^2,\Omega) : \,\dis\big(\amu_{a_n}(f_{a_n}),\nu\big)\leq \frac{1}{n}\right\}\] is not empty. We choose according to some deterministic rule a realization $\omega_n$ of the capacities of the set $\Omega\cap\E_{a_n}^d$ that belongs to this event. According to some deterministic rule, on the fixed realization $\omega_n$, we choose a stream $f_{a_n}\in\cS_{a_n}(\Gamma^1,\Gamma^2,\Omega)$ that satisfies 
\begin{align}\label{eq:distmeasure}
\dis\big(\amu_{a_n}(f_{a_n}),\nu\big)\leq \frac{1}{n}\,.
\end{align}
By lemma \ref{lem:weakconv}, it follows that $\amu_{a_n}(f_{a_n})$ weakly converges towards $\nu$.
By lemma \ref{lem:convdiscstream}, we have that 
$$\nu\in \left\{\ssigma\cL ^d : \,\ssigma\in\Sigma(\Gamma^1,\Gamma ^2,\Omega)\cap \Sigma^M(\Gamma^1,\Gamma ^2,\Omega)\right\})\,.$$
This concludes the proof.
\end{proof}

\begin{proof}[Proof of lemma \ref{lem:convdiscstream}]
To lighten the notation we will write $\amu_n$ instead of $\amu_{\psi(n)}(f_{\psi(n)})$. Note that for all $n\geq 1$, the support of $\amu_n$ is included in the compact set $\overline{\cV_\infty(\Omega,1)}$. %a definir
For $i\in\{1,\dots,d\}$, let $\amu_n^i=\amu_n^{i,+}-\amu_n^{i,-}$ be the Hahn-Jordan decomposition of the signed measure $\amu_n^i$.
Notice that we have
$$|\amu_n^i|\left(\overline{\cV_\infty(\Omega,1)}\right)\leq \frac{1}{n^d}\sum_{e\in\E_n^d}\|f_n(e)\|_2\leq   \frac{M}{n^d} 2d |\Omega_n|\leq 2d\cL^d(\overline{\cV_\infty(\Omega,1)})M\,.$$
Hence, the sequence $(\amu_n)_{n\geq 1}$ is uniformly tight and uniformly bounded in the total variation norm. By Prohorov theorem (see for example, Theorem 8.6.2 in volume II of \cite{Bogachev}), it follows that up to extraction, we can assume that 
$$\amu_n^{i,+}\rightharpoonup \amu^{i,+},\quad \amu_n^{i,-}\rightharpoonup \amu^{i,-}\,,$$
and by using inequality \eqref{eq:distmeasure} and lemma \ref{lem:weakconv}, we deduce that
$$\forall i\in\{1,\dots,d\}\qquad \nu^i =\amu^{i,+}-\amu^{i,-}\,.$$

\noindent{\bf Step 1 : We prove that $\nu$ is absolutely continuous with respect to $\cL ^d$.}
This proof is an adaptation of the proof of proposition 4.2. in \cite{CT1}. 
Let $A$ be a Borel subset of $\sR^d$. Since the Lebesgue measure $\cL^d$ is outer regular, for $\ep>0$ there exists an open set $O$ such that $A\subset O$ and $\cL^d(O\setminus A)\leq \ep$. By the Vitali covering theorem for Radon measures (see Theorem 2.8. in \cite{Mattila}), there exists a countable family $(B(x_j,r_j),j\in J)$ of disjoint closed balls included in $O$ such that 
$$\amu^{i,+}\left(O\setminus \bigcup _{j\in J}B(x_j,r_j)\right)=0\,.$$
We have for $\delta>0$, using Portmanteau theorem
$$\amu^{i,+}(O)\leq \amu^{i,+}\big(\cup _{j\in J}\mathring{B}(x_j,r_j+\delta)\big)\leq \liminf_{n\rightarrow\infty} \amu_n^{i,+}\big(\cup_{j\in J} \mathring{B}(x_j,r_j+\delta)\big)\,.$$
Moreover, we have on the realization $\omega_n$,
 $$\amu_n^{i,+}\big(\cup_{j\in J} \mathring{B}(x_j,r_j+\delta)\big)\leq M\sum_{j\in J}\cL^d (B(x_j,r_j+\delta+2n^{-1})\,.$$
Hence, by taking the liminf in $n$ in the previous inequality and then by letting $\delta$ goes to $0$, we obtain
$$\amu^{i,+}(O)\leq M\sum_{j\in J}\cL^d (B(x_j,r_j))\leq M\cL^d(O)\leq M(\ep+\cL^d(A))\,$$
and $$\amu ^{i,+}(A)\leq  \amu^{i,+}(O)\leq  M(\ep+\cL^d(A))\,.$$
Finally, we let $\ep$ goes to $0$, we deduce that 
$\amu ^{i,+}(A)\leq  M\cL^d(A)$. Similarly, $\amu ^{i,-}(A)\leq  M\cL^d(A)$. We deduce that $\nu$ is absolutely continuous with respect to the Lebesgue measure; that is, there exists $\ssigma\in L^1(\sR^d\rightarrow \sR^d,\cL ^d)$ such that $\nu=\ssigma \cL ^d$. Hence, we have $\ssigma\in\Sigma^M (\Gamma^1,\Gamma^2,\Omega)$. We use the notation $\ssigma=(\sigma^1,\dots,\sigma^d)$. We have proved that for all $i\in\{1,\dots,d\}$,
$$\forall A \in\cB(\sR^d)\qquad\int_A|\sigma^i|d\cL^d\leq \amu^{i,+}(A)+\amu^{i,-}(A)\leq 2M\cL ^d(A)\,,$$
and $$\forall A \in\cB(\sR^d)\qquad-M\cL^d(A)\leq \int_A \ssigma\cdot \overrightarrow{e_i} d\cL^d= \amu^{i,+}(A)-\amu^{i,-}(A)\leq M\cL^d(A)$$
which implies that $|\ssigma\cdot\overrightarrow{e_i}|\leq M$ $\cL^d$-almost everywhere. It follows that $\ssigma\in L^\infty(\sR^d\rightarrow \sR^d,\cL ^d)$.
Moreover, we have 
\begin{align}\label{eq:sigmaL1mun}
\|\ssigma\|_{L^1}=\int_\Omega\|\ssigma\|_2d\cL^d&\leq\sum_{i=1}^d \int_\Omega |\sigma^i|d\cL^d\leq \sum_{i=1}^d \amu^{i,+}(\Omega)+\amu^{i,-}(\Omega)\nonumber\\
&\leq \sum_{i=1}^d \liminf_{n\rightarrow\infty}\amu_n^{i,+}(\Omega)+\liminf_{n\rightarrow\infty}\amu_n^{i,-}(\Omega)\nonumber\\
&\leq \liminf_{n\rightarrow\infty}\sum_{i=1}^d \amu_n^{i,+}(\Omega)+\amu_n^{i,-}(\Omega)\nonumber\\&=\liminf_{n\rightarrow\infty}\frac{1}{n^d}\sum_{e\in\E_n^d}\|f_n(e)\|_2
\end{align}
where we use Portmanteau theorem and the fact that $\Omega$ is an open set.

\noindent{\bf Step 2 : We prove that $\nu(\sR^d\setminus \Omega)=0$.}
The set $\sR^d\setminus \overline{\Omega}$ is an open set. Using Portmanteau theorem, we have
\[\forall i\in\{1,\dots,d\}\quad\forall \diamond\in\{+,-\}\qquad \amu^{i,\diamond}(\sR^d\setminus \overline{\Omega})\leq\liminf_{n\rightarrow \infty}\amu_n^{i,\diamond}(\sR^d\setminus \overline{\Omega})\,.\]
Besides, using proposition \ref{prop:minkowski}, and by construction of $\amu_n$ we have for $n$ large enough
$$\amu_n^{i,\diamond}(\sR^d\setminus \overline{\Omega})\leq 2dM \frac{|\cV_\infty(\partial \Omega,1/n)\cap \sZ_n^d|}{n^d}\leq  2dM\cL^d(\cV_2(\partial \Omega,d/n))\leq 8d^2\frac{M}{n} \cH^{d-1}(\partial \Omega)\,.$$
It follows that 
\[\forall i\in\{1,\dots,d\}\quad\forall \diamond\in\{+,-\}\qquad \amu^{i,\diamond}(\sR^d\setminus \overline{\Omega})=0\,\]
and $\nu(\sR^d\setminus \overline{\Omega})=0$. Finally, since $\nu$ is absolutely continuous with respect to the Lebesgue measure we have $\nu(\partial \Omega)=0$ and the result follows.
%Let $A\subset \sR^d\setminus \Omega$. Let $\ep>0$. Since the Lebesgue measure $\cL^d$ is outer regular, for $\ep>0$ there exists an open set $O$ such that $A\subset O$ and $\cL^d(O\setminus A)\leq \ep$.
%Moreover $\amu_n$ is null on $A$ for $n$ large enough, by Portmanteau theorem, we get
%$$\forall i\in\{1,\dots,d\}\qquad \amu^{i,+}(A)\leq \amu^{i,+}(O)\leq \liminf \amu_n^{i,+}\big(A)+\amu_n^{i,+}\big(O\setminus A)\leq 2dM\ep\,.$$
%By letting $\ep$ goes to $0$ we obtain 
%$$\forall i\in\{1,\dots,d\}\qquad \amu^{i,+}(A)=0\,.$$
%Similarly, we get
%$$\forall i\in\{1,\dots,d\}\qquad \amu^{i,-}(A)=0\,.$$
% Let $n\geq 1$. Let $f_n\in\cS_n(\Gamma^1,\Gamma^2,\Omega)$, we have
%$$\forall e\in A\cap \E_n^d\quad \qquad f_n(e)=0\,.$$
%It follows that
%$$\forall f_n\in\cS_n(\Gamma^1,\Gamma^2,\Omega)\qquad \dis(\amu_n(f_n),\nu)\geq \left\|\int_ {A}d\amu_n-\int_Ad\nu\right\|=\left\|\int_Ad\nu\right\|=\| \nu(A)\|\,.$$
%We conclude that $\|\nu(A)\|=0$.

\noindent{\bf Step 3 : We prove that $ \diver\ssigma=0$ $\cL^d$-almost everywhere.} 
Thanks to the previous step, we can write $\nu=\ssigma\cL^d$. 
Let $h\in \sC_c ^\infty (\Omega,\sR)$. 
For all $x\in \Gamma_n^1\cup \Gamma_n ^2$, let $\di f_n(x)$ be the amount of water that appears at $x$ according to the stream $f_n$:
\begin{align}\label{eq:discdiv}
\di f_n(x)= n\sum_{y\in\sZ_n^d:\,e =\langle x,y \rangle\in \E_n^d } f_n(e)\cdot \overrightarrow{yx}\,.
\end{align}
We have that $f_n$ satisfies the node law at $x$ if and only if $\di f_n(x)=0$.
We recall that $\|\overrightarrow{yx}\|_2=1/n$, this accounts for the $n$ factor in the expression above. The function $\di f_n$ corresponds to a discrete divergence. Since $f_n$ satisfies the node law, $\di f_n$ is null on $\Omega_n\setminus(\Gamma_n^1\cup\Gamma_n^2)$. We state here the equality obtained in \cite{CT1} just after equality (4.6):
$$\int_{\sR^d}\overrightarrow{\nabla}h\cdot d\amu_n=-\frac{1}{n ^{d-1}}\sum_{x\in \Gamma_n^1\cup \Gamma_n ^2}h(x)\,\di f_n(x)+\frac{\alpha_n(h,f_n,\Omega)}{n ^d}\,$$
with
$$\lim_{n\rightarrow \infty} \frac{\alpha_n(h,f_n,\Omega)}{n ^d}=0\,.$$
This equality is not difficult to obtain, it uses the fact that the stream $f_n$ has a null discrete divergence to control the divergence of the limiting object $\ssigma$. The proof of this result may be found in Proposition 4.5 in \cite{CT1}.
Notice that since $h\in \sC_c ^\infty (\Omega,\sR)$, $h$ is null on $\Gamma_n^1\cup \Gamma_n ^2$, for all $n$, and
$$\lim_{n\rightarrow \infty}\int_{\sR^d}\overrightarrow{\nabla}h\cdot d\amu_n= 0\,.$$
Since $\overrightarrow{\nabla} h\in\sC_c ^\infty (\Omega,\sR)$, by Portmanteau theorem, we have
$$\int_{\sR ^d}h\diver \ssigma d\cL ^d=\int_{\sR^d}\ssigma\cdot\overrightarrow{\nabla}h\,d\cL^d=\lim_{n\rightarrow\infty}\int_{\sR^d}\overrightarrow{\nabla}h\cdot d\amu_n=0\,.$$
This yields the result.

\noindent{\bf Step 4 : We prove that $\ssigma\cdot\overrightarrow{n}_\Omega=0$ $\cH^{d-1}$-almost everywhere on $\Gamma\setminus( \Gamma^1\cup\Gamma^2)$.}  Thanks to inequality $(4.8)$ in the proof of the Corollary 2 in \cite{CT1}, $\ssigma\cdot\overrightarrow{n}_\Omega$ is an element of $L ^\infty (\Gamma,\cH ^{d-1})$ characterized by
\begin{align}\label{eq:caracterisationsigman}
\forall u\in\sC_c ^\infty (\sR ^d,\sR )\qquad \int_\Gamma (\ssigma\cdot\overrightarrow{n}_\Omega)u\,d\cH ^{d-1}=\int_{\sR ^d} \ssigma\cdot \overrightarrow{\nabla}ud\cL ^d\,.
\end{align}
Let $u\in\sC^\infty_ c((\Gamma^1\cup\Gamma^2)^c,\sR)$ . 
As in the previous step, we have
$$\lim_{n\rightarrow \infty}\int_{\sR^d}\overrightarrow{\nabla}u\cdot d\amu_n=\lim_{n\rightarrow\infty}-\frac{1}{n ^{d-1}}\sum_{x\in \Gamma_n^1\cup \Gamma_n ^2}u(x)\,\di f_n(x)+\frac{\alpha_n(h,f_n,\Omega)}{n ^d}\,.$$
Since $u$ is null on $\Gamma_n^1\cup \Gamma_n ^2$ for $n$ large enough, we  have $$\lim_{n\rightarrow \infty}\int_{\sR^d}\overrightarrow{\nabla}u\cdot d\amu_n=0\,.$$
Finally, using Portmanteau theorem we have 
$$\int_\Gamma (\ssigma\cdot\overrightarrow{n}_\Omega)u\,d\cH ^{d-1}=\int_{\sR ^d} \ssigma\cdot \overrightarrow{\nabla}u\,d\cL ^d=\lim_{n\rightarrow \infty}\int_{\sR^d}\overrightarrow{\nabla}u\cdot d\amu_n=0\,.$$
This ends the proof.
\end{proof}

\section{Technical lemmas \label{sect: technicallem}}
\subsection{Mixing\label{sect:mixing}}
This section is only geometrical and does not contain any randomness. The aim of this section is to prove that we can reconnect two different streams if the incoming flow coincides with the outcoming flow. Namely, if we consider  two families of inputs and outputs such that the sum of the inputs is equal to the sum of the outputs, we can connect the inputs with the outputs. To connect the streams, we are going to give an algorithm that enables to build a stream that connects the inputs to the outputs. To lighten the notations, all the lemmas of this section are stated and proved in $\sZ^d$ instead of $\sZ_n^d$.
%\subsection{Mixing with continuous inputs}
 \begin{lem}[Mixing]\label{lem:mixing} Let $M> 0$, $n\geq 1$. For any two sequences of real numbers $(f_{in}(y),y\in \{1,\dots,n\}^{d-1})$ and $(f_{out}(y),y\in \{1,\dots,n\}^{d-1})$ satisfying 
 $$\forall y\in \{1,\dots,n\}^{d-1}\qquad |f_{in}(y)|\leq M\, ,\quad  |f_{out}(y)|\leq M\,$$ and
 $$\sum_{y\in \{1,\dots,n\}^{d-1}}f_{in}(y)=\sum_{y\in \{1,\dots,n\}^{d-1}}f_{out}(y)\,,$$
for any $m\geq 2(d-1)n$, there exists a stream $f:\E^d\rightarrow \sR^d$ such that:
\begin{itemize}
\item[$\cdot$] for each edge $e\notin [0,m[\times [1,n]^{d-1}$ (we recall that $e$ belong to a set if its left endpoint belong to this set), we have $f(e)=0$,
\item[$\cdot$] for each $e\in\E^d$ we have $\|f(e)\|_2\leq M$,
\item[$\cdot$] for each $y\in \{1,\dots,n\}^{d-1} $, we have $f(\langle (0,y),(1,y)\rangle)=f_{in}(y)\overrightarrow{e_1}$ and $f(\langle(m-1,y), (m,y)\rangle)=f_{out}(y)\overrightarrow{e_1}$,
\item [$\cdot$] for each vertex $v\in \sZ^d\setminus((\{0\}\times  \{1,\dots,n\}^{d-1})\cup( \{m\}\times  \{1,\dots,n\}^{d-1}))$ the node law is respected.
\end{itemize} 
Moreover, if the outputs are uniform, \textit{i.e}, 
$$\forall y \in\{1,\dots,n\}^{d-1}\qquad f_{out}(y)=\frac{1}{n^{d-1}}\sum_{z\in \{1,\dots,n\}^{d-1}}f_{in}(z)$$
then the same result holds for any $m\geq (d-1)n$.
\end{lem}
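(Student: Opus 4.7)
The plan is to construct $f$ as the concatenation of $2(d-1)$ \emph{averaging stages}, each occupying $n$ consecutive columns of the slab along $\overrightarrow{e_1}$: the first $d-1$ stages successively average the horizontal flow over the transverse coordinates $y_1,\ldots,y_{d-1}$, reducing it to the uniform value $\bar f:=n^{-(d-1)}\sum_y f_{in}(y)$; the last $d-1$ stages run the mirror construction that spreads $\bar f$ into $f_{out}$. The factor $2$ in $m\ge 2(d-1)n$ accounts for this ``there and back'' structure, and when $f_{out}\equiv\bar f$ the spreading half collapses to mere uniform propagation, leaving only $m\ge(d-1)n$.

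The building block is a one-dimensional sub-lemma: given $a_1,\ldots,a_n\in[-M,M]$ with mean $\bar a$, I construct a two-dimensional stream on $\{0,\ldots,n-1\}\times\{1,\ldots,n\}$ with horizontal input $h_0(j)=a_j$, horizontal output $h_{n-1}(j)=\bar a$, vertical flow only along $\overrightarrow{e_2}$, and every edge of norm at most $M$. The explicit choice is the linear interpolation
\[
h_k(j) \;=\; a_j + \frac{k}{n-1}(\bar a - a_j),\qquad v_k(j) \;=\; \frac{1}{n-1}\sum_{j'=1}^{j}(a_{j'}-\bar a),
\]
for which the node law at interior vertices holds by construction. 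Convexity yields $|h_k(j)|\le M$ immediately, and the key estimate
\[
\Big|\sum_{j'=1}^{J}(a_{j'}-\bar a)\Big| \;=\; \Big|\tfrac{n-J}{n}\sum_{j'\le J} a_{j'}\;-\;\tfrac{J}{n}\sum_{j'>J} a_{j'}\Big|\;\le\;\frac{2J(n-J)M}{n}\;\le\;\frac{Mn}{2}
\]
gives $|v_k(j)|\le Mn/(2(n-1))\le M$ for all $n\ge 2$ (the case $n=1$ is vacuous). The identity $\sum_{j'=1}^{n}(a_{j'}-\bar a)=0$ moreover forces $v_k(n)=0$, so no flow leaks out of the transverse boundary of the sub-slab.

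To assemble the full stream, I iteratively define $a^{(0)}=f_{in}$ and $a^{(i)}(y)=\tfrac{1}{n}\sum_{y'_i}a^{(i-1)}(y_1,\ldots,y'_i,\ldots,y_{d-1})$ for $i=1,\ldots,d-1$; by induction each $a^{(i)}$ is independent of $y_1,\ldots,y_i$, takes values in $[-M,M]$, and $a^{(d-1)}\equiv\bar f$. Stage $i$ occupies the horizontal edges with first coordinate in $[(i-1)n,in-1]$: for each fixed $y_{-i}$, I apply the sub-lemma in the $(\overrightarrow{e_1},\overrightarrow{e_i})$-plane to pass from $a^{(i-1)}(\cdot,y_{-i})$ to the constant $a^{(i)}(y_{-i})$, setting the remaining vertical flows to $0$. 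At the interface column $x_1=in$ the outgoing flow of stage $i$ and the incoming flow of stage $i+1$ both equal $a^{(i)}$, and every vertical flow there vanishes (stage $i$ uses only direction $y_i$, stage $i+1$ only $y_{i+1}$, and each places its vertical flows in the interior of its own stage); the node law at the interface thus holds trivially. A mirror sequence of $d-1$ spreading stages, running the sub-lemma with the $\overrightarrow{e_1}$-axis reversed on patterns $c^{(i)}$ constructed analogously from $f_{out}$, brings $\bar f$ back to $f_{out}$ over the remaining $(d-1)n$ columns. When $f_{out}\equiv\bar f$, the spreading stages reduce to columns carrying the constant flow $\bar f$, saving exactly $(d-1)n$ columns.

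The essential quantitative input is the sharp bound $|\sum_{j'\le J}(a_{j'}-\bar a)|\le Mn/2$: it is what dictates the length~$n$ of each stage and what ultimately yields the required total width $2(d-1)n$. Everything else is combinatorial: checking the node law at interior vertices (automatic from the sub-lemma), at the transverse boundary (automatic from $\sum_{j'=1}^n(a_{j'}-\bar a)=0$), and at the interfaces between stages (the clean-interface property above). The main care needed is in the bookkeeping of these interfaces to ensure that successive sub-lemma constructions, each handling only a single transverse direction, patch together into a single globally admissible stream.
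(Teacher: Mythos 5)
Your proof is correct and reaches the stated widths $m\ge 2(d-1)n$ (respectively $m\ge(d-1)n$), but the two-dimensional mixing primitive underlying your stage construction is genuinely different from the one the paper uses. The paper's Lemma~\ref{lem:mixing in dim2} builds the 2D averaging stream algorithmically: it starts from straight lines carrying $\min(f_{in}(i),\beta)$, then repeatedly picks a source line in excess and a sink line in deficit and reroutes mass along an L-shaped path $\gamma_{i,j}$, maintaining monotonicity invariants on each line to keep every edge under capacity. You instead write down a closed-form stream: a linear interpolation $h_k(j)=\bigl(1-\tfrac{k}{n-1}\bigr)a_j+\tfrac{k}{n-1}\bar a$ in the flow direction, balanced by the column-independent vertical flow $v(j)=\tfrac{1}{n-1}\sum_{j'\le j}(a_{j'}-\bar a)$, and you rely on the sharp bound $\bigl|\sum_{j'\le J}(a_{j'}-\bar a)\bigr|\le Mn/2$ to secure $|v(j)|\le M$ for $n\ge 2$. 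Your stage-by-stage averaging over the transverse coordinates corresponds to the paper's induction on the dimension, and your mirror block matching arbitrary outputs corresponds to the paper's reflection $Sf^o$; both yield the same $2(d-1)n$ count. The explicit formula is cleaner and shorter for the present lemma. The paper's greedy algorithm is heavier but produces streams with a specific monotone structure on each source and sink line which the paper exploits again in the refinement Lemma~\ref{lem:mixprecis}; your interpolation does not come with that structure, so the two constructions are not interchangeable downstream, but for Lemma~\ref{lem:mixing} itself both arguments are valid.
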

Before proving this lemma, we need to prove that we can reconnect streams in the particular case of the dimension 2 with uniform outputs. We build the stream by an algorithm, this algorithm will be used in other proofs of this section.
\begin{lem}[Mixing in dimension 2]\label{lem:mixing in dim2}Let $M> 0$, $n\geq 1$. For any sequence of real number $(f_{in}(j),j=1,\dots,n)$ satisfying
 $$\forall j\in \{1,\dots,n\}\qquad| f_{in}(j)|\leq M\,,$$
there exists a stream $f:\E^2\rightarrow \sR^2$ 
\begin{enumerate}[(i)]
\item for each edge $e\notin [0,n[\times[1,n]$ we have  $$f(e)=0\,$$,\label{item:1}
\item for each $e\in\E^2$ we have $\|f(e)\|_2\leq M$,\label{item:2}
\item for each $j\in\{1,\dots,n\}$, we have $f(e_j)=f_{in}(j)\overrightarrow{e_1}$ and $$f( e_j+n\overrightarrow{e_1})=\frac{1}{n}\sum_{i=1}^{n}f_{in}(i)\overrightarrow{e_1},$$ where $e_j=\langle (0,j),(1,j)\rangle$ and $\langle x,y\rangle+k\overrightarrow{e_1}=\langle x+k\overrightarrow{e_1},y+k\overrightarrow{e_1}\rangle$,\label{item:3}
\item for each vertex $v\in \sZ^2\setminus((\{0\}\times  \{1,\dots,n\})\cup(\{n\}\times  \{1,\dots,n\}))$, the node law is respected.\label{item:4}
\end{enumerate}

\end{lem}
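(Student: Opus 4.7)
Set $\bar f := n^{-1}\sum_{k=1}^n f_{in}(k)$; note that $|\bar f|\le M$. The strategy is to linearly interpolate the horizontal flow column by column between the non-uniform input profile $(f_{in}(j))_j$ at column $0$ and the constant profile $(\bar f)_j$ at column $n-1$, and then let the node law dictate the vertical flow needed to transport the excess $f_{in}(j)-\bar f$ away from each row. The key point of this particular interpolation is that the vertical correction gets spread evenly over all $n-1$ interior columns, which is precisely what makes the capacity bound on each vertical edge come out to $M$.

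\textbf{Construction.} The case $n=1$ is trivial, so assume $n\ge 2$. For each $j\in\{1,\dots,n\}$, define
\[h_i(j)\ :=\ \frac{n-1-i}{n-1}\,f_{in}(j)+\frac{i}{n-1}\,\bar f\qquad(i\in\{0,\dots,n-1\}),\]
\[v_i(j)\ :=\ \frac{1}{n-1}\sum_{k=1}^j \bigl(f_{in}(k)-\bar f\bigr)\qquad(i\in\{1,\dots,n-1\},\ j\in\{1,\dots,n-1\}),\]
with the conventions $v_i(0)=v_i(n)=0$ and $v_0(\cdot)=0$. Set $f(\langle(i,j),(i+1,j)\rangle)=h_i(j)\overrightarrow{e_1}$ on each horizontal edge above and $f(\langle(i,j),(i,j+1)\rangle)=v_i(j)\overrightarrow{e_2}$ on each vertical edge above, and declare $f(e)=0$ on every other edge.

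\textbf{Verification.} Properties (i) and (iii) are immediate from the construction: in particular $h_0(j)=f_{in}(j)$ and $h_{n-1}(j)=\bar f$, and edges outside $[0,n[\times[1,n]$ carry zero flow. The node law (iv) at any interior vertex $(i,j)$ with $i\in\{1,\dots,n-1\}$ and $j\in\{1,\dots,n\}$ is just the identity
\[h_{i-1}(j)-h_i(j)\ =\ \tfrac{1}{n-1}\bigl(f_{in}(j)-\bar f\bigr)\ =\ v_i(j)-v_i(j-1),\]
and the vanishing $v_i(n)=0$ reflects precisely the constraint $\sum_{k=1}^n(f_{in}(k)-\bar f)=0$, so no flow escapes through the top of the strip. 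For the capacity bound (ii), $h_i(j)$ is a convex combination of $f_{in}(j)$ and $\bar f$ so $|h_i(j)|\le M$; writing $S_j:=\sum_{k=1}^j f_{in}(k)$,
\[\sum_{k=1}^j \bigl(f_{in}(k)-\bar f\bigr)\ =\ \tfrac{n-j}{n}S_j-\tfrac{j}{n}(S_n-S_j),\]
and combining $|S_j|\le jM$ with $|S_n-S_j|\le (n-j)M$ yields $|v_i(j)|\le \tfrac{2j(n-j)}{n(n-1)}M\le \tfrac{n}{2(n-1)}M\le M$ for $n\ge 2$.

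\textbf{Main obstacle.} The only delicate step is the capacity estimate on the vertical edges: the inequality $\tfrac{n}{2(n-1)}\le 1$ is tight at $n=2$, and it is precisely this tightness that forces the ``corridor'' to have length $n$ in the uniform case and $2(d-1)n$ for the non-uniform higher-dimensional Lemma \ref{lem:mixing}. The present 2D uniform lemma will then serve as the elementary building block for Lemma \ref{lem:mixing} via successive averagings along the coordinate axes.
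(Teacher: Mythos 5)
Your proof is correct and verifiably simpler than the paper's, but it takes a genuinely different route. The paper builds the stream $f$ by an iterative algorithm: starting from $f^{(0)}$ which sends $\min(f_{in}(i),\beta)$ straight across each row, it repeatedly picks a source $i$ with excess and a sink $j$ with deficit, adds a unit-strength path $\gamma_{i,j}$ that travels horizontally to column $n-i$, then vertically to row $j$, then horizontally to the output, and maintains throughout the monotonicity invariants labelled (a), (b), (c). You instead write down a closed-form stream: $h_i(j)$ linearly interpolates between $f_{in}(j)$ and $\bar f$ in $i$, and the vertical flow $v_i(j)=\frac{1}{n-1}\sum_{k\le j}(f_{in}(k)-\bar f)$ (independent of $i$) is the unique choice making the node law hold. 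Your verification of the node law and the capacity bound $|v_i(j)|\le\frac{2j(n-j)}{n(n-1)}M\le M$ is correct for $n\ge2$, and your explicit formula is arguably more transparent than the paper's algorithm.

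However, the paper's more laborious construction is not gratuitous: the invariants (a)--(c) of its algorithm are reused verbatim in the proof of Lemma \ref{lem:mixprecis} (where one needs vertical edges to carry flow at most $\ep$ and horizontal edges to stay in $[-M,\ep]$). Your stream does not have those properties. In your construction the vertical flow $v_i(j)$ is the same in every interior column, and its magnitude scales with $M$, not with $\ep$; property (a), which confines the vertical flow in column $n-i$ to the excess of source $i$ alone, fails. So while your proof of the present lemma is perfectly valid, the argument for Lemma \ref{lem:mixprecis} would have to be rebuilt from scratch if your construction were substituted for the paper's. One cosmetic note: both you and the paper realize the uniform output on the edge $e_j+(n-1)\overrightarrow{e_1}$ rather than $e_j+n\overrightarrow{e_1}$; the latter lies outside $[0,n[\times[1,n]$, so items (i) and (iii) of the lemma as literally stated are incompatible when $\bar f\neq 0$ — this appears to be a harmless off-by-one in the statement and is not a defect in your proof. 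Finally, your ``main obstacle'' remark slightly overstates the tightness: had you interpolated over $n$ columns (using $h_i$ for $i=0,\dots,n$ and denominator $n$ in $v_i$) the bound would be $M/2$, so the corridor length $n$ is not forced by this particular estimate.
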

\begin{proof}Up to multiplying by $-1$ all the inputs, we can always assume that 
$$\sum_{i=1}^{n}f_{in}(i)\geq 0\,.$$
We set $$\beta=\frac{1}{n}\sum_{i=1}^{n}f_{in}(i)\,.$$
We start by sending the minimum between $\beta$ and $f_{in}(i)$ through straight lines:
$$f^{(0)}=\sum_{i=1}^{n}\min(f_{in}(i),\beta)\sum_{k=0}^{n-1} \overrightarrow{e_1}\ind _{e_i+k\overrightarrow{e_1}}\,.$$
We are going to perform an algorithm starting with the stream $f^{(0)}$ to build a stream $f$ satisfying all the conditions of the lemma. At any step of the algorithm, $f$ will satisfy condition \ref{item:4} but also the following conditions:
\begin{enumerate}[label=(\alph*)]
\item \label{eqcondmix1}The vertical edges in the column $n-i$ are only used by 
the source $i\in\{1,\dots,n\}$: 
\begin{align*}
\forall i\in\{1,\dots,n\}&\quad \forall j\in\{1,\dots,n-1\}\qquad \|f(\langle(n-i,j),(n-i,j+1)\rangle)\|_2\leq \|f(e_i)\|_2\ind_{f_{in}(i)>\beta}\,.
\end{align*}
\item \label{eqcondmix2}If $f_{in}(i)\leq\beta$ then the flow through the line $i$ in the direction $\overrightarrow {e_1}$ is non-decreasing:
\begin{align*}
\forall i\in\{1,\dots,n\} &\text{ s.t. }f_{in}(i)<\beta\quad \forall j<k \in\{1,\dots,n-1\}\qquad \nonumber\\
& f_{in}(i)= f(e_i)\cdot \overrightarrow{e_1}\leq  f(e_i+j\overrightarrow{e_1})\cdot \overrightarrow{e_1}\leq f(e_i+k\overrightarrow{e_1})\cdot \overrightarrow{e_1}\leq \beta \,.
\end{align*}
\item \label{eqcondmix3}If $f_{in}(i)>\beta$ then the flow through the line $i$ in the direction $\overrightarrow {e_1}$ is non-increasing:
\begin{align*}
\forall& i\in\{1,\dots,n\} \text{ s.t. }f_{in}(i)>\beta\quad \forall j<k \in\{1,\dots,n-1\}\qquad \nonumber\\
&f_{in}(i)\geq  f(e_i)\cdot \overrightarrow{e_1}\geq  f(e_i+j\overrightarrow{e_1})\cdot \overrightarrow{e_1}\geq f(e_i+k\overrightarrow{e_1})\cdot \overrightarrow{e_1}\geq f(e_i+n\overrightarrow{e_1})\cdot\overrightarrow{e_1}=\beta \,.
\end{align*}
\end{enumerate}
We set $f=f^{(0)}$.
It is clear that the stream $f$ satisfies the node law \ref{item:4} and conditions \ref{eqcondmix1}, \ref{eqcondmix2} and \ref{eqcondmix3}.
 Let us assume there exists $i$ such that
$$\|f(e_i)\|_2<|f_{in}(i)|\,.$$
We consider the smallest integer $i$ such that the previous inequality is satisfied. Since $f$ satisfies condition \ref{eqcondmix2}, necessarily $f_{in}(i)>\beta$ (if not we have $\|f(e_i)\|_2=|f_{in}(i)|)$).By condition \ref{eqcondmix3}, we have $f(e_i)\cdot\overrightarrow{e_1}\geq 0$ and so 
 $\|f(e_i)\|_2=f(e_i)\cdot\overrightarrow{e_1}<f_{in}(i)$.
Since $f$ satisfies condition \ref{item:4}, it yields by the node law
$$\sum_{k=1}^n f(e_k+n\overrightarrow{e_1})\cdot\overrightarrow{e_1}=\sum_{k=1}^n f(e_k)\cdot\overrightarrow{e_1}<\sum_{k=1}^n f_{in}(k)=n\beta\,.$$
Then, there exists $j$ such that
$$f(e_j+n\overrightarrow{e_1})\cdot\overrightarrow{e_1}<\beta\,.$$
We pick the smallest integer $j$ such that the previous inequality holds. By condition \ref{eqcondmix3}, we have $f_{in}(j)<\beta$.
We set
$$\gamma_{i,j} =\sum_{k=0}^{n-1-i}\overrightarrow{e_1}\ind_{e_i+k\overrightarrow{e_1}}+\sum_{k=-(j-i)_-}^{(j-i)_+-1}\sign(j-i)\overrightarrow{e_2}\ind_{\langle (n-i,i+k),(n-i,i+k+1)\rangle}+\sum_{k=n-i}^{n-1}\overrightarrow{e_1}\ind_{e_j+k\overrightarrow{e_1}}\,.$$
Note that $\gamma_{i,j}$ is a stream, \textit{i.e.}, a function from $\E^2$ to $\sR^2$. We can associate with $\gamma_{i,j}$ an oriented path corresponding to the path the water takes to go from source $i$ to sink $j$ for the stream $\gamma_{i,j}$ (see figure \ref{fig:cheminogamij}).
\begin{figure}[H]
\begin{center}
\def\svgwidth{0.6\textwidth}
   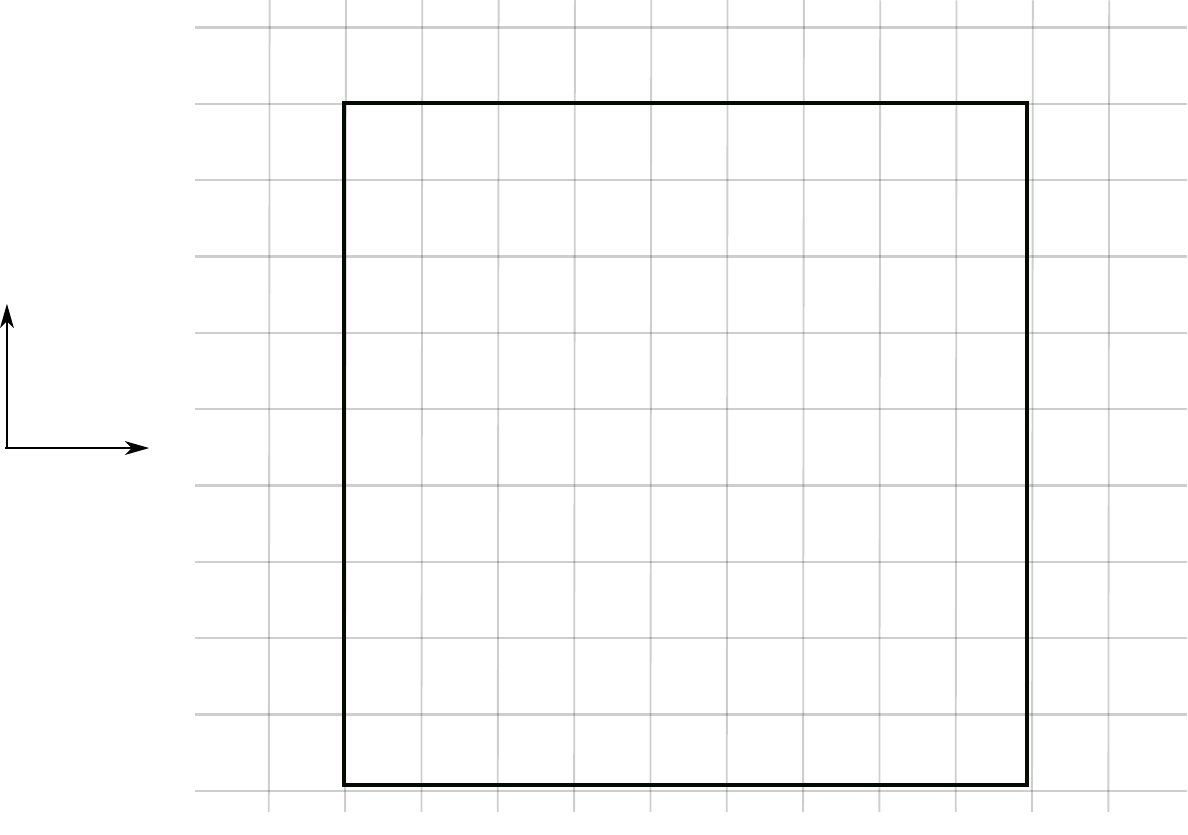
   \caption{\label{fig:cheminogamij}The path associated with $\gamma_{i,j}$ for $j>i$}
   \end{center}
\end{figure}
\noindent  We set $$f'=f+ \min\left(f_{in}(i)-\|f(e_i)\|_2,\beta- f(e_j+n\overrightarrow{e_1})\cdot\overrightarrow{e_1}\right)\gamma_{i,j}\,.$$
 It is clear that $f'$ satisfies condition \ref{item:4}. We have to check that the conditions \ref{eqcondmix1}, \ref{eqcondmix2} and \ref{eqcondmix3} are still satisfied for $f'$. Note that we have
 $$m_{i,j}:=\min\left(f_{in}(i)-\|f(e_i)\|_2,\beta- f(e_j+n\overrightarrow{e_1})\cdot\overrightarrow{e_1}\right)>0\,.$$
 We start by checking if the condition \ref{eqcondmix1} is satisfied:
 for any $k\in\{1,\dots,n-1\}$, we have
 \begin{align*}
 \|f'(\langle(n-i,k),(n-i,k+1)\rangle)\|_2&\leq \|f(\langle(n-i,k),(n-i,k+1)\rangle)\|_2+ m_{i,j} \\
 &\leq \|f(e_i)\|_2 +m_{i,j}\\
 & =f(e_i)\cdot\overrightarrow{e_1}+m_{i,j}\\
 &=f'(e_i)\cdot\overrightarrow{e_1}=\|f'(e_i)\|_2\,.
 \end{align*}
 We have
 $$f'(e_i)\cdot\overrightarrow{e_1}=f(e_i)\cdot\overrightarrow{e_1}+m_{i,j}=\|f(e_i)\|_2+m_{i,j}\leq f_{in}(i)\,.$$
 Moreover, it is clear that the flow through the line $i$ in the direction $\overrightarrow{e_1}$ is non-increasing for $f'$ since it was the case for $f$. Furthermore, we have $f'(e_i+n\overrightarrow{e_1})\cdot\overrightarrow{e_1}=f(e_i+n\overrightarrow{e_1})\cdot\overrightarrow{e_1}=\beta$. Hence, the stream $f'$ satisfies condition \ref{eqcondmix3}.
 Since the flow through the line $j$ is non-decreasing for $f$, it is easy to check that it is also true for $f'$.
 Moreover, we have by definition of $m_{i,j}$
 $$f'(e_j+n\overrightarrow{e_1})\cdot\overrightarrow{e_1}=f(e_j+n\overrightarrow{e_1})\cdot\overrightarrow{e_1}+m_{i,j}\leq \beta$$
 and
 $$f'(e_j)\cdot\overrightarrow{e_1}=f(e_j)\cdot\overrightarrow{e_1}= f_{in}(j)\,.$$
 It follows that $f'$ also satisfies condition \ref{eqcondmix2}.
Since after each step of the algorithm the number of such couples $(i,j)$ is decreasing, the algorithm will eventually end.
Let us assume there exists no such $i$. Therefore, we have
\[\sum_{i=1}^nf(e_i)\cdot\overrightarrow{e_1}=\sum_{i=1}^n f_{in}(i)\,.\]
By the node law again, we have
\[\sum_{i=1}^n f(e_i+n\overrightarrow{e_1})\cdot\overrightarrow{e_1}=\sum_{i=1}^n f_{in}(i)=n\beta\]
and by conditions \ref{eqcondmix2} and \ref{eqcondmix3}, it follows that
\[\forall j \in\{1,\dots,n\}\qquad f(e_j+n\overrightarrow{e_1})=\beta\overrightarrow{e_1}\,.\]
This yields property \ref{item:3}. At the end of the algorithm, the stream $f$ satisfies conditions \ref{item:1}, \ref{item:2}, \ref{item:3} and \ref{item:4}. 
\end{proof}

\begin{proof}[Proof of lemma \ref{lem:mixing}] We first prove the result for uniform outputs, that is, for any family $(f_{in}(y),y\in\{1,\dots,n\}^{d-1})$ satisfying for any $y\in\{1,\dots,n\}^{d-1}$ $|f_{in}(y)|\leq M$,
 if we set
$$\forall y\in\{1,\dots,n\}^{d-1}\qquad f_ {out}(y)=\frac{1}{n^{d-1}}\sum_{w\in\{1,\dots,n\}^{d-1}}f_{in}(w)\,,$$
there exists a stream in $[0,(d-1)n[\times[1,n]^{d-1}$ satisfying all the requirements in the statement of lemma \ref{lem:mixing}.
We prove this result by induction on the dimension. The result holds for the dimension $2$ thanks to lemma \ref{lem:mixing in dim2}. Let us now consider $d\geq 3$ and assume the result holds for the dimension $d-1$. Consider a family of inputs bounded by $M$:
$(f_{in}(y),y\in\{1,\dots,n\}^{d-1})$. Using the induction hypothesis for the dimension $d-1$, for each $i\in\{1,\dots,n\}$, we can build a stream $f_{d-1}^{(i)}$ in $[0,(d-2)n[\times \{i\}\times [1,n]^{d-2}$ given the inputs $(f_{in} (i,z),z\in\{1,\dots,n\}^{d-2})$ and the uniform outputs equal to
$$\forall z\in\{1,\dots,n\}^{d-2}\qquad g(i,z)=\frac{1}{n^{d-2}}\sum_{z\in\{1,\dots,n\}^{d-2}}f_{in} (i,z)\,.$$
It is clear that the streams $f_{d-1}^{(i)}$ are defined on disjoint sets of edges.
Finally, for each $x\in\{1,\dots,n\}^{d-2}$, using lemma \ref{lem:mixing in dim2}, we denote by $f_2^{(x)}$ the stream in $ [0,n[\times [1,n]\times \{x\}$ with inputs $(g(i,x),i=1,\dots,n)$ and uniform output equal to
$$\frac{1}{n}\sum_{i=1}^n g(i,x)=\frac{1}{n^{d-1}}\sum_{i=1}^n\sum_{z\in\{1,\dots,n\}^{d-2}}f_{in} (i,z)=\frac{1}{n^{d-1}}\sum_{w\in\{1,\dots,n\}^{d-1}}f_{in} (w)\,.$$
The stream $f_2^{(x)}$ are also defined on disjoint set of edges.
Finally, the stream $$g=\sum_{i=1}^n f_{d-1}^{(i)}+\sum_{x\in\{1,\dots,n\}^{d-2}}f_2^{(x)}(\cdot-(d-2)n\overrightarrow{e_1})$$ is defined on $[0,(d-1)n[\times[1,n]^{d-1}$, $\|g(e)\|_2\leq M$ for any $e$ and $g$ mixes uniformly the inputs since
$$\forall y\in\{1,\dots,n\}^{d-1}\qquad g(\langle((d-1)n-1,y),((d-1)n,y)\rangle)=\frac{1}{n^{d-1}}\sum_{w\in\{1,\dots,n\}^{d-1}}f_{in} (w)\,.$$ It follows that the result holds for the dimension $d$. This concludes the induction.

Let $d\geq 2$, let us now consider two families $(f_{in}(y))_y$ and $(f_{out}(y))_y$ of arbitrary inputs and outputs that satisfy the conditions in the statement of the lemma. Let $f^{i}$ be the stream in $[0,(d-1)n[\times[1,n]^{d-1}$ with inputs $(f_{in}(y))_y$ and uniform outputs. Let $f^{o}$ be the stream in $[0,(d-1)n[\times[1,n]^{d-1}$ with inputs $(f_{out}(y))_y$ and uniform outputs. Denote by $S$ the reflexion with regards to the hyperplane $\{x\in\sR^d,\,x_1=0\}$, \textit{i.e.}, $$\forall (x_1,\dots,x_d)\in\sR^d\qquad S(x_1,x_2,\dots,x_d)=(-x_1,x_2,\dots,x_d)\,.$$
We denote by $Sf^{o}$ the symmetric of the stream $f^o$ by $S$:
$$\forall e\in\E^d\qquad Sf^o(e)=S(f^o(S(e)))\,$$
where for $e=\langle x,y\rangle$ the edge $S(e)$ corresponds to $\langle S(x),S(y)\rangle$.
Note that for any edge $e$ parallel to $\overrightarrow{e_1}$, we have $f^{o}(e)=a\overrightarrow{e_1}$ with $a\in\sR$ and 
$$Sf^o (S(e))=S(f^o(S(S(e))))=S(f^o(e))=-a\overrightarrow{e_1}=-f^o(e)\,.$$
We have for $y\in\{1,\dots,n\}^{d-1}$, 
\begin{align*}
f^i(\langle ((d-1)n-1,y),((d-1)n,y)\rangle)&=\frac{1}{n^{d-1}}\sum_{y\in \{1,\dots,n\}^{d-1}}f_{in}(y)=\frac{1}{n^{d-1}}\sum_{y\in \{1,\dots,n\}^{d-1}}f_{out}(y)\\
&=f^o(\langle ((d-1)n-1,y),((d-1)n,y)\rangle)\\
&=-Sf^o (\langle (-(d-1)n,y),(-(d-1)n+1,y)\rangle)\,.
\end{align*}
It follows that the stream $g=f^i-Sf^{o}(\cdot -(2(d-1)n-1)\overrightarrow{e_1})$ connects the inputs with the outputs in $[0,2(d-1)n[\times[1,n]^{d-1}$ and satisfies all the properties stated in the lemma.
If $m>2(d-1)n$, we extend the stream outside $[0,2(d-1)n[\times[1,n]^{d-1}$ through straight lines:
$$g+\sum_{y\in\{1,\dots,n\}}\sum_{k=2(d-1)n+1}^m f_{out}(y)\overrightarrow{e_1}\ind_{\langle(k-1,y),(k,y)\rangle}\,.$$
\end{proof}
We will need a special result of mixing in the case where the non-null inputs and outputs are regularly spaced in the lattice. Namely, there exists an integer $K\geq 1$ such that any input or output whose index does not belong to $K\sZ^{d-1}\cap[1,n]^{d-1}$ is null. In that case, we want to prove that we do not use a lot of edges to reconnect the inputs with the outputs.
For any integer $K\geq 1$, we denote by $\E_K ^d$ the following set of edges:
 \begin{align*}
\E_K ^d=&\left\{e=\langle x,y\rangle \in\E^d: y-x=\overrightarrow{e_1},\,\forall j\neq 1\quad x_j\in K \sZ\right\}\\
&\quad\cup\left\{e \in\E^d: \,\exists z\in\sZ \,\,\exists x,y\in K\sZ^{d-1}\text{ s.t. }\|x-y\|_1=K\text{ and } e\subset[(z,x),(z,y)] \right\}\,.
\end{align*}
\begin{lem} \label{lem:mixbin}Let $d\geq 2$, $M> 0$, $n\geq 1$. There exists a positive integer $c_d$ such that for any integer $K$ satisfying $K\geq c_d$, for any two $(f_{in}(y),y\in \{1,\dots,n\}^{d-1}\cap K\sZ^{d-1})$ and $(f_{out}(y),y\in \{1,\dots,n\}^{d-1}\cap K\sZ^{d-1})$ sequences of real numbers satisfying
 $$\forall y\in \{1,\dots,n\}^{d-1}\cap K\sZ^{d-1}\qquad |f_{in}(y)|\leq M\, ,\quad  |f_{out}(y)|\leq M\,$$ and
 $$\sum_{y\in \{1,\dots,n\}^{d-1}\cap K\sZ^{d-1}}f_{in}(y)=\sum_{y\in \{1,\dots,n\}^{d-1}\cap K\sZ^{d-1}}f_{out}(y)\,,$$
there exists a stream $f:\E^d\rightarrow \sR^d$
such that
\begin{itemize} 
\item[$\cdot$] for each $e\notin\E_K ^d\cap [0,n[\times[1,n]^{d-1}$ we have $f(e)=0$,
\item[$\cdot$] for each $e\in\E^d$ we have $\|f(e)\|_2\leq M$,
\item[$\cdot$] for each $y\in \{1,\dots,n\}^{d-1}\cap K\sZ^{d-1} $, we have $f(\langle(0,y),(1,y)\rangle)=f_{in}(y)\overrightarrow{e_1}$ and $f(\langle(n-1,y),(n,y)\rangle)=f_{out}(y)\overrightarrow{e_1}$,
\item [$\cdot$] for each vertex $v\in \sZ^d\setminus((\{0\}\times  (\{1,\dots,n\}^{d-1}\cap K\sZ^{d-1}))\cup(
 \{n\}\times ( \{1,\dots,n\}^{d-1}\cap K\sZ^{d-1})))$ the node law is respected.
\end{itemize} 
Moreover, we have
\[\left|\{e\in\E^d:f(e)\neq 0\}\right|\leq \left|\E_K ^d\cap[0,n[\times[1,n]^{d-1}\right|\leq \frac{3d}{K ^{d-2}}n^d\,.\]
\end{lem}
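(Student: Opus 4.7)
The plan is to reduce the problem to Lemma~\ref{lem:mixing} by rescaling the transverse directions by the factor $K$. Set $N = \lfloor n/K\rfloor$ and identify each point $y \in K\sZ^{d-1}\cap [1,n]^{d-1}$ with its rescaled version $w = y/K \in \{1,\dots,N\}^{d-1}$. Define rescaled inputs $\tilde f_{in}(w) = f_{in}(Kw)$ and outputs $\tilde f_{out}(w) = f_{out}(Kw)$; these are defined on the full dense lattice $\{1,\dots,N\}^{d-1}$, still bounded by $M$, and still satisfy the balance condition. Choose $c_d$ (e.g.\ $c_d = 2d$) large enough that for $K \geq c_d$ we have $n \geq 2(d-1)N$, so Lemma~\ref{lem:mixing} produces a stream $\tilde f:\E^d \to \sR^d$ with the prescribed inputs at $\{0\}\times\{1,\dots,N\}^{d-1}$, outputs at $\{n\}\times\{1,\dots,N\}^{d-1}$, supported on edges with left endpoint in $[0,n[\times[1,N]^{d-1}$, of norm at most $M$, and satisfying the node law in the interior.

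Next, lift $\tilde f$ to a stream $f$ on the original lattice as follows. Each horizontal edge $\langle(z,w),(z+1,w)\rangle$ of the rescaled lattice corresponds canonically to the horizontal edge $\langle(z,Kw),(z+1,Kw)\rangle$ in $\E_K^d$, and we let $f$ carry the same flow value as $\tilde f$. Each transverse edge $\langle(z,w),(z,w+\overrightarrow{e_i})\rangle$ of the rescaled lattice corresponds to the segment of $K$ consecutive unit edges from $(z,Kw)$ to $(z,Kw+K\overrightarrow{e_i})$, and we assign each of these $K$ edges the same flow value $\tilde f(\langle(z,w),(z,w+\overrightarrow{e_i})\rangle)$ parallel to $\overrightarrow{e_i}$. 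By construction, $f$ is supported in $\E_K^d \cap [0,n[\times [1,n]^{d-1}$, satisfies the capacity bound, and realises the correct inputs and outputs.

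The node law must be checked at two types of interior vertices. At an intermediate vertex $(z, Kw + j\overrightarrow{e_i})$, $1\leq j\leq K-1$, on a transverse segment: the two edges of the segment carry the same flow value in the same direction, and every other edge incident to this vertex lies outside $\E_K^d$ (since its transverse coordinate is not in $K\sZ^{d-1}$), hence carries zero flow; the node law is then automatic. At a macro-vertex $(z,Kw)$ with $z\in\{1,\dots,n-1\}$, the edges of $\E_K^d$ incident to $(z,Kw)$ are in one-to-one correspondence with the edges of the rescaled lattice incident to $(z,w)$, and each carries the corresponding value of $\tilde f$; the node law for $f$ at $(z,Kw)$ thus reduces to the node law for $\tilde f$ at $(z,w)$, which holds by Lemma~\ref{lem:mixing}.

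Finally, count the edges. For each of the $n$ values of $z$, the horizontal edges number at most $(N+1)^{d-1} \leq (2n/K)^{d-1}$, while the transverse edges number at most $K\cdot(d-1)(N+1)^{d-1} \leq (d-1)K(2n/K)^{d-1}$ (summing over the $d-1$ directions and $K$ unit edges per segment). Summing over $z$ gives a total bounded by $2^{d-1}n^d/K^{d-1} + (d-1)2^{d-1}n^d/K^{d-2}$, which is at most $3dn^d/K^{d-2}$ provided $c_d$ is taken large enough. The main obstacle is not conceptual but bookkeeping: setting up the rescaling bijection cleanly and verifying the node law at every type of interior vertex, together with choosing $c_d$ so that both Lemma~\ref{lem:mixing} applies and the edge count absorbs the numerical constants.
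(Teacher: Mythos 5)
Your overall strategy is exactly the paper's: rescale the transverse coordinates by $K$ via the bijection $y\mapsto y/K$, apply Lemma~\ref{lem:mixing} on the reduced lattice $[0,n[\times[1,N]^{d-1}$ with $N=\lfloor n/K\rfloor$ (possible once $n\geq 2(d-1)N$, i.e.\ $c_d\geq 2(d-1)$), lift the resulting stream back by copying the flow value along each transverse segment of $K$ unit edges, and verify the node law both at macro-vertices (inherited from Lemma~\ref{lem:mixing}) and at intermediate vertices (trivial since only two collinear edges are nonzero). All of this matches.

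The one genuine gap is in the edge count. You bound the number of macro-points per slice by $(N+1)^{d-1}\leq(2n/K)^{d-1}$, whereas the exact count is $|K\sZ^{d-1}\cap[1,n]^{d-1}|=N^{d-1}\leq(n/K)^{d-1}$. That spurious $+1$ introduces a factor $2^{d-1}$ in both terms. Your final bound then requires $2^{d-1}/K+(d-1)2^{d-1}\leq 3d$, which fails already for $d=4$ (giving $24>12$), and the caveat ``provided $c_d$ is large enough'' cannot rescue it because the offending term $(d-1)2^{d-1}$ does not involve $K$ at all. The fix is simply to use $N^{d-1}\leq(n/K)^{d-1}$: the horizontal edges then number at most $n\,(n/K)^{d-1}=n^d/K^{d-1}$, the transverse edges at most $n\cdot(d-1)K\,(n/K)^{d-1}=(d-1)n^d/K^{d-2}$ (or, as the paper bounds it, $2dK$ edges per macro-point), and the sum is at most $3d\,n^d/K^{d-2}$ for all $d\geq 2$ and $K\geq 1$ with no further assumption on $c_d$.
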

\begin{proof}[Proof of lemma \ref{lem:mixbin}]Let $c_d$ be an integer we will choose later. Let $K\geq c_d$. Let us consider the following bijection $\pi$ between the lattice $\sZ\times K\sZ^{d-1}$  and $\sZ^d$ defined as follows 
\[\forall x\in\sZ\quad\forall y\in\sZ^{d-1}\qquad \pi((x,K y))=(x,y)\,.\] 
Therefore, the problem boils down to finding a stream that joins the inputs $(f_{in}(K y),y\in\{1,\dots,n_0\}^{d-1})$ with the outputs $(f_{out}(K y),y\in\{1,\dots,n_0\}^{d-1})$ in $[0,n[\times [1,n_0]^{d-1}$ where $n_0=\lfloor n/K\rfloor$. Note that $n\geq K n_0\geq c_d n_0$.  By setting $c_d=2(d-1)$, this ensures that we can apply lemma \ref{lem:mixing}. We obtain a stream $f_n$ in $[0,n[\times [1,n_0]^{d-1}\cap\E^d$ that satisfies all the properties stated in the lemma \ref{lem:mixing}. It remains to build upon $f_n$ a stream $\widetilde{f}_n$ in the original lattice. To do so, we set
\[\forall e=\langle x,y\rangle\in\sZ^d\quad\forall \widetilde{e}=\langle w_1,w_2\rangle \text{  s.t.  }[w_1,w_2]\subset [\pi^{-1}(x),\pi ^{-1}(y)]\qquad \widetilde{f}_n(\widetilde{e})=f_n(e)\,.\]
It is easy to check that the stream $\widetilde{f}_n$ is supported on $\E^d_K\cap[0,n[\times[1,n]^{d-1}$ and that it satisfies all the properties stated in the lemma \ref{lem:mixbin}. 
It remains to upper-bound the quantity $|\E_K ^d\cap[0,n[\times[1,n]^{d-1}|$. We have
\begin{align*}
|\E_K ^d\cap[0,n[\times[1,n]^{d-1}|&\leq n\, |K \sZ^{d-1}\cap[1,n]^{d-1}|  + n\,|K \sZ^{d-1}\cap[1,n]^{d-1}|2dK \\
&\leq n\left(\frac{n}{K}\right)^{d-1}+2dK n\left(\frac{n}{K}\right)^{d-1}\leq \frac{3d}{K ^{d-2}}n^d\,.
\end{align*}
This yields the result.
\end{proof}

In what follows, we will need the following lemma. This lemma gives a precise description on the way the edges are used. The hypothesis of this lemma may seem strange but should be more clear in its context of application (see proofs of lemma \ref{lem:preconv} and proposition \ref{prop:step2uld}).
\begin{lem}\label{lem:mixprecis}
Let $d\geq 2$, $M> 0$, $\ep>0$ and $n\geq 1$. For any sequence of real number $(f_{in}(y),y\in \{1,\dots,n\}^{d-1})$ satisfying
 $$\forall y\in \{1,\dots,n\}^{d-1}\qquad -M\leq  f_{in}(y)\leq \ep$$ and
\begin{align*} 
 \forall i\in&\{0,\dots,d-2\}\quad\forall y\in\{1,\dots,n\}^i\\ &\sum_{x\in \{1,\dots,n\}^{d-1-i}}f_{in}(y,x)\geq 0 \quad\text{ or }\quad\forall x,z\in \{1,\dots,n\}^{d-1-i}\quad |f_{in}(y,x)-f_{in}(y,z)|\leq \ep\,,
 \end{align*}
there exists a stream $f:\E^d\rightarrow \sR^d$
such that
\begin{itemize} 
\item[$\cdot$] for each $e\notin [0,(d-1)n[\times[1,n]^d$ we have $f(e)=0$;
\item[$\cdot$] for each $e\in\E^d$, if $e$ is parallel to $\overrightarrow{e_1}$, then we have $-M\leq f(e)\cdot \overrightarrow{e_1}\leq \ep$, otherwise $\|f(e)\|_2\leq \ep$;
\item[$\cdot$] for each $y\in \{1,\dots,n\}^{d-1} $, we have $f(\langle(0,y),(1,y)\rangle)=f_{in}(y)\overrightarrow{e_1}$ and $$f(\langle((d-1)n-1,y),((d-1)n,y)\rangle)=\frac{1}{n^{d-1}}\sum_{z\in\{1,\dots,n\}^{d-1}}f_{in}(z)\overrightarrow{e_1}\,;$$
\item [$\cdot$] for each vertex $v\in \sZ^d\setminus((\{0\}\times  (\{1,\dots,n\}^{d-1}))\cup(
 \{(d-1)n\}\times ( \{1,\dots,n\}^{d-1})))$ the node law is respected.
\end{itemize} 
\end{lem}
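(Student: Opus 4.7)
The plan is to prove lemma \ref{lem:mixprecis} by induction on the dimension $d$, mirroring the slab-by-slab reduction used in the proof of lemma \ref{lem:mixing} but with a refined bookkeeping of the flow magnitudes on non-$\overrightarrow{e_1}$ edges. The key quantitative improvement over the base lemma \ref{lem:mixing} is that, under the present hypothesis, every transversal edge carries at most $\ep$ in absolute value, rather than $M$.

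For the base case $d=2$, I would set $\beta=\frac{1}{n}\sum_y f_{in}(y)$ and distinguish two sub-cases. If $\beta\geq 0$, I would apply the algorithm of lemma \ref{lem:mixing in dim2} directly: its conditions (b) and (c) force the $\overrightarrow{e_1}$-flow along row $i$ to interpolate between $f_{in}(i)$ and $\beta$, so it lies in $[-M,\ep]$; and a direct inspection of the paths $\gamma_{i,j}$ shows that the flow on any vertical edge of column $n-i$ equals the cumulative transfer of excess out of source $i$, bounded by $\max(f_{in}(i)-\beta,0)\leq\ep$ since $\beta\geq 0$ and $f_{in}(i)\leq\ep$. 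If instead $\beta<0$, the hypothesis at level $i=0$ rules out the "sum nonnegative" alternative, so the "within $\ep$" one must hold; the shifted inputs $g_{in}(y):=f_{in}(y)-\beta$ then have zero sum and magnitude at most $\ep$, and applying the algorithm to $g_{in}$ produces a stream $g$ all of whose flows are bounded by $\ep$. I would set $f(e)=g(e)+\beta\overrightarrow{e_1}\ind\{e\text{ is }\overrightarrow{e_1}\text{-parallel with left endpoint in }[0,n[\times[1,n]\}$: the constant horizontal shift preserves the node law, the $\overrightarrow{e_1}$-flow on row $i$ lands in $[\min(f_{in}(i),\beta),\max(f_{in}(i),\beta)]\subset[-M,\ep]$, and the transversal flows remain bounded by $\ep$.

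For the induction step, I would apply the $(d-1)$-dim statement slab by slab: for each $y_1\in\{1,\dots,n\}$ the input family $(f_{in}(y_1,z))_z$ satisfies the hypothesis in dimension $d-1$, because its sub-condition at level $i'\in\{0,\dots,d-3\}$ and $w\in\{1,\dots,n\}^{i'}$ is exactly the original hypothesis at level $i=i'+1$ and coordinates $(y_1,w)$. This yields streams $f_{d-1}^{(y_1)}$ in $[0,(d-2)n[\times\{y_1\}\times[1,n]^{d-2}$ with uniform outputs $g(y_1):=\frac{1}{n^{d-2}}\sum_z f_{in}(y_1,z)$. The family $(g(y_1))_{y_1}$ then satisfies the 2D hypothesis: $g(y_1)\leq\ep$ by averaging, and both the "sum nonnegative" and "within $\ep$" alternatives at level $i=0$ are preserved by partial averaging over $z$. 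Applying the base case for each $z\in\{1,\dots,n\}^{d-2}$ yields streams $f_2^{(z)}$ in $[0,n[\times[1,n]\times\{z\}$, and I would glue them exactly as in the proof of lemma \ref{lem:mixing} by setting $f=\sum_{y_1}f_{d-1}^{(y_1)}+\sum_z f_2^{(z)}(\cdot-(d-2)n\overrightarrow{e_1})$. The node law at the junction hyperplane $\{x_1=(d-2)n\}$ is automatic because the uniform output $g(y_1)$ of each $f_{d-1}^{(y_1)}$ matches the input of every $f_2^{(z)}$ at row $y_1$.

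The main obstacle will be establishing the quantitative bound $\|f(e)\|_2\leq\ep$ on transversal edges, which is sharper than what lemma \ref{lem:mixing in dim2} directly states. I will need to revisit the explicit construction of the paths $\gamma_{i,j}$ and argue that the flow on any fixed vertical edge of column $n-i$ equals the sum of the amounts carried by the paths $\gamma_{i,k}$ crossing that edge, hence is bounded by the total excess $\max(f_{in}(i)-\beta,0)$. The upper bound $f_{in}(y)\leq\ep$ is exactly what forces this excess to stay below $\ep$; combined with the shift argument when $\beta<0$, this delivers the bound in all cases and propagates cleanly through the induction.
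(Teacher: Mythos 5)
Your proposal is correct and follows essentially the same route as the paper: a base case in $d=2$ distinguishing the sign of the total, handled by applying the algorithm of lemma~\ref{lem:mixing in dim2} (directly when the sum is nonnegative, after a constant shift and subsequent compensation on horizontal edges when the sum is negative), followed by the same slab-by-slab induction used in lemma~\ref{lem:mixing}. The only differences are cosmetic: in the $\beta<0$ sub-case you shift the inputs by their \emph{average} (yielding zero-sum inputs in $[-\ep,\ep]$), whereas the paper shifts by the \emph{minimum} (yielding nonnegative inputs in $[0,\ep]$), both of which work; and you propose to re-derive the $\ep$-bound on transversal edges by inspecting the paths $\gamma_{i,j}$, while the paper simply reads it off from condition~\ref{eqcondmix1} of lemma~\ref{lem:mixing in dim2} together with the fact that, when $f_{in}(i)>\beta\geq 0$, we have $\|f(e_i)\|_2\leq f_{in}(i)\leq\ep$. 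What you call the ``main obstacle'' is thus already available in the stated conditions of the auxiliary lemma and does not require a fresh path-counting argument, but your intended argument is not wrong, merely redundant.
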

\begin{proof}We prove this result by induction on the dimension. For $d=2$. Let $(f_{in}(j),1\leq j\leq n)$ be a family that satisfies the conditions stated in the lemma: 
$$\forall\, j\in\{1,\dots,n\}\qquad -M\leq f_{in}(j)\leq \ep\,$$
and \[\beta=\sum_{k=1}^nf_{in}(k)\geq 0\qquad\text{ or} \qquad \forall k,j\in\{1,\dots,n\}\quad|f_{in}(k)-f_{in}(j)|\leq \ep\,.\]
If $\beta\geq 0$, we apply directly the algorithm in the proof of lemma \ref{lem:mixing in dim2} to obtain a stream $f$. If $\beta<0$, then $\forall k,j\in\{1,\dots,n\}\quad|f_{in}(k)-f_{in}(j)|\leq \ep$ and  we set $\alpha=\min\{f_{in}(j):\,1\leq j\leq n\}$. It follows that for any $j\in\{1,\dots,n\}$, we have $f_{in}(j)-\alpha\in[0,\ep]$, we apply the lemma \ref{lem:mixing in dim2} to the sequence of real numbers $(f_{in}(j)-\alpha, j=1,\dots,n)$ to obtain a stream $g$ in $[0,n[\times[1,n]$, finally we set 
$$f=g+\sum_{i=1}^{n}\alpha\sum_{k=0}^{n-1} \overrightarrow{e_1}\ind _{e_i+k\overrightarrow{e_1}}\,.$$
In both cases, the stream $f$ we obtain satisfies all the required properties: if $\beta\geq 0$, due to condition \ref{eqcondmix1}, only the inputs $i\in\{1,\dots,n\}$ such that $0\leq \beta\leq f_{in}(i)\leq \ep$ can use the edges parallel to $\overrightarrow{e_2}$. Thanks to condition \eqref{eqcondmix2} and \eqref{eqcondmix3}, for each $e\in\E^d$ parallel to $\overrightarrow{e_1}$, we have $-M\leq f(e)\cdot \overrightarrow{e_1}\leq \ep$.

Let us assume the result holds for $d-1$ where $d\geq 3$. Let $(f_{in}(y),y\in\{1,\dots,n\}^ {d-1})$ be a family that satisfies the condition stated in the lemma \ref{lem:mixprecis}. 
 For $i\in\{1,\dots,n\}$, it is easy to check that the family $(f_{in}(i,x),x\in\{1,\dots,n\}^{d-2})$ also satisfies the conditions of the lemma \ref{lem:mixprecis}. By induction hypothesis, there exists a stream $f^{(i)}_{(d-1)}$ in $[0,(d-2)n[\times\{i\}\times[1,n]^{d-2}$ that satisfies all the conditions of the lemma \ref{lem:mixprecis}. We build the family $g$ as follows
$$ \forall x\in\{1,\dots,n\}^{d-2} \qquad g(i,x)= f^{(i)}_{(d-1)}(\langle ((d-2)n-1,i,x),((d-2)n,i,x)\rangle)\cdot\overrightarrow{e_1}=\frac{1}{n^{d-2}}\sum_{z\in\{1,\dots,n\}^{d-2}}f_{in}(i,z)\,.$$
It is clear that for any $y\in\{1,\dots,n\}^{d-1}$, $g(y)\in [-M,\ep]$. Besides, we have
$$ \forall x\in\{1,\dots,n\}^{d-2} \qquad\sum_{k=1}^ng(k,x)=\sum_{y\in\{1,\dots,n\}^{d-1}}f_{in}(y)\,.$$
By the properties of the family $(f_{in}(y),y\in\{1,\dots,n\}^{d-1})$ we have for any $x\in\{1,\dots,n\}^{d-2}$\[\sum_{k=1}^ng(k,x)\geq 0\quad\text{ or }\quad \forall y,z\in\{1,\dots,n\}^{d-1}\quad |f_{in}(y)-f_{in}(z)|\leq \ep\,.\] If $\sum_{k=1}^ng(k,x)<0$, it follows that for any $k,j\in\{1,\dots,n\}$, we have $|g(k,x)-g(j,x)|\leq\ep$. 
 In both cases, we can apply the result for the dimension $2$: we denote by $f_2^{(x)}$ the stream in $[0,n[\times [1,n]\times\{x\}$ with inputs $(g(i,x),i=1,\dots,n)$.
We can check as in the proof of lemma \ref{lem:mixing} that the stream 
$$\sum_{i=1}^n f^{(i)}_{(d-1)}+\sum_{x\in\{1,\dots,n\} ^{d-2}}f_2^{(x)}(\cdot - (d-2)n\overrightarrow{e_1})$$
satisfies all the required conditions.

\end{proof}
\subsection{Decomposition of a stream}
In all this section, we consider $(\Omega,\Gamma^1,\Gamma^2)$ that satisfy hypothesis \ref{hypo:omega}. Let $n\geq 1$.
We say that $\overrightarrow{\gamma}=(\overrightarrow{g_1},\dots,\overrightarrow{g_r})$ is an oriented self-avoiding path if there exists $r+1$ distinct points $x_1,\dots,x_{r+1}\in\E_n^d$ such that for any $i\in\{1,\dots,r\}$, $\overrightarrow{g_i}=\langle\langle x_i,x_{i+1}\rangle\rangle \in\overrightarrow{\E}_n^d$.

\begin{lem}[Decomposition of a stream]\label{lem:res}
Let $f_n$ be a stream inside $\Omega$ that satisfies the node law everywhere except points in $\Gamma_n^1\cup\Gamma_n^2$. There exists a finite set of self-avoiding oriented path $\oGam$ (that may be empty) such that for any $\overrightarrow{\gamma}\in \oGam$, the starting point and the ending point belong to $\Gamma_n ^1\cup\Gamma_n^2$, all the other vertices in $\ogam$ belong to $\Omega_n\setminus (\Gamma_n ^1\cup\Gamma_n^2)$. To each oriented path $\ogam\in\oGam$ we can associate a positive real number $p(\overrightarrow{\gamma})$ such that
$$f_n=\sum_{\overrightarrow{\gamma}\in\oGam}p(\ogam)\sum_{\langle\langle x,y\rangle\rangle\in\overrightarrow{\gamma}}n\,\overrightarrow{xy}\ind_{\langle x,y\rangle}\,.$$
Moreover, we have
\[\forall \ogam \in\oGam\quad\forall\, \overrightarrow{e}=\langle\langle x,y\rangle\rangle \in\ogam\qquad f_n(e)\cdot\overrightarrow{xy}>0\,.\]
\end{lem}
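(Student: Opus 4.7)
The plan is to proceed by induction on the quantity $N(f_n) := |\{e \in \E_n^d : f_n(e) \neq 0\}|$, which is finite since $f_n$ is supported on edges with endpoints in $\Omega_n$. If $N(f_n) = 0$, take $\oGam = \emptyset$ and the identity holds trivially. For the inductive step the heart of the argument is to extract one boundary-to-boundary self-avoiding oriented path $\ogam$ along which $f_n$ is strictly positive in the direction of the orientation; then setting
\[
p(\ogam) := \min_{\langle\langle x,y\rangle\rangle \in \ogam} n\, f_n(\langle x,y\rangle) \cdot \overrightarrow{xy} \;>\; 0
\]
and defining
\[
f_n' := f_n - p(\ogam) \sum_{\langle\langle x,y\rangle\rangle \in \ogam} n\, \overrightarrow{xy}\, \ind_{\langle x,y\rangle},
\]
the residual stream $f_n'$ still satisfies the node law outside $\Gamma_n^1 \cup \Gamma_n^2$ (a unit flow along an oriented path has vanishing divergence at every interior vertex of the path), and it satisfies $N(f_n') < N(f_n)$ because at least one edge of $\ogam$ attains the minimum and loses its flow. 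The inductive hypothesis then furnishes a decomposition of $f_n'$ to which we adjoin $(\ogam, p(\ogam))$.

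To build $\ogam$, I pick any edge $e_0 = \langle x_0, x_1\rangle$ with $f_n(e_0) \neq 0$, orient it so that $f_n(e_0) \cdot \overrightarrow{x_0 x_1} > 0$, and extend greedily forward from $x_1$ and backward from $x_0$. At any visited interior vertex $v \notin \Gamma_n^1 \cup \Gamma_n^2$, the node law
\[
\sum_{w : \langle v,w\rangle \in \E_n^d} f_n(\langle v,w\rangle) \cdot \overrightarrow{vw} = 0
\]
combined with the presence of an edge already carrying positive oriented flow into $v$ (which contributes a term $-f_n(\langle w_0, v\rangle)\cdot \overrightarrow{w_0 v} < 0$ when viewed as $\overrightarrow{v w_0}$) forces the existence of another edge $\langle v, w\rangle$ with $f_n(\langle v,w\rangle) \cdot \overrightarrow{vw} > 0$; this provides the next edge of the path. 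Symmetric reasoning gives the backward extension. Since the set of vertices in $\Omega_n$ is finite, each extension terminates in finitely many steps.

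The delicate point is to guarantee that both extensions terminate at $\Gamma_n^1 \cup \Gamma_n^2$ rather than by revisiting a vertex. If a revisit occurs at some $v_j$, we obtain an oriented cycle $\overrightarrow{c}$ in the support of $f_n$; I remove it by setting $\alpha := \min_{\langle\langle x,y\rangle\rangle \in \overrightarrow{c}} n\, f_n(\langle x,y\rangle)\cdot \overrightarrow{xy} > 0$ and replacing $f_n$ by $f_n - \alpha \sum_{\langle\langle x,y\rangle\rangle \in \overrightarrow{c}} n\, \overrightarrow{xy}\, \ind_{\langle x,y\rangle}$. An oriented cycle flow is divergence-free at every vertex, so node law is preserved at every point (not only outside $\Gamma_n^1 \cup \Gamma_n^2$), while $N(f_n)$ strictly decreases. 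After finitely many iterations no oriented cycle remains in the support, and then any greedy extension must reach $\Gamma_n^1 \cup \Gamma_n^2$ on both ends: otherwise it would revisit a vertex, contradicting cycle-freeness.

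The main obstacle is thus the cycle-removal step: one must check that subtracting an oriented cycle preserves the node-law hypothesis and strictly reduces the finite quantity $N(f_n)$, so that the preprocessing terminates and the greedy path-extraction succeeds. Once this is in place, the remainder of the argument is the routine induction sketched above, and the positivity condition $f_n(e)\cdot \overrightarrow{xy} > 0$ along the extracted path is automatic by construction.
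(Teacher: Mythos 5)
Your approach is genuinely different from the paper's. The paper's algorithm always initiates a path at a boundary edge: it picks $x\in\Gamma_n^1\cup\Gamma_n^2$ and an edge $e=\langle x,y\rangle$ with $f_n^{res}(e)\neq 0$, then extends forward to another boundary point, subtracting the minimum along the way, and never explicitly confronts cycles. You instead start from an arbitrary edge, extend in both directions, and handle cycles by a preprocessing cancellation step. Your version is more explicit about where the difficulty lies, and the bookkeeping (the factor of $n$ in $p(\ogam)$, the inductive quantity $N(f_n)$, the preservation of the node law after subtracting a path or a cycle) is all correct.

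The gap is in the cycle-removal step. When you detect a cycle $\overrightarrow{c}$ and replace $f_n$ by $f_n - \alpha\sum_{\langle\langle x,y\rangle\rangle\in\overrightarrow{c}}n\,\overrightarrow{xy}\,\ind_{\langle x,y\rangle}$, that flow is discarded permanently. The induction then furnishes a decomposition of the cycle-removed stream, not of $f_n$. Since a closed loop supported in $\Omega_n\setminus(\Gamma_n^1\cup\Gamma_n^2)$ has no endpoints on $\Gamma_n^1\cup\Gamma_n^2$, it cannot be recovered as a boundary-to-boundary path and added back to $\oGam$. Consequently the identity $f_n=\sum_{\ogam\in\oGam}p(\ogam)\sum_{\langle\langle x,y\rangle\rangle\in\ogam}n\,\overrightarrow{xy}\ind_{\langle x,y\rangle}$ fails whenever a cycle was cancelled. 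To close the gap you would either have to show that no cycle can appear (which requires hypotheses beyond the node law — a pure interior loop already satisfies the node law everywhere), or add cycle streams to the decomposition, which changes the statement. The paper's proof sidesteps this by only ever extracting paths anchored at the boundary and then asserting $f_n^{res}=0$ once all boundary edges are exhausted — a step that likewise does not account for residual interior loops, but at least does not visibly throw flow away. If you want to defend your proof, you would need to make the cycle-free assumption explicit, or argue that in the contexts where the lemma is invoked the decomposed stream has no closed-loop component.
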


\begin{proof}
We are going to perform an algorithm to build iteratively the couple $(\overrightarrow{\Gamma},(p(\ogam))_{\ogam\in\overrightarrow{\Gamma}}) $. We set $$\widehat{f}_n=\sum_{\overrightarrow{\gamma}\in\oGam}p(\ogam)\sum_{\langle\langle x,y\rangle\rangle\in\overrightarrow{\gamma}}n\,\overrightarrow{xy}\ind_{\langle x,y\rangle}$$
and $f_n^{res}=f_n-\widehat{f}_n$.
At any step of the algorithm, we have
\begin{align}\label{cond:res}\forall \ogam \in\oGam\quad\forall\, \overrightarrow{e}=\langle\langle x,y\rangle\rangle \in\ogam\qquad f_n(e)\cdot\overrightarrow{xy}\geq\widehat{f}_n(e)\cdot\overrightarrow{xy}>0\,.
\end{align}
Moreover, for any $\ogam\in\oGam$, the path $\ogam$ has both of its endpoints in $\Gamma_n^1\cup\Gamma_n^2$ and all the other vertices in $\ogam$ belong to $\Omega_n\setminus (\Gamma_n^1\cup\Gamma_n^2)$. Consequently, at any step of the algorithm, the stream $f_n ^{res}$ satisfies the node law for any point in $\sZ_n^d\setminus(\Gamma_n^1\cup\Gamma_n^2)$.

We start with $\oGam=\emptyset$.
Let $x\in\Gamma_n^1\cup\Gamma_n ^2$ and $y\in\Omega\cap\sZ_n^d$ such that $e=\langle x,y\rangle\in\E_n^d$ and $f_n^{res}(e)\neq 0$.
We distinguish two cases either $f_n^{res}(e)\cdot \overrightarrow{xy}>0$ or $f_n^{res}(e)\cdot \overrightarrow{xy}<0$. Let us assume  $f_n^{res}(e)\cdot \overrightarrow{xy}>0$. 
Since $f_n^{res}$ satisfies the node law and since there exists only a finite number of self avoiding path using edges with endpoints $\Omega_n$, there exists $z\in (\Gamma_n^1\cup\Gamma_n^2)\setminus\{x\}$ and an oriented self-avoiding path $\ogam_0$ starting from $x$ and ending at $z$ with vertices in $\Omega_n$ such that the first edge of $\ogam_0$ is $\langle\langle x,y\rangle\rangle$ and for any $\overrightarrow{e_0}=\langle\langle w_0,w_1\rangle\rangle \in\ogam_0$, we have $f_n^{res}(e_0)\cdot \overrightarrow{w_0w_1}>0$.

If $f_n^{res}(e)\cdot \overrightarrow{xy}<0$. Then there exists $z\in\Gamma_n^1\cup\Gamma_n^2\setminus\{x\}$ and an oriented self-avoiding path $\ogam_0$ starting from $z$ and ending at $x$ with vertices in $\Omega_n$ such that the last edge of $\ogam_0$ is $\langle\langle y,x\rangle\rangle$ and for any $\overrightarrow{e_0}=\langle\langle w_0,w_1\rangle\rangle \in\ogam_0$, we have $f_n^{res}(e_0)\cdot \overrightarrow{w_0w_1}>0$.

Note that, up to removing a section of $\ogam_0$, we can always assume that all the vertices of $\ogam_0$ 
except its two endpoints are in $\Omega_n\setminus (\Gamma_n^1\cup\Gamma_n^2)$. If it is not the case, we denote by $w$ the first vertex in $\Gamma_n^1\cup\Gamma_n^2$ along the path $\ogam_0$ starting from $x$ and we replace $\ogam_0$ by the section of $\ogam_0$ between the vertices $w$ and $x$.

Besides, we have
$$m(\overrightarrow{\gamma}_0)=\inf\left\{f_n ^{res}(e)\cdot (n\,\overrightarrow{w_0w_1}):\,\overrightarrow{e}= \langle \langle w_0,w_1\rangle\rangle \in\overrightarrow{\gamma}_0\right\}>0\,.$$
Let  $\overrightarrow{e}=\langle\langle w_0,w_1\rangle\rangle \in\ogam_0$. By construction, we have
$f_n^{res}(e)\cdot\overrightarrow{w_0w_1}>0$ and 
$f_n(e)\cdot\overrightarrow{w_0w_1}>\widehat{f}_n(e)\cdot\overrightarrow{w_0w_1}$. Hence, $\langle\langle w_1,w_0\rangle\rangle$ cannot belong to one of the $\ogam$ in $\oGam$ since it would contradict \eqref{cond:res}. Necessarily, we have $\widehat{f}_n(e)\cdot\overrightarrow{w_0w_1}\geq 0$.
It yields that
\begin{align*}
0\leq \left(\widehat{f}_n+m(\ogam_0)\sum_{\langle\langle x_0,y_0\rangle\rangle\in\overrightarrow{\gamma}_0}n\,\overrightarrow{x_0y_0}\ind_{\langle x_0,y_0\rangle}\right)(e)\cdot\overrightarrow{w_0w_1}&=\widehat{f}_n(e)\cdot\overrightarrow{w_0w_1}+\frac{1}{n}m(\ogam_0)\\&\leq (\widehat{f}_n+f_n^{res})(e)\cdot\overrightarrow{w_0w_1}\leq f_n(e)\cdot\overrightarrow{w_0w_1}\,.
\end{align*}
We add $(\ogam_0,m(\ogam_0))$ to $(\oGam,(p (\ogam)_{\ogam\in\oGam})$ and the condition \eqref{cond:res} still holds.
We can iterate this process finitely many times with every possible self-avoiding oriented paths ending or starting with the edge $e$ (according to the sign of $f_n^{res}(e)\cdot \overrightarrow{xy}$). At any iteration, $|f_n^{res}(e)\cdot \overrightarrow{xy}|$  decrease. Eventually, the stream function we obtain satisfies $f_n^{res}(e)\cdot \overrightarrow{xy}=0$. 

The algorithm ends when for any $x\in\Gamma_n^1\cup\Gamma_n ^2$ and $y\in\Omega\cap\sZ_n^d$ such that $e=\langle x,y\rangle\in\E_n^d$, we have $f_n^{res}(e)= 0$. Consequently, at the end of the algorithm we have $f_n^{res}=0$ and
$$f_n=\widehat{f}_n=\sum_{\overrightarrow{\gamma}\in\oGam}p(\ogam)\sum_{\langle\langle x,y\rangle\rangle\in\overrightarrow{\gamma}}n\,\overrightarrow{xy}\ind_{\langle x,y\rangle}\,.$$
This concludes the proof.
\end{proof}

\section{Construction  and convexity of the elementary rate function\label{sect:basicbrick}}
In this section, we build the elementary rate function $I$ that is the basic brick to build the rate function $\widehat{I}$. We start by proving preliminary lemmas that we need in order to prove theorem \ref{thmbrique} but also theorem \ref{thm:ULDpatate} 
\subsection{Preliminary lemmas}
Before proving theorem \ref{thmbrique}, we are going to prove that we can slightly modify a stream $f_n\in\cS_n(\fC)$ without paying too much probability such that the stream is well-behaved in the sense that at a mesoscopic level for each face of the cube $\fC$ the stream spreads uniformly.

We recall that $(\overrightarrow{e_1},\dots,\overrightarrow{e_d})$ denotes the oriented canonical basis of $\sR^d$. For $i\in\{1,\dots,d\}$, we denote by $\fC_i^+$ and $\fC_i^-$ the two faces of $\fC=[-1/2,1/2[^d$ associated with the vector $\overrightarrow{e_i}$ that is
\[\fC_i^-=\left[-\frac{1}{2},\frac{1}{2}\right[^{i-1}\times\left\{-\frac{1}{2}\right\}\times \left[-\frac{1}{2},\frac{1}{2}\right[^{d-i}\qquad \text{and}\qquad   \fC_i^+=\left[-\frac{1}{2},\frac{1}{2}\right[^{i-1}\times\left\{\frac{1}{2}\right\}\times \left[-\frac{1}{2},\frac{1}{2}\right[^{d-i}\,.\] 
Let  $i\in\{1,\dots,d\}$ and let $\cA$ be an hyperrectangle normal to $\overrightarrow{e_i}$. We denote by $\E_n^{i,+}[\cA]$ and $\E_n^{i,-}[\cA]$ the following set of edges (see figure \ref{figra}):
\begin{align}\label{eq:defeni+}
\E_n^{i,+}[\cA]=\left\{e=\left\langle x,x+\frac{\overrightarrow{e_i}}{n}\right\rangle\in\E_n^d: \left]x,x+\frac{\overrightarrow{e_i}}{n}\right]\cap\cA\neq\emptyset\right\}
\end{align}
and
\begin{align}\label{eq:defeni-}
\E_n^{i,-}[\cA]=\left\{e=\left\langle x,x+\frac{\overrightarrow{e_i}}{n}\right\rangle\in\E_n^d: \left]x-\frac{\overrightarrow{e_i}}{n},x\right]\cap \cA\neq\emptyset\right\}\,.
\end{align}
The choice of the definitions of $\E_n^{i,-}$ and $\E_n^{i,+}$ is to ensure that for $\cA\subset\fC_i^- $ and $\cB\subset\fC_i^+$, we have $\E_n^{i,-}[\cA]\subset\E_n^d\cap\fC$ and $\E_n^{i,+}[\cB]\subset\E_n^d\cap\fC$.
\begin{figure}[H]
\begin{center}
\def\svgwidth{0.6\textwidth}
   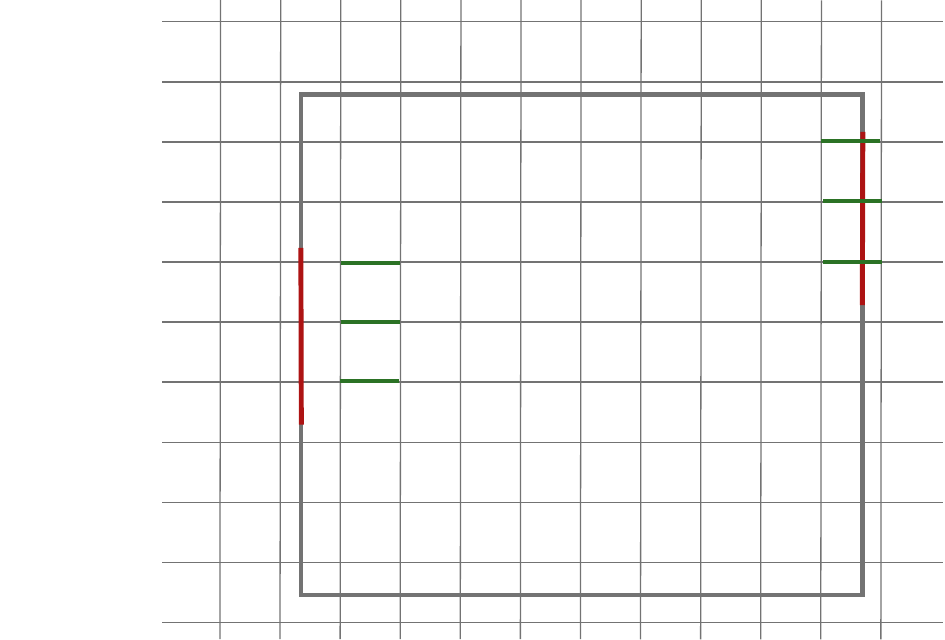
   \caption{\label{figra}The sets $\E_n^{1,-}[\cA]$ and $\E_n^{1,+}[\cB]$}
   \end{center}
\end{figure}

 Let $m\geq 1$.
 We partition all the faces of $\fC$ in hypersquares of side-length $1/m$.
We denote $\cP_i^+(m)$ and $\cP_i^-(m)$ the following sets (see figure \ref{fig6b})
\begin{align}\label{eq:defPi-}
\cP_i^-(m)=\left\{\left[-\frac{1}{2},-\frac{1}{2}+\frac{1}{m}\right[^{i-1}\times\left\{-\frac{1}{2}\right\}\times \left[-\frac{1}{2},-\frac{1}{2}+\frac{1}{m}\right[^{d-i}+\sum_{\substack{k=1,\dots,d\\k\neq i}}\frac{a_k}{m}\overrightarrow{e}_k:\,\begin{array}{c} a_k\in\{0,\dots,m-1\},\\ k\in\{1,\dots,d\}\setminus\{i\}\end{array}\right\}
\end{align}
and 
\begin{align}\label{eq:defPi+}
\cP_i^+(m)=\left\{\left[-\frac{1}{2},-\frac{1}{2}+\frac{1}{m}\right[^{i-1}\times\left\{\frac{1}{2}\right\}\times \left[-\frac{1}{2},-\frac{1}{2}+\frac{1}{m}\right[^{d-i}+\sum_{\substack{k=1,\dots,d\\k\neq i}}\frac{a_k}{m}\overrightarrow{e}_k:\,\begin{array}{c} a_k\in\{0,\dots,m-1\},\\ k\in\{1,\dots,d\}\setminus\{i\}\end{array}\right\}\,.
\end{align}
Note that for $A\in\cP_i^-(m)$, $\overrightarrow{e_i}$ is normal to $A$. The cube splits into $m ^{d-1}$ tubes according to the direction $\overrightarrow{e_i}$:
\[\fC\cup\fC_i^+=\bigcup_{A\in\cP_i^-(m)}\cyl(A,1,\overrightarrow{e_i})\,.\]
Note that for $A\in\cP_i^-(m)\cup\cP_i^+(m)$, we have $\cH^{d-1}(A)=1/m^{d-1}$.
\begin{figure}[h]
\begin{center}
\def\svgwidth{0.6\textwidth}
   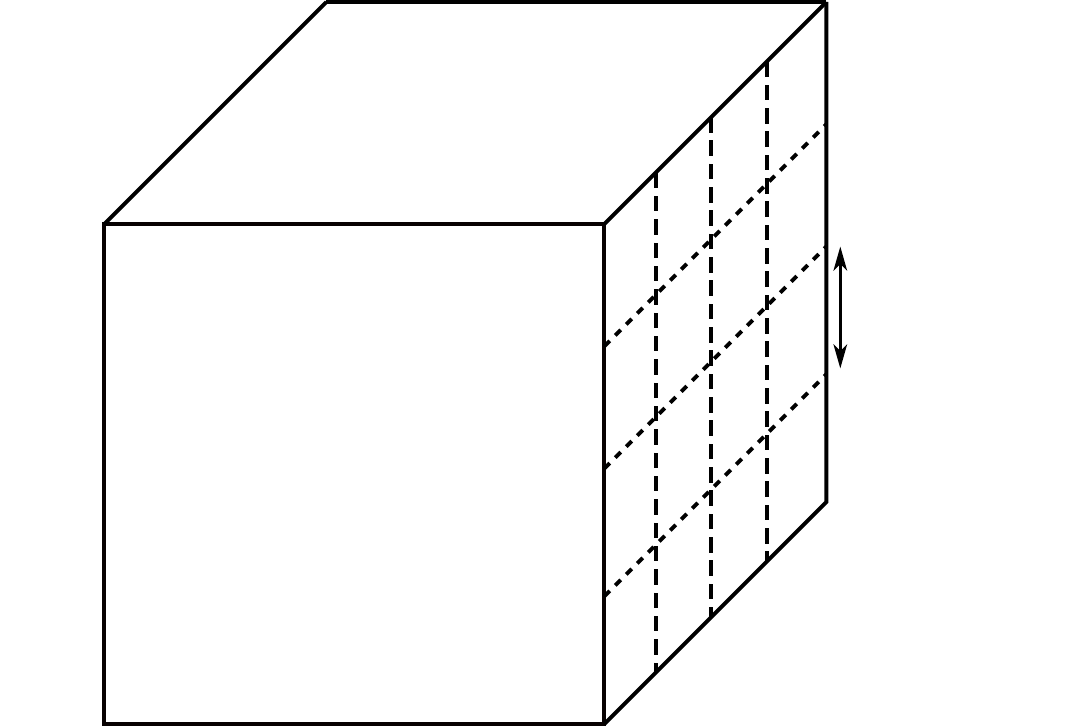
   \caption{\label{fig6b}Splitting the face $\fC_1 ^+$ into $m^{d-1}$ hyperrectangles}
   \end{center}
\end{figure}
Let $f_n\in\cS_n(\fC)$. For $\diamond\in\{+,-\}$ , $i\in\{1,\dots,d\}$ and $A\in\cP_i^\diamond(m)$, let us denote by $\psi_i^\diamond(f_n,A)$ the intensity of the stream $f_n$ through the face $A\in\cP_i^\diamond(m)$ in the direction $\overrightarrow{e_i}$, that is 
 \[\psi_i^\diamond(f_n,A)=\sum_{e\in\E_n^{i,\diamond}[A]}f_n\left(e\right)\cdot\overrightarrow{e_i}\,.\] 
% 
%\begin{defn}Let $m\geq 1$, $\delta>0$, $s>0$ and $\vv\in\sS^{d-1}$. Let $f_n\in\cS_n(\fC)$ we say that $f_n$ is $(m,\delta,s\vv)$-well-behaved if
%\[\forall i\in \{1,\dots,d\}\quad\forall A\in\cP_i^-(m)\cup\cP_i^+(m)\qquad \psi_i(f_n,A)=(1-\delta)\frac{sv_i}{m ^{d-1}}n ^{d-1}\,. \]
%\end{defn}
Note that the intensity of the stream through a face is not uniform at the mescopic level. In the following lemma, we prove that there exists a repartition at a mesoscopic level for the intensity of the stream through the faces of $\fC$ that is more likely.
\begin{lem}\label{lem:lemexistencefamille}
Let $s>0$ and $\vv=(v_1,\dots,v_d)\in\sS^{d-1}$. Let $\ep>0$. There exists $\kappa_d$ and $\alpha$ depending only on $d$, for $m=\lfloor \ep ^{-\alpha}\rfloor$, for any $n\geq 1$ there exist two families of real numbers in $\sqrt{\ep}\sZ$, namely $(\lambda_{A}^+,A\in\cup_{i=1}^d\cP_i^+(m))$, $(\lambda_{A}^-,A\in\cup_{i=1}^d\cP_i^-(m))$, that satisfy
\begin{align}\label{cond:famille}
\forall \diamond\in\{+,-\}\quad\forall i\in\{1,\dots,d\}\quad\forall A\in\cP_i^\diamond(m)\qquad \left|\lambda_A^\diamond-sv_i\cH^{d-1}(A)n^{d-1}\right|\leq \kappa_d\frac{\ep^\alpha}{m^{d-1}} n ^{d-1}
\end{align}
and
\begin{align}\label{cond:famille2}
\sum_{A\in\cup_{i=1}^d\cP_i^+(m)}\lambda_A^+=\sum_{A\in\cup_{i=1}^d\cP_i^-(m)}\lambda_A^-
\end{align}
such that
\begin{align*}
&\liminf_{\ep\rightarrow 0}\limsup_{n\rightarrow\infty}\frac{1}{n^d}\log\Prb\left( \exists f_n\in\cS_n(\fC) : \begin{array}{c}\forall \diamond\in\{+,-\}\,\forall i\in\{1,\dots,d\}\,\forall A\in\cP_i^\diamond(m) \quad \psi_i^\diamond(f_n,A)=\lambda_A^\diamond\\ \text{and }\,\dis\big(\amu_n(f_n),s\vv\ind_{\fC}\cL^d\big)\leq \ep^{\alpha}\end{array}\right)\\
&\hspace{5cm}= \lim_{\ep\rightarrow 0}\limsup_{n\rightarrow\infty}\frac{1}{n^d}\log\Prb\left( \exists f_n\in\cS_n(\fC) : \,\dis\big(\amu_n(f_n),s\vv\ind_{\fC}\cL^d\big)\leq \ep\right)
\end{align*}
and \begin{align*}
&\liminf_{\ep\rightarrow 0}\liminf_{n\rightarrow\infty}\frac{1}{n^d}\log\Prb\left( \exists f_n\in\cS_n(\fC) : \begin{array}{c}\forall \diamond\in\{+,-\}\,\forall i\in\{1,\dots,d\}\,\forall A\in\cP_i^\diamond(m) \quad \psi_i^\diamond(f_n,A)=\lambda_A^\diamond\\ \text{and }\,\dis\big(\amu_n(f_n),s\vv\ind_{\fC}\cL^d\big)\leq \ep^{\alpha}\end{array}\right)\\
&\hspace{5cm}= \lim_{\ep\rightarrow 0}\liminf_{n\rightarrow\infty}\frac{1}{n^d}\log\Prb\left( \exists f_n\in\cS_n(\fC) : \,\dis\big(\amu_n(f_n),s\vv\ind_{\fC}\cL^d\big)\leq \ep\right)\,.
\end{align*}
\end{lem}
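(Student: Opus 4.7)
The strategy is a classical discretization--pigeonhole argument applied to the mesoscopic flows $\psi_i^\diamond(f_n,A)$. The goal is to partition the event $\{\exists f_n : \dis(\amu_n(f_n), s\vv\ind_\fC\cL^d) \leq \ep\}$ according to the discretized values of the flows across the $2dm^{d-1}$ mesoscopic faces, count the resulting sub-events, and pick the one of maximal probability. Because the number of sub-events grows polynomially in $n$, the chosen sub-event will carry the same exponential rate as the original event.

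The plan is to fix $\alpha \in (0,1/d)$ small (depending only on $d$) and set $m = \lfloor \ep^{-\alpha}\rfloor$. Let $\fF_n(\ep)$ denote the set of families $\lambda = (\lambda_A^\diamond)_{A,i,\diamond}$ with entries in $\sqrt{\ep}\sZ$ satisfying \eqref{cond:famille} and \eqref{cond:famille2}. Since condition \eqref{cond:famille} constrains each $\lambda_A^\diamond$ to an interval of length $O(\ep^{\alpha d} n^{d-1})$ on a grid of spacing $\sqrt{\ep}$, a direct computation yields
\[
\log|\fF_n(\ep)| = O\bigl(\ep^{-\alpha(d-1)}\bigl((d-1)\log n + |\log\ep|\bigr)\bigr),
\]
so $\log|\fF_n(\ep)|/n^d \to 0$ as $n\to\infty$ for $\ep$ fixed.

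Next, I would associate to each $f_n$ in the event $\{\dis \leq \ep\}$ a family $\lambda \in \fF_n(\ep)$ and a modified stream $\widetilde{f}_n$ with exactly $\psi_i^\diamond(\widetilde{f}_n,A) = \lambda_A^\diamond$. First, I show that $|\psi_i^\diamond(f_n,A) - sv_i n^{d-1}/m^{d-1}| = O\bigl(\ep^{\min(1-\alpha,\alpha d)}n^{d-1}/m^{d-1}\bigr)$, by applying Lemma~\ref{lem:propdis3} (with $\delta = 1/m$ and $\rho = \ep_\fC/m$) to cubes adjacent to each face $A$, and combining this with the node law in $\fC$ summed over cross-sections of the tube $\cyl(A,1,\overrightarrow{e_i})$ to relate the $\overrightarrow{e_i}$-component of $\amu_n(f_n)$ on slabs to the face flow $\psi_i^\diamond(f_n,A)$. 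Rounding each $\psi_i^\diamond(f_n,A)$ to the nearest multiple of $\sqrt{\ep}$ produces an ``almost'' family $\widetilde{\lambda}$ satisfying \eqref{cond:famille} with a suitable $\kappa_d$. The family $(\psi_i^\diamond(f_n,A))$ itself satisfies the balance relation \eqref{cond:famille2} exactly, because summing the node law over all interior vertices of $\fC$ telescopes to give $\sum_{A\in\cup_i\cP_i^+(m)}\psi_i^+(f_n,A) = \sum_{A\in\cup_i\cP_i^-(m)}\psi_i^-(f_n,A)$; the rounded family $\widetilde{\lambda}$ then differs from balance by at most $O(\sqrt{\ep}\cdot m^{d-1})$, an imbalance that I correct by adjusting a bounded number of entries by $O(\sqrt{\ep})$. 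Finally, the mixing Lemma~\ref{lem:mixing} is applied in thin corridors of width $O(1/m)$ near $\partial\fC$ to redirect a small amount of flow so that $\psi_i^\diamond(\widetilde{f}_n,A) = \lambda_A^\diamond$ exactly; since the adjustments have magnitude $O(\sqrt{\ep} + \ep^\alpha)/m^{d-1}$ and affect only a boundary layer of volume $O(1/m)$, the change in $\amu_n(f_n)$ is of order $\ep^\alpha$ in the $\dis$-distance, which gives $\dis(\amu_n(\widetilde{f}_n), s\vv\ind_\fC\cL^d) \leq \ep^\alpha$.

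I conclude by union bound and pigeonhole. For each $n,\ep$, choose $\lambda^{(n,\ep)} \in \fF_n(\ep)$ maximizing the probability of the event $E_{n,\ep,\lambda}$ appearing in the statement. Then
\[
\Prb\bigl(\exists f_n : \dis(\amu_n(f_n),s\vv\ind_\fC\cL^d) \leq \ep\bigr) \,\leq\, |\fF_n(\ep)|\cdot \Prb\bigl(E_{n,\ep,\lambda^{(n,\ep)}}\bigr),
\]
and taking $(1/n^d)\log$, passing to $\limsup_n$ (resp. $\liminf_n$) and then to $\lim_{\ep\to 0}$ (resp. $\liminf_{\ep\to 0}$), the term $(1/n^d)\log|\fF_n(\ep)|$ vanishes in the limit, which yields the $\geq$ direction of both equalities. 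The converse direction is immediate because $E_{n,\ep,\lambda^{(n,\ep)}}$ is a sub-event of $\{\exists f_n : \dis \leq \ep^\alpha\}$ and $\ep^\alpha \to 0$ with $\ep$. The main obstacle is the construction step: matching exactly $\psi_i^\diamond(\widetilde{f}_n,A) = \lambda_A^\diamond$ via the mixing lemma while respecting the random capacity constraints $|\widetilde{f}_n(e)| \leq t(e)$. This requires a careful choice of the corridor and uses the fact that the required adjustments are of order $\sqrt{\ep} + \ep^\alpha$, which is much smaller than the typical slack of the existing capacities (this slack being available on a fraction of the edges of the corridor at a negligible probabilistic cost, since the corridor has volume $O(1/m) = o(1)$).
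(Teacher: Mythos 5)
Your overall pigeonhole strategy is the same as the paper's: discretize the mesoscopic flows $\psi_i^\diamond(f_n,A)$ into a finite deterministic set, count the number of possible families (polynomial in $n$ for fixed $\ep$), and pick the one of maximal probability. The estimate of $\log|\fF_n(\ep)|$ and the conclusion of Step 3 are correct, and your Step 1 (controlling $\psi_i^\diamond(f_n,A)$) parallels the paper's Step 1.

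The genuine gap is in your construction of the modified stream $\widetilde{f}_n$. You propose to round each $\psi_i^\diamond(f_n,A)$ to the \emph{nearest} multiple of $\sqrt{\ep}$ and then use the mixing lemma in corridors to hit the rounded values exactly. This is substantially harder than you acknowledge, for two reasons. First, nearest rounding can \emph{increase} a flow, which is not achievable without injecting extra water; and any injection requires edges of capacity at least the correction size. Those capacities are random, so you must pay $\Prb(\forall e \in\Cor,\, t(e)\ge \cdots)$, and to multiply this probability with $\Prb(\exists f_n : \dis\le\ep)$ you need an FKG argument (both events are increasing) — something you never invoke. Second, after nearest rounding, the balance condition \eqref{cond:famille2} is broken by an amount of order $\sqrt{\ep}\,m^{d-1}$ and must be repaired; your proposed fix (``adjusting a bounded number of entries'') is underspecified, and any repair again has to be realized by an admissible stream, returning you to the capacity problem. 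These issues are precisely the technical content of the paper's \emph{subsequent} Lemma~\ref{lem:toutefamille}, which treats arbitrary target families and needs FKG and a capacity event; using that machinery here is over-engineering.

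The paper avoids all of this in Lemma~\ref{lem:lemexistencefamille} by a cleaner construction. It decomposes $f_n$ into self-avoiding oriented paths (Lemma~\ref{lem:res}), groups them into streams $g_n[A_1,A_2]$ indexed by entry/exit face, and multiplies each group by the factor $\proj(\psi,\ep)/\psi$ where $\proj(t,\ep)=\sign(t)\sqrt{\ep}\lfloor|t|/\sqrt{\ep}\rfloor$. This is a \emph{floor} rounding implemented as a scale-down, so (i) $\|\widehat{f}_n(e)\|_2 \le \|f_n(e)\|_2 \le t(e)$ on every edge, hence $\widehat{f}_n\in\cS_n(\fC)$ at zero probabilistic cost; and (ii) the node law in the interior of $\fC$ is preserved exactly, so \eqref{cond:famille2} holds automatically for the family $(\psi_i^\diamond(\widehat f_n,A))$. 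Both conditions you struggle with come for free. To make your Step 2 correct you would essentially have to rediscover this ``scale down, never up'' idea; as written, the construction of $\widetilde f_n$ is not carried out.
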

Note that in the statement of this lemma, we cannot chose the families $(\lambda_A^+)$ and $(\lambda_A ^-)$, we only know that these families exist. Moreover, notethat these families depend on $n$ and o$\ep$. Actually, if we consider families that satisfy condition \eqref{cond:famille} and \eqref{cond:famille2}, we can prove the same result for these families by slightly modifying the environment to create a new stream. The following lemma, which is an improvement of lemma \ref{lem:lemexistencefamille}, will be useful in what follows. We postpone its proof to the end of the proof of lemma \ref{lem:lemexistencefamille}.
\begin{lem}\label{lem:toutefamille}
Let $s>0$ and $\vv\in\sS^{d-1}$. Let $\kappa_d$ and $\alpha$ the positive constants from lemma \ref{lem:lemexistencefamille}. Let $\ep>0$. For $m=\lfloor \ep ^{-\alpha}\rfloor$, for any $n\geq 1$ for any families of real numbers $(\beta_{A}^+,A\in\cup_{i=1}^d\cP_i^+(m))$, $(\beta_{A}^-,A\in\cup_{i=1}^d\cP_i^-(m))$ (not necessarily in $\sqrt{\ep}\sZ$) that satisfy conditions \eqref{cond:famille} and \eqref{cond:famille2}, we have 
\begin{align*}
&\liminf_{\ep\rightarrow 0}\limsup_{n\rightarrow\infty}\frac{1}{n^d}\log\Prb\left( \exists f_n\in\cS_n(\fC) : \begin{array}{c}\forall \diamond\in\{+,-\}\,\forall i\in\{1,\dots,d\}\,\forall A\in\cP_i^\diamond(m) \\ \psi_i ^\diamond(f_n,A)=(1-\ep ^{\alpha/4})\beta_A^\diamond\\ \,\text{and } \dis\big(\amu_n(f_n),s\vv\ind_{\fC}\cL^d\big)\leq \ep^{\alpha_0}\end{array}\right)\\
&\hspace{5cm}= \lim_{\ep\rightarrow 0}\limsup_{n\rightarrow\infty}\frac{1}{n^d}\log\Prb\left( \exists f_n\in\cS_n(\fC) : \,\dis\big(\amu_n(f_n),s\vv\ind_{\fC}\cL^d\big)\leq \ep\right)
\end{align*}
and
\begin{align*}
&\liminf_{\ep\rightarrow 0}\liminf_{n\rightarrow\infty}\frac{1}{n^d}\log\Prb\left( \exists f_n\in\cS_n(\fC) : \begin{array}{c}\forall \diamond\in\{+,-\}\,\forall i\in\{1,\dots,d\}\,\forall A\in\cP_i^\diamond(m) \\\psi_i ^\diamond(f_n,A)=(1-\ep ^{\alpha/4})\beta_A^\diamond\\ \,\text{and }\dis\big(\amu_n(f_n),s\vv\ind_{\fC}\cL^d\big)\leq \ep^{\alpha_0}\end{array}\right)\\
&\hspace{5cm}=\lim_{\ep\rightarrow 0}\liminf_{n\rightarrow\infty}\frac{1}{n^d}\log\Prb\left( \exists f_n\in\cS_n(\fC) : \,\dis\big(\amu_n(f_n),s\vv\ind_{\fC}\cL^d\big)\leq \ep\right)
\end{align*}
where $\alpha_0$ is a constant depending on $\alpha$ and $d$.
\end{lem}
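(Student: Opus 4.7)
The plan is to bootstrap from Lemma \ref{lem:lemexistencefamille}. That lemma provides specific families $(\lambda_A^\diamond)$ in $\sqrt{\ep}\sZ$ satisfying \eqref{cond:famille} and \eqref{cond:famille2} for which the desired logarithmic asymptotic already holds. Given an arbitrary family $(\beta_A^\diamond)$ satisfying the same two conditions, the idea is to show that a stream $f_n$ realizing the face intensities $\lambda_A^\diamond$ can be modified, at negligible exponential cost, into a stream $\tilde f_n$ realizing the face intensities $(1-\ep^{\alpha/4})\beta_A^\diamond$. Condition \eqref{cond:famille} applied to both families immediately gives $|\lambda_A^\diamond - \beta_A^\diamond| \leq 2\kappa_d \ep^\alpha n^{d-1}/m^{d-1}$, while the global balance condition \eqref{cond:famille2} is what makes the required face-intensity increment compatible with a divergence-free correction supported near $\partial\fC$.

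The construction I would propose has two steps. First, scale $f_n$ by $(1-\ep^{\alpha/4})$ to obtain a stream whose capacity usage is at most $(1-\ep^{\alpha/4})t(e)$ on every edge, leaving a slack of $\ep^{\alpha/4}t(e)$. Second, add a correction stream $g_n$ supported in a thin slab $S$ of thickness of order $1/m=\ep^\alpha$ adjacent to $\partial\fC$, with $\psi_i^\diamond(g_n,A) = (1-\ep^{\alpha/4})(\beta_A^\diamond - \lambda_A^\diamond)$ for every mesoscopic face $A$ and vanishing inflow from the interior of $\fC$ along the inner boundary of $S$. The construction of $g_n$ uses the mixing lemmas of Section \ref{sect:mixing}: in each mesoscopic tube adjacent to a face of $\fC$, Lemmas \ref{lem:mixing}--\ref{lem:mixprecis} route the per-face correction, and the connectedness of $S$ is used to redirect surplus at some tubes to cover deficit at others, the overall balance being guaranteed by \eqref{cond:famille2}. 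By distributing $\delta_A^\diamond := (1-\ep^{\alpha/4})(\beta_A^\diamond - \lambda_A^\diamond)$ uniformly over the $(n/m)^{d-1}$ lattice edges meeting each face, the norm $\|g_n(e)\|_2$ is of order $\ep^\alpha$ on every edge, so $\tilde f_n = (1-\ep^{\alpha/4})f_n + g_n$ respects the capacity constraint as soon as $t(e) \geq \ep^{3\alpha/4}$ on every edge of $S$.

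The probability that $t(e)\geq\ep^{3\alpha/4}$ on every edge of $S$ is at least $G([\ep^{3\alpha/4},M])^{|S|}$, which equals $\exp(-O(\ep^\alpha n^d))$ since $|S|=O(\ep^\alpha n^d)$ and $G([\ep^{3\alpha/4},M])\to G((0,M])>0$ as $\ep\to 0$. Both this capacity condition and the event from Lemma \ref{lem:lemexistencefamille} are increasing in the capacities, so the FKG inequality yields that the log-probability of $\{\exists \tilde f_n\in\cS_n(\fC):\psi_i^\diamond(\tilde f_n,A)=(1-\ep^{\alpha/4})\beta_A^\diamond\}$ differs from that of the $\lambda$-event by at most $O(\ep^\alpha)n^d$, which vanishes after dividing by $n^d$ and sending $\ep\to 0$. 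For the distance, the triangle inequality together with Lemma \ref{lem:propdis2} gives $\dis(\amu_n(\tilde f_n),s\vv\ind_\fC\cL^d) \leq \dis(\amu_n(f_n),s\vv\ind_\fC\cL^d) + O(\ep^{\alpha/4})$, so one can take $\alpha_0$ strictly smaller than $\alpha/4$.

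The hard part will be the construction of $g_n$ inside $S$. The per-face corrections $\delta_A^\diamond$ are balanced only globally via \eqref{cond:famille2} and not face-by-face, so the mixing has to transport surplus at some mesoscopic cells to cover deficits at others, possibly on different faces of $\fC$. Ensuring that $g_n$ is simultaneously divergence-free inside $S$ with vanishing inflow from the interior, realizes the prescribed face intensities on $\partial\fC$, and stays bounded by $\ep^\alpha$ on each edge to keep the probability cost negligible, will require a careful iterative application of Lemmas \ref{lem:mixing}--\ref{lem:mixprecis} in adjacent tubes of $S$, with particular attention to the geometry of $S$ along the edges and corners of $\partial\fC$.
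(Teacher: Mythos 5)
Your plan correctly identifies the reduction to Lemma \ref{lem:lemexistencefamille}, the scaling by $(1-\ep^{\alpha/4})$ to create capacity slack, the FKG argument, and the fact that the cost is controlled by requiring large capacities on the edges used by the correction. All of that matches the paper's strategy. The gap is in the geometry of the correction stream.

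You propose to support the correction $g_n$ in a thin slab $S$ of thickness $O(\ep^{\alpha})$ adjacent to $\partial\fC$ and claim $\|g_n(e)\|_2 = O(\ep^{\alpha})$ on every edge. The \emph{inputs} are indeed $O(\ep^{\alpha})$ per edge, since $|\beta_A^\diamond - \lambda_A^\diamond| \leq 2\kappa_d \ep^{\alpha} n^{d-1}/m^{d-1}$ is shared among $(n/m)^{d-1}$ edges. But conditions \eqref{cond:famille}--\eqref{cond:famille2} balance the correction only \emph{globally}, and the per-face imbalance $\mu_i^\diamond = \sum_{A\in\cP_i^\diamond(m)}(\beta_A^\diamond - \lambda_A^\diamond)$ can be as large as $2\kappa_d\ep^{\alpha} n^{d-1}$. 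Routing a flow of that magnitude from one face to another entirely inside a slab of thickness $\ep^{\alpha}$ forces it through cross-sections of only $O(\ep^{\alpha} n^{d-1})$ edges, so the per-edge flow inside $S$ is $O(1)$, not $O(\ep^{\alpha})$. Your capacity requirement $t(e)\geq\ep^{3\alpha/4}$ on $S$ is therefore far too weak, and the construction breaks.

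The paper circumvents this by an orthogonal geometry: the correction lives on a \emph{sparse $K$-periodic sub-lattice} ($K \sim \ep^{-\alpha/(2(d-1))}$, cf.\ \eqref{eq:defkappa} and Lemma \ref{lem:mixbin}) that spans the \emph{full depth} of $\fC$. This gives the correction a transverse cross-section of $\sim n^{d-1}/K^{d-1}$ edges, which is enough to keep per-edge flows bounded by $O(\ep^{\alpha/2})$ (see \eqref{eq:defwi} and \eqref{eq:contfnres}), while the total number of used edges $|\Gamma| \leq \kappa_d' n^d/K^{d-2}$ still vanishes relative to $n^d$ as $\ep\to 0$ because $K\to\infty$. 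The paper also imposes a \emph{fixed} capacity floor $t(e)\geq s/(2d)$ on $\Gamma$ (not a floor vanishing with $\ep$), which is both necessary for the capacity verification of $\widetilde f_n=(1-\ep^{\alpha/4})(f_n+f_n^{res})$ and makes the cost $|\Gamma|\log G([s/(2d),\infty))$ negligible. In short, a thin slab gives too few edges in the bottleneck direction; a sparse set spread through the whole cube gives enough cross-section with still a vanishing total edge count, and that is the essential idea you would need to add.
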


 \begin{proof}[Proof of lemma \ref{lem:lemexistencefamille}]
 The proof is divided into three steps. In the first step, we prove that if $f_n\in\cS_n(\fC)$ satisfies 
 $\dis\big(\amu_n(f_n),s\vv\ind_{\fC}\cL^d\big)\leq \ep$, then
 the flow for $f_n$ through any tube $\cyl(A,1,\overrightarrow{e_i})$ for $A\in\cP_i^-(m)$ is close to the value of the flow through this tube for $s\vv$. In a second step, we modify the stream in such a way that the corresponding flows through the tubes are in $\sqrt{\ep}\sZ$. This ensures that the possible values the flow can take at a mesoscopic level belong to a finite deterministic set. Finally, we do a pigeonhole principle to prove that there exists a deterministic set of possible values for the flows through the tubes that can be observed with a large enough probability.

Let $n\geq 1$. Let us consider $\omega\in\left\{\exists f_n\in\cS_n(\fC) : \,\dis\big(\amu_n(f_n),s\vv\ind_{\fC}\cL^d\big)\leq \ep\right\}.$ On the configuration $\omega$, we choose a stream $f_n(\omega)$ such that $ \dis\big(\amu_n(f_n),s\vv\ind_{\fC}\cL^d\big)\leq \ep$. If there are several possible choices, we select one according to a deterministic rule. For short, we write $\amu_n$ for $\amu_n(f_n)$.

\noindent{\bf Step 1: Control the incoming and outcoming flow in the tubes.} In this step we use some tools from \cite{CT1}.
The aim is now to show that the strength of the stream $f_n$ that flows through $A\in\cP_i^+(m)$ is close to $$\int_A s\vv\cdot \overrightarrow{e_i}\, d\cH^{d-1}(x)\,n ^{d-1}=sv_i\cH^{d-1}(A)\,n^{d-1}\,.$$
In \cite{CT1} (more precisely in the display equality after inequality (4.18)), the authors define the following flow through the bottom half of the cylinder $\cyl(A,h)$ for $h>0$:
\[\Psi(\amu_n,\cyl(A,h),\overrightarrow{e_i})=\sum_{\substack{e=\langle x,y\rangle\in\E_n^d: e\subset \cyl(A,h),\\x\in B'(A,h),\,y\notin B'(A,h)}}f_n(e)\cdot (n\overrightarrow{xy})\,\]
where we recall that $T'(A,h)$ and $B'(A,h)$ were defined in equalities \eqref{eq:defT(A,h)} and \eqref{eq:defB(A,h)}.
It is easy to check that the set of edges $\E_n ^{i,+}[A]$ is a cutset from $B'(A,h)$ to $T'(A,h)$ in $\cyl(A,h)$ which is minimal for the inclusion. By the node law, it follows that
$$\Psi(\amu_n,\cyl(A,h),\overrightarrow{e_i})=\psi_i^+(f_n,A)\,.$$
We refer to equation (4.20) in \cite{CT1} for more details about this fact.

Note that both expressions only depend on edges that have their left endpoint in $\cyl(A,h,-\overrightarrow{e_i})\subset \fC$. The value of the streams outside this set has no importance in the estimation made below.
Therefore, we can use the estimates proven in proposition 4.5 in \cite{CT1} and the expression of $h_0$ given just after the display inequality $(4.24)$. There exists $C(d)>0$ depending on $d$ such that 
$$\forall \eta>0\quad\forall h\leq  \frac{\eta}{C(d)M m}\qquad \left|\frac{1}{h}\int_{\cyl(A,h,-\overrightarrow{e_i})}\overrightarrow{e_i}\cdot d\amu_n(f_n)- \frac{\psi_i^+(f_n,A)}{n^{d-1}}\right|\leq \eta\,. $$
Hence, we have
\begin{align*}
\left|\frac{\psi_i^+(f_n,A)}{n^{d-1}}-s\vv\cdot\overrightarrow{e_i}\cH^{d-1}(A)\right|&\leq  \left| \frac{\psi_i^+(f_n,A)}{n^{d-1}}-\frac{1}{h}\int_{\cyl(A,h,-\overrightarrow{e_i})}\overrightarrow{e_i}\cdot d\amu_n(f_n)\right|\\
&\quad+\left|\frac{1}{h}\int_{\cyl(A,h,-\overrightarrow{e_i})}\overrightarrow{e_i}\cdot d\amu_n(f_n)-\frac{1}{h}\int_{\cyl(A,h,-\overrightarrow{e_i})}s\vv\cdot\overrightarrow{e_i}d\cL ^d(x)\right|\\
&\leq \eta +\frac{1}{h}\|\amu_n(\cyl(A,h,-\overrightarrow{e_i}))-s\vv\cL^d(\cyl(A,h,-\overrightarrow{e_i}))\|_2\,.
\end{align*}
We choose $h$ in such a way that $1/mh\in\sZ$: we set
\[h=\frac{1}{m}\left\lceil \frac{MC(d)}{\eta}\right\rceil^{-1}\,\]
where $\lceil x\rceil $ denotes the ceil of the real number $x$.
We have
$$h\leq \frac{1}{m}\frac{\eta}{MC(d)}\,.$$
Write 
\[h=\frac{\lambda}{2^j}\]
with $j\geq 1$ and $\lambda\in[1,2]$. 
Since $1/mh\in\sZ$, there exists $x\in[0,1]^d$ such that
\[\cyl(A,h,-\overrightarrow{e_i})\setminus A=\bigcup_{\substack{Q\in \Delta_\lambda^j \\(Q+x)\subset \cyl(A,h,-\overrightarrow{e_i})}}(Q+x)\,.\]
Hence, we have for $n$ large enough depending on $\ep$ and $M$
\begin{align*}
&\|\amu_n(\cyl(A,h,-\overrightarrow{e_i}))-s\vv\cL^d(\cyl(A,h,-\overrightarrow{e_i}))\|_2\\
&\hspace{2cm}\leq\|\amu_n(\cyl(A,h,-\overrightarrow{e_i})\setminus A)-s\vv\cL^d(\cyl(A,h,-\overrightarrow{e_i})\setminus A)\|_2+ \|\amu_n(A)\|_2\\
&\hspace{2cm}\leq \sum_{Q\in\Delta_\lambda^j}\|\amu_n(Q+x)-s\vv\cL^d(Q)\|_2+\frac{M}{n m^{d-1}}\\&\hspace{2cm}\leq 2^{j}\dis(\amu_n,s\vv\ind_\fC\cL^d)+\frac{M}{n m^{d-1}}\leq 2\frac{\lambda}{ h}\ep\,.
\end{align*}
It yields that
\[\left|\frac{\psi_i^+(f_n,A)}{n^{d-1}}-s\vv\cdot\overrightarrow{e_i}\cH^{d-1}(A)\right|\leq \eta  +\frac{2\lambda\ep}{h^{2}}\leq \frac{1}{m^{d-1}}\left(\eta m ^{d-1} +2\lambda\ep\left\lceil \frac{MC(d)}{\eta}\right\rceil^{2} m ^{d+1}\right) \,.\]
Set $\eta=\frac{1}{m ^d}$, we have
\begin{align*}
\left|\frac{\psi_i^+(f_n,A)}{n^{d-1}}-s\vv\cdot\overrightarrow{e_i}\cH^{d-1}(A)\right|&\leq \frac{1}{m^{d-1}}\left(\frac{1}{m} +8M^2C(d)^2\ep m^{3d+1}\right) \,.
\end{align*}
Setting $m=\left\lfloor\ep ^{-\alpha}\right\rfloor $
where 
\begin{align}\label{eq:defalpha}
\alpha=\frac{1}{2(3d+1)}\,
\end{align}
where $\lfloor x\rfloor$ denotes the integer part of the real number $x$. %check consequences alpha
Consequently, there exists $\kappa_d$ depending on $d$ and $M$ such that
\begin{align}\label{eq:contpsi}
\left|\frac{\psi_i^+(f_n,A)}{n^{d-1}}-s\vv\cdot\overrightarrow{e_i}\cH^{d-1}(A)\right|\leq \kappa_d\frac{\ep^\alpha}{m^{d-1}} \,.
\end{align}
By the same arguments, we can prove the same result for $A\in\cP_i^-(m)$.

\noindent{\bf Step 2: Modify the stream in such a way the flow is in $\sqrt{\ep}\sZ$ in each tube.}
The aim is now to correct the stream so that $\psi_i ^\diamond(A,f_n)\in\sqrt{\ep}\sZ$ for $i=1,\dots,d$, $\diamond\in\{+,-\}$, $A\in\cP_i^\diamond(m)$. 
Using the arguments of the proof of lemma \ref{lem:res}, there exists $\overrightarrow{\Gamma}$ a set of self-avoiding oriented paths in $\fC$ such that for any path $\ogam\in\overrightarrow{\Gamma}$ only its first and last edges belong to $\cup_{i=1,\dots,d}\E_n^{i,-}[\fC_i^-]\cup\E_n^{i,+}[\fC_i^+]$, and we can associate a positive real number $p(\ogam)$ such that
\[f_n=\sum_{\ogam\in\overrightarrow{\Gamma}}p(\ogam)\sum_{\overrightarrow{e}=\langle\langle x,y\rangle\rangle\in\ogam}n\,\overrightarrow{xy}\ind_{e}\]
and
\[\forall \ogam\in\overrightarrow{\Gamma}\quad\forall \overrightarrow{e}=\langle\langle x,y\rangle\rangle\in \ogam \qquad f_n(e)\cdot \overrightarrow{xy}\geq 0 \,.\]
For $i,j\in\{1,\dots,d\}$, $\diamond,\circ\in\{+,-\}$, $A_1\in\cP_i^\diamond(m)$ and $A_2\in\cP_j^\circ(m)$, we set
\[g_n[A_1,A_2]=\sum_{\substack{\ogam\in\overrightarrow{\Gamma}:\\ \gamma^f\in\E_n^{i,\diamond}[ A_1],\gamma^l\in\E_n^{i,\circ}[ A_2]} }p(\ogam)\sum_{\overrightarrow{e}=\langle\langle x,y\rangle\rangle\in\ogam}n\,\overrightarrow{xy}\ind_{e}\,.\]
where $\gamma^f$ (respectively $\gamma ^l$) corresponds to the first (resp. last) edge of $\ogam$.
Hence, we have
$$f_n= \sum_{A_1\in\cup_{i=1}^d\cP_i^+(m)\cup\cP_i^-(m)}\,\,\sum_{A_2\in\cup_{k=1}^d\cP_k^+(m)\cup\cP_k^-(m)}g_n[A_1,A_2]\,.$$
On the configuration $\omega$, $g_n[A_1,A_2]\in\cS_n(\fC)$.
Moreover, we have
\[\forall A_1,A_2\in\cup_{i=1}^d\cP_i^+(m)\cup\cP_i^-(m)\quad\forall e\in\E_n^d\qquad f_n(e)\cdot g_n[A_1,A_2](e)\geq 0\,.\]
 Since this decomposition is not necessarily unique, we choose one according to a deterministic rule.
 Let $t$ be a real number, we define
\[\proj(t,\ep)=\sign(t) \sqrt{\ep}\left\lfloor\frac{|t|}{\sqrt{\ep}}\right\rfloor\,\]
where $\sign(t)$ corresponds to the sign of $t$.
We define $\widehat{f}_n$ in the following way
\[\widehat{f}_n= \sum_{\substack{1\leq i,j\leq d\\\diamond,\circ\in\{+,-\}}}\sum_{\substack{A_1\in\cP_i^\diamond(m)\\A_2\in\cP_j^\circ(m)}}\frac{\proj\big(\psi_i(g_n[A_1,A_2],A_1),\ep\big)}{\psi_i(g_n[A_1,A_2],A_1)}g_n[A_1,A_2]\ind_{\psi_i^\diamond(g_n[A_1,A_2],A_1)\neq 0}\,.\]
It is easy to check that on the configuration $\omega$, we have $\widehat{f}_n\in\cS_n(\fC)$ because we have
\[0\leq \frac{\proj\big(\psi_i(g_n[A_1,A_2],A_1)),\ep\big)}{\psi_i(g_n[A_1,A_2],A_1)}\leq 1\,.\]
Moreover, for $i\in\{1,\dots,d\}$, $\diamond\in\{+,-\}$ and $A_1\in\cP_i^\diamond(m)$, we have
\begin{align*}
\psi_i^\diamond( \widehat{f}_n ,A_1)&=\sum_{A_2\in\cup_{k=1}^d\cP_k^+(m)\cup\cP_k^-(m)}\frac{\proj\big(\psi_i^\diamond(g_n[A_1,A_2],A_1)),\ep\big)}{\psi_i^\diamond(g_n[A_1,A_2],A_1)}\psi_i^\diamond(g_n[A_1,A_2],A_1)\ind_{\psi_i^\diamond(g_n[A_1,A_2],A_1)\neq 0}\\
&=\sum_{A_2\in\cup_{k=1}^d\cP_k^+(m)\cup\cP_k^-(m)}\proj\big(\psi_i^\diamond(g_n[A_1,A_2],A_1)),\ep\big)
\in\sqrt{\ep}\sZ
\end{align*}
and since 
\[\forall t\in\sR\qquad |t-\proj(t,\ep)|\leq \sqrt{\ep}\,\]
we have
\[\left|\psi_i^\diamond( \widehat{f}_n ,A_1)-\psi_i^\diamond( f_n ,A_1)\right|\leq\card\left(\cup_{k=1}^d\cP_k^+(m)\cup\cP_k^-(m)\right)\sqrt{\ep}\leq  2dm ^{d-1}\sqrt{\ep}\,.\]
It follows that for $n$ large enough (depending on $\ep$) using inequality \eqref{eq:contpsi}, we have
\begin{align}\label{eqpsi}
\left|\psi_i^\diamond( \widehat{f}_n ,A_1)-sv_i\cH^{d-1}(A_1)n ^{d-1}\right|&\leq \left|\psi_i^\diamond( \widehat{f}_n ,A_1)-\psi_i^\diamond( f_n ,A_1)\right|+\left|\psi_i^\diamond( f_n ,A_1)-sv_i\cH^{d-1}(A_1)n^{d-1}\right|\nonumber\\&\leq 2\kappa_d\frac{\ep^\alpha}{m ^{d-1}}n^{d-1}\,.
\end{align}
Let us compute  the distance $\dis(\amu_n(\widehat{f}_n),\amu_n(f_n))$. Let $x\in [-1,1[^d$, $\lambda\in[1,2]$, let $k\geq 1$, let $Q\in\Delta_\lambda ^k$ such that $Q\cap\fC\neq \emptyset$, we have
\begin{align*}
&\left\|\amu_n(f_n)(Q+x)-\amu_n(\widehat{f}_n)(Q+x)\right\|_2 \\
&\hspace{0.5cm}\leq  \sum_{\substack{1\leq i,j\leq d\\\diamond,\circ\in\{+,-\}}}\sum_{\substack{A_1\in\cP_i^\diamond(m)\\A_2\in\cP_j^\circ(m)}}\left|1-\frac{\proj(\psi_i^\diamond(g_n[A_1,A_2],A_1),\ep)}{\psi_i^\diamond(g_n[A_1,A_2],A_1)}\right|\|\amu_n(g_n[A_1,A_2])(Q+x)\|_2\ind_{\psi_i^\diamond(g_n[A_1,A_2],A_1)\neq 0}\\
&\hspace{0.5cm}\leq \sum_{\substack{1\leq i,j\leq d\\\diamond,\circ\in\{+,-\}}}\sum_{\substack{A_1\in\cP_i^\diamond(m)\\A_2\in\cP_j^\circ(m)}}\frac{\sqrt{\ep} }{\psi_i^\diamond(g_n[A_1,A_2],A_1)}\|\amu_n(g_n[A_1,A_2])(Q+x)\|_2\ind_{\psi_i^\diamond(g_n[A_1,A_2],A_1)\neq 0}\,.
\end{align*}
Besides, we have by construction:
\[\psi_i^\diamond(g_n[A_1,A_2],A_1)=\sum_{\substack{\ogam\in\overrightarrow{\Gamma}:\\ \gamma^f\in\E_n^{i,\diamond}[ A_1],\gamma^l\in\E_n^{j,\circ}[ A_2]} }p(\ogam)\geq 0\,\]
and since the path $\ogam$ is self-avoiding, we have for $n$ large enough
\begin{align*}
\sum_{Q\in\Delta^k_\lambda }\|\amu_n(g_n[A_1,A_2])(Q+x)\|_2&\leq \sum_{Q\in\Delta^k_\lambda }\sum_{\substack{\ogam\in\overrightarrow{\Gamma}:\\ \gamma^f\in\E_n^{i,\diamond}[ A_1],\gamma^l\in\E_n^{j,\circ}[ A_2]} }p(\ogam)\frac{|\ogam \cap (Q+x)|}{n^d}\\
&\leq \sum_{\substack{\ogam\in\overrightarrow{\Gamma}:\\ \gamma^f\in\E_n^{i,\diamond}[ A_1],\gamma^l\in\E_n^{j,\circ}[ A_2]} }p(\ogam)
\frac{|\ogam \cap \fC|}{n^d}= d \psi_i^\diamond(g_n[A_1,A_2],A_1)\,
\end{align*}
where we use that 
\[|\{e\in\E_n^d:e\in\fC\}|=|\{e=\langle x,y\rangle\in\E_n^d:\, x\in\fC, \,\exists i\in\{1,\dots,d\}\quad n\,\overrightarrow{xy}=\overrightarrow{e_i}\}|=d|\fC\cap\sZ_n^d|=dn^d\,.\]
It follows that
\begin{align*}
\sum_{Q\in\Delta^k_\lambda }\left\|\amu_n(f_n)(Q+x)-\amu_n(\widehat{f}_n)(Q+x)\right\|_2 \leq \sqrt{\ep}\sum_{\substack{1\leq i,j\leq d\\\diamond,\circ\in\{+,-\}}}\sum_{\substack{A_1\in\cP_i^\diamond(m)\\A_2\in\cP_j^\circ(m)}}d\leq d\sqrt{\ep}(2dm^{d-1})^2
\end{align*}
and
\begin{align*}
\dis(\amu_n(\widehat{f}_n),\amu_n(f_n))&=\sup_{x\in[0,1]^d}\sup_{\lambda\in[1,2]}\sum_{k=0}^\infty\frac{1}{2^{k}}\sum_{Q\in\Delta^k_\lambda }\left\|\amu_n(f_n)(Q+x)-\amu_n(\widehat{f}_n)(Q+x)\right\|_2\\
&\leq 8d^3\sqrt{\ep} m ^{2(d-1)}\leq 8d^3\sqrt{\ep}\, \ep ^{-\frac{d-1}{3d+1}}\,.
\end{align*}
Hence, we have for $n$ large enough
\begin{align*}
\dis(\amu_n(\widehat{f}_n),s\vv\ind_\fC\cL^d)&\leq \dis(\amu_n(f_n),s\vv\ind_\fC\cL^d)+\dis(\amu_n(\widehat{f}_n),\amu_n(f_n))\\
&\leq \ep + 8d^3\ep ^{\frac{d+3}{2(3d+1)}}\,.
\end{align*}
Finally, for $\ep$ small enough depending on $d$, for $n$ large enough depending on $d$ and $\ep$, we have
$$\dis(\amu_n(\widehat{f}_n),s\vv\ind_\fC\cL^d)\leq\ep ^{\alpha}\,.$$
\noindent{\bf Step 3: Do a pigeonhole principle for possible values of $\psi_i^\diamond(\widehat{f}_n,A)$.}
We would like to project $\psi_i^\diamond(\widehat{f}_n,A)$ on the possible values it can take for $i=1,\dots,d$ and $A\in\cP_i^+(m)\cup\cP_i^-(m)$. Note that the two families $(\psi_i^\diamond(\widehat{f}_n,A),\,i=1,\dots,d, \, A\in\cP_i^+(m))$ and $(\psi_i^\diamond(\widehat{f}_n,A),\,i=1,\dots,d, \, A\in\cP_i^-(m))$ satisfy the conditions \eqref{cond:famille} and \eqref{cond:famille2}.
Since $\psi_i^\diamond(\widehat{f}_n,A)$ satisfies inequality \eqref{eqpsi}, for $n$ large enough, there are at most $4\kappa_d\ep ^\alpha n^{d-1}/(m^{d-1}\sqrt{\ep})$ possible values for $\psi_i^\diamond(\widehat{f}_n,A)$. It follows that by a pigeonhole principle, there exist two deterministic families $(\lambda_{A}^+,A\in\cup_{i=1}^d\cP_i^+(m))$ and $(\lambda_{A}^-,A\in\cup_{i=1}^d\cP_i^-(m))$ of real numbers in $\sqrt{\ep}\sZ$  that satisfies the condition \eqref{cond:famille} depending on $n$ and $\ep$ and such that
\begin{align}\label{eq3.1:2}
\Prb&\left( \exists \widehat{f}_n\in\cS_n(\fC) : \begin{array}{c}\forall \diamond\in\{+,-\}\,\forall i\in\{1,\dots,d\}\,\forall A\in\cP_i^\diamond(m) \quad \psi_i^\diamond(\widehat{f}_n,A)=\lambda_A^\diamond\\ \text{and }\,\dis\big(\amu_n(\widehat{f}_n),s\vv\ind_{\fC}\cL^d\big)\leq \ep^{\alpha}\end{array}\right)\nonumber\\
&\qquad\geq\left(\frac{m ^{d-1}\sqrt{\ep}}{4\kappa_d\ep^\alpha n^{d-1}}\right)^{2dm^{d-1}}\Prb\left( \exists f_n\in\cS_n(\fC) : \dis\big(\amu_n(f_n),s\vv\ind_{\fC}\cL^d\big)\leq \ep \right)\,.
\end{align}
The families $(\lambda_A^+)_A$ and $(\lambda_A^-)_A$ satisfy the condition \eqref{cond:famille2} since $\widehat{f}_n$ satisfies the node law.
Hence, we have by taking the limsup in $n$ and then the liminf in $\ep$ in inequality \eqref{eq3.1:2}
\begin{align}\label{eq3.1:2n}
\liminf_{\ep\rightarrow 0}\limsup_{n\rightarrow\infty}\frac{1}{n^d}\log\Prb&\left( \exists f_n\in\cS_n(\fC) : \begin{array}{c}\forall \diamond\in\{+,-\}\,\forall i\in\{1,\dots,d\}\,\forall A\in\cP_i^\diamond(m) \quad \psi_i^\diamond(f_n,A)=\lambda_A^\diamond\\\text{and } \,\dis\big(\amu_n(f_n),s\vv\ind_{\fC}\cL^d\big)\leq \ep^{\alpha}\end{array}\right)\nonumber\\
&\qquad\geq\lim_{\ep\rightarrow 0}\limsup_{n\rightarrow\infty}\frac{1}{n^d}\log\Prb\left( \exists f_n\in\cS_n(\fC) : \dis\big(\amu_n(f_n),s\vv\ind_{\fC}\cL^d\big)\leq \ep \right)\,.
\end{align}
Moreover, we have for all $n\geq 1$
\begin{align*}
\Prb&\left( \exists f_n\in\cS_n(\fC) : \begin{array}{c}\forall \diamond\in\{+,-\}\,\forall i\in\{1,\dots,d\}\,\forall A\in\cP_i^\diamond(m) \quad \psi_i^\diamond(f_n,A)=\lambda_A^\diamond\\ \text{and }\,\dis\big(\amu_n(f_n),s\vv\ind_{\fC}\cL^d\big)\leq \ep^{\alpha}\end{array}\right)\nonumber\\
&\qquad\leq\Prb\left( \exists f_n\in\cS_n(\fC) : \dis\big(\amu_n(f_n),s\vv\ind_{\fC}\cL^d\big)\leq \ep^{\alpha} \right)\,.
\end{align*}
It follows that
\begin{align}\label{eq3.1:2nn}
\liminf_{\ep\rightarrow 0}\limsup_{n\rightarrow\infty}\frac{1}{n^d}\log\Prb&\left( \exists f_n\in\cS_n(\fC) : \begin{array}{c}\forall \diamond\in\{+,-\}\,\forall i\in\{1,\dots,d\}\,\forall A\in\cP_i^\diamond(m) \quad \psi_i^\diamond(f_n,A)=\lambda_A^\diamond\\\text{and } \,\dis\big(\amu_n(f_n),s\vv\ind_{\fC}\cL^d\big)\leq \ep^{\alpha}\end{array}\right)\nonumber\\
&\qquad\leq\lim_{\ep\rightarrow 0}\limsup_{n\rightarrow\infty}\frac{1}{n^d}\log\Prb\left( \exists f_n\in\cS_n(\fC) : \dis\big(\amu_n(f_n),s\vv\ind_{\fC}\cL^d\big)\leq \ep^{\alpha} \right)\,.
\end{align}
Combining inequalities \eqref{eq3.1:2n} and \eqref{eq3.1:2nn} we obtain the equality. We can do the same computations by taking the liminf in $n$ instead of the limsup. The result follows.
\end{proof}

\begin{proof}[Proof of lemma \ref{lem:toutefamille}] 
Let $\ep>0$. Let $m=\lfloor \ep ^{-\alpha}\rfloor$ and $(\lambda_{A}^+,A\in\cup_{i=1}^d\cP_i^+(m))$ and $(\lambda_{A}^-,A\in\cup_{i=1}^d\cP_i^-(m))$ be the two families of real numbers in $\sqrt{\ep}\sZ$ defined in lemma \ref{lem:lemexistencefamille}. We consider the event 
\[\cE=\left\{ \exists f_n\in\cS_n(\fC) : \begin{array}{c}\forall \diamond\in\{+,-\}\,\forall i\in\{1,\dots,d\}\,\forall A\in\cP_i^\diamond(m) \quad \psi_i^\diamond(f_n,A)=\lambda_A^\diamond\\\text{and } \,\dis\big(\amu_n(f_n),s\vv\ind_{\fC}\cL^d\big)\leq \ep^{\alpha}\end{array}\right\}\,.\]
From now on, $f_n$ stands for a stream that satisfies the requirements stated in the previous event. If there are several possible choices, we select one according to a deterministic rule.
We consider two families of real numbers $(\beta_{A}^+,A\in\cup_{i=1}^d\cP_i^+(m))$ and $(\beta_{A}^-,A\in\cup_{i=1}^d\cP_i^-(m))$ that satisfy conditions \eqref{cond:famille} and \eqref{cond:famille2} (these families are not necessarily in $\sqrt{\ep}\sZ$).
The aim is now to correct the stream by modifying slightly the environment so that we obtain a stream $\widetilde{f}_n\in\cS_n(\fC)$ that satisfies
\[\forall \diamond\in\{+,-\}\quad\forall i\in\{1,\dots,d\}\quad\forall A\in\cP_i^\diamond(m) \qquad \psi_i^\diamond(\widetilde{f}_n,A)=(1-\ep ^{\alpha/4})\beta_A^\diamond\,.\]
Since both families satisfy condition \eqref{cond:famille}, we have
\begin{align}\label{eq:ecartbetalambda}
\forall \diamond\in\{+,-\}\quad\forall i\in\{1,\dots,d\}\quad\forall A\in\cP_i^\diamond(m) \qquad \left|\lambda_A^\diamond-\beta_{A}^\diamond\right|\leq 2\kappa_d \frac{\ep^{\alpha}}{m ^{d-1}}n ^{d-1}\,.
\end{align}
We set 
\begin{align}\label{eq:defkappa}
K=\left \lfloor \left(\frac{1}{2\kappa_d\ep ^{\alpha/2}}\right) ^{1/(d-1)}\right\rfloor\,.
\end{align}
For $\diamond\in\{+,-\}$,  $i\in\{1,\dots,d\}$ and $  A\in\cP_i^\diamond(m)$, we denote by $V_{A}$ the following set
\[V_{A}=\left\{x\in\fC:\{x\diamond\lambda\overrightarrow{e_i}:\lambda\in[0,1/n[\}\cap A\neq \emptyset , \,\forall j\neq i\quad x_j\in K \sZ/n\right\}\,.\]
We set $$V_{i}^\diamond=\bigcup_{A\in\cP_i ^\diamond }V_{A}\,.$$
Let us define the function $w_i$ on $V_i^+\cup V_i^-$ as follows:
\[\forall A\in\cP_i^\diamond(m)\quad \forall x\in V_{A}\qquad w_i(x)=\frac{\beta^\diamond_A-\lambda_A^{\diamond}}{|V_{A}|}\,.\]
Hence, we have using \eqref{eq:ecartbetalambda} and \eqref{eq:defkappa}
\begin{align}\label{eq:defwi}
 \forall x\in V_{A}\qquad|w_i(x)|\leq 2 \kappa_d \frac{\ep^\alpha}{m ^{d-1}}n ^{d-1}\frac{m ^{d-1}K ^{d-1}}{n^{d-1}}\leq \ep ^{\alpha/2}\,.
 \end{align}
For $i=1,\dots,d$, we denote by $\mu_i^\diamond$ the following quantity:
\[\mu_i ^\diamond=\sum_{A\in\cP_i^\diamond(m)}\beta_A ^\diamond -\lambda_A^{\diamond}\,.\]
The quantity $\mu_i^\diamond$ corresponds to the difference between the flow through $\fC_i^\diamond$ for $f_n$  and the flow we would like to obtain.
Since both families satisfy condition \eqref{cond:famille2}, then we have
\[\sum_{i=1} ^d\mu_i ^-=\sum_{i=1}^d\mu_i ^+\,.\]
We split the $2d$ faces of $\fC$ into three categories: the faces $\cF_{in}$ where there is an excess of flow, the faces $\cF_{out}$ where there is a default of flow and the faces $\cF_0$ where the difference of flow is null, \textit{i.e.},
\[\cF_{in}=\{\fC_i^-:\,\mu_i^->0,\,i=1,\dots,d\}\cup \{\fC_i^+:\,\mu_i^+<0,\,i=1,\dots,d\};\]
\[\cF_{out}=\{\fC_i^-:\,\mu_i^-<0,\,i=1,\dots,d\}\cup \{\fC_i^+:\,\mu_i^+>0,\,i=1,\dots,d\}\] and\[ \cF_{0}=\{\fC_i^\diamond:\,\mu_i^\diamond=0,\,i=1,\dots,d\}\,.\]
By the node law, we have
\[\sum_{\fC_i ^\diamond\in\cF_{in}}|\mu_i ^\diamond|=\sum_{\fC_j ^\diamond\in\cF_{out}}|\mu_j ^\diamond|\]
For any $i\in \{1,\dots,d\}$, we define the function $\mathfrak{p}_i:\sR^d\rightarrow \sR ^{d-1}$ as 
\begin{align}\label{eq:defpi}
\forall x=(x_1,\dots,x_d)\in\sR^d\qquad \mathfrak{p}_i(x)=(x_1,\dots,x_{i-1},x_{i+1},\dots,x_d)\,.
\end{align}
Let us assume $\fC_i^-\in \cF_0$. We denote by $f_n^{res}[\fC_i ^-]$, the stream given by lemma \ref{lem:mixbin} associated with the families \[\left(f_{in}(\mathfrak{p}_i(x)) =w_i(x),\,x\in \bigcup_{A\in\cP_i^-(m)}V_A\right)\] and the null family (that corresponds here to uniform outputs) \[\left(f_{out}(\mathfrak{p}_i(x)) =0,\,x\in V_i ^+\right)\]  in the direction $\overrightarrow{e_i}$. The number of edges on which the stream is not null is at most $3dn^d/K ^{d-2}$ and for each edge $e\in\E^d$, $\|f_n^{res}[\fC_i ^-](e)\|_2\leq \ep ^{\alpha/2}$ by \eqref{eq:defwi}.
We can do the same thing for $\fC_i ^+\in \cF_0$ in the direction $-\overrightarrow{e_i}$ with the families of null inputs
 \[\left(f_{in}(\mathfrak{p}_i(x)) =0,\,x\in V_i ^+\right)\]  and for outputs \[\left(f_{out}(\mathfrak{p}_i(x)) =w_i(x),\,x\in V_i ^-\right)\,.\]  We obtain a stream $f_n^{res}[\fC_i ^+]$.

Let us now consider $\fC_i^- \in \cF_{in}$ and $\fC_j ^+\in \cF_{out}$. Let $\alpha>0$ such that $\alpha\leq|\mu_i^-|,|\mu_j^+|$. Let us first assume that $i=j$. We denote by $ f_n ^{res}[\fC_i^-,\fC_i^+,\alpha]$ the stream given by lemma \ref{lem:mixbin} in the direction $\overrightarrow{e_i}$ with the families of inputs
\[\left(f_{in}(\mathfrak{p}_i(x)) =\frac{\alpha}{|\mu_i ^-|}w_i(x),\,x\in V_i ^-\right)\] 
and the family of outputs
\[\left(f_{out}(\mathfrak{p}_i(x)) =\frac{\alpha}{|\mu_i ^+|}w_i(x),\,x\in V_i ^+\right)\,.\] 
It remains to deal with the case $i\neq j$.
Let us call $\tau_{i,j}$ the bijection that inverts the $i^{th}$ coordinate with the $j^{th}$ one. Notice that $\tau_{i,j}(V_i^-)=V_j^-$. We can build 
a family of pairwise disjoint oriented paths $(\ogam_x, x\in V_i^-)$ of length at most $2n$, such that for $x\in V_i^-$ the path $\ogam_x$ joins $x$ to $\tau_{i,j}(x)$ in $\fC$ (see figure \ref{fig:cheminogam}):
$$\ogam_x=\sum_{k=0}^{x_j-x_i-1}\overrightarrow{e_i}\ind_{\langle x+k\overrightarrow{e_i}/n,x+(k+1)\overrightarrow{e_i}/n\rangle}-\sum_{k=0}^{x_j-x_i-1}\overrightarrow{e_j}\ind_{\langle x+(x_j-x_i)\overrightarrow{e_i}/n+k\overrightarrow{e_j}/n,x+(x_j-x_i)\overrightarrow{e_i}/n+(k+1)\overrightarrow{e_j}/n\rangle}\,.$$
$\ogam_x$ represents in fact the stream of intensity $1$ through this oriented path.
\begin{figure}[H]
\begin{center}
\def\svgwidth{0.6\textwidth}
   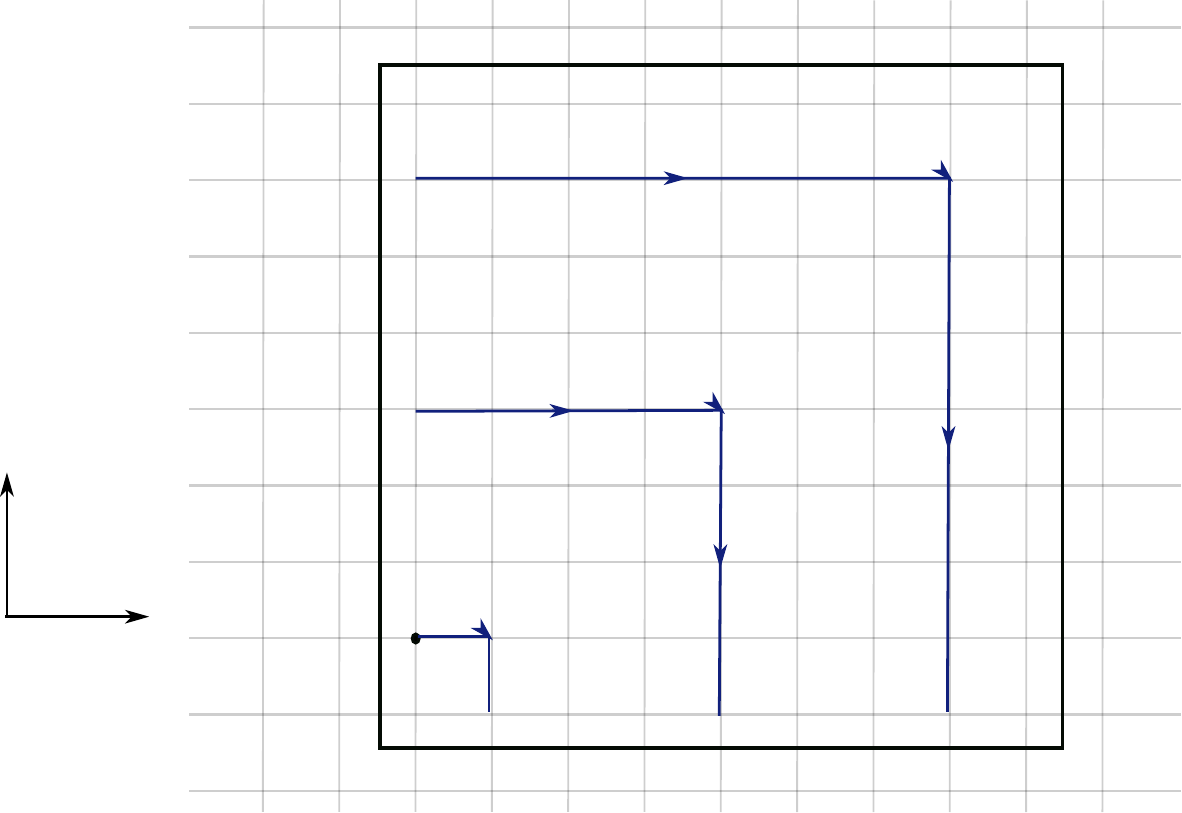
   \caption[figaaa]{\label{fig:cheminogam}The path $\overrightarrow{\gamma}_x$ for $x\in V_1^-$}
   \end{center}
\end{figure}
\noindent By lemma \ref{lem:mixbin}, we can build a stream $g_n^{i,j}$ in the direction $\overrightarrow{e_j}$
with the family of inputs
\[\left(f_{in}(\mathfrak{p}_j(x)) =\frac{\alpha}{|\mu_i ^-|}w_i(\tau_{i,j}^{-1}(x)),\,x\in V_j ^-\right)\] 
and the family of outputs
\[\left(f_{out}(\mathfrak{p}_j(x)) =\frac{\alpha}{|\mu_j ^+|}w_j(x),\,x\in V_j ^+\right)\,.\] 
Finally, we set 
\[f_n ^{res}[\fC_i^-,\fC_j^+,\alpha]=g_n^{i,j} +\sum_{x\in V_i^-} \frac{\alpha}{|\mu_i ^-|}w_i(x)\ogam_x\,.\]
We have
$$\sum_{x\in V_i ^-}|\ogam_x|\leq 2n|V_i^-|\leq 2n \frac{n^{d-1}}{K ^{d-1}}\leq 2\frac{n^d}{K ^{d-1}}\,.$$
Hence, using the previous inequality and lemma \ref{lem:mixbin}, we have that the stream $f_n ^{res}[\fC_i^-,\fC_j^+,\alpha]$ is supported by at most $2n^d/K ^{d-1}+3dn^d/K ^{d-2}\leq 4dn^d/K ^{d-2}$ for $\ep$ small enough depending on $d$. Moreover, for each edge $e\in\fC$, we have $\|f_n ^{res}[\fC_i^-,\fC_j^+,\alpha](e)\|_2\leq 2\ep ^{\alpha/2}$. By symmetry, the same construction holds for any $\fC_i ^\diamond \in \cF_{in}$ and $\fC_j ^\diamond\in\cF_ {out}$.

The aim is now to build a residual stream $f_n^{res}$ such that 
\[\forall i \in\{1,\dots,d\}\quad\forall \diamond\in\{-,+\}\quad\forall A\in\cP_i ^{\diamond}(m)\qquad \psi_i^\diamond(f_n^{res},A)=\beta^\diamond_A- \lambda^\diamond_A\,.\]
We do the following algorithm to build this stream.
\begin{algorithm}[H]
\caption{Build the stream $f_n ^{res}$}
\begin{algorithmic}
\STATE $f_n ^{res} \leftarrow 0$
\FOR{$i=1,\dots,d$, $\circ=-,+$}
\IF{$\fC_i ^{\circ}\in\cF_0$}
\STATE $f_n ^{res} \leftarrow f_n ^{res}+f_n ^{res}[\fC_i ^\circ]$
\ENDIF
\ENDFOR
\FOR{$i=1,\dots,d$, $\circ=-,+$}
\IF{$\fC_i ^{\circ}\in\cF_{in}$}
\WHILE {$|\psi_i^\circ(f_n ^{res},\fC_i^\circ)|< |\mu_i^\circ|$}
\STATE By the node law, there exists $\fC_j^\diamond\in\cF_{out}$ such that \[|\psi_j(f_n ^{res},\fC_j^\diamond)|< |\mu_j^\diamond|\,.\] We set $\alpha= \min(|\mu_i^\circ|-|\psi_i^\circ(f_n ^{res},\fC_i^\circ)|,|\mu_j^\diamond|-|\psi_j(f_n ^{res},\fC_j^\diamond)|)$. 
\STATE $f_n ^{res} \leftarrow f_n ^{res}+f_n ^{res}[\fC_i^\circ,\fC_j^\diamond,\alpha]$
\ENDWHILE
\ENDIF
\ENDFOR
\RETURN$f_n ^{res}$.
\end{algorithmic}
\end{algorithm}

%By the node law, if $\mu_i^-\neq 0$. 
%There exists a family of coefficients $(\alpha_{i,j}^-)$ and $(\alpha_{i,j}^+)$ such that
%\[\mu_i^-=-\sum_{\substack{j=1,\dots,d\\j\neq i}}\alpha_{i,j}^-\mu_j^-+\sum_{\substack{j=1,\dots,d\\j\neq i}}\alpha_{i,j}^+\mu_j^+\]
%where $0\leq\alpha_{i,j}^\circ\leq 1$ and $\alpha_{i,j}^-=0$ (respectively $\alpha_{i,j}^+=0$) if $\mu_i^- \mu_j ^-\geq 0$ (resp. $\mu_i^- \mu_j ^-\leq 0$), \textit{i.e}, the two flows are both incoming or outcoming $\fC$.

\noindent The number of steps of this algorithm is at most $(2d)^2$. Finally, the stream $f_n ^{res}$ has its support included in a set $\Gamma$ such that 
\[|\Gamma|\leq \frac{\kappa'_d}{K ^{d-2}} n^d\] where $\kappa'_d$ depends only on the dimension. Moreover, each edge $e\in \Gamma$ is used a most twice at each step, hence we have  
\begin{align}\label{eq:contfnres}
\|f_n^{res}(e)\|_2\leq 2(2d)^2\ep ^{\alpha/2}\leq 8 d^2 \ep ^{\alpha/2}\,.
\end{align}
We set \[\widetilde{f}_n=(1-\ep ^{\alpha/4})(f_n+f_n ^{res})\,.\] This ensures that for $\ep$ small enough depending on $s$ and $d$, on the event $\{\forall e \in\Gamma \quad t(e)\geq s/(2d)\}$, we have $\widetilde{f}_n\in\cS_n(\fC)$.
Indeed, for $e\in\Gamma$, we have 
\begin{align*}
\|\widetilde{f}_n(e)\|_2&\leq (1-\ep^{\alpha/4})(\|f_n(e)\|_2+8d^2\ep ^{\alpha/2})\leq (1-\ep^{\alpha/4})\left(1+16d^3\frac{\ep ^{\alpha/2}}{s}\right)t(e)\leq t(e)
\end{align*}
for $\ep$ small enough depending on $d$ ans $s$.
Doing so we obtain
\[\forall \diamond\in\{+,-\}\,\forall i\in\{1,\dots,d\}\,\forall A\in\cP_i^\diamond(m) \quad \psi_i^\diamond(\widetilde{f}_n,A)=(1-\ep ^{\alpha/4})(\psi_i^\diamond(f_n,A)+\beta_A^\diamond-\lambda_A ^\diamond)=(1-\ep ^{\alpha/4})\beta_A^\diamond\,\]
where we recall that $\psi_i^\diamond(f_n,A)=\lambda_ A ^\diamond$.
Moreover, using inequality \eqref{eq:contfnres} and the expression of $K$ in terms of $\ep$ given by \eqref{eq:defkappa}, we obtain for $n$ large enough depending on $d$
\begin{align*}
\dis(\amu_n(\widetilde{f}_n),s\vv\ind_\fC\cL^d)&\leq \dis(\amu_n(f_n),s\vv\ind_\fC\cL^d)+\frac{2\ep^{\alpha/4}}{n^d}\sum_{e\in\E_n^d\cap\fC}\|f_n(e)\|_2+\frac{2}{n^d}\sum_{e\in\E_n^d\cap\fC}\|f_n^{res}(e)\|_2\\
&\leq \ep^{\alpha} +6d\ep^{\alpha/4}M+ 16d^2\ep ^{\alpha/2}\frac{|\Gamma|}{n^d}\\
&\leq\ep^{\alpha} +6d\ep^{\alpha/4}M+ 16d^2K'_ d\frac{\ep ^{\alpha/2}}{K ^{d-2}}\leq K \ep ^{\frac{\alpha}{2}\left(1-\frac{d-2}{d-1}\right)}=K\ep^{\frac{\alpha}{2(d-1)}}\leq\ep^{\frac{\alpha}{4(d-1)}}\,.
\end{align*}
where $K$ depends on $M$ and $d$ and the last inequalities holds for $\ep$ small enough depending on $d$.
We set 
\begin{align}\label{eq:defalpha0}
\alpha_0=\frac{\alpha}{4(d-1)}\,.
\end{align}
On the following event
$$\cE\cap\cE'=\left\{ \exists f_n\in\cS_n(\fC) : \begin{array}{c}\,\forall \diamond\in\{+,-\}\,\forall i\in\{1,\dots,d\}\,\forall A\in\cP_i^\diamond(m) \\ \psi_i^\diamond(f_n,A)=\lambda_A^\diamond\\ \text{and }\,\dis\big(\amu_n(f_n),s\vv\ind_{\fC}\cL^d\big)\leq \ep^{\alpha},\,\end{array}\right\}\cap\left\{\forall e\in\Gamma\quad t(e)\geq \frac{s}{2d}\right\}\,,$$
the stream $\widetilde{f}_n$ is admissible since it satisfies the capacity constraint.
Using the fact that the two events $\cE$ and $\cE'$ are increasing (requiring large capacities will always help to obtain a given stream), we have by FKG inequality
\begin{align}\label{eq3.1:3}
\Prb&\left(\exists \widetilde{f}_n\in\cS_n(\fC) : \begin{array}{c}\,\forall \diamond\in\{+,-\}\,\forall i\in\{1,\dots,d\}\,\forall A\in\cP_i^\diamond(m) \quad \psi_i^\diamond(\widetilde{f}_n,A)=(1-\ep ^{\alpha/4})\beta_A^\diamond\\ \text{and }\,\dis\big(\amu_n(\widetilde{f}_n),s\vv\ind_{\fC}\cL^d\big)\leq \ep ^{\alpha_0}\end{array}\right)\nonumber\\
&\geq\Prb\left(\left\{\exists f_n\in\cS_n(\fC) : \begin{array}{c}\,\forall \diamond\in\{+,-\}\,\forall i\in\{1,\dots,d\}\,\forall A\in\cP_i^\diamond(m) \\\psi_i^\diamond(f_n,A)=\lambda_A^\diamond\\\text{and } \,\dis\big(\amu_n(f_n),s\vv\ind_{\fC}\cL^d\big)\leq \ep^{\alpha}\end{array}\right\}\cap\left\{\forall e\in\Gamma\quad t(e)\geq \frac{s}{2d}\right\}\right)\nonumber\\
&\geq \Prb\left(\exists f_n\in\cS_n(\fC) : \begin{array}{c}\,\forall \diamond\in\{+,-\}\,\forall i\in\{1,\dots,d\}\,\forall A\in\cP_i^\diamond(m) \quad \psi_i^\diamond(f_n,A)=\lambda_A^\diamond\\ \text{and }\,\dis\big(\amu_n(f_n),s\vv\ind_{\fC}\cL^d\big)\leq \ep^{\alpha}\end{array}\right)\nonumber\\
 &\hspace{2cm}\times\Prb\left(\forall e\in\Gamma\quad t(e)\geq \frac{s}{2d}\right)\,.
\end{align}
Using the independence of the capacities, we get
\begin{align}\label{eq3.1:4}
\Prb\left(\forall e\in\Gamma\quad t(e)\geq \frac{s}{2d}\right) \geq G\left(\left[\frac{s}{2d},+\infty\right[\right)^{|\Gamma|}\geq G\left(\left[\frac{s}{2d},+\infty\right[\right)^{\kappa'_dn^d/K ^{d-2}}\,.
\end{align}
Combining inequalities \eqref{eq3.1:3} and \eqref{eq3.1:4}, we obtain for $\ep$ small enough (depending on $d$ and $s$)
\begin{align*}
\limsup_{n\rightarrow\infty}&\frac{1}{n^d}\log \Prb\left(\exists f_n\in\cS_n(\fC) : \begin{array}{c}\,\forall \diamond\in\{+,-\}\,\forall i\in\{1,\dots,d\}\,\forall A\in\cP_i^\diamond(m) \quad \psi_i^\diamond(f_n,A)=(1-\ep ^{\alpha/4})\beta_A^\diamond\\ \text{and }\,\dis\big(\amu_n(f_n),s\vv\ind_{\fC}\cL^d\big)\leq  \ep ^{\alpha_0}\end{array}\right)\\
&\geq \limsup_{n\rightarrow\infty}\frac{1}{n^d}\log \Prb\left( \exists f_n\in\cS_n(\fC) : \begin{array}{c}\,\forall \diamond\in\{+,-\}\,\forall i\in\{1,\dots,d\}\,\forall A\in\cP_i^\diamond(m) \quad \psi_i^\diamond(f_n,A)=\lambda_A^\diamond\\ \text{and }\,\dis\big(\amu_n(f_n),s\vv\ind_{\fC}\cL^d\big)\leq \ep^{\alpha}\end{array}\right)\\
&\qquad+\frac{\kappa'_d}{K^{d-2}} \log G\left(\left[\frac{s}{2d},+\infty\right[\right)\,
\end{align*}
where we recall that $K$ goes to infinity when $\ep$ goes to $0$.
Finally, by lemma \ref{lem:lemexistencefamille}, we obtain by letting $\ep$ goes to $0$ and choosing a fixed $s\leq 2d M$:
\begin{align*}
\liminf_{\ep\rightarrow 0}\limsup_{n\rightarrow\infty}&\frac{1}{n^d}\log \Prb\left(\exists f_n\in\cS_n(\fC) : \begin{array}{c}\,\forall \diamond\in\{+,-\}\,\forall i\in\{1,\dots,d\}\,\forall A\in\cP_i^\diamond(m) \\\psi_i^\diamond(f_n,A)=(1-\ep ^{\alpha/4})\beta_A^\diamond\\ \text{and }\,\dis\big(\amu_n(f_n),s\vv\ind_{\fC}\cL^d\big)\leq \ep ^{\alpha_0}\end{array}\right)\\
&\geq \lim_{\ep\rightarrow 0}\limsup_{n\rightarrow\infty}\frac{1}{n^d}\log \Prb\left( \exists f_n\in\cS_n(\fC) : \,\dis\big(\amu_n(f_n),s\vv\ind_{\fC}\cL^d\big)\leq \ep\right)\,.
\end{align*}
Moreover, we have 
\begin{align*}
\liminf_{\ep\rightarrow 0}\limsup_{n\rightarrow\infty}&\frac{1}{n^d}\log \Prb\left(\exists f_n\in\cS_n(\fC) : \begin{array}{c}\,\forall \diamond\in\{+,-\}\,\forall i\in\{1,\dots,d\}\,\forall A\in\cP_i^\diamond(m) \\\psi_i^\diamond(f_n,A)=(1-\ep ^{\alpha/4})\beta_A^\diamond\\ \text{and }\,\dis\big(\amu_n(f_n),s\vv\ind_{\fC}\cL^d\big)\leq \ep ^{\alpha_0}\end{array}\right)\\
&\leq \lim_{\ep\rightarrow 0}\limsup_{n\rightarrow\infty}\frac{1}{n^d}\log \Prb\left( \exists f_n\in\cS_n(\fC) : \,\dis\big(\amu_n(f_n),s\vv\ind_{\fC}\cL^d\big)\leq \ep^{\alpha_0}\right)\,.
\end{align*}
This yields the result. The same result holds for the liminf.
\end{proof}
\begin{defn}Let $s>0$ and $\vv\in\sS^{d-1}$. Let $\pi$ be an homothety of $\sR^d$ (see \eqref{eq:defpixalpha}). We will say that $f_n\in\cS_n(\pi(\fC))$ is $(\ep$, $s\vv$,$\pi$)-well-behaved if
\[\forall \diamond\in\{+,-\}\,\forall i\in\{1,\dots,d\}\,\forall A\in\cP_i^\diamond(m) \quad \psi_i^\diamond(f_n,\pi(A))=(1-\ep ^{\alpha/4})(s\vv\cdot \overrightarrow{e_i})\, \cH^{d-1}(\pi(A))n ^{d-1}\,.\]
If $\pi=\mathrm{Id}$, we will write $(\ep,s\vv)$-well-behaved instead of $(\ep,s\vv,\mathrm{Id})$-well-behaved.
\end{defn}

\subsection{Definition and existence of the elementary rate function}
\begin{proof}[Proof of Theorem \ref{thmbrique}] Here $M>0$ denotes the supremum of the support of $G$.
Let $\vv\in\sS^{d-1}$. Let $s>0$. Let $\ep>0$. Let $m=\lfloor \ep ^{-\alpha}\rfloor$ where $ \alpha$ was defined in \eqref{eq:defalpha}. Let $N,n\in\sN$ such that $n\leq N$ and $n\geq m+1$. 
Write $$\fC'=\frac{n}{N}\fC\,.$$
\noindent{\bf Step 1: Paving $\fC$ with smaller cubes.}
We want to almost cover $\fC$ with translates of $\fC'$ by letting enough space between them to allow to reconnect the streams inside the different translates of $\fC'$ together. Let us set \[p=\left\lfloor n\left(1+\frac{2d}{m}\right)\right\rfloor\,.\]
\begin{figure}[h]
\begin{center}
\def\svgwidth{0.6\textwidth}
   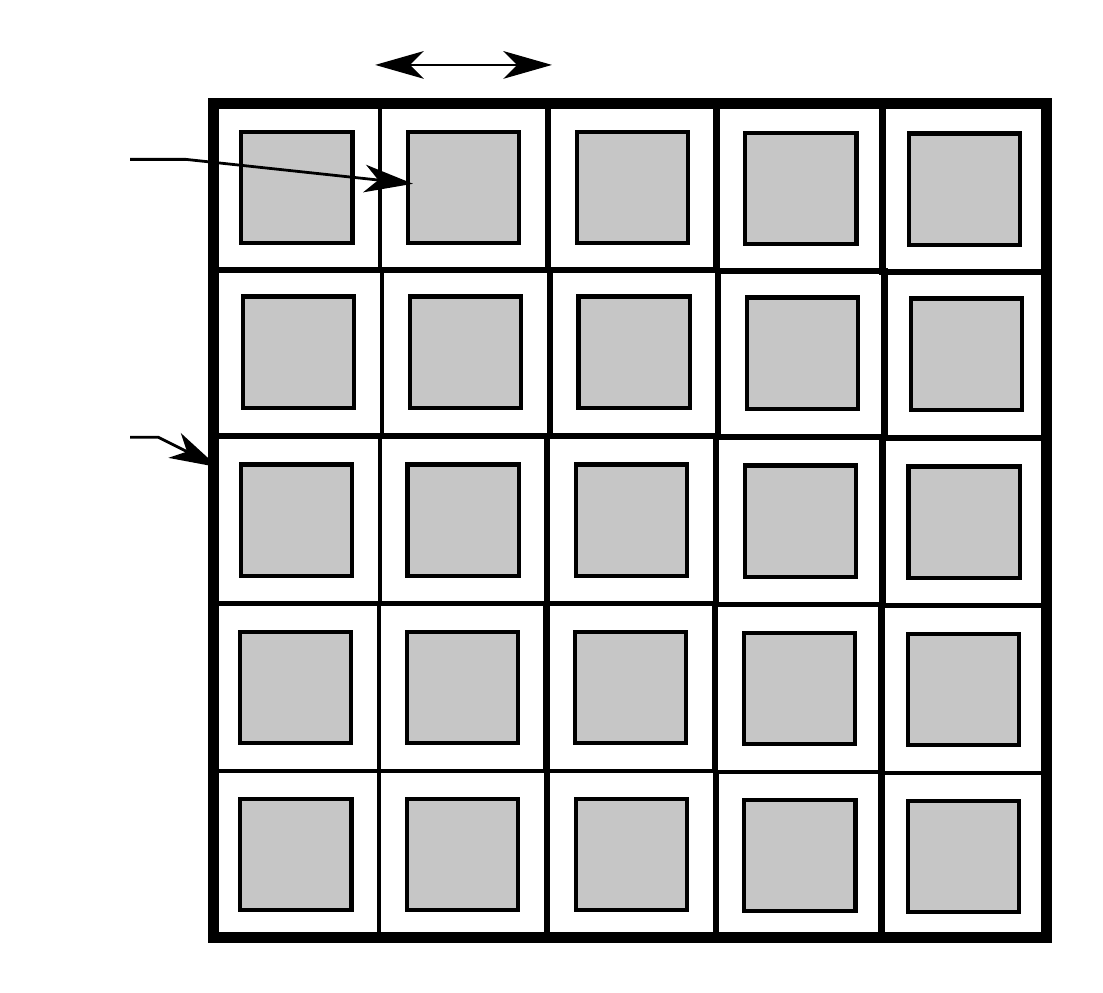
   \caption{\label{fig1}Paving $\fC$ with translates of $\fC'$}
   \end{center}
\end{figure}

\noindent Let $(x_i)_{i\in I}$ be the points in $p\sZ^d_{N}\cap (1-p/N)\fC$. Any two distinct points $x_i$ and $x_j$, $i,j\in I$, are at distance at least $p/N$ from each other. We set $$\pi_i=\pi_{x_i,\frac{n}{N}}\text{, for  }i\in I\,$$
where we recall that $\pi_{x_i,n/N}$ was defined in \eqref{eq:defpixalpha}. We have $\pi_i(\fC)=x_i+\fC'$.
The family $(\pi_i(\fC))_{i\in I}$ is a disjoint collection of translates of $\fC'$ such that 
\[\forall i \in I \qquad  \pi_i(\fC)\subset\fC\,.\]
We define the set $\Cor=\fC\setminus \cup_{i\in I}\pi_i(\fC)$. The set $\Cor$ represents the "corridor", this space will allow the streams in different $\pi_i(\fC)$ to be connected altogether (see figure \ref{fig1}).
It is easy to check that $\pi_i(\sZ_n^d)=\sZ_N^d$ and so that $\pi_i$ induces a bijection from $\E_n^d$ to $\E_N^d$.
We write 
$$\cE_i=\left\{\,\exists f_N\in\cS_N(\pi_i(\fC)) \text{ $(\ep,s\vv,\pi_i)$-well-behaved}: \,\dis\big(\amu_N(f_N)\ind_{\pi_i(\fC)},s\vv\ind_{\pi_i(\fC)}\cL^d\big)\leq 4 \frac{n^d}{N^d}\ep ^{\alpha_0}\,\right\}\,$$
where $\alpha_0$ was defined in \eqref{eq:defalpha0}.
On the event $\cE_i$, we will denote by $f_{N}^{(i)}$ a well-behaved stream satisfying the property described in $\cE_i$ (chosen according to a deterministic rule if there is more than one such stream). We denote by $\Cor_N$ the edges in $\E_N^d$ whose left endpoints are in $\Cor$:
$$\Cor_N=\left\{\langle x,y\rangle\in\E_N^d: x\in\Cor \text{ and }\exists i\in\{1,\dots,d\}\quad\overrightarrow{xy}=\frac{\overrightarrow{e_i}}{N}\right\}\,.$$
Let us denote by $\cF$ the event 
\[\cF=\left\{\, \forall e\in \Cor_N\qquad t(e)\geq M-\mathrm{H}(\ep)\right\}\]
where $\mathrm{H}:\sR_+\rightarrow\sR_+$ is a function we will chose later in such a way $\lim_{\ep\rightarrow0}\mathrm{H}(\ep)=0$.
%yyy attention notation H
We aim to prove that on the event $\cF\cap\bigcap_{i\in I }\cE_i$, we can build a stream $f_N\in \cS_N(\fC)$ such that $f_N$ coincides with $f_N^{(i)}$ on the cubes $\pi_i(\fC)$, $i\in I$ and 
$$\forall e\in \E_N^d\cap \Cor\qquad \|f_N(e)\|_2\leq  M-\mathrm{H}(\ep)\,.$$
\noindent{\bf Step 2: Reconnecting streams in the different cubes.}
We now explain how to reconnect the streams in the different cubes. Let $i,j \in I$ such that $\|x_i-x_j\|_1=p/N$. There exists $l\in\{1,\dots,d\}$ such that \[x_j=\frac{p}{N}\overrightarrow{e_l}+x_i\,.\]
Note that for all $A\in\cP_l^+ (m)$, we have on the event $\cE_i\cap\cE_j$
\[\psi_l^+\left(f_N^{(i)},\pi_i(A)\right)=(1-\ep ^{\alpha/4})(s\vv\cdot \overrightarrow{e_l})\, \cH^{d-1}(\pi_i(A))n ^{d-1}=\psi_l^-\left(f_{N}^{(j)},\pi_j\left(A-\overrightarrow{e_l}\right)\right)\,.\]
\begin{figure}[h]
\begin{center}
\def\svgwidth{0.5\textwidth}
   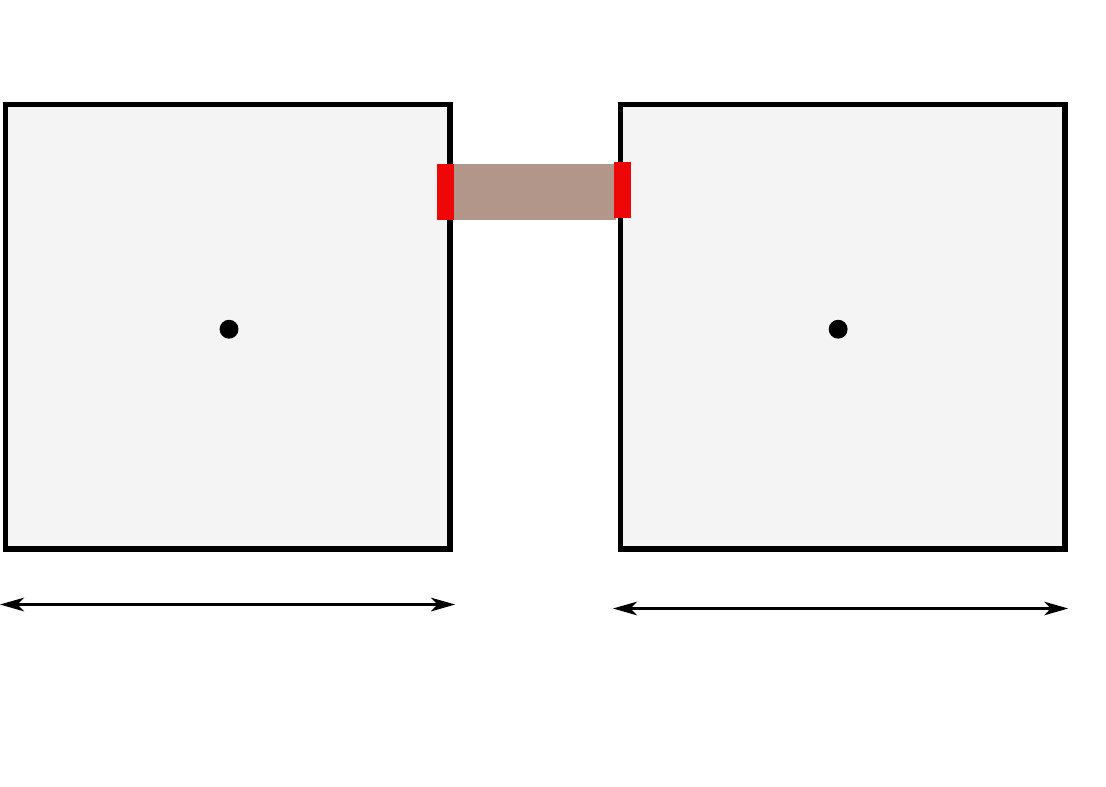
   \caption{\label{fig6c}Connecting streams in two adjacent cubes at mesoscopic level}
   \end{center}
\end{figure}

\noindent We can therefore apply lemma \ref{lem:mixing} to connect these two streams using only edges in the cylinder $\cyl(\pi_i(A),(p-n)/N,\overrightarrow{e_l})$ (see figure \ref{fig6c}) since $(p-n)/N\geq 2(d-1)/(Nm)$. We denote by $f_N ^{mix, i,j,A}$ the corresponding stream. Note that each edge in the corridor is used at most once by the streams $f_N ^{mix, i,j,A}$. We set
\[f_N^{mix,i,j}=\sum_{A\in\cP_l^+ (m)}f_N^{mix,i,j,A}\,.\]
For $i\in I$, $\diamond\in\{+,-\}$ such that there exists $l\in \{1,\dots,d\}$ with $x_i\diamond p/N\overrightarrow{e_l}\notin \{x_j,j\in I\}$, we have to connect the stream $f_N^{(i)}$ to the boundary of $\fC$ by exiting the water in the straight direction $\overrightarrow{e_l}$. More formally, we set
\[g_N^{i,l,+}=\sum_{e\in\E_N^{l,+}[ \pi_i(\fC_l^+)]}f_N^{(i)}(e)\sum_{k=1}^{p-n} \ind_{e+k\frac{\overrightarrow{e_l}}{N}}\ind_{\left(e+k\frac{\overrightarrow{e_l}}{N}\right)\in\fC}\,\]
and
\[g_N^{i,l,-}=\sum_{e\in\E_N^{l,-}[ \pi_i(\fC_l^-)]}f_N^{(i)}(e)\sum_{k=1}^{p-n} \ind_{e-k\frac{\overrightarrow{e_l}}{N}}\ind_{\left(e-k\frac{\overrightarrow{e_l}}{N}\right)\in\fC}\,\]
where for an edge $e=\langle x,y\rangle\in\E_N^d$ for $z\in\sZ_N^d$, we denote by $e+z$ the edge $\langle x+z,y+z\rangle\in\E_N^d$.
Finally, we set 
\[f_N=\sum_{i\in I}f_N^{(i)}+\sum_{(i,j)\in I^2:\|x_i-x_j\|_1=\frac{p}{N}}f_N^{mix,i,j}+\sum_{i\in I}\sum_{\substack{l=1,\dots,d, \diamond\in\{+,-\}: \\x_i\diamond \frac{p}{N}\overrightarrow{e_l}\notin\{x_j,j \in I \}}}g_N^{i,l,\diamond}\,.\]
By construction, $f_N$ coincides with $f_N^{(i)}$ on all $\fC'+x_i$ for $i\in I$. But, the value of $\|f_N(e)\|_2$ may exceed $M-\mathrm{H}(\ep)$ for edges in the corridor. To fix this issue we consider the stream $\widehat{f}_N=(1-\mathrm{H}(\ep)/M)f_N$. On the event $\cF\cap\bigcap_{i\in I}\cE_i$, the stream $\widehat{f}_N$ is in $\cS_N(\fC)$.

\noindent{\bf Conclusion.}
Using lemma \ref{lem:propdis4}, we obtain
\begin{align}\label{eq:contdis1}
\dis( \amu_N(f_N),s\vv\ind_{\fC}\cL^d)&\leq  \sum_{i\in I}\dis( \amu_N(f_N^{(i)}),s\vv\ind_{\pi_i(\fC)}\cL^d)+\dis( \amu_N(f_N)\ind_{\Cor},s\vv\ind_{\Cor}\cL^d)\nonumber\\
&\leq 4|I|\frac{n^d}{N^d}\ep^{\alpha_0}+ 2s\cL^d(\Cor)+2M\frac{|\Cor_N|}{N^d}\,.
\end{align}
Moreover, we have
\begin{align}\label{eq:contdis2}
|I|\leq \frac{N^d}{\lfloor n (1+ 2d/m)\rfloor ^d}\leq \frac{N^d}{n^d}
\end{align}
and since $(1-2p/N)\fC\subset \cup_{i\in I}(x_i+(p/N)\fC)$, it follows that
\[|I|\geq \left(\frac{1-2p/N}{p/N}\right)^d=\left(\frac{N}{\lfloor n(1+ 2d/m)\rfloor}-2\right)^d\geq \left(\frac{N}{ n(1+ 2d/m)}-2\right)^d\]
and
\begin{align}\label{eq:contdis3}
\cL ^d(\Cor)&\leq 1-\frac{n^d}{N^d}|I|\leq 1-\left(\frac{1}{1+ 2d/m}-2\frac{n}{N}\right)^d\,.
\end{align}
We have using proposition \ref{prop:minkowski}, for $N$ large enough depending on $d$ and $\ep$ and $n$
\begin{align}\label{eq:contdis4}
|\Cor_N|\leq 2d\frac{\cL^d(\cV_2(\Cor,d/N))}{1/N^d}&\leq 2d\left(\cL^d(\Cor)+\cL^d(\cV_2(\partial \Cor,d/N))\right)N^d\nonumber\\&\leq 2d\left(\cL^d(\Cor)+\frac{4d}{N}\cH^{d-1}(\partial\Cor)\right)N^d 
\,.
\end{align}
We also have using inequality \eqref{eq:contdis2}
\begin{align}\label{eq:contdis5}
\cH^{d-1}(\partial \Cor)\leq \cH^{d-1}(\partial \fC)+\sum_{i\in I}\cH ^{d-1}\left(\partial(\pi_i(\fC)\right)\leq 2d + 2d|I| \left(\frac{n}{N }\right)^{d-1}\leq 2d+ 2d\frac{N}{n}\,.
\end{align}
Combining inequalities \eqref{eq:contdis1}, \eqref{eq:contdis2}, \eqref{eq:contdis3}, \eqref{eq:contdis4} and \eqref{eq:contdis5}, we obtain for $N, n$ large enough depending on $\ep$, on the event $\cF\cap\bigcap_{i\in I}\cE_i$, that
\begin{align}
\dis(\amu_N(f_N),s\vv\ind_{\fC}\cL^d)\leq g(\ep)
\end{align}
where $g:\sR_+\rightarrow\sR_+$ is a function such that $\lim_{\ep\rightarrow 0}g(\ep)=0$.
It follows that for $N$ large enough
\begin{align}
\dis(\amu_N(\widehat{f}_N),s\vv\ind_{\fC}\cL^d)&\leq \dis(\amu_N(\widehat{f}_N),\amu_N(f_N))+\dis(\amu_N(f_N),s\vv\ind_{\fC}\cL^d)\nonumber\\
&\leq \frac{2\mathrm{H}(\ep)}{MN^d}\sum_{e\in\E_N^d\cap\fC}\|f_N(e)\|_2+g(\ep)\leq 2d\frac{\mathrm{H}(\ep)}{M}+g(\ep)\,
\end{align}
where we recall that $|\E_N^d\cap\fC|=dN^d$.
Hence, using the independence and  inequality \eqref{eq:contdis2}, we obtain
\begin{align}\label{eq:contdis6}
\frac{1}{N^d}&\log\Prb\left(\exists f_N\in\cS_N(\fC):\,\dis(\amu_N(f_N),s\vv\ind_{\fC}\cL^d)\leq g(\ep)+2d\frac{\mathrm{H}(\ep)}{M}\right)\nonumber\\
&\geq \frac{1}{N^d}\log\Prb\left(\cF\cap\bigcap_{i\in I}\cE_i\right)=\frac{1}{N^d}\log\Prb(\cF)+\frac{1}{N^d}|I|\log\Prb(\cE_1)\nonumber\\
&\geq \frac{|\Cor_N|}{N^d}\log G([M-\mathrm{H}(\ep),M])+\frac{1}{n^d} \log\Prb(\cE_1)\,.
\end{align}
We define $\mathrm{H}(\ep)$ as follows:
\begin{align}\label{eq:defh}
\mathrm{H}(\ep)=\inf\left\{a>0: G([M-a,M])\geq 1-\left(\frac{1}{1+2d/m}\right)^d\right\}\,.
\end{align}
We recall that $m=\lfloor \ep^{-\alpha}\rfloor$. It is clear that $\mathrm{H}$ is non-decreasing. We denote by $l=\lim_{\ep\rightarrow 0}\mathrm{H}(\ep)$. Let us assume that $l>0$. By defintion of $\mathrm{H}$ it follows that
$$\forall \ep>0\qquad G([M-l/2,M])<1-\left(\frac{1}{1+2d/m}\right)^d\,$$
and so $G([M-l/2,M])=0$. This contradicts the fact that $M$ is the supremum of the support of $G$. Hence, $l=0$.
 Thanks to inequality \eqref{eq:contdis3} and by definition of $\mathrm{H}$, we have
\begin{align*} 
\left(1-\left(\frac{1}{1+2d/m}\right)^d\right) \log\left(1-\left(\frac{1}{1+2d/m}\right)^d\right)
 & \leq\liminf_{N\rightarrow\infty} \cL^d(\Cor)\log G([M-\mathrm{H}(\ep),M])\leq 0\,.
 \end{align*}
 Since  $$\lim_{\ep\rightarrow 0}\left(1-\left(\frac{1}{1+2d/m}\right)^d\right) \log\left(1-\left(\frac{1}{1+2d/m}\right)^d\right)=0\,,$$
 it follows that
\begin{align}\label{eq:cortend0} 
 \lim_{\ep\rightarrow 0} \liminf_{N\rightarrow\infty}\cL^d(\Cor)\log G([M-\mathrm{H}(\ep),M])=0\,.
 \end{align}
We admit the following result, we postpone its proof (see lemma \ref{lem:scaling1} below).
\begin{align}\label{eq:scaling}
\Prb(\cE_i)=\Prb&\left(\exists f_N\in\cS_N(\pi_i(\fC)) \text{ $(\ep,s\vv,\pi_i)$-well-behaved}: \,\dis\big(\amu_N(f_N),s\vv\ind_{\pi_i(\fC)}\cL^d)\leq 4\frac{n^d}{N^d}\ep^{\alpha_0}\right)\nonumber\\
&\hspace{3cm}\geq \Prb(\exists f_n\in\cS_n(\fC) \quad(\ep,s\vv)\text{-well-behaved}: \dis(\amu_n(f_n),s\vv\ind_{\fC}\cL^d)\leq \ep^{\alpha_0})\,.
\end{align}
For $f_n\in\cS_n(\fC)$ $(\ep,s\vv)$-well-behaved, we have
\[\forall \diamond\in\{+,-\}\,\forall i\in\{1,\dots,d\}\,\forall A\in\cP_i^\diamond(m) \quad \psi_i^\diamond(f_n,A)=(1-\ep ^{\alpha/4})(s\vv\cdot \overrightarrow{e_i})\, \cH^{d-1}(A)n ^{d-1}\,.\]
The families $((s\vv\cdot \overrightarrow{e_i})\, \cH^{d-1}(A)n ^{d-1}, A\in\cup_{i=1}^d\cP_i^-(m))$ and $((s\vv\cdot \overrightarrow{e_i})\, \cH^{d-1}(A)n ^{d-1}, A\in\cup_{i=1}^d\cP_i^+(m))$ clearly satisfy conditions \eqref{cond:famille} and \eqref{cond:famille2}. Hence, by applying lemma \ref{lem:toutefamille}, we obtain 
\begin{align}\label{eq:eglimwb}
&\liminf_{\ep\rightarrow 0}\limsup_{n\rightarrow\infty}\frac{1}{n^d} \log\Prb(\exists f_n\in\cS_n(\fC) \quad(\ep,s\vv)\text{-well-behaved}: \dis(\amu_n(f_n),s\vv\ind_{\fC}\cL^d)\leq \ep^{\alpha_0} )\nonumber\\
&=\lim_{\ep\rightarrow 0}\limsup_{n\rightarrow\infty}\frac{1}{n^d} \log\Prb(\exists f_n\in\cS_n(\fC): \dis(\amu_n(f_n),s\vv\ind_{\fC}\cL^d)\leq \ep )\,.
\end{align}
Using inequalities \eqref{eq:contdis4}, \eqref{eq:contdis6} and \eqref{eq:scaling}, by taking first the liminf in $N$ and then the limsup in $n$ we obtain
\begin{align*}
&\liminf_{N\rightarrow\infty}\frac{1}{N^d}\log\Prb\left(\exists f_N\in\cS_N(\fC):\,\dis(\amu_N(f_N),s\vv\ind_{\fC}\cL^d)\leq g(\ep)+2d\frac{\mathrm{H}(\ep)}{M}\right)\\
&\geq \liminf_{N\rightarrow\infty}2d\cL^d(\Cor)\log G([M-\mathrm{H}(\ep),M])\\
&\hspace{1cm}+\limsup_{n\rightarrow\infty}\frac{1}{n^d} \log\Prb(\exists f_n\in\cS_n(\fC) \quad(\ep,s\vv)\text{-well-behaved}: \dis(\amu_n(f_n),s\vv\ind_{\fC}\cL^d)\leq \ep^{\alpha_0})\,.
\end{align*}
Finally, taking the limit when $\ep$ goes to $0$ in the previous inequality (the probability are non-decreasing in $\ep$) and using equalities \eqref{eq:cortend0}, \eqref{eq:scaling} and \eqref{eq:eglimwb}, we obtain
\begin{align*}
\lim_{\ep\rightarrow 0}\liminf_{N\rightarrow\infty}\frac{1}{N^d}&\log\Prb(\exists f_N\in\cS_N(\fC):\,\dis(\amu_N(f_N),s\vv\ind_{\fC}\cL^d)\leq\ep)\\
&\geq\lim_{\ep\rightarrow 0}\limsup_{n\rightarrow\infty}\frac{1}{n^d} \log\Prb(\exists f_n\in\cS_n(\fC): \dis(\amu_n(f_n),s\vv\ind_{\fC}\cL^d)\leq \ep )\,.
\end{align*}
This yields the result.

\end{proof}
The following lemma proves inequality \eqref{eq:scaling} in a slightly more general setting.
\begin{lem}[Scaling and Translation]\label{lem:scaling1} Let  $\vv\in\sS^{d-1}$ and $s>0$. Let $\ep>0$. Let $m\in\sN$. Let $N\geq n\geq 1$. Let  $(\rho_{A}^+,A\in\cup_{i=1}^d\cP_i^+(m))$ and $(\rho_{A}^-,A\in\cup_{i=1}^d\cP_i^-(m))$ be two families of real numbers (potentially depending on $\ep$, $n$ and $N$). Let $x\in \sZ_N^d$. Set $\delta=n/N$.  Then $\pi_{x,\delta}(\sZ_n ^d)=\sZ_N^d$: $\pi_{x,\delta}$ induces a bijection from $\E_n^d$ to $\E_N ^d$ (we refer to \eqref{eq:defpixalpha} for the definition of $\pi_{x,\delta}$).
Then, we have
\begin{align*}
\Prb&\left( \exists f_n\in\cS_n(\fC) : \begin{array}{c}\forall \diamond\in\{+,-\}\,\forall i\in\{1,\dots,d\}\,\forall A\in\cP_i^\diamond(m) \quad \psi_i^\diamond(f_n,A)=\rho_A^\diamond\\ \text{and }\,\dis\big(\amu_n(f_n),s\vv\ind_{\fC}\cL^d\big)\leq \ep\end{array}\right)\\
&\leq \Prb\left( \exists f_N\in\cS_N(\pi_{x,\delta} (\fC)) : \begin{array}{c}\forall \diamond\in\{+,-\}\,\forall i\in\{1,\dots,d\}\,\forall A\in\cP_i^\diamond(m) \quad \psi_i^\diamond(f_N,\pi_{x,\delta} (A))=\rho_A^\diamond\\ \text{and }\,\dis\big(\amu_N(f_N),s\vv\ind_{\pi_{x,\delta}(\fC)}\cL^d\big)\leq 4\delta ^{d}\ep\end{array}\right)\,.
\end{align*}
\end{lem}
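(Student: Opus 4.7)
The strategy is a direct deterministic coupling. The affine map $\pi_{x,\delta}(y)=\delta y+x$ induces a bijection $\pi_{x,\delta}:\E_n^d\to\E_N^d$ (since $x\in\sZ_N^d$ and $\delta=n/N$), which restricts to a bijection between $\{e\in\E_n^d : e\subset\fC\}$ and $\{e'\in\E_N^d : e'\subset\pi_{x,\delta}(\fC)\}$. Coupling the capacities through $t_N(\pi_{x,\delta}(e))=t_n(e)$ preserves the i.i.d.\ law $G$ on the relevant edges, so it suffices to show that, on any realization on which the LHS event occurs, the transported stream $f_N(\pi_{x,\delta}(e)):=f_n(e)$ (extended by $0$ elsewhere) satisfies the RHS event. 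Admissibility $f_N\in\cS_N(\pi_{x,\delta}(\fC))$ (support, capacity constraint, node law) and the flow equalities $\psi_i^\diamond(f_N,\pi_{x,\delta}(A))=\psi_i^\diamond(f_n,A)=\rho_A^\diamond$ follow immediately, as these are combinatorial or linear identities preserved by any bijection of edges.

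For the distance estimate, set $\mu_n=\amu_n(f_n)$, $\nu_n=s\vv\ind_\fC\cL^d$, and $\mu_N,\nu_N$ analogously. Using $1/N^d=\delta^d/n^d$ and the change of variables formula for $\cL^d$ under the affine map, I check that $\mu_N=\delta^d(\pi_{x,\delta})_*\mu_n$ and $\nu_N=\delta^d(\pi_{x,\delta})_*\nu_n$, hence $(\mu_N-\nu_N)(B)=\delta^d(\mu_n-\nu_n)(\delta^{-1}(B-x))$ for every Borel $B$. In particular, for $Q\in\Delta_\lambda^k$, the preimage $\delta^{-1}(Q+y-x)$ is a cube of side $a_k=2^{-k}\lambda/\delta$.

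The crux is to bound $\sum_{k\ge 0}2^{-k}\sum_{Q\in\Delta_\lambda^k}\|(\mu_N-\nu_N)(Q+y)\|_2$ for arbitrary $y\in[-1,1[^d$, $\lambda\in[1,2]$. Set $j_\delta=\lfloor\log_2(1/\delta)\rfloor$, so that $2^{-j_\delta}<2\delta$, and split the sum at $k=j_\delta$. For $k\ge j_\delta+1$ we have $a_k\le 2$: the preimage cubes are exactly a shifted family $\Delta_{\lambda'}^{k'}+v$ with $v=\delta^{-1}(y-x)$, $\lambda'\in[1,2]$ fixed, and $k'=k-j$ for a constant $j\in\{j_\delta,j_\delta+1\}$. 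Reindexing in $k'$ and using the translation-invariance of $\dis$ in its shift parameter (by periodicity, the supremum over $x\in[-1,1[^d$ equals the supremum over $\sR^d$), the weighted sum is at most $2^{-j}\dis(\mu_n,\nu_n)\le 2\delta\ep$; multiplying by the overall $\delta^d$, this part contributes at most $2\delta^{d+1}\ep$.

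For $k\le j_\delta$ the preimages have side $a_k\ge 1$, exceeding the native scale-$2$ reach of $\dis$; this is the main obstacle. I subdivide each preimage into $M_k^d$ sub-cubes of side $b_k=a_k/M_k\in[1,2]$, with $M_k=2^{\lfloor\log_2 a_k\rfloor}$, chosen so that all sub-cubes (across the whole preimage grid) together form a single $\Delta_{b_k}^0$-grid shifted by the appropriate $y^*\in\sR^d$. The triangle inequality and the $k^*=0$ term of the $\dis$ definition then yield
\[
\sum_{Q\in\Delta_\lambda^k}\|(\mu_n-\nu_n)(\delta^{-1}(Q+y-x))\|_2\;\le\;\sum_{R\in\Delta_{b_k}^0+y^*}\|(\mu_n-\nu_n)(R)\|_2\;\le\;\dis(\mu_n,\nu_n)\;\le\;\ep
\]
for every such $k$. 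Weighting by $2^{-k}$ and summing over $k\le j_\delta$ gives at most $2\ep$; multiplied by $\delta^d$, this contributes at most $2\delta^d\ep$. Combining the two regimes, $\dis(\mu_N,\nu_N)\le 2\delta^{d+1}\ep+2\delta^d\ep\le 4\delta^d\ep$ since $\delta\le 1$, which is exactly the bound on the RHS event.
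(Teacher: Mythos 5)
Your proof is correct and follows essentially the same approach as the paper: transport the configuration and stream via $\pi_{x,\delta}$, check that admissibility, capacities and the flow equalities $\psi_i^\diamond$ are preserved, then bound the distance by reindexing the dyadic scales with a subdivision step for coarse scales. Your subdivision of the preimage cube at scales $k\leq j_\delta$ into $M_k^d$ sub-cubes of side $b_k\in[1,2]$ is precisely the paper's reduction of scale-$k$ cubes (for $k<j$) to scale-$j$ cubes before applying $\pi_{x,\delta}^{-1}$ (one checks $b_k=\lambda'$ identically), so the two arguments coincide up to bookkeeping and both yield the bound $4\delta^d\ep$.
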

\begin{proof} First notice that for $y\in\sZ_n^d$, we have $\pi_{x,\delta}(y)=ny/N+x\in\sZ_N^d$. Then, $\pi_{x,\delta}$ induces a bijection from $\E_n^d$ to $\E_N ^d$.
Let us consider $\omega\in(\sR_+) ^{\E_{n}^d}$ a configuration for which there exists $f_{n}\in\cS_{n}( \fC)$ such that \[\dis\big(\amu_{n}(f_{n}),s\vv\ind_{\fC}\cL^d\big)\leq\ep\]
and
\[\forall \diamond\in\{+,-\}\,\forall i\in\{1,\dots,d\}\,\forall A\in\cP_i^\diamond(m) \quad \psi_i^\diamond(f_n,A)=\rho_A^\diamond\,.\]
Let $f_{n}=f_{n}(\omega)$ be such a stream in the configuration $\omega$ and define $\amu_n=\amu_n(f_n)$. We aim to prove that on the configuration $\omega\circ\pi_{x,\delta} ^{-1}$ the stream $f_{n}\circ\pi_{x,\delta} ^{-1}$ belongs to $\cS_{N}(\pi_{x,\delta}(\fC))$, satisfies 
\[\forall \diamond\in\{+,-\}\quad \forall i\in\{1,\dots,d\}\quad\forall A\in\cP_i^\diamond(m) \qquad \psi_i^\diamond(f_n\circ\pi_{x,\delta}^{-1},\pi_{x,\delta} (A))=\rho_A^\diamond\]
and
 \[\dis\big(\amu_{N}(f_{n}\circ\pi_{x,\delta} ^{-1}),s\vv\ind_{\pi_{x,\delta}(\fC)}\cL^d\big)\leq 4 \delta ^{d} \ep\,.\]
We set
\[
\amu_{N}=\amu_N(f_n\circ\pi_{x,\delta}^{-1})=\frac{1}{N^d}\sum_{e\in\E_{N}^d}f_n\circ\pi_{x,\delta}^{-1}(e)\delta_{c(e)}\,.
\]
It is clear that  $f_n\circ\pi_{x,\delta}^{-1}\in\cS_{N}(\pi_{x,\delta}(\fC))$ for the configuration $\omega\circ\pi_{x,\delta} ^{-1}$. Moreover, we have for $A\in\cP_i ^\diamond(m)$
\begin{align*}
\psi_i^\diamond(f_n\circ\pi_{x,\delta}^{-1},\pi_{x,\delta} (A)))&=\sum_{e\in\E_N^{i,\diamond}[\pi_{x,\delta}(A)]}f_n(\pi_{x,\delta} ^{-1}(e))\cdot\overrightarrow{e_i}=\sum_{e\in\E_n^{i,\diamond}[A]}f_n(e)\cdot\overrightarrow{e_i}=\psi_i^\diamond(f_n,A)=\rho_A^\diamond\,.
\end{align*}
It remains to compute the distance $\dis\big(\amu_{N},s\vv\ind_{\pi_{x,\delta}(\fC)}\cL^d\big)$.
Let $\lambda\in[1,2]$, $y\in[0,1]^d$. 
Let $j\geq 1$ such that
\[2^{j}<\lambda \frac{N}{n} \leq 2^{j+1}\,.\]
Let $\lambda'\in[1,2]$ such that
\[\lambda \frac{N}{n} =\lambda'2^{j}\,.\]
Let $z\in[-1,1[^d$ such that $\frac{N}{n} (y-x)\in(z+\lambda'\sZ^d)$.
Let $k\geq j$.
Let $Q\in\Delta^k_\lambda$, set $B=Q+y$, we have

\[
\pi_{x,\delta}^{-1}(B)=\frac{N}{n}(B-x)=\frac{N}{n}Q+\frac{N}{n} (y-x)\,.
\]
Since $\frac{N}{n} (y-x)\in(z+\lambda'\sZ^d)\subset (z+\lambda'2^{j-k}\sZ^d)$, it yields that \[\pi_{x,\delta}^{-1}(B)\in\left(z+\Delta^{k-j}_{\lambda'}\right)\,.\]
We have by change of variable
\begin{align*}
\cL^d(B\cap\pi_{x,\delta}(\fC))&=\int_{B\cap\pi_{x,\delta}(\fC)}d\cL^d(y)=\frac{n^d}{N^d}\int _{\pi_{x,\delta}^{-1}(B)\cap\fC}d\cL^d(y)=\frac{n^d}{N^d}\cL^d(\pi_{x,\delta}^{-1}(B)\cap\fC)\,.
\end{align*}
It follows that for $B\in (y+\Delta^k _\lambda)$, we have
$$\|\amu_{N}(B)-s\vv\cL^d(B\cap\pi_{x,\delta}(\fC))\|_2=\frac{n^d}{N^d}\|\amu_{n}(\pi_{x,\delta}^{-1}(B))-s\vv\cL^d(\pi_{x,\delta}^{-1}(B)\cap\fC)\|_2$$
where $\pi_{x,\delta}^{-1}(B)\in(z+\Delta^{k-j}_{\lambda'})$. Hence, we have
\begin{align*}
\sum_{B\in(y+\Delta^k_\lambda)} \|\amu_{N}(B)-s\vv\cL^d(B\cap\pi_{x,\delta}(\fC))\|_2= \sum_{\widehat{B}\in(z+\Delta^{k-j}_{\lambda'})}\frac{n^d}{N^d}\|\amu_{n}(\widehat{B})-s\vv\cL^d(\widehat{B}\cap\fC)\|_2\,.
\end{align*}
Besides, for $k<j$, we have by triangular inequality
\begin{align*}
\sum_{Q\in(y+\Delta^k_\lambda)} \|\amu_{N}(Q)-s\vv\cL^d(Q\cap\pi_{x,\delta}(\fC))\|_2&\leq \sum_{Q\in(y+\Delta^j_\lambda)} \|\amu_{N}(Q)-s\vv\cL^d(Q\cap\pi_{x,\delta}(\fC))\|_2\\
&=\sum_{Q'\in(z+\Delta^{0}_\lambda)}\frac{n^d}{N^d}\|\amu_{N}(Q')-s\vv\cL^d(Q'\cap\fC)\|_2\,.
\end{align*}
Combining the two previous inequalities, it follows that
\begin{align*}
&\sum_{k=0}^\infty\frac{1}{2^{k}}\sum_{Q\in(y+\Delta^k_\lambda)} \|\amu_{N}(Q)-s\vv\cL^d(Q\cap\pi_{x,\delta}(\fC))\|_2\\
&\leq \frac{n^d}{N^d}\sum_{k=0}^{j-1}\frac{1}{2^{k}}\sum_{Q'\in(z+\Delta^{0}_\lambda)}\|\amu_{N}(Q')-s\vv\cL^d(Q'\cap\fC)\|_2+\frac{n^d}{N^d}\sum_{k=j}^\infty\frac{1}{2^{k}}\sum_{Q\in(z+\Delta^{k-j}_{\lambda'})}  \|\amu_{n}(Q)-s\vv\cL ^d(Q\cap\fC)\|_2 \\
&\leq 2\frac{n^d}{N^d}\sum_{Q\in(z+\Delta^{0}_\lambda)}\|\amu_{N}(Q)-s\vv\cL^d(Q\cap\fC)\|_2+ 2^{-j}\frac{n^d}{N^d}\sum_{k=0}^\infty\frac{1}{2^{k}}\sum_{Q\in(z+\Delta^{k}_{\lambda'})}  \|\amu_{n}(Q)-s\vv\cL ^d(Q\cap\fC)\|_2\\
&\leq \frac{n^{d}}{N^{d}}\left( 2+ \frac{\lambda'}{\lambda}\frac{n}{N}\right)\dis(\amu_{n},s\vv\ind_{\fC}\cL^d)\leq 4\frac{n^d}{N^d}\ep\,.\\
\end{align*}
Hence, it yields that on the configuration $\omega\circ\pi_{x,\delta} ^{-1}$,
\[\dis(\amu_N(g_N),s\vv\ind_{\delta\fC+x}\cL^d)\leq 4\delta^{d}\ep\]
and the result follows.
\end{proof}

\subsection{Lower semi-continuity}
In this section, we prove that the map $I$ is lower semi-continuous on $\sR^d$.
\begin{prop}\label{prop:lscI}The map $I$ is lower semi-continuous on $\sR^d$.
\end{prop}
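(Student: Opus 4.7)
The plan is to exploit directly the variational definition of $I$ from Theorem \ref{thmbrique} together with a triangular inequality argument for the distance $\dis$. Fix $\ssigma \in \sR^d$ and a sequence $(\ssigma_m)_{m\geq 1}$ in $\sR^d$ converging to $\ssigma$; we want to establish $\liminf_m I(\ssigma_m) \geq I(\ssigma)$. The key observation is that for constant streams the distance $\dis$ is controlled linearly by the Euclidean distance between the constants via lemma \ref{lem:propdis2}: since $\cL^d(\fC) = 1$, for any $\ssigma' \in \sR^d$,
$$\dis\bigl(\ssigma'\ind_\fC\cL^d,\, \ssigma\ind_\fC\cL^d\bigr) \leq 2\|\ssigma' - \ssigma\|_2\,.$$

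Next fix any $\alpha < I(\ssigma)$. Because the map $\ep \mapsto -\limsup_n n^{-d}\log \Prb(\exists f_n \in \cS_n(\fC): \dis(\amu_n(f_n)\ind_\fC, \ssigma\ind_\fC\cL^d) \leq \ep)$ is non-increasing in $\ep$ (the underlying probability is non-decreasing) and converges to $I(\ssigma)$ as $\ep \to 0$ by Theorem \ref{thmbrique}, we may choose $\ep_0 > 0$ such that
$$-\limsup_{n\rightarrow\infty} \frac{1}{n^d} \log \Prb\bigl(\exists f_n \in \cS_n(\fC):\dis(\amu_n(f_n)\ind_\fC, \ssigma\ind_\fC\cL^d) \leq \ep_0\bigr) > \alpha\,.$$
For $m$ large enough we have $\|\ssigma_m - \ssigma\|_2 < \ep_0/4$, and thus $\dis(\ssigma_m\ind_\fC\cL^d, \ssigma\ind_\fC\cL^d) < \ep_0/2$ by the bound above. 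By the triangular inequality, any stream $f_n \in \cS_n(\fC)$ with $\dis(\amu_n(f_n)\ind_\fC, \ssigma_m\ind_\fC\cL^d) \leq \ep_0/2$ automatically satisfies $\dis(\amu_n(f_n)\ind_\fC, \ssigma\ind_\fC\cL^d) \leq \ep_0$. The corresponding events are therefore nested, which yields
$$\Prb\bigl(\exists f_n \in \cS_n(\fC):\dis(\amu_n(f_n)\ind_\fC, \ssigma_m\ind_\fC\cL^d) \leq \ep_0/2\bigr) \leq \Prb\bigl(\exists f_n \in \cS_n(\fC):\dis(\amu_n(f_n)\ind_\fC, \ssigma\ind_\fC\cL^d) \leq \ep_0\bigr)\,,$$
so by the variational definition of $I$ we get $I(\ssigma_m) \geq -\limsup_n n^{-d}\log \Prb(\exists f_n \in \cS_n(\fC):\dis(\amu_n(f_n)\ind_\fC, \ssigma_m\ind_\fC\cL^d) \leq \ep_0/2) > \alpha$ for all $m$ large enough.

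Passing to the liminf in $m$ gives $\liminf_m I(\ssigma_m) \geq \alpha$ for every $\alpha < I(\ssigma)$, which delivers $\liminf_m I(\ssigma_m) \geq I(\ssigma)$ and covers the case $I(\ssigma) = +\infty$ by taking $\alpha$ arbitrarily large. There is no serious obstacle here: the argument is purely structural and relies only on the monotonicity in $\ep$ of the probability defining $I$, the triangular inequality for $\dis$, and the Lipschitz control of $\dis$ between constant streams from lemma \ref{lem:propdis2}; the existence of the limit defining $I$ is exactly what Theorem \ref{thmbrique} provides.
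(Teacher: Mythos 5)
Your proof is correct and follows essentially the same route as the paper's own argument: both use the monotonicity of $\ep \mapsto -\limsup_n n^{-d}\log\Prb(\cdot \leq \ep)$, lemma \ref{lem:propdis2} to bound $\dis$ between constant streams by $2\|\cdot\|_2$, and a triangle-inequality event inclusion to transfer the bound from $\ssigma$ to $\ssigma_m$. The only difference is cosmetic: you parametrize by $\alpha < I(\ssigma)$ where the paper parametrizes by $\delta > 0$ and proves $\liminf_m I(\ssigma_m) \geq I(\ssigma) - \delta$; these are equivalent formulations.
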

\begin{proof}[Proof of proposition \ref{prop:lscI}]
Let $\vv\in\sR^d$ and let $(\vv_p)_{p\geq1}$ be a sequence such that $\lim_{p\rightarrow\infty}\vv_p=\vv$. Let us first assume that $I(\vv)<\infty$.
Let $\delta>0$.
Let $\ep_0=\ep_0(\delta)>0$ such that 
$$\forall\ep\leq \ep_0\qquad-\limsup_{n\rightarrow\infty}\frac{1}{n^d}\log\Prb(\exists f_n\in\cS_n(\fC):\dis(\amu_n(f_n),\vv\ind_\fC\cL^d)\leq \ep)\geq I(\vv)-\delta\,.$$
Let $p_0\geq 1$ be such that for any $p\geq p_0$, $\|\vv-\vv_p\|_2 \leq \ep_0/4$.
Using lemma \ref{lem:propdis2}, it yields that
\[\dis(\amu_n(f_n),\vv\ind_\fC\cL^d)\leq \dis(\amu_n(f_n),\vv_p\ind_\fC\cL^d)+ 2 \|\vv-\vv_p\|_2\leq\dis(\amu_n(f_n),\vv_p\ind_\fC\cL^d)+\frac{\ep_0}{2} \,,\]
and
\begin{align*}
\forall \ep\leq \ep_0\quad\forall p\geq p_0\qquad -\limsup_{n\rightarrow\infty}\frac{1}{n^d}&\log\Prb(\exists f_n\in\cS_n(\fC):\dis(\amu_n(f_n),\vv\ind_\fC\cL^d)\leq \ep_0)\\&\leq  -\limsup_{n\rightarrow\infty}\frac{1}{n^d}\log\Prb(\exists f_n\in\cS_n(\fC):\dis(\amu_n(f_n),\vv_p\ind_\fC\cL^d)\leq \ep/4)\,.
\end{align*}
It follows that
$$\forall \ep\leq \ep_0\quad\forall p\geq p_0\qquad  -\limsup_{n\rightarrow\infty}\frac{1}{n^d}\log\Prb(\exists f_n\in\cS_n(\fC):\dis(\amu_n(f_n),\vv_p\ind_\fC\cL^d)\leq \ep/4) \geq I(\vv)-\delta\,.$$
By letting first $\ep$ goes to $0$ and then taking the liminf in $p$, we obtain
$$\liminf_{p\rightarrow\infty} I(\vv_p)\geq I(\vv)-\delta\,.$$
Since the previous inequality holds for any $\delta>0$, it follows that 
$$\liminf_{p\rightarrow\infty} I(\vv_p)\geq I(\vv)\,.$$
Let us now assume that $I(\vv)=+\infty$. By the same reasoning, we can prove that for any $M>0$,
$$\liminf_{p\rightarrow\infty} I(\vv_p)\geq M\,.$$
It follows that $\liminf_{p\rightarrow\infty} I(\vv_p)=I(\vv)=+\infty$. This yields the proof.
\end{proof}
\subsection{Convexity}
In this subsection, we aim to prove that the map $I$ is convex, this property will allow us to obtain regularity properties on $I$.
\begin{thm}\label{thm:convexity}
The map $I:\sR^d\rightarrow \sR_+\cup\{+\infty\}$ is convex, that is
\[\forall \lambda\in[0,1]\quad \forall\, \vv_1,\vv_2\in\sR^d\qquad I(\lambda\vv_1+(1-\lambda)\vv_2)\leq \lambda I(\vv_1)+(1-\lambda)I(\vv_2)\,.\]
\end{thm}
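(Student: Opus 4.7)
The plan is to realize $\vv=\lambda\vv_1+(1-\lambda)\vv_2$ inside $\fC$ by assembling streams close to $\vv_1$ and $\vv_2$ on complementary sub-regions of $\fC$, with the sub-region geometry chosen so that both the distance to $\vv\ind_\fC\cL^d$ and the aggregate cost factorize correctly. The degenerate cases ($\lambda\in\{0,1\}$, $\vv_1=\vv_2$, or $I(\vv_i)=+\infty$ for some $i$) being immediate, I assume $\vv_1\neq\vv_2$, $\lambda\in(0,1)$, and $I(\vv_1),I(\vv_2)<\infty$. Since $d\geq 2$, I first choose a unit vector $\overrightarrow{w}\in\sS^{d-1}$ orthogonal to $\vv_1-\vv_2$ and, for a small parameter $\eta>0$, partition $\fC$ into slabs of thickness $\eta$ with normal $\overrightarrow{w}$, each such slab being further split into a $\vv_1$-sub-slab of thickness $\lambda\eta$ and a $\vv_2$-sub-slab of thickness $(1-\lambda)\eta$. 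The key geometric input is $(\vv_1-\vv_2)\cdot\overrightarrow{w}=0$: the normal fluxes of $\vv_1$ and $\vv_2$ through every interface between a $\vv_1$- and a $\vv_2$-sub-slab agree, which is precisely what will permit stitching the streams together at negligible probability cost.

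In each sub-slab, I apply theorem \ref{thmbrique} together with the scaling/translation lemma \ref{lem:scaling1} to obtain, with probability of order $\exp(-n^d\,\cL^d(\mathrm{sub\text{-}slab})\,I(\vv_i))$, an $(\ep,s\vv_i)$-well-behaved stream whose empirical measure is $\ep^{\alpha_0}$-close to $\vv_i\ind_{\mathrm{sub\text{-}slab}}\cL^d$. I then reconnect adjacent sub-slab streams along their common interfaces using the mixing and corridor construction of section \ref{sect: technicallem}, exactly as in the proof of theorem \ref{thmbrique}: the matching of mesoscopic fluxes on the common faces follows from the well-behaved property combined with $(\vv_1-\vv_2)\cdot\overrightarrow{w}=0$, and the tiny residual mismatch (of order $\ep^\alpha$) is absorbed via lemma \ref{lem:toutefamille} at sub-exponential cost. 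The corridor cost is then handled, as in the proof of theorem \ref{thmbrique}, by an FKG argument on a near-maximal-capacity event whose log-probability is $o(n^d)$ as $\eta\to 0$. Independence of the capacities across the disjoint sub-slabs makes the probabilities multiply into a global lower bound of order $\exp\bigl(-n^d[\lambda I(\vv_1)+(1-\lambda)I(\vv_2)]\bigr)$.

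Finally, the distance $\dis(\nu_\eta,\vv\ind_\fC\cL^d)$ of the constructed measure to the target must vanish with $\eta$. For a dyadic cube $Q$ of side $2^{-k}$, periodicity of the slab structure along $\overrightarrow{w}$ gives $\|\nu_\eta(Q)-\vv\cL^d(Q)\|_2=O(\eta\,2^{-k(d-1)}\|\vv_1-\vv_2\|_2)$ when $2^{-k}\geq\eta$ (the error coming from the slabs that cross $\partial Q$) and $O(\|\vv_1-\vv_2\|_2\,2^{-kd})$ when $2^{-k}<\eta$. Summing as in the definition of $\dis$, the coarse scales contribute $O(\eta\log(1/\eta)\|\vv_1-\vv_2\|_2)$ and the fine scales $O(\eta\|\vv_1-\vv_2\|_2)$, both tending to $0$. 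Passing to the limits $n\to\infty$, then $\ep\to 0$, then $\eta\to 0$ in the probability estimate of the previous paragraph yields $I(\vv)\leq \lambda I(\vv_1)+(1-\lambda)I(\vv_2)$. The main obstacle will be the discretization of the slabs when $\overrightarrow{w}$ is not a lattice direction: the sub-slab faces then no longer align with $\sZ_n^d$, and the mesoscopic partitions $\cP_i^\diamond(m)$ on which well-behaved streams are defined must be replaced by tilted analogues; this forces a careful adaptation of the mixing lemmas of section \ref{sect: technicallem} along tilted faces, keeping the corridor length at the mesoscopic scale $O(\kappa)$ so that the distance overhead remains $O(\eta\log(1/\eta))$ and the probability overhead remains $o(n^d)$.
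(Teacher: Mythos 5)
Your choice of interface normal $\overrightarrow{w}\perp(\vv_1-\vv_2)$ is essentially the paper's $\overrightarrow{n}$ (the paper works with unit directions and requires $s_1\vv_1\cdot\overrightarrow{n}=s_2\vv_2\cdot\overrightarrow{n}\neq 0$; note you must additionally arrange $\vv_1\cdot\overrightarrow{w}\neq 0$, or the cylinders along $\vv_i$ with base normal $\overrightarrow{w}$ degenerate — the paper proves such a vector always exists when $\vv_1\neq\pm\vv_2$). The slab/sub-slab split with the flux-matching condition at the interfaces is the same geometric idea. The genuine gap is at the gluing step. You propose to reconnect adjacent sub-slab streams via the mixing-and-corridor machinery of section \ref{sect: technicallem}, but every mixing lemma there (lemmas \ref{lem:mixing}, \ref{lem:mixbin}, \ref{lem:mixprecis}) is formulated for an axis-aligned corridor along $\overrightarrow{e_1}$, and the well-behaved notion itself is tied to the axis-aligned mesoscopic partitions $\cP_i^\diamond(m)$. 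When $\overrightarrow{w}$ is not a lattice direction, the interfaces are tilted, and you acknowledge this (``a careful adaptation of the mixing lemmas\dots along tilted faces'') but leave it unresolved. That adaptation is not a minor modification: the counting and reconnection arguments lean hard on lattice alignment, and this is exactly where a head-on attempt would stall.

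The paper takes a genuinely different route that sidesteps tilted mixing entirely, and this is the content you would need to supply. Instead of corridors between cells, it proves lemma \ref{lem:preconv}: for any cylinder $\cyl(\cA,h,\vv)$ — tilted or not — one can plant a stream close to the constant value $s\vv$ whose boundary plaquette values are \emph{exactly prescribed} (the class $\widehat{\cS}_n(\cyl(\cA,h,\vv),s\vv,g_1(\ep))$), at exponential cost $\cL^d(\cyl(\cA,h,\vv))\,I(s\vv)$. With boundary data prescribed rather than matching only in aggregate flux, adjacent cells glue with no residual mismatch at all: the node law at every vertex on an interface is verified directly by a Gauss--Green computation for the piecewise-constant continuous field on the tiling (equation \eqref{eq:gg}), and the tilted cells $E_1=\cyl(A,\lambda l^2,\vv_1)$, $E_2=\cyl(A,(1-\lambda)l^2,-\vv_2)$ are chosen with lateral sides parallel to the respective stream directions so the lateral fluxes vanish and the Gauss--Green identity closes. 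All corridor/mixing work happens inside lemma \ref{lem:preconv} at the small-cube scale, where it is axis-aligned — never across a tilted interface. So your plan is right in spirit, but the step you defer is precisely the hard one, and the paper avoids it by upgrading theorem \ref{thmbrique} to a prescribed-boundary version (lemma \ref{lem:preconv}) rather than attempting to mix across tilted faces.
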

\noindent Let us define $\mathcal{D}_I$ as the set of points where $I$ is finite, that is,
$$\mathcal{D}_I=\left\{x\in\sR^d: I(x)<+\infty\,\right\}\,.$$
It is easy to check thanks to theorem \ref{thm:convexity}, that the set $\mathcal{D}_I$ is convex.
From theorem \ref{thm:convexity}, we can deduce the following proposition that is a corollary of Theorem 6.7.(i) in \cite{EVGA}.
\begin{prop}\label{prop:continuityI}The map $I$ is continuous on $\mathring{\mathcal{D}}_I$. 
\end{prop}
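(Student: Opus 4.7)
The plan is to invoke the classical principle of convex analysis: a convex function $\sR^d\to\sR\cup\{+\infty\}$ is automatically continuous (in fact locally Lipschitz) on the interior of its effective domain. Since convexity of $I$ has already been established in theorem \ref{thm:convexity} and since $I\geq 0$ by definition, a short self-contained argument suffices; note that the lower semi-continuity proved in proposition \ref{prop:lscI} is not even needed here, as nonnegativity handles the lower side for free.

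First I would exhibit a uniform local upper bound on $I$. Fix $x_0\in\mathring{\mathcal{D}}_I$ and choose $r>0$ small enough that $\overline{B}(x_0,r\sqrt{d})\subset\mathcal{D}_I$; this is possible since $x_0$ is interior to $\mathcal{D}_I$. The $2d$ points $y_i^{\pm}=x_0\pm r\sqrt{d}\,\overrightarrow{e_i}$ then all lie in $\mathcal{D}_I$, so $M:=\max_{i,\pm} I(y_i^{\pm})<\infty$. Every $x\in\overline{B}(x_0,r)$ belongs to the convex hull of the $2d$ vertices $\{y_i^{\pm}\}$, because the Euclidean ball of radius $r$ is contained in the $\ell^1$-ball of radius $r\sqrt{d}$ around $x_0$. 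Applying convexity of $I$ yields $I(x)\leq M$ for every $x\in\overline{B}(x_0,r)$.

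Next I would upgrade local boundedness to local Lipschitz continuity on the smaller ball $B(x_0,r/2)$. Given distinct $x,y\in B(x_0,r/2)$, extend the segment from $x$ through $y$ until it meets $\partial B(x_0,r)$ at a point $z$, so that $y=(1-t)x+tz$ with $t=\|x-y\|_2/\|x-z\|_2$, and $\|x-z\|_2\geq r-r/2=r/2$ by the reverse triangle inequality. Convexity of $I$ together with $I(z)\leq M$ and $I(x)\geq 0$ give
\[ I(y)-I(x)\leq t\bigl(I(z)-I(x)\bigr)\leq tM\leq \frac{2M}{r}\,\|x-y\|_2, \]
and exchanging the roles of $x$ and $y$ yields the symmetric inequality. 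Hence $I$ is Lipschitz on $B(x_0,r/2)$ with constant $2M/r$, and in particular continuous at $x_0$.

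The only mildly delicate point is the geometric inclusion $\overline{B}(x_0,r)\subset\mathrm{conv}\{y_i^{\pm}\}$, which reduces to the elementary inclusion of Euclidean balls inside $\ell^1$-balls; everything else is mechanical. Consequently there is no substantive obstacle: the proposition is a standard consequence of the convexity provided by theorem \ref{thm:convexity}, exactly as in the cited Theorem 6.7.(i) of \cite{EVGA}.
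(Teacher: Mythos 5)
Your proof is correct and takes the same approach as the paper, which simply cites the classical fact (Theorem 6.7.(i) in \cite{EVGA}) that a convex function is locally Lipschitz on the interior of its effective domain; you have merely expanded the standard cross-polytope argument that underlies that textbook result, using convexity from theorem~\ref{thm:convexity} and $I\geq 0$.
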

Let $C$ be the cube of side-length $1/n$ centered at $0$, that is
\[C=\left[-\frac{1}{2n},\frac{1}{2n}\right]^d\,.\]
For any edge $e\in\E_n^d$, write $\cP(e)$ the dual of the edge $e$, \textit{i.e.}, the hypersquare of dimension $d-1$ of side-length $1/n$, orthogonal to $e$ and centered at the center of $e$.
Let $\vv\in\sS^{d-1}$, $s\in[0,M]$, $h>0$,  and  $\cA$ be an hyperrectangle of $\sR^d$ such that $\vv$ is not contained in an hyperplane parallel to $\cA$.
We need to define a new set of admissible streams $\widehat{\cS}_n(\cyl(\cA,h,\vv),s\vv)$ that is defined only in the interior of $\cyl(\cA,h,\vv)$ and have prescribed values near the boundary of $\cyl(\cA,h,\vv)$. Let $g:\sR_+\rightarrow [0,1]$ be a function such that
$$\lim_{\ep\rightarrow 0}g(\ep)=1\,.$$
 A stream $f_n$ is in $\widehat{\cS}_n(\cyl(\cA,h\vv),s\vv,g(\ep))$ if 
\begin{itemize}
\item The stream respects the capacity constraint:$\forall e\in \E_n ^d\qquad \|f_n(e)\|\leq t(e)$.
\item The stream is null outside the cylinder: $\forall e \in\E_n^d\qquad \cP(e)\not\subset\cyl(A,h,\vv)\implies f_n(e)=0$.
\item The values of the stream for edges closed to the boundary are prescribed by the continuous stream $s\vv$:  $\forall e=\langle x,y\rangle$ such that $((x+C)\cup(y+C))\not\subset \cyl(\cA,h,\vv)$ and $\cP(e)\subset  \cyl(\cA,h,\vv)$, we have $f_n(e)=g(\ep)n^2(s\vv\cdot \overrightarrow{xy})\overrightarrow{xy}$.
\item The node law is respected for any $x\in\sZ_n^d$ such that $(x+C)\subset\cyl(\cA,h,s\vv)\,.$
\end{itemize}

To prove theorem \ref{thm:convexity}, we need first to prove the following lemma. This lemma controls the probability of having a constant stream $s\vv$ in a cylinder oriented in the direction $\vv$.
\begin{lem}\label{lem:preconv}Let $\vv\in\sS^{d-1}$, $s\in[0,dM]$, $h>0$ and  $\cA$ be an hyperrectangle of $\sR^d$ such that $\vv$ is not contained in an hyperplane parallel to $\cA$. There exists two positive functions $g_0:\sR_+\rightarrow \sR_+$ and $g_1:\sR_+\rightarrow [0,1]$ that satisfy
$$\lim_{\ep\rightarrow 0}g_0(\ep)=0\qquad\text{   and   }\qquad\lim_{\ep\rightarrow 0}g_1(\ep)=1\,$$
such that we have
\begin{align*}
-\liminf_{\ep\rightarrow 0} \limsup_{n\rightarrow \infty}\frac{1}{n^d}\log \Prb\left(\cE_n(\cyl(\cA,h,\vv),s\vv,g_0(\ep),g_1(\ep))\right)\leq \cL^d(\cyl(\cA,h,\vv))I(s\vv)
\end{align*}
where \[\cE_n(\cyl(\cA,h,\vv),s\vv,g_0(\ep),g_1(\ep))=\left\{ \begin{array}{c}\exists f_n\in\widehat{\cS}_n(\cyl(\cA,h,\vv),s\vv,g_1(\ep)) :\\ \,\dis\big(\amu_n(f_n),s\vv\ind_{\cyl(\cA,h,\vv)}\cL^d\big)\leq g_0(\ep)\cL^d(\cyl(\cA,h,\vv))\end{array}\right\}\,.\]
\end{lem}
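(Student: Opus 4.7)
The plan is to adapt Steps 1--3 of the proof of Theorem \ref{thmbrique} to the cylinder $\cyl(\cA,h,\vv)$, with two modifications: the ambient domain is no longer a cube aligned with the lattice, and the stream must match a prescribed boundary value $g_1(\ep)\,s\vv$ on the edges near $\partial\cyl(\cA,h,\vv)$. I may assume $I(s\vv)<\infty$, since otherwise the bound is trivial; this forces $s|\vv\cdot\overrightarrow{e_i}|\le M$ for every $i$, so for $\ep$ small enough the prescribed boundary values $g_1(\ep)\,s(\vv\cdot\overrightarrow{e_i})\overrightarrow{e_i}$ have norm at most $M$ and are therefore realizable whenever the corresponding edge capacity is close enough to $M$.

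Fix $N\ge n\gg m=\lfloor\ep^{-\alpha}\rfloor$ and $\fC'=(n/N)\fC$, and pave the interior of $\cyl(\cA,h,\vv)$ by a disjoint family $(\pi_i(\fC))_{i\in I}$ of translates of $\fC'$, separated by corridors of width $\sim n/(Nm)$, keeping also a boundary layer $\Lambda_N$ of thickness $O(n/N)$ adjacent to $\partial\cyl(\cA,h,\vv)$. A standard volume bound gives $|I|(n/N)^d\to\cL^d(\cyl(\cA,h,\vv))$ as $n/N\to 1$ and $m\to\infty$. For each $i\in I$, let $\cE_i$ be the event that there exists an $(\ep,s\vv,\pi_i)$-well-behaved stream $f_N^{(i)}$ on $\pi_i(\fC)$ with $\dis(\amu_N(f_N^{(i)})\ind_{\pi_i(\fC)},s\vv\ind_{\pi_i(\fC)}\cL^d)\le 4(n/N)^d\ep^{\alpha_0}$, and let $\cF$ be the event that every edge whose dual lies in the corridors or in $\Lambda_N$ has capacity at least $M-\mathrm{H}(\ep)$, with $\mathrm{H}(\ep)$ as in \eqref{eq:defh}. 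The $(\cE_i)_{i\in I}$ depend on disjoint edges, and both $\cF$ and each $\cE_i$ are increasing in the capacities, so by independence and FKG
\begin{equation*}
\Prb\Bigl(\cF\cap\bigcap_{i\in I}\cE_i\Bigr)\ge \Prb(\cF)\prod_{i\in I}\Prb(\cE_i).
\end{equation*}

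On this intersection event, reconnect as in Step 2 of Theorem \ref{thmbrique}: between two adjacent interior cubes the well-behaved property makes the mesoscopic flows through matching faces coincide exactly, so Lemma \ref{lem:mixing} produces a connecting stream bounded by $M-\mathrm{H}(\ep)$; between the outermost cubes and the prescribed values $g_1(\ep)s\vv$ in $\Lambda_N$, the same matching holds once we set $g_1(\ep)=1-\ep^{\alpha/4}$, since then both sides of the connecting face carry the same uniform flow $g_1(\ep)(s\vv\cdot\overrightarrow{e_i})\cH^{d-1}(A)N^{d-1}$. This again uses Lemma \ref{lem:mixing} inside the axis-aligned corridors and a straight-line extension of outputs, as at the end of the proof of Lemma \ref{lem:mixing}, for the slanted boundary pieces of $\Lambda_N$. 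Multiplying the resulting stream by $1-\mathrm{H}(\ep)/M$ as in Step 3 of Theorem \ref{thmbrique} yields an element of $\widehat{\cS}_N(\cyl(\cA,h,\vv),s\vv,g_1(\ep))$, and a paving estimate analogous to \eqref{eq:contdis1}--\eqref{eq:contdis5} gives $\dis(\amu_N(\widehat f_N),s\vv\ind_{\cyl}\cL^d)\le g_0(\ep)\cL^d(\cyl(\cA,h,\vv))$ for an appropriate $g_0(\ep)\to 0$.

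To conclude, take the logarithm, divide by $N^d$, pass to the $\limsup_{N\to\infty}$ and then to $\ep\to 0$. By Lemma \ref{lem:scaling1} combined with Theorem \ref{thmbrique} and Lemma \ref{lem:toutefamille}, each $-(n/N)^d\limsup_N N^{-d}\log\Prb(\cE_i)\le (n/N)^d(I(s\vv)+o_\ep(1))$, so summing over $i\in I$ contributes $\cL^d(\cyl(\cA,h,\vv))I(s\vv)$. The contribution of $\Prb(\cF)$ is at most $(|\mathrm{Cor}_N|+|\Lambda_N|)/N^d\cdot\log G([M-\mathrm{H}(\ep),M])$, which vanishes as $\ep\to 0$ by the same reasoning as in \eqref{eq:cortend0}, because $\mathrm{H}(\ep)\to 0$ while the corridor-plus-boundary-layer volume tends to zero. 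The main obstacle is the boundary layer: since $\vv$ may be irrational, $\partial\cyl(\cA,h,\vv)$ is not lattice-aligned and the prescribed-value edges form an irregular set, so bridging the outermost well-behaved cubes to these uniform prescribed values requires a careful face-by-face flux balance check, with the residual adjusted through the same mesoscopic mixing machinery via Lemma \ref{lem:mixprecis}.
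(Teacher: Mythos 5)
Your general plan (pave the cylinder with cubes carrying well-behaved streams, use corridors for the mixing, and bolt on a boundary layer matching the prescribed $g_1(\ep)s\vv$ values) is the right shape, and the paper's proof does follow these steps, but with a single discrete scale $n$ and cube side $\kappa\approx\ep$ rather than the two scales $n\le N$ you import from Theorem~\ref{thmbrique}. That difference is harmless. However, there is a genuine gap in how you handle the junction between the interior construction and the prescribed boundary values.

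You claim that setting $g_1(\ep)=1-\ep^{\alpha/4}$ makes the flow through the connecting face agree on both sides, so that Lemma~\ref{lem:mixing} plus straight-line extension finishes the job. This is not correct, for two linked reasons. First, after the corridor mixing, the well-behaved stream exits uniformly only on the $\kappa^{d-1}$ central sub-face of each exterior face; the corner regions between adjacent cubes (and the whole exterior region beyond the outermost cubes) carry no flow at all. But an element of $\widehat{\cS}_n(\cyl(\cA,h,\vv),s\vv,g_1(\ep))$ must have the non-zero prescribed value $g_1(\ep)n^2(s\vv\cdot\overrightarrow{xy})\overrightarrow{xy}$ on \emph{every} edge touching $\partial\cyl(\cA,h,\vv)$, including those in the corners and under the slanted faces. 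A straight-line extension of the corridor outputs cannot populate those edges, and if you instead fill the whole exterior region with a uniform discretized stream of intensity $g_1(\ep)s\vv$, the total flux entering an exterior face $F_0$ of side $\kappa(1+2d/m)$ is larger than the flux $(1-\ep^{\alpha/4})s v_i\,\kappa^{d-1}n^{d-1}$ exiting the cube, by exactly the factor $(1+2d/m)^{d-1}$. The fluxes balance only if $g_1(\ep)$ is scaled by $(1+2d/m)^{-(d-1)}$, which is precisely the factor the paper puts in its definition of $g_1$. Without it, the node law fails along each exterior cube face.

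Second, even once the totals match, the edge-by-edge values do not: the corridor output is uniform $(1-\ep^{\alpha/4})s v_i$ on a smaller sub-face and zero elsewhere on $F_0$, whereas the exterior stream is $(1+2d/m)^{-(d-1)}(1-\ep^{\alpha/4})s v_i$ everywhere on $F_0$. This mismatch is what Lemma~\ref{lem:mixprecis} is for — it is not a residual issue confined to the slanted boundary pieces, but the central device used on \emph{every} exterior face $F_0\in E_\kappa(x)$ to interpolate between the two streams inside $\Omega$, without paying extra capacity. You mention \ref{lem:mixprecis} only as an afterthought and describe the matching as already done; in fact the hypotheses of \ref{lem:mixprecis} are exactly tailored to the sign pattern of the differences $(-f_n^{disc}+f_n^{prel})(e)\cdot\overrightarrow{e_i}$ on $F_0$, and verifying those hypotheses is the technical core of the proof that your sketch omits.
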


\begin{proof}[Proof of lemma \ref{lem:preconv}]
To prove lemma \ref{lem:preconv}, we proceed similarly as in the proof of theorem \ref{thmbrique}. We pave the cylinder with small cubes, we consider streams in these small cubes and we try to reconnect these streams using the corridor. The main difference with the proof of theorem \ref{thmbrique} is that we require that edges close to the boundary of the cylinder have a prescribed value. This prescribed value corresponds to a discretized version of the continuous stream $s\vv$.

Here $M>0$ denotes the supremum of the support of $G$. Without loss of generality, we can assume that for any $i\in\{1,\dots,d\}$, $\vv\cdot\overrightarrow{e_i}=v_i\geq 0$.

\noindent{\bf Step 1: Paving $\cyl(\cA,h,\vv)$ with cubes.}
Let $m=\lfloor \ep ^{-\alpha}\rfloor$ where $\alpha$ was defined in \eqref{eq:defalpha}. We set
$$\kappa = \frac{m}{n}\left\lfloor \frac{n\ep}{m}\right\rfloor\,.$$
Hence we have $n\kappa\in\sZ$, $n\kappa(1+2d/m)\in\sZ$ and $\lim_{n\rightarrow\infty}\kappa=\ep$.
Write $$E=\kappa\left(1+\frac{2d}{m}\right)\fC\,.$$
We want to cover $\cyl(\cA,h,\vv)$ by translates of $E$.
Let $\fT(\cyl(\cA,h,\vv))$ be the following set of translates of $E$ contained in $\cyl(\cA,h,\vv)$:
\[\fT(\cyl(\cA,h,\vv))=\left\{x\in\kappa(1+2d/m)\sZ^d:\, (E+x)\subset\left(\cyl(\cA,h,\vv)\setminus \cV_\infty(\partial \cyl(\cA,h,\vv), d\kappa)\right)\right\}\,.\]
%For short, we write $\pi_x$ instead of $\pi_{x, \kappa(1+2d/m)}$, for $x\in\fT(\cyl(\cA,h,\vv))$.
Write $\Cor$ the following set
\[\Cor=\cyl(\cA,h,\vv)\setminus \bigcup_{x\in\fT(\cyl(\cA,h,\vv))}\pi_{x,\kappa}(\fC)\,\]
and the set of edges $\Cor_n$ whose left endpoint is in $\Cor$:
$$\Cor_n=\left\{\langle x,y\rangle\in\E_n^d: x\in\Cor \text{ and }\exists i\in\{1,\dots,d\}\quad\overrightarrow{xy}=\frac{\overrightarrow{e_i}}{n}\right\}\,.$$
For $x\in\fT(\cyl(\cA,h,\vv)))$ we write 
$$\cE_x=\left\{\,\exists f_n\in\cS_n(\pi_{x,\kappa}(\fC)) \text{ $(\ep,s\vv,\pi_{x,\kappa})$-well-behaved}: \,\dis\big(\amu_n(f_n),s\vv\ind_{\pi_{x,\kappa}(\fC)}\cL^d\big)\leq 4\ep^{\alpha_0}\kappa ^d\,\right\}\,.$$
On the event $\cE_x$, we will denote by $f_n^x$ a $(\ep,s\vv,\pi_{x,\kappa})$-well-behaved stream satisfying 
$$\dis\big(\amu_n(f_n),s\vv\ind_{\pi_{x,\kappa}(\fC)}\cL^d\big)\leq 4\ep^{\alpha_0}\kappa ^d$$ (chosen according to a deterministic rule if there is more than one). 
%Since the stream is $(\ep,s\vv,\pi_{x,\kappa})$- well-behaved, we have
%\[\forall \diamond\in\{+,-\}\,\forall i\in\{1,\dots,d\}\,\forall A\in\cP_i^\diamond(m) \quad \psi_i^\diamond(f_n,\pi_{x,\kappa}(A))=(1-\ep ^{\alpha/4})sv_i\, \cH^{d-1}(\pi_{x,\kappa}(A))n ^{d-1}\,.\]
Let us denote by $\cF$ the event 
$$\cF=\left\{\, \forall e\in\Cor_n\qquad t(e)\geq M-\mathrm{H}(\ep)\right\}\,$$
where $\mathrm{H}(\ep)\geq 0$ will be defined later in a similar way than in \eqref{eq:defh}. The function $\mathrm{H}$ satisfies $\lim_{\ep\rightarrow 0}\mathrm{H}(\ep)=0$.
We aim to prove, that on the event $\cF\cap\bigcap_{x\in\fT(\cyl(\cA,h,\vv)))}\cE_x$, we can build a stream $$f_n\in \widehat{\cS}_n\left(\cyl(\cA,h,\vv),s\vv,(1-\mathrm{H}(\ep))(1-\ep^{\alpha/4})\right)$$ such that $f_n$ coincides with $f_n^x$ on $\pi_{x,\kappa}(\fC)$, for $x\in\fT(\cyl(\cA,h,\vv))$.

\noindent{\bf Step 2: Construction of the stream inside $\cup_{x\in\fT(\cyl(\cA,h,\vv))}(x+E)$.}
By lemma \ref{lem:mixing}, for any $x\in\fT(\cyl(\cA,h,\vv))$, for any $i\in\{1,\dots,d\}$, for any $\diamond\in\{+,-\}$, for any $A_0\in\cP_i^\diamond(m)$,
%\[\psi_i^\diamond(f_n(F_1),c(F_1)+\kappa A_0)=\psi_i^\diamond(f_n(F_2),c(F_1)+\kappa A_0+d/m\overrightarrow{e_i})\,.\]
there exists a stream $\overline{f}_n^{x,A_0}$ in $\cyl(\pi_{x,\kappa}( A_0),\kappa d/m,\diamond\overrightarrow{e_i})$ such that 
\[\forall e\in \E_n^{i,\diamond}[\pi_{x,\kappa} (A_0)] \qquad \overline{f}_n^{x,A_0}\left(e\diamond\frac{\overrightarrow{e_i}}{n}\right)=f_n^x(e)\]
and
\[\forall e\in \E_n^{i,\diamond}\left[\pi_{x,\kappa} \left(A_0\diamond \frac{ d}{m}\overrightarrow{e_i}\right)\right]\qquad \overline{f}_n^{x,A_0}(e)=\frac{\psi_i^\diamond(f_n^x,\pi_{x,\kappa}( A_0))}{|\E_n^{i,\diamond}[\pi_{x,\kappa} (A_0)] |}\,.\]
This stream mix the inputs in such a way the outputs are uniform.
We set $f_n^{prel}$ the stream inside $\cup_{x\in\fT(\cyl(\cA,h,\vv))}(x+E)$ as 
\[f_n^{prel}=\sum_{x\in\fT(\cyl(\cA,h,\vv))}\left(f_n^{x}+\sum_{A_0\in\cup_i\cP_i^+(m)\cup\cP_i^-(m)}\overline{f}_n^{x,A_0}\right)\,.\]
For any $x_1,x_2\in\fT(\cyl(\cA,h,\vv))$ such that $x_2=x_1+\kappa(1+2d/m)\overrightarrow{e_i}$, we have on the event $\cE_{x_1}\cap\cE_{x_2}$
\[\forall A_0\in\cP_i^+ (m)\qquad\psi_i^+\left(f_n^{x_1},\pi_{x_1,\kappa}( A_0)\right)=(1-\ep ^{\alpha/4})sv_i\, \cH^{d-1}(\pi_{x,\kappa}(A_0))n ^{d-1}=\psi_i^-\left(f_n^{x_2}, \pi_{x_2,\kappa}( A_0-\overrightarrow{e_i})\right)\,.\]
Moreover, since $2\kappa d/m\in\sZ_n$, it follows that:
$$|\E_n^{i,\diamond}[\pi_{x,\kappa} (A_0)] |=\left|\E_n^{i,\diamond}\left[\pi_{x,\kappa} (A_0)+2\frac{\kappa d}{m}\overrightarrow{e_i}\right]\right |\,.$$
Combining the two latter equalities, we obtain that
 \[\forall e\in \E_n^{i,\diamond}\left[\pi_{x_1,\kappa} \left(A_0\diamond \frac{ d}{m}\overrightarrow{e_i}\right)\right]\qquad \overline{f}_n^{x_1,A_0}(e)=\overline{f}_n^{x_2,A_0-\overrightarrow{e_i}}\left(e+\frac{\overrightarrow{e_i}}{n}\right)\,.\]
The latter equality ensures that the streams $ \overline{f}_n^{x_1,A_0}$ and $\overline{f}_n^{x_2,A_0-\overrightarrow{e_i}}$ glue well together and so that the node law is satisfied  everywhere inside $\cup_{x\in\fT(\cyl(\cA,h,\vv))}(x+E)$.
It remains to extend $f_n^{prel}$ to $\cyl(\cA,h,\vv)\setminus\cup_{x\in\fT(\cyl(\cA,h,\vv))}(x+E)$ in such a way that we respect the boundary conditions. To do so, we define a discretized version $f_n^{disc}$ of the continuous stream $s\vv$. Since the stream $f_n ^{disc}$ will not perfectly match with the boundary conditions of $f_n^{prel}$, we will also need to build a stream that corrects the differences on the boundary. 
 
\noindent{\bf Step 3: Construction of a discrete stream from the continuous one.}
Let $x\in \fT(\cyl(\cA,h,\vv)$. Let $i\in\{1,\dots,d\}$ and $\diamond \in\{+,-\}$, let us compute
$|\E_n^{i,\diamond}[\pi_{x,\kappa(1+2d/m)}(\fC_i^\diamond)]|$.
By symmetry of the lattice and since $x\in\kappa(1+2d/m)\sZ^d\subset\sZ_n^d$, it is equal to $|\E_n^{1,+}[\kappa(1+2d/m)\fC_1^+]|=\kappa^{d-1}(1+2d/m)^{d-1}n^{d-1}$ since $n\kappa(1+2d/m)\in\sZ$.
We consider the following stream $f^{disc}_n$ that is the discretized version of $s\vv\ind_{\cyl(\cA,h,\vv)}$ defined by
\begin{align*}
\forall e=\langle x,y\rangle \in\E_n^d &\text{  such that  } \cP(e)\subset \cyl(\cA,h,\vv)\qquad\\
& f_n ^{disc}(e)= (1+2d/m)^{-(d-1)}(1-\ep ^{\alpha/4})n^2 (s\vv\cdot \overrightarrow{xy})\overrightarrow{xy}\,.
\end{align*}
In particular, if $\overrightarrow{xy}=\overrightarrow{e_i}/n$, for $i\in\{1,\dots,d\}$, we have
$$f_n ^{disc}(e)= (1+2d/m)^{-(d-1)}(1-\ep ^{\alpha/4}) sv_i\overrightarrow{e_i}\,.$$
Hence, we have for $x\in\fT(\cyl(\cA,h,\vv)$,  $i\in\{1,\dots,d\}$, $\diamond\in\{+,- \}$
\begin{align}\label{eq:egalitépreldisc}
\psi_i ^\diamond(f_n^{disc},\pi_{x,\kappa(1+2d/m)}(\fC_i ^\diamond))&=(1+2d/m)^{-(d-1)}(1-\ep ^{\alpha/4}) sv_i |\E_n^{i,\diamond}[\pi_{x,\kappa(1+2d/m)}(\fC_i^\diamond)]|\nonumber\\
&=(1-\ep ^{\alpha/4}) sv_i \kappa^{d-1}n^{d-1}\nonumber\\
&=\sum_{A_0\in\cP_i^\diamond(m)}\psi_i^\diamond(f_n^x,\pi_{x,\kappa}(A_0))\nonumber\\&=\psi_i^\diamond(f_n^x,\pi_{x,\kappa}(\fC_i^\diamond))=\psi_i ^\diamond(f_n^{prel},\pi_{x,\kappa(1+2d/m)}(\fC_i ^\diamond))\,.
\end{align}
Let $w\in\sZ_n ^d$  such that $w+C\subset\cyl(\cA,h,\vv)$, we have
\[\sum_{\substack{y\in\sZ_n^d:\\e=\langle w,y\rangle\in\E_n^d}}f_n^{disc}(e)\cdot (n\overrightarrow{wy})=\\(1+2d/m)^{-(d-1)}(1-\ep ^{\alpha/4})n \sum_{\substack{y\in\sZ_n^d:\\e=\langle w,y\rangle\in\E_n^d}}(s\vv\cdot \overrightarrow{wy})=0\,\]
and the node law is satisfied at $w$ for the stream $f_n ^{disc}$.

\noindent{\bf Step 4: Gluing the streams and correcting the differences.}
Let us now consider $x\in\fT(\cyl(\cA,h,\vv))$ such that there exists $i\in\{1,\dots,d\}$ and $\diamond\in\{-,+\}$ such that $x\diamond\kappa(1+2d/m)\overrightarrow{e_i}\notin\fT(\cyl(\cA,h,\vv))$. Let us denote by $\partial ^{int}\fT(\cyl(\cA,h,\vv))$ such $x$, \textit{i.e.},
\[\partial ^{int}\fT(\cyl(\cA,h,\vv))=\left\{x\in \fT(\cyl(\cA,h,\vv)):\begin{array}{c}\exists i\in\{1,\dots,d\},\diamond\in\{+,-\}\\ x\diamond\kappa(1+2d/m)\overrightarrow{e_i}\notin\fT(\cyl(\cA,h,\vv))\end{array}\right\}\]
and for such an $x$, let us denote by $E_\kappa(x)$ the set of faces of $\pi_{x,\kappa(1+2d/m)}(\fC)$ that are "external", \textit{i.e.},
\[E_\kappa(x)=\left\{\pi_{x,\kappa(1+2d/m)}(\fC_i^\diamond):\,x\diamond\kappa(1+2d/m)\overrightarrow{e_i}\notin\fT(\cyl(\cA,h,\vv)) ,\,i\in\{1,\dots,d\},\diamond\in\{-,+\}\right\}\,.\]  
For those faces, we need to correct the stream $f_n^{prel}$ to be able to glue it with the discretized version $f^{disc}_n$. Let us consider $F_0=\kappa(1+2d/m)(\fC_i^\diamond+x)\in E_\kappa(x)$. By equality \eqref{eq:egalitépreldisc}, we have 
\begin{align}\label{eq:egaliteflux}
\psi_i^\diamond(f_n ^{disc},F_0)=\psi_i^\diamond(f_n^{prel},F_0)\,.
\end{align}
 We have
$$\forall e\in\E_n^{i,\diamond}[F_0] \qquad f_n ^{disc}(e)=( 1+2d/m)^{-(d-1)}(1-\ep ^{\alpha/4}) sv_i \overrightarrow{e_i}\,,$$
$$\forall e\in\E_n^{i,\diamond}[\kappa(\fC_i^\diamond+x\diamond d/m\overrightarrow{e_i})] \qquad f_n ^{prel}(e)= (1-\ep ^{\alpha/4}) sv_i \overrightarrow{e_i}\,.$$
It follows that for $e\in \E_n^{i,\diamond}[\kappa(\fC_i^\diamond+x\diamond d/m\overrightarrow{e_i})]$, we have
\begin{align}\label{eq:inputpos}
(-f_n ^{disc}(e)+f_n^{prel}(e))\cdot \overrightarrow{e_i}=(1-\ep^{\alpha/4})\left(1-\left(1+\frac{2d}{m}\right)^{-(d-1)}\right)sv_i\leq 4\frac{d^2}{m}M\leq 8d^2 M\ep ^{\alpha}
\end{align}
for small enough $\ep$ depending on $d$.
For $e\in \E_n^{i,\diamond}[F_0]\setminus \E_n^{i,\diamond}[\kappa(\fC_i^\diamond+x\diamond d/m\overrightarrow{e_i})]$, we have $f_n ^{prel}(e)=0$ thus
\begin{align}\label{eq:inputneg}
(-f_n ^{disc}(e)+f_n^{prel}(e))\cdot \overrightarrow{e_i}=-(1-\ep^{\alpha/4})\left(1+2\frac{d}{m}\right)^{-(d-1)}sv_i\geq -sv_i\,.
\end{align}
We can indexed the edges of $\E_n^{i,\diamond}[F_0]$ following the order given by the canonical basis such that to each edge $e$ we can associate its index $\zeta(e)\in\{1,\dots, \kappa(1+2d/m)n\}^{d-1}$. More precisely, we set 
$$\forall e\in\E_n^{i,\diamond}[F_0]\qquad\zeta(e)=n\mathfrak{p}_i(c(e))+\left(\left\lfloor \frac{\kappa(1+2d/m)n}{2}\right\rfloor+1\right)\sum_{j\in\{1,\dots,d\}\setminus\{i\}}\overrightarrow{e_j}\,$$
where we recall that the definition of $\mathfrak{p}_i$ was given in \eqref{eq:defpi}.
It is easy to check that $\zeta(e)\in\{1,\dots, \kappa(1+2d/m)n\}^{d-1}$ (we recall that $ \kappa(1+2d/m)n\in\sZ$).
Set for any $e\in\E_n^{i,\diamond}[F_0]$, $f_{in }(\zeta(e))=(-f_n ^{disc}(e)+f_n^{prel}(e))\cdot \overrightarrow{e_i}$.
If $e$ is such that $\zeta(e)\in\{\kappa d n/ m+1, \kappa(1+d/m)n\}^{d-1}$, then $$f_{in}(\zeta(e))=(1-\ep^{\alpha/4})\left(1-\left(1+\frac{2d}{m}\right)^{-(d-1)}\right)sv_i\,.$$
Otherwise, we have
$$f_{in}(\zeta(e))=-(1-\ep^{\alpha/4})\left(1+\frac{2d}{m}\right)^{-(d-1)}sv_i\,.$$
To apply lemma \ref{lem:mixprecis}, we have to check that the sequence $(f_{in}(y),y\in\{1,\dots, \kappa(1+2d/m)n\}^{d-1})$ satisfies the conditions stated in the lemma.
First note that thanks to equality \eqref{eq:egaliteflux}, we have
$$\sum_{y\in\{1,\dots, \kappa(1+2d/m)n\}^{d-1}}f_{in}(y)=0\,$$
and by inequalities \eqref{eq:inputneg} and \eqref{eq:inputpos}, $$\forall y\in \{1,\dots, \kappa(1+2d/m)n\}^{d-1}\qquad-M\leq f_{in}(y)\leq 8d^2M\ep^\alpha\,.$$
Let $l\in\{1,\dots,d-2\}$ and $x\in\{1,\dots, \kappa(1+2d/m)n\}^{l}$, if $x\notin \{\kappa d n/ m+1,\dots, \kappa(1+d/m)n\}^{l}$ then for any $y\in \{1,\dots, \kappa(1+2d/m)n\}^{d-1-l}$, we have
$$f_{in}(x,y)=-(1-\ep^{\alpha/4})\left(1+\frac{2d}{m}\right)^{-(d-1)}sv_i\,.$$
If $x\in \{\kappa d n/ m+1,\dots, \kappa(1+d/m)n\}^{l}$, then we have
\begin{align*}
&\sum_{y\in \{1,\dots, \kappa(1+2d/m)n\}^{d-1-l}}f_{in}(x,y)\\
&\quad = \left|\left\{ \frac{\kappa dn}{m},\dots,\kappa\left(1+\frac{d}{m}\right)n\right\}^{d-1-l}\right|\left(1-\left(1+\frac{2d}{m}\right)^{-(d-1)}\right)(1-\ep^{\alpha/4})sv_i\\
&\qquad - \left|\left\{1,\dots,\kappa\left(1+\frac{2d}{m}\right)n\right\}^{d-1-l}\setminus \left\{ \frac{\kappa dn}{m},\dots,\kappa\left(1+\frac{d}{m}\right)n\right\}^{d-1-l}\right|\left(1+\frac{2d}{m}\right)^{-(d-1)}(1-\ep^{\alpha/4})sv_i\\
&\quad=\left(1-\left(1+\frac{2d}{m}\right)^{-(d-1)}-\left(\left(1+\frac{2 d }{m}\right) ^{d-1-l}-1\right)\left(1+\frac{2d}{m}\right)^{-(d-1)} \right)(1-\ep^{\alpha/4})sv_i(\kappa n) ^{d-1-l}\\
&\quad =\left(1 -\left(1+\frac{2d}{m}\right)^{-l}\right)(1-\ep^{\alpha/4})sv_i(\kappa n) ^{d-1-l}\geq 0\,.
\end{align*}
It follows that the conditions to apply lemma \ref{lem:mixprecis} are satisfied.
By lemma \ref{lem:mixprecis}, there exists a stream $g_n^{x,F_0}$ in $\cyl(F_0,(d-1)\kappa(1+2d/m),\diamond\overrightarrow{e_i})\subset \cyl(\cA,h,\vv)$ such that 
\[\forall e\in\E_n^{i,\diamond}[F_0]\qquad g_n^{x,F_0}\left(e\diamond\frac{\overrightarrow{e_i}}{n}\right)=-f_n ^{disc}(e)+f_n^{prel}(e)\,\]
and
\[\forall e\in\E_n^{i,\diamond}[F_0\diamond (d-1)\kappa(1+2d/m)\overrightarrow{e_i}]\qquad  g_n^{x,F_0}(e)=0\,.\]
The stream $g_n^{x,F_0}$ satisfies the node law everywhere except for points in $\{w\in\sZ_n^d: \,\exists y\in\sZ_n^d \text{ s.t. }\overrightarrow{yw}=\diamond\overrightarrow{e_i}/n\text{ and } \langle y,w\rangle \in \E_n ^{i,\diamond}[F_0]\}$.
Moreover, using inequalities \eqref{eq:inputpos} and \eqref{eq:inputneg}, we have for any edge $e\in\E_n^d$ parallel to $\overrightarrow{e_i}$
\[g_n^{x,F_0}(e)\cdot\overrightarrow{e_i}\in[-sv_i,8d^2M\ep ^{\alpha}]\,\]
and for edge $e$ parallel to $\overrightarrow{e_j}$ with $j\neq i$:
\[\|g_n^{x,F_0}(e)\|_2\leq 8d^2M\ep^\alpha\,.\]
Finally, we build $f_n$ as follows: for any $e\in\cyl(\cA,h,\vv)\cap\E_n^d$
\[f_n(e)=\left\{
    \begin{array}{ll}
       f_n^{prel} (e) & \mbox{if $e\in\E_n^d\cap\cup_{x\in\fT(\cyl(\cA,h,\vv)}\pi_{x,\kappa(1+2d/m)}(\fC)$} \\
       f_n ^{disc}(e)+\sum_{x\in\partial ^{int}\fT(\cyl(\cA,h,\vv))}\sum_{F_0\in E_\kappa(x)}g_n^{x,F_0}(e)& \mbox{otherwise.}
    \end{array}
\right.\]
The node law is satisfied everywhere inside $\cyl(\cA,h,\vv)$. Note that by construction of $\fT(\cyl(\cA,h,\vv))$, each $e\in \cyl(F_0,(d-1)\kappa,\diamond\overrightarrow{e_i})$ belongs at most to $d$ such cylinder (one for each direction): for each $j\in\{1,\dots,d\}$ there exists at most one $\circ\in\{+,-\}$ and $y\in\partial ^{int}(\fT\cyl(\cA,h, \vv))$ such that $F_1=\pi_{y,\kappa(1+2d/m)}(\fC_i ^\circ)\in E_\kappa(y)$ and $e\in \cyl(F_1,(d-1)\kappa,\circ\overrightarrow{e_i})$.
It follows that for any $e\in\cyl(\cA,h,\vv)$, we have
$$\|f_n(e)\|_2\leq M+8d^3M\ep ^\alpha\,.$$
On the event $\cF$, the stream $\widehat{f}_n=(1-\ep ^{\alpha/4})(1 -\mathrm{H}(\ep)/M)f_n$ respects the capacity constraint for $\ep$ small enough depending on $d$.
Indeed, we have for $\ep$ small enough depending on $d$,
$$\|\widehat{f}_n(e)\|_2\leq (1-\ep^{\alpha/4})(1+8d^3\ep ^{\alpha})(M-\mathrm{H}(\ep))\leq  M-\mathrm{H}(\ep)\,.$$
  On the event $\cF\cap\cap_{x\in\fT(\cyl(\cA,h,\vv)))}\cE_x$, we have that $\widehat{f}_n\in \widehat{\cS}_n(\cyl(\cA,h,\vv),s\vv,g_1(\ep))$ for $n$ large enough where
$$g_1(\ep)=\left(1+\frac{2d}{m}\right)^{-(d-1)}\left(1-\frac{\mathrm{H}(\ep)}{M}\right)(1-\ep ^{\alpha/4})^2\,.$$

\noindent {\bf Conclusion.}
Using lemma \ref{lem:propdis4}, on the event $\cap_{x\in\fT(\cyl(\cA,h,\vv))}\cE_x$, we have
\begin{align}\label{eq:disconv}
&\dis(\amu_n(\widehat{f}_n),s\vv\ind_{\cyl(\cA,h,\vv)}\cL^d)\nonumber
\\&\quad\leq \dis(\amu_n(\widehat{f}_n),\amu_n(f_n))
+\dis(\amu_n(f_n),s\vv\ind_{\cyl(\cA,h,\vv)}\cL^d)\nonumber\\
&\quad\leq  2\left(1-(1-\ep ^{\alpha/4})(1 -\mathrm{H}(\ep)/M)\right)\frac{1}{n^d}\sum_{\substack{e\in\E_n^d:c(e)\in\cyl(\cA,h,\vv)}}\|f_n(e)\|_ 2+ \dis(\amu_n(f_n)\ind_\Cor,s\vv\ind_{\Cor}\cL^d)\nonumber\\
&\hspace{1cm}+ \sum_{x\in\fT(\cyl(\cA,h,\vv))}\dis(\amu_n(f_n^x),s\vv\ind_{\pi_{x,\kappa}(\fC)}\cL^d)\nonumber\\
&\quad\leq\left(\ep^{\alpha/4} +\frac{\mathrm{H}(\ep)}{M}(1+\ep ^{\alpha/4})\right)4dM \cL^d(\cV_2(\cyl(\cA,h,\vv),d/n))+2M\frac{|\Cor_n|}{n^d}+2dM\cL^d(\Cor)\nonumber\\
&\hspace{1cm}+4\ep^{\alpha_0} \cL^d(\cyl(\cA,h,\vv))\,.
\end{align}
We have for $n$ large enough depending on $\cA$ and $h$ that 
\begin{align}\label{eq:preldis1}
\cL^d(\cV_2(\cyl(\cA,h,\vv),d/n))\leq 2\cL^d(\cyl(\cA,h,\vv))\,.
\end{align}
Let us estimate the size of $\cH^{d-1}(\partial\Cor)$:
\begin{align}\label{eq:preldis2}
\cH^{d-1}(\partial\Cor) &\leq \cH^{d-1}(\partial \cyl(\cA,h,\vv))+|\fT(\cyl(\cA,h,\vv))|\cH^{d-1}(\partial (\kappa\fC))\nonumber\\
&\leq\cH^{d-1}(\partial \cyl(\cA,h,\vv))+ \frac{\cL^d(\cyl(\cA,h,\vv))}{\kappa ^d}2d\kappa ^{d-1}\nonumber\\
&\leq \cH^{d-1}(\partial \cyl(\cA,h,\vv))+ 2d\frac{\cL^d(\cyl(\cA,h,\vv))}{\kappa }
\end{align}
By doing similar computations than in \eqref{eq:contdis4}, we have
\begin{align}\label{eq:preldis3}
|\Cor_n|\leq 2d\left(\cL^d(\Cor)+\frac{2d}{n}\cH^{d-1}(\partial\Cor)\right)n^d 
\,.
\end{align}
Let us upper-bound the volume of the corridor for $n$ large enough depending on $\ep$:
\begin{align*}
\cL^d(\Cor)&\leq |\fT(\cyl(\cA,h,\vv))|\cL^d\left(\kappa\left(1+\frac{2d}{m}\right)\fC\setminus\kappa \fC\right)+\cL^d\left(\cV_2(\partial \cyl(\cA,h,\vv), d^2\kappa)\right)\nonumber\\
&\leq \frac{\cL^d(\cyl(\cA,h,\vv))}{\kappa^d(1+2d/m)^d}\kappa^d\left(\left(1+\frac{2d}{m}\right)^{d}-1\right)+4d^2\kappa\cH^{d-1}(\partial\cyl(\cA,h,\vv))\nonumber\\
&\leq \frac{2^{d+1}}{m}d\cL^d(\cyl(\cA,h,\vv))+4d^2\kappa\cH^{d-1}(\partial\cyl(\cA,h,\vv))\nonumber\\
&\leq 2^{d+2}d\ep ^{\alpha_0}\cL^d(\cyl(\cA,h,\vv)) +8d^2\ep \cH^{d-1}(\partial\cyl(\cA,h,\vv))\,
\end{align*}
where in the second inequality we use proposition \ref{prop:minkowski} for $\kappa$ small and in the last inequality we use the fact that $\kappa$ goes to $\ep$ when $n$ goes to infinity.
We detail here an inequality, we used in the previous inequality and that we will use again in what follows. For $x\in[0,1]$, we have
\begin{align}\label{eq:binomenewton}
(1+x)^d-1=\sum_{k=1}^d\binom{d}{k}x^k\leq x\sum_{k=1}^d\binom{d}{k}\leq 2^dx\,.
\end{align}
Finally, for $\ep$ small enough depending on $\cA$ and $h$, we have
\begin{align}\label{eq:contvolcor}
\cL^d(\Cor)\leq 2^{d+3}d\ep ^{\alpha_0}\cL^d(\cyl(\cA,h,\vv))\,.
\end{align}
For $\ep$ small enough depending on $\cA$ and $h$, for $n$ large enough depending on $\ep$, using inequalities \eqref{eq:disconv}, \eqref{eq:preldis1}, \eqref{eq:preldis2}, \eqref{eq:preldis3} and \eqref{eq:contvolcor}, we have
$$\dis(\amu_n(\widehat{f}_n),s\vv\ind_{\cyl(\cA,h,\vv)}\cL^d)\leq g_0(\ep)\cL^d(\cyl(\cA,h,\vv)) $$
where 
$$g_0(\ep)=8dM\left(\ep^{\alpha/4} +\frac{\mathrm{H}(\ep)}{M}(1+\ep ^{\alpha/4})\right)+2^{d+6}\ep ^{\alpha_0}d^2M +4\ep^{\alpha_0} \,.$$
Finally, we have
$$\Prb\left(\cF\cap\bigcap_{x\in\fT(\cyl(\cA,h,\vv))}\cE_x\right)\leq \Prb(\cE_n(\cyl(\cA,h,\vv),s\vv,g_0(\ep),g_1(\ep)))\,.$$
Using the independence, we have 
\begin{align}\label{eq:indepprelconv}
\Prb(\cF)&\prod_{x\in\fT(\cyl(\cA,h,\vv))}\Prb(\cE_x)\leq\Prb(\cE_n(\cyl(\cA,h,\vv),s\vv,g_0(\ep),g_1(\ep)))\,.
\end{align}
We have  $n\kappa(1+2d/m)\in\sN$. Therefore, for any $x\in\fT(\cyl(\cA,h,\vv))$ we have $x\in\sZ_n^d$. Hence, if we set $n_0= n\kappa$, the application $\pi_{x,n_0/n}$ is a bijection from $\E_{n_0}^d$ to $\E_n^d$.
We can apply lemma \ref{lem:scaling1}:
\begin{align}\label{eq:eqn2}
\Prb(\cE_x)\geq \Prb\left(\exists f_{n_0}\in\cS_{n_0}(\fC) \,(\ep,s\vv)\text{-well-behaved}:\quad \dis(\amu_{n_0}(f_{n_0}),s\vv\ind_ {\fC}\cL^d)\leq \ep^{\alpha_0} \right)
\end{align}
and we have using lemma \ref{lem:toutefamille} and theorem \ref{thmbrique}
\begin{align}\label{eq:eqn33}
\lim_{\ep\rightarrow 0}\limsup_{n\rightarrow \infty} \frac{1}{n^d\kappa^d}\log\Prb(\cE_x)\geq - I(s\vv)\,.
\end{align}
Besides, using inequality \eqref{eq:eqn33}, we have 
\begin{align}\label{eq:n3b}
\lim_{\ep\rightarrow 0}\limsup_{n\rightarrow \infty} \frac{1}{n^d}\sum_{x\in\fT(\cyl(\cA,h,\vv))}\log\Prb(\cE_x)&=\lim_{\ep\rightarrow 0}\limsup_{n\rightarrow \infty} \frac{1}{n^d}|\fT(\cyl(\cA,h,\vv))|\log\Prb(\cE_{x_0})\nonumber\\
&\geq \lim_{\ep\rightarrow 0}\limsup_{n\rightarrow \infty} \frac{1}{n^d}\frac{\cL^d(\cyl(\cA,h,\vv))}{\kappa^d(1+2d/m)^d}\log\Prb(\cE_{x_0})\nonumber\\
&\geq -\cL^d(\cyl(\cA,h,\vv))I(s\vv)\,
\end{align}
where $x_0\in\fT(\cyl(\cA,h,\vv))$.
Besides, we have
$$\limsup_{n\rightarrow \infty}\frac{1}{n^d}\Prb(\cF)=\limsup_{n\rightarrow \infty} \frac{|\Cor_n|}{n^d}\log G([M-\mathrm{H}(\ep),M])\,.$$
Hence, we can define the function $\mathrm{H}$ as in equality \eqref{eq:defh}, using the control of the volume of the corridor \eqref{eq:contvolcor}:
\begin{align}\label{eq:defh2}
\mathrm{H}(\ep)=\inf\left\{a>0: G([M-a,M])\geq 2^{d+3}d\ep ^{\alpha_0}\cL^d(\cyl(\cA,h,\vv)\right\}\,.
\end{align}
and since $M$ is the supremum of the support of $G$ with the same arguments as in the proof of theorem \ref{thmbrique}, we can prove that $\mathrm{H}(\ep)$ goes to $0$ when $\ep$ goes to $0$ and
% Thanks to inequality \eqref{eq:contvolcor} and by definition of $\mathrm{H}$, we have
%\begin{align*} 
%\left( 2^{d+3}\ep ^\alpha\cL^d(\cyl(\cA,h,\vv) \right)\log\left( 2^{d+3}\ep ^{\alpha_0}\cL^d(\cyl(\cA,h,\vv)\right) \leq\limsup_{n\rightarrow\infty} \cL^d(\Cor)\log G([M-\mathrm{H}(\ep),M])\leq 0\,.
% \end{align*}
% Since  $$\lim_{\ep\rightarrow 0}\left(2^{d+3}\ep ^\alpha\cL^d(\cyl(\cA,h,\vv) \right)\log\left(2^{d+3}\ep ^\alpha\cL^d(\cyl(\cA,h,\vv)\right)=0\,,$$
% it follows that
\begin{align*} 
 \lim_{\ep\rightarrow 0} \limsup_{n\rightarrow\infty}\cL^d(\Cor)\log G([M-\mathrm{H}(\ep),M])=0\,.
 \end{align*} 
Hence, we get
\begin{align}\label{eq:eqn4}
\lim_{\ep\rightarrow 0}\limsup_{n\rightarrow \infty} \frac{1}{n^d}\log\Prb(\cF)=0\,.
\end{align}
%Finally, for small enough $\kappa$ depending on $\ep$ we have
%\begin{align}\label{eq:eqn5}
%\log\Prb(\cF)&+\sum_{x\in\fT(\cyl(\cA,h,\vv)))}\limsup_{n\rightarrow \infty}\log\Prb(\cE_x)\nonumber\\
%&\leq \log\Prb\left(\begin{array}{c}\exists f_n\in \widehat{\cS}_n(\cyl(\cA,h,\vv),g(\ep)s\vv):\\\dis(\amu_n(f_n),s\vv\ind_{\cyl(\cA,h,\vv)}\cL^d)\leq 3d\ep\cL^d(\cyl(\cA,h,\vv))\end{array}\right)
%\end{align}
Finally, combining inequalities \eqref{eq:indepprelconv}, \eqref{eq:n3b} and \eqref{eq:eqn4}, by taking the liminf when $\ep$ goes to $0$ we obtain:
\begin{align*}
\liminf_{\ep\rightarrow 0}&\limsup_{n\rightarrow \infty}\frac{1}{n^d}\Prb(\cE_n(\cyl(\cA,h,\vv),s\vv,g_0(\ep),g_1(\ep)))\geq -\cL^d(\cyl(\cA,h,\vv))I(s\vv)\,.
\end{align*}
The result follows.
\end{proof}
\begin{proof}[Proof of theorem \ref{thm:convexity}]
We have to treat separately the case where $\vv_1=\pm\vv_2$. \newline
\noindent {\bf $\bullet$ First Case $\vv_1=\pm\vv_2$.}
Let $\lambda\in[0,1]$ and $l>0$ be a small real number, in particular, we have $l<1$. Let $\vv\in\sS^{d-1}$ and $s_1,s_2\in\sR$. Without any loss of generality, we can assume that $\vv\cdot \overrightarrow{e_d}\neq 0$. Let $C^\lambda_1$ and $C^\lambda_2$ be the following sets (see figure \ref{fig2})
\[C^\lambda_1=\cyl\left([0,\lambda l ^2]\times [0,l^2]^{d-2}\times \{0\},l^2,\vv\right) \qquad \text{and}\qquad C^\lambda_2=\cyl\left([\lambda l ^2,l^2]\times [0,l^2]^{d-2}\times \{0\},l^2,\vv\right)\,.\] 
We pave $\fC$ with translates of $C^\lambda=C_1^\lambda\cup C_2^\lambda$.
\begin{figure}[h]
\begin{center}
\def\svgwidth{0.5\textwidth}
   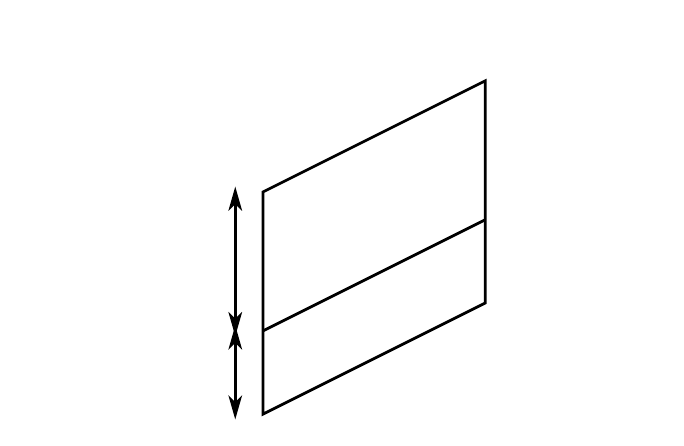
   \caption{\label{fig2}Representation of $C^\lambda$  ($d=3$)}
   \end{center}
\end{figure}
Note that we have
\[\cL^d(C_1^\lambda)=\lambda \cL^d(C^\lambda)\qquad\text{and}\qquad \cL^d(C_2^\lambda)=(1-\lambda) \cL^d(C^\lambda)\,.\]
We consider the following set $\fT$ of translated of $C^\lambda$:
\[\fT=\left\{C^\lambda+\sum_{i=1}^{d-1}k_il^2\overrightarrow{e_i}+k_dl ^2\vv:\,(k_1,\dots,k_d)\in\sZ^d\right\}\,.\]
Let $\fT(\fC)$ be the following set
\[\fT(\fC)=\{F\in\fT:\,F\cap\fC\neq \emptyset\}\,.\]
 Let $g_0$ and $g_1$ be the functions  defined in lemma \ref{lem:preconv}.
For $F=C^\lambda+x\in\fT(\fC)$, we denote by $\cG_F$ the following event (see the definition of $\cE_n$ in lemma \ref{lem:preconv})
\[\cG_F=\cE_n(C^\lambda_1+x,s_1\vv,g_0(\ep),g_1(\ep))\cap\cE_n(C^\lambda_2+x,s_2\vv,g_0(\ep),g_1(\ep))\,.\]
In other words, on the event $\cG_F$ we suppose the existence of a stream similar to $s_1\vv$ in $C_1^\lambda+x$ (respectively to $s_2\vv$ in $C_2 ^\lambda+x$). We denote by $f_n^{1,F}$ and $f_n^{2,F}$ the streams corresponding to these events (chosen according to a deterministic rule).
We denote by $\mathfrak{B}$ the following set of edges:
\[\mathfrak{B}= \left\{e\in\E_n^d\cap\fC:\cP(e)\cap \bigcup_{F=x+C^\lambda\in\fT(\fC)}(x+\partial C^\lambda_ 1)\cup (x+\partial C^\lambda_ 2)\neq \emptyset\right\}\,.\]
We denote by $\cF$ the following event 
$$\cF=\left\{\,\forall e\in \mathfrak{B}\qquad t(e)\geq g_1(\ep) \max(|s_1|,|s_2|)\|\vv\|_\infty\right\}\,.$$
On the event $ \cF\cap \cap_{F\in\fT(\fC)}\cG_F$,
we claim that there exists a stream $f_n\in\cS_n(\fC)$ obtained by concatenating all the streams $(f_n^{1,F},f_n^{2,F})$ such that
\[\dis(\amu_n(f_n),(\lambda s_1+(1-\lambda)s_2)\vv\ind_\fC)\leq K_1l+g_0(\ep)\,\]
where $K_1$ is a positive constant depending only on $M$ and $d$.
Set \[\ssigma=\sum_{F=( C^\lambda+z)\in \fT(\fC)}(s_1\ind_{C^\lambda_1+z}+s_2\ind_{C^\lambda_2+z})\vv\,.\]
Note that sets in $\fT(\fC)$ have pairwise disjoint interiors and $\fC\subset\cup_{F\in\fT(\fC)}F$.
We build $f_n$ as follows: for any $e=\langle y,w \rangle\in\E_n^d\cap\fC$
\[f_n(e) = \left\{
    \begin{array}{ll}
        f_n^{1,C^\lambda+x}(e) & \mbox{if } \cP(e)\subset C^\lambda_1+x, \,C^\lambda+x\in\fT(\fC) \\
        f_n^{2,C^\lambda+x}(e) & \mbox{if } \cP(e)\subset C^\lambda_2+x, \,C^\lambda+x\in\fT(\fC)\\
        g_1(\ep)n^{d-1}\left(\int_{\cP(e)}\ssigma(u)\cdot (n\,\overrightarrow{yw})\,d\cH^{d-1}(u)\right)n\,\overrightarrow{yw} &\mbox{if } e\in\mathfrak{B}.
    \end{array}
\right.
\]
It remains to check that $f_n$ satisfies the node law everywhere in $\fC$. Remember that $C=[-1/(2n),1/(2n)]^d$. By construction of $f_n$, if  $w\in\sZ_n^d$ is such that $(w+C)\subset (x+C_i^\lambda) $ for $i=1,2$ and $x$ such that $(x+C^\lambda)\in\fT(\fC)$ then since $f_n^{i,C^\lambda+x}$ satisfies the node law at $w$ it is also true for the stream $f_n$.
 Let $w\in\sZ_n^d$ such that $$(w+C)\cap \bigcup_{x: x+C^\lambda\in\fT(\fC)}(x+ (\partial C_1^\lambda\cup\partial C_2^\lambda))\neq \emptyset\,.$$
For $z\in\sZ_n^d$ such that $e=\langle w,z\rangle\in\E_n^d$, either $\cP(e)\subset (x+C^\lambda_1) $ for some $x$ such that $x+C^\lambda\in\fT(\fC)$  either $\cP(e)\subset (x+C^\lambda_2)$ or $e\in\mathfrak{B}$. In any case, we have for $e=\langle w,z\rangle$
\[ f_n(e)=g_1(\ep)n^{d-1}\left(\int_{\cP(e)}\ssigma(u)\cdot (n\,\overrightarrow{wz})\, d\cH^{d-1}(u)\right) n\,\overrightarrow{wz}\,.\]
We recall that $L(C_1^\lambda)$ denotes the lateral sides of the cylinder $C_1^\lambda$, $T(C_1^\lambda)$ its top and $B(C_1^\lambda)$ its bottom.
We apply Gauss-Green theorem for $\ssigma$ in $(w+C)\cap(x+C^\lambda_ 1)$:
\begin{align}\label{eq:gg}
&0=\int_{(w+C)\cap(x+C^\lambda_ 1)}\diver\ssigma d\cL ^d\nonumber\\
&=\sum_{\substack{y\in\sZ_n^d:\\\langle w,y\rangle\in\E_n^d}}\left(\int_{\cP(\langle w,y\rangle)}n s_1\vv\cdot\overrightarrow{wy}\ind_{x+C^\lambda_ 1}(z)\,d\cH^{d-1}(z)\right)+\int_{(C+w)\cap (x+\partial C^\lambda_1)} s_1\vv\cdot \nn_{x+ C^\lambda_1}(u)d\cH^{d-1}(u)\nonumber\\
&=\sum_{\substack{y\in\sZ_n^d:\\\langle w,y\rangle\in\E_n^d}}\left(\int_{\cP(\langle w,y\rangle)}n s_1\vv\cdot \overrightarrow{wy}\ind_{x+C^\lambda_ 1}(z)\,d\cH^{d-1}(z)\right)+\int_{(C+w)\cap (x+T(C^\lambda_1)\cup B(C^\lambda_1))}s_1\vv\cdot \nn_{x+ C^\lambda_1}(u)d\cH^{d-1}(u)
\end{align}
where we use in the last equality that if $u\in (x+ L(C_1^\lambda))$ then $\vv\cdot \overrightarrow{n}_{x+C_1^\lambda}(u)=0$.
Note that for $u\in (x+T(C_1^\lambda))=(x+l^2\vv+B(C_1^\lambda))$ we have
$\overrightarrow{n}_{x+C_1 ^\lambda}(u)=-\overrightarrow{n}_{x+C_1^\lambda +l^2 \vv}(u)=\overrightarrow{e_d}$.
It follows that 
$$\sum_{x:x+C^\lambda\in\fT(\fC)}\int_{(C+w)\cap (x+T(C^\lambda_1)\cup B(C^\lambda_1))}s_1\vv\cdot \overrightarrow{n}_{x+ C^\lambda_1}(u)d\cH^{d-1}(u)=0$$ 
and thanks to equality \eqref{eq:gg}, we have
\[\sum_{x:x+C^\lambda\in\fT(\fC)}\sum_{\substack{y\in\sZ_n^d:\\\langle w,y\rangle\in\E_n^d}}\int_{\cP(\langle w,y\rangle)}ns_1\vv\cdot \overrightarrow{wy}\ind_{x+C^\lambda_ 1}(z)\,d\cH^{d-1}(z)=0\,.\]
By similar arguments, we can prove that
\[\sum_{x:x+C^\lambda\in\fT(\fC)}\sum_{\substack{y\in\sZ_n^d:\\\langle w,y\rangle\in\E_n^d}}\int_{\cP(\langle w,y\rangle)}ns_2\vv\cdot \overrightarrow{wy}\ind_{x+C^\lambda_ 2}(z)\,d\cH^{d-1}(z)=0\,.\]
Hence, we have
\[\sum_{\substack{y\in\sZ_n^d:\\\langle w,y\rangle\in\E_n^d}}\int_{\cP(\langle w,y\rangle)}n\ssigma(z)\cdot \overrightarrow{wy}\,d\cH^{d-1}(z)=0\]
and $f_n$ satisfies the node law at $w$. We recall that $g_1(\ep)\leq 1$, so $f_n$ satisfies the capacity constraint in $\fC$ on the event $\cF\cap\cap_{F\in\fT(\fC)}\cG_F$. Finally, we have $f_n\in\cS_n(\fC)$. 

Write $\nu=(\lambda s_1+(1-\lambda)s_2)\vv\ind_\fC\cL^d$.
We want to upper bound the distance $\dis(\amu_n(f_n),\nu)$. To do so, we introduce another measure $\widetilde{\nu}$ and we upper bound separately the quantities $\dis(\amu_n(f_n),\widetilde{\nu})$ and $\dis(\widetilde{\nu},\nu)$.
We set
\[\widetilde{\nu}=\sum_{z:\,( C^\lambda+z)\in \fT(\fC)}(s_1\ind_{C^\lambda_1+z}+s_2\ind_{C^\lambda_2+z})\vv\ind_\fC\cL ^d=\ssigma\ind_\fC\cL^d\,.\]
We denote for short $\amu_n(f_n)$ by $\amu_n$.
Notice that for $z$ such that $( C^\lambda+z)\in \fT(\fC)$ and $( C^\lambda+z)\subset\fC$, we have
\[\|\nu(C^\lambda+z)-\widetilde{\nu}(C^\lambda+z)\|_2=0\,.\]
Let $k\geq 1$ such that $ l\leq \sqrt{\ep_\fC /d} 2^{-k}$ where $\ep_{\fC}$ was defined in lemma \ref{lem:interbord}. It follows that $dl^2\leq\ep_{\fC} 2^{-2k}\leq \ep_{\fC} 2^{-k}$. Such a $k$ exists when $l$ is small enough. Let $x\in[-1,1[^d$ and $\beta\in[1,2]$. Let $Q\in\Delta ^k _\beta$. 

\noindent $\bullet$ Let us first assume that $(Q+x)\subset \fC $ then
\begin{align*}
\|\nu(Q+x)-\widetilde{\nu}(Q+x)\|_2&\leq \sum_{z:(C^\lambda+z)\subset(Q+x)}\|\nu(C^\lambda+z)-\widetilde{\nu}(C^\lambda+z)\|_2\nonumber\\
&\qquad+\sum_{z:(C^\lambda+z)\cap(\partial Q+x)\neq\emptyset}\|\nu((Q+x)\cap (C^\lambda+z))-\widetilde{\nu}((Q+x)\cap (C^\lambda+z))\|_2\nonumber\\
&\leq \sum_{z:(C^\lambda+z)\cap(\partial Q+x)\neq\emptyset}2(|s_1|+|s_2|)\cL^d(C^\lambda)\,.
\end{align*}
Moreover, using lemma \ref{lem:interbord}, we have that $\fT$ is a paving and $\diam( C^\lambda)\leq dl^2\leq \ep_\fC\beta 2^{-k}$, it follows that
\[\left|\left\{z:(C^\lambda+z)\cap(\partial Q+x)\neq\emptyset\right\}\right|\leq 4 \frac{\cH^{d-1}(\partial Q)}{\cL^d(C^\lambda)}\diam (C^\lambda)\,.\]
Finally, we get
\begin{align}\label{eq:Qdansc}
\|\nu(Q+x)-\widetilde{\nu}(Q+x)\|_2\leq 8(|s_1|+|s_2|)dl^{2}\cH^{d-1}(\partial Q)\,.
\end{align}

\noindent $\bullet$ Let us assume now that $(Q+x)\cap\partial\fC \neq \emptyset $. Thus, we have
\begin{align*}
\|\nu(Q+x)-\widetilde{\nu}(Q+x)\|_2&\leq \sum_{z:(C^\lambda+z)\subset(Q+x)\cap \fC}\|\nu(C^\lambda+z)-\widetilde{\nu}(C^\lambda+z)\|_2\\
&\qquad+\sum_{z:(C^\lambda+z)\cap\partial ((Q+x)\cap\fC)\neq\emptyset}\|\nu((Q+x)\cap (C^\lambda+z))-\widetilde{\nu}((Q+x)\cap (C^\lambda+z))\|_2\\
&\leq \sum_{z:(C^\lambda+z)\cap\partial ((Q+x)\cap\fC)\neq\emptyset}2(|s_1|+|s_2|)\cL^d((Q+x)\cap \fC\cap(C^\lambda+z))\,
\end{align*}
where we used in the last inequality that $\nu$ and $\widetilde{\nu}$ are null outside $\fC$.
%&\leq 2(|s_1|+|s_2|)\cL^d(C^\lambda)\left( \left|\left\{z:(C^\lambda+z)\cap(\partial \fC\cap(Q+x))\neq\emptyset\right\}\right|+\left|\left\{z:(C^\lambda+z)\cap\partial(Q+x)\neq\emptyset\right\}\right|\right)\,.
It follows that
\begin{align*}
\sum_{Q\in\Delta_\beta^k:(Q+x)\cap \partial \fC\neq\emptyset}&\|\nu(Q+x)-\widetilde{\nu}(Q+x)\|_2\\
&\leq \sum_{\substack{Q\in\Delta_\beta^k:\\(Q+x)\cap \partial \fC\neq\emptyset}}\sum_{z:(C^\lambda+z)\cap\partial ((Q+x)\cap\fC)\neq\emptyset}2(|s_1|+|s_2|)\cL^d((Q+x)\cap \fC\cap(C^\lambda+z))\\
&\leq \sum_{\substack{Q\in\Delta_\beta^k:\\(Q+x)\cap \partial \fC\neq\emptyset}}\sum_{z:(C^\lambda+z)\cap\partial \fC\neq\emptyset}2(|s_1|+|s_2|)\cL^d((Q+x)\cap \fC\cap(C^\lambda+z))\\
&\quad+\sum_{\substack{Q\in\Delta_\beta^k:\\(Q+x)\cap \partial \fC\neq\emptyset}}\sum_{z:(C^\lambda+z)\cap\partial (Q+x))\neq\emptyset}2(|s_1|+|s_2|)\cL^d((Q+x)\cap \fC\cap(C^\lambda+z))\\
&\leq  \left|\left\{z:(C^\lambda+z)\cap\partial \fC\neq\emptyset\right\}\right|2(|s_1|+|s_2|)\cL^d(C^\lambda)\\
&\quad +\sum_{\substack{Q\in\Delta_\beta^k:\\(Q+x)\cap \partial \fC\neq\emptyset}}\left|\left\{z:(C^\lambda+z)\cap\partial(Q+x)\neq\emptyset\right\}\right|2(|s_1|+|s_2|)\cL^d(C^\lambda)\,.
\end{align*}
Using again lemma \ref{lem:interbord}, we obtain
\begin{align}\label{eq:QpasdansC}
\sum_{Q\in\Delta_\beta^k:(Q+x)\cap \partial \fC\neq\emptyset}&\|\nu(Q+x)-\widetilde{\nu}(Q+x)\|_2\nonumber\\
&\leq  8(|s_1|+|s_2|)(\diam C^\lambda) \left(\cH^{d-1}(\partial\fC)+ |\{Q\in\Delta_\beta^k:(Q+x)\cap \partial \fC\neq\emptyset\}|\cH^{d-1}(\partial Q) \right)\,.
\end{align}
%\begin{align*}
%\left|\left\{z:(C^\lambda+z)\cap(\partial \big((Q+x)\cap\fC)\neq\emptyset\right\}\right|&\leq \left|\left\{z:(C^\lambda+z)\cap\partial \fC\neq\emptyset\right\}\right|+\left|\left\{z:(C^\lambda+z)\cap\partial(Q+x)\neq\emptyset\right\}\right|\\
%&\leq c_d\left(\cH^{d-1}(\partial Q)+\cH^{d
%\end{align*}
Combining inequalities \eqref{eq:Qdansc} and \eqref{eq:QpasdansC}, it follows that
%\[\|\nu((Q+x)\cap\fC)-\widetilde{\nu}((Q+x)\cap\fC)\|_2\leq 2c_d(|s_1|+|s_2|)\left(\cH^{d-1}(\partial\fC\cap (Q+x))+\cH^{d-1}(\partial Q)\right)l^{2} \,.\]
%Finally, we have
\begin{align*}
\sum_{Q\in\Delta_\beta^k}\|\nu(Q+x)-\widetilde{\nu}(Q+x)\|_2&\leq \sum_{\substack{Q\in\Delta_\beta^k: \\(Q+x)\subset\fC}}\|\nu(Q+x)-\widetilde{\nu}(Q+x)\|_2+\sum_{\substack{Q\in\Delta_\beta^k:\\(Q+x)\cap \partial \fC\neq\emptyset}}\|\nu(Q+x)-\widetilde{\nu}(Q+x)\|_2\\
&\leq 8(|s_1|+|s_2|) d l^2\left( |\{Q\in\Delta_\beta^k: (Q+x)\cap \fC\neq\emptyset\}| \cH^{d-1}(\partial Q)+\cH^{d-1}(\partial\fC)\right)\\
&\leq 8(|s_1|+|s_2|)dl^2\left(\frac{\cL^d(3\fC)}{(\beta 2^{-k})^d}2d(\beta 2^{-k})^{d-1} +2d \right)\\
&\leq 8(|s_1|+|s_2|)dl^2\left(2\,3^dd2^k+2d \right)\\
&\leq C_d(|s_1|+|s_2|)l
\end{align*}
where in the last inequality we use the fact that $ 2^k\leq \sqrt{\ep_\fC/d}/l$ and where $C_d$ is a positive constant depending only on $d$. 
It follows that
\begin{align*}
&\sum_{k=1}^\infty\frac{1}{2^{k}}\sum_{Q\in\Delta_\beta^k}\|\nu(Q+x)-\widetilde{\nu}(Q+x)\|_2\\
&\quad\leq \sum_{k: 2^{-k}\geq l\sqrt{d/\ep_\fC}}\frac{1}{2^{k}}\sum_{Q\in\Delta_\beta^k}\|\nu(Q+x)-\widetilde{\nu}(Q+x)\|_2+\sum_{k: 2^{-k}<l\sqrt{d/\ep_\fC}}\frac{1}{2^{k}}\sum_{Q\in\Delta_\beta^k}\|\nu(Q+x)-\widetilde{\nu}(Q+x)\|_2\\
&\quad\leq 2C_d(|s_1|+|s_2|)l+2(|s_1|+|s_2|)\sum_{k: 2^{-k}<l\sqrt{d/\ep_\fC}}\frac{1}{2^{k}}\sum_{\substack{Q\in\Delta_\beta^k:\\(Q+x)\cap \fC\neq\emptyset}}\cL^d(Q)\\
&\quad\leq 2C_d(|s_1|+|s_2|)l+2(|s_1|+|s_2|)\sum_{k: 2^{-k}<l\sqrt{d/\ep_\fC}}\frac{1}{2^{k}}\cL^d(3\fC)\\
&\quad\leq 23^d\left(C_d+2\sqrt{\frac{d}{\ep_\fC}}\right) (|s_1|+|s_2|)l\,.
\end{align*}
Consequently, we have 
\[\dis(\nu,\widetilde{\nu})\leq 2\left(C_d+23^d\sqrt{\frac{d}{\ep_\fC}}\right) (|s_1|+|s_2|)l\,.\]
Let us now compute the distance $\dis(\amu_n,\widetilde{\nu})$.
Using lemma \ref{lem:propdis4} and lemma \ref{lem:interbord}, on the event $\cF\cap\cap_{F\in\fT(\fC)}\cG_F$ we have
\begin{align*}
\dis(\amu_n,\widetilde{\nu})&\leq \sum_{F=(w+C^\lambda)\in\fT: F\subset\fC}\dis(\amu_n\ind_{w+C^\lambda_1},s_1\vv\ind_{w+C^\lambda_1}\cL^d)+\dis(\amu_n\ind_{w+C^\lambda_2},s_2\vv\ind_{w+C^\lambda_2}\cL^d)\\
&\quad+\sum_{F=(w+C^\lambda)\in\fT: F\cap\partial\fC\neq \emptyset}4 (|s_1|+|s_2|)\cL^d(C^\lambda)\\
&\leq  \sum_{F=(w+C^\lambda)\in\fT: F\subset\fC}g_0(\ep)\cL^d(C ^\lambda)+16d (|s_1|+|s_2|)\cH^{d-1}(\partial\fC)l^2\leq g_0(\ep) + 16d^2l^2(|s_1|+|s_2|)\,.
\end{align*}
Note that if $I(s_1\vv),I(s_2\vv)$ are finite then necessarily for any $\ep>0$, then we have $$\Prb(t(e)\geq g_1(\ep)\max(|s_1|,|s_2|)\|v\|_\infty)>0\,.$$ Indeed, let us assume that $\Prb(t(e)\geq g_1(\ep_0)|s_1|\|v\|_\infty)=0$ for some $\ep_0>0$ and $I(s_1\vv)<\infty$, thanks to theorem \ref{thmbrique}, we have
$$\forall \ep>0\qquad\liminf_{n\rightarrow\infty}\frac{1}{n^d}\log \Prb(\exists f_n\in\cS_n(\fC) : \,\dis\big(\amu_n(f_n),s_1\vv\ind_\fC \cL ^d)\leq \ep)\geq -I(s_1\vv)\,.$$
By doing the same reasoning of proposition \ref{propadmissiblestream}, we can extract a subsequence and choose a sequence of configurations $(\omega_n)_{n\geq 1}$ such that $\amu_n(f_n)(\omega_n)$ weakly converges towards $s_1\vv\ind_\fC\cL^d$. Let $i\in\{1,\dots,d\}$ such that $|v_i|=\|v\|_\infty$. Without loss of generality, we can assume that $v_i\geq 0$ and $s_1\geq 0$. Then for any $e\in\E_n^d$ we have $f_n(e)\cdot \overrightarrow{e_i}\leq  g_1(\ep_0)s_1v_i$ and $$\int_\fC\amu_n(f_n)(\omega_n)\cdot \overrightarrow{e_i}d\cL^d\leq g_1(\ep_0)s_1v_i<\int_{\fC}s_1\vv\cdot \overrightarrow{e_i}d\cL^d\,.$$
This is a contradiction, it follows that if $I(s_1\vv)$ is finite then  $\Prb(t(e)\geq g_1(\ep_0)|s_1|\|v\|_\infty)>0$.

On the event $\cap_{F\in\fT(\fC)}\cG_F\cap\cF$, there exists $f_n\in\cS_n(\fC)$ such that 
\[\dis(\amu_n(f_n),\nu)\leq \dis(\amu_n(f_n),\widetilde{\nu})+\dis(\widetilde{\nu},\nu)\leq g_0(\ep) +K_1l\]
where $K_1$ is a constant depending only on $d$, $s_1$ and $s_2$. It follows that for $\ep$ small enough depending on $l$
\[\Prb\left(\cap_{F\in\fT(\fC)}\cG_F\cap\cF\right)\leq \Prb\left(\exists f_n\in\cS_n(\fC):\,\dis(\amu_n(f_n),(\lambda s_1+(1-\lambda)s_2)\vv\ind_\fC\cL^d)\leq 2K_1 l\right)\,.\]
Notice that $|\fB|/n^d$ goes to $0$ when $n$ goes to infinity, it follows that
\begin{align}\label{eq:contcf}
\limsup_{n\rightarrow \infty}\frac{1}{n^d}\log\Prb(\cF)=\limsup_{n\rightarrow \infty}\frac{1}{n^d}|\mathfrak{B}|\log\Prb\Big(t(e)\geq g_1(\ep) \max(|s_1|,|s_2|)\|v\|_\infty\Big)=0\,.
\end{align}
We have for $F\in\fT(\fC)$, using the independence:
$$\Prb(\cG_F)=\Prb(\cE_n(C^\lambda_1+x,s_1\vv,g_0(\ep),g_1(\ep)))\Prb(\cE_n(C^\lambda_2+x,s_2\vv,g_0(\ep),g_1(\ep)))\,.$$
By lemma \ref{lem:preconv},  we obtain
\begin{align}\label{eq:contgF}
-\liminf_{\ep\rightarrow 0} \limsup_{n\rightarrow \infty}\frac{1}{n^d}\log \Prb(\cG_F)\leq (\lambda I(s_1\vv)+(1-\lambda)I(s_2\vv))\cL^d(C^\lambda)\,.
\end{align}
Since the events $(\cG_F,\, F\in\fT(\fC))$ and $\cF$ are independent and using \eqref{eq:contcf}, we have
\begin{align*}
-&\limsup_{n\rightarrow \infty}\frac{1}{n^d}\log\Prb\left(\exists f_n\in\cS_n(\fC):\,\dis(\amu_n(f_n),(\lambda s_1+(1-\lambda)s_2)\vv)\leq 2K_1 l\right)\\
&\hspace{1cm} \leq-|\fT(\fC)|\limsup_{n\rightarrow \infty}\frac{1}{n^d}\log \Prb(\cG_{F_0})-\limsup_{n\rightarrow \infty}\frac{1}{n^d}\log\Prb(\cF)=-|\fT(\fC)|\limsup_{n\rightarrow \infty}\frac{1}{n^d}\log \Prb(\cG_{F_0})
\end{align*}
where $F_0\in\fT(\fC)$. Since the previous result holds for any $\ep$ small enough depending on $l$, inequality \eqref{eq:contgF} yields
\begin{align*}
-&\limsup_{n\rightarrow \infty}\frac{1}{n^d}\log\Prb\left(\exists f_n\in\cS_n(\fC):\,\dis(\amu_n(f_n),(\lambda s_1+(1-\lambda)s_2)\vv)\leq 2K_1 l\right)\\
&\hspace{4cm}\leq |\fT(\fC)|(\lambda I(s_1\vv)+(1-\lambda)I(s_2\vv)) \cL^d(C^\lambda)\,.
\end{align*}
Besides, we have
\[|\fT(\fC)|\leq \frac{\cL^d((1+2l^2)\fC)}{\cL ^d(C^\lambda)}\,.\]
Hence, we obtain
\begin{align*}
- \limsup_{n\rightarrow \infty}\frac{1}{n^d}\log\Prb&\left(\exists f_n\in\cS_n(\fC):\,\dis(\amu_n(f_n),(\lambda s_1+(1-\lambda)s_2)\vv)\leq 2K_1 l\right)\\
&\qquad\leq  (1+2l^2)^d(\lambda I(s_1\vv)+(1-\lambda)I(s_2\vv))\,.
\end{align*}
By letting $l$ go to $0$ (the left hand side is non-decreasing in $l$), we obtain
\begin{align*}
-\lim_{l\rightarrow 0} \limsup_{n\rightarrow \infty}\frac{1}{n^d}\Prb\left(\exists f_n\in\cS_n(\fC):\,\dis(\amu_n(f_n),(\lambda s_1+(1-\lambda)s_2)\vv)\leq 2K_1 l\right)\leq \lambda I(s_1\vv)+(1-\lambda)I(s_2\vv)\,.
\end{align*}
The result follows.

\noindent{\bf $\bullet$ Second Case:}
Let $\lambda\in[0,1]$ and $l\in[0,1]$.
Let $\vv_1, \vv_2\in \sS^{d-1}$ such that $\vv_1\neq\pm \vv_2$ and $s_1,s_2>0$. We claim that there exists $\overrightarrow{n}\in\sS^{d-1}$ such that $s_1\vv_1\cdot\an=s_2\vv_2\cdot \overrightarrow{n}\neq 0$. Indeed, we can complete $\vv=s_1\vv_1-s_2\vv_2$ into a normal basis $(\vv,\ff_2,\dots,\ff_d)$ of $\sR^d$ where $\overrightarrow{f}_2,\dots,\overrightarrow{f}_d$ are in $\sS^{d-1}$. There exists $i\in\{2,\dots,d\}$ such that $\vv_1\cdot\ff_i\neq 0$. If not, we have 
\[\forall i\in\{2,\dots,d\}\qquad \vv_1\cdot\ff_i=0\] and there exists $\lambda\in\sR$ such that we have $\vv_1=\lambda(s_1\vv_1-s_2\vv_2)$. This is a contradiction with $\vv_1\neq\pm \vv_2$. Hence, the vector $\overrightarrow{n}$ corresponds to the $\overrightarrow{f_i}$ such that  $\vv_1\cdot\ff_i\neq 0$ (if there are several choices we pick $\ff_i$ with the smallest $i$). Since $(\vv,\ff_2,\dots,\ff_d)$ is a normal basis, we have 
$$\vv\cdot\an =s_1\vv_1\cdot\an-s_2\vv_2\cdot \overrightarrow{n}=0\,.$$
\begin{figure}[h]
\begin{center}
\def\svgwidth{0.4\textwidth}
   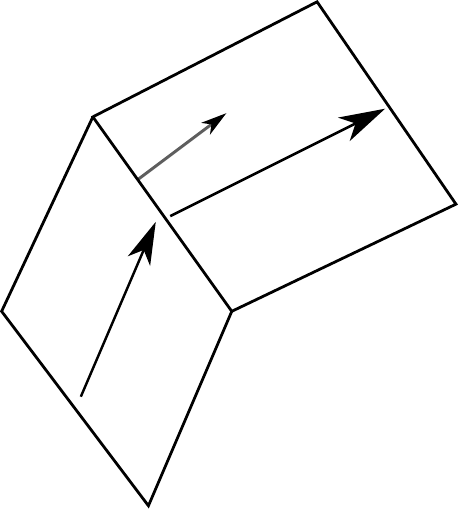
   \caption{\label{fig3}Representation of the set $E$ $(d=2)$}
   \end{center}
\end{figure}

\noindent Let $(\overrightarrow{g}_1,\dots,\overrightarrow{g}_{d-1},\overrightarrow{n})$ be an orthonormal basis. Let $A$ be the hyperrectangle normal to $\an$ whose expression in the basis $(\overrightarrow{g}_1,\dots,\overrightarrow{g}_{d-1},\overrightarrow{n})$ is $[0,l^2]^{d-1}\times\{0\}$.
Let $E_1,E_2$ and $E$ be the following sets (see figure \ref{fig3})
\[E_1=\cyl(A,\lambda l^2 ,\vv_1),\quad E_2=\cyl(A,(1-\lambda)l^2,-\vv_2)\quad\text{and}\quad E=E_1\cup E_2\,.\]
We have $\cL^d(E_1)=\lambda l^{2d}$ and $\cL^d(E_2)=(1-\lambda)l ^{2d}$.
We consider the following set $\fT$ of translated of $E$:
\[\fT=\left\{E+\sum_{i=1}^{d-1}k_il^2\overrightarrow{g}_i+k_dl ^2(\lambda \vv_1+(1-\lambda)\vv_2):\,(k_1,\dots,k_d)\in\sZ^d\right\}\,.\]
Let $\fT(\fC)$ be the following set
\[\fT(\fC)=\{F\in\fT:\,F\cap\fC\neq\emptyset\}\,.\]
Let $F\in\fT(\fC)$ write $F=E+x$. We denote by $\cF_F$ the following event
\[\cF_F=\cE_n(\cyl(A,\lambda l^2 ,\vv_1)+x,s_1\vv_1,g_0(\ep),g_1(\ep))\cap\cE_n(\cyl(A,(1-\lambda)l^2,-\vv_2)+x,s_2\vv_2,g_0(\ep),g_1(\ep))\,\]
where $g_0$ and $g_1$ are the functions defined in lemma \ref{lem:preconv}.
On $\cF_F$, we denote by $f_n^{1,F}$ and $f_n^{2,F}$ the streams corresponding to these events (chosen according to a deterministic rule if there are several possible choices).
We denote by $\mathfrak{B}$ the following set of edges:
\[\fB= \left\{e\in\E_n^d\cap\fC:\cP(e)\cap \bigcup_{F=x+E\in\fT(\fC)}(x+\partial\cyl(A,\lambda l^2 ,\vv_1))\cup (x+\partial \cyl(A,(1-\lambda)l^2,-\vv_2))\neq \emptyset\right\}\,.\]
We denote by $\cF$ the following event 
\[\cF=\left\{\forall e\in\fB\quad t(e)\geq \max(s_1\|v_1\|_\infty,s_2\|v_2\|_\infty)\right\}\,.\]
Set \[\ssigma=\sum_{z:\,( E+z)\in \fT(\fC)}(s_1\vv_1\ind_{\cyl(A,\lambda l^2 ,\vv_1))+z}+s_2\vv_2\ind_{\cyl(A,(1-\lambda)l^2,-\vv_2)+z})\ind_{\fC}\,.\]
On the event $\cap_{F\in\fT(\fC)}\cG_F\cap\cF$,
we define the following stream $f_n\in\cS_n(\fC)$ obtained by concatenating all the streams $f_n^{i,E+x}$,  $\forall  e=\langle y,w\rangle \in\E_n^d\cap\fC$:
\[f_n(e) = \left\{
    \begin{array}{ll}
        f_n^{1,E+x}(e) & \mbox{if } \cP(e)\subset \cyl(A,\lambda l^2 ,\vv_1)+x, \,E+x\in\fT(\fC) \\
        f_n^{2,E+x}(e) & \mbox{if } \cP(e)\subset \cyl(A,(1-\lambda)l^2,-\vv_2)+x, \,E+x\in\fT(\fC) \\
        g_1(\ep)n^{d-1}\left(\int_{\cP(e)}\ssigma(x)\cdot (n\,\overrightarrow{yw})\, d\cH^{d-1}(x)\right)n\,\overrightarrow{yw}  & \mbox{if } e\in\fB.
    \end{array}
\right.
\]
Let $y\in A$. Without loss of generality, we can assume that $\vv_1\cdot \overrightarrow{n}>0$. Hence, we have $\overrightarrow{n}_{\cyl(A,\lambda l^2,\vv_1)}(y)=-\overrightarrow{n}$. We have \[s_1\vv_1 \cdot \overrightarrow{n}_{\cyl(A,\lambda l^2,\vv_1)}(y)=-s_1\vv_1\cdot \overrightarrow{n}=s_2\vv_2\cdot \overrightarrow{n}= -s_2\vv_2 \cdot \overrightarrow{n}_{\cyl(A,(1-\lambda) l^2,-\vv_2)}(y)\,.\]
Hence, by same arguments than in the previous case, we have that the node law is satisfied at any point in $\fC\cap\sZ_n^d$.

By the same computations than in the previous case, we can prove that there exists a positive constant $K_2$ depending only on $d$, $s_1$ and $s_2$ such that on the event $\cF\cap_{F\in\fT(\fC)}\cG_F$, for $l$ small enough, we have
\[\dis\Big(\amu_n(f_n),(\lambda s_1\vv_1+(1-\lambda)s_2\vv_2)\ind_{\fC}\cL^d\Big)\leq g_0(\ep)+K_2l\,\]
and by similar arguments
\begin{align*}
&I(\lambda s_1\vv_1+(1-\lambda)s_2\vv_2)\\
&\qquad=-\lim_{l\rightarrow{0}}\limsup_{n\rightarrow \infty}\frac{1}{n^d}\log\Prb\left(\exists f_n\in\cS_n(\fC):\,\dis\Big(\amu_n(f_n),(\lambda s_1\vv_1+(1-\lambda)s_2\vv_2)\ind_{\fC}\Big)\leq 2K_2l\right)\\&\qquad\leq \lambda I(s_1\vv_1)+(1-\lambda)I(s_2\vv_2)\,.
\end{align*}
This yields the result.
%
%Using independence, we get 
%\begin{align*}
%\Prb&\left(\bigcap_{F\in\fT(\fC)}\cF_F\cap\{\forall e\in \fB\quad t(e)\geq\max(|s_1|,|s_2|)\|v\|_\infty\}\right)\\
%&\hspace{2cm}=G([\max( \|s_1\vv_1\|_\infty,\|s_2\vv_2\|_\infty),M]) ^{|\fB|}\prod_{F\in\fT(\fC)}\Prb(\cF_F)\,.
%\end{align*}
%Thanks to lemma \ref{lem:preconv},
%\begin{align*}
%-\lim_{\ep\rightarrow 0} \limsup_{n\rightarrow \infty}\frac{1}{n^d}\log \Prb\left(\cF_F\right)&\leq \cL^d(\cyl(A,\lambda l^2,\vv_1))I(s_1\vv_1)+\cL^d(\cyl(A,(1-\lambda) l^2,-\vv_2))I(s_2\vv_2)\\
%&\leq l ^{2d}\left(\lambda I(s_1\vv_1)+(1-\lambda)I(s_2\vv_2)\right)\,.
%\end{align*}
%Note that we have
%\[\fC\subset\bigcup_{F\in\fT(\fC)}F\,.\]
%It follows that 
%\[|\fT(\fC)|\leq \frac{\cL ^d((1+2l^2)\fC)}{\cL ^d(E)}=\frac{(1+2l^2)^d}{l ^{2d}}\,.\]
%
%It follows that 
%\begin{align*}
%\lim_{l\rightarrow{0}}&-\limsup_{n\rightarrow \infty}\frac{1}{n^d}\log \Prb\left(\bigcap_{F\in\fT(\fC)}\cF_F\cap\{\forall e\in \fB\quad t(e)\geq\max(|s_1|,|s_2|)\|v\|_\infty\}\right)\\&\hspace{4cm}\leq \lambda I(s_1\vv_1)+(1-\lambda)I(s_2\vv_2)
%\end{align*}
%and
%\begin{align*}
%&I(\lambda s_1\vv_1+(1-\lambda)s_2\vv_2)\\
%&\qquad=\lim_{l\rightarrow{0}}-\limsup_{n\rightarrow \infty}\frac{1}{n^d}\log\Prb\left(\exists f_n\in\cS_n(\fC):\,\dis\Big(\amu_n(f_n),(\lambda s_1\vv_1+(1-\lambda)s_2\vv_2)\ind_{\fC}\Big)\leq l\right)\\&\qquad\leq \lambda I(s_1\vv_1)+(1-\lambda)I(s_2\vv_2)\,.
%\end{align*}
%This yields the result.
\end{proof}
\subsection{Control of the elementary rate function}

In this section, we aim to obtain a control on the elementary rate function in terms of $G$.
\begin{prop}\label{prop:controleI}For any $\vv\in\sR^d$ such that $G([\|\vv\|_\infty,M])>0$, we have
$$I(\vv)\leq -d\log G([\|\vv\|_\infty,M])\,.$$
\end{prop}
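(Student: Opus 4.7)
The plan is to exhibit an explicit deterministic ``constant'' stream $f_n$ on $\fC$ whose associated measure approximates $\vv\ind_\fC\cL^d$, and to observe that this stream is admissible as soon as every edge in $\fC$ has capacity at least $\|\vv\|_\infty$. By independence of the capacities, this event has probability $G([\|\vv\|_\infty,M])^{|\E_n^d\cap\fC|}=G([\|\vv\|_\infty,M])^{dn^d}$, and after taking $-\log(\cdot)/n^d$ this is precisely the target bound.

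Writing $\vv=(v_1,\dots,v_d)$, I will define $f_n(e):=v_i\overrightarrow{e_i}$ whenever $e=\langle x,y\rangle$ satisfies $x\in\fC$ and $\overrightarrow{xy}=\overrightarrow{e_i}/n$, and $f_n(e):=0$ otherwise. At any vertex $x\in\sZ_n^d\cap\fC$ all of whose neighbors $x\pm \overrightarrow{e_i}/n$ lie in $\fC$, the contribution to the node-law sum in coordinate $i$ is $v_i/n-v_i/n=0$; summing over $i$ shows that the node law required in the definition of $\cS_n(\fC)$ is satisfied. On the event $\cE_n=\{\forall e\in\E_n^d\cap\fC,\ t(e)\geq \|\vv\|_\infty\}$ the capacity constraint $\|f_n(e)\|_2=|v_i|\leq \|\vv\|_\infty$ also holds, so $f_n\in\cS_n(\fC)$, and independence gives $\Prb(\cE_n)\geq G([\|\vv\|_\infty,M])^{dn^d}$.

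Next I will check that $\dis(\amu_n(f_n)\ind_\fC,\vv\ind_\fC\cL^d)\to 0$. The $i$-th component of $\amu_n(f_n)$ is $(v_i/n^d)$ times the counting measure on the centers of edges parallel to $\overrightarrow{e_i}$ with left endpoint in $\fC$, which is a Riemann-sum approximation of $v_i\ind_\fC\cL^d$; consequently $\amu_n(f_n)$ converges weakly to $\vv\ind_\fC\cL^d$. Since $\|f_n(e)\|_2\leq \|\vv\|_\infty\leq M$ uniformly in $e$ and $n$, the second part of lemma \ref{lem:weakconvautresens} converts this weak convergence into $\dis(\amu_n(f_n),\vv\ind_\fC\cL^d)\to 0$, and the discrepancy between $\amu_n(f_n)$ and $\amu_n(f_n)\ind_\fC$ is controlled by the mass carried by edge-centers in the $1/(2n)$-neighborhood of $\partial\fC$, which is $O(1/n)$.

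Combining the two ingredients, for every $\ep>0$ and every $n$ large enough (depending on $\ep$) the event $\cE_n$ is contained in $\{\exists f_n\in\cS_n(\fC):\,\dis(\amu_n(f_n)\ind_\fC,\vv\ind_\fC\cL^d)\leq \ep\}$, whence
\[
\liminf_{n\to\infty}\frac{1}{n^d}\log\Prb\bigl(\exists f_n\in\cS_n(\fC):\dis(\amu_n(f_n)\ind_\fC,\vv\ind_\fC\cL^d)\leq \ep\bigr)\geq d\log G([\|\vv\|_\infty,M]).
\]
Letting $\ep\to 0$ and invoking the definition of $I(\vv)$ from theorem \ref{thmbrique} gives $I(\vv)\leq -d\log G([\|\vv\|_\infty,M])$. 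There is no real obstacle: the construction is trivial, and the main point is just to notice that the node law for the constant stream is automatic and that the scaling of independent capacities produces exactly the right volume-order rate $dn^d$.
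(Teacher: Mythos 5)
Your proposal is correct and takes essentially the same approach as the paper: construct the discretized constant stream $f_n^{disc}(e)=v_i\overrightarrow{e_i}$ on edges in $\fC$, observe it is admissible on the increasing event $\{\forall e\in\fC:\, t(e)\geq\|\vv\|_\infty\}$ which has probability $G([\|\vv\|_\infty,M])^{dn^d}$, and conclude via theorem~\ref{thmbrique}. The only difference is a presentational one: the paper verifies $\dis(\amu_n(f_n^{disc}),\vv\ind_\fC\cL^d)\leq\ep$ by a direct dyadic-cube computation, while you deduce it from weak convergence together with the second bullet of lemma~\ref{lem:weakconvautresens}; both are valid.
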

\begin{proof} Let $\vv\in\sR^d$ such that $G([\|\vv\|_\infty,M])>0$. Let $\ep>0$.
We consider the following stream $f^{disc}_n$ that is the discretized version of $\vv\ind_{\fC}$ defined by
\begin{align*}
\forall e=\langle x,y\rangle \in\fC \qquad
& f_n ^{disc}(e)= n^2(\vv\cdot \overrightarrow{xy})\overrightarrow{xy}\,.
\end{align*}
In particular, if $\overrightarrow{xy}=\overrightarrow{e_i}/n$, for $i\in\{1,\dots,d\}$, we have
$$f_n ^{disc}(e)= v_i\overrightarrow{e_i}\,.$$
We aim to compute the distance $\dis(\amu_n(f_n^{disc}),\vv\ind_\fC\cL^d)$. We write $\amu_n$ for $\amu_n(f_n^{disc})$.
Let $z\in[-1,1[^d$, $k\geq 1$ and $\lambda\in[1,2]$. Let $Q\in\Delta ^k_\lambda$. 
We have 
\begin{align*}
n^d\amu_n((Q+z)\cap C)&=\sum_{x\in\sZ_n^d\cap(Q+z)\cap\fC}\vv+\sum_{x\in\sZ_n^d\setminus ((Q+z)\cap\fC)}\sum_{i=1}^d \vv_i\overrightarrow{e_i} \ind_{(x+\overrightarrow{e_i}/(2n))\in (Q+z)\cap\fC}\\
&- \sum_{x\in\sZ_n^d\cap (Q+z)}\sum_{i=1}^d \vv_i\overrightarrow{e_i} \ind_{(x+\overrightarrow{e_i}/(2n))\notin (Q+z)\cap\fC}
\end{align*}
Using proposition \ref{prop:minkowski}, it follows that for $n$ large enough depending on $Q$
\begin{align*}
&\|\amu_n((Q+z)\cap\fC)-\vv\cL^d((Q+z)\cap\fC)\|_2\\
&\qquad\leq  \left(\frac{|\sZ_n^d\cap(Q+z)\cap\fC|}{n^d}-\cL^d((Q+z)\cap\fC)\right)\|\vv\|_2+2d\frac{|\sZ_n^d \cap\cV_\infty(\partial((Q+z)\cap\fC),1/n)|}{n^d}\|\vv\|_\infty\\
&\qquad \leq \cL^d(\cV_2(\partial((Q+z)\cap\fC),d/n))(\|\vv\|_2+2d\|\vv\|_\infty)\\
&\qquad \leq \frac{4d}{n}\cH^{d-1}(\partial((Q+z)\cap\fC)(\|\vv\|_2+2d\|\vv\|_\infty)
\end{align*}
and
\begin{align*}
\sum_{Q\in(z+\Delta_k^\lambda)}\|\amu_n(Q\cap\fC)-\vv\cL^d(Q\cap\fC)\|_2&\leq \frac{4d}{n}\left(\cH^{d-1}(\partial\fC)+\frac{\cL^d(2\fC)}{\cL^d( Q)}\cH^{d-1}(\partial Q)\right)(\|\vv\|_2+2d\|\vv\|_\infty)\\
&\leq \frac{4d}{n}(2d+2^d2d 2^k)(\|\vv\|_2+2d\|\vv\|_\infty)
\end{align*}
Let $k_0\geq 1$ be the smallest integer such that
$$10dM 2^{-k_0}\leq \ep/2\,.$$
With this choice of $k_0$, we have for $n$ large enough
\begin{align*}
\sum_{k=k_0}^\infty \frac{1}{2^k}\sum_{Q\in(z+\Delta_k^\lambda)}\|\amu_n(Q\cap\fC)-\vv\cL^d(Q\cap\fC)\|_2\leq \sum_{k=k_0}^\infty \frac{1}{2^k}5dM=10dM 2^{-k_0}\leq \ep/2
\end{align*}
We can choose $n$ large enough depending on $\ep$ and $k_0$ such that
$$\sum_{k=0}^{k_0-1}\frac{1}{2^k}\frac{4d}{n}(2d+2^d2d 2^k)(\|\vv\|_2+2d\|\vv\|_\infty)\leq \frac {\ep}{2}\,.$$
It follows that for $n$ large enough depending on $\ep$, we have
$$\dis(\amu_n(f_n^{disc}),\vv\ind_\fC\cL^d)\leq \ep\,.$$
Note that on the event $\{\forall e\in\fC\quad t(e)\geq\|\vv\|_\infty\}$, the stream $f_n^{disc}$ belongs to $\cS_n(\fC)$.
We recall that
\[|\{e\in\E_n^d:e\in\fC\}|=|\{e=\langle x,y\rangle\in\E_n^d:\, x\in\fC, \,\exists i\in\{1,\dots,d\}\quad n\,\overrightarrow{xy}=\overrightarrow{e_i}\}|=d|\fC\cap\sZ_n^d|=dn^d\,.\]
Using the independence of the family $(t(e))_{e\in\E_n^d}$, it follows that for $n$ large enough depending on $\ep$
\begin{align}\label{eq:controleIparG}
-\frac{1}{n^d}\log \Prb(\exists f_n\in\cS_n(\fC):\dis(\amu_n(f_n),\vv\ind_\fC\cL^d)\leq \ep)&\leq -\frac{1}{n^d}\log \Prb(\forall e\in\fC\quad t(e)\geq\|\vv\|_\infty)\nonumber\\&=-d\log G([\|\vv\|_\infty,M])
\end{align}
Finally, by taking first the limsup in $n$ and then letting $\ep$ goes to $0$ in the previous inequality, we obtain that
$$I(\vv)\leq -d\log G([\|\vv\|_\infty,M])\,.$$
This yields the proof.
\end{proof}
\section{Upper large deviations for the stream in a domain\label{sect:ULD}}
The aim of this section is to prove that the function $\widehat{I}$ (defined in \eqref{eq:defhatI}), build from the elementary rate function $I$, is the rate function corresponding to the probability that a stream $f_n\in\cS_n(\Gamma^1,\Gamma ^2,\Omega)$ is close to some continuous stream $\ssigma\in\Sigma(\Gamma^1,\Gamma ^2,\Omega)$. This is the purpose of theorem \ref{thm:ULDpatate}.

We will need to approximate $\ssigma$ by a regular vector field.
\subsection{Approximation by a regular stream }

Define the function $\eta\in \sC^\infty_c(\sR^d,\sR)$ by
\[
\eta(x)=\left\{\begin{array}{ll}c\exp\left(\frac{1}{\|x\|_2-1}\right)&\mbox{if $\|x\|_2<1$}\\
0&\mbox{if $\|x\|_2\geq 1$}\end{array}\right.
\]
where the constant $c>0$ is adjusted such that $\int_{\sR^d}\eta(x)dx=1$.
For any $n\geq 1$, we denote by $K_n$ the following function
\begin{align}\label{eq:defKn}
\forall x\in\sR^d\qquad K_n(x)=n^d\eta(nx)\,.
\end{align}
The sequence $(K_n)_{n\geq 1}$ is a sequence of mollifiers.
Note that since for any $x\in\Omega$ we have $I(\ssigma(x))\geq 0$, it follows that $\widehat{I}(\ssigma)=\|I(\ssigma))\|_{L^1}$ and $x\mapsto I(\ssigma(x))\in L^1(\sR^d\rightarrow \sR, \cL^d)$.
\begin{prop}\label{prop:continuityIhat}Let $\ssigma\in\Sigma(\Gamma^1,\Gamma ^2,\Omega)$ such that $\widehat{I}(\ssigma)<\infty$. 
 Let $(K_n)_{n\geq1}$ be the sequence as defined in \eqref{eq:defKn}. We have
\[
\lim_{n\rightarrow\infty} \widehat{I}(\ssigma * K_n)=\widehat{I}(\ssigma)
\]
where $*$ denotes the convolution operator.
\end{prop}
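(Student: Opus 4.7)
The strategy is to bound $\widehat{I}(\ssigma * K_n)$ from above using Jensen's inequality, and from below using Fatou's lemma together with the lower semi-continuity of $I$.

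I first observe that $I(0) = 0$: the zero stream $f_n \equiv 0$ belongs to $\cS_n(\fC)$ and satisfies $\dis(\amu_n(0), 0) = 0$, so the event in Theorem~\ref{thmbrique} occurs with probability one when $s = 0$. Armed with this and with convexity of $I$ on $\sR^d$ (Theorem~\ref{thm:convexity}), I apply Jensen's inequality to the probability measure $K_n(x - \cdot)\,d\cL^d$ on $\sR^d$, which yields, for every $x \in \sR^d$,
\[
I(\ssigma * K_n(x)) = I\!\left(\int_{\sR^d} \ssigma(y) K_n(x-y)\,d\cL^d(y)\right) \leq \int_{\sR^d} I(\ssigma(y)) K_n(x-y)\,d\cL^d(y).
\]
Since $\ssigma \equiv 0$ off $\Omega$ and $I(0) = 0$, the right-hand side reduces to $\int_\Omega I(\ssigma(y)) K_n(x-y)\,d\cL^d(y)$. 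Integrating over $\Omega$ and swapping the order of integration via Fubini gives
\[
\widehat{I}(\ssigma * K_n) \leq \int_\Omega I(\ssigma(y)) \left(\int_\Omega K_n(x - y)\, d\cL^d(x)\right) d\cL^d(y) \leq \widehat{I}(\ssigma),
\]
hence $\limsup_n \widehat{I}(\ssigma * K_n) \leq \widehat{I}(\ssigma)$.

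For the matching lower bound, I use that $\ssigma \in L^\infty(\sR^d \to \sR^d, \cL^d) \subset L^1_{\mathrm{loc}}(\sR^d)$, so the Lebesgue differentiation theorem implies $\ssigma * K_n(x) \to \ssigma(x)$ for $\cL^d$-a.e.\ $x$. Combined with the lower semi-continuity of $I$ at $\ssigma(x)$ (Proposition~\ref{prop:lscI}), this gives
\[
\liminf_{n \to \infty} I(\ssigma * K_n(x)) \geq I(\ssigma(x)) \qquad \text{for $\cL^d$-a.e.\ } x \in \Omega.
\]
Since $I \geq 0$, Fatou's lemma delivers $\liminf_n \widehat{I}(\ssigma * K_n) \geq \widehat{I}(\ssigma)$, and combining with the upper bound yields the claimed limit.

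The argument is essentially routine given the earlier work: the only mild subtlety is recording $I(0) = 0$ so that Jensen produces a clean bound once $\ssigma$ is extended by zero outside $\Omega$. No substantial obstacle remains, since the two nontrivial properties of $I$ needed here---convexity and lower semi-continuity---have both already been established.
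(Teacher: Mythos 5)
Your proof is correct and follows essentially the same route as the paper: Fatou's lemma plus lower semi-continuity of $I$ for the lower bound, and Jensen's inequality plus convexity of $I$ for the upper bound. The one point of genuine divergence is how the Jensen bound is closed out. After Jensen, the paper arrives at $\widehat{I}(\ssigma * K_n) \leq \int_\Omega (I(\ssigma) * K_n)(x)\,d\cL^d(x)$ and then argues $\|I(\ssigma)*K_n - I(\ssigma)\|_{L^1} \to 0$ using $x \mapsto I(\ssigma(x)) \in L^1$; you instead apply Fubini and the elementary bound $\int_\Omega K_n(x-y)\,d\cL^d(x) \leq 1$ to obtain the sharper, uniform-in-$n$ inequality $\widehat{I}(\ssigma * K_n) \leq \widehat{I}(\ssigma)$. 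Your variant is a bit more direct and bypasses the $L^1$-continuity-of-mollification argument. You also make explicit the fact that $I(0)=0$ (needed because $\ssigma$ is extended by zero off $\Omega$), which the paper relies on implicitly, e.g.\ when it writes $\widehat{I}(\overrightarrow{0}) = 0$ elsewhere. Both arguments are complete; yours is marginally cleaner.
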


\begin{proof}
Let $n\geq 1$.
Write $\ssigma_n=\ssigma * K_n$.
By classical properties (see for instance theorem 4.1. in \cite{EVGA}),
we have
\begin{align}\label{eq:limsigmap}
\forall p\geq 1\qquad\lim_{n\rightarrow\infty} \ssigma_n=\ssigma\qquad \text{in $L^p$}
\end{align}
and
\[\lim_{n\rightarrow\infty} \ssigma_n(x)=\ssigma(x)\qquad \text{for $\cL^d$-almost every $x$}\,.\]
Using Fatou lemma and the fact that $I$ is lower semi-continuous on $\sR^d$ (proposition \ref{prop:lscI}), we have
\begin{align}\label{eq:liminf}
\liminf_{n\rightarrow\infty}\widehat{I}(\ssigma_n)=\liminf_{n\rightarrow\infty}\int_{\Omega}I(\ssigma_n(x))d\cL^d(x)\geq \int_{\Omega}\liminf_{n\rightarrow\infty}I(\ssigma_n(x))d\cL^d(x) \geq \int_{\Omega}I(\ssigma(x))d\cL^d(x)= \widehat{I}(\ssigma)\,.
\end{align}
Besides, using the fact that $I$ is convex (theorem \ref{thm:convexity}) and Jensen's inequality, we have
\begin{align*}
\widehat{I}(\ssigma_n)&=\int_{\Omega}I\left(\int_{\sR^d}\ssigma(x-y)K_n(y)d\cL^d(y)\right)d\cL^d(x)\leq\int_{\Omega}\int_{\sR^d}I(\ssigma(x-y))K_n(y)d\cL^d(y)d\cL^d(x)\,.
\end{align*}
Hence, it yields
\begin{align}\label{eqcontlem22}
\widehat{I}(\ssigma_n)-\widehat{I}(\ssigma)&\leq\int_{\Omega}(I(\ssigma)\star K_n)(x)-I(\ssigma(x))d\cL^d(x)\nonumber\\
&\leq\int_{\Omega}|(I(\ssigma)*K_n)(x)-I(\ssigma(x))|d\cL^d(x)=\|I(\ssigma)* K_n-I(\ssigma)\|_{L^1}
\,.
\end{align}
Since $x\rightarrow I(\ssigma(x))\in L^1(\sR^d\rightarrow\sR,\cL^d)$, it follows that 
$$\lim_{n\rightarrow \infty}\|I(\ssigma)* K_n-I(\ssigma)\|_{L^1}=0$$ and by inequality \eqref{eqcontlem22}
\begin{align}\label{eq:limsup}
\limsup_{n\rightarrow \infty}\widehat{I}(\ssigma_n)\leq\widehat{I}(\ssigma)\,.
\end{align}
The result follows by combining inequalities \eqref{eq:liminf} and \eqref{eq:limsup}.
\end{proof}

\subsection{Proof of theorem \ref{thm:ULDpatate}}

We prove theorem \ref{thm:ULDpatate} in two steps. These two steps correspond to the two following propositions. 
\begin{prop}\label{prop:step1uld}Let $\ssigma\in\Sigma(\Gamma^1,\Gamma ^2,\Omega)$. We have
\[\lim_{\ep\rightarrow 0} \limsup_{n\rightarrow \infty}\frac{1}{n^d}\log \Prb\left(\exists f_n\in\cS_n(\Gamma^1,\Gamma^2,\Omega) : \,\dis\big(\amu_n(f_n),\ssigma\cL^d\big)\leq \ep\right)\leq-\int_{\Omega}I(\ssigma(x))d\cL ^d(x)\,.\]
\end{prop}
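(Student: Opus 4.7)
The proof follows the sketch in Section 1.3.2: tile $\Omega$ by small cubes on which $\ssigma$ is nearly constant, factorize the probability via independence of the capacities, apply Theorem \ref{thmbrique} on each cube, and recognize the resulting sum as a Riemann approximation of $\widehat{I}(\ssigma)$. One may assume $\widehat{I}(\ssigma)<\infty$, otherwise the bound follows by truncation together with Proposition \ref{propadmissiblestream}. Fix $\eta>0$; the goal is to show the $\limsup$ is $\leq -\widehat{I}(\ssigma)+\eta$.

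\emph{Smoothing step.} I need $I\circ\ssigma$ to behave like a continuous function so that its Riemann sums converge to its integral. First rescale $\ssigma$ by $(1-\kappa)$: Proposition \ref{prop:controleI} gives $I(0)=0$, so convexity (Theorem \ref{thm:convexity}) yields $\widehat{I}((1-\kappa)\ssigma)\leq (1-\kappa)\widehat{I}(\ssigma)$, and combined with the lower semi-continuity of $I$ (Proposition \ref{prop:lscI}), $\widehat{I}((1-\kappa)\ssigma)\to\widehat{I}(\ssigma)$ as $\kappa\to 0$. Then mollify with the kernel $K_p$ of \eqref{eq:defKn} to obtain $\ssigma_{\kappa,p}$ smooth on $\sR^d$, whose image lies in $\{\vv:\|\vv\|_\infty\leq(1-\kappa)M\}\subset\mathring{\mathcal{D}}_I$ (using Proposition \ref{prop:controleI} and the convexity of $\mathcal{D}_I$). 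By Proposition \ref{prop:continuityI}, $I$ is continuous on this compact set; by Proposition \ref{prop:continuityIhat}, $\widehat{I}(\ssigma_{\kappa,p})\to\widehat{I}((1-\kappa)\ssigma)$ as $p\to\infty$.

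\emph{Tiling and factorization.} Fix $\delta>0$ and partition a subset of $\Omega$ of measure $\cL^d(\Omega)-O(\delta)$ by disjoint cubes $(Q_i=x_i+\delta\fC)_{i\in I_\delta}$ of side $\delta$, setting $\vv_i:=\ssigma_{\kappa,p}(x_i)$. If $f_n\in\cS_n(\Gamma^1,\Gamma^2,\Omega)$ satisfies $\dis(\amu_n(f_n),\ssigma\cL^d)\leq\ep$, its restriction to each $Q_i$ belongs to $\cS_n(Q_i)$ by the very definition of the latter. Lemma \ref{lem:propdis3} applied with $\rho=\sqrt{\ep}$, together with the triangle inequality and Lemma \ref{lem:propdis2}, yields
\[
\dis(\amu_n(f_n)\ind_{Q_i},\,\vv_i\ind_{Q_i}\cL^d)\;\leq\;\beta_1\sqrt{\ep}+\beta_2\sqrt{\ep}\,\delta^{d-1}+2\|\ssigma-\vv_i\|_{L^1(Q_i)}=:\ep',
\]
which can be made arbitrarily small uniformly in $i$ by sending $\ep\to 0$ then $\delta\to 0$ (using uniform continuity of $\ssigma_{\kappa,p}$ and the $L^1$-closeness of $\ssigma$ and $\ssigma_{\kappa,p}$). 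By the remark at the end of Section 1.1.1, $\cS_n(Q_i)$ depends only on capacities of edges with left endpoint in $Q_i$, so the events $\cE_i(\ep'):=\{\exists g_n\in\cS_n(Q_i):\dis(\amu_n(g_n)\ind_{Q_i},\vv_i\ind_{Q_i}\cL^d)\leq\ep'\}$ are independent, whence
\[
\Prb\!\left(\exists f_n:\dis(\amu_n(f_n),\ssigma\cL^d)\leq\ep\right)\;\leq\;\prod_{i\in I_\delta}\Prb(\cE_i(\ep')).
\]

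\emph{Riemann sum and conclusion.} The scaling Lemma \ref{lem:scaling1} applied with $\pi_{x_i,\delta}$ reduces $\Prb(\cE_i(\ep'))$ to a probability on $\fC$ at scale $n\delta$, and Theorem \ref{thmbrique} then gives $\limsup_n\frac{1}{n^d}\log\Prb(\cE_i(\ep'))\leq -\delta^d\,r(\vv_i,\ep')$ with $r(\vv_i,\ep')\to I(\vv_i)$ as $\ep'\to 0$. Summing and letting $\ep\to 0$, $\delta\to 0$, $p\to\infty$, $\kappa\to 0$ in that order, the continuity of $I$ on the image of $\ssigma_{\kappa,p}$ gives Riemann-sum convergence $\sum_i\delta^d I(\vv_i)\to\widehat{I}(\ssigma_{\kappa,p})\to\widehat{I}(\ssigma)$. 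Boundary cubes (those intersecting $\cV_\infty(\partial\Omega,\delta)$) are handled by the trivial bound $\Prb\leq 1$, contributing $o(1)$ to the Riemann sum since $\cL^d(\cV_\infty(\partial\Omega,\delta))=O(\delta)$. The main obstacle is precisely this Riemann-sum convergence: without the preliminary $(1-\kappa)$-rescaling, $\ssigma_{\kappa,p}$ could take values on $\partial\mathcal{D}_I$ where $I$ is only lower semi-continuous, so that the limit of $\sum_i\delta^d I(\vv_i)$ is only guaranteed to dominate $\widehat{I}(\ssigma_{\kappa,p})$ from one side — the wrong direction for our upper bound.
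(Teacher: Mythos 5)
The proposal diverges from the paper's strategy in one key respect — the smoothing step — and this is where a genuine gap appears. The paper works directly with $\ssigma$, using the Lebesgue differentiation theorem to show that the per-cube constant $\ssigma(x)$ is a good $L^1$-approximation of $\ssigma$ on the cube $B_i(x)$ for $\cL^d$-a.e.\ $x$, and then transferring the two successive $\liminf$'s (over $\ep$ and over $n$) inside the integral via two applications of Fatou's lemma. This avoids any uniformity assumption on how fast the rate function in Theorem \ref{thmbrique} converges as its tolerance shrinks.

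Your approach tries to sidestep the lack of continuity of $I\circ\ssigma$ by replacing $\ssigma$ with a mollified $(1-\kappa)$-rescaled version $\ssigma_{\kappa,p}$ and then letting Riemann sums do the work. The smoothing bookkeeping ($I(0)=0$, convexity, $\widehat{I}((1-\kappa)\ssigma)\uparrow\widehat{I}(\ssigma)$, Proposition \ref{prop:continuityIhat}) is fine, and the observation that the image of $\ssigma_{\kappa,p}$ sits in a compact subset of $\mathring{\mathcal{D}}_I$ where $I$ is uniformly continuous is sound. But the conditioning event controls $\dis(\amu_n(f_n),\ssigma\cL^d)$, not $\dis(\amu_n(f_n),\ssigma_{\kappa,p}\cL^d)$. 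Your per-cube tolerance
\[
\ep'_i=\beta_1\sqrt{\ep}+\beta_2\sqrt{\ep}\,\delta^{d-1}+2\|\ssigma-\vv_i\|_{L^1(Q_i)},\qquad \vv_i=\ssigma_{\kappa,p}(x_i),
\]
contains the term $\|\ssigma-\ssigma_{\kappa,p}\|_{L^1(Q_i)}$. After the rescaling by $\pi_{x_i,\delta}$ (and here note that you invoked Lemma \ref{lem:scaling1}, which goes in the wrong direction; Lemma \ref{lem:scaling2} is the one adapted to the upper bound), what feeds into Theorem \ref{thmbrique} at unit scale is $\ep'_i$ divided by $\delta^{d+1}$. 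Sending $\ep\to 0$ with $\delta,\kappa,p$ fixed kills the first two terms, but the rescaled third term converges, for $\cL^d$-a.e.\ $x$, to a positive quantity proportional to $\|\ssigma(x)-\ssigma_{\kappa,p}(x)\|_2$ — it does not vanish. Since Theorem \ref{thmbrique} only recovers $I(\vv_i)$ in the limit where the distance tolerance tends to $0$, the claimed bound $\limsup_n\frac{1}{n^d}\log\Prb(\cE_i(\ep'_i))\leq -\delta^d I(\vv_i)+o(1)$ fails on a set of cubes of non-negligible total volume. The order of limits you propose ($\ep\to 0$, then $\delta\to 0$, then $p\to\infty$, $\kappa\to 0$) is incompatible: sending $p\to\infty$ before $\delta\to 0$ would cure the error term but destroys the Riemann-sum argument, because the modulus of continuity of $\ssigma_{\kappa,p}$ degenerates and the sum has infinitely many terms.

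In short, pre-smoothing lets you make $I$ continuous on the image, but decouples the conditioning event (still expressed via $\ssigma$) from the target values $\vv_i$ (expressed via $\ssigma_{\kappa,p}$), and that mismatch is exactly of the order that breaks the per-cube application of Theorem \ref{thmbrique}. The paper's mechanism — take $\vv_i$ to be $\ssigma(x)$ itself at a Lebesgue point $x$, where the per-cube $L^1$-error does vanish a.e., and integrate the resulting pointwise a.e.\ bound by Fatou — is not a cosmetic alternative but the device that makes the argument close. Your conclusion that without rescaling, lower semi-continuity gives the "wrong-direction" Riemann limit, is correct; but the fix is Fatou plus Lebesgue differentiation, not mollification.
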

\begin{prop}\label{prop:step2uld}
Let $\ssigma\in\Sigma(\Gamma^1,\Gamma ^2,\Omega)\cap \Sigma^M(\Gamma^1,\Gamma ^2,\Omega)$. We have
\[\lim_{\ep\rightarrow 0} \liminf_{n\rightarrow \infty}\frac{1}{n^d}\log \Prb\left(\exists f_n\in\cS_n(\Gamma^1,\Gamma^2,\Omega) : \,\dis\big(\amu_n(f_n),\ssigma\cL^d\big)\leq \ep\right)\geq-\int_{\Omega}I(\ssigma(x))d\cL ^d(x)\,.\]
\end{prop}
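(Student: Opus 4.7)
The plan is to adapt the construction scheme used in the proof of Theorem \ref{thmbrique} and Lemma \ref{lem:preconv}, but now assembling local streams over all of $\Omega$ rather than over a single cube. First I would reduce to the case where $\ssigma$ is smooth. By Proposition \ref{prop:continuityIhat} applied to $\ssigma_\delta = \ssigma * K_{1/\delta}$, together with the continuity of $I$ on $\mathring{\mathcal{D}}_I$ (Proposition \ref{prop:continuityI}) and the control $I(\vv) \le -d\log G([\|\vv\|_\infty,M])$ (Proposition \ref{prop:controleI}), it suffices to treat smooth $\ssigma$ whose values stay away from $\partial \mathcal{D}_I$; the regularization changes $\widehat{I}(\ssigma)$ by an arbitrarily small amount, and the $L^1$ approximation can be transferred to an approximation in $\dis$ via Lemma \ref{lem:propdis2}.

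Second, fix a mesoscopic scale $\kappa > 0$. Tile the interior $\Omega_\kappa := \{x \in \Omega : d(x,\Gamma) > \kappa^{1/2}\}$ by a disjoint family $(C_j)_{j \in J}$ of axis-parallel cubes of side $\kappa$, leaving a thin corridor between adjacent cubes. Since $\ssigma$ is smooth, the oscillation of $\ssigma$ over each $C_j$ is $O(\kappa)$, so I can replace $\ssigma$ by its value $\ssigma_j = \ssigma(x_{C_j})$ at the center of $C_j$. For each $j$ I apply Lemma \ref{lem:preconv} inside $C_j$: the event $\cE_j$ that there exists $f_n^{(j)} \in \widehat{\cS}_n(C_j, \ssigma_j, g_1(\ep))$ with $\dis(\amu_n(f_n^{(j)}), \ssigma_j \ind_{C_j}\cL^d) \le g_0(\ep)\cL^d(C_j)$ satisfies
\[
\liminf_{n\to\infty} \frac{1}{n^d}\log \Prb(\cE_j) \ge -\cL^d(C_j) I(\ssigma_j) - o_\ep(1).
\]
The events $\cE_j$ are independent (they depend on disjoint edge sets), together with a corridor event $\cF$ demanding that edges of the corridors have capacity close to $M$, which by the proof of Theorem \ref{thmbrique} contributes at most $o(n^d)$ to the log-probability. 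Multiplying and taking Riemann sums, the product bound becomes
\[
-\sum_{j \in J} \cL^d(C_j) I(\ssigma_j) \xrightarrow[\kappa \to 0]{} -\int_\Omega I(\ssigma(x))\,d\cL^d(x).
\]
Then I glue all the local streams $(f_n^{(j)})$ and a discretized version of $\ssigma$ in the corridor, exactly as in the proof of Lemma \ref{lem:preconv}: the boundary values of $f_n^{(j)}$ near $\partial C_j$ already coincide with the discretization of $\ssigma_j \approx \ssigma$ (up to a factor $g_1(\ep)$ absorbed by a global contraction), so only a thin sleeve of mixing edges (handled by Lemmas \ref{lem:mixing}, \ref{lem:mixprecis}) is required to reconcile adjacent faces; this produces a single stream $\widetilde f_n$ defined on all of $\cup_j C_j$ plus the corridor, close to $\ssigma \cL^d$ in $\dis$.

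The hard part, and the place where the hypothesis $\ssigma \in \Sigma^M(\Gamma^1,\Gamma^2,\Omega)$ is needed, is to convert $\widetilde f_n$ into an admissible stream $\overline f_n \in \cS_n(\Gamma^1,\Gamma^2,\Omega)$. There are two obstructions: edges of $\widetilde f_n$ located in the thin layer $\Omega \setminus \Omega_\kappa$ near $\Gamma$ are not defined, and the discretized flux of $\widetilde f_n$ may leave $\Omega$ through $\Gamma \setminus (\Gamma^1 \cup \Gamma^2)$, where $\ssigma \cdot \vec n_\Omega = 0$ only in an $L^\infty$-weak sense. To handle this, I would use the definition of $\Sigma^M$: there is an approximating sequence of admissible discrete streams $f_{\psi(m)} \in \cS_{\psi(m)}^M(\Gamma^1,\Gamma^2,\Omega)$ converging to $\ssigma \cL^d$ in $\dis$. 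Rescaling and fitting one such $f_{\psi(m)}$ inside the boundary layer $\Omega \setminus \Omega_\kappa$, I splice it with $\widetilde f_n$ inside $\Omega_\kappa$, relying on Lemma \ref{lem:mixing} to reconnect the two streams along the interface $\partial \Omega_\kappa$ (where, for smooth $\ssigma$, the fluxes match up to an error tending to $0$ with $\kappa$). The resulting stream has no flow crossing $\Gamma\setminus(\Gamma^1\cup\Gamma^2)$. Finally, any residual mismatch of flux inside $\Omega_\kappa$ is eliminated by decomposing $\widetilde f_n$ via Lemma \ref{lem:res} and subtracting the paths whose endpoints lie on $\Gamma \setminus (\Gamma^1 \cup \Gamma^2)$; since $\ssigma \cdot \vec n_\Omega = 0$ on this set and $\ssigma$ is smooth, the total mass of these paths is $O(\kappa^{1/2})$ and Lemmas \ref{lem:propdis2} and \ref{lem:propdis4} ensure the $\dis$-distance is perturbed by at most $o_\kappa(1)$. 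Taking successively $n \to \infty$, $\ep \to 0$, $\kappa \to 0$, and finally $\delta \to 0$ in the mollification yields the claimed bound.
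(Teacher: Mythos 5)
Your high-level plan matches the paper's: mollify, tile $\Omega$ into small cubes, build local constant streams inside cubes, glue with corridors, and then fix up the boundary. But there is a genuine gap at the very first step that propagates through the whole argument. You mollify $\ssigma$ directly, working with $\ssigma_\delta = \ssigma * K_{1/\delta}$. This $\ssigma_\delta$ is \emph{not} an element of $\Sigma(\Gamma^1,\Gamma^2,\Omega)$: its divergence is nonzero on a neighborhood of $\Gamma^1\cup\Gamma^2$ (mollification smears the boundary flux into a volume source/sink), and its trace $\ssigma_\delta\cdot\overrightarrow{n}_\Omega$ is no longer zero on $\Gamma\setminus(\Gamma^1\cup\Gamma^2)$. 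Consequently the discretization $\ssigma_\delta^{disc}$ that you use to fill the region between the cubes and $\Gamma$ does \emph{not} satisfy the discrete node law at vertices near the boundary. This is exactly the obstruction the paper isolates, and it is the entire reason for Proposition~\ref{prop:prolsigma}: one must first prolongate $\ssigma$ to a stream $\ssigma'$ on an enlarged domain $\widetilde\Omega$ in which the sources and sinks $\widetilde\Gamma^1,\widetilde\Gamma^2$ have been pushed a distance $\rho$ off $\Gamma$, and only then mollify with a kernel of radius $<\rho/4$. That guarantees $\diver\ssigma_p = 0$ on all of $\cV_2(\Omega,\rho/2)$ and $\ssigma_p\cdot\overrightarrow{n}_{\widetilde\Omega}=0$ on $\partial\widetilde\Omega$ outside $\cV_2(\widetilde\Gamma^1\cup\widetilde\Gamma^2,\rho/2)$, which is precisely what makes the discretized stream satisfy the node law at every interior vertex and at every boundary vertex outside $\Gamma_n^1\cup\Gamma_n^2$. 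Your proposal cannot get this for free, and Propositions~\ref{prop:continuityIhat}, \ref{prop:continuityI}, \ref{prop:controleI} only control the value of $\widehat I$, not the admissibility of the regularized stream.

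Your proposed patch --- splicing in a stream $f_{\psi(m)}$ coming from the definition of $\Sigma^M$ to fill the boundary layer --- does not close the gap and raises new problems. (i) $f_{\psi(m)}$ lives on $\sZ^d_{\psi(m)}$ for some fixed $m$; it cannot be ``rescaled'' to the lattice $\sZ^d_n$ for general $n$ without destroying the node law and the capacity constraint. (ii) Splicing along $\partial\Omega_\kappa$ requires \emph{exact} equality of fluxes through matching mesoscopic faces, not equality ``up to an error tending to $0$'' --- otherwise you need yet another mixing layer, and you cannot mix there because the boundary layer is thin (width $\kappa^{1/2}$) while the mixing lemma requires a corridor of length $\sim c_d\kappa$ \emph{per mesoscopic face}, the number of such faces per unit boundary area blowing up as $\kappa^{-(d-1)}$. (iii) Subtracting paths with an endpoint on $\Gamma\setminus(\Gamma^1\cup\Gamma^2)$ via Lemma~\ref{lem:res} requires that the stream you are decomposing already satisfy the node law at all interior vertices, which, as explained above, fails for your construction near $\Gamma^1\cup\Gamma^2$. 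Moreover the claim that the mass of those paths is $O(\kappa^{1/2})$ is unjustified: a path carrying mass $p(\ogam)$ contributes to $\amu_n$ proportionally to its \emph{length}, which can be of order $1$, not $\kappa^{1/2}$. The paper sidesteps all of this by ensuring through the prolongation that nothing needs to be subtracted near $\Gamma\setminus(\Gamma^1\cup\Gamma^2)$ in the first place, and by performing the interior mixing with Lemma~\ref{lem:mixprecis}, which produces a corridor-free correction with controlled edge values.
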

The result of theorem \ref{thm:ULDpatate} follows immediately from propositions \ref{prop:step1uld} and \ref{prop:step2uld}. To prove proposition \ref{prop:step1uld}, on the event that there exists a stream $f_n\in\cS_n(\Gamma ^1,\Gamma^2,\Omega)$ such that $\dis(\amu_n(f_n),\ssigma\cL^d)\leq \ep$, we pick such a stream $f_n$. We can divide $\Omega$ into a collection of small cubes $(B_i)_{i\in J}$. Thanks to the choice of the distance, the restriction of $f_n$ to these cubes is close to the restriction of $\ssigma$ to these cubes, \textit{i.e.}, the quantity $\dis(\amu_n(f_n)\ind_{B_i},\ssigma\ind_{B_i}\cL^d)$ is small. By independence, we will be able to upper-bound the probability that there exists a stream $f_n\in\cS_n(\Gamma ^1,\Gamma^2,\Omega)$ such that $\dis(\amu_n(f_n),\ssigma\cL^d)\leq \ep$ by a product of probabilities of more elementary events related to the elementary rate function $I$ we have defined in theorem \ref{thmbrique}.
To prove proposition \ref{prop:step2uld}, we do the reverse. Namely, starting with a collection of elementary events,  we try to reconstruct the event that there exists a stream $f_n\in\cS_n(\Gamma ^1,\Gamma^2,\Omega)$ such that $\dis(\amu_n(f_n),\ssigma\cL^d)\leq \ep$. The proof of this proposition is much more difficult and technical than the proof of proposition \ref{prop:step1uld}.
\subsubsection{Proof of proposition \ref{prop:step1uld}}
To prove proposition \ref{prop:step1uld}, we will need the following lemma that enables to compare the probability of an event in a dilation of $\fC$ with the rescaled version of this event in $\fC$.
\begin{lem}[Scaling and Translation 2]\label{lem:scaling2} Let $\ssigma\in\Sigma(\Gamma^1,\Gamma ^2,\Omega)$. Let $n\geq 1$ and $x\in \sZ_n^d$. Let $n_0\leq n$. The application $\pi_{x,n_0/n}$ defines a bijection from $\E_{n_0}^d$ to $\E_{n}^d$ (we recall that $\pi_{x,n_0/n}$ was defined in \eqref{eq:defpixalpha}) and
\begin{align*}
\Prb&\left( \exists f_n\in\cS_n(\pi_{x,n_0/n}(\fC)) : \,\dis\big(\amu_n(f_n),\ssigma\ind_{\pi_{x,n_0/n}(\fC)}\cL^d\big)\leq \frac{1}{2}\frac{n_0^{d+1}}{n^{d+1}}\ep\right)\\
&\hspace{2cm}\leq \Prb\left(\exists f_{n_0}\in\cS_{n_0}(\fC) : \,\dis\big(\amu_{n_0}(f_{n_0}),\ssigma\circ\pi_{x,n_0/n} \ind_{\fC}\cL^d\big)\leq \ep\right)\,.
\end{align*}
\end{lem}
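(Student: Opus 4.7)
The plan is to mimic the strategy of Lemma \ref{lem:scaling1}, but in the reverse direction: instead of embedding a small configuration into a larger one, I will pull back a configuration on the small cube $\pi_{x,n_0/n}(\fC)$ to one on the reference cube $\fC$. Throughout, write $\delta=n_0/n\leq 1$. First, I note that since $x\in\sZ_n^d$ and, for any $y=z/n_0\in\sZ_{n_0}^d$, we have $\pi_{x,\delta}(y)=z/n+x\in\sZ_n^d$, the map $\pi_{x,\delta}$ restricts to a bijection of lattices $\sZ_{n_0}^d\to\sZ_n^d$; edges of Euclidean length $1/n_0$ are sent to edges of length $\delta/n_0=1/n$, so it induces a bijection $\E_{n_0}^d\to\E_n^d$. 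Given a realization of capacities $(t(e))_{e\in\E_n^d}$, I define pulled-back capacities $t'(e):=t(\pi_{x,\delta}(e))$ for $e\in\E_{n_0}^d$; since $(t(e))$ is i.i.d.\ with law $G$, the family $(t'(e))_{e\in\E_{n_0}^d}$ has the same joint law as an independent copy of the original capacities on $\E_{n_0}^d$.

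Now, on the event $\{\exists f_n\in\cS_n(\pi_{x,\delta}(\fC)):\,\dis(\amu_n(f_n),\ssigma\ind_{\pi_{x,\delta}(\fC)}\cL^d)\leq\tfrac12\delta^{d+1}\ep\}$, I pick such an $f_n$ according to a deterministic rule and set $f_{n_0}:=f_n\circ\pi_{x,\delta}$. The node law and the capacity constraint for $f_{n_0}$ w.r.t.\ $(t'(e))$ follow immediately from those of $f_n$ w.r.t.\ $(t(e))$ via the bijection, so $f_{n_0}\in\cS_{n_0}(\fC)$ for the coupled capacities. The core computation is to control the distance between $\amu_{n_0}(f_{n_0})$ and $(\ssigma\circ\pi_{x,\delta})\ind_\fC\cL^d$. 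A direct change of variables $w=\pi_{x,\delta}(z)$ yields, for any Borel set $B\subset\fC$,
\[
\|\amu_{n_0}(f_{n_0})(B)-(\ssigma\circ\pi_{x,\delta})\ind_\fC\cL^d(B)\|_2
=\delta^{-d}\,\|\amu_n(f_n)(\pi_{x,\delta}(B))-\ssigma\ind_{\pi_{x,\delta}(\fC)}\cL^d(\pi_{x,\delta}(B))\|_2 .
\]

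It remains to sum this over dyadic cubes, exactly as in the proof of Lemma \ref{lem:scaling1}, but with the roles of the two scales swapped. For $\lambda\in[1,2]$ and $y\in[-1,1[^d$, choose the smallest $j\geq 0$ such that $\lambda\delta\, 2^j\in[1,2]$ (such $j$ exists since $\delta\leq 1$), set $\lambda':=\lambda\delta\,2^j\in[1,2]$, and let $z\in[-1,1[^d$ be the unique reduction of $\delta y+x$ modulo $\lambda'\sZ^d$. Then the image $\pi_{x,\delta}(Q+y)$ of any $Q\in\Delta^k_\lambda$ lies in $\Delta^{k+j}_{\lambda'}+z$, and summing the preceding identity against the weights $2^{-k}$ gives, as in Lemma \ref{lem:scaling1},
\[
\sum_{k=0}^\infty\frac{1}{2^k}\sum_{Q\in\Delta^k_\lambda}\|\amu_{n_0}(f_{n_0})(Q+y)-(\ssigma\circ\pi_{x,\delta})\ind_\fC\cL^d(Q+y)\|_2
\leq \delta^{-d}\,2^{j}\,\dis\big(\amu_n(f_n),\ssigma\ind_{\pi_{x,\delta}(\fC)}\cL^d\big).
\]
Since $2^j\leq 2/(\lambda\delta)\leq 2/\delta$, taking the supremum over $(\lambda,y)$ yields $\dis(\amu_{n_0}(f_{n_0}),(\ssigma\circ\pi_{x,\delta})\ind_\fC\cL^d)\leq 2\delta^{-(d+1)}\dis(\amu_n(f_n),\ssigma\ind_{\pi_{x,\delta}(\fC)}\cL^d)\leq \ep$ by our hypothesis. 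Consequently, the LHS event is contained in the event that there exists $f_{n_0}\in\cS_{n_0}(\fC)$ (for the coupled capacities $(t'(e))$) with $\dis(\amu_{n_0}(f_{n_0}),(\ssigma\circ\pi_{x,\delta})\ind_\fC\cL^d)\leq \ep$, and taking probabilities together with the distributional identity of $(t'(e))$ gives the desired inequality. The only mildly subtle point is the dyadic bookkeeping in the second paragraph; everything else is a direct translation of Lemma \ref{lem:scaling1} to the opposite direction.
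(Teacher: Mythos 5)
Your proof is correct and follows essentially the same route as the paper: pull the stream back through $\pi_{x,\delta}^{-1}$, identify the pulled-back capacities in distribution, apply the change-of-variables identity $\amu_{n_0}(f_{n_0})(B)=\delta^{-d}\amu_n(f_n)(\pi_{x,\delta}(B))$ and similarly for $\ssigma$, and then control the dyadic sum by matching scales via the smallest $j$ with $\lambda\delta\,2^j\in[1,2]$, which yields the factor $2\delta^{-(d+1)}$. The only cosmetic difference is that you make the coupling of capacities $t'(e)=t(\pi_{x,\delta}(e))$ explicit, whereas the paper phrases the same step as working on the configuration $\omega\circ\pi_{x,\delta}$.
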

We postpone the proof of lemma \ref{lem:scaling2} after the proof of proposition \ref{prop:step1uld}.
\begin{proof}[Proof of proposition \ref{prop:step1uld}]Let $\ssigma\in\Sigma(\Gamma^1,\Gamma ^2,\Omega)$. Write $\nu=\ssigma\cL^d$. Let $\ep\in[0,1]$.

 On the event $\{\exists f_n\in\cS_n(\Gamma^1,\Gamma^2,\Omega) : \,\dis\big(\amu_n(f_n),\nu\big)\leq \ep\}$, we pick $f_n\in\cS_n(\Gamma^1,\Gamma^2,\Omega)$ such that $\dis\big(\amu_n(f_n),\nu\big)\leq \ep$. If there are several choices, we pick one according to a deterministic rule.

\noindent{\bf Step 1. Dividing $\Omega$ into a collection of small cubes.}
Let $i=i(\ep)$ be the integer such that 
$$2^i \leq \ep ^{-\frac{1}{d+5}}<2^{i+1}\,.$$ 
We set $$\lambda_i=\frac{\lceil n2^{-i}\rceil}{n2^{-i}}\,.$$
We have $\lambda_i n 2^{-i}\in\sN$ and for $n$ large enough $\lambda_i\in [1,2]$.
Let $z\in\sZ^d$ and $B=2^{-i}\lambda_i(\fC+ z)$. Using lemma \ref{lem:propdis3} with $\delta=\lambda_i2^{-i}$ and $\rho=\ep_\fC\delta ^3$, we have
\[\dis(\amu_n \ind_B,\ssigma\ind_B\cL^d)\leq \beta_1 \frac{\ep}{\rho}+\beta_2\rho\delta ^{d-1}=\beta_1\frac{\ep}{\ep_\fC\delta^3}+\beta_ 2\ep_\fC\delta^{d+2}\leq \frac{\beta_ 1}{\ep_\fC}\delta^{d+2}+\beta_ 2\ep_\fC\delta^{d+2}\,.\]
There exists a constant $\beta_0$ depending on $d$, $\beta_1$ and $\beta_2$ such that
$$\dis(\amu_n \ind_B,\ssigma\ind_B\cL^d)\leq \beta_0\delta^{d+2}\,.$$
Using the independence, we have
\begin{align*}
&\Prb\left(\exists f_n\in\cS_n(\Gamma^1,\Gamma^2,\Omega) : \,\dis\big(\amu_n(f_n),\ssigma\cL^d\big)\leq \ep\right)\\&\hspace{4cm}\leq \prod_{B\in\Delta^i_{\lambda_i}: B \subset \Omega}\Prb\left(\exists f_n\in\cS_n(B) : \,\dis\big(\amu_n(f_n)\ind_B,\ssigma\ind_B\cL^d\big)\leq  \beta_0\delta^{d+2}\right)\,.
\end{align*}
 Let $\eta_0>0$. We consider a small enough $\ep$ such that $2d\lambda_i2^{-i}\leq \eta_0$.
Hence, it yields
\begin{align}\label{eq:ind}
-&\frac{1}{n^d}\log\Prb\left(\exists f_n\in\cS_n(\Gamma^1,\Gamma^2,\Omega) : \,\dis\big(\amu_n(f_n),\ssigma\cL^d\big)\leq \ep\right)\nonumber\\
&\geq \sum_{B\in\Delta^i_ {\lambda_i}: B\subset \Omega}-\frac{1}{n^d}\log\Prb\left(\exists f_n\in\cS_n(B) : \,\dis\big(\amu_n(f_n)\ind_B,\ssigma\ind_B\cL^d\big)\leq  \beta_0 \delta^{d+2}\right)\nonumber\\
&\geq \int_{\Omega\setminus \cV_2(\partial \Omega,\eta_0)}-\frac{1}{n^d\cL^d(B_i(x))}\log\Prb\left(\exists f_n\in\cS_n(B_i(x)) : \,\dis\big(\amu_n(f_n)\ind_{B_i(x)},\ssigma\ind_{B_i(x)}\cL^d\big)\leq  \beta_0\delta^{d+2}\right)d\cL^d(x)
\end{align}
where $B_i(x)$ corresponds to the unique $B\in\Delta^i_{\lambda_i}$ such that $x\in B$. Of course, $B_i(x)$ depends on $\ep$ and $x$.  Using Fatou lemma twice and inequality \eqref{eq:ind}, we obtain 
\begin{align}\label{eq:fatou1}
&\liminf_{\ep\rightarrow 0}\liminf_{n\rightarrow\infty}-\frac{1}{n^d}\log\Prb\left(\exists f_n\in\cS_n(\Gamma^1,\Gamma^2,\Omega) : \,\dis\big(\amu_n(f_n),\ssigma\cL^d\big)\leq \ep\right)\nonumber\\
&\quad\geq \int_{\Omega\setminus \cV_2(\partial \Omega,\eta_0)}\liminf_{\ep\rightarrow 0}\liminf_{n\rightarrow\infty}-\frac{1}{n^d\cL^d(B_i(x))}\log\Prb\left(\begin{array}{c}\exists f_n\in\cS_n(B_i(x)) :\\ \,\dis\big(\amu_n(f_n)\ind_{B_i(x)},\ssigma\ind_{B_i(x)}\cL^d\big)\leq  \beta_0 \delta^{d+2}\end{array}\right)d\cL^d(x)\,.
\end{align}

\noindent{\bf Step 2.}
We now prove that for $\cL^d$-almost all $x\in\Omega$, we have
\begin{align*}
\liminf_{\ep\rightarrow 0}\liminf_{n\rightarrow\infty}-\frac{1}{n^d\cL^d(B_i(x))}\log\Prb\left(\exists f_n\in\cS_n(B_i(x)) : \,\dis\big(\amu_n(f_n)\ind_{B_i(x)},\ssigma\ind_{B_i(x)}\cL^d\big)\leq \beta_0 \delta^{d+2}\right)\\\hfill\geq I(\ssigma(x))\,.
\end{align*}
Let $x\in\Omega\setminus \cV_2(\partial \Omega,\eta_0)$. There exists a unique $w\in\sZ^d$ such that $B_i(x)=\lambda_i2^{-i}(\fC+w)$. Since $\lambda_i2^{-i}n \in\sN$, it follows that $\lambda_i2^{-i}w\in\sZ_n^d$, we will write $c(x)$ for $\lambda_i2^{-i}w$. Let $n_0=\lambda_i2^{-i}n$ and so $\delta=n_0/n$. We recall that $\delta \leq 4\ep ^{1/(d+5)}$. By lemma \ref{lem:scaling2}, we have
\begin{align}\label{eqnwn}
\Prb&\left(\exists f_n\in\cS_n(B_i(x)) : \,\dis\big(\amu_n(f_n)\ind_{B_i(x)},\ssigma\ind_{B_i(x)}\cL^d\big)\leq \beta_0\delta^{d+2} \right)\nonumber\\
&\qquad\leq\Prb\left(\exists f_n\in\cS_n(B_i(x)) : \,\dis\big(\amu_n(f_n)\ind_{B_i(x)},\ssigma\ind_{B_i(x)}\cL^d\big)\leq 4\beta_0\ep ^{1/(d+5)}\left(\frac{n_0}{n}\right)^{d+1} \right)\nonumber\\
&\qquad\leq \Prb\left(\exists f_{n_0}\in\cS_{n_0}(\fC) : \,\dis\big(\amu_{n_0}(f_{n_0}),\ssigma\circ\pi_{c(x),\delta} \ind_{\fC}\cL^d\big)\leq 8\beta_0\ep ^{1/(d+5)}\right).
\end{align}
By a change of variable, we get
\begin{align}\label{eqsigma}
\int_{\fC}\|\ssigma\circ\pi_{c(x),\delta}(y)-\ssigma(x)\|_2d\cL^d(y)=\frac{n^d}{n_0 ^d}\int_{B_i(x)}\|\ssigma(y)-\ssigma(x)\|_2d\cL^d(y)\,.
\end{align}
 According to Definition 7.9 in \cite{Rudinrc}, the set $B_i(x)$ shrinks nicely to $x$ as $\ep$ goes to $0$.  Indeed, we have $B_i(x)\subset B(x,d\delta)$ and \[\cL^d(B_i(x))=\delta ^d= \frac{1}{\alpha_d d^d}\alpha_d d^d\delta^d=\frac{1}{\alpha_d d^d}\cL ^d(B(x,d\delta))\,\]
where $\alpha_d$ is the volume of the $d$-dimensional unit Euclidean ball. Moreover when $\ep$ goes to $0$, $\delta$ goes to $0$.
Since $\ssigma\in L^1(\sR^d\rightarrow\sR^d,\cL^d)$, using Lebesgue differentiation theorem on $\sR ^d$, there exists a subset $\widehat{\Omega}$ of $\Omega$ such that $\cL^d(\Omega\setminus \widehat{\Omega})=0$ and
\begin{align}\label{eq:lebedifthm}
\forall y\in \widehat{\Omega}\qquad\lim_{\ep \rightarrow 0}\frac{1}{\cL^d(B_i(y))}\int_{B_i(y)}\|\ssigma(w)-\ssigma(y)\|_2 d\cL ^d(w)=0\,.
\end{align}
Using equality \eqref{eqsigma}, it follows that
\begin{align*}
\|\ssigma\circ\pi_{c(x),\delta}\ind_\fC-\ssigma(x)\ind_\fC\|_{L^1}&=\int_{\fC}\|\ssigma\circ\pi_{c(x),\delta}((y)-\ssigma(x)\|_2d\cL^d(y)\\
&=\frac{n^d}{n_0 ^d}\int_{B_i(x)}\|\ssigma(y)-\ssigma(x)\|_2d\cL^d(y)\\
&=\frac{1}{\cL^d(B_i(x))}\int_{B_i(x)}\|\ssigma(y)-\ssigma(x)\|_2d\cL^d(y)\,.
\end{align*}
Using lemma \ref{lem:propdis2}, it yields that
\begin{align}\label{eq:defh_x}
\dis(\ssigma\circ\pi_{x,\delta}\ind_\fC\cL^d,\ssigma(x)\ind_\fC\cL^d)\leq 2\|\ssigma\circ\pi_{x,\delta}\ind_\fC-\ssigma(x)\ind_\fC\|_{L^1}\leq \frac{2}{\cL^d(B_i(x))}\int_{B_i(x)}\|\ssigma(y)-\ssigma(x)\|_2d\cL^d(y)\,.
\end{align}
We set 
$$\forall x\in \Omega \quad\forall \ep>0\qquad h_x(\ep)=\frac{2}{\cL^d(B_i(x))}\int_{B_i(x)}\|\ssigma(y)-\ssigma(x)\|_2d\cL^d(y)\,.$$
Thanks to equality \eqref{eq:lebedifthm}, we have
$$\forall x\in\widehat{\Omega}\qquad\lim_{\ep\rightarrow 0}h_x(\ep)=0\,.$$ 
Finally, using inequalities \eqref{eqnwn} and \eqref{eq:defh_x}, we have
\begin{align*}
\liminf_{n\rightarrow\infty}-&\frac{1}{n^d\delta ^d}\log\Prb\left(\exists f_n\in\cS_n(B_i(x)) : \,\dis\big(\amu_n(f_n)\ind_{B_i(x)},\ssigma\ind_{B_i(x)}\cL^d\big)\leq \beta_0\delta^{d+2}\right)\\
&\geq \liminf_{n\rightarrow\infty}-\frac{1}{n_0^d}\log \Prb\left(\exists f_{n_0}\in\cS_{n_0}(\fC) : \,\dis\big(\amu_{n_0}(f_{n_0}),\ssigma(x)\ind_{\fC}\cL^d\big)\leq 8\beta_0\ep ^{1/(d+5)}+h_x(\ep)\right)\\
&\geq\liminf_{n_0\rightarrow\infty}-\frac{1}{n_0^d}\log \Prb\left(\exists f_{n_0}\in\cS_{n_0}(\fC) : \,\dis\big(\amu_{n_0}(f_{n_0}),\ssigma(x)\ind_{\fC}\cL^d\big)\leq 8\beta_0\ep ^{1/(d+5)}+h_x(\ep)\right)\,.
\end{align*}
Using theorem \ref{thmbrique} and \eqref{eq:lebedifthm}, by letting $\ep$ goes to $0$, we get for any $ x\in\widehat{\Omega}$
\[\liminf_{\ep\rightarrow 0}\liminf_{n\rightarrow\infty}-\frac{1}{n^d\delta ^d}\log\Prb\left(\exists f_n\in\cS_n(B_i(x)) : \,\dis\big(\amu_n(f_n)\ind_{B_i(x)},\ssigma\ind_{B_i(x)}\cL^d\big)\leq \beta_0 \delta(\ep)^{d+2}\right)\geq I(\ssigma(x))\,.\]
Using inequality \eqref{eq:fatou1} and the previous inequality, we obtain
\[\lim_{\ep\rightarrow 0}\liminf_{n\rightarrow\infty}-\frac{1}{n^d}\log\Prb\left(\exists f_n\in\cS_n(\Gamma^1,\Gamma^2,\Omega) : \,\dis\big(\amu_n(f_n),\ssigma\cL^d\big)\leq \ep\right)\geq\int_{\Omega\setminus \cV_2(\partial \Omega,\eta_0)}I(\ssigma(x))d\cL^d(x)\,.\]
Finally, since $I(\ssigma(x))\geq 0$ for any $x\in\Omega$, using the monotone convergence theorem, we obtain by letting $\eta_0$ go to $0$:
\[\lim_{\ep\rightarrow 0}\liminf_{n\rightarrow\infty}-\frac{1}{n^d}\log\Prb\left(\exists f_n\in\cS_n(\Gamma^1,\Gamma^2,\Omega) : \,\dis\big(\amu_n(f_n),\ssigma\cL^d\big)\leq \ep\right)\geq \int_\Omega I(\ssigma(x))d\cL^d(x)\,.\] 
This yields the result.
\end{proof}
 \begin{proof}[Proof of lemma \ref{lem:scaling2}]
Let $n\geq 1$ and $x\in\sZ_n ^d$. Let $n_0\leq n$. We set $\delta=n_0/n$. Let us consider $\omega\in(\sR_+) ^{\E_n ^d}$ a configuration for which there exists $f_n\in\cS_n(\pi _{x,\delta}(\fC))$ such that 
\begin{align}\label{eq:disint}
\dis\big(\amu_n(f_n),\ssigma\ind_{\pi_{x,\delta}(\fC)}\cL^d\big)\leq \frac{1}{2}\delta^{d+1}\ep\,.
\end{align}
Let $f_n=f_n(\omega)$ be such a stream in the configuration $\omega$ (if there are several such streams, we pick one according to a deterministic rule). We set $\amu_n=\amu_n(f_n)$. We aim to prove that on the configuration $\omega\circ \pi_{x,\delta}\in (\sR_+)^{\E_{n_0}^d}$ the stream $f_n\circ \pi_{x,\delta}$ is in $\cS_{n_0}(\fC)$ and 
 \[\dis\big(\amu_{n_0}(f_n\circ\pi_{x,\delta}),\ssigma\circ\pi_{x,\delta}\ind_{\fC}\cL^d\big)\leq \ep\,.\]
Set $g_{n_0}=f_n\circ\pi_{x,\delta}$ the stream in the lattice $\E^d_{n_0}$ defined as follows
\[\forall e\in\E_{n_0}^d\qquad g_{n_0}(e)=f_n(\pi_{x,\delta}(e))\,.\]
We also set
\[
\amu_{n_0}:=\amu_{n_0}(g_{n_0})=\frac{1}{n_0^d}\sum_{e\in\E_{n_0}^d}g_{n_0}(e)\delta_{c(e)}\,.
\]
It is clear that $g_{n_0}\in\cS_{n_0}(\fC)$ on the configuration $\omega\circ\pi_{x,\delta}$.

 Let us compute the distance $\dis\big(\amu_{n_0}(g_{n_0}),\ssigma\circ\pi_{x,\delta}\ind_{\fC}\cL^d\big)$.
Let $k\geq 1$, $\lambda\in[1,2]$, $y\in[-1,1[^d$ and $w\in\sZ^d$. Let $Q\in\Delta_\lambda^k$, we have
\[
\amu_{n_0}(Q+y)=\frac{1}{n_0^d}\sum_{\substack{e\in\E_{n_0}^d :\\ c(e)\in (Q+y)}}f_n(\pi_{x,\delta}(e))=\frac{1}{n_0^d}\sum_{\substack{e\in\E_{n}^d :\\ c(e)\in \pi_{x,\delta}(Q+y)}}f_n(e)=\frac{n^d}{n_0^d}\amu_n(\pi_{x,\delta}(Q+y))\,.
\]
We have 
\[
\pi_{x,\delta}(Q+y)=\frac{n_0}{n}(Q+y)+ x=\frac{n_0}{n}Q+\frac{n_0}{n}y+x\,.
\]
Write $z=\frac{n_0}{n}y + x$.
Let $i\geq 1$ be such that
\[\frac{1}{2^{i}}<\lambda\frac{n_0}{n}\leq\frac{1}{2^{i-1}}\,.\]
Let $\lambda'\in[1,2]$ such that
\[\lambda\frac{n_0}{n}=\lambda'2^{-i}\,.\]
It yields that \[\pi_{x,\delta}(Q+y)\in\left(z+\Delta^{k+i}_{\lambda'}\right)\,.\]
Let us compute $\cL^d(B)/\cL^d(\pi_{x,\delta}(B))$ for $B=Q+y$:
\[\frac{\cL^d(B)}{\cL^d(\pi_{x,\delta}(B))}=\frac{\cL^d(B)}{\delta ^d\cL^d(B)}=\frac{n ^d}{n_0^d}\,.\]
Write $\nu=\ssigma\ind_{\pi_{x,\delta}(\fC)}\cL^d$ and $\nu\circ\pi_{x,\delta}= \ssigma\circ\pi_{x,\delta}\ind_{\fC}\cL^d$ .
We have by change of variable
\begin{align*}
\nu\circ\pi_{x,\delta}(B)&=\int_B\ssigma(\pi_{x,\delta}(y))\ind_{y\in\fC}d\cL^d(y)=\int_{B\cap \fC}\ssigma(\pi_{x,\delta}(y))d\cL^d(y)\\
&=\frac{n^d}{n_0^d}\int _{\pi_{x,\delta}(B\cap \fC)}\ssigma(y)d\cL^d(y)=\frac{n^d}{n_0^d}\nu(\pi_{x,\delta}(B))\,.
\end{align*}
Using inequality \eqref{eq:disint}, we have
\begin{align*}
\sum_{k=0}^\infty\frac{1}{2^{k}}\sum_{Q\in\Delta^k_\lambda} \|\amu_{n_0}(Q+y)-\nu\circ\pi_{x,\delta}(Q+y)\|_2&=\frac{n^d}{n_0^d}\sum_{k=0}^\infty\frac{1}{2^{k}}\sum_{Q\in\Delta^{k+i}_{\lambda'}}  \|\amu_{n}(Q+z)-\nu(Q+z)\|_2 \\
&\leq 2^{i}\frac{n^d}{n_0^d}\sum_{k=0}^\infty\frac{1}{2^{k}}\sum_{Q\in\Delta^{k}_{\lambda'}}  \|\amu_{n}(Q+z)-\nu(Q+z)\|_2 \\
&\leq  \frac{\lambda'}{\lambda}\frac{n^{d+1}}{n_0^{d+1}}\dis(\amu_{n},\ssigma\ind_{\pi_{x,\delta}(\fC)}\cL^d)\leq \ep\,.
\end{align*}
On the configuration $\omega\circ \pi_{x,\delta}$, we thus get
\[\dis\big(\amu_{n_0}(g_{n_0}),\ssigma\circ\pi_{x,\delta}\ind_{\fC}\cL^d\big)\leq  \ep\,.\]
It yields the result.
\end{proof}

\subsubsection{Proof of proposition \ref{prop:step2uld}}
Let us explain the strategy of the proof of proposition \ref{prop:step2uld}. The general idea of this proof is to build a discrete stream that is close to $\ssigma$ by reconnecting constant streams in cubes. To do so, we need to work with regular continuous streams that are close to $\ssigma$.  However, we cannot use the regularization sequence directly on $\ssigma$ since
$\ssigma\star K_p$ do not have null divergence close to the sources and the sinks. To avoid this issue, we first need to build a prolongated version $\ssigma'$ of $\ssigma$ defined in an extended version of $\Omega$ where the sources and the sinks have been pushed away. Doing so ensures that $\ssigma_p=\ssigma'\star K_p$ has null divergence almost everywhere on $\Omega$. 

Next, we till $\Omega$ into small cubes $(B_i)_{i\in J}$ centered at $(x_i)_{i\in J}$ such that $\ssigma_p\approx \ssigma_p(x_i)$ on $B_i$ for any $i\in J$. We consider the family of elementary events: in each cube $B_i$ there exists a discrete stream $f_n^{(i)}$ close to the constant continuous stream $\ssigma_p(x_i)\ind_{B_i}$. We use again corridors to reconnect these streams altogether and create a stream $f_n$, where outside the cubes and their corridors in $\Omega\setminus\cup_{i\in J}B_i$, the stream $f_n$ coincides with the discretized version of the stream $\ssigma_p$ (defined as in the proof of lemma \ref{lem:preconv}). Note that unlike the proof of theorem \ref{thmbrique}, where the node law at the macroscopic level was straightforward (because the continuous stream was constant and so the flow through the adjacent faces always match), here the stream $\ssigma_p$ is not constant and so the node law at the macroscopic level is harder to get.
A major difficulty of this proof is to build $\ssigma'$ in such away we can build a discretized version of $\ssigma_p$ that belongs in $\cS_n^M (\Gamma^1,\Gamma^2,\Omega)$.
% The stream $f_n$ that we obtain is close to $\ssigma_p$. However, the stream $f_n$ is not necessarily in $\cS_n(\Gamma ^1,\Gamma^2,\Omega)$. Indeed, the discretized version of $\ssigma_p$ may let water enter or exit through $\Gamma\setminus (\Gamma^1\cup\Gamma^2)$. To avoid this issue, we will first extend the stream $\ssigma$ by pushing away the sources and sinks and then build $\ssigma_p$ the regularized version of this extended stream. This procedure will ensure that we can easily build a discretized version of $\ssigma_p$ in $\cS_n(\Gamma ^1,\Gamma^2,\Omega)$.

 The aim of the following proposition is to push away the sink and source for $\ssigma$. We postpone its proof until the section \ref{sec:5.3}.
\begin{prop}[Prolongation of a continuous stream]\label{prop:prolsigma}Let $\ssigma\in\Sigma(\Gamma^1,\Gamma^2,\Omega)\cap \Sigma^M(\Gamma^1,\Gamma ^2,\Omega)$ (we recall that $\Sigma^M(\Gamma^1,\Gamma ^2,\Omega)$ was defined in \eqref{eq:defSigmaM}) such that $\widehat{I}(\ssigma)<\infty$. For any $\eta>0$, there exist $\rho=\rho(\eta)>0$ and $\widetilde {\Omega}$, $\widetilde \Gamma^1$, $\widetilde{\Gamma}^2$ and  $\ssigma'\in \Sigma(\widetilde{\Gamma}^1,\widetilde{\Gamma}^{2},\widetilde{\Omega})$ such that 
\begin{itemize}
\item $\Omega \subset \widetilde{\Omega}$,  $\widetilde \Gamma^1\cup\widetilde{\Gamma} ^2\subset \partial \widetilde \Omega$,  $d_2( \widetilde \Gamma^1\cup \widetilde \Gamma^2,\Gamma)=\rho$
and  $(\widetilde \Omega\setminus \Omega )\cap \cV_2(\Gamma\setminus (\Gamma^1\cup\Gamma^2),\rho/2)=\emptyset$ (see figure \ref{extomega})
\item $\|\ssigma-\ssigma'\ind_{\Omega}\|_{L^1}\leq \eta$ 
 and $\widehat{I}(\ssigma'\ind_\Omega)\leq\widehat{I}(\ssigma)+ \eta\,.$
\end{itemize}
\end{prop}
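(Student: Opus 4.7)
The strategy is to enlarge $\Omega$ by attaching thin collars of width $\rho$ outward along the normal direction to $\Gamma^1$ and $\Gamma^2$ (but not to $\Gamma\setminus(\Gamma^1\cup\Gamma^2)$), and to extend $\ssigma$ through these collars by a divergence-free field aligned with $-\overrightarrow{n}_\Omega$.

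First, I set up the geometry. Since $\cH^{d-1}(\partial_\Gamma\Gamma^i)=0$ and $\Gamma$ is piecewise $\sC^1$, for $\rho$ small I pick open subsets $\Gamma^i_\rho\subset\Gamma^i$ each lying in a single smooth piece $\cS_j$, at distance $\geq 2\rho$ from $\partial_\Gamma\Gamma^i$ and from $\bigcup_{k\neq l}\cS_k\cap\cS_l$, with $\cH^{d-1}(\Gamma^i\setminus\Gamma^i_\rho)\to 0$. On $\Gamma^i_\rho$, $\overrightarrow{n}_\Omega$ is well-defined and continuous, and the Fermi map $\Phi_i(x,t)=x-t\overrightarrow{n}_\Omega(x)$ is a bi-Lipschitz embedding of $\Gamma^i_\rho\times[0,\rho]$ with Jacobian $J_i(x,t)=1+O(\rho)$. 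I set $C^i_\rho=\Phi_i(\Gamma^i_\rho\times(0,\rho))$, $\widetilde{\Omega}=\Omega\cup C^1_\rho\cup C^2_\rho$, and $\widetilde{\Gamma}^i=\Phi_i(\Gamma^i_\rho\times\{\rho\})$. All four geometric requirements (inclusion, boundary placement, $d_2=\rho$, disjointness from $\cV_2(\Gamma\setminus(\Gamma^1\cup\Gamma^2),\rho/2)$) follow from this choice.

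Next, I perform an interior modification of $\ssigma$ to handle the band $\Gamma^i\setminus\Gamma^i_\rho$, where no collar is attached and so the normal trace of the extension must vanish. I construct $\widetilde{\ssigma}$ on $\Omega$ coinciding with $\ssigma$ outside a tubular strip $S_\rho$ of $\Omega$-thickness $O(\rho)$ around $\Gamma\setminus(\Gamma^1_\rho\cup\Gamma^2_\rho)$, divergence-free in $\Omega$, with $\widetilde{\ssigma}\cdot\overrightarrow{n}_\Omega=0$ on $\Gamma\setminus(\Gamma^1_\rho\cup\Gamma^2_\rho)$. This is done by adding a compactly-supported divergence-free corrector inside the strip that absorbs the residual normal trace of $\ssigma$ on $\Gamma^i\setminus\Gamma^i_\rho$ and reroutes it into $\Gamma^i_\rho$ (a Hodge-type construction using a curl potential works). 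Then I define $\ssigma':=\widetilde{\ssigma}$ on $\Omega$, and on each collar
\begin{equation*}
\ssigma'(\Phi_i(x,t))\,=\,-\frac{g_i(x)}{J_i(x,t)}\,\overrightarrow{n}_\Omega(x),\qquad g_i(x):=-(\widetilde{\ssigma}\cdot\overrightarrow{n}_\Omega)(x)|_{\Gamma^i_\rho}.
\end{equation*}
The $1/J_i$ factor forces the flux along each normal ray to be constant, so $\diver\ssigma'=0$ a.e.\ on $C^i_\rho$. Matching traces at $\Gamma^i_\rho$ (both sides giving $-g_i$) ensures no singular distributional divergence across $\Gamma^i_\rho$, so $\ssigma'\in\Sigma(\widetilde{\Gamma}^1,\widetilde{\Gamma}^2,\widetilde{\Omega})$. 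The field is tangential on $\Phi_i(\partial_\Gamma\Gamma^i_\rho\times[0,\rho])$ and on $\Gamma\setminus(\Gamma^1\cup\Gamma^2)$ by construction.

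Finally, I estimate both quantities. Since $\ssigma'\ind_\Omega=\widetilde{\ssigma}$ differs from $\ssigma$ only on $S_\rho$ with $\cL^d(S_\rho)=O(\rho\,\cH^{d-1}(\Gamma))$, and both fields are controlled in $L^\infty$, I get $\|\ssigma-\ssigma'\ind_\Omega\|_{L^1}\leq CM\rho\leq\eta$ for small $\rho$. For the rate function, $\widehat{I}(\ssigma'\ind_\Omega)-\widehat{I}(\ssigma)\leq \int_{S_\rho}I(\widetilde{\ssigma}(x))\,d\cL^d(x)$, and the corrector is constructed so that $\|\widetilde{\ssigma}\|_\infty$ stays below a threshold on which proposition \ref{prop:controleI} and proposition \ref{prop:continuityI} give a uniform bound on $I$, yielding $\widehat{I}(\ssigma'\ind_\Omega)-\widehat{I}(\ssigma)\leq C''\cL^d(S_\rho)\leq\eta$ for $\rho$ small enough. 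The main obstacle is the interior modification: simultaneously preserving divergence-freeness, killing the normal trace on a $\cH^{d-1}$-small band, keeping $L^1$ and $I$ costs small, and doing so using only local perturbations near the stratified singular locus $\partial_\Gamma\Gamma^i\cup\bigcup_{k\neq l}\cS_k\cap\cS_l$; this is where the piecewise $\sC^1$ hypothesis on $\Gamma$ and the $\cH^{d-1}$-negligibility of $\partial_\Gamma\Gamma^i$ are essential.
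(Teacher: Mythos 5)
Your approach is genuinely different from the paper's: you work entirely with continuous objects, attaching collars along the normal via a Fermi map and constructing an interior divergence-free corrector, whereas the paper works at the discrete level, decomposing a converging sequence of discrete streams into paths (lemma \ref{lem:res}), \emph{discarding} the paths that touch a small bad set $\cN_p$ on the boundary, extending the remaining discrete stream outward along coordinate directions $\overrightarrow{e_i}$, and passing to the limit. That difference is not cosmetic — it is where your argument has two genuine gaps.

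First, extending along the outward normal cannot keep $\ssigma'$ inside $\Sigma(\widetilde\Gamma^1,\widetilde\Gamma^2,\widetilde\Omega)$. That set carries the constraint $|\ssigma'\cdot\overrightarrow{e_i}|\leq M$ for each $i$ (see \eqref{eq:defadmsigma}). Your collar field is $\ssigma'=-\frac{g_i(x)}{J_i(x,t)}\overrightarrow{n}_\Omega(x)$ with $g_i=\widetilde\ssigma\cdot\overrightarrow{n}_\Omega$. If $\widetilde\ssigma$ has all components near $M$ and $\overrightarrow{n}_\Omega$ is tilted, one easily gets $|\ssigma'\cdot\overrightarrow{e_j}|>M$: e.g.\ in $d=2$, $\widetilde\ssigma=(M,M)$ and $\overrightarrow{n}_\Omega=(0.8,0.6)$ give $g=1.4M$ and $\ssigma'\cdot\overrightarrow{e_1}=-1.12M$. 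So membership in $\Sigma(\widetilde\Gamma^1,\widetilde\Gamma^2,\widetilde\Omega)$ fails. The paper's coordinate-direction extension — prolonging $f_n^{(p)}(\langle x,x\mp\overrightarrow{e_i}/n\rangle)$ through straight lines in direction $\pm\overrightarrow{e_i}$ — is chosen precisely because it never creates a new component exceeding the original one, so the constraint $\|f_n(e)\|_2\leq M$ propagates automatically; the paper then has to check (via the choice of $r_0$ and figures \ref{fig:caspb1}, \ref{fig:caspb2}) that distinct boundary points don't double-load an exterior edge, which your Fermi collars avoid — but at the cost of the $L^\infty$ constraint.

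Second, the interior corrector $\widetilde\ssigma$ is the crux and you have only asserted it exists. You need a divergence-free field, supported in a thin strip $S_\rho$, with prescribed normal trace on the band $\Gamma^i\setminus\Gamma^i_\rho$, whose \emph{$L^\infty$ norm on $S_\rho$} is small enough that $\int_{S_\rho}I(\widetilde\ssigma)\to 0$. "A Hodge-type / curl-potential construction" gives $L^2$ control naturally, not $L^\infty$, and near the stratified set $\partial_\Gamma\Gamma^i\cup\bigcup_{k\ne l}\cS_k\cap\cS_l$ the geometry is only Lipschitz; flux funnelled through a thin strip tends to concentrate. Moreover, even before adding the corrector, $|\ssigma\cdot\overrightarrow{e_i}|$ can already equal $M$ on parts of $S_\rho$, so that $I(\ssigma)$ is not uniformly bounded there and any additive perturbation immediately risks $I=\infty$. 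The paper circumvents all of this by \emph{removing} flux rather than rerouting it (keeping only the paths avoiding $\cN_p$), which can only decrease the local magnitudes, and by a final scaling by $(1-\delta)$ before invoking the continuity of $I$ on $\mathring{\mathcal{D}}_I$ (proposition \ref{prop:continuityI}) together with the control of $I$ in proposition \ref{prop:controleI}; you omit the scaling step, and without it the uniform-bound-plus-dominated-convergence argument does not close either. Until you either (a) construct the corrector with an explicit $L^\infty$ bound strictly below $M$ (uniformly in $\rho$) and replace the normal-direction collar by one respecting the componentwise bound, or (b) switch to the paper's discard-and-extend-along-axes strategy, the proof has a hole at its core.
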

\begin{figure}[H]
\begin{center}
\def\svgwidth{0.9\textwidth}
   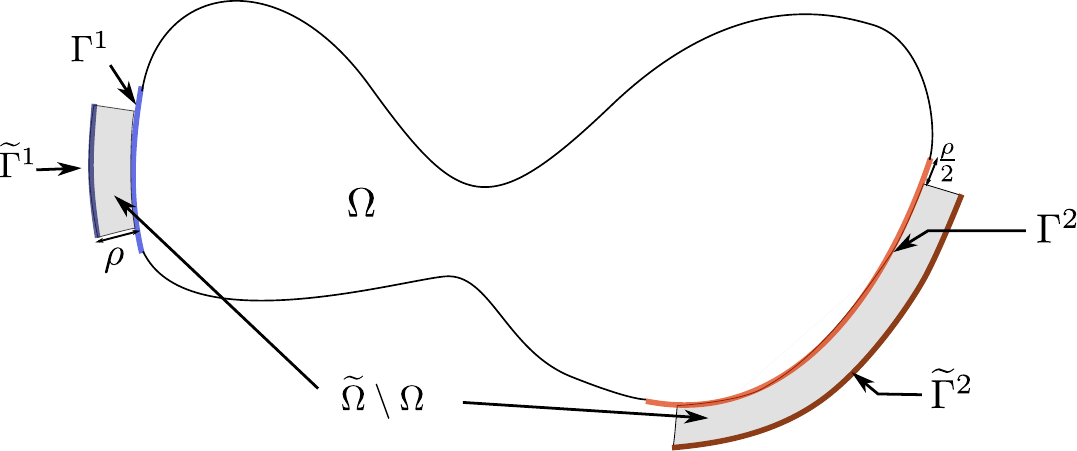
   \caption{\label{extomega}Example of possible $(\widetilde \Omega, \widetilde \Gamma^1,\widetilde \Gamma^2)$}
   \end{center}
\end{figure}
\begin{proof}[Proof of proposition \ref{prop:step2uld}]
Let $\ssigma\in\Sigma(\Gamma^1,\Gamma ^2,\Omega)\cap \Sigma^M(\Gamma^1,\Gamma ^2,\Omega)$. If $\widehat{I}(\ssigma)=\infty$, the result is straightforward. Let us assume that $\widehat{I}(\ssigma)<\infty$. 

Let $\eta>0$. Let $\rho$,  $\widetilde {\Omega}$, $\widetilde \Gamma^1$, $\widetilde{\Gamma}^2$ and $\ssigma'$ be as in the statement of proposition \ref{prop:prolsigma}.

\noindent {\bf Step 1. Approximation of $\ssigma'$ by a regular function.} 
Let $(K_p)_{p\geq 1}$ be the sequence of mollifiers (defined in \eqref{eq:defKn}).
Write $\ssigma_p=\ssigma' * K_p$. We have $\ssigma_p \in\sC^\infty (\sR^d,\sR^d)$. Let $p\geq 4/\rho$. We claim that
\[\forall x\in \cV_2(\Omega,\rho/2)\qquad\diver\ssigma_p(x)=0\,.\]
Since $\ssigma'\in \Sigma(\widetilde{\Gamma}^1,\widetilde{\Gamma}^{2},\widetilde{\Omega})$, we have (see remark 13 in \cite{CT1}) 
$$\diver\ssigma'= -(\ssigma'\cdot\nn_{\widetilde \Omega})\cH^{d-1}|_{\partial \widetilde\Omega}=-(\ssigma'\cdot\nn_{\widetilde \Omega})\cH^{d-1}|_{\widetilde\Gamma^1\cup \widetilde \Gamma ^2}\,.$$
Let $h\in\sC^\infty_c (\sR^d,\sR)$. We get
\begin{align*}
\int_{\sR^d}h\diver\ssigma_pd\cL^d&=-\int_{\sR^d}\ssigma_p\cdot \overrightarrow{\nabla}hd\cL^d\\
&=-\int_{\sR^d}\int_{\sR^d}K_p(y)\ssigma'(x-y)\cdot \overrightarrow{\nabla}h(x)d\cL^d(y)d\cL^d(x)\\
&=-\int_{\sR^d}\int_{\sR^d}K_p(y)\ssigma'(z)\cdot \overrightarrow{\nabla}h(z+y)d\cL^d(y)d\cL^d(z)\,.
\end{align*}
Since $h\in\sC^\infty_c (\sR^d,\sR)$, we have for $\cL^d$-almost every $y\in\sR^d$
$$\int_{\sR^d}K_p(y) \overrightarrow{\nabla}h(z+y)d\cL^d(y)=\overrightarrow{\nabla}\int_{\sR^d}K_p(y) h(z+y)d\cL^d(y)=\overrightarrow{\nabla}(K_p*h)(z)\,.$$
It follows that 
\begin{align*}
\int_{\sR^d}h\diver\ssigma_pd\cL^d&=\int_{\sR^d}K_p* h \diver \ssigma'd\cL^d=-\int_{\widetilde\Gamma^1\cup \widetilde \Gamma ^2}(\ssigma'\cdot\nn_{\widetilde \Omega}) K_p* h\, d\cH^{d-1}\,.
\end{align*}
Hence, if $h$ has its support included in $\cV_2(\Omega,\rho/2)$, the function $K_p* h$ has its support included in $\cV_2(\Omega,3\rho/4)$. For any $x\in \widetilde\Gamma^1\cup \widetilde \Gamma ^2$, we have $ K_p* h(x)=0$ and
 $$\int_{\sR^d}h\diver\ssigma_pd\cL^d=0\,.$$
It follows that $\diver \ssigma_p = 0$ on $\cV_2(\Omega,\rho/2)$.

Moreover, since  for $i=1,\dots,d$ we have $|\ssigma'(x)\cdot \overrightarrow{e_i}|\leq M$ for $\cL^d$-almost every $x$ in $\cV_2(\Omega,\rho)$, we have $|\ssigma_p(x)\cdot\overrightarrow{e_i}|\leq  M$ for $\cL^d$-almost every $x$ in $\cV_2(\Omega,\rho/2)$. For $p$ large enough, by proposition \ref{prop:continuityIhat}, we have $\widehat{I}(\ssigma_p)<\infty$.
 Let $n\geq 1$. Let $\ep>0$. The function $\ssigma_p $ is uniformly continuous on $\Omega$, that is there exists $\delta=\delta(\ep)>0$ such that 
\begin{align}\label{eq:contsigmap}
\forall x,y\in\Omega \qquad \|x-y\|_2\leq \delta \implies \|\ssigma_p(x)-\ssigma_p(y)\|_2\leq \ep\,.
\end{align}
In what follows, $m=\lfloor \ep ^{-\alpha}\rfloor $ where $\alpha=(2(3d+1))^{-1}$ was defined in \eqref{eq:defalpha}, $\kappa=\kappa(m,n,\delta)$ must satisfy $\kappa\leq\delta(\ep)/(2d)$, we set 
\[\kappa=\frac{2m}{n}\left\lfloor\frac{n\delta}{ 4dm}\right\rfloor\,.\]
We have $n\kappa\in\sN$ and $n\kappa(1+2d/m)/2\in\sN$. 
Besides, we get by definition of $\kappa$
\[\lim_{n\rightarrow \infty}\kappa(n)=\frac{\delta}{2d}\,.\]
 We divide $\Omega$ into small cubes of side-length $\kappa(1+2d/m)$. 
Write $\fM_\kappa$ the set of the centers of the cubes of side-length $\kappa(1+2d/m)$ included in $\Omega$, that is
\[\fM_\kappa=\left\{x\in\kappa \left(1+\frac{2d}{m}\right)\sZ^d\,:\,\pi_{x,\kappa(1+2d/m)}(\fC)\subset \Omega\setminus \cV_\infty(\partial \Omega,d\kappa) \right\}\,.\]
We define $\partial^{int}\fM_\kappa$ as the centers of the cubes in $\fM_\kappa $ that are in the boundary of $\fM_\kappa$, \textit{i.e.},
\[\partial^{int}\fM_\kappa=\left\{x\in\fM_\kappa : \,\exists y\in\kappa\left(1+\frac{2d}{m}\right)\sZ^d,\quad \|y-x\|_\infty =\kappa\left(1+\frac{2d}{m}\right), \,y\notin\fM_\kappa\right\}\,.\]
We denote by $\ssigma_p ^\kappa$ the approximation of $\ssigma_p$ at scale $\kappa$ defined as follow
\[\ssigma_p ^\kappa=\sum_{x\in\fM_\kappa}\ssigma_p(x)\ind_{\pi_{x,\kappa(1+2d/m)}(\fC)}\,.\]
Note that 
\begin{align}\label{eq:inclusionbordomega}
\Omega\setminus \cV_2 (\partial \Omega,d(d+2)\kappa)\subset \Omega\setminus \cV_\infty (\partial \Omega,(d+2)\kappa)\subset \bigcup_{x\in\fM_{\kappa}}\pi_{x,\kappa(1+2d/m)}(\fC)\,.\end{align}
Thanks to \eqref{eq:contsigmap} and proposition \ref{prop:minkowski}, we have for $\kappa$ small enough depending on $\Omega$
\begin{align}\label{eq:sigmad}
\|\ssigma_p^\kappa-\ssigma_p\ind_\Omega\|_{L^1}&\leq \ep\cL^d(\Omega)+2dM\cL^d\left(\cV_2(\partial \Omega,d(d+2)\kappa)\right)\leq  \ep\cL^d(\Omega)+10d^3M\cH^{d-1}(\Gamma)\kappa\,.
\end{align}
%Moreover, we have
%\begin{align*}
%\left|\int_{\Omega} I(\ssigma_p(x))d\cL^d(x)-\sum_{Q\in\fM_\kappa}I(\ssigma_p(c(Q)))\cL^d(Q)\right|&\leq \sum_{Q\in\fM_\kappa}\int_Q| I(\ssigma_p(x))-I(\ssigma_p(c(Q))|d\cL^d(x)\\
%&\qquad+\cL^d(\Omega\setminus \cup_{Q\in\fM_\kappa}Q)\\
%&\leq \ep \cL ^d(\Omega)+2d\cH^{d-1}(\partial \Gamma)\kappa
%\end{align*}
%It follows, by taking the limit when $\ep$ goes to $0$, we obtain
%\begin{align*}
%\lim_{\ep\rightarrow 0}\sum_{Q\in\fM_\kappa}I(\ssigma_p(c(Q)))\cL^d(Q)=\int_{\Omega} I(\ssigma_p(x))d\cL^d(x)\,.
%\end{align*}
\noindent {\bf Step 2. Prove that $\ssigma_p\cdot \nn_{\widetilde{\Omega}}=0$ $\cH^{d-1}$- almost everywhere on $\partial \widetilde \Omega \cap\cV_2(\widetilde \Gamma^1\cup\widetilde\Gamma^2,\rho/2)^c$.}
 Let $ u\in\sC_c ^\infty (\sR ^d,\sR )$, by inequality \eqref{eq:caracterisationsigman}, we have
\begin{align*}
\int_{\partial \widetilde \Omega} (\ssigma_p\cdot\nn_{\widetilde{\Omega}})u\,d\cH ^{d-1}&=\int_{\sR ^d} \ssigma_p\cdot \overrightarrow{\nabla}ud\cL ^d\\
&=\int_{\sR ^d}\int_{\sR^d}\ssigma'(x-y)\cdot \overrightarrow{\nabla}u(x)\,K_p(y)d\cL^d(y)d\cL^d(x)\,.
\end{align*}
Since $u\in\sC_c ^\infty (\sR ^d,\sR )$, there exist a bounded subset $F_u$ of $\sR^d$ and a constant $C_u>0$ such that
\[\forall x\in\sR^d\qquad \| \overrightarrow{\nabla}u(x)\|_2\leq C_u\ind_{x\in F_u}\,.\]
We have
\begin{align*}
\int_{\sR ^d}\int_{\sR^d}|\ssigma'(x-y)\cdot \overrightarrow{\nabla}u(x)K_p(y)|d\cL^d(y)d\cL^d(x)&\leq \int_{\sR ^d}\int_{\sR^d}2dMC_uK_p(y)\ind_{x\in F_u}d\cL^d(y)d\cL^d(x)\\&=2dMC_u\cL^d(F_u)<\infty\,.
\end{align*}
Hence, we can apply Fubini Tonelli theorem
\begin{align*}
\int_{\partial\widetilde \Omega} (\ssigma_p\cdot\nn_{\widetilde{\Omega}})u\,d\cH ^{d-1}
&=\int_{\sR ^d}\int_{\sR^d}\ssigma'(x-y)\cdot \overrightarrow{\nabla}u(x)K_p(y)d\cL^d(x)d\cL^d(y)\\
&=\int_{\sR^d}K_p(y)\int_{\sR^d}\ssigma'(z)\cdot \overrightarrow{\nabla}u(z+y)d\cL^d(z)d\cL^d(y)\\
&=\int_{\sR^d}K_p(y)\int_{\partial \widetilde{\Omega}}(\ssigma'\cdot\overrightarrow{n}_{\widetilde \Omega})(x) u(x+y)d\cH^{d-1}(x)d\cL^d(y)
\end{align*}
Since $|(\ssigma'\cdot\overrightarrow{n}_{\widetilde\Omega})(x)|\leq 2dM$, we can again apply Fubini Tonelli theorem:
\begin{align}\label{eq:liensigmasigmap}
\int_{\partial \widetilde \Omega}(\ssigma_p\cdot\nn_{\widetilde{\Omega}})u\,d\cH ^{d-1}
&=\int_{\partial \widetilde \Omega}(\ssigma'\cdot\nn_{\widetilde{\Omega}})(x)\left(\int_{\sR^d} u(x+y)K_p(y)d\cL^d(y)\right)d\cH^{d-1}(x)\nonumber\\
&= \int_{\widetilde \Gamma ^1\cup\widetilde \Gamma ^2}(\ssigma'\cdot\nn_{\widetilde{\Omega}})(x)(K_p*u)(x)d\cH^{d-1}(x)\,.
\end{align}
We recall that $1/p\leq \rho/4$. Let $u\in\sC^\infty_c(\cV_2(\widetilde \Gamma^1\cup\widetilde\Gamma^2,\rho/2)^c,\sR)$. Then, $K_p* u$ has its support included in $\cV_2(\widetilde \Gamma^1\cup\widetilde\Gamma^2,\rho/4)^c$ and 
$$\int_{\partial\widetilde \Omega}  (\ssigma_p\cdot\nn_{\widetilde{\Omega}})u\,d\cH ^{d-1}=0\,.$$
It follows that
\begin{align}\label{eq:nodelawsigmadis}
\ssigma_p\cdot\nn_{\widetilde{\Omega}}=0\text{  $\cH^{d-1}$-almost everywhere on $\partial \widetilde \Omega \cap\cV_2(\widetilde \Gamma^1\cup\widetilde\Gamma^2,\rho/2)^c$}
\end{align}

\noindent {\bf Step 3. Construction of a stream in $\cup_{x\in\fM_\kappa}\pi_{x,\kappa(1+2d/m)}(\fC)$ close to $\ssigma$.} 
%For any edge $e\in\E_n^d$, we recall that $\cP(e)$ is the dual of the edge $e$. We claim that there exists $z_0\in[-1/n,1/n[^d$ such that
%for any $x\in\fM_\kappa$, $i\in\{1,\dots,d\}$ and $\diamond\in\{-,+\}$, we have
%\begin{align}\label{eq:defz0}
%\bigcup_{e\in\E_n^{i,\diamond}[\pi_{x,\kappa(1+2d/m)}(\fC_i^\diamond)]}\cP(e)=\pi_{x+z_0,\kappa(1+2d/m)}(\overline{\fC_i^\diamond})\,
%\end{align}
%where $\E_n ^{i,\diamond}[A]$ was defined in \eqref{eq:defeni+} and \eqref{eq:defeni-}.
%To prove the existence of such $z_0$ it is enough to prove that it exists for $x=0$ since the translation by $x\in\fM_\kappa$ preserves the lattice (because $n\kappa(1+2d/m)\in\sZ$). Since $n\kappa(1+2d/m)/2\in \sN$, we have
%\begin{align*}
%\bigcup_{e\in\E_n^{i,\diamond}[\pi_{0,\kappa(1+2d/m)}(\fC_i^\diamond)]}\cP(e)&= \left[-\upsilon-\frac{1}{2n},\upsilon-\frac{1}{2n}\right]^{i-1}\times\left\{\diamond\left(\upsilon-\frac{1}{2n}\right)\right\}\times \left[-\upsilon-\frac{1}{2n},\upsilon-\frac{1}{2n}\right]^{d-i}\\
%&=\pi_{z_0,2\upsilon}(\overline{\fC_i^\diamond})
%\end{align*}
Set $$z_0=\left(-\frac{1}{2n},\dots,-\frac{1}{2n}\right)\,.$$
We denote by $\cE_\kappa(x)$ the following event:
\[\cE_\kappa(x)=\left\{ \exists f_n\in\cS_n(\pi_{x,\kappa}(\fC)):\begin{array}{c}\dis(\amu_n(f_n),\ssigma_p(x)\ind_ {\pi_{x,\kappa}(\fC)})\leq 12  \ep^{\alpha_0} \kappa ^d,\\\forall \diamond\in\{+,-\}\,\forall i\in\{1,\dots,d\}\,\forall A\in\cP_i^\diamond(m)\\\psi_i^\diamond(f_n,\pi_{x,\kappa}(A))=(1-\ep ^{\alpha/4})\left(\int_{\pi_{x+z_0,2\upsilon}(\fC_i ^\diamond)}\ssigma_p (y)\cdot \overrightarrow{e_i}d\cH ^{d-1}(y)\right)\,\frac{n ^{d-1} }{m^{d-1}}\end{array}\right\}\,\]
we recall that $\cP_i^-(m)$ and $\cP_i^+(m)$ were defined in \eqref{eq:defPi-} and \eqref{eq:defPi+} and $\alpha_0$ in \eqref{eq:defalpha0}. The choice of $z_0$ is to compensate the shift due to integrating over the plaquettes. This choice will be clear in the next step.
Let $f_n^x$ be a stream that satisfies the conditions of the event $\cE_\kappa (x)$ (if there are several possible choices, we choose according to a deterministic rule). Let $x\in\fM_\kappa$, we have on the event $\cE_\kappa(x)$ that for any $i\in\{1,\dots,d\}$ and $\diamond\in\{+,-\}$:
$$\psi_i^\diamond(f_n^x,\pi_{x,\kappa}(\fC_i^\diamond))=\sum_{A\in\cP_i ^\diamond(m)}\psi_i^\diamond(f_n^x,\pi_{x,\kappa}(A))=(1-\ep^{\alpha/4})\left(\int_{\pi_{x+z_0,2\upsilon}(\fC_i^\diamond)}\ssigma_p (y)\cdot \overrightarrow{e_i}d\cH ^{d-1}(y)\right) \,n ^{d-1}\,. $$ 
We define the corridor $\Cor$ as follows:
\[\Cor=\Omega\setminus\bigcup_{x\in\fM_\kappa}\pi_{x,\kappa}(\fC)\,.\]
We proceed similarly as in the proof of lemma \ref{lem:preconv}.
Let $\diamond\in\{+,-\}$, $x\in\fM_\kappa$, $i\in\{1,\dots,d\}$ such that $x+\kappa(1+d/m)\overrightarrow{e_i}\in\fM_\kappa$. For $\forall A\in\cP_i^\diamond(m)$, by lemma \ref{lem:mixing}, there exists a stream $\overline{f}_n^{x,A}$ in $\cyl(\pi_{x,\kappa}(A),\kappa d/m,\diamond\overrightarrow{e_i})$ such that 
\[\forall e\in \E_n^{i,\diamond}[\pi_{x,\kappa}(A)]\qquad \overline{f}_n^{x,A}\left(e\diamond\frac{\overrightarrow{e_i}}{n}\right)=f_n^x(e)\]
and	\[\forall e\in \E_n^{i,\diamond}\left[\pi_{x,\kappa} \left(A\diamond \frac{ d}{m}\overrightarrow{e_i}\right)\right]\qquad \overline{f}_n^{x,A}(e)=\frac{\psi_i^\diamond(f_n^x,\pi_{x,\kappa}( A))}{|\E_n^{i,\diamond}[\pi_{x,\kappa} (A)] |}\,.\]
If $x+\kappa(1+d/m)\overrightarrow{e_i}\notin\fM_\kappa$, by lemma \ref{lem:mixing}, there exists a stream $\overline{f}_n^{x,A}$ in $\cyl(\pi_{x,\kappa}(A),\kappa d/m-1/n,\overrightarrow{e_i})$ such that 
\[\forall e\in \E_n^{i,+}[\pi_{x,\kappa}(A)]\qquad \overline{f}_n^{x,A}\left(e+\frac{\overrightarrow{e_i}}{n}\right)=f_n^x(e)\]
and	\[\forall e\in \E_n^{i,+}\left[\pi_{x,\kappa} \left(A+\frac{d}{m}\overrightarrow{e_i}\right)-\frac{1}{n}\overrightarrow{e_i}\right]\qquad \overline{f}_n^{x,A}(e)=\frac{\psi_i^\diamond(f_n^x,\pi_{x,\kappa}( A))}{|\E_n^{+,\diamond}[\pi_{x,\kappa} (A)] |}\,.\]
This stream mixes the inputs in such a way the outputs are uniform.
We build $f_n^{prel}$ in $\cup_{x\in\fM_\kappa}\pi_{x,\kappa(1+2d/m)}(\fC)$ as follows 
\begin{align}\label{eq:deffnprel}
f_n^{prel}=\sum_{x\in\fM_\kappa}\left(f_n^{x}+\sum_{A\in\cup_{i=1,\dots,d}\cP_i^+(m)\cup\cP_i^-(m)}\overline{f}_n^{x,A}\right)\,.
\end{align}
We claim that on the event $\cap_{x\in\fM_\kappa}\cE_\kappa(x)$,the stream $f_n^{prel}$ satisfies the node law everywhere inside $\cup_{x\in\fM_\kappa}\pi_{x,\kappa(1+2d/m)}(\fC)$. Indeed, for any $x,y\in\fM_\kappa$ such that $\|x-y\|_1=\kappa(1+2d/m)$, we can write without loss of generality $y-x=\kappa (1+2d/m)\overrightarrow{e_i}$. On the event $\cE_\kappa(x)\cap \cE_\kappa(y)$, we have for any $ A\in\cP_i^+ (m)$
\[\psi_i^+(f_n^{x},\pi_{x,\kappa}(A))=(1-\ep^{\alpha/4})\left(\int_{\pi_{x+z_0,\kappa(1+2d/m)}(\fC_i ^+)}\ssigma_p (y)\cdot \overrightarrow{e_i}d\cH ^{d-1}(y)\right) \,\frac{n^{d-1}}{m ^{d-1}}=\psi_i^-\left(f_n^{y},\pi_{y,\kappa}(A-\overrightarrow{e_i})\right)\,.\]
The latter equality combined with the fact that 
$|\E_n^{i,+}[\pi_{x,\kappa} (A)] |=|\E_n^{i,-}[\pi_{y,\kappa} (A-\overrightarrow{e_i})] |$ (since $\E_n^{i,+}[\pi_{x,\kappa} (A)]=\E_n^{i,-}[\pi_{y,\kappa} (A-\overrightarrow{e_i})] +2\kappa d/m \overrightarrow{e_i}$ and $2\kappa d/m\in\sZ_n$)
 ensures that the node law is satisfied along the common face of $\pi_{x,\kappa(1+2d/m)}(\fC)$ and $\pi_{y,\kappa(1+2d/m)}(\fC)$.

At this stage, we have constructed the stream inside the cubes $\pi_{x,\kappa(1+2d/m)}(\fC)$, for $x\in\fM_\kappa$. The remaining part is the most technical part of the proof. The aim is to prolongate this stream in $\Omega \setminus \cup_{x\in\fM_\kappa}\pi_{y,\kappa(1+2d/m)}(\fC)$ in such a way the node law is respected everywhere except in $\Gamma_n^1\cup\Gamma_n^2$. To do so, we are going to build the discretized version  $\ssigma_p ^{disc}$ of $\ssigma_p$. Note that $\ssigma_p$ have been built in such a way  that its discretized version is in $\cS_n^M(\Gamma^1,\Gamma^2,\Omega)$. We have the stream $f_n^{prel}$ in the cubes and the stream $\ssigma_p ^{disc}$ outside the cubes. At this point the node law is not respected inside $\Omega$ along the common faces. For these common faces, the stream $f_n^{prel}$ has been built in such a way that its flow match with the flow of $\ssigma_p ^{disc}$. However, the inputs and the outputs do not perfectly match. We need to correct this difference by doing a mixing, but without corridor. This is the most technical part of the proof. 

\noindent {\bf Step 4. Construction of a discrete stream.}
We recall that $C$ is the cube of side-length $1/n$ centered at $0$, that is
\[C=\left[-\frac{1}{2n},\frac{1}{2n}\right]^d\,.\]
We consider the following stream $\ssigma_p ^{disc}$ that is the discretized version of $\ssigma_p$
defined as follows: for any $i\in\{1,\dots,d\}$, for any $e=\langle x,y\rangle \in\E_n^d$ such that $x,y\in\Omega_n$ and $\overrightarrow{xy}=\overrightarrow{e_i}/n$ ,
\[ \ssigma_p ^{disc}(e)=(1-\ep^{\alpha/4})n ^{d-1}\left(\int_{\cP(e)}\ssigma_p(u)\cdot \overrightarrow{e_i}\,\ind_{\widetilde\Omega}d\cH^{d-1}(u)\right)\,\overrightarrow{e_i}\,.\]
%We need to define the prolonged version of this discretized stream. 
%For any $x\in\Omega\cap \widetilde{\Omega}_n$ and $y\notin \widetilde{\Omega}_n$ such that $\langle x,y\rangle \in\E_n^d$. It follows that $y+C\cap \Gamma\subset (\Gamma^1\cup\Gamma^2)$ and $y+\overrightarrow{xy}\in(\Gamma_n^1\cup\Gamma_n^2)$. We set 
%$$\ssigma_p^{disc}(\langle x,y\rangle+\overrightarrow{xy})=\ssigma_p^{disc}(\langle x,y\rangle)$$
%For any $e=\langle x,y\rangle\in\E_n^d$ such that $x,y\in \widetilde{\Omega}_n\setminus \Gamma_n^1\cup\Gamma_n^2$ we have $\ssigma_p^{disc}(e)=0$.

%\[ \ssigma_p ^{disc}(e)=\left\{\begin{array}{ll} (1-\ep^{\alpha/4})n ^{d-1}\left(\int_{\cP(e)}\ssigma_p(u)\cdot \overrightarrow{e_i}\,\ind_{\Omega}d\cH^{d-1}(u)\right)\,\overrightarrow{e_i}&\mbox{if $x\in\widetilde{\Omega}_n$}\\
%\ssigma_p ^{disc}(\langle x,y\rangle +\overrightarrow{xy})&\mbox{if $x,y\in\widetilde{\Omega}_n^c$ and $x+\overrightarrow{xy}\in\widetilde{\Omega}_n$}\\
%0 &\mbox{otherwise}
%\end{array}\right.\,.\]

Let $x\in\Omega_n \setminus (\Gamma_n^1\cup\Gamma_n^2)$. We want to prove that $\ssigma_p ^{disc}$ satisfies the node law at $x$. We distinguish several cases.

\noindent{\bf Case 1.} We have $x+C\subset \Omega$.
Since $\diver\ssigma_p=0$ on $\Omega$, we obtain by applying Gauss-Green theorem to $\ssigma_p$ in $x+C$:
\[\int_{x+\partial C }\ssigma_p(u)\cdot \overrightarrow{n}_{x+C}(u)d\cH^{d-1}(u)=\sum_{\substack{y\in\sZ_n^d:\\\langle x,y\rangle\in\E_n^d}}\int_{\cP(\langle x,y\rangle)}\ssigma_p(u)\cdot (n\,\overrightarrow{xy})\,d\cH^{d-1}(u)=0\,.\]
It follows that $\ssigma_p ^{disc}$ satisfies the node-law at $x$.

\noindent{\bf Case 2.} We have $(x+C)\cap \Gamma\neq \emptyset$.
The amount of water $\di\ssigma_p ^{disc}(x)$ created at $x$ for the stream $\ssigma_p ^{disc}$ is equal to
\begin{align*}
\di\ssigma_p ^{disc}(x)&=(1-\ep ^{\alpha/4})n^{d-1}\sum_{y\in\Omega_n:\langle x, y \rangle\in\E_n^d}\int_{\cP(e)}\ssigma_p \cdot (n\,\overrightarrow{yx})\ind_{\widetilde \Omega}d\cH^{d-1}
\end{align*}
 We claim that for any $y\notin\Omega_n$ such that $\langle x,y\rangle\in\E_n^d$, we have $\cP(e)\cap \widetilde\Omega=\emptyset$. We distinguish two cases. 
\begin{itemize}
\item Let us assume $(x+C)\cap (\Gamma^1\cup\Gamma^2)\neq \emptyset$. If there exists $y\notin\Omega_n$ such that $\langle x,y\rangle\in\E_n^d$ then $x\in\Gamma_n^1\cup\Gamma_n^2$ and this is a contradiction. 
\item Let us assume that $(x+C)\cap (\Gamma^1\cup\Gamma^2)= \emptyset$. Since by construction $(\widetilde \Omega\setminus \Omega )\cap \cV_2(\Gamma\setminus (\Gamma^1\cup\Gamma^2),\rho/2)=\emptyset$, then we have $(x+C)\cap \widetilde \Omega=(x+C)\cap \Omega$. If there exists $y$ such that $e=\langle x,y\rangle\in\E_n^d$ and  $\cP(e)\cap \widetilde\Omega=\cP(e)\cap \Omega\neq \emptyset$ then $d_\infty(y,\Omega)\leq 1/2n$ and $y\in\Omega_n$.
\end{itemize}
It yields that
\begin{align*}
\di\ssigma_p ^{disc}(x)&=(1-\ep ^{\alpha/4})n^{d-1}\sum_{y\in\sZ_n^d:\langle x, y \rangle\in\E_n^d}\int_{\cP(e)}\ssigma_p \cdot (n\,\overrightarrow{yx})\ind_{\widetilde \Omega}d\cH^{d-1}\,.
\end{align*}
By applying the Gauss-Green theorem to $\ssigma_p$ in $(x+C)\cap \widetilde \Omega$, we have
\[-\sum_{y\in\sZ_n^d:\langle x, y \rangle\in\E_n^d}\int_{\cP(e)}\ssigma_p \cdot (n\,\overrightarrow{yx})\ind_{\widetilde \Omega}d\cH^{d-1}+ \int_{(\partial \widetilde \Omega\cap C)\setminus \partial C}\ssigma_p\cdot \nn_{\widetilde\Omega}d\cH^{d-1}=0\,.\]
Using equality \eqref{eq:nodelawsigmadis}, we get
\begin{align*}
\di\ssigma_p ^{disc}(x)=0\,.
\end{align*}
We conclude that $\ssigma_p ^{disc}$ satisfies the node law at $x$ for any $x\in\Omega_n\setminus (\Gamma_n^1\cup\Gamma_n^2)$.
 
%\noindent{\bf Case 3.} We have $x\notin \widetilde{\Omega}_n$ and $x\notin \Omega$. By construction, we have $x \in\Gamma_n \setminus (\Gamma_n^1\cup\Gamma_n^2)$. Since $x\notin \widetilde{\Omega}_n$, we have either $(x+C)\cap\Gamma =\emptyset $ or $(x+C)\cap \Gamma \subset (\Gamma ^1\cup\Gamma ^2)$. The latter is not possible because it would imply that $x\in\Gamma_n^1\cup\Gamma_n^2$. It follows that $x+C \subset \Omega^c$
% Let $y\in \Omega$ such that $e=\langle x,y\rangle \in\E_n^d$. We claim that $y\in \widetilde{\omega}_n$. If not, we would have $(y+C)\cap\Gamma \subset \Gamma ^1\cup\Gamma^2$ and $[x,y]\cap\Gamma\subset (\Gamma ^1\cup\Gamma^2)$. Hence, we would have $d_\infty(x,\Gamma^1\cup\Gamma ^2)<1/n$. The latter is excluded since $x\notin \Gamma_n^1\cup\Gamma_n ^2$. 
%Besides, since $\cP(e)\cap \Omega =\emptyset$,  we have $\ssigma_p ^{disc}(e)=0$.
%Hence, for any $y\in \sZ_n^d$ such that $e=\langle x,y \rangle\in \E_n^d$, we have $\ssigma_p ^{disc}(e)=0$. It follows that the node law is satisfied at $x$.
%
%\noindent{\bf Case 4.}  We have $x\notin \widetilde{\Omega}_n$ and $x\in \Omega$. This implies that $(x+C)\cap\Gamma \subset (\Gamma^1\cup\Gamma^2)$ and for any $y\notin \Omega$ such that $\langle x,y\rangle\in\E_n^d$ we have $y\in\Gamma_n^1\cup\Gamma_n^2$. 
%
%
% Then, $x$ belongs to $\Gamma ^n$ and there exists $y\in\Omega_n$ such that $\langle x,y \rangle \in\E_n^d$ and $y+C\cap (\Gamma^1\cup\Gamma^2)\neq\emptyset$. Hence, 
\noindent {\bf Step 5. Correcting the stream} 
Let us now consider $x\in\partial ^{int}\fM_\kappa$. Let us denote by $E_\kappa(x)$ the set of faces of $\pi_{x,\kappa(1+2d/m)}(\fC)$ that are external, \textit{i.e.},
\[E_\kappa(x)=\left\{\kappa\left(1+\frac{2d}{m}\right)\fC_i^\diamond+x:\,x\diamond\kappa\left(1+\frac{2d}{m}\right)\overrightarrow{e_i}\notin\fM_\kappa ,\,i\in\{1,\dots,d\},\diamond\in\{-,+\}\right\}\,.\]  
For those faces, the stream $f_n^{prel}$ (defined in \eqref{eq:deffnprel}) does not perfectly coincide with the discretized version of $\ssigma_p$ but their flow match. To overcome this issue, we are going to build a stream that corrects these differences. We here want to mix, but without using a corridor. This means that we need to be particularly cautious that the stream we build does not exceed the capacity constraint. Let us first consider $F_0=x+\kappa(1+2d/m)\fC_i^-\in E_\kappa(x)\subset \Omega$. We recall that $\cP(e)$ denote the dual of the edge $e$. Set $\upsilon=\kappa(1+2d/m)/2$. Since $n\upsilon\in \sN$ and $x\in\sZ_n^d$, we have
\begin{align*}
F_0+z_0&= \left[-\upsilon-\frac{1}{2n},\upsilon-\frac{1}{2n}\right]^{i-1}\times\left\{-\upsilon-\frac{1}{2n}\right\}\times \left[-\upsilon-\frac{1}{2n},\upsilon-\frac{1}{2n}\right]^{d-i}+x\\
&=\bigcup_{x\in F_0 \cap\sZ_n^d}\cP\left(\left\langle x-\frac{1}{n}\overrightarrow{e_i},x\right\rangle\right)
=\bigcup_{e\in\E_n^{i,+}[F_0]}\cP(e)\,.
\end{align*}
It follows that
\begin{align}\label{eq:flowmatchprel}
\psi_i^-(f_n^{prel}, F_0)&=(1-\ep^{\alpha/4})\left(\int_{F_0+z_0}\ssigma_p (y)\cdot \overrightarrow{e_i}d\cH ^{d-1}(y)\right)\,n^{d-1}\nonumber\\
&=(1-\ep^{\alpha/4})\left(\int_{\underset{e\in\E_n^{i,+}[F_0]}{\cup}\cP(e)}\ssigma_p (y)\cdot \overrightarrow{e_i}d\cH ^{d-1}(y)\right)\,n^{d-1}=\psi_i^{+}(\ssigma_p^{disc},F_0)\,.
\end{align}
Let us now consider the case where $F_0=x+\kappa(1+2d/m)\fC_i^+\in E_\kappa(x)\subset \Omega$. We have
\begin{align*}
F_0+z_0&= \left[-\upsilon-\frac{1}{2n},\upsilon-\frac{1}{2n}\right]^{i-1}\times\left\{\upsilon-\frac{1}{2n}\right\}\times \left[-\upsilon-\frac{1}{2n},\upsilon-\frac{1}{2n}\right]^{d-i}\\&=\bigcup_{x\in \left(F_0-\frac{1}{n} \overrightarrow{e_i} \right)\cap\sZ_n^d}\cP\left(\left\langle x,x+\frac{1}{n}\overrightarrow{e_i}\right\rangle\right)=\bigcup_{e\in\E_n^{i,-}[F_0-\frac{1}{n}\overrightarrow{e_i}]}\cP(e)\,.
\end{align*}
It follows that
\begin{align}\label{eq:flowmatchprelbis}
\psi_i^+\left(f_n^{prel}, F_0-\frac{1}{n}\overrightarrow {e_i}\right)&=(1-\ep^{\alpha/4})\left(\int_{F_0+z_0}\ssigma_p (y)\cdot \overrightarrow{e_i}d\cH ^{d-1}(y)\right)\,n^{d-1}\nonumber\\
&=(1-\ep^{\alpha/4})\left(\int_{\underset{e\in\E_n^{i,-}[F_0-\frac{1}{n}\overrightarrow{e_i}]}{\cup}\cP(e)}\ssigma_p (y)\cdot \overrightarrow{e_i}d\cH ^{d-1}(y)\right)\,n^{d-1}\nonumber\\
&=\psi_i^{+}(\ssigma_p^{disc},F_0-\frac{1}{n}\overrightarrow {e_i})\,.
\end{align}
We refer to figure \ref{fig:defz0} for the illustration of the choice of $z_0$.
\begin{figure}[H]
\begin{center}
\def\svgwidth{0.6\textwidth}
   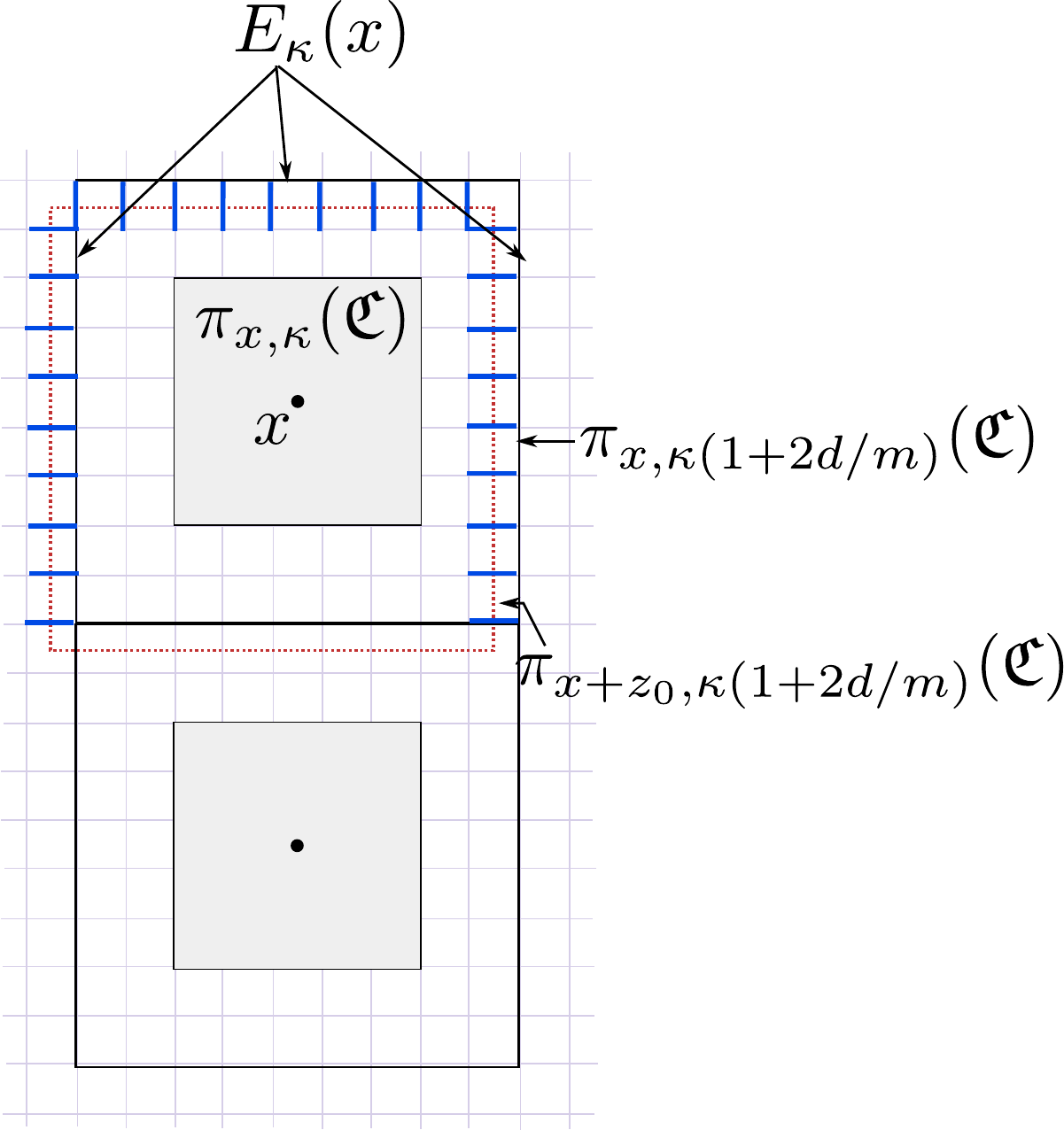
 
   \caption[figdefz0]{\label{fig:defz0} Choice of $z_0$. The edges in bold represent edges where we will affect the value given by $\ssigma_p ^{disc}$.
   }
   \end{center}
\end{figure}

$\bullet$ Let us first assume that $\ssigma_p(x)\cdot\overrightarrow{e_i}\geq 0$. Up to a translation of $-1/n \,\overrightarrow{e_i}$ the case $+$ is treated in the same way as the case $-$. To avoid cumbersome notations, we only treat the case where $\diamond=-$ but by seak of generality we do not replace $\diamond$ by $-$.
We have
$$\forall e\in\E_n^{i,\diamond}\left[\kappa\left(\fC_i^\diamond\diamond\frac{ d}{m}\overrightarrow{e_i}\right)+x\right] \qquad f_n ^{prel}(e)=\frac{1-\ep ^{\alpha/4}}{\kappa ^{d-1}} \left(\int_{F_0+z_0}\ssigma_p (y)\cdot \overrightarrow{e_i}d\cH ^{d-1}(y)\right)\,\overrightarrow{e_i}\,.$$
Besides, using \eqref{eq:contsigmap}, we have
$$\left|\frac{1}{\kappa ^{d-1}}\int_{F_0+z_0}\ssigma_p (y)\cdot \overrightarrow{e_i}d\cH ^{d-1}(y)-\left(1+\frac{2d}{m}\right) ^{d-1}\ssigma_p(x)\cdot \overrightarrow{e_i}\right|\leq \ep \left(1+\frac{2d}{m}\right) ^{d-1}\,$$
and
$$\left|n ^{d-1}\int_{\cP(e)}\ssigma_p(u)\cdot \overrightarrow{e_i}\,d\cH^{d-1}(u)-\ssigma_p(x)\cdot\overrightarrow{e_i}\right|\leq \ep\,.$$
It follows that for any $e\in\E_n^{i,\diamond}\left[\kappa\left(\fC_i^\diamond\diamond \frac{d}{m}\overrightarrow{e_i}\right)+x\right]$
\begin{align}\label{eq:controlefprel}
 (1-\ep ^{\alpha/4})\left(1+\frac{2d}{m}\right) ^{d-1}(\ssigma_p(x)\cdot \overrightarrow{e_i}-\ep) \leq f_n^{prel}(e)\cdot \overrightarrow{e_i}\leq (1-\ep ^{\alpha/4})\left(1+\frac{2d}{m}\right) ^{d-1}(\ssigma_p(x)\cdot \overrightarrow{e_i}+\ep)
\end{align}
and for any $e\in\E_n^{i,\diamond}[F_0]$
\begin{align}\label{eq:controlesigmapn}
(1-\ep ^{\alpha/4})(-\ssigma_p(x)\cdot \overrightarrow{e_i}-\ep) \leq- \ssigma_p ^{disc}(e)\cdot \overrightarrow{e_i}\leq (1-\ep ^{\alpha/4})(-\ssigma_p(x)\cdot \overrightarrow{e_i}+\ep)\,.
\end{align}
Combining the two previous inequalities for $e\in\E_n^{i,\diamond}\left[\kappa\left(\fC_i^\diamond\diamond \frac{d}{m}\overrightarrow{e_i}\right)+x\right]$, we obtain
\begin{align*}
(-\ssigma_p^{disc}(e)+f_n^{prel}(e))\cdot \overrightarrow{e_i}&\leq (1-\ep ^{\alpha/4})\left(\left (1 + \left(1+\frac{2d}{m}\right) ^{d-1}\right)\ep +\left ( \left(1+\frac{2d}{m}\right) ^{d-1}-1\right)\ssigma_p(x)\cdot \overrightarrow{e_i}\right)\\
&\leq 2^d\ep+2^{d-1}\frac{2d}{m}M\leq 2^{d+1}d M\ep^\alpha 
\end{align*}
for small enough $\ep$ depending on $d$ and $M$ where we recall that $m=\lfloor \ep^{-\alpha}\rfloor$.
Moreover, we have
\begin{align*}
(-\ssigma_p^{disc}(e)+f_n^{prel}(e))\cdot \overrightarrow{e_i}\geq (1-\ep ^{\alpha/4})\left(\left(\left(1+\frac{2d}{m}\right) ^{d-1}-1\right)\ssigma_p(x)\cdot \overrightarrow{e_i}-2^d\ep\right)\geq - 2^{d}\ep\geq -M\,
\end{align*}
for $\ep$ small enough depending on $M$.
For $e\in \E_n^{i,\diamond}[F_0]\setminus \E_n^{i,\diamond}\left[\kappa\left(\fC_i^\diamond\diamond \frac{d}{m}\overrightarrow{e_i}\right)+x\right]$, we have $f_n^{prel}(e)=0$,
\begin{align*}
(-\ssigma_p^{disc}(e)+f_n^{prel}(e))\cdot \overrightarrow{e_i}=-\ssigma_p^{disc}(e)\cdot \overrightarrow{e_i}=-(1-\ep^{\alpha/4})n ^{d-1}\int_{\cP(e)}\ssigma_p(u)\cdot \overrightarrow{e_i}\,d\cH^{d-1}(u)\geq -M\,
\end{align*}
and using inequality \eqref{eq:controlesigmapn}
$$(-\ssigma_p^{disc}(e)+f_n^{prel}(e))\cdot \overrightarrow{e_i}=-\ssigma_p^{disc}(e)\cdot \overrightarrow{e_i}\leq (1-\ep ^{\alpha/4})(-\ssigma_p (x)\cdot\overrightarrow{e_i}+\ep)\leq \ep\,.$$
We recall that we assume here that $\ssigma_p(x)\cdot \overrightarrow{e_i}\geq 0$.

We can index the edges of $\E_n^{i,\diamond}[F_0]$ by $\{1,\dots, \kappa(1+2d/m)n\}^{d-1}$.
We recall the definition of $\mathfrak{p}_i$ in \eqref{eq:defpi}. We set 
$$\forall e\in\E_n^{i,\diamond}[F_0]\qquad\zeta(e)=n\mathfrak{p}_i(c(e))+\left(\left\lfloor \frac{\kappa(1+2d/m)n}{2}\right\rfloor+1\right)\sum_{j\in\{1,\dots,d\}\setminus\{i\}}\mathfrak{p}_i(\overrightarrow{e_j})\,.$$
It is easy to check that $\zeta(e)\in\{1,\dots, \kappa(1+2d/m)n\}^{d-1}$ (we recall that $ \kappa(1+2d/m)n\in\sZ$).
 Set for any $e$ in $\E_n^{i,\diamond}[F_0]$ $$f_{in }(\zeta(e))=(-\ssigma_p^{disc}(e)+f_n^{prel}(e))\overrightarrow{e_i}\,.$$
If $e$ is such that $\zeta(e)\notin\{\kappa d n/ m+1, \kappa(1+d/m)n\}^{d-1}$, then 
$$f_{in}(\zeta(e))=-\ssigma_p ^{disc} (e)\cdot \overrightarrow{e_i}\,.$$
It follows that
$$\forall y \in \{1,\dots, \kappa(1+2d/m)n\}^{d-1}\qquad -M\leq f_{in}(y)\leq 2^{d+1}dM\ep ^\alpha\,.$$
To apply lemma \ref{lem:mixprecis}, we have to check that the sequence $(f_{in}(y),y\in\{1,\dots, \kappa(1+2d/m)n\}^{d-1})$ satisfies the conditions stated in this lemma.
First note that thanks to inequality \eqref{eq:flowmatchprel}, we have
$$\sum_{y\in\{1,\dots, \kappa(1+2d/m)n\}^{d-1}}f_{in}(y)=0\,.$$
By \eqref{eq:contsigmap}, we have 
\begin{align}\label{eq:difentrearetessigma}
\forall e_0,e_1\in\E_n^{i,\diamond}[F_0]\qquad |\ssigma_p ^{disc} (e_0)\cdot\overrightarrow{e_i}-\ssigma_p^{disc}(e_1)\cdot\overrightarrow{e_i}|\leq \ep\,.
\end{align}
We distinguish two cases.
 
$\triangleright$ We assume that $0\leq \ssigma_p(x)\cdot\overrightarrow{e_i}\leq 2^d\ep^{1-\alpha}$. In that case, for any $e_0\in\E_n ^{i,\diamond}[F_0]$, using \eqref{eq:controlefprel}, we have
$|f_n^{prel}(e_0)\cdot \overrightarrow{e_i}|\leq 2^{d-1}(2^d\ep^{1-\alpha}+\ep)$. It follows that for any $y,z\in\{1,\dots,\kappa(1+2d/m)n\}^{d-1}$, we have 
$$|f_{in}(y)-f_{in}(z)|\leq \ep + 2^{d}(2^d\ep^{1-\alpha}+\ep)\leq \ep ^{\alpha}\,$$
for $\ep$ small enough depending on $d$ where we recall that $\alpha <1/2$ (see \eqref{eq:defalpha}).

 $\triangleright$ We assume that $\ssigma_p(x)\cdot\overrightarrow{e_i}\geq 2^d\ep ^{1-\alpha}\geq 2^d\ep m$.
Let $l\in\{1,\dots,d-2\}$ and $u\in\{1,\dots, \kappa(1+2d/m)n\}^{l}$, if $u\notin \{\kappa d n/ m+1, \kappa(1+d/m)n\}^{l}$ then we have
$$\forall y\in \left\{1,\dots, \kappa\left(1+\frac{2d}{m}\right)n\right\}^{d-1-l}\qquad f_{in}(u,y)=-\ssigma_p ^{disc} (\zeta^{-1}(u,y)))\,$$ and using \eqref{eq:difentrearetessigma}, we have $$\forall y_0,y_1\in \left\{1,\dots, \kappa\left(1+\frac{2d}{m}\right)n\right\}^{d-1-l}\qquad |f_{in}(u,y_0)-f_{in}(u,y_1)|\leq \ep\,.$$
If $u\in \{\kappa d n/ m+1, \kappa(1+d/m)n\}^{l}$ using inequalities \eqref{eq:controlefprel} and \eqref{eq:controlesigmapn}
\begin{align*}
&\sum_{y\in \{1,\dots, \kappa(1+2d/m)n\}^{d-1-l}}f_{in}(u,y)\\
&\quad= -\sum_{y\in \{1,\dots, \kappa(1+2d/m)n\}^{d-1-l}}\ssigma_p^{disc}(\zeta ^{-1}(u,y))\cdot\overrightarrow{e_i} +\sum_{y\in \{\kappa dn/m,\dots, \kappa(1+d/m)n\}^{d-1-l}}f_n^{prel}(\zeta ^{-1}(u,y))\cdot\overrightarrow{e_i}\\
&\quad\geq (1-\ep ^{\alpha/4}) \left((-\ssigma_p(x)\cdot\overrightarrow{e_i}-\ep)\left(1+\frac{2d}{m}\right) ^{d-1-l}+\left(1+\frac{2d}{m}\right) ^{d-1}(\ssigma_p(x)\cdot\overrightarrow{e_i}-\ep)\right)(\kappa n) ^{d-1-l}\\
&\quad= (1-\ep ^{\alpha/4}) \left(\ssigma_p(x)\cdot\overrightarrow{e_i}\left(\left(1+\frac{2d}{m}\right) ^{l}-1\right)-\ep\left(1+\left(1+\frac{2d}{m}\right) ^{l}\right)\right)(\kappa n) ^{d-1-l}\left(1+\frac{2d}{m}\right) ^{d-1-l}\\ 
&\quad\geq (1-\ep ^{\alpha/4}) \left(\frac{2d}{m}\ssigma_p(x)\cdot \overrightarrow{e_i}-2^{d-1}\ep\right)(\kappa n) ^{d-1-l}\\
&\quad \geq (1-\ep ^{\alpha/4})(2^{d+1}d-2^{d-1})\ep\geq 0  \,.
\end{align*}
In both cases, the conditions to apply lemma \ref{lem:mixprecis} are fulfilled.
By lemma \ref{lem:mixprecis}, there exists a stream $g_n^{x,F_0}$ in $\cyl(F_0,(d-1)\kappa(1+2d/m),\diamond\overrightarrow{e_i})\subset \Omega$ such that 
\[\forall e\in\E_n^{i,\diamond}[F_0]\qquad g_n^{x,F_0}\left(e\diamond\frac{\overrightarrow{e_i}}{n}\right)=-\ssigma_p^{disc}(e)+f_n^{prel}(e)\,.\]
The stream $g_n^{x,F_0}$ satisfies the node law everywhere except for points in $\sZ_n^d\cap F_0$.
Moreover, we have for any edge $e\in\E_n^d\cap\cyl(F_0,(d-1)\kappa,\diamond\overrightarrow{e_i})$ parallel to $\overrightarrow{e_i}$:
\[g_n^{x,F_0}(e)\cdot\overrightarrow{e_i}\in[-M,2^{d+1}dM\ep ^{\alpha}]\,\]
and by \eqref{eq:contsigmap}
\[g_n^{x,F_0}(e)\cdot\overrightarrow{e_i}+\ssigma_p ^{disc}(e)\cdot \overrightarrow{e_i}\geq -M+ \ssigma_p(x)\cdot \overrightarrow{e_i}-\ep\geq -M-\ep\]
and 
\[g_n^{x,F_0}(e)\cdot\overrightarrow{e_i}+\ssigma_p ^{disc}(e)\cdot \overrightarrow{e_i}\leq M+\ep +2^{d+1}dM\ep^\alpha\,.\]
For an edge $e$ parallel to $\overrightarrow{e_j}$ with $j\neq i$:
\[\|g_n^{x,F_0}(e)\|_2\leq 2^{d+1}dM\ep ^{\alpha}\,.\]

$\bullet$ Let us assume that $\ssigma_p(x)\cdot\overrightarrow{e_i}< 0$. Hence, we have $(- \ssigma_p(x))\cdot\overrightarrow{e_i}\geq 0$.
We can apply the previous case for $-\ssigma_p ^{disc}$ and $-f_n^{prel}$. Then, we multiply by $-1$ the stream we obtained. We end up with a discrete stream $g_n^{x,F_0}$ in $\cyl(F_0,(d-1)\kappa(1+2d/m),\diamond\overrightarrow{e_i})\subset \Omega$ such that 
\[\forall e\in\E_n^{i,\diamond}[F_0]\qquad g_n^{x,F_0}\left(e\diamond\frac{\overrightarrow{e_i}}{n}\right)=-\ssigma_p^{disc}(e)+f_n^{prel}(e)\,.\]
Moreover, we have for any edge $e\in\E_n^d\cap\cyl(F_0,(d-1)\kappa,\diamond\overrightarrow{e_i})$ parallel to $\overrightarrow{e_i}$:
\[g_n^{x,F_0}(e)\cdot\overrightarrow{e_i}\in[-2^{d+1}dM\ep ^{\alpha},M]\,\]
and by \eqref{eq:contsigmap}
\[g_n^{x,F_0}(e)\cdot\overrightarrow{e_i}+\ssigma_p ^{disc}(e)\cdot \overrightarrow{e_i}\leq M+ \ssigma_p(x)\cdot \overrightarrow{e_i}+\ep\leq M+\ep\]
and 
\[g_n^{x,F_0}(e)\cdot\overrightarrow{e_i}+\ssigma_p ^{disc}(e)\cdot \overrightarrow{e_i}\geq -M-\ep -2^{d+1}dM\ep^\alpha\,.\]
For an edge $e$ parallel to $\overrightarrow{e_j}$ with $j\neq i$:
\[\|g_n^{x,F_0}(e)\|_2\leq 2^{d+1}dM\ep ^{\alpha}\,.\]

Finally, we build $f_n\in\cS_n(\Gamma^1,\Gamma^2,\Omega)$ as follows
\[\forall e=\langle w,z\rangle\in\Omega\cap\E_n^d\qquad f_n(e)=\left\{
    \begin{array}{ll}
       f_n^{prel} (e) & \mbox{if $w,z\in\cup_{Q\in\fM_\kappa}Q$} \\
       \ssigma_p^{disc}(e)+\sum_{x\in\partial ^{int}\fM_\kappa}\sum_{F\in E_\kappa(x)}g_n^{x,F}(e)& \mbox{otherwise.}
    \end{array}
\right.\]
The node law is satisfied everywhere in $\Omega_n$ for $f_n$. Note that by construction of $\fM_\kappa$ each $e\in \cyl(F_0,(d-1)\kappa(1+2d/m),\diamond\overrightarrow{e_i})$ belongs at most to $2d$ such cylinder (one for each direction): for each $j\in\{1,\dots,d\}$ there exists at most one $\circ\in\{+,-\}$ and $y\in\partial ^{int}\fM_\kappa$ such that $F_1=\pi_{y,\kappa(1+2d/m)}(\fC_i ^\circ)\in E_\kappa(y)$ and $e\in \cyl(F_1,(d-1)\kappa(1+2d/m),\circ\overrightarrow{e_i})$. Indeed, let us assume there exists $x,y\in\partial ^{int}\fM_\kappa$ with $\kappa(1+2d/m)\fC_i^+ +x \in E_\kappa(x)$ and $\kappa(1+2d/m)\fC_i^-+y \in E_\kappa(y)$ such that 
\begin{align*}
e\in \cyl\left(\kappa\left(1+\frac{2d}{m}\right)\fC_i^+ +x-\frac{1}{n}\overrightarrow{e_i},(d-1)\kappa\left(1+\frac{2d}{m}\right),\overrightarrow{e_i}\right)\\\hfill\cap\, \cyl\left(\kappa\left(1+\frac{2d}{m}\right)\fC_i^- +y,(d-1)\kappa\left(1+\frac{2d}{m}\right),-\overrightarrow{e_i}\right)\,.
\end{align*}
 It follows that $y-x=t\overrightarrow{e_i}$ with $t<\kappa (1+2d/m)+2(d-1)\kappa (1+2d/m)=2d\kappa(1+2d/m)$.
Since $x,y \in\fM_\kappa$, we have $d_\infty( \pi_{x,\kappa(1+2d/m)},\partial \Omega)\geq d\kappa$ and $d_\infty( \pi_{y,\kappa(1+2d/m)},\partial \Omega)\geq d\kappa$. It yields that
$\pi_{x,\kappa(2d+1+2d/m)}(\fC)\cup\pi_{y,\kappa(2d+1+2d/m)}(\fC)\subset \Omega $. Since $\|y-x\|_\infty \leq 2d\kappa(1+2d/m)\leq \kappa(2d+1+2d/m)$ (the last inequality holds for large enough $m$). It is easy to check that $d_\infty (\pi_{x+\kappa(1+2d/m)\overrightarrow{e_i},\kappa(1+2d/m)},\partial \Omega)\geq d\kappa$. Hence, it follows that $x+\kappa(1+2d/m)\overrightarrow{e_i} \in\fM_\kappa$, this contradicts the fact that $\kappa(1+2d/m)\fC_i^+ +x\in E_\kappa(x)$.

It follows that for any $e\in\Omega$, we have
$$\|f_n(e)\|_2\leq M+2^{d+1}d^2M\ep ^\alpha+\ep\leq M(1+\ep ^{\alpha/2})\,$$
for $\ep$ small enough depending on $d$ and $M$.
%The stream $\widehat{f}_n=(1-\ep ^{\alpha/4})(1 -\mathrm{H}(\ep)/M)f_n$ respects the capacity constraint for $\ep$ small enough depending on $d$.

\noindent {\bf  Conclusion.}
By proposition \ref{prop:minkowski} and \eqref{eq:inclusionbordomega}, we have for $\kappa$ small enough depending on $\Omega$,
\begin{align*}
\cL^d(\Cor)&\leq \cL^d(\cV_2(\partial \Omega,2d^2\kappa))+\frac{\cL^d(\Omega)}{\kappa ^d}\kappa ^{d}\left(\left(1+\frac{2d}{m}\right)^d-1\right)\\
&\leq 8\cH ^{d-1}(\partial \Omega)d^2\kappa +\cL ^d(\Omega)2^{d+1}\frac{d}{m}\leq 8\cH ^{d-1}(\partial \Omega)d\delta(\ep) +\cL ^d(\Omega)2^{d+1}\frac{d}{m(\ep)}
\end{align*}
where we use \eqref{eq:binomenewton} in the second inequality.
We recall that $\delta(\ep)$ and $1/m(\ep)$ goes to $0$ when $\ep$ goes to $0$, we recall that $\delta(\ep)$ depends on $p$.
We set 
$$\mathrm{H}(\ep)=\inf\left\{a>0:G([M-a,M]\leq 8\cH ^{d-1}(\partial \Omega)d\delta(\ep) +\cL ^d(\Omega)2^{d+1}\frac{d}{m(\ep)}\right\}\,.$$
 We can prove as in the proof of theorem \ref{thmbrique} equality \eqref{eq:cortend0}, that $\lim_{\ep\rightarrow 0}\mathrm{H}(\ep)=0$ and for any $p\geq 1$
 \begin{align}\label{eq:corid}
\lim_{\ep\rightarrow 0}\limsup_{n\rightarrow\infty}\cL^d(\Cor)\log G([M-\mathrm{H}(\ep),M])=0\,.
\end{align}
Besides, we have
\begin{align*}
\cH^{d-1}(\partial \Cor)\leq |\fM_\kappa|2d\kappa^{d-1}+\cH^{d-1}(\partial \Omega)\leq \cL ^d(\Omega)\frac{2d}{\kappa}+\cH^{d-1}(\partial \Omega)\,.
\end{align*}
Using an inequality similar to \eqref{eq:contdis4}, it follows that for $n$ large enough (depending on $\ep$)
\begin{align}\label{eq:conccontrcardcor}
|\{e\in\E_n^d:e\in\Cor\}|\leq 3dn^d\cL^d(\Cor)\,.
\end{align}
We set $\widetilde{f}_n=(1-\ep ^{\alpha/2})(1-\mathrm{H}(\ep)/M)f_n$. Hence for any $e\in \Omega$, we have
$$\|\widetilde f _n (e)\|_2 \leq (1-\ep ^{\alpha/2})\left(1-\frac{\mathrm{H}(\ep)}{M}\right)M(1+\ep ^{\alpha/2})\leq M-\mathrm{H}(\ep)\,.$$
Hence, on the event $\cap_{x\in\fM_\kappa}\cE_\kappa(x)\cap \{\forall e\in\Cor\quad t(e)\geq M-\mathrm{H}(\ep)\}$, we have $\widetilde{f}_n\in\cS_n(\Gamma^1,\Gamma ^2,\Omega)$. 
 Using lemmas \ref{lem:propdis2} and  \ref{lem:propdis4}, we have for $n$ large enough
\begin{align*}
\dis(\amu_n(f_n),\ssigma_p^\kappa\cL^d)&\leq \sum_{x\in\fM_\kappa}\dis(\amu_n(f_n)\ind_{\pi_{x,\kappa}(\fC)},\ssigma_p(x)\ind_{\pi_{x,\kappa}(\fC)}\cL^ d)+2\|\ssigma_p^\kappa \ind_{\pi_{x,\kappa(1+2d/m)}(\fC)\setminus \pi_{x,\kappa}(\fC)} \|_{L^1}\\
&\qquad+\frac{2}{n^d}\sum_{e\in\E_n^d\cap \Cor}\|f_n(e)\|_2\\
&\leq 12 \ep^{\alpha_0}\cL^d(\Omega)+6d M\cL^d(\Cor)\,.
\end{align*}
Using inequality \eqref{eq:sigmad} and lemma \ref{lem:propdis2}, it follows that
\begin{align}\label{eq:disfn}
\dis(\ssigma_p^\kappa\cL^d,\ssigma\cL^d)&\leq \dis(\ssigma_p^\kappa\cL^d,\ssigma_p\ind_\Omega\cL^d)+ \dis(\ssigma_p\ind_\Omega\cL^d,\ssigma'\ind_\Omega\cL^d)+ \dis(\ssigma'\ind_\Omega\cL^d,\ssigma\cL^d)\nonumber\\
&\leq 2\|\ssigma_p^\kappa-\ssigma_p\ind_\Omega\|_{L^1}+2\|\ssigma'-\ssigma_p\|_{L^1}+2\|\ssigma'\ind_\Omega-\ssigma\|_{L^1}\nonumber\\
&\leq 2\ep \cL^d(\Omega)+20d^3M\cH^{d-1}(\Gamma)\kappa+2\|\ssigma'-\ssigma_p\|_{L^1}+2\eta\,.
\end{align}
Moreover, using inequality \eqref{eq:disfn}, we have
\begin{align*}
\dis(\amu(\widetilde{f}_n),\ssigma\cL^d)&\leq \dis(\amu(\widetilde{f}_n),\amu(f_n))+\dis(\amu(f_n),\ssigma_p ^ \kappa\cL^d)+\dis(\ssigma_p^\kappa\cL^d,\ssigma\cL^d)\\
&\leq 2d\cL^d(\cV_\infty(\Omega,1))\left(\frac{\mathrm{H}(\ep)}{M}(1-\ep ^{\alpha/2})+\ep ^{\alpha/2}\right)M +6dM\cL^d(\Cor)+(2\ep+12 \ep^{\alpha_0})\cL^d(\Omega)\\&\qquad+20d^3M\cH^{d-1}(\Gamma)\kappa+2\|\ssigma'-\ssigma_p\|_{L^1}+2\eta\,.
\end{align*}
Hence, using the independence, we have for $p$ large enough depending on $\eta$ for $\ep$ small enough depending on $p$, $d$ and $M$ and then $n$ large enough depending on $\ep$
\begin{align}\label{eq:prodind}
&\prod_{e\in\Cor\cap\E_n^d}\Prb(t(e)\geq M-\mathrm{H}(\ep) )\prod_{x\in\fM_\kappa}\Prb(\cE_\kappa(x))\leq \Prb(\exists \widetilde{f}_n\in\cS_n(\Gamma_1,\Gamma_2,\Omega):\, \dis(\amu_n(\widetilde{f}_n),\ssigma\cL^d)\leq 3\eta)
\end{align} 
where we recall that $\kappa$ goes to $0$ when $\ep$ goes to $0$.
We set $n_0= n\kappa $.
 Let $x\in \Omega \setminus \Cor$. Note that $\|\ssigma_p(x)\ind_\fC-\ssigma_p(c(x))\ind_{\fC}\|_{L^1}\leq \ep$ where $c(x)\in\fM_\kappa$ such that $x\in \pi_{c(x),\kappa}(\fC)$. By lemma \ref{lem:propdis2}, it yields 
 $$\dis(\amu_{n_0}(f_{n_0}),\ssigma_p(c(x))\ind_ {\fC}\cL^d)\leq \dis(\amu_{n_0}(f_{n_0}),\ssigma_p(x)\ind_ {\fC}\cL^d)+2\ep\,.$$
We can apply lemma \ref{lem:scaling1} and use the previous inequality
\begin{align}\label{eq:eqn2b}
\Prb(\cE_\kappa(c(x))&= \Prb\left( \begin{array}{c}\exists f_n\in\cS_n(\pi_{c(x),\kappa}(\fC)):\dis(\amu_n(f_n),\ssigma_p(c(x))\ind_ {\pi_{c(x),\kappa}(\fC)})\leq 12 \ep^{\alpha_0} \kappa ^d,\\\forall \diamond\in\{+,-\}\,\forall i\in\{1,\dots,d\}\,\forall A\in\cP_i^\diamond(m)\\\psi_i^\diamond(f_n,\pi_{c(x),\kappa}(A))=(1-\ep ^{\alpha/4})\left(\int_{\pi_{c(x)+z_0,2\upsilon}(\fC_i ^\diamond)}\ssigma_p (y)\cdot \overrightarrow{e_i}d\cH ^{d-1}(y)\right)\,\frac{n ^{d-1} }{m^{d-1}}\end{array}\right)\nonumber\\
&\geq \Prb\left(\begin{array}{c}\exists f_{n_0}\in\cS_{n_0}(\fC):\quad \dis(\amu_{n_0}(f_{n_0}),\ssigma_p(c(x))\ind_ {\fC}\cL^d)\leq 3 \ep^{\alpha_0} ,\\\forall \diamond\in\{+,-\}\,\forall i\in\{1,\dots,d\}\,\forall A\in\cP_i^\diamond(m)\\\psi_i^\diamond(f_{n_0},A)=(1-\ep ^{\alpha/4})\left(\int_{\pi_{c(x)+z_0,\kappa(1+2d/m)}(\fC_i ^\diamond)}\ssigma_p (y)\cdot \overrightarrow{e_i}\,d\cH ^{d-1}(y)\right)\frac{n_0 ^{d-1}}{(m\kappa)^{d-1}} \end{array}\right)\nonumber\\
&\geq\Prb\left(\begin{array}{c}\exists f_{n_0}\in\cS_{n_0}(\fC):\quad \dis(\amu_{n_0}(f_{n_0}),\ssigma_p(x)\ind_ {\fC}\cL^d)\leq  \ep^{\alpha_0} ,\\\forall \diamond\in\{+,-\}\,\forall i\in\{1,\dots,d\}\,\forall A\in\cP_i^\diamond(m)\\\psi_i^\diamond(f_{n_0},A)=(1-\ep ^{\alpha/4})\left(\int_{\pi_{c(x)+z_0,\kappa(1+2d/m)}(\fC_i ^\diamond)}\ssigma_p (y)\cdot \overrightarrow{e_i}\,d\cH ^{d-1}(y)\right)\frac{n_0 ^{d-1}}{(m\kappa)^{d-1}} \end{array}\right)\,.
\end{align}
%where we also use the fact that $|\ssigma_p(x)-\ssigma_p(c(Q(x))|\leq \ep$.
We check that the conditions to apply lemma \ref{lem:toutefamille} are satisfied.
Since $\diver\ssigma_p=0$, we have by Gauss-Green theorem applied to $\ssigma_p$ in $\pi_{c(x)+z_0,\kappa(1+2d/m)}(\fC)$
\begin{align*}
\sum_{i=1}^d\int_{\pi_{c(x)+z_0,\kappa(1+2d/m)}(\fC_i ^-)}\ssigma_p (y)\cdot \overrightarrow{e_i}\,d\cH ^{d-1}(y)&=\sum_{i=1}^d\int_{\pi_{c(x)+z_0,\kappa(1+2d/m)}(\fC_i^+)}\ssigma_p(y)\cdot \overrightarrow{e_i}d\cH^{d-1}(y)
\end{align*}
Moreover, for all $\diamond\in\{+,-\}$, for all $i\in\{1,\dots,d\}$, and for all $ A\in\cP_i^\diamond(m)$, we have
\begin{align*}
&\left|\frac{1}{(m\kappa )^{d-1}}\int_{\pi_{c(x)+z_0,\kappa(1+2d/m)}(\fC_i ^\diamond)}\ssigma_p (y)\cdot \overrightarrow{e_i}d\cH ^{d-1}(y)-\cH^{d-1}(A)\ssigma_p(x)\cdot \overrightarrow{e_i}\right|\\
&\quad =\frac{1}{(m\kappa) ^{d-1}}\left|\int_{\pi_{c(x)+z_0,\kappa(1+2d/m)}(\fC_i ^\diamond)}\ssigma_p (y)\cdot \overrightarrow{e_i}d\cH ^{d-1}(y)-\cH^{d-1}(\pi_{c(x)+z_0,\kappa}(\fC_i ^\diamond))\ssigma_p(x)\cdot \overrightarrow{e_i}\right|\\
&\quad\leq \frac{1}{(m\kappa )^{d-1}}\int_{\pi_{c(x)+z_0,\kappa}(\fC_i ^\diamond)}\|\ssigma_p (y)-\ssigma_p(x)\|_2d\cH ^{d-1}(y)+\frac{1}{(m\kappa)^{d-1}}\kappa ^{d-1}\left(\left(1+\frac{2d}{m}\right) ^{d-1}-1\right)\|\ssigma_p\cdot\overrightarrow{e_i}\|_{L^\infty}\\
&\quad\leq \left( \ep +2^dM\frac{d}{m}\right)\cH^{d-1}(A)\,.
\end{align*}
where we use in the last inequality that $2d/m\leq 1$ and inequality \eqref{eq:binomenewton}. 
We recall that $m=\lfloor \ep ^{-\alpha}\rfloor$, hence we have for $\ep$ small enough
$$\left|\frac{1}{(m\kappa) ^{d-1}}\int_{\pi_{x+z_0,\kappa(1+2d/m)}(\fC_i ^\diamond)}\ssigma_p (y)\cdot \overrightarrow{e_i}d\cH ^{d-1}(y)-\cH^{d-1}(A)\ssigma_p(x)\cdot \overrightarrow{e_i}\right|\leq 2 ^{d+2}M d\ep ^{\alpha}\cH^{d-1}(A)\,.$$
It follows that the conditions to apply lemma \ref{lem:toutefamille} are fulfilled. Thanks to theorem \ref{thmbrique} and inequality \eqref{eq:eqn2b}, it yields
\begin{align}\label{eq:scaling1}
\limsup_{\ep\rightarrow 0}\limsup_{n\rightarrow\infty}-\frac{1}{n_0^d}\log \Prb(\cE_\kappa(c(x))\leq \limsup_{\ep\rightarrow 0}\limsup_{n_0\rightarrow\infty}-\frac{1}{n_0^d}\log \Prb(\cE_\kappa(c(x))\leq  I(\ssigma_p(x))\,.
\end{align}

Besides, we have 
\begin{align}\label{eq:eqn1}
\limsup_{\epsilon\rightarrow 0}\limsup_{n\rightarrow\infty}\sum_{x\in\fM_\kappa}-\frac{1}{n^d}\log\Prb (\cE_\kappa(x) )&=\limsup_{\ep\rightarrow 0}\limsup_{n\rightarrow\infty}\int_{\Omega}-\frac{1}{n_0^d}\log\Prb (\cE_\kappa(c(x)) \ind_{\underset{w\in\fM_\kappa}{\cup}\pi_{w,\kappa}(\fC)}(x)d\cL ^d(x)\,.
\end{align}
We would like to use the reverse Fatou Lemma. Fix $\ep>0$. To be able to use this lemma we need to upperbound the integrand uniformly on $\ep$ and $n$ by an integrable function. To do so, we need to use inequalities from the proofs of lemma \ref{lem:lemexistencefamille} and lemma \ref{lem:toutefamille}. We have using inequalities \eqref{eq:eqn2b}, \eqref{eq3.1:3} and \eqref{eq3.1:4} 
\begin{align*}
-\frac{1}{n_0^d}\log\Prb (\cE_\kappa(c(x)) &\leq -\frac{1}{n_0^d}\log \Prb\left( \exists f_{n_0}\in\cS_{n_0}(\fC) : \begin{array}{c}\,\forall \diamond\in\{+,-\}\,\forall i\in\{1,\dots,d\}\,\forall A\in\cP_i^\diamond(m) \quad \psi_i^\diamond(f_{n_0},A)=\lambda_A^\diamond\\ \text{and }\,\dis\big(\amu_{n_0}(f_{n_0}),\ssigma_p(x)\ind_{\fC}\cL^d\big)\leq \ep^{\alpha}\end{array}\right)\\
&\qquad-\kappa'_d \log G\left(\left[\frac{\|\ssigma_p(x)\|_2}{2d},+\infty\right[\right)
\end{align*}
where $(\lambda_A^+)_A$ and $(\lambda_A^-)$ are the families defined in lemma \ref{lem:lemexistencefamille} associated with $\ssigma(x)$ and $\ep$. Note that $K\geq 1$ in \eqref{eq3.1:4}.
Finally, using equality \eqref{eq3.1:2}, we obtain 
\begin{align}\label{eq:controlereversefatou}
-\frac{1}{n_0^d}\log\Prb (\cE_\kappa(c(x)) &\leq -\frac{1}{n_0^d}\log \Prb\left( \exists f_{n_0}\in\cS_{n_0}(\fC) : \dis\big(\amu_{n_0}(f_{n_0}),\ssigma_p(x)\ind_{\fC}\cL^d\big)\leq \ep\right)\nonumber\\
&\qquad-\frac{2dm^{d-1}}{n_0^d}\log\left(\frac{m ^{d-1}\sqrt{\ep}}{4\kappa_d\ep^\alpha n_0^{d-1}}\right)-\kappa'_d \log G\left(\left[\frac{M}{\sqrt {2d}},+\infty\right[\right)\,.
\end{align}
Besides, using lemma \ref{lem:propdis2}, we have
\begin{align*}
\Prb&\left( \exists f_{n_0}\in\cS_{n_0}(\fC) : \dis\big(\amu_{n_0}(f_{n_0}),\ssigma_p(x)\ind_{\fC}\cL^d\big)\leq \ep\right)\\&\hspace{3cm}\geq \Prb\left( \exists f_{n_0}\in\cS_{n_0}(\fC) : \dis\big(\amu_{n_0}(f_{n_0}),(1-\ep/(4dM))\ssigma_p(x)\ind_{\fC}\cL^d\big)\leq \ep/2\right)\,.
\end{align*}
It follows that using inequality \eqref{eq:controleIparG} for $n$ large enough
\begin{align*}
 -\frac{1}{n_0^d}\log \Prb\left( \exists f_{n_0}\in\cS_{n_0}(\fC) : \dis\big(\amu_{n_0}(f_{n_0}),\ssigma_p(x)\ind_{\fC}\cL^d\big)\leq \ep\right)&\leq -d\log G([(1-\ep/(4dM))\|\ssigma_p(x)\|_\infty,M])\\&\leq-d\log G([(1-\ep/(4dM))M,M]) \,.
 \end{align*}
We can therefore use reverse Fatou lemma for a fixed $\ep$, we obtain
\begin{align*}
\limsup_{n\rightarrow\infty}\sum_{x\in\fM_\kappa}-\frac{1}{n^d}\log\Prb (\cE_\kappa(x) )&\leq\int_{\Omega}\limsup_{n\rightarrow\infty}-\frac{1}{n_0^d}\log\Prb (\cE_\kappa(c(x)) \ind_{\underset{w\in\fM_\kappa}{\cup}\pi_{w,\kappa}(\fC)}(x)d\cL ^d(x)\\&\leq\int_{\Omega}\limsup_{n_0\rightarrow\infty}-\frac{1}{n_0^d}\log\Prb (\cE_\kappa(c(x)) \ind_{\underset{w\in\fM_\kappa}{\cup}\pi_{w,\kappa}(\fC)}(x)d\cL ^d(x)\,.
\end{align*}
Using inequality \eqref{eq:controlereversefatou}, we have
\begin{align*}
\limsup_{n_0\rightarrow\infty}&-\frac{1}{n_0^d}\log\Prb (\cE_\kappa(c(x))\\
& \leq\limsup_{n_0\rightarrow\infty} -\frac{1}{n_0^d}\log \Prb\left( \exists f_{n_0}\in\cS_{n_0}(\fC) : \dis\big(\amu_{n_0}(f_{n_0}),\ssigma_p(x)\ind_{\fC}\cL^d\big)\leq \ep\right)-\kappa'_d \log G\left(\left[\frac{M}{\sqrt{2d}},+\infty\right[\right)\\
&\leq I(\ssigma_p(x))-\kappa'_d \log G\left(\left[\frac{M}{2d},+\infty\right[\right)
\end{align*}
and the right hand side is integrable on $\Omega$, we can use again the reverse Fatou lemma, we obtain
\begin{align*}
\limsup_{\epsilon\rightarrow 0}\limsup_{n\rightarrow\infty}\sum_{x\in\fM_\kappa}-\frac{1}{n^d}\log\Prb (\cE_\kappa(x) )&\leq\int_{\Omega}\limsup_{\ep\rightarrow 0}\limsup_{n_0\rightarrow\infty}-\frac{1}{n_0^d}\log\Prb (\cE_\kappa(c(x)) \ind_{\underset{w\in\fM_\kappa}{\cup}\pi_{w,\kappa}(\fC)}(x)d\cL ^d(x)\,.
\end{align*}
%&\leq \int_{\Omega}\limsup_{\ep\rightarrow 0}\limsup_{n\rightarrow\infty}-\frac{1}{n_0^d}\log\Prb (\cE_\kappa(c(x)) \ind_{\underset{w\in\fM_\kappa}{\cup}\pi_{w,\kappa}(\fC)}(x)d\cL ^d(x)
Combining inequalities \eqref{eq:eqn1} and \eqref{eq:scaling1}, we obtain
\begin{align}\label{eq:eqn3b}
\limsup_{\epsilon\rightarrow 0}\limsup_{n\rightarrow\infty}\sum_{x\in\fM_\kappa}-\frac{1}{n^d}\log\Prb (\cE_\kappa(x)) \leq \int_{\Omega}I(\ssigma_p(x))d\cL^d(x)=\widehat{I}(\ssigma_p)\,.
\end{align}
Finally, using inequalities \eqref{eq:corid}, \eqref{eq:conccontrcardcor}, \eqref{eq:prodind} and \eqref{eq:eqn3b}, we obtain 
\begin{align*}
\limsup_{n\rightarrow\infty}-\frac{1}{n^d}\log \Prb(\exists f_n\in\cS_n(\Gamma_1,\Gamma_2,\Omega):\, \dis(\amu_n(f_n),\ssigma)\leq 3 \eta)\leq \widehat{I}(\ssigma_p)\,.
\end{align*}
By proposition \ref{prop:continuityIhat}, we have
\[\lim_{p\rightarrow\infty}\widehat{I}(\ssigma_p)=\widehat{I}(\ssigma')\,.\]
By the properties of $\ssigma'$, we have 
\begin{align*}
\limsup_{n\rightarrow\infty}-\frac{1}{n^d}\log \Prb(\exists f_n\in\cS_n(\Gamma_1,\Gamma_2,\Omega):\, \dis(\amu_n(f_n),\ssigma)\leq 3 \eta)\leq \widehat{I}(\ssigma)+\eta \,.
\end{align*}
The result follows by letting $\eta$ go to $0$.
\end{proof}
\subsection{Proof of proposition \ref{prop:prolsigma}\label{sec:5.3} }
\begin{proof}[Proof of proposition \ref{prop:prolsigma}] Let $\ssigma\in\Sigma(\Gamma^1,\Gamma ^2,\Omega)\cap \Sigma^M(\Gamma^1,\Gamma ^2,\Omega)$ such that $\widehat{I}(\ssigma)<\infty$.
By hypothesis \ref{hypo:omega}, there exist $\cS_1,\dots,\cS_l$ hypersurfaces of class $\sC^1$ such that $\Gamma\subset \cup_{i=1,\dots,l}\cS_i$.

\noindent {\bf Step 1. Decomposition of $\ssigma$.}
 Let $p\geq 1$.
 We denote by $\cN_p$ the following subset of $\Gamma$.
 $$\cN_p= (\Gamma \cap \cV_2(\partial_\Gamma \Gamma^1\cup\partial_\Gamma \Gamma^2\cup _{i=1,\dots,l}\partial _\Gamma (\cS_i\cap \Gamma),1/p))\cup\bigcup_{i=1,\dots ,d}\left\{x\in \Gamma : 0<|\nn_\Omega(x)\cdot \overrightarrow{e_i}|\leq 1/p\right\}\,.$$
We aim to decompose $\ssigma=\ssigma^{(p)}+\ssigma^{(p),res}$ such that 
\begin{itemize}
\item $\ssigma^{(p)},\ssigma^{(p),res}\in\Sigma(\Gamma^1,\Gamma ^2,\Omega)$;
\item $\ssigma^{(p)}\cdot \overrightarrow{n}_{\Omega}=0$ $\cH^{d-1}$-almost everywhere on $\cN_p$;
\item $\ssigma^{(p),res}$ is negligible in some sense.
\end{itemize}
We are going to build these continuous streams as the limit of discrete streams. 
Since $\ssigma\in\Sigma^M(\Gamma^1,\Gamma ^2,\Omega)$, there exist an increasing function $\psi:\sN\rightarrow \sN$ and  $f_{\psi(n)}\in\cS_{\psi(n)}^M (\Gamma^1,\Gamma^2,\Omega)$ for $n\in\sN$, such that 
$$\lim_{n\rightarrow \infty}\dis(\amu_{{\psi(n)}}(f_{{\psi(n)}}),\ssigma\cL^d)=0\,.$$ To lighten the notations, we will write $f_n$ instead of $f_{\psi(n)}$.
By lemma \ref{lem:res}, there exists a couple $(\oGam_n,(p(\ogam))_{\ogam\in\oGam_n})$ such that
$$f_n=\sum_{\overrightarrow{\gamma}\in\oGam}p(\ogam)\sum_{\langle\langle x,y\rangle\rangle\in\overrightarrow{\gamma}}n\,\overrightarrow{xy}\ind_{\langle x,y\rangle}\,$$
where $\oGam_n$ is a set of self-avoiding oriented path that have both extremities in $\Gamma_n^1\cup\Gamma_n ^2$. 
If there are several possible choices for this couple, we pick one according to a deterministic rule.
%\[\forall \ep>0\quad\forall n_0\geq 1\quad \exists n\geq n_0\quad\exists f_n\in \cS_n^M(\Gamma^1,\Gamma^2,\Omega)\qquad \dis(\amu_n(f_n),\ssigma)\leq \ep\,.\]
%Moreover, for any $\ogam\in \oGam$, if $\langle \langle x,y\rangle\rangle \in \ogam$ then 
We can decompose the set $\oGam_n$ into two disjoint sets $\oGam ^{(p)}_n$ and $\oGam^{(p),res}_n$
where $$\oGam^{(p),res}_n=\left\{\ogam\in\oGam: \,\ogam \text{ has at least one extremity in $\cV_2(\cN_p,d/n)$}\right\}$$
and $\oGam ^{(p)}_n=\oGam\setminus \oGam_n^{(p),res}$.
We set $$f_n^{(p)}=\sum_{\ogam\in \oGam^{(p)}_n}p(\ogam)\sum_{\overrightarrow{e}=\langle \langle x,y\rangle\rangle \in \ogam}n\,\overrightarrow{xy}\ind_{e}\,$$
and $f_n^{(p),res}=f_n- f_n^{(p)}$.
It is easy to check that $$f_n^{(p)}\in\cS_n^M( \Gamma ^1\setminus \cN_p,\Gamma^2 \setminus \cN_p,\Omega)\,.$$
Let $N\geq 1$. 
By compactness and lemma \ref{lem:convdiscstream}, up to extractions, we can assume that for any $p\in\{1,\dots,N\}$ the measure $\amu_n(f_n ^{(p)})$ converges weakly towards a stream $\ssigma^{(p)}\cL^d$ where $\ssigma^{(p)}\in\Sigma(\Gamma^1,\Gamma^2,\Omega)$ and $\ssigma^{(p)}\cdot \overrightarrow{n}_{\Omega}=0$ $\cH^{d-1}$-almost everywhere on $(\Gamma \setminus (\Gamma^1\cup\Gamma^2))\cup\cN_p$.
Besides, we recall that by lemma \ref{lem:res}, for any $e_0\in\E_n^d$ and any $p\geq 1$, we have 
$$(f_n^{(p)}(e_0)-f_n^{(p+1)}(e_0))\cdot f_n(e_0)=\sum_{\ogam\in \oGam^{(p)}_n\setminus \oGam^{(p+1)}_n}p(\ogam)\sum_{\overrightarrow{e}=\langle \langle x,y\rangle\rangle \in \ogam}n\,\overrightarrow{xy}\cdot f_n(e_0)\ind_{e}(e_0)\geq 0\,.$$
It follows that
\begin{align*}
\frac{1}{n^d}\sum_{e\in \E_n^d}\sum_{p=1}^N\|f_n ^{(p+1)}(e)-f_n^{(p)}(e)\|_2 &=\frac{1}{n^d}\sum_{e\in \E_n^d}\sum_{p=1}^N(f_n ^{(p+1)}(e)-f_n^{(p)}(e))\cdot \frac{f_n(e)}{\|f_n(e)\|_2}\\
&\leq \frac{1}{n^d}\sum_{e\in \E_n^d}\|f_n (e)\|_2\leq 2d\cL^d(\cV_\infty(\Omega,1))M\,.
\end{align*}
By inequality \eqref{eq:sigmaL1mun}, it follows that 
\[\sum_{p=1}^N\|\ssigma^{(p+1)}-\ssigma^{(p)}\|_{L^1}\leq 2d\cL^d(\cV_\infty(\Omega,1))M\,.\]
By letting $N$ go to infinity, we obtain
\[\sum_{p=1}^\infty\|\ssigma^{(p+1)}-\ssigma^{(p)}\|_{L^1}\leq 2d\cL^d(\cV_\infty(\Omega,1))M\,.\]
It follows that there exists $\ssigma_0\in \Sigma(\Gamma^1,\Gamma ^2,\Omega)$ such that
$\lim_{p\rightarrow\infty}\|\ssigma^{(p)}-\ssigma_0\|_{L^1}=0$. Note that in general, we don't have necessarily $\ssigma_0=\ssigma$. However, we prove that the stream $\ssigma_0-\ssigma$ has null divergence on $\sR^d$ and null trace on $\Gamma$. We set $\ssigma^{(p),res}=\ssigma-\ssigma^{(p)}$. 
By following the arguments steps 3 and 4 in the proof of lemma \ref{lem:convdiscstream} and proposition \ref{prop:minkowski}, we have for any $u\in\sC^\infty_c(\sR^d,\sR)$,  
\begin{align*}
\left|\int_{\sR ^d} \ssigma^{(p),res}\cdot \overrightarrow{\nabla}ud\cL ^d\right|&= \left| \lim_{n\rightarrow\infty}-\frac{1}{n ^{d-1}}\sum_{x\in (\Gamma_n^1\cup \Gamma_n ^2)\cap \cV_2(\cN_p,d/n)}u(x)\,\di f_n(x)\right|\\
&\leq d M\|u\|_\infty \lim_{n\rightarrow\infty}n\frac{|\sZ_n^d\cap \cV_2(\cN_p,d/n)|}{n ^{d}}\\
&\leq d M\|u\|_\infty \lim_{n\rightarrow\infty}n\cL^d(\cV_2(\cN_p,2d/n))\\
&\leq 8d^2 M\|u\|_\infty\cH^{d-1}(\cN_p)\,.
\end{align*}
Besides, since the manifolds intersect transversally for $i\neq j\in\{1,\dots,l\}$ the intersection $\cS_i\cap\cS_j$ is a sub-manifold of codimension 2 (see for instance chapter 1 paragraph 5 in \cite{GuilleminPollack}). It follows that $\cH^{d-1}(\cS_i\cap\cS_j)=0$ and
$$\cH^{d-1}(\partial_{\Gamma}(\cS_i\cap \Gamma))\leq \sum_{j\neq i}\cH^{d-1}(\cS_i\cap\cS_j)=0\,.$$
Since $\cH^{d-1}(\partial_\Gamma \Gamma^1\cup\partial_\Gamma \Gamma^2\cup_{i=1,\dots l}\partial_\Gamma (\cS_i\cap\Gamma))=0$, we have
$\lim_{p\rightarrow\infty}\ind_ {\cV_2(\partial_\Gamma \Gamma^1\cup\partial_\Gamma \Gamma^2\cup_{i=1,\dots l}\partial_\Gamma (\cS_i\cap\Gamma),1/p)}(x)=0$ for $\cH^{d-1}$-almost every $x$ in $\Gamma$. Hence, thanks to the dominated convergence theorem
\begin{align*}
\lim_{p\rightarrow\infty}\cH^{d-1}&(\Gamma \cap\cV_2(\partial_\Gamma \Gamma^1\cup\partial_\Gamma \Gamma^2\cup_{i=1,\dots l}\partial_\Gamma (\cS_i\cap\Gamma),1/p))\\
&=\int_{\Gamma}\lim_{p\rightarrow\infty}\ind_ {\cV_2(\partial_\Gamma \Gamma^1\cup\partial_\Gamma \Gamma^2\cup_{i=1,\dots l}\partial_\Gamma (\cS_i\cap\Gamma),1/p)}(x)d\cH^{d-1}(x)=0\,.
\end{align*}
Let $i\in\{1,\dots,d\}$. For $\cH^{d-1}$-almost every $x\in\Gamma$, the normal exterior vector $\nn_{\Omega}(x)$ is well defined. For every $x\in\Gamma$ such that $\nn_\Omega(x)$ is well defined, we have
$$\lim_{p\rightarrow \infty}\ind_{0<|\nn_{\Omega}(x)\cdot \overrightarrow{e_i}|\leq 1/p}=0\,.$$
Thanks to dominated convergence theorem, we have
$$\lim_{p\rightarrow \infty}\int_{\Gamma}\ind_{0<|\nn_{\Omega}(x)\cdot \overrightarrow{e_i}|\leq 1/p}d\cH^{d-1}(x)=0\,.$$
Finally, we have that 
$$\lim_{p\rightarrow \infty}\cH^{d-1}(\cN_p)=0\,$$ and for any $u\in\sC^\infty_c(\sR^d,\sR)$, we have
$$\lim_{p\rightarrow\infty} \int_{\sR ^d} \ssigma^{(p),res}\cdot \overrightarrow{\nabla}ud\cL ^d=0\,.$$
We have
\begin{align*}
\left|\int_{\sR ^d} (\ssigma^{(p),res}-\ssigma+\ssigma_0)\cdot \overrightarrow{\nabla}ud\cL ^d\right|&= \left|\int_{\sR ^d} (\ssigma_0-\ssigma^{(p)})\cdot \overrightarrow{\nabla}ud\cL ^d\right|\leq \|\overrightarrow{\nabla}u\|_{L ^\infty}\|\ssigma_0-\ssigma^{(p)}\|_{L^1}\,.
\end{align*}
 %for $\cL^d$ almost every $x\in\sR^d$
%$$|\ssigma^{(p),res}(x)\cdot \overrightarrow{\nabla}u(x)|\leq \|\ssigma^{(p),res}(x)\|_2\|\overrightarrow{\nabla}u|\|_{L ^\infty}\ind_\Omega(x)\leq 2dM\|\overrightarrow{\nabla}u|\|_{L ^\infty}\ind_\Omega(x)\,.$$
%Since the right hand side of the previous inequality is integrable on $\sR^d$, by the dominated convergence theorem, we get
It follows that for any $u\in \sC^\infty _c(\sR^d,\sR)$,
$$\lim_{p\rightarrow\infty} \int_{\sR ^d} \ssigma^{(p),res}\cdot \overrightarrow{\nabla}ud\cL ^d=\int_{\sR ^d} (\ssigma-\ssigma_0)\cdot \overrightarrow{\nabla}ud\cL ^d=0\,.$$
Hence, we have $\diver (\ssigma-\ssigma_0)=0$ $\cL^d$-almost everywhere on $\Omega$.
Furthermore, by equality \eqref{eq:caracterisationsigman}, we get
\begin{align*}
\int_{\sR ^d} (\ssigma-\ssigma_0)\cdot \overrightarrow{\nabla}ud\cL ^d=\int_{\Gamma} (\ssigma-\ssigma_0)\cdot \nn_\Omega \, ud\cH ^{d-1}\,.
\end{align*}
Finally, for any $u\in \sC^\infty _c(\sR^d,\sR)$, we have
$$\int_\Gamma ((\ssigma-\ssigma_0)\cdot \overrightarrow{n}_\Omega)ud\cH^{d-1}=0\,.$$
{\bf Step 2. Prolongation of the discrete stream.}
Let $r_0>0$ we will choose later. We define the prolongated version $\widetilde f_n^{(p)}$ of $f_n^{(p)}$ (see figure \ref{fig:prolstream}) as follows
\begin{align*}
\widetilde f_n^{(p)}=f_n^{(p)}+\sum_{x\in\Gamma_n^1\cup\Gamma_n^2\setminus \cV_2(\cN_p,d/n)}\sum_{i=1,\dots,d}\Bigg(\ind _{\nn_{\Omega}(\pi_i(x))\cdot \overrightarrow{e_i}\geq 1/p}\sum_{k=1}^{\lfloor r_0 n\rfloor}f_n^{(p)}(\langle x,x-\overrightarrow{e_i}/n\rangle)\ind_{\langle x+(k-1)\overrightarrow{e_i}/n,x+k\overrightarrow{e_i}/n\rangle}\\
\hfill +\ind _{\nn_{\Omega}(\pi_i(x))\cdot \overrightarrow{e_i}\leq -1/p}\sum_{k=1}^{\lfloor r_0 n\rfloor}f_n^{(p)}(\langle x,x+\overrightarrow{e_i}/n\rangle)\ind_{\langle x-(k-1)\overrightarrow{e_i}/n,x-k\overrightarrow{e_i}/n\rangle}\Bigg)
\end{align*}
where $\pi_i(x)\in \Gamma$ is the intersection between $\Gamma$ and $\{x+\lambda \overrightarrow{e_i},\lambda\in\sR\}$, if there are several intersection points, we pick the closest from $x$. Note that it may exist two disjoint points $x$ and $y$ in $\Gamma_n^1\cup\Gamma_n^2$ such that 
$\pi_i(x)=\pi_i(y)$. However, this is not an issue since by definition of $\cS_n^M (\Gamma^1,\Gamma^2,\Omega)$, we have $f_n^{(p)}(\langle x,y\rangle)=0$. 
 Roughly speaking, we obtain the stream $\widetilde f_n^{(p)}$ by prolongating the stream $f_n^{(p)}$ through straight lines.
\begin{figure}[H]
\begin{center}
\def\svgwidth{0.9\textwidth}
   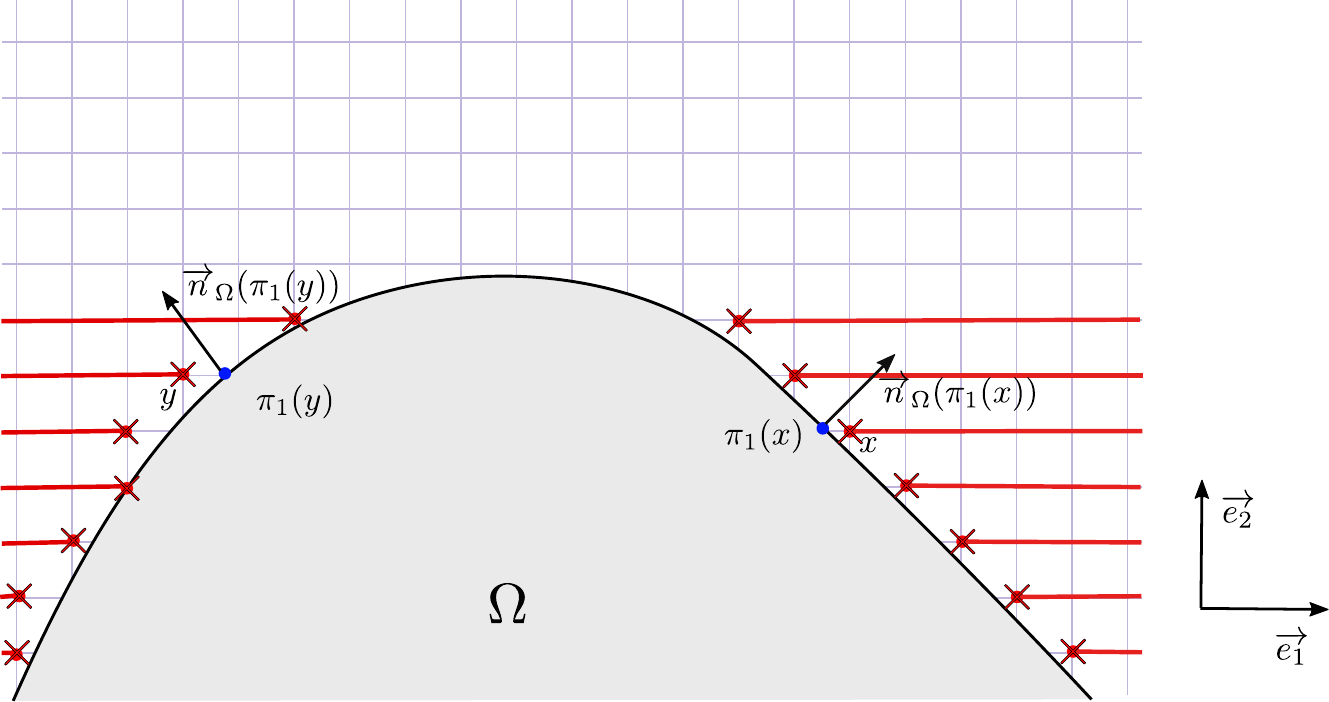
   \label{fig:prolstream}
   \caption[figprol]{The crosses correspond to points in $\Gamma_n^1$. By sake of clarity we only represent how we prolongate the stream in the direction $\overrightarrow{e_1}$ (represented by the bold lines). Note that in the figure, $\nn_\Omega(\pi_1(x))\cdot \overrightarrow{e_1}>0$ and  $\nn_\Omega(\pi_1(y))\cdot \overrightarrow{e_1}<0$. The corresponding $p$ is chosen big enough.}
   \end{center}
\end{figure}

{\bf Choice of $r_0$.} Let $x\in\overline{\Gamma^1\cup \Gamma^2 \setminus \cN_p}$. Since $\Omega$ is a Lipschitz domain, there exist $r>0$, an hyperplane $H_x$ containing $x$ of normal vector $\nn_x$ and $\phi_x: H\rightarrow \sR$ a Lipschitz function such that
\[B(x,r)\cap \Gamma = \left\{y+\phi_x(y)\nn_x: y\in H_x\cap B(x,r)\right\}\,.\]
Since $x\notin \cV_2(\cup_{j=1,\dots,l}\partial _\Gamma (\cS_j\cap \Gamma),1/2p)$, up to choosing a smaller $r$, we can assume that $B(x,r)\cap\Gamma \subset \cS_j$ for some $j\in\{1,\dots,l\}$ and for any $i\in\{1,\dots,d\}$, if $|\nn_\Omega(x)\cdot \overrightarrow{e_i}|\geq 1/p$ then for any $y\in \Gamma \cap B(x,r)$, we have $|\nn_\Omega(y)\cdot \overrightarrow{e_i}|\geq 1/(2p)$. If $\nn_\Omega(x)\cdot \overrightarrow{e_i}=0$ then for any $y\in \Gamma \cap B(x,r)$, we have $|\nn_\Omega(y)\cdot \overrightarrow{e_i}|\leq 1/(2p)$. We used the fact that the hypersurface $\cS_j$ is of class $\sC^1$. To each $x$ in $\Gamma^1\cup \Gamma^2 \setminus \cN_p$, we associate $r_x>0$ as above.
We can extract from the family $(B(x,r_x/2),x\in\overline{\Gamma^1\cup \Gamma^2\setminus \cN_p})$ a finite covering $(B(x_i,r_{x_i}/2),i\in I)$ of the compact set $\overline{\Gamma^1\cup\Gamma^2\setminus \cN_p}$. We set $$r_0=\frac{1}{4}\min\left(\min_{i\in I}r_{x_i},1\right)\,.$$

{\bf We prove that the stream does not exceed the capacity constraint.}
Let $e$ be an edge in the support of $\widetilde f _n ^{(p)}-f_n^{(p)}$. Let us assume $e$ has its two endpoints in $\Omega_n$. Let $i\in\{1,\dots,d\}$ be such that $e$ is colinear to $\overrightarrow{e_i}$. Then, there exists $x\in (\Gamma^1\cup\Gamma^2)\setminus \cN_p$ such that $|\nn_\Omega(x)\cdot \overrightarrow{e_i}|\geq 1/p$ and $y\in \Gamma$ such that $x-y=t\overrightarrow{e_i}$ with $|t|\leq r_0$.
\begin{figure}[H]
\begin{center}
\def\svgwidth{0.9\textwidth}
   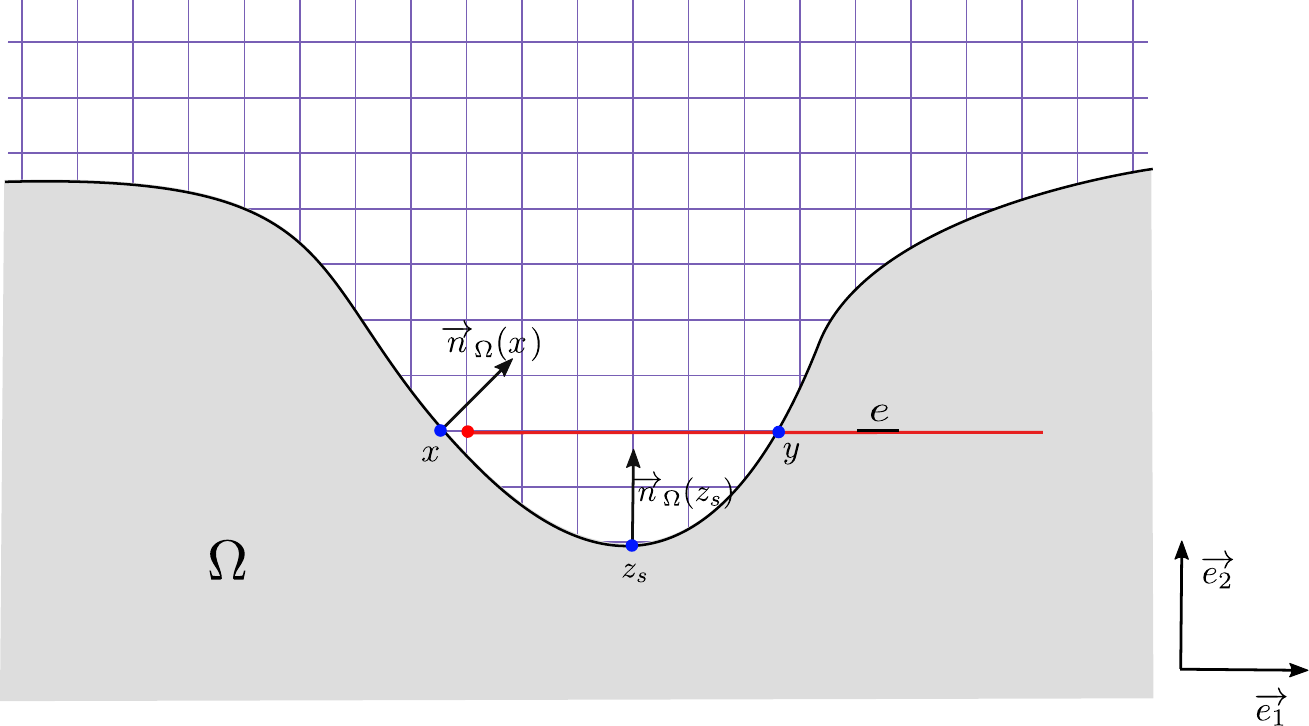
   \caption{Illustration of the case where there exists $e$ with its two endpoints in $\Omega_n$ in the support of $\widetilde f_n^{(p)}-f_n^{(p)}$.}
   \label{fig:caspb1}
   \end{center}
\end{figure}
 We claim that there does not exists any $y\in\Gamma$ such that $x-y=t\overrightarrow{e_i}$ with $|t|\leq 2r_0$. Since we prolongate the stream by the exterior  the existence of such $y$ implies that by prolongating the stream we have crossed $\Omega$ again. Indeed, assume such a $y$ exists. Let $j\in I$ such that $x\in B(x_j,r_{x_j}/2)$. Since $\|x-y\|_2\leq r_{x_j}/2$, we have that $y\in B(x_j,r_{x_j})$. Moreover, since $|\nn_\Omega(x)\cdot \overrightarrow{e_i}|\geq 1/p$, by definition of $r_{x_j}$, we have 
\begin{align}\label{eq:defzdansB} 
 \forall z \in \Gamma\cap B(x_i,r_{x_j})\qquad |\nn_\Omega(z)\cdot \overrightarrow{e_i}|\geq 1/2p\,.
 \end{align}
 Let us denote by $x'$ and $y'$ the points in $H_{x_j}$ (the hyperplane associated to $x_j
 $) such that $x=x'+\phi_{x_j}(x')\nn_{x_j}$ and $y=y'+\phi_{x_j}(y')\nn_{x_j}$. Let us denote by $\phi_0$ the following mapping
$$\forall s\in[0,1]\qquad \phi_0(s)=\phi_{x_j}((1-s)x'+sy')\,.$$
Since $x_j\notin\cN_p$ and $\Gamma$ is locally $\sC^1$ around $x$ we know that $\phi_0$ is of class $\sC^1$.
By the mean value theorem, there exists $s\in]0,1[$ such that $\phi_0'(s)=\phi_0(1)-\phi_0(0)=\phi_{x_j}(y')-\phi_{x_j}(x')$. In other words, the vector $y'-x'+(\phi_{x_j}(y')-\phi_{x_j}(x'))\nn_{x_j}=t\overrightarrow{e_i}$ belongs to the tangent space at the point $z_s=((1-s)x'+sy'+\phi_0(s)\nn_{x_j})\in \Gamma$ (see figure \ref{fig:caspb1}). Consequently, we have $$\nn_{\Omega}(z_s)\cdot \overrightarrow{e_i}=0\,.$$
This contradicts \eqref{eq:defzdansB}.
The support of $\widetilde f _n ^{(p)}-f_n^{(p)}$ does not contain edges with two endpoints in $\Omega_n$.

Let us assume that there exists $e\in\E_n^d$ such that $\|\widetilde f_n ^{(p)}(e)\|_2>M$. Necessarily $e$ is not in $\Omega$. Roughly speaking, in this situation, by prolongating the stream two disjoint points of $\Gamma$ use the same edge. Let $i\in\{1,\dots,d\}$ be such that $e$ is colinear to $\overrightarrow{e_i}$. Then there exists $x$ and $y$ in $\Gamma^1\cup\Gamma^2\setminus \cN_p$ such that 
 $|\nn_\Omega(x)\cdot \overrightarrow{e_i}|\geq 1/p$ and 
 $x-y=t\overrightarrow{e_i}$ with $|t|\leq 2r_0$ (see figure \ref{fig:caspb2}. By the same reasoning than above, this is excluded.
 \begin{figure}[H]
\begin{center}
\def\svgwidth{0.9\textwidth}
   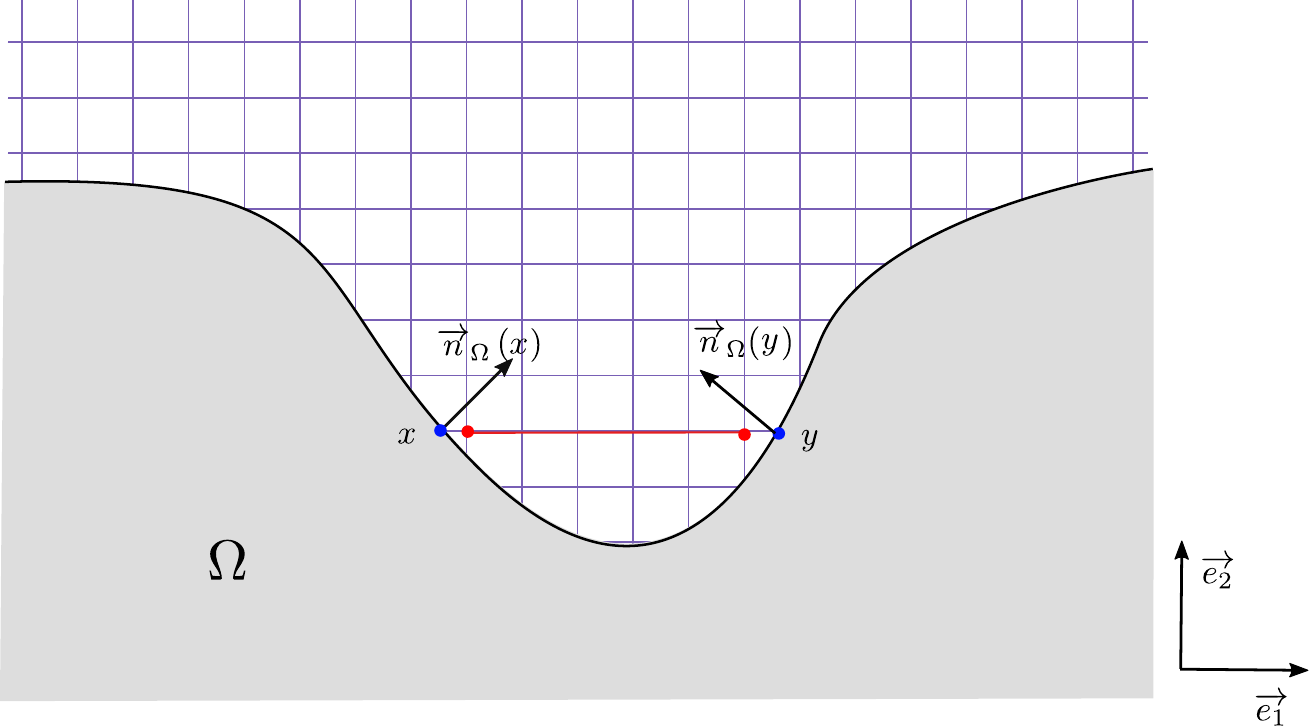
   \caption{Illustration of the case where there exists $e$ such that $\|\widetilde f _n (e)\|_2>M$.}
   \label{fig:caspb2}
   \end{center}
\end{figure}
{\bf We prove that the stream $\widetilde f_n^{(p)}$ satisfies the node law at $\Gamma_n^1\cup\Gamma_n^2$.}
Let $x\in\Gamma_n^1\cup\Gamma_n^2$. If $x\in \cV_2(\cN_p,d/n)$, then for any edge $e$ that has $x$ for endpoint, we have by construction $f_n^{(p)}(e)=\widetilde f_n ^{(p)}(e)=0$.
Let us now assume that $x\notin \cV_2(\cN_p,d/n)$. Assume there exists $i\in\{1,\dots,d\}$ such that $\nn_{\Omega}(\pi_i(x))\cdot\overrightarrow{e_i}=0$. Since $x\notin \cV_2(\cN_p,d/n)$, we have that for any $y\in B(x,d/n)\cap\Gamma $, $\nn_\Omega(y)\cdot \overrightarrow{e_i}=0$. In other words, $\Gamma$ is locally flat near $\pi_i(x)$. Consequently, it yields that
 $x+\overrightarrow{e_i}/n$ and $x-\overrightarrow{e_i}/n$ belong to $\Gamma_n^1\cup \Gamma_n^2$. By definition of $\cS_n^M(\Gamma_1,\Gamma_2,\Omega)$, we have $$f_n^{(p)}(\langle x,  x-\overrightarrow{e_i}/n\rangle)=f_n^{(p)}(\langle x,  x+\overrightarrow{e_i}/n\rangle)=f_n^{(p)}(\langle x,  x+\overrightarrow{e_i}/n\rangle)=0\,.$$
 Finally, we have
\begin{align*}
\di \widetilde f_n^{(p)}(x)&=\sum_{i=1,\dots,d}\widetilde f_n^{(p)}(\langle x,x-\overrightarrow{e_i}/n\rangle)\cdot \overrightarrow{e_i}-\widetilde f_n^{(p)}(\langle x,x+\overrightarrow{e_i}/n\rangle)\cdot \overrightarrow{e_i}\\
&=\sum_{i=1,\dots,d}\ind_{\nn_{\Omega}(\pi_i(x))\cdot \overrightarrow{e_i}\geq 1/p}(f_n^{(p)}(\langle x,x-\overrightarrow{e_i}/n\rangle)-f_n^{(p)}(\langle x,x-\overrightarrow{e_i}/n\rangle)\cdot \overrightarrow{e_i}\\
&\qquad\qquad+\ind_{\nn_{\Omega}(\pi_i(x))\cdot \overrightarrow{e_i}\leq -1/p}(f_n^{(p)}(\langle x,x+\overrightarrow{e_i}/n\rangle)-f_n^{(p)}(\langle x,x+\overrightarrow{e_i}/n\rangle)\cdot \overrightarrow{e_i}\\
&=0\,.
\end{align*}

\noindent{\bf Step 3. We prove that the prolongated discrete stream converges towards a continuous stream in an extended version of $\Omega$.}
We claim that the node law is satisfied for $\widetilde f_n ^{(p)}$ at any point in $\cV_2(\Omega, r_0/2p)$. We prove this result by contradiction. Assume there exists a point $w\in \cV_2(\Omega,r_0/2p)$ where the node law is not satisfied (see figure \ref{fig:caspb3}). Necessarily, $w\notin \Omega_n$ and there
exists $y\in \Gamma$ such that $w=y+t_0\nn_{\Omega}(y)$ with $t_0\leq r_0/2p$ (see figure \ref{fig:caspb3}. Since the node law is not respected only at the end of a prolongated line, \textit{i.e}, for $w\in\sZ_n^d$ such that there exists $x\in \Gamma$ that satisfies $w=x+ t\overrightarrow {e_i}$ with $|t|\geq r_0-1/n$ and $|\nn_\Omega(x)\cdot \overrightarrow{e_i}|\geq 1/p$. There exists $j\in I$ such that $x\in B(x_j,r_{x_j}/2)$ and $|\nn_\Omega(x_j)\cdot \overrightarrow{e_i}|\geq 1/p$. Since $\|x-y\|_2\leq 2r_0\leq r_{x_j}/2$ we have $y\in B(x_j,r_{x_j})$.
 \begin{figure}[H]
\begin{center}
\def\svgwidth{0.9\textwidth}
   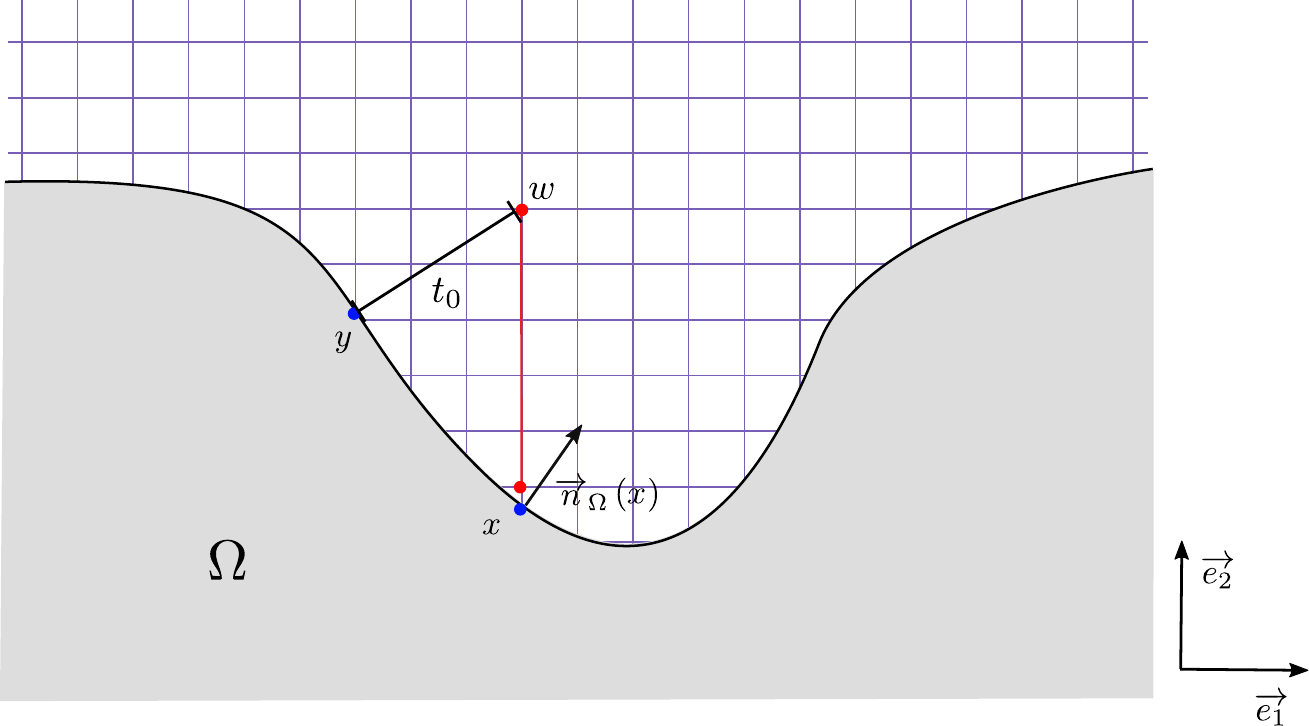
   \caption{Illustration of the case where the node law is not respected in $\cV_2(\Omega,r_0/2p)$.}
   \label{fig:caspb3}
   \end{center}
\end{figure}
 By the mean value theorem, there exists $z\in B(x_j,r_{x_j})\cap \Gamma$ such that $y-x$ is in the tangent space of $z$. Hence, we have $(y-x)\cdot \nn_\Omega(z)=0$.
And so $t\overrightarrow {e_i}\cdot \nn_\Omega(z)=t_0 \nn_\Omega(y)\cdot \nn_\Omega(z)$ and $|t\overrightarrow {e_i}\cdot \nn_\Omega(z)|\leq t_0\leq r_0/2p\leq 1/2p$.
This contradicts the fact that $|\overrightarrow {e_i}\cdot \nn_\Omega(z)|\geq 1/2p$ by definition of $r_{x_j}$.

Write $$\widetilde{\Omega}=\Omega\cup\cV_2(\Gamma^1\cup\Gamma^2\setminus \cN_p,r_0/2p)$$ and 
$$\widetilde{\Gamma}^1=\{x\in\widetilde \Omega: d_2(x,\Gamma^1)=r_0/2p\}$$
$$\widetilde{\Gamma}^2=\{x\in\widetilde \Omega: d_2(x,\Gamma^2)=r_0/2p\}\,.$$
By lemma \ref{lem:convdiscstream}, we can prove that we can extract from $\amu_n(\widetilde f_n^{(p)})\ind_{\widetilde{\Omega}}$ a sequence that converges towards a continuous stream $\widetilde{\sigma}^{(p)}\cL^d$ in $\{\widetilde{\sigma}\cL^d:\,\widetilde{\sigma}\in\Sigma(\widetilde{\Gamma}^1,\widetilde{\Gamma}^1,\widetilde \Omega)\}$ that coincides with $\ssigma ^{(p)}\cL^d$ on $\Omega$.
It is easy to check that since $\ssigma-\ssigma_0$ has null divergence on $\sR^d$ and null trace on $\Gamma$ then 
$(\widetilde{\sigma}^{(p)}+\ssigma-\ssigma_0)\cL^d$ in $\{\widetilde{\sigma}\cL^d:\,\widetilde{\sigma}\in\Sigma(\widetilde{\Gamma}^1,\widetilde{\Gamma}^1,\widetilde \Omega)\}$ and coincides with $(\ssigma ^{(p)}+\ssigma-\ssigma_0)\cL^d$ on $\Omega$.
%
%Let $\ep>0$. Let $p$ be large enough such that $\|\ssigma-\ssigma^{(p)}\|_{L^1}\leq \ep/8$. By lemma \ref{lem:propdis2}, it follows that $\dis(\ssigma\cL^d,\ssigma^{(p)}\cL^d)\leq \ep/4$. Let $\delta\in]0,\ep/(4dM\cL^d(\Omega))]$.
%We have 
%\begin{align*}
%\dis(\amu_n(f_n),\ssigma\cL^d)&\leq \dis(\amu_n(f_n),(1-\delta)\ssigma\cL^d)+ \dis((1-\delta)\ssigma_p\cL^d,\ssigma_p\cL^d)+\dis(\ssigma_p\cL^d,\ssigma\cL^d)\\
%&\leq \dis(\amu_n(f_n),(1-\delta)\ssigma\cL^d)+ \delta 2dM\cL^d(\Omega)+\frac{\ep}{4}\leq\dis(\amu_n(f_n),(1-\delta)\ssigma\cL^d)+\frac{\ep}{2} \,.
%\end{align*}
% and
%\begin{align*}
%\limsup_{n\rightarrow\infty}&-\frac{1}{n^d}\log \Prb(\exists f_n\in\cS_n(\Gamma_1,\Gamma_2,\Omega):\, \dis(\amu_n(f_n),\ssigma\cL^d)\leq \ep )\\
%&\leq \limsup_{n\rightarrow\infty}-\frac{1}{n^d}\log \Prb(\exists f_n\in\cS_n(\Gamma_1,\Gamma_2,\Omega):\, \dis(\amu_n(f_n),(1-\delta)\ssigma^{(p)}\cL^d)\leq \ep/2 )\leq \widehat{I}((1-\delta)\ssigma ^{(p)})\,
%\end{align*}
%where in the last inequality we used the case $a.$ for $\ssigma^{(p)}$.

{\bf Continuity of $\widehat{I}$.}
Note that $\widehat{I}((\widetilde{\sigma}^{(p)}+\ssigma-\ssigma_0)\ind_\Omega)=\widehat{I}(\ssigma^{(p)}+\ssigma-\ssigma_0)$.
Since the sequence $(\ssigma^{(p)}+\ssigma-\ssigma_0)_{p\geq 1}$ converges in $L^1$ towards $\ssigma$, up to extraction, we can assume that $(\ssigma^{(p)}+\ssigma-\ssigma_0)(x)$ converges towards $\ssigma(x)$ for $\cL^d$-almost every $x$ on $\Omega$. Let $\eta>0$. Set $\delta = \eta/ (4d\cL^d(\Omega) M)$.
We have
$$\widehat{I}((1-\delta)(\ssigma^{(p)}+\ssigma-\ssigma_0))=\int_{\Omega}I((1-\delta)(\ssigma^{(p)}+\ssigma-\ssigma_0)(x))d\cL^d(x)\,.$$
Since $(1-\delta)(\ssigma^{(p)}+\ssigma-\ssigma_0)(x)\in\mathring{\mathcal{D}}_I$, by proposition \ref{prop:continuityI}, the function $I$ is continuous at this point:
$$\lim_{p\rightarrow\infty}I((1-\delta)(\ssigma^{(p)}+\ssigma-\ssigma_0)(x))=I((1-\delta)\ssigma(x))\,.$$
Moreover, by proposition \ref{prop:controleI}, we have
$I((1-\delta)(\ssigma^{(p)}+\ssigma-\ssigma_0)(x))\leq -d\log G([(1-\delta)M,M])$ for $\cL^d$-almost every $x$.
We can therefore apply the dominated convergence theorem:
$$\lim_{p\rightarrow\infty}\widehat{I}((1-\delta)(\ssigma^{(p)}+\ssigma-\ssigma_0))=\widehat{I}((1-\delta)\ssigma)\,.$$
Using the convexity of $\widehat{I}$, we obtain
$$\lim_{p\rightarrow\infty}\widehat{I}((1-\delta)(\ssigma^{(p)}+\ssigma-\ssigma_0))\leq \widehat{I}((1-\delta)\ssigma )\leq (1-\delta)\widehat{I}(\ssigma)+\delta\widehat I (\overrightarrow{0})\leq \widehat{I}(\ssigma)\,.$$ 
Let $p$ be large enough such that 
$$\widehat{I}((1-\delta)(\ssigma^{(p)}+\ssigma-\ssigma_0))\leq \widehat{I}(\ssigma)+\eta$$
and
$$\|\ssigma-(\ssigma^{(p)}+\ssigma-\ssigma_0)\|_{L^1}\leq \eta/2\,.$$
We have
\begin{align*}
\|\ssigma-(1-\delta)(\widetilde \sigma^{(p)}+\ssigma-\ssigma_0)\ind_{\Omega}\|_{L^1}&=\|\ssigma-(1-\delta)(\ssigma^{(p)}+\ssigma-\ssigma_0)\|_{L^1}\\&\leq \|\ssigma-\ssigma^{(p)}\|_{L^1}+ \delta\cL^d(\Omega)2dM\leq \eta\,.
\end{align*}
We set $\ssigma'=(1-\delta)(\widetilde \sigma ^{(p)}+\ssigma-\ssigma_0)$ and $\rho=r_0/2p$. This concludes the proof.
%$$\limsup_{n\rightarrow\infty}-\frac{1}{n^d}\log \Prb(\exists f_n\in\cS_n(\Gamma_1,\Gamma_2,\Omega):\, \dis(\amu_n(f_n),\ssigma\cL^d)\leq \ep )\leq \widehat{I}((1-\delta)\ssigma )\leq (1-\delta)\widehat{I}(\ssigma)\,.$$
%Finally by letting $\delta$ go to $0$, we obtain
%\begin{align*}
%\limsup_{n\rightarrow\infty}-\frac{1}{n^d}\log \Prb(\exists f_n\in\cS_n(\Gamma_1,\Gamma_2,\Omega):\, \dis(\amu_n(f_n),\ssigma\cL^d)\leq \ep )\leq\widehat{I}(\ssigma)\,.
%\end{align*}
%This concludes the proof.

%Since $\widehat{I}(\ssigma)<\infty$, by proposition \ref{prop:step1uld}, we have
%\[\lim_{\ep\rightarrow 0} \liminf_{n\rightarrow \infty}-\frac{1}{n^d}\log \Prb\left(\exists f_n\in\cS_n(\Gamma^1,\Gamma^2,\Omega) : \,\dis\big(\amu_n(f_n),\ssigma\cL^d\big)\leq \ep\right)\geq\widehat{I}(\ssigma)\,.\]
%There exists $\ep_0>0$ such that
%$$\forall \ep\leq \ep_0\qquad \liminf_{n\rightarrow \infty}-\frac{1}{n^d}\log \Prb\left(\exists f_n\in\cS_n(\Gamma^1,\Gamma^2,\Omega) : \,\dis\big(\amu_n(f_n),\ssigma\cL^d\big)\leq \ep\right)\geq\frac{\widehat{I}(\ssigma)}{2}\,.$$
%Hence, we can build iteratively an increasing sequence $(a_n)_{n\geq 1}$ of integers such that

\end{proof}

\section{Upper large deviation principle\label{sect:goodtaux}}
The following little lemma will appear several times in what follows. We refer to lemma 6.7 in \cite{Cerf:StFlour} for a proof of this lemma.
\begin{lem}[Lemma 6.7 in \cite{Cerf:StFlour}] \label{lem:estimeanalyse}
Let $f_1,\dots,f_r$ be $r$ non-negative functions defined on $]0,1[$. Then,
$$\limsup_{\ep\rightarrow 0}\ep\log \left(\sum_{i=1}^rf_i(\ep)\right)=\max_{1\leq i\leq r}\limsup_{\ep\rightarrow0}\ep\log f_i(\ep)\,.$$
\end{lem}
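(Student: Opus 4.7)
The plan is to prove the two inequalities separately; both are elementary consequences of the monotonicity of $\ep \log$ (for $\ep>0$) and the fact that the max of finitely many limsups commutes well with limsup.

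For the lower bound $\limsup_{\ep\to 0}\ep\log\sum_{i=1}^r f_i(\ep) \ge \max_i \limsup_{\ep\to 0}\ep\log f_i(\ep)$, I would simply observe that since the $f_i$ are non-negative, $\sum_{i=1}^r f_i(\ep)\ge f_j(\ep)$ for every $j$. Taking $\ep\log$ (with the convention $\log 0=-\infty$) and then $\limsup$ gives $\limsup_{\ep\to 0}\ep\log\sum_i f_i(\ep)\ge \limsup_{\ep\to 0}\ep\log f_j(\ep)$ for each $j$, and one takes the max over $j$.

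For the upper bound, I would use the trivial estimate $\sum_{i=1}^r f_i(\ep)\le r\,\max_{1\le i\le r} f_i(\ep)$, which gives
\[
\ep\log\sum_{i=1}^r f_i(\ep)\;\le\; \ep\log r + \max_{1\le i\le r}\ep\log f_i(\ep),
\]
using that $\ep\log$ is non-decreasing for $\ep>0$ so $\ep\log\max_i f_i(\ep)=\max_i \ep\log f_i(\ep)$. The term $\ep\log r$ vanishes as $\ep\to 0$, so it remains to show that $\limsup_{\ep\to 0}\max_i g_i(\ep)=\max_i \limsup_{\ep\to 0} g_i(\ep)$ for $g_i=\ep\log f_i$. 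For a finite family this identity follows directly from the definition $\limsup_{\ep\to 0} h(\ep)=\lim_{\delta\to 0}\sup_{0<\ep<\delta} h(\ep)$ together with the commutation $\sup_{0<\ep<\delta}\max_i g_i(\ep)=\max_i\sup_{0<\ep<\delta}g_i(\ep)$ and the continuity of $\max$ with respect to the decreasing limit in $\delta$.

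There is no real obstacle: the only mildly delicate point is the max/limsup commutation, which crucially uses finiteness of $r$ (for an infinite family only the inequality $\limsup\max\ge\max\limsup$ survives). Since the statement assumes a finite sum, this suffices and the two inequalities combine to give the claimed equality.
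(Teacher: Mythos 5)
Your proof is correct, and since the paper simply cites Lemma 6.7 of Cerf's Saint-Flour notes rather than re-proving the statement, there is no in-text proof to compare against; your argument coincides with the standard one used for this kind of Laplace-type bound. The two-sided sandwich $\max_i f_i \le \sum_i f_i \le r\max_i f_i$, combined with the vanishing of $\ep\log r$ and the finite-family identity $\limsup_{\ep\to 0}\max_i g_i(\ep)=\max_i\limsup_{\ep\to 0}g_i(\ep)$, is exactly how the result is obtained. Your handling of the edge cases (the convention $\log 0=-\infty$, and the remark that finiteness of $r$ is what makes the max/limsup commutation an equality) is the only delicate point and you treat it correctly.
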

We recall that we endow $\cM(\overline{\cV_\infty(\Omega,1)})^d$ with the topology $\mathcal{O}$ associated with the distance $\dis$ and the Borelian $\sigma$-field $\cB$ and that $\Prb_n$ denotes the following probability:
$$\forall A\in\cB\qquad \Prb_n(A)=\Prb(\exists f_n\in\cS_n(\Gamma^1,\Gamma^2,\Omega):\amu_n(f_n)\in A)\,.$$
Let $\ssigma\in\Sigma(\Gamma ^1,\Gamma ^2,\Omega)$ and $\delta>0$. We denote by $\cB_{\dis}(\ssigma,\delta)$ the open ball centered at $\ssigma\cL^d$ of radius $\delta$:
$$\cB_{\dis}(\ssigma,\delta)=\left\{ \nu \in\cM(\overline{\cV_\infty(\Omega,1)})^d: \dis(\ssigma\cL^d,\nu)<\delta\right \}\,.$$
We denote $\cU$ the following basis of neighborhood of the null element of $\cM(\overline{\cV_\infty(\Omega,1)})^d$:
$$\cU=\left\{\cB_{\dis}\left(0,\frac{1}{p}\right):p\geq 1\right\}\,.$$
To prove theorem \ref{thm:pgd}, it is sufficient to prove that $\widetilde{I}$ is a good rate function, that the sequence of measures $(\Prb_n)_{n\geq1}$ is $\widetilde{I}$-tight and that the following local estimates are satisfied (see section 6.2 in \cite{Cerf:StFlour}). This is the purpose of the following proposition.

\begin{prop}\label{prop:localestimates}

The function $\widetilde{I}$ is a good rate function. The sequence of measures $(\Prb_n)_{n\geq1}$ is $\widetilde{I}$-tight, \textit{i.e.}, there exists positive constants $c$ and $\lambda_0$ such that
\[\forall \lambda\geq \lambda_0 \quad\forall U \in\mathcal{U}\qquad \limsup_{n\rightarrow \infty}\frac{1}{n^d}\log \Prb(\exists f_n\in\cS_n(\Gamma^1,\Gamma ^2,\Omega):\,\amu_n(f_n)\notin \widetilde{I}^{-1} ([0,\lambda])+U)\leq -c\lambda\,.\]
 Moreover, the local estimates are satisfied:
 $$\forall \,\nu\in\cM(\overline{\cV_\infty(\Omega,1)})^d \quad\forall \ep>0\qquad \liminf_{n\rightarrow \infty}\frac{1}{n^d}\log\Prb\left(\exists f_n\in\cS_n(\Gamma^1,\Gamma^2,\Omega) : \,\dis\big(\amu_n(f_n),\nu\big)\leq \ep\right)\geq -\widetilde{I}(\nu)\,.$$
\begin{align*}
\forall \,\nu\in\cM(\overline{\cV_\infty(\Omega,1)})^d & \quad{s.t.}\quad \widetilde{I}(\nu)<\infty, \quad\forall \ep>0,\quad \exists \delta=\delta(\ep)>0\\
&\limsup_{n\rightarrow \infty}\frac{1}{n^d}\log\Prb(\exists f_n\in\cS_n(\Gamma^1,\Gamma^2,\Omega) : \,\dis\big(\amu_n(f_n),\nu\big)\leq \delta(\ep))\leq -\widetilde{I}(\nu)(1-\ep)\,.
\end{align*}
\end{prop}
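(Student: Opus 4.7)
Since $G$ is supported on $[0,M]$, any admissible stream satisfies $\|f_n(e)\|_2\leq t(e)\leq M$, so the measures $\amu_n(f_n)$ all lie in a fixed set $K\subset \cM(\overline{\cV_\infty(\Omega,1)})^d$ of uniformly bounded total variation. Combining Prohorov's theorem with lemma \ref{lem:convdiscstream} (which forces every weak subsequential limit to be of the form $\ssigma\cL^d$ with $\ssigma\in\Sigma(\Gamma^1,\Gamma^2,\Omega)\cap\Sigma^M(\Gamma^1,\Gamma^2,\Omega)$ and $\|\ssigma\|_\infty\leq M$) and lemma \ref{lem:weakconvautresens} (which upgrades such weak convergence to $\dis$-convergence), one checks that $K$ together with the set of admissible absolutely continuous limits is sequentially compact for $\dis$, hence compact. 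In particular $\Prb_n$ is almost surely supported on this compact set. The local lower bound is then a direct restatement of proposition \ref{prop:step2uld}: when $\widetilde{I}(\nu)<\infty$ we have $\nu=\ssigma\cL^d$ with $\ssigma\in\Sigma\cap\Sigma^M$, and monotonicity of the probability in $\ep$ upgrades the limit as $\ep\to 0$ into the inequality $\liminf_n \frac{1}{n^d}\log\Prb(\dis(\amu_n,\nu)\leq \ep)\geq -\widehat{I}(\ssigma)$ for every fixed $\ep>0$. Similarly, the local upper bound follows from proposition \ref{prop:step1uld}: for any $\ep>0$ one selects $\delta(\ep)>0$ so that $\limsup_n \frac{1}{n^d}\log\Prb(\dis\leq \delta(\ep))\leq -\widehat{I}(\ssigma)+\widehat{I}(\ssigma)\ep=-\widetilde{I}(\nu)(1-\ep)$.

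\textbf{Goodness of $\widetilde{I}$.} Since level sets of $\widetilde{I}$ lie inside the compact set above, it suffices to establish lower semi-continuity. If $\nu_n=\ssigma_n\cL^d\to \nu$ in $\dis$ with $\widetilde{I}(\nu_n)\leq \lambda$, then $\ssigma_n\in\Sigma\cap\Sigma^M$; the closure of $\Sigma^M$ under $\dis$-convergence (via a diagonal extraction in its definition) and the stability of the divergence-free and trace-zero conditions defining $\Sigma$ (arguing as in steps 3--4 of lemma \ref{lem:convdiscstream}) imply $\nu=\ssigma\cL^d$ with $\ssigma\in\Sigma\cap\Sigma^M$. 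For $\widehat{I}(\ssigma)\leq \liminf \widehat{I}(\ssigma_n)$ I would invoke the classical sequential weak-$*$ lower semi-continuity in $L^\infty$ of integral functionals whose integrand is convex and lower semi-continuous, which applies here thanks to proposition \ref{prop:lscI} and theorem \ref{thm:convexity}; the proof goes through Mazur's lemma and Fatou applied to an almost everywhere convergent subsequence of convex combinations.

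\textbf{$\widetilde{I}$-tightness and main obstacle.} Fix $\lambda>0$ and $U=\cB_\dis(0,1/p)\in\cU$. The set $A=K\setminus (\widetilde{I}^{-1}([0,\lambda])+U)$ is closed in $K$, hence compact, and every $\nu\in A$ satisfies either $\widetilde{I}(\nu)>\lambda$ or $\widetilde{I}(\nu)=+\infty$. In the first case the local upper bound yields $\delta_\nu>0$ with $\limsup_n \frac{1}{n^d}\log\Prb(\dis(\amu_n,\nu)\leq \delta_\nu)\leq -\lambda/2$; in the second, I would invoke the limsup version of proposition \ref{propadmissiblestream} (obtained by the identical extraction argument with $\limsup$ in place of $\liminf$ in the contradiction hypothesis) to get a ball around $\nu$ with limsup bounded by $-\lambda$. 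Extracting a finite subcover of $A$ and applying the sub-additivity lemma \ref{lem:estimeanalyse} then yields $\limsup_n \frac{1}{n^d}\log\Prb_n(A)\leq -\lambda/2$, so the tightness holds with $c=1/2$ and any $\lambda_0>0$. The main technical difficulty lies in the lower semi-continuity of $\widehat{I}$ under $\dis$-convergence: one must convert $\dis$-convergence into weak-$*$ convergence of the densities (through lemmas \ref{lem:weakconv} and \ref{lem:weakconvautresens}) and then exploit the convexity of $I$, via Mazur's lemma, to pass to the limit inside the integral---without this ingredient neither goodness of $\widetilde{I}$ nor the reduction of tightness to a finite covering of $A$ would close up.
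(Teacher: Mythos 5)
Your proposal is correct, and it reaches the same four conclusions, but the lower semi-continuity of $\widehat{I}$ is established by a genuinely different route. You invoke the classical weak lower semi-continuity of convex integral functionals in $L^1$ (Mazur's lemma plus Fatou on an a.e.\ convergent subsequence of convex combinations), after using lemma \ref{lem:weakconv} to promote $\dis$-convergence of $\ssigma_p\cL^d$ to weak convergence of the densities. The paper instead deduces lower semi-continuity directly from the probabilistic characterization in theorem \ref{thm:ULDpatate}: since $\widehat{I}(\ssigma)=\lim_{\ep\to 0}\liminf_n -n^{-d}\log\Prb(\dis(\amu_n(f_n),\ssigma\cL^d)\leq\ep)$ with the map monotone in $\ep$, the inclusion $\{\dis(\amu_n,\ssigma_p\cL^d)\leq\ep/4\}\subset\{\dis(\amu_n,\ssigma\cL^d)\leq\ep_0\}$ for $p$ large immediately gives $\liminf_p\widehat{I}(\ssigma_p)\geq\widehat{I}(\ssigma)-\delta$. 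The paper's argument is softer: it needs neither Mazur's lemma nor the calculus-of-variations theorem, and in particular does not use theorem \ref{thm:convexity} at this step, while yours does (though convexity is available anyway). On the tightness side you cover $A=K\setminus(\widetilde{I}^{-1}([0,\lambda])+U)$ directly, which requires asserting that the union $K$ of all discrete admissible $\amu_n(f_n)$ together with the limit set is $\dis$-compact; the paper instead covers only the compact set $\{\ssigma\cL^d:\ssigma\in\Sigma\cap\Sigma^M\}$ (lemma \ref{lem:Sigmacompact}) and disposes of the event $\{\amu_n\notin\bigcup\cB_i\}$ separately via proposition \ref{propadmissiblestream}. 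Your compactness claim is true (a diagonal extraction over $n$ combined with lemmas \ref{lem:convdiscstream} and \ref{lem:weakconvautresens}, as you indicate), but it is a stronger statement than the paper's lemma \ref{lem:Sigmacompact} and is not proved there; a fully rigorous version should spell out that argument. The $\limsup$ version of proposition \ref{propadmissiblestream} you need does indeed follow from the same extraction, and the constant $c=1/2$ you obtain is weaker than the paper's $c=1$ but satisfies the statement.
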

Before proving this proposition, we prove the following lemma.
\begin{lem}\label{lem:Sigmacompact} The set $\{\ssigma\cL^d:\ssigma\in\Sigma(\Gamma^1,\Gamma^2,\Omega)\cap \Sigma^M(\Gamma^1,\Gamma^2,\Omega)\}$ is compact for the topology associated with the distance $\dis$.
\end{lem}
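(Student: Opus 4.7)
The plan is to take an arbitrary sequence in the set, extract a subsequence converging in $\dis$ to some limit, and show the limit again lies in the set. The argument proceeds by approximating each element by discrete streams, extracting a weakly convergent subsequence of these discrete streams via Prohorov, and finally upgrading weak convergence to $\dis$-convergence via lemma \ref{lem:weakconvautresens}.

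More precisely, let $(\ssigma_k)_{k\geq 1}$ be a sequence in $\Sigma(\Gamma^1,\Gamma^2,\Omega)\cap \Sigma^M(\Gamma^1,\Gamma^2,\Omega)$. By the definition \eqref{eq:defSigmaM} of $\Sigma^M$, for each $k$ there exist an increasing function $\psi_k:\sN\to\sN$ and streams $f^{k}_{\psi_k(n)}\in\cS^M_{\psi_k(n)}(\Gamma^1,\Gamma^2,\Omega)$ such that $\dis(\amu_{\psi_k(n)}(f^k_{\psi_k(n)}),\ssigma_k\cL^d)\to 0$ as $n\to\infty$. A diagonal extraction then produces indices $N_k=\psi_k(n_k)$ which we may take strictly increasing, together with streams $g_k=f^k_{\psi_k(n_k)}\in\cS^M_{N_k}(\Gamma^1,\Gamma^2,\Omega)$, such that
\[
\dis\big(\amu_{N_k}(g_k),\ssigma_k\cL^d\big)\leq \frac{1}{k}\,.
\]

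Since each coordinate measure of $\amu_{N_k}(g_k)$ has support in the compact set $\overline{\cV_\infty(\Omega,1)}$ and, as in step~1 of the proof of lemma \ref{lem:convdiscstream}, satisfies $|\amu_{N_k}^i(g_k)|(\overline{\cV_\infty(\Omega,1)})\leq 2dM\cL^d(\overline{\cV_\infty(\Omega,1)})$, the sequence is uniformly tight and uniformly bounded in the total variation norm. By Prohorov's theorem applied component-wise to the Hahn--Jordan decompositions, we can extract a subsequence (still denoted $(g_k)$) such that $\amu_{N_k}(g_k)$ converges weakly to some $\nu\in\cM(\overline{\cV_\infty(\Omega,1)})^d$. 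Lemma \ref{lem:convdiscstream} applied to $(g_k)_{k\geq 1}$ (which is a sequence of admissible streams in $\cS^M_{N_k}(\Gamma^1,\Gamma^2,\Omega)$ with $N_k\to\infty$) yields that $\nu=\ssigma\cL^d$ for some $\ssigma\in\Sigma(\Gamma^1,\Gamma^2,\Omega)\cap\Sigma^M(\Gamma^1,\Gamma^2,\Omega)$.

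It remains to upgrade the weak convergence of $\amu_{N_k}(g_k)$ to convergence in the distance $\dis$. Since $\ssigma\in\Sigma(\Gamma^1,\Gamma^2,\Omega)$, $\ssigma$ vanishes on $\Omega^c$ and satisfies $|\ssigma\cdot\overrightarrow{e_i}|\leq M$ $\cL^d$-a.e., so $\|\ssigma\|_2\leq dM$ $\cL^d$-a.e.; the streams $g_k$ satisfy $\|g_k(e)\|_2\leq M$ by definition of $\cS^M_{N_k}$. Consequently the second bullet of lemma \ref{lem:weakconvautresens} applies and gives
\[
\lim_{k\to\infty}\dis\big(\amu_{N_k}(g_k),\ssigma\cL^d\big)=0\,.
\]
The triangle inequality then yields
\[
\dis(\ssigma_k\cL^d,\ssigma\cL^d)\leq \dis(\ssigma_k\cL^d,\amu_{N_k}(g_k))+\dis(\amu_{N_k}(g_k),\ssigma\cL^d)\leq \frac{1}{k}+\dis(\amu_{N_k}(g_k),\ssigma\cL^d)\xrightarrow[k\to\infty]{}0\,,
\]
which proves that the set is sequentially compact, hence compact, since the topology is metrizable. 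The main subtle point is the diagonal extraction combined with the invocation of lemma \ref{lem:convdiscstream}: one must be careful that the diagonal sequence $(g_k)$ is still a legitimate sequence of discrete streams at scales $N_k\to\infty$ for that lemma to apply and yield that the limit $\ssigma$ belongs to both $\Sigma$ and $\Sigma^M$.
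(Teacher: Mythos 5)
Your proof is correct and rests on the same three pillars as the paper's own argument: Prohorov's theorem, lemma \ref{lem:convdiscstream}, and lemma \ref{lem:weakconvautresens}. The only structural difference is the order of operations. The paper first applies Prohorov to the sequence $(\ssigma_k\cL^d)$ of continuous streams, upgrades to $\dis$-convergence, and \emph{then} constructs a diagonal sequence of discrete approximations to which lemma \ref{lem:convdiscstream} is applied; you instead pass to discrete streams right away by a diagonal choice $g_k\in\cS_{N_k}^M(\Gamma^1,\Gamma^2,\Omega)$ with $\dis(\amu_{N_k}(g_k),\ssigma_k\cL^d)\leq 1/k$, apply Prohorov to $(\amu_{N_k}(g_k))_k$, invoke lemma \ref{lem:convdiscstream} directly on this subsequence of discrete streams, and only afterwards use the second bullet of lemma \ref{lem:weakconvautresens} and the triangle inequality. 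Your ordering is arguably a little cleaner since the hypotheses of lemma \ref{lem:convdiscstream} (a single weakly convergent sequence of discrete streams at increasing scales) are produced in one pass rather than via a two-stage diagonal; in substance, however, the two arguments are the same, and your handling of the constant in lemma \ref{lem:weakconvautresens} (replacing $M$ by $dM$ so that both $\|\ssigma\|_2\leq dM$ a.e. and $\|g_k(e)\|_2\leq M\leq dM$ hold) is exactly the right observation.
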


\begin{proof}[Proof of lemma \ref{lem:Sigmacompact}]
Let $\ssigma\in\Sigma(\Gamma^1,\Gamma ^2,\Omega)\cap \Sigma^M(\Gamma^1,\Gamma^2,\Omega)$. Set $\nu=\ssigma\cL^d$. We have 
\[|\nu|(\Omega ^c)=0\qquad\text{and}\qquad |\nu|(\Omega)\leq 2dM\cL^d(\Omega)\,.\]
It follows by Prohorov theorem that the set $\{\ssigma\cL^d:\ssigma\in\Sigma(\Gamma^1,\Gamma^2,\Omega)\cap \Sigma^M(\Gamma^1,\Gamma ^2,\Omega)\}$ is relatively compact for the weak topology in the sense that for any sequence $(\ssigma_n\cL^d)_{n\geq 1}$, we can extract a subsequence $(\ssigma_{\psi(n)}\cL^d)_{n\geq 1}$ such that there exists $\nu_0 \in\cM(\overline{\cV_\infty(\Omega,1)})^d$ such that 
\[\forall f\in\sC_b(\sR^d,\sR)\qquad \lim_{n\rightarrow \infty}\int_{\sR ^d}f\ssigma_{\psi(n)}d\cL ^d=\int_{\sR^d}fd\nu_0 \,.\]
By lemma \ref{lem:weakconvautresens}, it follows that 
$$\lim_{n\rightarrow \infty}\dis(\ssigma_{\psi(n)}\cL ^d,\nu_0)=0\,.$$
Since $\ssigma_{\psi(n)}\in\Sigma ^M$ it is itself the weak limit of  a sequence of discrete streams: there exists $\phi:\sN\rightarrow\sN$ an increasing function such that for all $m\geq 1$  there exists $f_{\psi(n),\phi(m)}\in\cS_{\phi(m)}^M(\Gamma^1,\Gamma^2,\Omega)$ and
$$\lim_{m\rightarrow\infty}\dis(\ssigma_{\psi(n)}\cL^d,\amu_{\phi(m)}(f_{\psi(n),\phi(m)}))=0\,.$$
For any $n\geq 1$, we define $\phi_0(n)$ to be
$$\phi_0(n)=\inf\left\{\phi(m):\dis(\ssigma_{\psi(n)}\cL^d,\amu_{\phi(m)}(f_{\psi(n),\phi(m)}))\leq \frac{1}{n}\right\}\,.$$
It follows that 
$$\lim_{n\rightarrow\infty}\dis(\ssigma\cL^d,\amu_{\phi_0(n)}(f_{\psi(n),\phi_0(n)}))=0\,.$$
By lemma \ref{lem:convdiscstream}, we have that $\nu_0 =\ssigma_0\cL^d$ where $\ssigma_0\in \Sigma(\Gamma^1,\Gamma ^2,\Omega)\cap \Sigma^M(\Gamma^1,\Gamma ^2,\Omega)$.
Hence, the set $\{\ssigma\cL^d:\ssigma\in\Sigma(\Gamma^1,\Gamma^2,\Omega)\cap \Sigma^M(\Gamma^1,\Gamma ^2,\Omega)\}$ is compact for the topology associated with the distance $\dis$.
\end{proof}
\begin{proof}[Proof of proposition \ref{prop:localestimates}]

{\noindent \bf Step 1. We prove that $\widetilde{I}$ is lower semi-continuous.} 
Let $\nu\in \cM(\overline{\cV_\infty(\Omega,1)})^d$ and $(\nu_p)_{p\geq1}$ be a sequence such that $\dis(\nu_p,\nu)$ goes to $0$ when $p$ goes to infinity.\\

\noindent $\bullet$ If $\nu \in\{\ssigma\cL^d:\ssigma\in\Sigma(\Gamma^1,\Gamma^2,\Omega)\}$ and $\liminf_{p\rightarrow\infty} \widetilde{I}(\nu_p)=+\infty$, then $$\liminf_{p\rightarrow\infty} \widetilde{I}(\nu_p)\geq \widetilde{I}(\nu)\,.$$

\noindent $\bullet$ If $\nu=\ssigma\cL^d$ with $\ssigma\in\Sigma(\Gamma^1,\Gamma^2,\Omega)$ and $\liminf_{p\rightarrow\infty} \widetilde{I}(\nu_p)<\infty$, then we can extract from the sequence $(\nu_p)_{p\geq1}$ the measures such that $\widetilde{I}(\nu_p)<\infty$.
We will denote this subsequence sequence by $(\ssigma_p\cL^d)_{p\geq 1}$ where $\ssigma_p\in\Sigma(\Gamma^1,\Gamma ^2,\Omega)$ and $\widehat{I}(\ssigma_p)=\widetilde{I}(\ssigma_p\cL^d)$.
We use the same arguments as in proposition \ref{prop:lscI}. However, we cannot use this proposition because we do not have the almost sure convergence of $\ssigma_p$ towards $\sigma$.
The function 
$$\ep\mapsto \liminf_{n\rightarrow \infty}-\frac{1}{n^d}\log\Prb\left(\exists f_n\in\cS_n(\Gamma^1,\Gamma^2,\Omega) : \,\dis\big(\amu_n(f_n),\ssigma\cL^d\big)\leq \ep\right)$$
is clearly non-increasing.
Moreover, by theorem \ref{thm:ULDpatate}, we have
$$\lim_{\ep\rightarrow0}\liminf_{n\rightarrow \infty}-\frac{1}{n^d}\log\Prb\left(\exists f_n\in\cS_n(\Gamma^1,\Gamma^2,\Omega) : \,\dis\big(\amu_n(f_n),\ssigma\cL^d\big)\leq \ep\right)=\widehat{I}(\ssigma)\,.$$
Let $\delta>0$, there exists $\ep_0=\ep_0(\delta)>0$ such that 
$$\forall\ep\leq \ep_0\qquad\liminf_{n\rightarrow\infty}-\frac{1}{n^d}\log\Prb(\exists f_n\in\cS_n(\fC):\dis(\amu_n(f_n),\ssigma\cL^d)\leq \ep)\geq \widehat{I}(\ssigma)-\delta\,.$$
Let $p_0\geq 1$ be such that for any $p\geq p_0$, $\dis(\ssigma_p\cL^d,\ssigma\cL^d)\leq \ep_0/4$.
It yields that
\begin{align*}
\forall \ep\leq \ep_0\quad\forall p\geq p_0\qquad \liminf_{n\rightarrow\infty}-\frac{1}{n^d}&\log\Prb(\exists f_n\in\cS_n(\fC):\dis(\amu_n(f_n),\ssigma\cL^d)\leq \ep_0)\\&\leq  \liminf_{n\rightarrow\infty}-\frac{1}{n^d}\log\Prb(\exists f_n\in\cS_n(\fC):\dis(\amu_n(f_n),\ssigma_p\cL^d)\leq \ep/4)\,.
\end{align*}
It follows that
$$\forall \ep\leq \ep_0\quad\forall p\geq p_0\qquad  \liminf_{n\rightarrow\infty}-\frac{1}{n^d}\log\Prb(\exists f_n\in\cS_n(\fC):\dis(\amu_n(f_n),\ssigma_p\cL^d)\leq \ep/4) \geq I(\vv)-\delta\,.$$
By letting first $\ep$ goes to $0$ and then taking the liminf in $p$, we obtain
$$\liminf_{p\rightarrow\infty} \widehat{I}(\ssigma_p)\geq \widehat{I}(\ssigma)-\delta\,.$$
Since the previous inequality holds for any $\delta>0$, it follows that 
\begin{align*}
\liminf_{p\rightarrow\infty}  \widehat{I}(\ssigma_p)\geq\widehat{I}(\ssigma)\,.
\end{align*}

\noindent $\bullet$ If $\nu\in \cM(\overline{\cV_\infty(\Omega,1)})^d \setminus\{\ssigma\cL^d:\ssigma\in\Sigma(\Gamma^1,\Gamma^2,\Omega)\}$, then by definition of $\widetilde{I}$, we have $\widetilde{I}(\nu)=+\infty$. Let $M>0$. By proposition \ref{propadmissiblestream}, there exists $\ep_0>0$ such that 
$$\limsup_{n\rightarrow \infty}-\frac{1}{n^d}\log\Prb(\exists f_n\in\cS_n(\Gamma^1,\Gamma^2,\Omega) : \,\dis\big(\amu_n(f_n),\nu\big)\leq \ep_0)\geq M\,.$$
For any $p\geq 1$ such that $\dis(\nu_p,\nu)\leq \ep_0/2$, we have 
\begin{align*}
\forall \ep\leq \frac{\ep_0}{2}\qquad \limsup_{n\rightarrow \infty}-\frac{1}{n^d}&\log\Prb(\exists f_n\in\cS_n(\Gamma^1,\Gamma^2,\Omega) : \,\dis\big(\amu_n(f_n),\nu_p\big)\leq \ep)\\
&\geq \limsup_{n\rightarrow \infty}-\frac{1}{n^d}\log\Prb(\exists f_n\in\cS_n(\Gamma^1,\Gamma^2,\Omega) : \,\dis\big(\amu_n(f_n),\nu_p\big)\leq \ep_0/2)\\
&\geq \limsup_{n\rightarrow \infty}-\frac{1}{n^d}\log\Prb(\exists f_n\in\cS_n(\Gamma^1,\Gamma^2,\Omega) : \,\dis\big(\amu_n(f_n),\nu\big)\leq \ep_0)\geq M\,.
\end{align*}
By first taking the limit when $\ep$ goes to $0$ and then the liminf in $p$, we obtain 
\begin{align*}
\liminf_{p\rightarrow\infty}\widetilde{I}(\nu_p)\geq M\,.
\end{align*}
By letting $M$ go to infinity, we obtain 
\begin{align*}
\liminf_{p\rightarrow\infty}\widetilde{I}(\nu_p)= \widetilde{I}(\nu)=+\infty\,.
\end{align*}
Finally, we have in any case
$$\liminf_{p\rightarrow\infty} \widetilde{I}(\nu_p)\geq \widetilde{I}(\nu)\,.$$
Since the space is metric, this implies that
$\widetilde{I}$ is lower semi-continuous.

{\noindent \bf Step 2. We prove that $\widetilde{I}$ is a good rate function.} 
 Let us prove that its level sets are compact for the distance $\dis$. Let $\lambda>0$. We have
\begin{align*}
\{\ssigma\cL ^d:\,\ssigma\in\Sigma(\Gamma^1,\Gamma ^2,\Omega)\cap \Sigma^M(\Gamma^1,\Gamma ^2,\Omega),\,\widehat{I}(\ssigma)\leq \lambda\}& =\{\nu\in\cM(\overline{\cV_\infty(\Omega,1)})^d:\,\widetilde{I}(\nu)\leq \lambda\}\\
&\subset \{\ssigma\cL^d:\ssigma\in\Sigma(\Gamma^1,\Gamma^2,\Omega)\cap \Sigma^M(\Gamma^1,\Gamma ^2,\Omega)\}\,.
\end{align*}
Since $\widehat{I}$ is lower semi-continuous, its level sets are closed for the topology associated with $\dis$. Moreover by lemma \ref{lem:Sigmacompact}, the set $\{\ssigma\cL^d:\ssigma\in\Sigma(\Gamma^1,\Gamma^2,\Omega)\cap \Sigma^M(\Gamma^1,\Gamma ^2,\Omega)\}$ is compact. Hence, the level sets are compact for the topology associated with $\dis$. This implies that $\widetilde{I}$ is a good rate function. 

{\noindent  \bf Step 3.
We prove that $\widetilde{I}$ satisfies the local estimates.} Let $\ssigma\in \Sigma(\Gamma^1,\Gamma ^2,\Omega)\cap \Sigma^M(\Gamma^1,\Gamma^2,\Omega)$ such that $\widehat{I}(\ssigma)<\infty$.
The function 
$$\ep\mapsto \liminf_{n\rightarrow \infty}\frac{1}{n^d}\log\Prb\left(\exists f_n\in\cS_n(\Gamma^1,\Gamma^2,\Omega) : \,\dis\big(\amu_n(f_n),\ssigma\cL^d\big)\leq \ep\right)$$
is clearly non-decreasing.
Hence, using theorem \ref{thm:ULDpatate}, we have
\begin{align*}
\forall  \ep>0\qquad &\liminf_{n\rightarrow \infty}\frac{1}{n^d}\log\Prb\left(\exists f_n\in\cS_n(\Gamma^1,\Gamma^2,\Omega) : \,\dis\big(\amu_n(f_n),\ssigma\cL^d\big)\leq \ep\right)\\
&\geq \lim_{\ep'\rightarrow 0}\liminf_{n\rightarrow \infty}\frac{1}{n^d}\log\Prb\left(\exists f_n\in\cS_n(\Gamma^1,\Gamma^2,\Omega) : \,\dis\big(\amu_n(f_n),\ssigma\cL^d\big)\leq \ep'\right)=-\widehat{I}(\ssigma)=-\widetilde{I}(\ssigma\cL^d)\,.
\end{align*}
For measures $\nu$ such that $\widetilde{I}(\nu)=+\infty$, we have
\begin{align*}
\forall  \ep>0\qquad &\liminf_{n\rightarrow \infty}\frac{1}{n^d}\log\Prb\left(\exists f_n\in\cS_n(\Gamma^1,\Gamma^2,\Omega) : \,\dis\big(\amu_n(f_n),\nu\big)\leq \ep\right)\geq-\widetilde{I}(\nu)=-\infty \,.
\end{align*}
Let $\ssigma\in \Sigma(\Gamma^1,\Gamma^2,\Omega)\cap \Sigma^M(\Gamma^1,\Gamma^2,\Omega)$ such that $\widehat{I}(\ssigma)<\infty$.
The function 
$$\ep\mapsto -\limsup_{n\rightarrow \infty}\frac{1}{n^d}\log\Prb\left(\exists f_n\in\cS_n(\Gamma^1,\Gamma^2,\Omega) : \,\dis\big(\amu_n(f_n),\ssigma\cL^d\big)\leq \ep\right)$$
is non-increasing and thanks to theorem \ref{thm:ULDpatate}, the function goes to $\widehat{I}(\ssigma)=\widetilde{I}(\ssigma\cL^d)$ when $\ep$ goes to $0$. There exists $\delta=\delta(\ep)>0$ such that
\begin{align}\label{eq:localestimatelimsup}
-\limsup_{n\rightarrow \infty}\frac{1}{n^d}\log\Prb\left(\exists f_n\in\cS_n(\Gamma^1,\Gamma^2,\Omega) : \,\dis\big(\amu_n(f_n),\ssigma\cL^d\big)\leq \delta(\ep)\right)\geq \widehat{I}(\ssigma)(1-\ep)=\widetilde{I}(\ssigma\cL^d)(1-\ep)\,.
\end{align}

{\noindent \bf Step 4. We prove the $\widetilde{I}$-tightness.} Let $\lambda \geq 0$. First note that 
$\widetilde{I}^{-1}([0,\lambda])=\{\ssigma \cL^d:\,\ssigma\in\widehat{I}^{-1}([0,\lambda])\cap \Sigma^M(\Gamma^1,\Gamma ^2,\Omega)\}$. Let $U \in\mathcal{U}$. Let $K>0$. Let $\ep_0>0$.
For $\ssigma\in \Sigma^M(\Gamma^1,\Gamma ^2,\Omega)$ such that $\widehat{I}(\ssigma)<+\infty$, there exists $\delta_{\ssigma}(\ep_0)>0$, such that the local estimates \eqref{eq:localestimatelimsup} is satisfied for $\ep_0$. 
For $\ssigma$ such that $\widehat{I}(\ssigma)\leq \lambda$, up to choosing a smaller $\delta_{\ssigma}$, we can assume that $\cB_{\dis}(\ssigma,\delta_{\ssigma})\subset (\widetilde{I}^{-1}([0,\lambda])+U)$.
For $\ssigma$ such that $\widehat{I}(\ssigma)=+\infty$, there exists $\delta_{\ssigma}$ such that
$$\limsup_{n\rightarrow\infty}\frac{1}{n^d}\log\Prb( \exists f_n\in\cS_n(\Gamma^1,\Gamma^2,\Omega):\,\dis(\amu_n(f_n),\ssigma\cL^d)\leq \delta_{\ssigma})\leq -K\,.$$
Since by lemma \ref{lem:Sigmacompact}, the set $\{\ssigma\cL^d:\ssigma\in\Sigma(\Gamma^1,\Gamma^2,\Omega)\cap \Sigma^M(\Gamma^1,\Gamma^2,\Omega)\}$ is compact for $\dis$, we can extract from $(\cB_{\dis}(\ssigma,\delta_{\ssigma}),\ssigma\in \Sigma(\Gamma^1,\Gamma^2,\Omega)\cap \Sigma^M(\Gamma^1,\Gamma^2,\Omega))$ a finite covering 
$(\cB_{\dis}(\ssigma_i,\delta_{\ssigma_i}),i=1,\dots,N)$ of $\{\ssigma\cL^d:\ssigma\in\Sigma(\Gamma^1,\Gamma^2,\Omega)\cap \Sigma^M(\Gamma^1,\Gamma^2,\Omega)\}$.
We have
\begin{align*}
\Prb&\left(\exists f_n\in\cS_n(\Gamma^1,\Gamma ^2,\Omega):\,\amu_n(f_n)\notin \widetilde{I}^{-1} ([0,\lambda])+U\right)\\&\leq \sum_{i=1}^N\Prb\left(\exists f_n\in\cS_n(\Gamma^1,\Gamma ^2,\Omega):\,\amu_n(f_n)\notin \widetilde{I}^{-1} ([0,\lambda])+U, \amu_n(f_n)\in\cB_{\dis}(\ssigma_i,\delta_{\ssigma_i})\right)\\
&\quad+\Prb\left( \exists f_n\in\cS_n(\Gamma^1,\Gamma^2,\Omega):\,\amu_n(f_n)\notin \bigcup_{i=1,\dots,N}\cB_{\dis}(\ssigma_i,\delta_{\ssigma_i}) \right)\,.
\end{align*}
Thanks to proposition \ref{propadmissiblestream}, we have
$$\limsup_{n\rightarrow\infty}\frac{1}{n^d}\log\Prb\left( \exists f_n\in\cS_n(\Gamma^1,\Gamma^2,\Omega):\,\amu_n(f_n)\notin \bigcup_{i=1,\dots,N}\cB_{\dis}(\ssigma_i,\delta_{\ssigma_i}) \right)=-\infty\,.$$
Moreover, if $\widehat{I}(\ssigma_i)\leq\lambda$, then $(\widetilde{I}^{-1} ([0,\lambda])+U)^c\cap\cB_{\dis}(\ssigma_i,\delta_{\ssigma_i})=\emptyset$ and
$$\Prb\left(\exists f_n\in\cS_n(\Gamma^1,\Gamma ^2,\Omega):\,\amu_n(f_n)\notin (\widetilde{I}^{-1} ([0,\lambda])+U), \amu_n(f_n)\in\cB_{\dis}(\ssigma_i,\delta_{\ssigma_i})\right)=0\,.$$
By lemma \ref{lem:estimeanalyse}, it follows that
\begin{align*}
\limsup_{n\rightarrow\infty}\frac{1}{n^d}&\log\Prb(\exists f_n\in\cS_n(\Gamma^1,\Gamma ^2,\Omega):\,\amu_n(f_n)\notin \widetilde{I}^{-1} ([0,\lambda])+U)\\
&\leq -\min\left((1-\ep_0)\min\left\{\widehat{I}(\ssigma_i):\,i=1,\dots,N,\,\widehat{I}(\ssigma_i)>\lambda\right\},K\right)\leq -\min((1-\ep_0)\lambda,K)\,.
\end{align*}
By letting first $K$ go to infinity and then $\ep_0$ go to $0$, we obtain 
\begin{align*}
\limsup_{n\rightarrow\infty}\frac{1}{n^d}\log\Prb&(\exists f_n\in\cS_n(\Gamma^1,\Gamma ^2,\Omega):\,\amu_n(f_n)\notin \widetilde{I}^{-1} ([0,\lambda])+U)\leq -\lambda\,.
\end{align*}
This concludes the proof.
\end{proof}
Before proving theorem \ref{thm:uldmf}, we need to prove that the map $\ssigma\cL^d\mapsto \flow^{cont}(\ssigma)$ is continuous.
\begin{prop}\label{prop:flowcont}
The function $\ssigma\cL^d\mapsto \flow^{cont}(\ssigma)$ is continuous where $\{\ssigma\cL^d:\,\ssigma\in\Sigma(\Gamma^1,\Gamma ^2,\Omega)\}$ is endowed with the topology associated with the distance $\dis$.
\end{prop}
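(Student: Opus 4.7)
The plan is to rewrite $\flow^{cont}(\ssigma)$ as an integral over $\sR^d$ of $\ssigma$ against a single, $\ssigma$-independent, bounded continuous vector field, and then read off continuity from the weak-convergence lemma \ref{lem:weakconv}. Since the distance $\dis$ metrizes the topology $\mathcal{O}$, sequential continuity is enough.

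First I would construct a suitable test function. Hypothesis \ref{hypo:omega} gives $\rho_0 := \inf\{\|x-y\|_2 : x\in\Gamma^1, y\in\Gamma^2\} > 0$, so the open $\rho_0/3$-neighborhoods of $\Gamma^1$ and $\Gamma^2$ are disjoint. A standard mollified cut-off then yields $u \in \sC_c^\infty(\sR^d,\sR)$ with $u\equiv 1$ on $\cV_2(\Gamma^1,\rho_0/3)$ and $u\equiv 0$ on $\cV_2(\Gamma^2,\rho_0/3)$; in particular $\overrightarrow{\nabla} u \in \sC_b(\sR^d,\sR^d)$. This function is fixed, once and for all, independently of $\ssigma$.

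Next, for any $\ssigma \in \Sigma(\Gamma^1,\Gamma^2,\Omega)$, I would apply the trace identity recalled in section \ref{sect:defmaxflow}: since $\diver\ssigma = 0$ on $\Omega$ and $\ssigma\cdot\overrightarrow{n}_\Omega = 0$ on $\Gamma\setminus(\Gamma^1\cup\Gamma^2)$, and since $\gamma(u) = 1$ on $\Gamma^1$ and $\gamma(u) = 0$ on $\Gamma^2$, we have
\[ \int_{\sR^d}\ssigma\cdot\overrightarrow{\nabla} u\,d\cL^d \;=\; \int_\Omega\ssigma\cdot\overrightarrow{\nabla} u\,d\cL^d \;=\; \int_{\Gamma^1}\ssigma\cdot\overrightarrow{n}_\Omega\,d\cH^{d-1} \;=\; -\flow^{cont}(\ssigma). \]
(I use here that $\ssigma=0$ outside $\Omega$ to turn the first equality around.) Thus $\flow^{cont}$ is a linear functional of $\ssigma\cL^d$ evaluated against a single bounded continuous test field.

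Finally, given a sequence $(\ssigma_n)$ in $\Sigma(\Gamma^1,\Gamma^2,\Omega)$ with $\dis(\ssigma_n\cL^d,\ssigma\cL^d)\to 0$, the definition \eqref{eq:defadmsigma} enforces $|\ssigma_n\cdot\overrightarrow{e_i}|\leq M$ a.e.\ on $\Omega$ and $\ssigma_n=0$ off $\Omega$, so $\sum_i|\ssigma_n^i|(\overline{\cV_\infty(\Omega,1)}) \leq dM\,\cL^d(\overline{\cV_\infty(\Omega,1)})$ uniformly, and likewise for $\ssigma$. Lemma \ref{lem:weakconv} then delivers weak convergence $\ssigma_n\cL^d \rightharpoonup \ssigma\cL^d$, and testing component-wise against $\overrightarrow{\nabla} u$ gives $\flow^{cont}(\ssigma_n)\to\flow^{cont}(\ssigma)$. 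The argument is almost all soft analysis; the only mildly delicate point is the construction of the cut-off $u$ that sees $\Gamma^1$ and $\Gamma^2$ cleanly while ignoring $\Gamma\setminus(\Gamma^1\cup\Gamma^2)$ (whose boundary normal contribution vanishes anyway by admissibility), and this is made easy by $d_2(\Gamma^1,\Gamma^2)>0$.
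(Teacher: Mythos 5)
Your proof is correct and takes a genuinely different route from the paper. You obtain an exact linear representation $\flow^{cont}(\ssigma)=-\int_{\sR^d}\ssigma\cdot\overrightarrow{\nabla}u\,d\cL^d$ for a single, $\ssigma$-independent cutoff $u\in\sC^\infty_c(\sR^d,\sR)$, using the Nozawa trace identity \eqref{eq:caracterisationsigman} together with the boundary conditions built into $\Sigma(\Gamma^1,\Gamma^2,\Omega)$. Continuity is then an immediate consequence of lemma \ref{lem:weakconv} applied component-wise to $\overrightarrow{\nabla}u$, since the total-variation bound $\sum_i|\ssigma^i\cL^d|(\overline{\cV_\infty(\Omega,1)})\leq dM\cL^d(\overline{\cV_\infty(\Omega,1)})$ is uniform over $\Sigma(\Gamma^1,\Gamma^2,\Omega)$. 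The paper instead cites inequality (4.28) from the proof of Proposition 4.7 in \cite{CT1}, which gives only an approximation of $\flow^{cont}(\ssigma)$ up to an error $C_0\eta M$ by a finite sum $\frac{1}{2h}\sum_i\int_{\cyl(\cA_i,h)}\ssigma\cdot\overrightarrow{v_i}\,d\cL^d$ of integrals over thin cylinders around hyperrectangles; passing to the limit $n\to\infty$ for fixed $\eta$ via Portmanteau, and then $\eta\to0$, yields the result. Your argument is both more self-contained (it relies only on the trace identity already stated in section \ref{sect:defmaxflow} and on lemma \ref{lem:weakconv}, rather than on a geometric construction imported from \cite{CT1}) and sharper, in that it produces an exact dual representation instead of an $\eta$-approximation. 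The only thing you must ensure, which you correctly flag, is that the two disjoint neighborhoods of $\Gamma^1$ and $\Gamma^2$ exist, which is guaranteed by hypothesis \ref{hypo:omega}; the contributions from $\Gamma\setminus(\Gamma^1\cup\Gamma^2)$ then drop out because $\ssigma\cdot\overrightarrow{n}_\Omega=0$ there, so the cutoff $u$ does not need to behave in any particular way on that part of $\Gamma$.
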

\begin{proof}[Proof of proposition \ref{prop:flowcont}]
Let $\ssigma\in\Sigma(\Gamma^1,\Gamma ^2,\Omega) $ and $(\ssigma_p)_{p\geq 1}$ be a sequence in $\Sigma(\Gamma^1,\Gamma ^2,\Omega)$ such that \[\lim_{p\rightarrow\infty }\dis(\ssigma_p\cL^d,\ssigma\cL^d)=0\,.\] 
Let $\eta>0$.
Thanks to the proof of proposition 4.7. in \cite{CT1} (see (4.28)), we know that there exists a finite family of hyperrectangles $\cA_1,\dots,\cA_{\cN}$ of disjoint interiors and $h\geq 0$ such that 
\begin{align}\label{eq:approxflow}
\forall \ssigma\in\Sigma(\Gamma^1,\Gamma ^2,\Omega)\qquad \left|\flow ^{cont}(\ssigma)-\frac{1}{2h}\sum_{i=1}^{\cN}\int_{\cyl(\cA_i,h)}\ssigma\cdot \overrightarrow{v_i} d\cL ^d\right|\leq C_0\eta M\,
\end{align}
where $\overrightarrow{v_i}$ is normal to $\cA_i$ and $C_0$ is a constant depending on $\Omega$ and the $\cA_i$.
By lemma \ref{lem:weakconv}, $\ssigma_p\cL^d$ weakly converges to $\ssigma\cL^d$. By Portmanteau theorem, we have that for any $i\in\{1,\dots,\cN\}$
$$\lim_{p\rightarrow\infty}\int_{\cyl(\cA_i,h)}\ssigma_p\cdot \overrightarrow{v_i} d\cL ^d=\int_{\cyl(\cA_i,h)}\ssigma\cdot \overrightarrow{v_i} d\cL ^d\,.$$
Using inequality \eqref{eq:approxflow}, it follows that
$$\limsup_{p\rightarrow\infty}\left|\flow ^{cont}(\ssigma)-\flow ^{cont}(\ssigma_p)\right|\leq 2C_0\eta M\,.$$
Finally, by letting $\eta$ goes to $0$, we obtain
\[\lim_{p\rightarrow\infty}\flow ^{cont}(\ssigma_p)=\flow ^{cont}(\ssigma)\,.\]
The result follows.
\end{proof}
We recall that 
$$J(\lambda)=\inf\left\{\widehat{I}(\ssigma):\ssigma\in\Sigma(\Gamma^1,\Gamma^2,\Omega)\cap \Sigma^M(\Gamma^1,\Gamma^2,\Omega), \flow^{cont}(\ssigma)=\lambda\right\}\,.$$
\begin{prop} \label{prop:prelthmuldmf}The function $J$ is convex on $\sR_+$. There exists $\lambda_{max}>0$ such that $J$ is finite on $[0,\lambda_{max}[$ and infinite on $]\lambda_{max},+\infty[$. Moreover, $J$ is increasing on $[\phi_{\Omega},\lambda_{max}[$, $J(\lambda)=0$ on $[0,\phi_{\Omega}]$ and
\[\forall \lambda\in]0,\lambda_{max}[\qquad\lim_{n\rightarrow\infty}\frac{1}{n^d}\log\Prb(\phi_n(\Gamma^1,\Gamma^2,\Omega)\geq\lambda n ^{d-1})=-J(\lambda)\,.\]
\end{prop}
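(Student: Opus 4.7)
The plan is to derive the structural properties of $J$ first and then obtain the LDP limit for $\phi_n/n^{d-1}$ by a contraction-principle argument from Theorem \ref{thm:pgd}.

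\textbf{Convexity, domain, and zero on $[0,\phi_\Omega]$.} Convexity of $J$ is inherited from the pointwise convexity of $I$ (Theorem \ref{thm:convexity}): the defining conditions of $\Sigma(\Gamma^1,\Gamma^2,\Omega)\cap\Sigma^M(\Gamma^1,\Gamma^2,\Omega)$ and the constraint $\flow^{cont}(\ssigma)=\lambda$ are linear, so a convex combination of near-optimal streams for $J(\lambda_1)$ and $J(\lambda_2)$ is admissible and has $\widehat I$ bounded by the convex combination. Infiniteness on $]\lambda_{max},+\infty[$ is immediate from the definition of $\lambda_{max}$; finiteness on $[0,\lambda_{max}[$ comes from picking $\ssigma^\star\in\Sigma\cap\Sigma^M$ with $\flow^{cont}(\ssigma^\star)$ strictly above $\lambda$ and scaling by $t=\lambda/\flow^{cont}(\ssigma^\star)<1$: the sup-norm of the scaled stream is at most $tM<M$, so Proposition \ref{prop:controleI} gives $I(t\ssigma^\star(x))\le -d\log G([tM,M])<\infty$ uniformly in $x$, hence $\widehat I(t\ssigma^\star)<\infty$. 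For $\lambda\in[0,\phi_\Omega]$, I would first produce $\ssigma^\star\in\Sigma_\nu$ with $\widehat I(\ssigma^\star)=0$: Theorem \ref{thm:CerfTheret} gives a.s.\ convergence of $\amu_n^{max}$ to $\Sigma_\nu$, hence the probability of any open neighborhood of $\Sigma_\nu$ does not decay exponentially; combined with the LDP lower bound of Theorem \ref{thm:pgd}, the lower semicontinuity of $\widehat I$, and the compactness of $\{\ssigma\cL^d:\ssigma\in\Sigma\cap\Sigma^M\}$ from Lemma \ref{lem:Sigmacompact}, this yields $\inf_{\Sigma_\nu}\widehat I=0$ attained at some $\ssigma^\star$. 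Since $\flow^{cont}(\ssigma^\star)=\phi_\Omega$, the scaling $t\ssigma^\star$ with $t=\lambda/\phi_\Omega\in[0,1]$ stays in $\Sigma\cap\Sigma^M$, has flow $\lambda$, and satisfies $I(t\ssigma^\star(x))\le tI(\ssigma^\star(x))+(1-t)I(\overrightarrow{0})=0$ $\cL^d$-a.e.\ (using $I(\overrightarrow{0})=0$, since the null stream is admissible with full probability). Hence $J(\lambda)=0$.

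\textbf{Strict monotonicity on $[\phi_\Omega,\lambda_{max}[$.} Together with convexity and $J(\phi_\Omega)=0$, this reduces to showing $J(\lambda)>0$ for $\lambda>\phi_\Omega$. The argument is by contradiction: if some $\ssigma$ had $\flow^{cont}(\ssigma)=\lambda>\phi_\Omega$ and $\widehat I(\ssigma)=0$, then $I(\ssigma(x))=0$ for $\cL^d$-a.e.\ $x$, and a key subclaim is that $I(\overrightarrow{w})=0$ forces $\overrightarrow{w}\cdot\vv_0\le\nu(\vv_0)$ for every $\vv_0\in\sS^{d-1}$. Granted the subclaim, $\ssigma$ would be admissible for the Nozawa continuous max-flow problem with capacity constraint $\nu$, so $\flow^{cont}(\ssigma)\le\phi_\Omega$, a contradiction. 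The subclaim follows from Theorem \ref{thm:theretuppertau}: if $\overrightarrow{w}\cdot\vv_0>\nu(\vv_0)$ for some $\vv_0$, then any discrete stream $f_n$ with $\amu_n(f_n)$ close to $\overrightarrow{w}\ind_\fC\cL^d$ must route a flow exceeding $\nu(\vv_0)$ through a thin cylinder oriented by $\vv_0$ inside $\fC$, an event whose probability decays exponentially at volume order, which forces $I(\overrightarrow{w})>0$.

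\textbf{The LDP limit via contraction.} The functional $\nu=\ssigma\cL^d\mapsto\flow^{cont}(\ssigma)$ is continuous on $\{\ssigma\cL^d:\ssigma\in\Sigma\cap\Sigma^M\}$ by Proposition \ref{prop:flowcont}. For the upper bound, the event $\{\phi_n\ge\lambda n^{d-1}\}$ is contained in $\{\exists f_n:\amu_n(f_n)\in A_\lambda^\delta\}$ where $A_\lambda^\delta$ is a closed neighborhood of $\{\nu:\flow^{cont}(\nu)\ge\lambda-\delta\}$, obtained from the hyperrectangle approximation underlying Proposition \ref{prop:flowcont}; applying the LDP upper bound of Theorem \ref{thm:pgd} and using continuity of $\flow^{cont}$ yields $\limsup\frac1{n^d}\log\Prb(\phi_n\ge\lambda n^{d-1})\le -J(\lambda-\delta)$, and continuity of the convex function $J$ on the interior of its finite domain lets $\delta\to 0$ to obtain $-J(\lambda)$. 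For the lower bound, fix $\lambda'>\lambda$ and $\ssigma\in\Sigma\cap\Sigma^M$ with $\flow^{cont}(\ssigma)=\lambda'$ and $\widehat I(\ssigma)$ close to $J(\lambda')$; Theorem \ref{thm:ULDpatate} gives $\liminf\frac1{n^d}\log\Prb(\exists f_n:\dis(\amu_n(f_n),\ssigma\cL^d)\le\ep)\ge-\widehat I(\ssigma)$, and the same $\flow_n/\flow^{cont}$ comparison turns this event into a sub-event of $\{\phi_n\ge\lambda n^{d-1}\}$ for $\ep$ small and $n$ large. Sending $\lambda'\downarrow\lambda$ using continuity of $J$ matches the upper bound.

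\textbf{Main obstacle.} The hardest step is the zero-set identification for $I$, namely the implication $I(\overrightarrow{w})=0\Rightarrow\overrightarrow{w}\cdot\vv_0\le\nu(\vv_0)$ for all $\vv_0\in\sS^{d-1}$, which is the quantitative bridge between the cube-based elementary rate function built in Section \ref{sect:basicbrick} and the cylinder-based flow constant from Section \ref{sect:flowconstant}; it requires embedding a cylinder in the unit cube and transferring bounds back and forth in a way that respects the distance $\dis$. A secondary technical point is the uniform control of $|\flow_n(f_n)/n^{d-1}-\flow^{cont}(\ssigma)|$ that is needed to convert events between the discrete and continuous levels cleanly on both sides of the contraction.
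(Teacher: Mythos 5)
Your overall strategy matches the paper's: establish convexity, identify the finite domain, obtain the LDP limit from Theorems \ref{thm:pgd} and \ref{thm:ULDpatate} via the continuity of $\ssigma\cL^d\mapsto\flow^{cont}(\ssigma)$ (Proposition \ref{prop:flowcont}) plus the compactness from Lemma \ref{lem:Sigmacompact}, and then characterize where $J$ vanishes. One step, however, is handled by a genuinely different route than the paper, and there are two places where your sketch understates the required technicalities.

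\emph{Positivity of $J$ beyond $\phi_\Omega$.} You reduce this to the subclaim that $I(\overrightarrow{w})=0$ forces $\overrightarrow{w}\cdot\vv_0\le\nu(\vv_0)$ for all unit vectors $\vv_0$, and argue that a contradiction follows from Theorem \ref{thm:theretuppertau}. This is a new statement about the elementary rate function, not present in the paper; it amounts to re-deriving (at the level of $I$) the volume-order upper deviation estimate of Cerf and Th\'eret. The paper avoids this entirely: once the LDP limit is in hand, it simply observes that Theorem \ref{thm:CT2} gives $\limsup\frac1{n^d}\log\Prb(\phi_n\ge\lambda n^{d-1})<0$ for $\lambda>\widetilde\phi_\Omega$, hence $J(\lambda)>0$; combined with Theorem \ref{thm:CerfTheret} giving $J=0$ on $[0,\phi_\Omega[$, convexity and continuity finish. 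Your route would work if the subclaim were made precise, but it is strictly more work; the subclaim needs an argument for extracting the flow through a tilted cylinder $\cyl(A,h,\vv_0)\subset\fC$ from the distance $\dis$, and matching scales carefully. You have correctly identified it as the hardest part.

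\emph{Caveats on the contraction step.} The functional $\amu_n\mapsto\flow_n/n^{d-1}$ is not literally the composition with $\flow^{cont}$, so the naive contraction principle does not apply. The bridge is the statement that if $\dis(\amu_n(f_n),\ssigma\cL^d)$ is small then $\flow_n(f_n)/n^{d-1}$ is close to $\flow^{cont}(\ssigma)$ \emph{for $n$ large}, and this requires a compactness-and-contradiction argument (extracting a weakly convergent subsequence of streams and using Proposition 4.7 of \cite{CT1}); the paper carries this out explicitly around display \eqref{eq:probatend0}. Similarly, for the upper bound, constructing $A_\lambda^\delta$ is not enough; you also need to show that the event $\{\exists f_n:\amu_n(f_n)\notin\bigcup_i\cB_\dis(\ssigma_i,\delta_{\ssigma_i})\}$ is eventually empty, which again comes from compactness of $\{\ssigma\cL^d:\ssigma\in\Sigma\cap\Sigma^M\}$ and the convergence lemma \ref{lem:convdiscstream}. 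The paper handles both by a finite covering and per-ball case analysis rather than a single set $A_\lambda^\delta$, which bypasses the question of whether $\inf_{A_\lambda^\delta}\widetilde I\ge J(\lambda-\delta)$ actually holds for a ``closed neighborhood'' in your sense. None of this invalidates your plan, but the finite-covering argument is the concrete way the paper makes it airtight.

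\emph{Minor points.} Your scaling arguments (for finiteness of $J$ and for $J=0$ on $[0,\phi_\Omega]$) are fine, but they tacitly use that $\Sigma^M$ is stable under $\ssigma\mapsto t\ssigma$ for $t\in[0,1]$; this is easy ($tf_n\in\cS_n^M$ whenever $f_n\in\cS_n^M$) but should be noted. Also, your identification of a minimizer $\ssigma^\star\in\Sigma_\nu$ with $\widehat I(\ssigma^\star)=0$ works, but in the paper it is not needed because the zero set is identified through the LDP limit and the cited theorems rather than by explicitly exhibiting a minimizer.
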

Note that we did not study the behavior of the function at $\lambda_{max}$ since eventually we will replace the value of $J( \lambda_{max})$ by the value of its left limit at $\lambda_{max}$.
\begin{proof}[Proof of proposition \ref{prop:prelthmuldmf}]
%Let $\lambda>0$. Let us first assume that $J(\lambda)=+\infty$.
%Let us assume there exists $\kappa>0$ such that
%$$\limsup_{n\rightarrow\infty}\frac{1}{n^d} \log\Prb\left(\phi_n(\Gamma^1,\Gamma^2, \Omega) >\lambda n ^{d-1}\right)\geq -\kappa\,.$$
%On the event $\{\phi_n(\Gamma^1,\Gamma^2, \Omega) >\lambda n ^{d-1}\}$, we denote by $f_n^{max}$ the stream that achieves $\phi_n(\Gamma^1,\Gamma^2, \Omega)$ (if there are several possible choices we use a deterministic rule to break ties).
{\noindent \bf Step 1. We prove that the infimum in the definition of $J$ is attained.}
Since by proposition \ref{prop:flowcont}, the function $\ssigma\mapsto\flow^{cont}(\ssigma)$ is continuous then the set $\{  \ssigma\in\Sigma(\Gamma^1,\Gamma ^2,\Omega)\cap \Sigma^M(\Gamma^1,\Gamma^2,\Omega),\,\flow^{cont}(\ssigma)=\lambda\}$ is closed. Besides, by lemma \ref{lem:Sigmacompact}, the set $\{\ssigma\cL^d:\ssigma\in\Sigma(\Gamma^1,\Gamma ^2,\Omega)\cap \Sigma^M(\Gamma^1,\Gamma^2,\Omega)\}$ is compact and so the set $\{  \ssigma\in\Sigma(\Gamma^1,\Gamma ^2,\Omega)\cap \Sigma^M(\Gamma^1,\Gamma^2,\Omega),\,\flow^{cont}(\ssigma)=\lambda\}$ is also compact. Consequently, the lower semi-continuous function $\widehat{I}$ attains its minimum over this set: there exists $\ssigma\in \Sigma(\Gamma^1,\Gamma ^2,\Omega)\cap \Sigma^M(\Gamma^1,\Gamma^2,\Omega)$ such that $\flow^{cont}(\ssigma)=\lambda$ and $J(\lambda)=\widehat{I}(\ssigma)$.

{\noindent \bf Step 2. We prove a lower bound.}
Let $\lambda\geq 0$ such that $J(\lambda)<\infty$. 
Note that we have 
\begin{align*}
J(\lambda)&=\inf\left\{\widehat{I}(\ssigma):\ssigma\in\Sigma(\Gamma^1,\Gamma^2,\Omega)\cap \Sigma^M(\Gamma^1,\Gamma^2,\Omega), \flow^{cont}(\ssigma)\geq \lambda\right\}\,.
\end{align*}
Indeed, for any $\ssigma\in\Sigma(\Gamma^1,\Gamma^2,\Omega)$ such that $\flow^{cont}( \ssigma)=(1+\delta)\lambda$ with $\delta\geq 0$, we have using the convexity of $I$ (theorem \ref{thm:convexity})
$$J(\lambda)\leq \widehat{I}\left(\frac{1}{1+\delta}\ssigma\right)\leq \frac{1}{1+\delta}\widehat{I}(\ssigma)+\left(1-\frac{1}{1+\delta}\right)\widehat{I}(\overrightarrow{0})\leq \widehat{I}(\ssigma)\,.$$
Let $\ep>0$. Thanks to proposition \ref{prop:flowcont}, there exists $\delta>0$ such that for any $\ssigma'\in\Sigma(\Gamma^1,\Gamma ^2,\Omega)$ we have 
\[\dis(\ssigma,\ssigma')\leq \delta \implies |\flow^{cont}(\ssigma)-\flow^{cont}(\ssigma')|\leq \frac{\ep}{2}\,.\]
We have
\begin{align*}
&\Prb(\exists f_n\in\cS_n(\Gamma^1,\Gamma^2,\Omega):\,\dis(\amu_n(f_n),\ssigma\cL^d)\leq \delta)\\&\hspace{2cm}\leq \Prb (\phi_n(\Gamma^1,\Gamma^2,\Omega)\geq(\lambda-\ep)n ^{d-1}, \exists f_n\in\cS_n(\Gamma^1,\Gamma^2,\Omega):\,\dis(\amu_n(f_n),\ssigma\cL^d)\leq \delta)\\
&\hspace{2cm}\quad+\Prb (\phi_n(\Gamma^1,\Gamma^2,\Omega)<(\lambda-\ep)n ^{d-1}, \exists f_n\in\cS_n(\Gamma^1,\Gamma^2,\Omega):\,\dis(\amu_n(f_n),\ssigma\cL^d)\leq \delta)\,.
\end{align*}
Let us prove that 
\begin{align}\label{eq:probatend0}
\exists n_0\geq 1\quad \forall n \geq n_0\qquad\Prb (\phi_n(\Gamma^1,\Gamma^2,\Omega)<(\lambda-\ep)n ^{d-1}, \exists f_n\in\cS_n(\Gamma^1,\Gamma^2,\Omega):\,\dis(\amu_n(f_n),\ssigma\cL^d)\leq \delta)=0\,.
\end{align}
If not, there exists a sequence $(\psi(n))_{n\geq 1}$ such that 
\[\Prb (\phi_{\psi(n)}(\Gamma^1,\Gamma^2,\Omega)<(\lambda-\ep)\psi(n) ^{d-1}, \exists f_{\psi(n)}\in\cS_{\psi(n)}(\Gamma^1,\Gamma^2,\Omega):\,\dis(\amu_{\psi(n)}(f_{\psi(n)},\ssigma\cL^d)\leq \delta)>0\,.\]
By the same arguments as in the proof of Proposition \ref{propadmissiblestream}, we can choose a sequence of realizations $(\omega_{\psi(n)})_{n\geq 1}$ and extract a subsequence from $(\amu_{\psi(n)}(f_{\psi(n)}(\omega_{\psi(n)}))_{n\geq 1}$ that converges weakly towards $\ssigma'\cL^d$ with $\ssigma'\in\Sigma(\Gamma^1,\Gamma ^2,\Omega)$. To lighten the notation we will denote by $(\amu_n(f_n))_{n\geq 1}$ this subsequence.
By Proposition 4.7. in \cite{CT1}, we have that
\[\lim_{n\rightarrow\infty}\frac{\flow_n(f_n)}{n^{d-1}}=\flow ^{cont}(\ssigma')\,.\]
%\geq \lambda-\frac{\ep}{2}
By lemma \ref{lem:weakconvautresens}, we obtain that $$\lim_{n\rightarrow\infty}\dis(\amu_n(f_n),\ssigma'\cL^d)=0\,.$$
It follows that $\dis(\ssigma\cL^d,\ssigma'\cL^d)\leq \delta$ and so
\[\lim_{n\rightarrow\infty}\frac{\flow_n(f_n)}{n^{d-1}}=\flow ^{cont}(\ssigma')\geq \lambda-\frac{\ep}{2}\]
This contradicts the fact that $$\flow_n(f_n)\leq \phi_{n}(\Gamma^1,\Gamma^2,\Omega)<(\lambda-\ep)n^{d-1}\,.$$
It yields that
\begin{align}\label{eq:pgdflowliminf}
-J(\lambda)=-\widehat{I}(\ssigma)&\leq \liminf_{n\rightarrow\infty}\frac{1}{n^d}\log\Prb(\exists f_n\in\cS_n(\Gamma^1,\Gamma^2,\Omega):\,\dis(\amu_n(f_n),\ssigma\cL^d)\leq \delta)\nonumber\\&\leq \liminf_{n\rightarrow\infty}\frac{1}{n^d}\log\Prb (\phi_n(\Gamma^1,\Gamma^2,\Omega)\geq(\lambda-\ep)n ^{d-1})\,.
\end{align}

{\noindent \bf Step 3. We prove an upper bound.}
Let $K>0$. Let $\ep_0>0$.
Thanks to proposition \ref{prop:flowcont}, to each $\ssigma\in \Sigma(\Gamma^1,\Gamma^2,\Omega)$, we can associate a real number $\delta_{\ssigma}>0$ such that
\[\forall \ssigma' \in\Sigma(\Gamma^1,\Gamma^2,\Omega)\qquad \dis(\ssigma\cL^d,\ssigma'\cL^d)\leq \delta_{\ssigma} \implies |\flow^{cont}(\ssigma)-\flow^{cont}(\ssigma')|\leq \frac{\ep}{2}\,.\]
For $\ssigma$ such that $\widehat{I}(\ssigma)<+\infty$, up to choosing a smaller $\delta_{\ssigma}$, we can assume that the local estimate given by the inequality \eqref{eq:localestimatelimsup} is satisfied for $\ep_0$:
\begin{align}\label{eq:localestimecf1}
\limsup_{n\rightarrow \infty}\frac{1}{n^d}\log\Prb\left(\exists f_n\in\cS_n(\Gamma^1,\Gamma^2,\Omega) : \,\dis\big(\amu_n(f_n),\ssigma\cL^d\big)\leq \delta_{\ssigma}\right)\leq -\widehat{I}(\ssigma)(1-\ep_0)\,.
\end{align}
For $\ssigma$ such that $\widehat{I}(\ssigma)=+\infty$, up to choosing a smaller $\delta_{\ssigma}$, we can assume that
\begin{align}\label{eq:localestimecf2}
\limsup_{n\rightarrow\infty}\frac{1}{n^d}\log\Prb( \exists f_n\in\cS_n(\Gamma^1,\Gamma^2,\Omega):\,\dis(\amu_n(f_n),\ssigma\cL^d)\leq \delta_{\ssigma})\leq -K\,.
\end{align}
Since by lemma \ref{lem:Sigmacompact}, the set $\{\ssigma\cL^d:\,\ssigma\in\Sigma(\Gamma^1,\Gamma^2,\Omega)\cap \Sigma^M(\Gamma^1,\Gamma^2,\Omega)\}$ is compact for $\dis$, we can extract from $(\cB_{\dis}(\ssigma,\delta_{\ssigma}),\ssigma\in \Sigma(\Gamma^1,\Gamma^2,\Omega)\cap \Sigma^M(\Gamma^1,\Gamma^2,\Omega))$ a finite covering 
$(\cB_{\dis}(\ssigma_i,\delta_{\ssigma_i}),i=1,\dots,N)$.
We have
\begin{align}\label{eq:boundsupphi}
\Prb(\phi_n(\Gamma^1,\Gamma^2,\Omega)>(\lambda+\ep)n ^{d-1})&\leq \sum_{i=1}^N\Prb\left(\phi_n(\Gamma^1,\Gamma^2,\Omega)\geq(\lambda+\ep)n ^{d-1}, \amu_n^{max}\in\cB_\dis(\ssigma_i,\delta_{\ssigma_i})\right)\nonumber\\&\quad+\Prb\left( \exists f_n\in\cS_n(\Gamma^1,\Gamma^2,\Omega):\,\amu_n(f_n)\notin \bigcup_{i=1,\dots,N}\cB_{\dis}(\ssigma_i,\delta_{\ssigma_i}) \right)\,.
\end{align}
We claim that
\[\exists n_0\geq 1\quad \forall n \geq n_0\qquad\Prb\left (\exists f_n\in\cS_n(\Gamma^1,\Gamma^2,\Omega):\,\amu_n(f_n)\notin \bigcup_{i=1,\dots,N}\cB_{\dis}(\ssigma_i,\delta_{\ssigma_i}) \right)=0\,.\]
Indeed, if it was not true, by the same arguments we used to prove \eqref{eq:probatend0} and lemma \ref{lem:convdiscstream}, we can prove that there exists $\ssigma\cL^ d \notin \bigcup_{i=1,\dots,N}\cB_{\dis}(\ssigma_i,\delta_{\ssigma_i}) $ with $\ssigma\in\Sigma^M(\Gamma^1,\Gamma^2,\Omega)$. This contradicts the fact that $(\cB_{\dis}(\ssigma_i,\delta_{\ssigma_i}),i=1,\dots,N)$ is a covering of 
 $\{\ssigma\cL^d:\,\ssigma\in\Sigma(\Gamma^1,\Gamma^2,\Omega)\cap \Sigma^M(\Gamma^1,\Gamma^2,\Omega)\}$.
Let $i\in\{1,\dots,N\}$ such that $\flow^{cont}(\ssigma_i)<\lambda$. By the same arguments we used to prove \eqref{eq:probatend0}, we can prove that
\[\exists n_0\geq 1\quad \forall n \geq n_0\qquad\Prb\left (\phi_n(\Gamma^1,\Gamma^2,\Omega)\geq(\lambda+\ep)n ^{d-1},  \amu_n^{max}\in\cB_\dis(\ssigma_i,\delta_{\ssigma_i})\right)=0\,.\]
Using lemma \ref{lem:estimeanalyse}, inequality \eqref{eq:boundsupphi} and the local estimates \eqref{eq:localestimecf1}, \eqref{eq:localestimecf2}, it follows that 
\begin{align*}
\limsup_{n\rightarrow\infty}&\frac{1}{n^d}\log\Prb(\phi_n(\Gamma^1,\Gamma^2,\Omega)\geq(\lambda+\ep)n ^{d-1})\\&\leq \limsup_{n\rightarrow\infty}\frac{1}{n^d}\log\left(\sum_{i=1}^N\Prb( \exists f_n\in\cS_n(\Gamma^1,\Gamma^2,\Omega):\,\dis(\amu_n(f_n),\ssigma_i\cL^d)\leq \delta_{\ssigma_i})\ind_{\flow^{cont}(\ssigma_i)\geq\lambda}\right)\\
&\leq -\min\left((1-\ep_0)\min\{\widehat{I}(\ssigma_i):\flow^{cont}(\ssigma_i)\geq\lambda, i=1,\dots,N\} ,K\right)\leq -\min((1-\ep_0)J(\lambda),K)\,.
\end{align*}
By letting $K$ go to infinity and then $\ep_0$ go to $0$, we obtain
\begin{align}\label{eq:pgdfluxlimsup}
\limsup_{n\rightarrow\infty}\frac{1}{n^d}\log\Prb(\phi_n(\Gamma^1,\Gamma^2,\Omega)\geq(\lambda+\ep)n ^{d-1})\leq -J(\lambda).
\end{align}

{\noindent \bf Step 4. We prove that $J$ is convex and conclude.}
Let us prove that the function $J$ is convex. Let $x,y>0$ such that $J(x)<\infty$ and $J(y)<\infty$. Let $\alpha\in[0,1]$. Let $\ssigma_x\in\Sigma(\Gamma^1,\Gamma^2,\Omega)\cap \Sigma^M(\Gamma^1,\Gamma^2,\Omega)$ (respectively $\ssigma_y$) such that $\widehat{I}(\ssigma_x)=J(x)$ and $\flow^{cont}(\ssigma_x)=x$ (respectively $\widehat{I}(\ssigma_y)=J(y)$ and $\flow^{cont}(\ssigma_y)=y$).
We have
\begin{align*}
\flow^{cont}(\alpha\ssigma_x+(1-\alpha)\ssigma_y)&=-\int_{\Gamma^1}(\alpha\ssigma_x+(1-\alpha)\ssigma_y)\cdot \overrightarrow{n}_\Omega d\cH^{d-1}\\&=\alpha \flow^{cont}(\ssigma_x)+(1-\alpha) \flow^{cont}(\ssigma_y)\\&=\alpha x+(1-\alpha)y\,.
\end{align*}
Using the convexity of $\widehat{I}$, it follows that
\begin{align*}
J(\alpha x+(1-\alpha)y)&\leq \widehat{I}(\alpha\ssigma_x+(1-\alpha)\ssigma_y)\\&\leq \alpha\widehat{I}(\ssigma_ x)+(1-\alpha)\widehat{I}(\ssigma_ y)\\
&\leq \alpha J(x)+(1-\alpha)J(y)\,.
\end{align*}
Thus, $J$ is convex.
Let $\lambda_{max}=\sup\{\lambda>0:J(\lambda)<\infty\}$.
By convexity, for any $\lambda \in ]0,\lambda_{max}[$, $J$ is continuous at $\lambda$. Using inequalities \eqref{eq:pgdflowliminf} and \eqref{eq:pgdfluxlimsup}, we have
\[\limsup_{n\rightarrow\infty}\frac{1}{n^d}\log\Prb(\phi_n(\Gamma^1,\Gamma^2,\Omega)\geq\lambda n ^{d-1})\leq -\lim_{\ep \rightarrow 0}J(\lambda-\ep)=-J(\lambda)\] and
\[\liminf_{n\rightarrow\infty}\frac{1}{n^d}\log\Prb(\phi_n(\Gamma^1,\Gamma^2,\Omega)\geq\lambda n ^{d-1})\geq -\lim_{\ep \rightarrow 0}J(\lambda+\ep)=-J(\lambda)\,.\]
As a result we have
\[\lim_{n\rightarrow\infty}\frac{1}{n^d}\log\Prb(\phi_n(\Gamma^1,\Gamma^2,\Omega)\geq\lambda n ^{d-1})=-J(\lambda)\,.\]

Using theorems \ref{thm:CerfTheret} and \ref{thm:CT2}, and the continuity of $J$, we have that $J(\lambda)>0$ if and only if $\lambda>\phi_\Omega$.
Let us prove that $J$ is increasing on $[\phi_{\Omega},\lambda_{max}[$.
Let $\lambda,\lambda'\in[\phi_{\Omega},\lambda_{max}[$ such that $\lambda<\lambda'$.
There exists $\alpha\in]0,1[$ such that
\[\lambda=(1-\alpha)\phi_{\Omega}+\alpha\lambda'\,.\]
Using the convexity of $J$, we have
\[J(\lambda)\leq (1-\alpha) J(\phi_{\Omega})+\alpha J(\lambda')=\alpha J(\lambda')<J(\lambda')\,.\]
The result follows.
\end{proof}

\begin{proof}[Proof of theorem \ref{thm:uldmf}]
We recall that $\widetilde J_u$ was defined in \eqref{eq:defJu}.

{\bf$\bullet$ Lower bound.}
We prove the local lower bound:
$$\forall \lambda\geq 0 \quad\forall\ep>0\qquad \liminf_{n\rightarrow\infty}\frac{1}{n^d}\log \Prb\left(\frac{\phi_n(\Gamma^1,\Gamma^2,\Omega)}{n^{d-1}}\in]\lambda-\ep,\lambda+\ep[\right)\geq-\widetilde J_u(\lambda)\,.$$
Let $\lambda>0$ and $\ep>0$. If $\widetilde J_u(\lambda)=+\infty$, there is nothing to prove. If $\lambda_{min}\leq \lambda<\phi_{\Omega}$ such that $\widetilde J_u (\lambda)<\infty$, we have by theorem \ref{thm:lldmf}
\begin{align*}
\liminf_{n\rightarrow\infty}\frac{1}{n^{d-1}}\log \Prb\left(\frac{\phi_n(\Gamma^1,\Gamma^2,\Omega)}{n^{d-1}}\in]\lambda-\ep,\lambda+\ep[\right)\geq-\widetilde J_l(\lambda)\,.
\end{align*} 
It follows that 
\begin{align*}
\liminf_{n\rightarrow\infty}\frac{1}{n^{d}}\log \Prb\left(\frac{\phi_n(\Gamma^1,\Gamma^2,\Omega)}{n^{d-1}}\in]\lambda-\ep,\lambda+\ep[\right)\geq 0=\widetilde J _u (\lambda)\,.
\end{align*} 
Let us now assume that $\lambda\geq \phi_{\Omega}$. We have
\begin{align*}
\Prb\left(\frac{\phi_n(\Gamma^1,\Gamma^2,\Omega)}{n^{d-1}}\in]\lambda-\ep,\lambda+\ep[\right)\geq \Prb\left(\frac{\phi_n(\Gamma^1,\Gamma^2,\Omega)}{n^{d-1}}\geq \lambda \right)-\Prb\left(\frac{\phi_n(\Gamma^1,\Gamma^2,\Omega)}{n^{d-1}}\geq \lambda+\ep \right)\,.
\end{align*}
Since $J(\lambda)<J(\lambda+\ep)$, by lemma \ref{lem:estimeanalyse}, it leads to
$$\liminf_{n\rightarrow\infty}\frac{1}{n^d}\log \Prb\left(\frac{\phi_n(\Gamma^1,\Gamma^2,\Omega)}{n^{d-1}}\in]\lambda-\ep,\lambda+\ep[\right)\geq-J(\lambda)=-\widetilde J_u (\lambda)\,.$$

{\bf$\bullet$ Upper bound.} We have to prove that for all closed subset $\cF$ of $\sR^+$
\begin{align}\label{eq:upbdfin}
\limsup_{n\rightarrow\infty}\frac{1}{n^d}\log \Prb\left(\frac{\phi_n(\Gamma^1,\Gamma^2,\Omega)}{n^{d-1}}\in\cF\right)\leq-\inf_{\cF}\widetilde J_u\,.
\end{align}
Let $\cF$ be a closed subset of $\sR^+$. 
%
%If $\phi_\Omega\in\cF$, then $\inf_{\cF}\widetilde J_u=0$ and  the result is obvious.
%We suppose now that $\phi_{\Omega}\notin\cF$. 
We consider $\cF_1=\cF\cap[0,\phi_\Omega]$ and $\cF_2=\cF\cap[\phi_{\Omega},+\infty[$.
Let us first assume that $\cF_1\neq\emptyset$. Let $f_1=\sup \cF_1$. We distinguish two cases.
\begin{itemize}
\item We assume $f_1\geq \lambda_{min}$. Then by proposition \ref{prop:prelthmuldmf}, we have $\inf_{\cF }\widetilde J_u=0$ and inequality \eqref{eq:upbdfin} is trivially satisfied.
\item We assume $f_1<\lambda_{min}$. Then, $\inf_{\cF }\widetilde J_u=\inf_{\cF _2}\widetilde J_u$ and for $n$ large enough, by definition of $\lambda_{min}$
$$\Prb\left(\frac{\phi_n(\Gamma^1,\Gamma^2,\Omega)}{n^{d-1}}\in \cF_1\right)\leq \Prb\left(\frac{\phi_n(\Gamma^1,\Gamma^2,\Omega)}{n^{d-1}}\leq f_1\right) =0\,.$$
It follows, that have
$$\limsup_{n\rightarrow\infty}\frac{1}{n^d}\log \Prb\left(\frac{\phi_n(\Gamma^1,\Gamma^2,\Omega)}{n^{d-1}}\in\cF\right)=\limsup_{n\rightarrow\infty}\frac{1}{n^d}\log \Prb\left(\frac{\phi_n(\Gamma^1,\Gamma^2,\Omega)}{n^{d-1}}\in\cF_2\right)\,.$$
\end{itemize}
Hence, to prove inequality \eqref{eq:upbdfin}, it remains to prove that
$$\limsup_{n\rightarrow\infty}\frac{1}{n^d}\log \Prb\left(\frac{\phi_n(\Gamma^1,\Gamma^2,\Omega)}{n^{d-1}}\in\cF_2\right) \leq - \inf_{\cF _2}\widetilde J_u\,.$$
Let us assume that $\cF_1=\emptyset$. If $\cF_2=\emptyset$ then 
$$\Prb\left(\frac{\phi_n(\Gamma^1,\Gamma^2,\Omega)}{n^{d-1}}\in\cF\right)=0$$
 and the inequality \eqref{eq:upbdfin} follows.
If $\cF_2\neq\emptyset$, we set $f_2=\inf\cF_2$. 
%Since $\cF$ is closed, we have $f_1\in\cF_1$ and $f_2\in\cF_2$. It follows that $f_1<\phi_\Omega$ and $f_2>\phi_\Omega$.
We have
\[\Prb\left(\frac{\phi_n(\Gamma^1,\Gamma^2,\Omega)}{n^{d-1}}\in\cF_2\right)\leq  \Prb\left(\frac{\phi_n(\Gamma^1,\Gamma^2,\Omega)}{n^{d-1}}\geq f_2\right)\,.\]
%Since the lower large deviations are of surface order and the upper large deviations are of volume order, by lemma \ref{lem:estimeanalyse},
%we have
%$$\limsup_{n\rightarrow\infty}\frac{1}{n^d}\log\Prb\left(\frac{\phi_n(\Gamma^1,\Gamma^2,\Omega)}{n^{d-1}}\in\cF\right)\leq \limsup_{n\rightarrow\infty}\frac{1}{n^d}\log \Prb\left(\frac{\phi_n(\Gamma^1,\Gamma^2,\Omega)}{n^{d-1}}\leq f_1\right)=0=-\inf_{\cF}\widetilde J_u$$
%The same result holds if $\cF_2=\emptyset$.
%Let us now assume that $\cF_1=\emptyset$ and $\cF_2\neq \emptyset $.
%We have for $n$ large enough
%\[\Prb\left(\frac{\phi_n(\Gamma^1,\Gamma^2,\Omega)}{n^{d-1}}\in\cF\right)\leq \Prb\left(\frac{\phi_n(\Gamma^1,\Gamma^2,\Omega)}{n^{d-1}}< \lambda_{min}\right)+ \Prb\left(\frac{\phi_n(\Gamma^1,\Gamma^2,\Omega)}{n^{d-1}}\geq f_2\right)= \Prb\left(\frac{\phi_n(\Gamma^1,\Gamma^2,\Omega)}{n^{d-1}}\geq f_2\right)\,.\]
Using theorem \ref{thm:ULDpatate}, it yields that
$$\limsup_{n\rightarrow\infty}\frac{1}{n^d}\log\Prb\left(\frac{\phi_n(\Gamma^1,\Gamma^2,\Omega)}{n^{d-1}}\in\cF\right)\leq \limsup_{n\rightarrow\infty}\frac{1}{n^d}\log\Prb\left(\frac{\phi_n(\Gamma^1,\Gamma^2,\Omega)}{n^{d-1}}\geq f_2\right)=-J(f_2)=-\inf_{\cF}\widetilde J_u$$
since $J$ is increasing on $[\phi_{\Omega},+\infty[$.
This concludes the proof.
\end{proof}

\paragraph{Acknowledgment} The first author would like to thank Raphaël Cerf for helpful discussions. The first author would also like to thank Ziad Kobeissi and Marc Pegon for their help.

\bibliographystyle{plain}

\end{document}